\documentclass[12pt]{amsart}

\usepackage{amsfonts,amssymb,stmaryrd,amscd,amsmath,latexsym,amsbsy, fullpage}
\usepackage[all]{xy}
\usepackage{amssymb}
\usepackage{amsfonts}
\usepackage{latexsym}
\usepackage[mathscr]{euscript}

\numberwithin{equation}{section} 
\newtheorem{theorem}{Theorem}[section]
\newtheorem{lemma}[theorem]{Lemma}
\newtheorem{proposition}[theorem]{Proposition}
\newtheorem{corollary}[theorem]{Corollary}
\theoremstyle{definition}
\newtheorem{definition}[theorem]{Definition}

\newtheorem{example}[theorem]{Example}

\newtheorem{conjecture}[theorem]{Conjecture}
\newtheorem{remark}[theorem]{Remark}

\renewcommand{\Re}{{\mathrm{Re\,}}}
\renewcommand{\Im}{{\mathrm{Im\,}}}
\newcommand{\be}{{\mathbf e}}
\newcommand{\e}{\mathtt{e}}
\newcommand{\Spec}{\mathrm{Spec}}
\newcommand{\Ext}{\mathrm{Ext}}

\newcommand{\Ker}{\mathrm{Ker\,}}
\newcommand{\rank}{\mathrm{rank\,}}

\newcommand{\End}{\mathrm{End}}
\newcommand{\PGL}{\mathrm{PGL}}
\newcommand{\Supp}{\mathrm{Supp}}
\newcommand{\Par}{\mathrm{Par}}
\newcommand{\GL}{\mathrm{GL}}
\newcommand{\tO}{\mathrm{O}}
\newcommand{\Ob}{\mathscr{O}}
\newcommand{\HC}{\mathrm{HC}}
\renewcommand{\H}{\mathrm{H}}
\newcommand{\SL}{\mathrm{SL}}
\newcommand{\Hom}{\mathrm{Hom}}

\newcommand{\Id}{\mathrm{Id}}
\newcommand{\Aut}{\mathrm{Aut}}

\newcommand{\Rees}{\mathrm{Rees}}
\newcommand{\Ind}{\mathrm{Ind}}
\newcommand{\Res}{\mathrm{Res}}
\newcommand{\Loc}{\mathrm{Loc}}
\newcommand{\reg}{\mathrm{reg}}
\newcommand{\res}{\mathrm{res}}
\newcommand{\ind}{\mathrm{ind}}
\newcommand{\Rep}{\mathrm{Rep}}
\newcommand{\supp}{\mathrm{supp}\,}

\newcommand{\diag}{\mathrm{diag}}

\newcommand{\KZ}{\mathrm{KZ}}
\newcommand{\K}{\mathrm{K}}
\newcommand{\op}{{\mathrm{op}}}
\newcommand{\tr}{\mathrm{Tr}\,}
\newcommand{\ch}{\mathrm{ch}\,}
\newcommand{\bh}{{\bf h}}
\newcommand{\bi}{{\mathbf i}}
\newcommand{\Hilb}{\mathrm{Hilb}}

\newcommand{\sH}{\mathsf{H}}
\newcommand{\sB}{\mathsf{B}}
\newcommand{\sZ}{\mathsf{Z}}
\newcommand{\sM}{\mathsf{M}}
\newcommand{\sS}{\mathsf{S}}
\newcommand{\sU}{\mathsf{U}}

\newcommand{\g}{\mathfrak{g}}
\newcommand{\h}{\mathfrak{h}}
\newcommand{\n}{\mathfrak{n}}
\newcommand{\kS}{\mathfrak{S}}

\newcommand{\kZ}{\mathfrak{Z}}

\renewcommand{\d}{\mathrm{d}}

\newcommand{\gl}{\mathfrak{gl}}
\newcommand{\gsl}{\mathfrak{sl}}
\newcommand{\su}{\mathfrak{su}}
\newcommand{\bW}{\overline{W}}
\newcommand{\aW}{\overline{W}_{a}}
\newcommand{\eW}{
\overline{W}_{a}^{\mathrm{ext}}}
\def\HH{\hbox{${\mathcal H}$\kern-5.2pt${\mathcal H}$}}
\newcommand{\rS}{\mathscr{S}}
\newcommand{\gr}{\mathrm{gr}}

\newcommand{\mC}{{\mathcal C}}

\newcommand{\CC}{{\mathbb{C}}}
\newcommand{\ZZ}{{\mathbb{Z}}}
\newcommand{\RR}{{\mathbb{R}}}
\newcommand{\QQ}{{\mathbb{Q}}}
\newcommand{\cM}{{\mathcal M}}
\newcommand{\cW}{{\mathcal W}}
\newcommand{\cS}{{\mathcal S}}
\newcommand{\cO}{\mathcal{O}}
\newcommand{\cD}{{\mathcal D}}

\newcommand{\bc}{{\bar c}}
\newcommand{\hoc}{{\mathrm{HH}}}

\theoremstyle{definition}
\theoremstyle{remark}

\begin{document}

\title{Lecture notes on Cherednik algebras}

\author[P.~Etingof]{Pavel Etingof}
\address{{\bf P.E.}: Department of Mathematics, Massachusetts Institute of Technology,
Cambridge, MA 02139, USA}
\email{etingof@math.mit.edu}

\author[X.~Ma]{Xiaoguang Ma}
\address{{\bf X.M.}: Department of Mathematics, Massachusetts Institute of Technology,
Cambridge, MA 02139, USA}
\email{xma@math.mit.edu}

\maketitle
\tableofcontents

\newpage \section{Introduction}
Double affine Hecke algebras, also called Cherednik algebras, 
were introduced by Cherednik in 1993 as a tool in 
his proof of Macdonald's conjectures about orthogonal polynomials 
for root systems. Since then, it has been realized that 
Cherednik algebras are of great independent interest; 
they appeared in many different mathematical contexts
and found numerous applications.  

The present notes are based on a course on Cherednik algebras given by the first author
at MIT in the Fall of 2009. Their goal is to give an introduction 
to Cherednik algebras, and to review the web of connections between them 
and other mathematical objects. For this reason, 
the notes consist of many parts that are relatively 
independent of each other. Also, to keep the notes within the
bounds of a one-semester course, we had to limit the discussion of many 
important topics to a very brief outline, or to skip them altogether.
For a more in-depth discussion of Cherednik algebras, we refer the reader to 
research articles dedicated to this subject. 

The notes do not contain any original material. 
In each section, the sources of the exposition are listed in the 
notes at the end of the section. 

The organization of the notes is as follows. 

In Section 2, we define the classical and quantum Calogero-Moser systems, and their analogs for any 
Coxeter groups introduced by Olshanetsky and Perelomov. 
Then we introduce Dunkl operators, prove the fundamental result of their commutativity, and 
use them to establish integrability of the Calogero-Moser and Olshanetsky-Perelomov systems. 
We also prove the uniqueness of the first integrals for these systems. 

In Section 3, we conceptualize the commutation relations between Dunkl operators 
and coordinate operators by introducing the main object of these notes - the rational Cherednik algebra. 
We develop the basic theory of rational Cherednik algebras (proving the PBW theorem), and then pass to the 
representation theory of rational Cherednik algebras, more precisely, study the structure of category
$\mathcal{O}$. After developing the basic theory (parallel to the case of semisimple Lie algebras),
we completely work out the representations in the rank 1 case, and prove a number of results 
about finite dimensional representations and about representations of the rational Cherednik 
algebra attached to the symmetric group.

In Section 4, we evaluate the Macdonald-Mehta integral, and then use it to find the 
supports of irrieducible modules over the rational Cherednik algebras with the trivial lowest weight, 
in particular giving a simple proof of the theorem of Varagnolo and Vasserot, classifying 
such representations which are finite dimensional. 

In Section 5, we describe the theory of parabolic induction and restriction functors 
for rational Cherednik algebras, developed in \cite{BE}, and give some applications of this theory, 
such as the description of the category of Whittaker modules and of possible supports of modules lying in category 
$\mathcal{O}$. 

In Section 6, we define Hecke algebras of complex reflection groups, 
and the Knizhnik-Zamolodchikov (KZ) functor from the category $\mathcal{O}$
of a rational Cherednik algebra to the category of finite dimensional representations of the 
corresponding Hecke algebra. We use this functor to prove the formal flatness of Hecke algebras of 
complex reflection groups (a theorem of Brou\'e, Malle, and Rouquier), and state the 
theorem of Ginzburg-Guay-Opdam-Rouquier that the KZ functor is an equivalence from the category $\mathcal{O}$ 
modulo its torsion part to the category of representations of the Hecke algebra. 

In Section 7, we define rational Cherednik algebras for orbifolds. We also 
define the corresponding Hecke algebras, and define the KZ functor 
from the category of modules over the former to that over the latter. 
This generalizes to the ``curved'' case the KZ functor for rational Cherednik algebras 
of complex reflection groups, defined in Section 6. We then apply the KZ functor to showing that 
if the universal cover of the orbifold in question has a trivial $H^2$ (with complex coefficients), 
then the orbifold Hecke algebra is formally flat, and explain why the condition of 
trivial $H^2$ cannot be dropped. Next, we list examples of orbifold Hecke algebras 
which satisfy the condition of vanishing $H^2$ (and hence are formally flat). 
These include usual, affine, and double affine Hecke algebras, as well as Hecke algebras attached to Fuchsian groups, 
which include quantizations of del Pezzo surfaces and their Hilbert schemes; we work these examples out in some detail, 
highlighting connections with other subjects. Finally, we discuss the issue of algebraic flatness, and prove it in the case of algebras 
of rank 1 attached to Fuchsian groups, using the theory of deformations of group algebras of Coxeter groups developed in \cite{ER}. 
 
In Section 8, we define symplectic reflection algebras (which inlude rational Cherednik algebras as a special case), 
and generalize to them some of the theory of Section 3. Namely, we use the theory of deformations of Koszul algebras
to prove the PBW theorem for symplectic reflection algebras. We also determine the center of symplectic reflection algebras, 
showing that it is trivial when the parameter $t$ is nonzero, and is isomorphic to the shperical subalgebra if $t=0$. 
Next, we give a deformation-theoretic interpretation of symplectic reflection algebras as universal deformations of 
Weyl algebras smashed with finite groups. Finally, we discuss finite dimensional representations of symplectic reflection algebras for $t=0$, 
showing that the Azumaya locus on the space of such representations coincides with the smooth locus. 
This uses the theory of Cohen-Macaulay modules and of homological dimension in commutative algebra. 
In particular, we show that for 
Cherednik algebras of type $A_{n-1}$, the whole representation space is smooth and coincides with the spectrum of the center. 

In Section 9, we give another description of the spectrum of the center of the rational Cherednik algebra of type $A_{n-1}$ 
(for $t=0$), as a certain space of conjugacy classes of pairs of matrices, introduced by Kazhdan, Kostant, and Sternberg, 
and called the Calogero-Moser space (this space is obtained by classical hamiltonian reduction, and is a special case of a quiver variety). 
This yields a new construction of the Calogero-Moser integrable system. 
We also sketch a proof of the Gan-Ginzburg theorem claiming that the quotient of the commuting 
scheme by conjugation is reduced, and hence isomorphic to $\Bbb C^{2n}/\kS_n$. 
Finally, we explain that the Calogero-Moser space is a topologically trivial deformation of the 
Hilbert scheme of the plane, we use the theory of Cherednik algebras to compute the cohomology ring of this space. 
 
In Section 10, we generalize the results of Section 9 to the quantum case. 
Namely, we prove the quantum analog of the Gan-Ginzburg theorem (the Harish Chandra-Levasseur-Stafford 
theorem), and explain how to quantize the Calogero-Moser space using quantum Hamiltonian reduction. 
Not surprisingly, this gives the same quantization as was constructed in the previous sections, namely, 
the spherical subalgebra of the rational Cherednik algebra. 

 {\bf Acknowledgements.} 
We are grateful to the participants of the course on Cherednik 
algebras at MIT, especially to Roman Travkin and Aleksander Tsymbalyuk, 
for many useful comments and corrections. 
This work was  partially supported by the NSF grants
DMS-0504847 and DMS-0854764.

\newpage \section{Classical and quantum 
Olshanetsky-Perelomov systems for finite Coxeter groups}\label{Olper}

\subsection{The rational quantum Calogero-Moser system}

Consider the differential operator
$$
H=\sum_{i=1}^{n}\frac{\partial^{2}}{\partial x_{i}^{2}}
-c(c+1)\sum_{i\neq j}\frac{1}{(x_{i}-x_{j})^{2}}.
$$
This is the quantum Hamiltonian for a system of $n$ 
particles on the line of unit mass and the interaction 
potential (between particle 
$1$ and $2$) $c(c+1)/(x_{1}-x_{2})^{2}$. 
This system is called {\em the rational quantum Calogero-Moser system}.

It turns out that the rational 
quantum Calogero-Moser system is completely integrable. 
Namely, we have the following theorem.
\begin{theorem}{\label{thm-qintsys}}
There exist differential operators $L_{j}$ with rational coefficients of the form
$$
L_{j}=\sum_{i=1}^{n}(\frac{\partial}{\partial
x_{i}})^{j}+\textit{ lower order 
terms},
\quad j=1,\ldots, n,
$$
which are invariant under the symmetric group $\kS_n$, 
homogeneous of degree $-j$, and such that 
$L_{2}=H$ and $[L_{j}, L_{k}]=0, \forall j,k=1, \ldots, n$.
\end{theorem}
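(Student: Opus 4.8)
The plan is to build the $L_j$ from \emph{Dunkl operators}. For the parameter $c$ and $i=1,\dots,n$, set
$$
D_i \;=\; \frac{\partial}{\partial x_i}\;-\;c\sum_{j\neq i}\frac{1}{x_i-x_j}\,(1-s_{ij}),
$$
an operator preserving $\CC[x_1,\dots,x_n]$, where $s_{ij}\in\kS_n$ is the transposition $(i\,j)$. Then $w D_i w^{-1}=D_{w(i)}$ for $w\in\kS_n$, and by the fundamental commutativity theorem for Dunkl operators (the main technical result of this section) the $D_i$ pairwise commute. Consequently, for any symmetric polynomial $q(y_1,\dots,y_n)$ the operator $q(D_1,\dots,D_n)$ is well defined and commutes with $\kS_n$, hence preserves the ring $\Lambda:=\CC[x_1,\dots,x_n]^{\kS_n}$. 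Taking the power sums $p_j=y_1^j+\dots+y_n^j$, $j=1,\dots,n$, we get pairwise commuting operators $p_j(D)$ on $\CC[x_1,\dots,x_n]$ that preserve $\Lambda$.

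Next I would turn $p_j(D)$ into an honest differential operator by restriction. Moving all group elements to the right, write $p_j(D)=\sum_{w\in\kS_n}A_{j,w}\,w$ with each $A_{j,w}$ a differential operator with coefficients in $\CC(x_1,\dots,x_n)$ (using $w\,\partial/\partial x_i=(\partial/\partial x_{w(i)})\,w$). For $f\in\Lambda$ one has $p_j(D)f=\big(\sum_w A_{j,w}\big)f$, so on $\Lambda$ the operator $p_j(D)$ agrees with the differential operator $\bar L_j:=\sum_w A_{j,w}$; it has rational coefficients with poles only along the diagonals $x_i=x_j$, top-order part $\sum_i(\partial/\partial x_i)^j$, and is homogeneous of degree $-j$ and $\kS_n$-invariant, all inherited from $p_j(D)$. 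Finally set $L_j:=\delta^{-c}\circ\bar L_j\circ\delta^{c}$, where $\delta=\prod_{i<j}(x_i-x_j)$; at the level of differential operators this is just the substitution $\partial/\partial x_i\mapsto\partial/\partial x_i+c\sum_{j\neq i}(x_i-x_j)^{-1}$, so $L_j$ is again a rational-coefficient differential operator of the required shape (conjugation preserves the order, the leading symbol $\sum_i(\partial/\partial x_i)^j$ and the degree $-j$; it preserves $\kS_n$-invariance because $\delta$ is $\kS_n$-anti-invariant, so the scalar factors $\mathrm{sgn}(w)^{\pm c}$ picked up on the two sides cancel). A direct computation of $\sum_i D_i^2$, its restriction to $\Lambda$, and the conjugation — the step where one uses the noncommutativity of $\partial/\partial x_i$ with $s_{ij}$ together with the partial-fraction identity for $\tfrac{1}{(x_i-x_j)(x_i-x_k)}$ to see that the first-order and three-body terms cancel — yields $L_2=H$.

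It remains to upgrade commutativity on $\Lambda$ to an operator identity. Since the $p_j(D)$ commute on all of $\CC[x_1,\dots,x_n]$, the differential operator $[\bar L_j,\bar L_k]$ annihilates every symmetric polynomial. But a differential operator $M$ with rational coefficients on $\CC^n$ that kills all of $\Lambda$ must vanish: on the dense open set $\{\delta\neq 0\}$ the map $(x_1,\dots,x_n)\mapsto(e_1,\dots,e_n)$ to the elementary symmetric functions is \'{e}tale, so at any such point every finite jet is the jet of some polynomial in $e_1,\dots,e_n$, i.e.\ of a symmetric polynomial; hence the coefficients of $M$ vanish there, and being rational they vanish identically. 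Applying this to $M=[\bar L_j,\bar L_k]$ and conjugating back gives $[L_j,L_k]=\delta^{-c}[\bar L_j,\bar L_k]\delta^{c}=0$.

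The real content of the argument is the commutativity $[D_i,D_j]=0$ of the Dunkl operators, which I am treating as given; once it is in hand, the only steps needing genuine care are the computation identifying $L_2$ with $H$ and the \'{e}tale/jet lemma that turns ``commutes on symmetric polynomials'' into ``commutes as a differential operator.'' I expect the latter to be the one subtle point in what remains.
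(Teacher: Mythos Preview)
Your approach is essentially the paper's: Dunkl operators, the ``restriction'' map $p_j(D)=\sum_w A_{j,w}w\mapsto \bar L_j:=\sum_w A_{j,w}$ (which the paper denotes $m$), then conjugation by $\delta^c$. The one place you work harder than necessary is the \'etale/jet step for commutativity. The map $m$ is multiplicative on $\kS_n$-invariant elements: if $B=\sum_h B_h h$ satisfies $wBw^{-1}=B$ for all $w$, then $wB_hw^{-1}=B_{whw^{-1}}$, so $\sum_h wB_hw^{-1}=\sum_h B_h$; expanding $m(AB)=\sum_{g,h}A_g\,(gB_hg^{-1})=\sum_g A_g\sum_h B_h=m(A)m(B)$. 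Since every $p_j(D)$ is $\kS_n$-invariant, this gives
\[
\bar L_j\bar L_k=m\bigl(p_j(D)p_k(D)\bigr)=m\bigl(p_k(D)p_j(D)\bigr)=\bar L_k\bar L_j
\]
directly as differential operators on $\h_{\reg}$, with no detour through $\Lambda$ and no jet argument. Everything else in your outline matches the paper.
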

We will prove this theorem later.

\begin{remark}
$L_{1}=\sum_{i}\dfrac{\partial}{\partial x_{i}}$.
\end{remark}

\subsection{Complex reflection groups}

Theorem \ref{thm-qintsys} can be generalized to the case of any finite Coxeter group. 
To describe this generalization, let us recall the basic theory of finite Coxeter groups and,
more generally, complex reflection groups.

Let $\h$ be a finite-dimensional complex 
vector space. We say that a semisimple 
element $s\in \GL(\h)$ is a {\em (complex) reflection} if 
$\rank(1-s)=1$. This means that $s$ is conjugate to the diagonal matrix 
$\diag(\lambda, 1, \ldots, 1)$ where $\lambda\neq 1$.

Now assume $\h$ carries a nondegenerate inner product $( \cdot ,\cdot )$. We say that a semisimple element 
$s\in \tO(\h)$ is a {\em real reflection} if $\rank(1-s)=1$; equivalently, $s$ is conjugate to 
$\diag(-1, 1, \ldots, 1)$.

Now let $G\subset \GL(\h)$ be a finite subgroup.

\begin{definition} 
\begin{enumerate}
\item[(i)] We say that $G$ is a {\em complex reflection group} if it is generated by complex reflections.
\item[(ii)] If $\h$ carries an inner product, then a finite subgroup 
$G\subset \tO(\h)$ 
is a {\em real reflection group} (or a {\em finite Coxeter group}) 
if $G$ is generated by real reflections.
\end{enumerate}
\end{definition}

For the complex reflection groups, we have the following
important theorem. 

\begin{theorem}[The Chevalley-Shepard-Todd theorem, \cite{Che}]
A finite subgroup $G$ of $\GL(\h)$ is a complex reflection group if and only if the
algebra $(S\h)^{G}$ is a polynomial (i.e., free)
algebra.
\end{theorem}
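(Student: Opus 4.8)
The plan is to prove the two implications separately, writing $R:=S\h$ as a graded ring with $\h$ placed in degree $1$ and $R^G\subset R$ for the subalgebra of $G$-invariants. I would treat the substantive implication first: if $G$ is generated by complex reflections, then $R^G$ is a polynomial algebra. By the Hilbert--Noether finiteness theorem $R^G$ is a finitely generated graded $\CC$-algebra, so fix a \emph{minimal} set $f_1,\dots,f_m$ of homogeneous generators, of positive degrees $d_1\le\cdots\le d_m$. Since $R$ is integral over $R^G$ we have $\dim R^G=\dim R=n:=\dim_\CC\h$, whence $m\ge n$; and if the $f_i$ are algebraically independent over $\CC$ then conversely $m\le n$, forcing $m=n$ and $R^G=\CC[f_1,\dots,f_n]$. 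So the whole statement reduces to showing that \emph{a minimal homogeneous generating set of $R^G$ is algebraically independent}.

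The only place the reflection hypothesis enters is the following divided-difference lemma. For a complex reflection $s\in G$ with reflecting hyperplane $\{\alpha_s=0\}$, $\alpha_s\in\h^*$, and any $p\in R$, the difference $p-sp$ vanishes on $\{\alpha_s=0\}$, so $\partial_s(p):=(p-sp)/\alpha_s$ lies in $R$ and has degree $\deg p-1$ — this uses $\rank(1-s)=1$ in an essential way. \emph{Lemma.} If $h_1,\dots,h_k\in R^G$ are homogeneous with $h_1\notin(h_2,\dots,h_k)R^G$, and $p_1,\dots,p_k\in R$ are homogeneous with $\sum_ip_ih_i=0$, then $p_1\in(h_2,\dots,h_k)R$. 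I would prove this by induction on $\deg p_1$. For $\deg p_1=0$: apply the Reynolds projection $\pi=\tfrac1{|G|}\sum_{g\in G}g\colon R\to R^G$ to the relation; since $\pi$ is $R^G$-linear and fixes the constant $p_1$, the hypothesis on $h_1$ forces $p_1=0$. For the inductive step: applying $s$ to the relation and subtracting gives $\sum_i\partial_s(p_i)h_i=0$, so by induction $\partial_s(p_1)\in(h_2,\dots,h_k)R$, hence $p_1-sp_1=\alpha_s\partial_s(p_1)\in(h_2,\dots,h_k)R$; as the reflections generate $G$ and the ideal $(h_2,\dots,h_k)R$ is $G$-stable, $p_1-gp_1$ lies in it for every $g\in G$, and combining this with $\pi$ applied to the original relation closes the induction.

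Granting the lemma, one proves the algebraic independence by contradiction. Suppose $f_1,\dots,f_m$ satisfy a nonzero relation; taking $y_i$ to have weight $d_i$, choose an isobaric relation $P(f_1,\dots,f_m)=0$ of minimal weight, and set $P_i:=(\partial P/\partial y_i)(f_1,\dots,f_m)\in R^G$. In characteristic $0$ not all $P_i$ vanish, and minimality of the generating set rules out any nonzero $P_i$ being a nonzero constant (that would put some $f_i$ in $\CC[f_j:j\neq i]$). Choose a minimal subset of the $P_i$ generating the ideal they span in $R^G$, differentiate $P(f_1,\dots,f_m)=0$ with respect to each coordinate $x_t$ and substitute, and feed the resulting relations into the lemma with the chosen $P_i$'s as the $h$'s. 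Multiplying by $x_t$, summing over $t$, and using Euler's identity $\sum_tx_t\partial_{x_t}q=(\deg q)q$ for homogeneous $q$ converts the output of the lemma into a membership relation among the $f_i$ and the $P_i$; applying $\pi$ and then comparing components of lowest degree — where the only invariants available are $\CC$-linear combinations of the $f_j$ of that degree, which are linearly independent by minimality of the generating set — forces some $f_i$ into the subalgebra generated by the others, the desired contradiction. The whole weight of this implication lies in the lemma and in the degree bookkeeping of its application; everything else is formal.

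For the converse, suppose $R^G=\CC[f_1,\dots,f_n]$ with $\deg f_i=d_i$, and let $H\le G$ be the (normal) subgroup generated by all complex reflections contained in $G$; I must show $H=G$. By the forward implication already established, $R^H=\CC[g_1,\dots,g_n]$ with $\deg g_i=e_i$. Expanding Molien's identity $\tfrac1{|G|}\sum_{g\in G}\det(1-tg)^{-1}=\prod_i(1-t^{d_i})^{-1}$ as a Laurent series at $t=1$: only the identity contributes a pole of order $n$, giving $|G|=\prod_id_i$; and among the other elements only the reflections contribute a pole of order $n-1$ (every other $g$ has $\rank(1-g)\ge2$), giving $\#\{\text{reflections in }G\}=\sum_i(d_i-1)$. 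The same applied to $H$ gives $\sum_i(e_i-1)=\#\{\text{reflections in }H\}=\#\{\text{reflections in }G\}$ by construction of $H$, so $\sum_id_i=\sum_ie_i$. Now $R^G\subseteq R^H\subseteq R$ makes $R^H$ a finitely generated graded $R^G$-module, so its Hilbert series equals $N(t)\prod_i(1-t^{d_i})^{-1}$ for some $N\in\ZZ[t]$; equating with $\prod_i(1-t^{e_i})^{-1}$ gives $N(t)=\prod_i(1-t^{d_i})/\prod_i(1-t^{e_i})$, a polynomial with $N(0)=1$ and of degree $\sum_id_i-\sum_ie_i=0$, hence $N\equiv1$. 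Therefore $\mathrm{HS}(R^H)=\mathrm{HS}(R^G)$, so the inclusion $R^G\subseteq R^H$ is an equality; hence $\CC(\h)^G=\CC(\h)^H$ and $G=H$ by Galois theory.
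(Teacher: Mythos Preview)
The paper does not prove this theorem; it is merely stated with a citation to Chevalley. So there is nothing to compare against, and I will simply assess your argument on its own merits.

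Your overall strategy is the classical Chevalley argument and is sound, and your converse via Molien's formula and Hilbert series is correct (though for complex reflections the identity $\#\{\text{reflections}\}=\sum_i(d_i-1)$ requires pairing each non-involutory reflection $s$ with $s^{-1}$, since the individual contribution is $\tfrac{1}{1-\lambda_s}$ rather than $\tfrac12$; you should say this).

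However, your key lemma is \emph{misstated}, and the proof you sketch does not establish the conclusion you wrote. You claim that from $h_1\notin(h_2,\dots,h_k)R^G$ and $\sum p_ih_i=0$ one gets $p_1\in(h_2,\dots,h_k)R$. This is false already for $G=\ZZ/2$ acting on $\CC[x]$ by $x\mapsto-x$: take $h_1=x^2$, $h_2=x^4$, $p_1=x^2$, $p_2=-1$; then $p_1h_1+p_2h_2=0$ and $h_1\notin(h_2)R^G$, yet $p_1=x^2\notin(x^4)\CC[x]$. Correspondingly, your step ``combining this with $\pi$ applied to the original relation closes the induction'' does not close: from $\pi(p_1)h_1\in(h_2,\dots,h_k)R^G$ you cannot conclude $\pi(p_1)\in(h_2,\dots,h_k)R$.

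The correct conclusion of the lemma is that $p_1$ lies in the \emph{Hilbert ideal} $I:=R\cdot R^G_{+}$, the ideal of $R$ generated by all invariants of positive degree. With this target your induction does close: you get $p_1-\pi(p_1)\in I$ exactly as you wrote, and since $\deg p_1>0$ in the inductive step, $\pi(p_1)\in R^G_{+}\subset I$ automatically, so $p_1\in I$. The application to the algebraic-independence step then proceeds as in your sketch: the Euler identity yields $d_1f_1+\sum_{j>\ell}d_ja_{j1}f_j\in\mathfrak{m}I$, and comparing degree-$d_1$ parts (using that every element of $\mathfrak{m}I$ has degree $\ge 2d_1$) gives a nontrivial $\CC$-linear relation among the $f_j$ of minimal degree $d_1$, contradicting minimality of the generating set. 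So the repair is local: change the conclusion of the lemma to $p_1\in I$ and the rest of your outline goes through.
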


By the Chevalley-Shepard-Todd theorem, the algebra $(S\h)^{G}$ has algebraically
independent generators $P_{i}$, homogeneous 
of some degrees $d_{i}$ for $i=1, \ldots, \dim\h$. The numbers $d_{i}$ are uniquely determined, 
and are called {\em the degrees} of $G$.

\begin{example}
If $G=\kS_{n}$, $\h=\CC^{n-1}$ (the space of vectors in $\CC^n$ with zero sum of coordinates), 
then one can take $P_{i}(p_{1},\ldots,
p_{n})=p_{1}^{i+1}+\cdots+p_{n}^{i+1}$, $i=1,\ldots,n-1$
(where $\sum_i p_i=0$).
\end{example}

\subsection{Parabolic subgroups}{\label{sec:paragp}}

Let $G\subset \GL(\h)$ be a finite subgroup.

\begin{definition} A parabolic 
subgroup of $G$ is the stabilizer $G_a$ of a point $a\in \h$. 
\end{definition}

Note that by Chevalley's theorem, a parabolic subgroup of a complex (respectively, real) reflection group is itself a 
complex (respectively, real) reflection group. 

Also, if $W$ is a real reflection group, then it can be shown that 
a subgroup $W'\subset W$ is parabolic if and only if it is conjugate to a subgroup generated by a subset of simple reflections of $W$. 
In this case, the rank of $W'$, i.e. the number of generating simple reflections,
equals the codimension of the space
$\h^{W'}$.

\begin{example}
Consider the Coxeter group of type $E_{8}$.
It has the Dynkin diagram:
\begin{equation*}
\xygraph{ !{<0cm,0cm>;<1cm,0cm>:<0cm,1cm>::} 
!{(0,0) }*{\bullet}="a1" 
!{(1,0) }*{\bullet}="a2"
!{(2,0) }*{\bullet}="a3" 
!{(3,0)}*{\bullet}="a4" 
!{(4,0) }*{\bullet}="a5" 
!{(5,0) }*{\bullet}="a6"
!{(6,0) }*{\bullet}="a7" 
!{(2,1) }*{\bullet}="a8"
"a1"-"a2" "a2"-"a3" "a3"-"a4" "a4"-"a5" 
"a5"-"a6" "a6"-"a7" "a3"-"a8"}
\end{equation*}

The parabolic subgroups will be  
Coxeter groups whose Dynkin diagrams are obtained by
deleting vertices from the above graph.
In particular, the maximal parabolic subgroups are 
$D_7, A_7, A_1\times A_6, A_2\times A_1\times A_4, A_4\times A_3, D_5\times A_2, E_6\times A_1, E_7$. 
\end{example}

Suppose $G'\subset G$ is a parabolic subgroup, and $b \in\h$ is such that 
$G_{b} = G'$. In this case, we have a natural $G'$-invariant decomposition
$\h=\h^{G'}\oplus ({\h^{*}}^{G'})^{\perp}$, and $b \in \h^{G'}$. 
Thus we have a nonempty open set $\h_{\reg}^{G'}$ of all $a \in \h^{G'}$
for which
$G_{a} = G'$; this set is nonempty because it contains $b$. 
We also have a $G'$-invariant decomposition 
$\h^{*}={\h^{*}}^{G'} \oplus (\h^{G'} )^{\perp}$, 
and we can define the open set $\h_{\reg}^{*G'}$ 
of all $\lambda\in\h^{G'}$
for which $G_{\lambda} = G'$. 
It is clear that this set is nonempty. 
This implies, in particular, that one can make an alternative definition of 
a parabolic subgroup of $G$ as the stabilizer of a point in $\h^*$.

\subsection{Olshanetsky-Perelomov operators}
\label{sec-OP operators} 

Let $s\subset \GL(\h)$ be a complex reflection.
Denote by $\alpha_{s}\in \h^{*}$ an eigenvector in $\h^{*}$ of $s$ with nontrivial eigenvalue.

Let $W\subset \tO(\h)$ be a real reflection group and $\cS\subset W$ the set of reflections. Clearly, $W$
acts on $\cS$ by conjugation. 
Let $c: \cS\to \CC$ be a conjugation invariant function.

\begin{definition}\cite{OP}
The quantum Olshanetsky-Perelomov Hamiltonian attached 
to  $W$ is the second order differential operator
$$
H:=\Delta_\h-\sum_{s\in \cS}
\frac{c_s(c_s+1)(\alpha_s,\alpha_s)}{\alpha_s^2},
$$
where $\Delta_\h$ is the Laplace operator on $\h$. 
\end{definition}

Here we use the inner product on $\h^*$ which is dual to the inner product on $\h$. 

Let us assume that $\h$ is an irreducible representation of $W$ 
(i.e. $W$ is an irreducible finite Coxeter group, and $\h$ is its reflection representation.)
In this case, we can take $P_{1}(\mathbf p)={\mathbf p}^2$.

\begin{theorem}{\label{thm-OPsys}}
The system defined by the  Olshanetsky-Perelomov operator $H$ is completely integrable.
Namely, there exist differential operators $L_{j}$ on $\h$ with rational coefficients and
symbols $P_{j}$, such that $L_{j}$ are homogeneous (of degree $-d_{j}$), $L_{1}=H$, and 
$[L_{j}, L_{k}]=0$, $\forall j,k$.
\end{theorem}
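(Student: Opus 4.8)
The plan is to prove Theorem \ref{thm-OPsys} by the Dunkl operator method, exactly paralleling the approach promised for Theorem \ref{thm-qintsys}. The key tool is the family of Dunkl operators $T_\xi$ ($\xi \in \h$) attached to $W$ and the parameter function $c$; these are deformations of the directional derivatives $\partial_\xi$ by reflection terms, they commute with one another, and they are $W$-equivariant in the sense that $w T_\xi w^{-1} = T_{w\xi}$. Granting their commutativity (which the excerpt states will be proved separately, and which I would invoke as an already-established fact), the subalgebra $\CC[T_\xi : \xi \in \h] \cong S\h$ is a commutative algebra of operators acting on functions on $\h_{\reg}$, carrying a natural action of $W$.

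First I would take any $W$-invariant polynomial $P \in (S\h)^W$ and form the operator $P(T)$ obtained by substituting the Dunkl operators for the coordinate momenta; because the $T_\xi$ commute this is unambiguous, and because $P$ is $W$-invariant and the assignment $\xi \mapsto T_\xi$ is $W$-equivariant, $P(T)$ commutes with the $W$-action. Next I would restrict $P(T)$ to the subspace of $W$-invariant functions: for $f$ invariant, the reflection correction terms in $T_\xi$ conspire so that $P(T)f$ is again an honest differential operator applied to $f$ with no residual finite-difference part — more precisely, $\mathrm{Res}(P(T))$, the restriction of $P(T)$ to $\CC[\h]^W$ (or to functions on $\h_{\reg}/W$), is a genuine differential operator $L_P$ with rational coefficients whose leading symbol is $P$ itself. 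Applying this to the generators $P_1, \ldots, P_{\dim \h}$ of $(S\h)^W$ with $P_1 = \mathbf{p}^2$ produces operators $L_j := L_{P_j}$ of symbol $P_j$; since the $T_\xi$ all commute, the $P_j(T)$ commute, hence so do their restrictions, giving $[L_j, L_k] = 0$. Homogeneity of degree $-d_j$ follows because $T_\xi$ lowers degree by $1$ and $P_j$ is homogeneous of degree $d_j$.

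It remains to identify $L_1$ with the Olshanetsky-Perelomov Hamiltonian $H$. Here one computes $\sum_i T_{x_i}^2$ (sum over an orthonormal basis, i.e. the image of the Casimir $\mathbf{p}^2$) acting on an invariant function: the cross terms produce precisely $\Delta_\h$ plus a sum over reflections of terms of the shape $\frac{2 c_s (\alpha_s,\alpha_s)}{\alpha_s} \partial_{\alpha_s}$ and a scalar-potential piece; after restricting to invariants and using that $\partial_{\alpha_s} f$ vanishes appropriately along the reflection hyperplane (or rather, combining the first-order terms with their $W$-images), the first-order part cancels and one is left with $\Delta_\h - \sum_s \frac{c_s(c_s+1)(\alpha_s,\alpha_s)}{\alpha_s^2}$, which is exactly $H$ up to a possible additive constant that can be absorbed or checked to vanish. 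This is a finite direct computation of the same type as in the $\kS_n$ case.

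The main obstacle — or rather the one genuinely substantive point — is establishing that $\mathrm{Res}$ sends the commutative algebra generated by Dunkl operators to a commutative algebra of \emph{differential} operators, i.e. that the restriction of $P(T)$ to $W$-invariant functions has no residual difference terms; this is where the $W$-invariance of $P$ is used in an essential way, via the observation that the reflection operators $s$ act as the identity on invariant functions so the problematic $\frac{1}{\alpha_s}(1-s)$-type singular terms either cancel in $W$-invariant combinations or act as bona fide (rational-coefficient) differential operators. Once that structural fact is in hand, commutativity of the $L_j$ is automatic from commutativity of the $T_\xi$, and everything else (symbols, degrees, $L_1 = H$) is bookkeeping. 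I would therefore organize the proof as: (1) recall Dunkl operators and their commutativity and equivariance; (2) prove the restriction lemma $\mathrm{Res}(P(T))$ is a differential operator with symbol $P$ for $P \in (S\h)^W$; (3) set $L_j = \mathrm{Res}(P_j(T))$ and read off integrability; (4) compute $L_1 = H$.
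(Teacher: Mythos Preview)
Your steps (1)--(3) are exactly the paper's approach: form $P_j(D)$ from commuting Dunkl operators, then pass to $W$-invariants via the map $m$ (your $\mathrm{Res}$) to obtain commuting differential operators $\overline L_j$ with symbol $P_j$. That part is fine.

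The gap is in step (4). Restricting $\sum_i D_{y_i}^2$ to $W$-invariant functions does \emph{not} give $H$. If you carry out the computation you describe, on an invariant $f$ the terms $\frac{1}{\alpha_s}(1-s)$ applied to $f$ vanish, so there is no ``scalar-potential piece'' at all, and what survives is
\[
\overline H \;=\; \Delta_\h \;-\; \sum_{s\in\cS}\frac{c_s(\alpha_s,\alpha_s)}{\alpha_s}\,\partial_{\alpha_s^\vee},
\]
a first-order perturbation of the Laplacian, not the potential operator $H=\Delta_\h-\sum_s c_s(c_s+1)(\alpha_s,\alpha_s)/\alpha_s^2$. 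The first-order terms do not cancel; they are genuinely there (and regular on invariants, since $\partial_{\alpha_s^\vee}f$ vanishes on $\alpha_s=0$, but still first order). So your $L_1$ is $\overline H$, not $H$.

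The missing ingredient is a gauge transformation: one must conjugate all the $\overline L_j$ by $\delta_c(\mathbf x)=\prod_{s\in\cS}\alpha_s(\mathbf x)^{c_s}$. A short calculation (using $\Delta_\h\delta=0$, or equivalently that $\sum_{s\ne u}c_sc_u(\alpha_s,\alpha_u)/(\alpha_s\alpha_u)=0$ by an anti-invariance/degree argument) shows $\delta_c^{-1}\circ\overline H\circ\delta_c=H$. Setting $L_j=\delta_c^{-1}\circ\overline L_j\circ\delta_c$ then gives the desired commuting family with $L_1=H$, rational coefficients, and the correct symbols and homogeneity. Without this conjugation your construction proves integrability of $\overline H$ but not of $H$.
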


This theorem is obviously a generalization of Theorem \ref{thm-qintsys} about $W=\kS_{n}$.

To prove Theorem \ref{thm-OPsys}, one needs to develop the theory of Dunkl operators.

\begin{remark}
1. We will show later that the operators $L_{j}$ are unique.

2. Theorem \ref{thm-OPsys} for classical root systems 
was proved by Olshanetsky and Perelomov (see \cite{OP}),
following earlier work of Calogero, Sutherland, and Moser 
in type A. For a general Weyl group, this theorem 
(in fact, its stronger trigonometric version) was 
proved by analytic methods in the series of papers 
\cite{HO},\cite{He3},\cite{Op3},\cite{Op4}. 
A few years later, a simple algebraic proof using Dunkl
operators, which works for any finite Coxeter group, 
was found by Heckman, \cite{He1}; this is the proof we will give
below. 

For the trigonometric version, Heckman also gave an
algebraic proof in \cite{He2}, which used non-commuting 
trigonometric counterparts of Dunkl operators. 
This proof was later improved by Cherednik (\cite{Ch1}), who defined commuting
(although not Weyl group invariant) versions of Heckman's
trigonometric Dunkl operators, now called Dunkl-Cherednik
operators.  
\end{remark}

\subsection{Dunkl operators}{\label{sec:dop}}

Let $G\subset \GL(\h)$ be a finite subgroup. Let $\cS$ be the set of reflections in $G$.
For any reflection $s\in \cS$, let $\lambda_{s}$ be the eigenvalue of $s$ on $\alpha_s\in \h^*$ 
(i.e. $s\alpha_{s}=\lambda_{s}\alpha_{s}$), and let $\alpha_{s}^{\vee}\in \h$ be an 
eigenvector such that $s\alpha_{s}^{\vee}=\lambda_{s}^{-1}\alpha_{s}^\vee$.
We normalize them in such a way that $\langle\alpha_s,\alpha_s^\vee\rangle=2$.

Let $c: \cS\to \CC $ be a function invariant with respect to
conjugation. Let $a\in \h$.

The following definition was made by Dunkl for real reflection
groups, and by Dunkl and Opdam for complex reflection groups. 

\begin{definition} 
The Dunkl operator $D_a=D_a(c)$ on $\CC(\h)$ is defined by the formula
$$
D_a=D_a(c):=\partial_a -\sum_{s\in \cS}\frac{2c_s\alpha_s(a)}{(1-\lambda_{s})\alpha_s}(1-s).
$$
\end{definition}

Clearly, $D_a\in \CC G\ltimes {\mathcal D}(\h_{\reg})$, where
$\h_{\reg}$ is the set of regular points of $\h$ (i.e. not preserved by any reflection),
and $\cD(\h_{\reg})$ denotes the algebra of differential operators on 
$\h_{\reg}$.

\begin{example}
Let $G=\ZZ_2$, $\h=\CC$. Then there is only one Dunkl operator up
to scaling, and it equals to
$$
D=\partial_x -\frac{c}{x}(1-s),
$$
where the operator $s$ is given by the formula $(sf)(x)=f(-x)$.
\end{example}

\begin{remark} 
The Dunkl operators $D_a$ map the space of polynomials
$\CC[\h]$ to itself.
\end{remark}

\begin{proposition}\label{prop-maincom}

\begin{enumerate}
\item[(i)] For any $x\in \h^*$, one has
$$
[D_a,x]=(a,x)-\sum_{s\in \cS}c_s(a,\alpha_s)(x,\alpha_s^\vee)s.
$$
\item[(ii)] If $g\in G$ then $gD_ag^{-1}=D_{ga}$.
\end{enumerate}
\end{proposition}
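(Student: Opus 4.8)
The plan is to verify both statements by direct computation from the definition of $D_a$, treating $D_a$ as an element of $\CC G\ltimes \cD(\h_{\reg})$.

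For part (ii), I would simply conjugate the defining formula $D_a=\partial_a-\sum_{s\in\cS}\frac{2c_s\alpha_s(a)}{(1-\lambda_s)\alpha_s}(1-s)$ by $g\in G$. The term $\partial_a$ conjugates to $\partial_{ga}$ since $g$ acts linearly on $\h$. For the sum, conjugation sends $s\mapsto gsg^{-1}$, which runs over $\cS$ again as $s$ does; one checks that under this reindexing the coefficient $\frac{2c_s\alpha_s(a)}{(1-\lambda_s)\alpha_s}$ transforms correctly. The key facts here are that $\lambda_{gsg^{-1}}=\lambda_s$ (conjugate reflections have the same eigenvalue), that $c$ is conjugation invariant, that one may take $\alpha_{gsg^{-1}}=(g^{-1})^*\alpha_s$ as an eigenvector in $\h^*$ (so that the rational function $1/\alpha_s$ becomes $1/\alpha_{gsg^{-1}}$ after letting $g$ act on functions), and that $\alpha_s(a)=\alpha_{gsg^{-1}}(ga)$ under this choice. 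Reassembling, the conjugated sum is exactly $\sum_{s'\in\cS}\frac{2c_{s'}\alpha_{s'}(ga)}{(1-\lambda_{s'})\alpha_{s'}}(1-s')$, giving $gD_ag^{-1}=D_{ga}$.

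For part (i), I would compute $[D_a,x]=[\partial_a,x]-\sum_{s\in\cS}\frac{2c_s\alpha_s(a)}{(1-\lambda_s)\alpha_s}[(1-s),x]$ acting on $\CC(\h)$, where $x\in\h^*$ is viewed as the linear function (hence multiplication operator). The first commutator gives $[\partial_a,x]=\partial_a(x)=(a,x)$, a scalar. For the second, $[s,x]=sx-xs=(s(x))s-xs=(s(x)-x)s$ as operators, where $s(x)\in\h^*$ is the image of $x$ under $s$; since $s$ is a reflection fixing the hyperplane $\ker\alpha_s^\vee$ and $s^{-1}\alpha_s^\vee=\lambda_s^{-1}\alpha_s^\vee$ on the $\h$-side, the dual action gives $s(x)-x=-\,(\text{something})\cdot\alpha_s$; precisely, using $\langle\alpha_s,\alpha_s^\vee\rangle=2$ one gets $x-s(x)=\frac{(1-\lambda_s)}{2}\langle x,\alpha_s^\vee\rangle\,\alpha_s$ (I would double-check the constant via the rank-one formula for $s$ acting on $\h^*$). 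Hence $[(1-s),x]=-[s,x]=(x-s(x))s=\frac{1-\lambda_s}{2}\langle x,\alpha_s^\vee\rangle\,\alpha_s\, s$. Substituting back, the factor $\alpha_s$ cancels the $1/\alpha_s$, the $(1-\lambda_s)$ cancels, the $2$ cancels the $\tfrac12$, and we are left with $-\sum_{s\in\cS}c_s\,\alpha_s(a)\,\langle x,\alpha_s^\vee\rangle\, s$, which with $\alpha_s(a)=(a,\alpha_s)$ is exactly the claimed formula.

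The only genuinely delicate point — the ``main obstacle'' — is bookkeeping the normalizations: getting the rank-one reflection formula for the $\h^*$-action of $s$ right (and the resulting constant $\tfrac{1-\lambda_s}{2}$), and, in part (ii), keeping consistent the choice of eigenvectors $\alpha_{gsg^{-1}}$ and $\alpha_{gsg^{-1}}^\vee$ so that both the numerator $\alpha_s(a)$ and the rational denominator $\alpha_s$ transform compatibly when $g$ acts on $\CC(\h)$. Once the normalization $\langle\alpha_s,\alpha_s^\vee\rangle=2$ is used systematically, all spurious constants cancel and both identities follow. Everything else is a routine manipulation inside $\CC G\ltimes\cD(\h_{\reg})$.
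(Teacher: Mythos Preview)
Your proposal is correct and follows essentially the same approach as the paper: for (i) the paper simply quotes the identity $x - sx = \frac{1-\lambda_s}{2}(x,\alpha_s^\vee)\alpha_s$ and leaves the remaining cancellations implicit, and for (ii) it says the claim is ``obvious from the invariance of the function $c$''. You have just written out the details the paper omits.
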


\begin{proof} 
(i) The proof follows immediately from the identity
$$x-sx=\frac{1-\lambda_{s}}{2}(x,\alpha_s^\vee)\alpha_s.$$

(ii) The identity is obvious from the invariance of the function $c$. 
\end{proof}

The main result about Dunkl operators, on which all their applications
are based, is the following theorem.

\begin{theorem}[C. Dunkl, \cite{Du1}]\label{thm-dunkl}  
The Dunkl operators commute:
$$[D_a,D_b]=0 \text{ for any } a,b\in \h.$$
\end{theorem}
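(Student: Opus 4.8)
The plan is to compute the commutator $[D_a, D_b]$ directly and show every term cancels, organizing the computation by the structure of $D_a$. Write $D_a = \partial_a - T_a$, where $T_a = \sum_{s} \frac{2c_s\alpha_s(a)}{(1-\lambda_s)\alpha_s}(1-s)$. Expanding, $[D_a,D_b] = [\partial_a,\partial_b] - [\partial_a, T_b] - [T_a,\partial_b] + [T_a,T_b]$. The first bracket vanishes since partials commute. The plan is to show that $[\partial_a, T_b] - [\partial_b, T_a]$ (the ``linear in $c$'' part) and $[T_a,T_b]$ (the ``quadratic in $c$'' part) each vanish — or, more realistically, that the quadratic part together with the non-cancelling piece of the linear part combine to zero. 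Since $[D_a,D_b] = -[D_b,D_a]$ is automatically antisymmetric, it suffices to check that the symmetric-looking obstructions drop out.

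First I would handle the linear part: $[\partial_a, T_b]$ is obtained by letting $\partial_a$ act on the coefficient functions $\frac{2c_s\alpha_s(b)}{(1-\lambda_s)\alpha_s}$, producing terms proportional to $\frac{\alpha_s(a)\alpha_s(b)}{\alpha_s^2}(1-s)$, which are manifestly symmetric in $a$ and $b$; hence $[\partial_a,T_b] - [\partial_b,T_a] = 0$ up to the contribution where $\partial_a$ must be commuted past the reflection $s$ inside $T_b$. That latter contribution is where the genuine interaction with the quadratic term lives. Next I would expand $[T_a, T_b] = \sum_{s,t} (\cdots)$ and split the double sum according to whether $s$ and $t$ generate a rank-one parabolic (i.e. commute in the relevant sense, or $s=t$) or a rank-two parabolic. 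The rank-one / diagonal terms should be seen to cancel against the leftover linear contribution by a direct computation using the identity $x - sx = \tfrac{1-\lambda_s}{2}(x,\alpha_s^\vee)\alpha_s$ from Proposition \ref{prop-maincom}(i) and the action of $s$ on rational functions.

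The main obstacle — and the crux of the whole argument — is the rank-two part: for each pair $\{s,t\}$ lying in a common rank-two parabolic subgroup $G'$, one must show the corresponding terms of $[T_a,T_b]$ cancel among themselves after summing over all reflections in $G'$. I expect to reduce to this case by the observation that the ``bad'' terms involve only reflections within a single two-dimensional subspace, so one may assume $\dim\h = 2$. There the finite complex reflection groups in $\GL_2$ are classified, and one verifies the cancellation by a symmetry argument: the sum over reflections in $G'$ of the offending rational-function coefficients is a $G'$-invariant (or suitably equivariant) expression that is forced to vanish by homogeneity and by having the wrong poles. Concretely, I would show the putative obstruction is a rational function on $\h$, homogeneous of degree $-2$, with poles only along the reflection hyperplanes, and $G'$-equivariant in a way that no nonzero such function can satisfy; alternatively one checks directly on a hyperplane $\alpha_s = 0$ that the residue vanishes. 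This pole-and-symmetry analysis, together with the explicit rank-two check, is the technical heart of the proof; everything else is bookkeeping.
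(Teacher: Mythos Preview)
Your approach is a genuine alternative to the paper's, and it can be made to work, but it is considerably more laborious. The paper avoids the direct expansion entirely. Instead it computes, for an arbitrary $x\in\h^*$, the triple commutator $[[D_a,D_b],x]$ via the Jacobi identity:
\[
[[D_a,D_b],x]=[[D_a,x],D_b]-[[D_b,x],D_a].
\]
Using the explicit formula for $[D_a,x]$ from Proposition~\ref{prop-maincom}(i), each side becomes a sum over single reflections $s$ with coefficient proportional to $(a,\alpha_s)(b,\alpha_s)(x,\alpha_s^\vee)$, which is symmetric in $a$ and $b$; hence the triple commutator vanishes. Since $[D_a,D_b]$ then commutes with every multiplication operator and annihilates constants, it is zero as an operator on $\CC(\h)$, and hence zero in $\CC G\ltimes\cD(\h_{\reg})$ by faithfulness.

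The gain of the paper's route is that it never confronts the quadratic-in-$c$ double sum at all: the rank-two reduction, the pole analysis, and the classification of rank-two complex reflection groups are all bypassed. Your direct-expansion plan is the classical strategy (and essentially how Dunkl originally argued in the real case), but for complex reflection groups the ``symmetry/homogeneity forces vanishing'' step in rank two is delicate and, as you say, may devolve into case-checking. The paper's trick trades all of that for a one-line symmetry observation after a single application of the commutator formula with a coordinate function.
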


\begin{proof} 
Let $x\in \h^*$. We have
$$
[[D_a,D_b],x]=[[D_a,x],D_b]-[[D_b,x],D_a].
$$
Now, using Proposition \ref{prop-maincom}, we obtain:
\begin{eqnarray*}
[[D_a,x],D_b]&=&-[\sum_{s\in \cS} c_s (a,\alpha_s)(x,\alpha_s^\vee)s,D_b]\\
&=&
-\sum_{s\in \cS}c_s (a,\alpha_s)(x,\alpha_s^\vee)(b,\alpha_s)s
D_{\alpha_s^\vee}\cdot \frac{1-\lambda_{s}^{-1}}{2}.
\end{eqnarray*}
Since $a$ and $b$ occur symmetrically, we obtain that $[[D_a,D_b],x]=0$.
This means that for any $f\in \CC[\h]$, $[D_a,D_b]f=f[D_a,D_b]1=0$.
So for $f,g\in \CC[\h]$, $g\cdot [D_{a}, D_{b}]\dfrac{f}{g}=[D_{a}, D_{b}]f=0$.
Thus $[D_{a}, D_{b}]\dfrac{f}{g}=0$ which implies $[D_{a}, D_{b}]=0$ in the algebra 
$\CC G\ltimes {\mathcal D}(\h_{\reg})$ (since this algebra acts faithfully
on $\CC(\h)$). 
\end{proof}

\subsection{Proof of Theorem \ref{thm-OPsys}} 

For any 
element $B\in \CC  W\ltimes {\mathcal D}(\h_{\reg})$, define
$m(B)$ to be the differential operator 
$\CC (\h)^{W}\to \CC (\h),$ defined by $B$.
That is, if $B=\sum_{g\in W}B_{g}g$, $B_{g}\in {\mathcal D}(\h_{\reg})$,
then $m(B)=\sum_{g\in W}B_{g}$.
It is clear that if $B$ is $W$-invariant, then 
$\forall A\in \CC W\ltimes {\mathcal D}(\h_{\reg})$, 
$$
m(AB)=m(A)m(B).
$$

\begin{proposition}[\cite{Du1}, \cite{He1}]\label{Hec}
Let $\{y_{1}, \ldots, y_{r}\}$ be an orthonormal basis of $\h$.
Then we have $$m(\sum_{i=1}^{r}D_{y_{i}}^{2})=\overline{H},$$ 
where
$\overline{H}=\Delta_{\h}-\sum_{s\in \cS}\dfrac{c_{s}(\alpha_{s}, \alpha_{s})}{\alpha_{s}}\partial_{\alpha_{s}^{\vee}}.$
\end{proposition}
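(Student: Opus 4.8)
The plan is to compute $m(\sum_i D_{y_i}^2)$ directly by expanding each Dunkl operator and carefully tracking which terms survive after applying $m$. Write $D_{y_i} = \partial_{y_i} - \sum_{s} \frac{2c_s \alpha_s(y_i)}{(1-\lambda_s)\alpha_s}(1-s) =: \partial_{y_i} - T_i$, so that $D_{y_i}^2 = \partial_{y_i}^2 - \partial_{y_i} T_i - T_i \partial_{y_i} + T_i^2$. Summing over an orthonormal basis $\{y_i\}$, the leading term $\sum_i \partial_{y_i}^2$ is exactly $\Delta_\h$, which is $W$-invariant and passes through $m$ unchanged. The strategy for the remaining three groups of terms is to observe that they each contain a factor of the form $(1-s)$ acting on the far right (or can be rearranged into such a form), and that $m$ kills any operator of the shape $B\cdot(1-s)$ when $B$ itself is suitably placed — more precisely, $m(B(1-s)) = m(B) - m(Bs)$, and when applied to $W$-invariant functions the terms must be reorganized so the differential operator parts combine. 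The cleanest way to organize this is to first argue that $E := \sum_i D_{y_i}^2$ is $W$-invariant (which follows from Proposition \ref{prop-maincom}(ii): conjugation by $g\in W$ permutes the orthonormal basis, hence permutes the summands), so $m(E)$ is a well-defined differential operator on $\CC(\h)^W$.

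Next I would extract the first-order part. The cross terms $-\sum_i(\partial_{y_i}T_i + T_i\partial_{y_i})$ are the only source of first-order differential operators in $m(E)$ (the $T_i^2$ terms contribute only zeroth-order-in-$\partial$ group-algebra elements after the $(1-s)$'s are expanded, modulo lower-order corrections from commuting $\partial$'s past the rational functions). Applying $m$ replaces each $s$ by $1$, so from $-\sum_i T_i\partial_{y_i}$ one gets $-\sum_i \sum_s \frac{2c_s\alpha_s(y_i)}{(1-\lambda_s)\alpha_s}(1-1)\partial_{y_i}$ — but this is not quite right because $m$ does not simply set $s=1$ inside an operator product; rather $m(Bs) = B$ for $B\in\cD(\h_{\reg})$. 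So I would expand $T_i = \sum_s \frac{2c_s\alpha_s(y_i)}{(1-\lambda_s)\alpha_s} - \sum_s \frac{2c_s\alpha_s(y_i)}{(1-\lambda_s)\alpha_s}s$ and handle the two pieces separately. The key identity I will need is $\sum_i \alpha_s(y_i)\,y_i = \alpha_s^{\sharp}$, the vector dual to $\alpha_s$ under the inner product, which combined with the normalization $\langle\alpha_s,\alpha_s^\vee\rangle = 2$ and the relation between $\alpha_s^\vee$ and $\alpha_s^\sharp$ (for real reflections $\lambda_s = -1$, so $\alpha_s^\vee = \alpha_s^\sharp \cdot \frac{2}{(\alpha_s,\alpha_s)}$, i.e. $\alpha_s^\sharp = \frac{(\alpha_s,\alpha_s)}{2}\alpha_s^\vee$) lets me convert $\sum_i \frac{\alpha_s(y_i)}{\alpha_s}\partial_{y_i}$ into $\frac{1}{\alpha_s}\partial_{\alpha_s^\sharp} = \frac{(\alpha_s,\alpha_s)}{2\alpha_s}\partial_{\alpha_s^\vee}$. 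Tracking the coefficient $\frac{2c_s}{1-\lambda_s} = \frac{2c_s}{2} = c_s$ and accounting for the factor of $2$ from the two cross terms $\partial_{y_i}T_i + T_i\partial_{y_i}$ and the cancellation between $s$-free and $s$-containing pieces, I expect to land precisely on $-\sum_s \frac{c_s(\alpha_s,\alpha_s)}{\alpha_s}\partial_{\alpha_s^\vee}$.

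The main obstacle — and the part requiring genuine care rather than bookkeeping — is showing that the $T_i^2$ terms and the derivative-hitting-the-rational-function corrections contribute \emph{nothing} to $m(E)$: that is, $m(\sum_i T_i^2 + \text{(commutator corrections from }\partial_{y_i}T_i\text{)}) = 0$. These are the zeroth-order-in-$\partial$ terms, living in $\CC W \ltimes \cO(\h_{\reg})$. One standard route is to note that $E$ applied to the constant function $1$ gives $E\cdot 1 = H\cdot 1 = 0$ — wait, more carefully: since $D_{y_i}$ preserves $\CC[\h]$ and the statement is about the action on $\CC(\h)^W$, I would instead use that the full Dunkl Laplacian's zeroth-order part, after symmetrization, vanishes because of the commutativity of Dunkl operators (Theorem \ref{thm-dunkl}) together with the fact that $\sum_i y_i \otimes y_i$ is $W$-invariant, so the would-be zeroth-order term is forced to be a $W$-invariant element of $\CC W\ltimes\cO(\h_{\reg})$ that annihilates $1$; checking it is actually zero reduces to a residue/pole-cancellation computation: the apparent simple poles along each reflection hyperplane $\alpha_s = 0$ must cancel between the $\partial_{y_i}(\alpha_s^{-1})$ corrections and the $T_i^2$ diagonal-in-$s$ terms, and the potential double poles from $T_i^2$ (the $s\cdot s' $ terms with $s = s'$, noting $s^2 = 1$) must cancel as well. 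This pole-cancellation is exactly the computation that underlies why the Dunkl operators are first-order "up to $\CC W$" and is the technical heart of the proposition; everything else is substitution and the linear-algebra identity for $\sum_i\alpha_s(y_i)y_i$.
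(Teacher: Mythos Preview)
You have correctly stated the crucial fact---that $m(B(1-s))=0$ for any $B$---but you have not applied it where it matters most, and as a result you have misidentified the ``technical heart'' of the argument. In your decomposition $D_{y_i}^2=\partial_{y_i}^2-\partial_{y_i}T_i-T_i\partial_{y_i}+T_i^2$, both $\partial_{y_i}T_i$ and $T_i^2$ are sums of terms that end in a factor $(1-s)$ on the right (since $T_i$ itself is such a sum). Hence $m(\partial_{y_i}T_i)=m(T_i^2)=0$ immediately, with no pole-cancellation argument, no appeal to Dunkl commutativity, and no residue analysis. Your ``main obstacle'' evaporates in one line. This is exactly the paper's first step, phrased there as $m(D_y^2)=m(D_y\partial_y)$.

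What remains is only $m(T_i\partial_{y_i})$. Here the factor $(1-s)$ sits to the \emph{left} of $\partial_{y_i}$, so one must commute it through. The paper uses the identity
\[
(1-s)\partial_y=\partial_y(1-s)+\alpha_s(y)\,\partial_{\alpha_s^\vee}\,s,
\]
which follows from $s\partial_y s^{-1}=\partial_{sy}$ and $y-sy=\alpha_s(y)\alpha_s^\vee$. The first summand is again killed by $m$; the second gives a differential operator with an explicit $s$ on the right, and $m$ simply drops that $s$. The result is
\[
m(D_{y_i}^2)=\partial_{y_i}^2-\sum_{s\in\cS}\frac{c_s\,\alpha_s(y_i)^2}{\alpha_s}\,\partial_{\alpha_s^\vee}.
\]
Summing over the orthonormal basis and using $\sum_i\alpha_s(y_i)^2=(\alpha_s,\alpha_s)$ finishes the proof. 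Note that one never needs $\sum_i\alpha_s(y_i)y_i=\alpha_s^\sharp$ or the relation between $\alpha_s^\sharp$ and $\alpha_s^\vee$: the squared coefficient $\alpha_s(y_i)^2$ arises directly, one factor from $T_i$ and one from the commutation identity above.
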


\begin{proof} For any $y\in \h$,
we have $m(D_y^2)=m(D_y\partial_y)$. A simple computation
shows that
\begin{eqnarray*}
D_y\partial_y&=&
\partial_y^2-\sum_{s\in \cS}\frac{c_s\alpha_s(y)}{\alpha_s}(1-s)\partial_y\\
&=&
\partial_y^2-
\sum_{s\in \cS}\frac{c_s\alpha_s(y)}{\alpha_s}(\partial_y(1-s)+\alpha_s(y)
\partial_{\alpha_s^\vee}s).
\end{eqnarray*}
This means that
$$
m(D_y^2)=\partial_y^2-\sum_{s\in \cS}\frac{c_s\alpha_s(y)^2}{\alpha_s}\partial_{\alpha_s^\vee}.
$$
So we get
$$
m(\sum_{i=1}^rD_{y_i}^2)
=\sum_{i=1}^r\partial_{y_i}^2-
\sum_{s\in \cS}c_s\sum_{i=1}^r\frac{\alpha_s(y_{i})^2}{\alpha_s}\partial_{\alpha_s^\vee}
=\overline{H},
$$ 
since $\sum_{i=1}^r\alpha_s(y_i)^2= (\alpha_s,\alpha_s)$.
\end{proof} 
Recall that by the Chevalley-Shepard-Todd theorem,
the algebra $(S\h)^W$ is free. Let
$P_1={\mathbf p}^2,P_2,\ldots,P_r$ be homogeneous generators of 
$(S\h)^W$.

\begin{corollary}
The differential operators $\overline{L}_{j}=m(P_{j}(D_{y_{1}}, \ldots, D_{y_{r}}))$ are
pairwise commutative, have symbols $P_{j}$, homogeneity degree $-d_{j}$, 
and $\overline{L}_{1}=\overline{H}$.
\end{corollary}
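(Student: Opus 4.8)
The plan is to deduce all four assertions from the commutativity of the Dunkl operators (Theorem \ref{thm-dunkl}), the behavior of the map $m$ on $W$-invariant elements, and Proposition \ref{Hec}. The key observation is that since $P_1,\ldots,P_r$ generate $(S\h)^W$ and each $D_{y_i}$ transforms in the reflection representation under conjugation by $W$ (Proposition \ref{prop-maincom}(ii)), the element $P_j(D_{y_1},\ldots,D_{y_r}) \in \CC W \ltimes \cD(\h_{\reg})$ is $W$-invariant; this is where the Chevalley-Shepard-Todd theorem enters, guaranteeing that such generators exist. Because the $D_{y_i}$ commute with one another, the expression $P_j(D_{y_1},\ldots,D_{y_r})$ is unambiguous, and moreover $[P_j(D_\bullet),P_k(D_\bullet)]=0$ in $\CC W \ltimes \cD(\h_{\reg})$ since both are polynomials in the commuting operators $D_{y_i}$.

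First I would establish commutativity of the $\overline{L}_j$. Since $P_j(D_\bullet)$ is $W$-invariant, the multiplicativity property $m(AB)=m(A)m(B)$ for $B$ $W$-invariant (stated just before Proposition \ref{Hec}) applies with $A = P_k(D_\bullet)$ and $B = P_j(D_\bullet)$, giving $m(P_k(D_\bullet)P_j(D_\bullet)) = m(P_k(D_\bullet))m(P_j(D_\bullet)) = \overline{L}_k \overline{L}_j$, and symmetrically with $j,k$ swapped. Since $P_j(D_\bullet)P_k(D_\bullet) = P_k(D_\bullet)P_j(D_\bullet)$ in $\CC W \ltimes \cD(\h_{\reg})$, applying $m$ yields $\overline{L}_j \overline{L}_k = \overline{L}_k \overline{L}_j$ as differential operators $\CC(\h)^W \to \CC(\h)$ (these are honest differential operators on $\h_{\reg}$, acting on the faithful module $\CC(\h)^W$). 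The statement $\overline{L}_1 = \overline{H}$ is immediate from Proposition \ref{Hec}, since $P_1 = {\mathbf p}^2 = \sum_i y_i^2$ in the chosen orthonormal basis gives $P_1(D_\bullet) = \sum_i D_{y_i}^2$, hence $\overline{L}_1 = m(\sum_i D_{y_i}^2) = \overline{H}$.

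Next I would verify the symbol and homogeneity claims. For the symbol: each $D_a = \partial_a + (\text{lower order, order } 0 \text{ but with a } W\text{-part})$; more precisely the principal symbol of $D_a$, viewed in $\cD(\h_{\reg})$ after applying $m$, is $\partial_a$. Since taking principal symbols is multiplicative and $m$ respects the leading term (the subtracted terms in $D_a$ lower the differential order), the principal symbol of $\overline{L}_j = m(P_j(D_\bullet))$ is $P_j(\partial_{y_1},\ldots,\partial_{y_r})$, i.e.\ $P_j$. For homogeneity: assign $\h$ degree $1$ and $\h^*$ degree $-1$, so $\partial_a$ has degree $-1$ and, by Proposition \ref{prop-maincom}(i) together with the explicit formula, each $D_a$ is homogeneous of degree $-1$ (the correction term $\frac{2c_s\alpha_s(a)}{(1-\lambda_s)\alpha_s}(1-s)$ has degree $-1$ as well since $\alpha_s^{-1}$ contributes $-1$ and $s$ is degree $0$). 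Since $P_j$ is homogeneous of degree $d_j$ in the $y_i$, the operator $P_j(D_\bullet)$ is homogeneous of degree $-d_j$, and $m$ preserves this grading, so $\overline{L}_j$ is homogeneous of degree $-d_j$.

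The main obstacle is not any single hard computation but rather being careful that $m$ is genuinely multiplicative on the relevant products — this requires that $P_j(D_\bullet)$ be exactly $W$-invariant, which in turn uses both Proposition \ref{prop-maincom}(ii) and the fact that the $D_{y_i}$ commute so that "$P_j(D_{y_1},\ldots,D_{y_r})$" makes sense independent of ordering and transforms under $W$ exactly as the polynomial $P_j$ transforms under the action on the $y_i$. Once this is in place, everything else is formal. Finally, restricting $\overline{L}_j$ to $\CC[\h]^W$ and conjugating appropriately (to replace $\overline{H}$ by the genuine Olshanetsky-Perelomov Hamiltonian $H$) will produce the operators $L_j$ of Theorem \ref{thm-OPsys}; I would remark that the corollary as stated already gives the integrable family, and the passage from $\overline{H}$ to $H$ is the standard gauge transformation by $\prod_{s}\alpha_s^{-c_s}$, which I expect to be carried out in the surrounding text.
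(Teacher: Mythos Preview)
Your proposal is correct and follows essentially the same approach as the paper: well-definedness from commutativity of Dunkl operators, pairwise commutativity from the multiplicativity $m(AB)=m(A)m(B)$ for $W$-invariant $B$, the identification $\overline{L}_1=\overline{H}$ from Proposition~\ref{Hec}, and the symbol/homogeneity claims treated as routine. The paper's proof is terser (it dismisses the symbol and homogeneity as ``clear''), but your expanded justification of those points and your closing remark anticipating the gauge transformation by $\delta_c$ are accurate and match exactly what the paper does next in Proposition~\ref{prop-delta}.
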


\begin{proof}
Since Dunkl operators commute, the operators $L_j$ are well
defined. Since $m(AB)=m(A)m(B)$ when $B$ is invariant, 
the operators $L_j$ are pairwise commutative. The rest is clear. 
\end{proof}

Now to prove Theorem \ref{thm-OPsys}, we will show that the operators $H$ and $\overline{H}$
are conjugate to each other by a certain function; this will complete the proof.

\begin{proposition}{\label{prop-delta}} 
Let $\delta_c({\mathbf x}):=\prod_{s\in \cS}\alpha_s({\mathbf x})^{c_s}$.
Then we have
$$
\delta_c^{-1}\circ \overline{H}\circ \delta_c=H.
$$
\end{proposition}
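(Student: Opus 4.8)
The plan is to compute the conjugation $\delta_c^{-1}\circ\overline{H}\circ\delta_c$ directly, exploiting that conjugation by a function is an automorphism of $\cD(\h_{\reg})$ acting on a vector field $\partial_v$ by $\delta_c^{-1}\partial_v\delta_c=\partial_v+\partial_v(\log\delta_c)$, and on a function by the identity. Since $\overline{H}=\Delta_\h-\sum_{s}\frac{c_s(\alpha_s,\alpha_s)}{\alpha_s}\partial_{\alpha_s^\vee}$, I would split the work into the second-order part $\Delta_\h=\sum_i\partial_{y_i}^2$ and the first-order part $-\sum_s\frac{c_s(\alpha_s,\alpha_s)}{\alpha_s}\partial_{\alpha_s^\vee}$.

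First I would handle $\Delta_\h$. Writing $\psi:=\log\delta_c=\sum_s c_s\log\alpha_s$, a standard computation gives
$$
\delta_c^{-1}\circ\left(\sum_i\partial_{y_i}^2\right)\circ\delta_c=\sum_i\partial_{y_i}^2+2\sum_i(\partial_{y_i}\psi)\partial_{y_i}+\sum_i\big((\partial_{y_i}\psi)^2+\partial_{y_i}^2\psi\big).
$$
Here $\sum_i(\partial_{y_i}\psi)\partial_{y_i}$ is the gradient vector field of $\psi$ paired via the metric; since $\partial_{y_i}\log\alpha_s=\alpha_s(y_i)/\alpha_s$ and $\sum_i\alpha_s(y_i)y_i$ is the metric dual of $\alpha_s$, which is $\frac{(\alpha_s,\alpha_s)}{2}\alpha_s^\vee$ by the normalization $\langle\alpha_s,\alpha_s^\vee\rangle=2$, the first-order term becomes $\sum_s c_s\frac{(\alpha_s,\alpha_s)}{\alpha_s}\partial_{\alpha_s^\vee}$. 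This is exactly the negative of the first-order part of $\overline{H}$, so after adding in that part these terms cancel.

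What remains is the zeroth-order term $\sum_i\big((\partial_{y_i}\psi)^2+\partial_{y_i}^2\psi\big)=(\nabla\psi,\nabla\psi)+\Delta_\h\psi$, and I must show this equals $-\sum_s\frac{c_s(c_s+1)(\alpha_s,\alpha_s)}{\alpha_s^2}$, i.e. the potential of $H$. Here $\Delta_\h\psi=\sum_s c_s\Delta_\h\log\alpha_s=-\sum_s c_s\frac{(\alpha_s,\alpha_s)}{\alpha_s^2}$ (each $\log\alpha_s$ is harmonic up to the rank-one correction, giving this formula), while $(\nabla\psi,\nabla\psi)=\sum_{s,t}c_sc_t\frac{(\alpha_s,\alpha_t)}{\alpha_s\alpha_t}$. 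The diagonal terms $s=t$ contribute $-\sum_s c_s^2\frac{(\alpha_s,\alpha_s)}{\alpha_s^2}$ (note $(\alpha_s,\alpha_s)<0$ would be wrong — rather $(\alpha_s,\alpha_s)>0$ and the sign works out since $(\nabla\psi,\nabla\psi)$ has a $+$ sign and we need the identity $(\alpha_s,\alpha_s)/\alpha_s^2$ with the stated sign; this is a bookkeeping check), and together with $\Delta_\h\psi$ they assemble into $-\sum_s c_s(c_s+1)\frac{(\alpha_s,\alpha_s)}{\alpha_s^2}$. The main obstacle is therefore the cross-terms: I must show $\sum_{s\neq t}c_sc_t\frac{(\alpha_s,\alpha_t)}{\alpha_s\alpha_t}=0$. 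This is the classical identity that makes the Calogero-Moser potential work, and it is proved by grouping the roots $s,t$ lying in a common rank-two (dihedral) parabolic subgroup, where one uses that the configuration of reflection hyperplanes of a dihedral group together with the $W$-invariance of $c$ forces the sum of $\frac{(\alpha_s,\alpha_t)}{\alpha_s\alpha_t}$ over such a subsystem to vanish (e.g. by a residue/partial-fractions argument in the two-dimensional plane spanned by $\alpha_s^\vee,\alpha_t^\vee$, or by the known trigonometric identity $\sum\cot\cdot\cot=$ const for the angles of a regular polygon). I would carry out this cancellation explicitly for a dihedral system and then sum over all rank-two parabolics, completing the proof.
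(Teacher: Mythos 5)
Your overall strategy --- conjugate, split into orders, reduce to the cross-term identity $\sum_{s\neq t}c_sc_t\frac{(\alpha_s,\alpha_t)}{\alpha_s\alpha_t}=0$, and prove that identity by localizing to rank-two parabolic subgroups --- is a legitimate alternative route, and the dihedral reduction for the last step is genuinely different from what the paper does. The paper observes that the cross-term sum $\Sigma:=\sum_{s\neq t}c_sc_t\frac{(\alpha_s,\alpha_t)}{\alpha_s\alpha_t}$ is $W$-invariant, so $\prod_{s\in\cS}\alpha_s\cdot\Sigma$ is a regular anti-invariant polynomial of degree $|\cS|-2<|\cS|$, hence zero, since the lowest-degree nonzero anti-invariant has degree $|\cS|$. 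Your approach is more elementary and geometrically transparent (and does check out: one verifies the identity in each dihedral subsystem, e.g.\ by writing $\alpha_k=\alpha_1+\alpha_2$-type relations and clearing denominators), but it requires the observation that each unordered pair $\{s,t\}$ lies in a unique rank-two parabolic, plus a small case analysis. The paper's argument is a two-line global one with no cases, at the cost of invoking the degree bound for anti-invariants.

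There is, however, a computational gap, which you half-notice yourself in the confused parenthetical about signs. You conjugate $\overline{H}$ by $\delta_c^{-1}$, and $\overline{H}$ has a first-order part $V=-\sum_{s\in\cS}\frac{c_s(\alpha_s,\alpha_s)}{\alpha_s}\partial_{\alpha_s^\vee}$. Conjugating a first-order operator by a function produces a scalar term: $\delta_c^{-1}V\delta_c=V+V(\psi)$. You correctly note that as a vector field $V$ cancels the gradient term $2(\nabla\psi)\cdot\nabla$ coming from $\delta_c^{-1}\Delta_\h\delta_c$, but you drop the scalar $V(\psi)$. Since $V=-2(\nabla\psi)\cdot\nabla$, we have $V(\psi)=-2(\nabla\psi,\nabla\psi)$, and the full zeroth-order remainder is $\Delta_\h\psi-(\nabla\psi,\nabla\psi)$, not $\Delta_\h\psi+(\nabla\psi,\nabla\psi)$. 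With the correct sign, the diagonal of $-(\nabla\psi,\nabla\psi)$ is $-\sum_sc_s^2\frac{(\alpha_s,\alpha_s)}{\alpha_s^2}$ (which is what you wanted, so no sign gymnastics are needed), the off-diagonal is $-\Sigma$, and everything assembles into $-\sum_sc_s(c_s+1)\frac{(\alpha_s,\alpha_s)}{\alpha_s^2}-\Sigma$ as desired. (The paper sidesteps this bookkeeping entirely by conjugating $H$, which has no first-order part, by $\delta_c$; this is the cleaner direction to run the computation.)
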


\begin{remark}
The function $\delta_c({\mathbf x})$ is not rational. It is a multivalued analytic function. 
Nevertheless, it is easy to see that for any differential operator $L$ with rational coefficients, 
$\delta_c^{-1}\circ L\circ \delta_c$ also has rational coefficients.
\end{remark}

\begin{proof}[Proof of Proposition \ref{prop-delta}]
We have
$$
\sum_{i=1}^{r} \partial_{y_i}(\log\delta_c)\partial_{y_i}=
\sum_{s\in \cS}\frac{c_s(\alpha_s,\alpha_s)}{2\alpha_s}\partial_{\alpha_s^\vee}.
$$
Therefore, we have
$$
\delta_c \circ H\circ
\delta_c^{-1}=\Delta_\h-
\sum_{s\in \cS}\frac{c_s(\alpha_s,\alpha_s)}{\alpha_s}\partial_{\alpha_s^\vee}
+U,
$$
where
$$
U=\delta_c(\Delta_\h\delta_c^{-1})-
\sum_{s\in \cS}\frac{c_s(c_s+1)(\alpha_s,\alpha_s)}{\alpha_s^2}.
$$
Let us compute $U$. We have
$$
\delta_c(\Delta_\h \delta_c^{-1})=
\sum_{s\in \cS}\frac{c_s(c_s+1)(\alpha_s,\alpha_s)}{\alpha_s^2}+
\sum_{s\ne u\in \cS}\frac{c_sc_u(\alpha_s,\alpha_u)}{\alpha_s\alpha_u}.
$$
We claim that the last sum $\Sigma$ is actually zero. Indeed, this sum is
invariant under the Coxeter group, so $\prod_{s\in \cS}\alpha_s\cdot \Sigma$
is a regular anti-invariant function of degree $|\cS|-2$. But the smallest
degree of a nonzero anti-invariant is $|\cS|$, so
$\Sigma=0$, $U=0$, and we are done 
(Proposition \ref{prop-delta} and Theorem \ref{thm-OPsys} are proved).
\end{proof}

\begin{remark}
A similar method works for any complex reflection group $G$. Namely, the operators 
$L_{i}=m(P_{i}(D_{y_{1}}, \ldots, D_{y_{r}}))$ form a quantum integrable system.
However, if $G$ is not a real reflection group, this system does not have a quadratic 
Hamiltonian in momentum variables (so it does not have a physical meaning).  
\end{remark}

\subsection{Uniqueness of the operators $L_{j}$}

\begin{proposition} The operators $L_{j}$ are unique. 
\end{proposition}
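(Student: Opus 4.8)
The plan is to show that the commuting family $\{L_j\}$ is rigid: any operator $L$ with the prescribed symbol and homogeneity that commutes with $H$ (equivalently, lies in the centralizer of the whole family) is forced to coincide with the one produced by Dunkl operators. I would work on the conjugated side, replacing $H$ by $\overline H$ via Proposition \ref{prop-delta}, so that the Dunkl construction gives honest operators $\overline L_j = m(P_j(D_{y_1},\dots,D_{y_r}))$ with symbol $P_j$ and degree $-d_j$, and the question becomes uniqueness of a commuting extension of $\overline H$ with those symbols.

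The key step is an analysis of the ``leading term'' obstruction. Suppose $L$ and $\overline L_j$ both have symbol $P_j$ and homogeneity degree $-d_j$; then $M := L - \overline L_j$ is a $W$-invariant differential operator on $\h_{\reg}$, homogeneous of degree $-d_j$, of order strictly less than $d_j$, and $[M,\overline H]$ has order at most $\mathrm{ord}(M)+1 \le d_j$. The idea is to look at the top-order symbol of $M$ as a polynomial on $T^*\h_{\reg}$: since $M$ commutes with $\overline H$, its top symbol Poisson-commutes with the symbol $\mathbf p^2$ of $\overline H$ (the lower-order corrections in $\overline H$ do not affect the leading Poisson bracket). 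A homogeneous polynomial on $T^*\h$ Poisson-commuting with $\mathbf p^2$ and of the right bidegree must, after restricting to the generic symplectic leaf, be a polynomial in $\mathbf p^2$ — and counting homogeneity forces the top symbol of $M$ to vanish. Iterating this order by order, $M=0$.

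The main obstacle I anticipate is making the ``Poisson-commutes with $\mathbf p^2$ forces polynomial in $\mathbf p^2$'' step precise in the presence of rational (pole) coefficients along the reflection hyperplanes and of the $W$-invariance constraint: one needs that the centralizer of $\mathbf p^2$ in the appropriate graded algebra of symbols is generated by the $P_i$'s together with functions of $\mathbf x$ alone, and then that $W$-invariance plus negative homogeneity degree $-d_j < 0$ rules out any correction of lower order. Equivalently, one can phrase this via the fact that $\mathbb C(\h)[\mathbf p_1,\dots,\mathbf p_r]$ is a free module over its subalgebra generated by $(S\h)^W$ on the symbol level, so the leading symbol of $M$, being in the centralizer of the free generator $P_1$ and of strictly smaller order, must be zero.

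An alternative cleaner route, which I would mention as a fallback, is to pass through the Dunkl embedding itself: restrict attention to the polynomial representation $\CC[\h]$, where the $\overline L_j$ act as $W$-invariant operators whose joint eigenvalues on the spherical parts separate, and use that a commuting operator with the stated symbol is determined by its action on $\CC[\h]^W$, which in turn is determined by the symbol via the triangularity of Dunkl operators with respect to the degree filtration. This reduces uniqueness to the statement that the map $m$ composed with $P_j \mapsto P_j(D_{y_i})$ is the unique degree-preserving, symbol-preserving lift, which follows from the faithfulness of $\CC G \ltimes \mathcal D(\h_{\reg})$ on $\CC(\h)$ already invoked in the proof of Theorem \ref{thm-dunkl}.
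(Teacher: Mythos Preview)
Your setup coincides with the paper's: form $M=L_j-L_j'$, observe it has order $m<d_j$ and homogeneity $-d_j$, and pass to the principal symbol $M_0$, which Poisson-commutes with $\mathbf p^2$. The gap is in the next step. Your claim that ``Poisson-commuting with $\mathbf p^2$ forces polynomial in $\mathbf p^2$'' (or, in the refined version, that the centralizer of $\mathbf p^2$ is generated by the $P_i$'s together with functions of $\mathbf x$ alone) is false already for polynomial symbols: the angular momentum $x_1p_2-x_2p_1$ Poisson-commutes with $p_1^2+p_2^2$, and its square is $W$-invariant for $W=\kS_2$. So the structural description of the centralizer you are aiming for does not hold, and no amount of $W$-invariance or bidegree bookkeeping will by itself force $M_0=0$.

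What the paper actually uses is a different and much sharper fact (Lemma~\ref{lem:Raj}): if $\psi(\mathbf x,\mathbf p)$ is \emph{rational} in $\mathbf x$, polynomial in $\mathbf p$, and $\{\psi,\mathbf p^2\}=0$, then $\psi$ is already \emph{polynomial} in $\mathbf x$. The proof is a one-line Hamiltonian-flow argument: along the geodesic flow $\mathbf x(t)=\mathbf x_0+t\,\d f(\mathbf p)$ the function $\psi$ is constant, so its pole divisor would have to contain all tangent directions $\d f(\mathbf p)$, which is impossible since these span $\h$. Once you know $M_0$ is polynomial in $\mathbf x$, the homogeneity constraint bites: the $\mathbf x$-degree of the coefficients is $m-d_j<0$, so $M_0=0$, contradicting $M\neq 0$. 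That is the missing idea; your alternative route through the polynomial representation does not supply it either, since faithfulness of the Dunkl embedding gives injectivity, not the uniqueness of a commuting lift with prescribed symbol.
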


\begin{proof}
Assume that we have two choices for 
$L_{j}$: $L_{j}$ and $L'_{j}$. Denote $L_{j}-L'_{j}$ by $M$.

Assume $M\neq 0$. We have
\begin{enumerate}
\item[(i)] $M$ is a differential operator on $\h$ with rational
coefficients, of order smaller than $d_j$ and homogeneity degree $-d_j$;
\label{cond-1}
\item[(ii)] $[M, H]=0$.\label{cond-2}
\end{enumerate}

Let $M_{0}$ be the symbol of $M$. Then $M_{0}$ is a polynomial of
${\mathbf p}\in \h^{*}$ with coefficients in 
$\CC(\h)$. We have, from (ii), 
$$
\{M_{0}, {\mathbf p}^{2}\}=0, \quad\forall {\mathbf p}\in \h^{*},
$$
and from (i) we see that the coefficients of $M_{0}$ are not polynomial (as they have 
negative degree).

However, we have the following lemma.

\begin{lemma}{\label{lem:Raj}}
Let $\h$ be a finite dimensional vector space. Let 
$\psi: ({\mathbf x},{\mathbf p})\mapsto \psi({\mathbf x},{\mathbf p})$ be a rational 
function on $\h\oplus\h^{*}$ which is a polynomial in ${\mathbf p}\in \h^{*}$.
Let $f: \h^{*}\to \CC$ be a polynomial such that the
differentials $\d f({\mathbf p})$ for ${\mathbf p}\in \h^*$ span $\h$ 
(e.g., $f({\mathbf p})={\mathbf p}^{2}$). Suppose that the Poisson
bracket of $f$ and $\psi$ vanishes: $\{\psi,f\}=0$. 
Then $\psi$ is a polynomial.
\end{lemma}

\begin{proof}(R. Raj)
Let $\kZ\subset \h$ be the pole divisor of $\psi$. 
Let ${\mathbf x}_{0}\in \h$ be a generic point in $\kZ$. 
Then $\psi^{-1}$ is regular 
and vanishes at $({\mathbf x}_{0}, {\mathbf p})$ for generic 
${\mathbf p}\in \h^*$. Also from $\{\psi^{-1}, f\}=0$, we have
$\psi^{-1}$ vanishes along the entire flowline of the Hamiltonian flow defined by $f$ and starting at
${\mathbf x}_{0}$. This flowline is defined by the formula 
$$
{\mathbf x}(t)={\mathbf x}_{0}+t\d f({\mathbf p}), \quad {\mathbf
p}(t)={\mathbf p},
$$
and it must be contained in the pole divisor of $\psi$ near
${\mathbf x}_{0}$. This implies that $\d f({\mathbf p})$
must be in $T_{{\mathbf x}_{0}}\kZ$ for almost every, hence for every
${\mathbf p}\in \h^*$. This is a contradiction 
with the assumption on $f$, which implies that in fact $\psi$ has no poles.
\end{proof}

\end{proof}

\subsection{Classical Dunkl operators and Olshanetsky-Perelomov Hamiltonians}
We continue to use the notations in Section \ref{sec-OP operators}.

\begin{definition}
The classical Olshanetsky-Perelomov Hamiltonian corresponding to $W$ is the following classical 
Hamiltonian on $\h_{\reg}\times\h^{*}=T^{*}\h_{\reg}$:
$$
H_{0}({\mathbf x},{\mathbf p})={\mathbf p}^{2}-
\sum_{s\in \cS}\frac{c_{s}^2(\alpha_{s},
\alpha_{s})}{\alpha_{s}^{2}({\mathbf x})}.
$$
\end{definition} 

\begin{theorem}[\cite{OP},\cite{HO,He3,Op3,Op4},\cite{He1}] 
\label{thm-classop}The Hamiltonian $H_{0}$ defines a classical integrable system. 
Namely, there exist unique regular functions $L_{j}^{0}$ on $\h_{\reg}\times \h^{*}$,
where highest terms in ${\mathbf p}$ are $P_{j}$, such that $L_{j}^{0}$ are homogeneous of
degree $-d_j$ (under ${\mathbf x}\mapsto \lambda {\mathbf x}, {\mathbf
x}\in \h^{*}, {\mathbf p}\mapsto \lambda^{-1}{\mathbf p}, {\mathbf p}\in \h$), 
and such that
$L_{1}^{0}=H_{0}$ and $\{L_{j}^{0}, L_{k}^{0}\}=0, \forall j,k$.
\end{theorem}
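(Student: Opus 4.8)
The plan is to obtain the classical system as a "classical limit" of the quantum Olshanetsky-Perelomov system constructed in Theorem~\ref{thm-OPsys}, and to derive uniqueness from the same argument that proved uniqueness in the quantum case (Lemma~\ref{lem:Raj}). For existence, recall that the quantum first integrals $L_j$ have rational coefficients, symbols $P_j$, and homogeneity degree $-d_j$. The idea is to rescale: introduce a formal parameter $\hbar$, replace each $\partial_{y_i}$ by $\hbar\partial_{y_i}$ and each reflection coefficient $c_s$ by $c_s/\hbar$ (equivalently, consider the Dunkl operators $D_a(c/\hbar)$ and scale), and observe that $\hbar^{d_j}L_j(c/\hbar)$, after substituting $\hbar\partial_{y_i}$, has a well-defined limit as $\hbar\to 0$. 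Concretely, one expands a differential operator in powers of $\hbar$ via the symbol map: writing $L_j = P_j(\partial) + (\text{lower order})$, the top-order part in this rescaled sense produces a function $L_j^0$ on $T^*\h_{\reg}$ whose leading term in $\mathbf p$ is $P_j$. Then $[L_j,L_k]=0$ together with the standard fact that the principal symbol of a commutator is the Poisson bracket of the principal symbols forces $\{L_j^0,L_k^0\}=0$. The normalization $L_1 = H$ degenerates to $L_1^0 = H_0$ because the quantum potential $c_s(c_s+1)/\alpha_s^2$ becomes $c_s^2/\alpha_s^2$ in the limit (the linear-in-$c_s$ term being lower order in $\hbar$ after rescaling); one must check this matches the definition of $H_0$ given, including the factor $(\alpha_s,\alpha_s)$ — but this is exactly the quasiclassical shadow of Proposition~\ref{prop-delta}, since conjugation by $\delta_c$ disappears in the classical limit while contributing the shift $c_s \mapsto c_s^2$.

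For uniqueness, I would argue exactly as in the proof of the uniqueness of the $L_j$ in the quantum case. Suppose $L_j^0$ and $L_j'^0$ both satisfy the conditions, and set $M = L_j^0 - L_j'^0$. Then $M$ is a regular function on $\h_{\reg}\times\h^*$, polynomial in $\mathbf p$ of degree strictly less than $d_j$ (the top terms $P_j$ cancel), homogeneous of degree $-d_j$, and $\{M, H_0\} = 0$. Since $H_0 = \mathbf p^2 + (\text{function of }\mathbf x)$, we have $\{M,H_0\} = \{M,\mathbf p^2\} + \{M, -\sum_s c_s^2(\alpha_s,\alpha_s)/\alpha_s^2\}$; the second bracket only raises the $\mathbf p$-degree, so it cannot cancel against $\{M,\mathbf p^2\}$ unless... here one must be slightly careful, and the cleaner route is to pass to the symbol in $\mathbf p$: let $M_0$ be the leading term of $M$ in $\mathbf p$. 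Then $\{M_0, \mathbf p^2\} = 0$. Now $M_0$ is a rational function on $\h\oplus\h^*$, polynomial in $\mathbf p$, with Poisson bracket against $f(\mathbf p) = \mathbf p^2$ vanishing. By Lemma~\ref{lem:Raj}, $M_0$ is a polynomial in $\mathbf x$ as well; but homogeneity of degree $-d_j$ and the bound on $\mathbf p$-degree force the $\mathbf x$-degree of $M_0$ to be negative, so $M_0$ has negative degree as a polynomial, hence $M_0 = 0$, and descending on $\mathbf p$-degree we get $M = 0$.

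The main obstacle I expect is making the classical limit rigorous rather than heuristic: one must specify precisely the filtration/grading on $\CC W \ltimes \cD(\h_{\reg})$ being used (assigning degree $1$ to each $\partial_{y_i}$ and degree $-1$ to each $c_s$, or equivalently working in the Rees algebra with parameter $\hbar$), verify that $P_j(D_{y_1},\dots,D_{y_r})$ and hence $m(P_j(D_{y_i}))$ has a clean leading term under this grading (the reflections $1-s$ being "lower order" because each carries a $c_s$), and check that the associated graded is the commutative Poisson algebra of functions on $T^*\h_{\reg}$ rather than something larger. Once the grading is fixed, the statements that symbols of commuting operators Poisson-commute, and that the leading symbol of $L_j$ is a well-defined regular function with highest $\mathbf p$-term $P_j$, are routine. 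A fully self-contained alternative — avoiding any limiting procedure — is to mimic the quantum proof directly: define classical Dunkl operators as the functions $\mathcal{D}_a = p_a - \sum_s \frac{2c_s\alpha_s(a)}{(1-\lambda_s)\alpha_s} \cdot (\text{something})$ is not straightforward classically since there is no group-algebra part, so in fact the limit argument, or equivalently extracting the top symbol of the quantum $L_j$, really is the natural proof, and I would present it that way.
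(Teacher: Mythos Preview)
Your overall strategy---obtain the classical integrals as the $\hbar\to 0$ limit of the quantum ones via the Rees-algebra rescaling $D_a(c,\hbar)=\hbar D_a(c/\hbar)$---is exactly the paper's, and your uniqueness argument via Lemma~\ref{lem:Raj} is correct. Two points, however, deserve correction.

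First, your final paragraph dismisses the possibility of classical Dunkl operators on the grounds that ``there is no group-algebra part'' classically. This is a misconception: the reduction modulo $\hbar$ lands in $A_0=\CC W\ltimes\cO(T^*\h_{\reg})$, which still contains $W$. The paper defines
\[
D_a^0 \;=\; p_a-\sum_{s\in\cS}\frac{2c_s\alpha_s(a)}{(1-\lambda_s)\alpha_s}(1-s)\ \in A_0,
\]
checks that these commute (as limits of the $D_a(c,\hbar)$), and applies the averaging map $m$ at the end exactly as in the quantum proof. So the ``self-contained alternative'' you reject is in fact the paper's route.

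Second, your treatment of the conjugation by $\delta_c$ (``disappears in the classical limit while contributing the shift $c_s\mapsto c_s^2$'') is too glib and would not extend past $j=1$. The paper observes that conjugation by $\delta_{c/\hbar}$ does \emph{not} disappear: its limit is the nontrivial outer automorphism $\theta_c$ of $A_0$ fixing $\h^*$ and $W$ and sending $p_a\mapsto p_a+\partial_a\log\delta_c$. The direct limit $\overline{H}_0=m\bigl(\sum_i (D_{y_i}^0)^2\bigr)$ is \emph{not} $H_0$; rather $H_0=\theta_c(\overline{H}_0)$, and the higher integrals are $L_j^0=m\bigl(\theta_c(P_j(D_{y_1}^0,\ldots,D_{y_r}^0))\bigr)$. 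This automorphism is precisely what makes your heuristic ``$c_s(c_s+1)\to c_s^2$'' rigorous, and it is needed uniformly for all $j$.
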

\begin{proof}
The proof is given in the next subsection.
\end{proof}

\begin{example}
Let $W=\kS_{n}$, $\h=\CC^{n-1}$. Then 
$$
H_{0}=\sum_{i=1}^np_{i}^{2}-c^{2}\sum_{i\neq j}\frac{1}{(x_{i}-x_{j})^{2}}
\quad(\text{ the classical Calogero-Moser Hamiltonian}).
$$
So the theorem says that there are functions $L_{j}^{0}, j=1, \ldots, n-1$, 
$$
L_{j}^{0}=\sum_{i}p_{i}^{j+1}+\text{ lower terms, }
$$
homogeneous of degree zero, such that $L_{1}^{0}=H_{0}$ and $\{L_{j}^{0}, L_{k}^{0}\}=0$.
\end{example}

\subsection{Rees algebras}
Let $\overline{A}$ be a filtered
algebra over a field $k$: $k=F^0\overline{A}\subset F^1\overline{A}\subset\cdots$,
$\cup_i F^i\overline{A}=\overline{A}$. 
Then the {\it Rees algebra} $A=\Rees(\overline{A})$ is defined by
the formula $A=\oplus_{n=0}^\infty F^n\overline{A}$. 
This is an algebra over $k[\hbar]$, where $\hbar$ is the element 1 
of the summand $F^1\overline{A}$. 

\subsection{Proof of Theorem \ref{thm-classop}}

The proof of Theorem \ref{thm-classop} is similar to the proof of
its quantum analog. 
Namely, to construct the functions $L_j^0$, we need to introduce 
classical Dunkl operators. To do so, we introduce a parameter $\hbar$ (Planck's constant)
and define Dunkl operators $D_a(\hbar)=D_a(c,\hbar)$ with $\hbar$: 
$$
D_{a}(c, \hbar)=\hbar D_{a}(c/\hbar)=\hbar\partial_{a}
-\sum_{s\in \cS}\frac{2c_{s}\alpha_{s}(a)}{(1-\lambda_{s})\alpha_{s}}(1-s), \text{ where } a\in \h.
$$
These operators can be regarded as elements of the Rees algebra 
$A=\Rees(\CC W\ltimes \cD(\h_{\reg}))$, where 
the filtration is by order of differential operators (and $W$ sits in 
degree $0$). Reducing these operators modulo $\hbar$, we get 
classical Dunkl operators $D_a^0(c)\in A_0:=A/\hbar A=\CC
W\ltimes \cO(T^*\h_{\reg})$. They are given by the formula 
$$
D_a^0(c)=p_a-\sum_{s\in \cS}\frac{2c_s\alpha_s(a)}{(1-\lambda_s)\alpha_s}(1-s),
$$
where $p_a$ is the classical momentum (the linear function on 
$\h^*$ corresponding to $a\in \h$). 

It follows from the commutativity of the quantum Dunkl operators $D_a(c)$ that 
the Dunkl operators $D_a(c,\hbar)$ commute. Hence, so do the classical Dunkl operators $D_a^0$: 
$$
[D_a^0,D_b^0]=0.
$$ 

We also have the following analog of Proposition \ref{prop-maincom}: 

\begin{proposition}\label{maincom1}
\begin{enumerate}
\item[(i)] For any $x\in \h^*$, one has
$$
[D_a^0,x]=-\sum_{s\in \cS}c_s(a,\alpha_s)(x,\alpha_s^\vee)s.
$$
\item[(ii)] If $g\in W$ then $gD_a^0g^{-1}=D_{ga}^0$.
\end{enumerate}
\end{proposition}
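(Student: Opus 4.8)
The plan is to reduce both identities to their quantum counterparts, Proposition~\ref{prop-maincom}, via the Rees-algebra specialization, with a direct computation available as a backup.

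For part (i), recall that $D_a(c,\hbar)=\hbar D_a(c/\hbar)$ is an element of $A=\Rees(\CC W\ltimes\cD(\h_{\reg}))$, while $x\in\h^*$ and every $g\in W$ sit in filtration degree $0$ and hence map without change to $A_0=A/\hbar A=\CC W\ltimes\cO(T^*\h_{\reg})$. First I would rescale Proposition~\ref{prop-maincom}(i): since $[D_a(c,\hbar),x]=\hbar\,[D_a(c/\hbar),x]$, the prefactor $\hbar$ cancels the $1/\hbar$ in the reflection sum and we obtain, in $A$,
\[
[D_a(c,\hbar),x]=\hbar\,(a,x)-\sum_{s\in\cS}c_s(a,\alpha_s)(x,\alpha_s^\vee)\,s.
\]
Reducing modulo $\hbar$ now sends the left-hand side to $[D_a^0,x]$, kills the term $\hbar(a,x)$, and leaves each $s$ untouched, giving exactly the asserted formula. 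As an independent check one can compute directly in $A_0$, exactly as in the proof of Proposition~\ref{prop-maincom}: with $D_a^0=p_a-\sum_s\frac{2c_s\alpha_s(a)}{(1-\lambda_s)\alpha_s}(1-s)$ one has $[p_a,x]=0$ since $\cO(T^*\h_{\reg})$ is commutative, while the commutator of $x$ with the reflection part is computed from the identity $x-sx=\frac{1-\lambda_s}{2}(x,\alpha_s^\vee)\alpha_s$ together with $\alpha_s(a)=(a,\alpha_s)$, and this reproduces the formula.

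For part (ii), I would similarly conjugate the $\hbar$-deformed operator: applying Proposition~\ref{prop-maincom}(ii) with parameter $c/\hbar$ and multiplying through by $\hbar$ gives $gD_a(c,\hbar)g^{-1}=D_{ga}(c,\hbar)$ in $A$, and reduction modulo $\hbar$ yields $gD_a^0g^{-1}=D_{ga}^0$. Alternatively this is immediate from the explicit formula for $D_a^0$: one has $gp_ag^{-1}=p_{ga}$; conjugation by $g$ permutes $\cS$ via $s\mapsto gsg^{-1}$, and for this reflection one may take $\alpha_{gsg^{-1}}=g\alpha_s$, $\alpha_{gsg^{-1}}^\vee=g\alpha_s^\vee$, $\lambda_{gsg^{-1}}=\lambda_s$ (these choices respect the normalization $\langle\alpha_s,\alpha_s^\vee\rangle=2$ because $g$ preserves the pairing); since $c$ is conjugation-invariant, reindexing the sum by $s\mapsto gsg^{-1}$ turns $gD_a^0g^{-1}$ back into $D_{ga}^0$.

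There is no genuine obstacle here; the only points needing a little care are that $x$ and the group elements really do lie in filtration degree $0$, so that the passage to $A_0=A/\hbar A$ is literal and the constant term $\hbar(a,x)$ genuinely drops out, and --- if one runs the direct computation instead --- keeping the $W$-action conventions in the smash product and the eigenvalue normalizations straight.
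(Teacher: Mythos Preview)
Your argument is correct and matches the paper's intent: the paper states Proposition~\ref{maincom1} as the direct analog of Proposition~\ref{prop-maincom} and gives no separate proof, so the implicit argument is precisely the degeneration/direct-computation you have written out. Both of your routes --- reducing the identity $[D_a(c,\hbar),x]=\hbar(a,x)-\sum_s c_s(a,\alpha_s)(x,\alpha_s^\vee)s$ modulo $\hbar$, and redoing the computation of Proposition~\ref{prop-maincom} with $[p_a,x]=0$ in place of $[\partial_a,x]=(a,x)$ --- are exactly what is expected here.
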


Now let us construct the classical Olshanetsky-Perelomov Hamiltonians. 
As in the quantum case, we have the operation $m(\cdot)$, which 
is given by the formula $\sum_{g\in W} B_g\cdot g\mapsto \sum B_g$, 
$B\in \cO(T^*\h_{\reg})$. We define 
the Hamiltonian
$$
\overline{H}_{0}:=m(\sum_{i=1}^r(D_{y_i}^0)^2).
$$
By taking the limit of quantum situation, we find 
$$
\overline{H}_{0}=
{\mathbf p}^2-\sum_{s\in\cS}
\frac{c_s(\alpha_s,\alpha_s)}{\alpha_s(\mathbf x)}p_{\alpha_s^\vee}.
$$

Unfortunately, this is no longer conjugate to $H_{0}$.
However, consider the (outer) automorphism $\theta_c$ of the algebra $\CC W\ltimes 
\cO(T^*\h_{\reg})$ defined by the formulas 
$$
\theta_c(x)=x,\ \theta_c(s)=s,\ \theta_c(p_a)=p_a+\partial_a \log\delta_c,
$$
for $x\in \h^*$, $a\in \h$, $s\in W$. 
It is easy to see that if $b_0\in A_0$
and $b\in A$ is a deformation of $b_0$ then 
$\theta_c(b_0)=\lim_{\hbar\to 0}\delta_{c/\hbar}^{-1}b\delta_{c/\hbar}$. 
Therefore, taking the limit $\hbar\to 0$ in Proposition
\ref{Hec}, we find that 
$H_{0}=\theta_{c}(\overline{H}_{0})$. 

Now set $L_j^{0}=m(\theta_{c}(P_{j}(D_{y_{1}}^{0}, \ldots,
D_{y_{r}}^{0})))$.
These functions are well defined since $D_a^0$
commute, are homogeneous of degree zero, and $L_1^0=H_0$.  

Moreover, we can define the operators 
$L_j(\hbar)$ in $\Rees(\cD(\h_{\reg})^W)$
in the same way as $L_j$, but using the Dunkl
operators $D_{y_i}(\hbar)$ instead of $D_{y_i}$. 
Then $[L_j(\hbar),L_k(\hbar)]=0$, and $L_j(\hbar)|_{\hbar=0}=L_j^0$. 
This implies that $L_j^0$ Poisson commute:
$\lbrace{L_j^0,L_k^0\rbrace}=0$.  

Theorem \ref{thm-classop} is proved.

\begin{remark}
As in the quantum situation, Theorem \ref{thm-classop} can be
generalized to complex reflection groups, giving integrable
systems with Hamiltonians which are non-quadratic in momentum variables. 
\end{remark}

\subsection{Notes} 
Section 2.1 follows Section 5.4 of \cite{E4}; 
the definition of complex reflection groups and their basic properties 
can be found in \cite{GM}; the definition of parabolic subgroups 
and the notations are borrowed from Section 3.1 of  \cite{BE};
the remaining parts of this section follow Section 6 of \cite{E4}.

\newpage \section{The rational Cherednik algebra}\label{rcera}

\subsection{Definition and examples} {\label{sec:rca}}
Above we have made essential use 
of the commutation relations between operators 
$x\in \h^{*}, g\in G$, and $D_{a}, a\in \h$. 
This makes it natural to consider the algebra generated by these operators.

\begin{definition}
The rational Cherednik algebra associated to $(G, \h)$ is the algebra 
$H_{c}(G, \h)$ generated 
inside $A=\Rees(\CC G\ltimes \cD(\h_{\reg}))$ by the elements 
$x\in \h^{*}, g\in G$, and $D_{a}(c, \hbar), a\in \h$. 
If $t\in \CC$, then the algebra 
$H_{t, c}(G, \h)$ is the specialization of $H_{c}(G, \h)$ at $\hbar=t$.
\end{definition}

\begin{proposition}{\label{prop-RCA}}
The algebra $H_{c}$ is the quotient of the algebra 
$\CC G\ltimes \mathbf T(\h\oplus \h^*)[\hbar]$ 
(where $\mathbf T$ denotes the tensor algebra) 
by the ideal generated by the relations
$$
[x,x']=0,\ [y,y']=0,\ [y,x]=\hbar(y,x)-\sum_{s\in \cS}
c_s(y,\alpha_s)(x,\alpha_s^\vee)s,
$$
where $x,x'\in \h^*$, $y,y'\in \h$.
\end{proposition}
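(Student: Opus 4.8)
The plan is to exhibit $H_c$ as a quotient of $\CC G \ltimes \mathbf{T}(\h \oplus \h^*)[\hbar]$ by the stated relations by checking two things: first, that the relations hold among the generators $x \in \h^*$, $g \in G$, $D_a(c,\hbar) \in A = \Rees(\CC G \ltimes \cD(\h_{\reg}))$, so that there is a well-defined surjection from the quotient algebra $\overline{H}$ onto $H_c$; second, that this surjection is injective, i.e.\ no further relations are needed. The first part is essentially a computation already done in the earlier part of the excerpt. The relation $[x,x']=0$ holds because $\h^*$ consists of commuting multiplication operators; the relation $[y,y']=0$ for $y,y' \in \h$ (meaning $[D_a(c,\hbar), D_b(c,\hbar)]=0$) is exactly the content of Theorem~\ref{thm-dunkl} after rescaling $c \mapsto c/\hbar$ and multiplying by $\hbar$, which is valid over $\CC[\hbar]$ since the commutator of the rescaled operators is a polynomial in $\hbar$; and the cross relation $[y,x] = \hbar(y,x) - \sum_{s \in \cS} c_s(y,\alpha_s)(x,\alpha_s^\vee)s$ is obtained from Proposition~\ref{prop-maincom}(i) by the same rescaling $D_a(c,\hbar) = \hbar D_a(c/\hbar)$: the term $(a,x)$ picks up a factor $\hbar$, while the reflection term is $\hbar \cdot c_s/\hbar \cdot(\dots)$ and is unchanged. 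The $G$-equivariance built into the semidirect product is compatible with Proposition~\ref{prop-maincom}(ii). This gives the surjection $\overline{H} \twoheadrightarrow H_c$.

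The harder half is injectivity. First I would observe that in $\overline{H}$, using the commutation relations, every element can be written as a $\CC[\hbar]$-linear combination of monomials of the form $x^{\beta} g\, y^{\gamma}$ with $x^\beta \in S\h^*$, $g \in G$, $y^\gamma \in S\h$ — i.e.\ the relations already force a spanning set of ``PBW type.'' Indeed the relations $[x,x']=0$ and $[y,y']=0$ let us order the $x$'s among themselves and the $y$'s among themselves, and the cross relation lets us move every $y$ to the right of every $x$ at the cost of lower-order terms (in total degree in $\h \oplus \h^*$) plus group elements; an induction on degree shows these monomials span $\overline{H}$ over $\CC[\hbar]$. So it suffices to show that the images of these monomials in $H_c \subset A$ are linearly independent over $\CC[\hbar]$.

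For that, I would use the faithful action of $A$ on $\CC(\h)$ — more precisely, note that the subalgebra of $A$ generated by $x$, $g$, $D_a(c,\hbar)$ preserves $\CC[\h][\hbar]$ (the Dunkl operators preserve polynomials, by the Remark following the Dunkl operator definition). Apply a monomial $x^\beta g\, y^\gamma$ to a polynomial $P \in \CC[\h]$: the operators $D_{y_i}(c,\hbar)$ act as $\hbar \partial_{y_i}$ plus terms that strictly lower polynomial degree, so $y^\gamma$ acts on $\CC[\h]$ with top symbol $\hbar^{|\gamma|} \partial^\gamma$; then $g$ permutes, then $x^\beta$ multiplies. Taking associated graded with respect to the order filtration on $\cD(\h_{\reg})$ — or equivalently looking at the symbol in $\CC G \ltimes \CC[\h \times \h^*][\hbar]$ — the monomial $x^\beta g\, y^\gamma$ has symbol $\hbar^{|\gamma|} x^\beta\, g\, p^\gamma$, and these symbols are manifestly linearly independent over $\CC[\hbar]$ in $\CC G \ltimes \CC[\h \times \h^*][\hbar]$. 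Hence the monomials themselves are linearly independent in $H_c$, the surjection $\overline{H} \to H_c$ is injective, and the proposition follows.

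The main obstacle is making the spanning argument in $\overline{H}$ and the symbol/linear-independence argument in $H_c$ fit together cleanly — essentially this is the PBW theorem for $H_c$, and one must be careful that ``lower order terms'' in the cross relation are controlled by a genuine filtration (by total degree in $\h\oplus\h^*$, with $G$ and $\hbar$ in degree $0$) so that the induction closes; the rescaling bookkeeping with $\hbar$ must also be tracked so that everything is an identity over $\CC[\hbar]$ rather than just after specialization. Everything else is routine given Theorem~\ref{thm-dunkl} and Proposition~\ref{prop-maincom}.
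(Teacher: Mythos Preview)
Your proposal is correct and follows essentially the same approach as the paper: define the abstract quotient algebra, construct the surjection onto $H_c$ by invoking the commutativity of Dunkl operators (Theorem~\ref{thm-dunkl}) and the commutator formula (Proposition~\ref{prop-maincom}), then prove injectivity by showing that PBW monomials span the abstract algebra and that their images in the Rees algebra have linearly independent symbols. The only cosmetic difference is your choice of PBW ordering $x^\beta g\, y^\gamma$ versus the paper's $g\, y^\gamma x^\beta$, which is immaterial.
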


\begin{proof}
Let us denote the algebra defined in the proposition 
by $H'_{c}=H'_{c}(G, \h)$. 
Then according to the results of the previous sections, we have a 
surjective homomorphism 
$\phi: H_{c}'\to H_{c}$ defined by the formula 
$\phi(x)=x$, $\phi(g)=g$, $\phi(y)=D_y(c,\hbar)$. 

Let us show that this homomorphism is injective. For this purpose 
assume that $y_i$ is a basis of $\h$, and $x_i$ is the dual basis
of $\h^*$. Then it is clear from the relations of $H_c'$ that 
$H_c'$ is spanned over $\CC[\hbar]$ by the elements 
\begin{equation}\label{basi}
g\prod_{i=1}^r y_i^{m_i}
\prod_{i=1}^r x_i^{n_i}.
\end{equation} 

Thus it remains to show that the images of the elements 
(\ref{basi}) under the map $\phi$, 
i.e. the elements 
$$
g\prod_{i=1}^r D_{y_i}(c,\hbar)^{m_i}
\prod_{i=1}^r x_i^{n_i}.
$$
are linearly independent. 
But this follows from the obvious fact that the symbols of these elements 
in $\CC G\ltimes \CC[\h^*\times \h_{\reg}][\hbar]$
are linearly independent. The proposition is proved.  
\end{proof}

\begin{remark}
1. Similarly, one can define the universal algebra $H(G, \h)$, in which
both $\hbar$ and $c$ are variables. (So this is an algebra over 
$\CC[\hbar, c]$.)
It has two equivalent definitions similar to the above.

2.  It is more convenient to work with algebras defined by generators and 
relations than with subalgebras of a given algebra generated by a 
given set of elements. Therefore, from now on we will use 
the statement of Proposition \ref{prop-RCA} 
as a definition of the rational Cherednik algebra $H_{c}$. 
According to Proposition \ref{prop-RCA}, this algebra comes with 
a natural embedding $\Theta_c: H_{c}\to \Rees(\CC G\ltimes 
\cD(\h_{\reg}))$,
defined by the formula $x\to x$, $g\to g$, $y\to D_y(c,\hbar)$. 
This embedding is called {\em the Dunkl operator embedding}. 

\end{remark}

\begin{example} 1. Let $G=\ZZ_2$, 
$\h=\CC$. In this case $c$ reduces to one parameter, and the algebra 
$H_{t,c}$ is generated by elements $x,y,s$ with defining relations
$$
s^2=1,\ sx=-xs,\ sy=-ys,\ [y,x]=t-2cs. 
$$

2. Let $G=\kS_n$, $\h=\CC^n$. In this case there is also only one 
complex parameter $c$, and the algebra 
$H_{t,c}$ is the quotient of 
$\kS_n\ltimes \CC\langle x_1,\ldots,x_n,y_1,\ldots,y_n\rangle$
by the relations 
\begin{equation*}
[x_i,x_j]=[y_i,y_j]=0,\ [y_i, x_j]=cs_{ij},\ [y_i,x_i]=t-c\sum_{j\ne i}s_{ij}.
\end{equation*}
Here $\CC\langle E\rangle$ denotes the free algebra on a set $E$, 
and $s_{ij}$ is the transposition of $i$ and $j$. 
\end{example}

\subsection{The PBW theorem for the rational Cherednik algebra}

Let us put a filtration on $H_c$ by setting $\deg y=1$ for $y\in \h$ and 
$\deg x=\deg g=0$ for
$x\in \h^{*}, g\in G$. 
Let $\gr(H_{c})$ denote the associated graded algebra 
of $H_{c}$ under this filtration, and similarly for $H_{t,c}$. We have a 
natural surjective homomorphism 
$$
\xi: \CC G\ltimes 
\CC[\h\oplus \h^*][\hbar]\to \gr(H_c).
$$  
For $t\in \CC$, it specializes to surjective homomorphisms 
$$
\xi_t: \CC G\ltimes 
\CC[\h\oplus \h^*]\to \gr(H_{t,c}).
$$

\begin{proposition}[The PBW theorem for rational Cherednik algebras]
\label{prop-pbw} 
The maps $\xi$ and $\xi_t$ are isomorphisms. 
\end{proposition}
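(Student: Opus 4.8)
The plan is to prove that the surjection $\xi\colon \CC G\ltimes \CC[\h\oplus\h^*][\hbar]\to\gr(H_c)$ is an isomorphism by exhibiting a faithful action of $\gr(H_c)$ on a space of the expected size, so that the source cannot collapse. The key point is that we already have, via Proposition \ref{prop-RCA}, the Dunkl embedding $\Theta_c\colon H_c\hookrightarrow \Rees(\CC G\ltimes\cD(\h_{\reg}))$. First I would transport the filtration on $H_c$ through this embedding and compare it with the order filtration on the Rees algebra. Under $\Theta_c$, the generators $x\in\h^*$ and $g\in G$ have degree $0$, and $y\in\h$ maps to $D_y(c,\hbar)=\hbar\partial_y-\sum_s(\cdots)(1-s)$, whose symbol (in the order-of-differential-operator grading, with $\hbar$ placed in degree $1$ matching $\partial_y$) is exactly the classical momentum $\hbar p_y$. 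So $\Theta_c$ is a filtered map, and passing to associated graded yields a homomorphism $\gr(\Theta_c)\colon \gr(H_c)\to \gr\bigl(\Rees(\CC G\ltimes\cD(\h_{\reg}))\bigr)=\CC G\ltimes\CC[\h^*\times\h_{\reg}][\hbar]$.

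Next I would check that the composite $\gr(\Theta_c)\circ\xi\colon \CC G\ltimes\CC[\h\oplus\h^*][\hbar]\to \CC G\ltimes\CC[\h^*\times\h_{\reg}][\hbar]$ is the obvious map sending $g\mapsto g$, $x\mapsto x$, $y\mapsto p_y$ (regarding $\CC[\h]\subset\CC[\h_{\reg}]$). This composite is visibly injective: a nonzero element of $\CC G\ltimes\CC[\h\oplus\h^*][\hbar]$, written as $\sum_g g\, f_g(\mathbf x,\mathbf p,\hbar)$ with the $f_g$ polynomials, maps to $\sum_g g\, f_g(\mathbf x,\mathbf p,\hbar)$ with the $f_g$ now viewed as regular functions on $\h_{\reg}\times\h^*$; since $\h_{\reg}$ is Zariski-dense in $\h$, no nonzero polynomial dies. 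Because $\gr(\Theta_c)\circ\xi$ is injective, $\xi$ must be injective; combined with the surjectivity already noted before the Proposition, $\xi$ is an isomorphism. The argument for $\xi_t$ is identical after specializing $\hbar=t$: the Dunkl embedding specializes to $\Theta_{t,c}\colon H_{t,c}\to\CC G\ltimes\cD(\h_{\reg})$ (for $t\neq0$ this is literally the subalgebra description, for $t=0$ one uses the classical Dunkl operators $D_a^0$), and the same symbol computation shows $\gr(\Theta_{t,c})\circ\xi_t$ is the inclusion $\CC G\ltimes\CC[\h\oplus\h^*]\to\CC G\ltimes\CC[\h_{\reg}\times\h^*]$, which is injective.

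Alternatively — and this is essentially what the injectivity argument inside the proof of Proposition \ref{prop-RCA} already does — one can phrase the whole thing in terms of the spanning set. From the defining relations of $H_c$ one sees directly that $H_c$ is spanned over $\CC[\hbar]$ by the ordered monomials $g\prod_i y_i^{m_i}\prod_i x_i^{n_i}$ of \eqref{basi}, hence $\gr(H_c)$ is spanned by their images, which are precisely the images under $\xi$ of a basis of $\CC G\ltimes\CC[\h\oplus\h^*][\hbar]$; so $\xi$ is surjective (reproving that) and it is injective as soon as those images are linearly independent in $\gr(H_c)$. Their linear independence is exactly what the symbol computation in Proposition \ref{prop-RCA} established in $\CC G\ltimes\CC[\h^*\times\h_{\reg}][\hbar]$. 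So the PBW theorem is really just a repackaging of Proposition \ref{prop-RCA}: the same faithful ``leading-symbol'' functional that separated the monomials there separates them here.

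The step I expect to be the only real content is the verification that $\Theta_c$ is filtered with the asserted symbols — i.e. that each Dunkl operator $D_y(c,\hbar)$ lies in filtration degree $1$ of the Rees algebra with leading term $\hbar p_y$, and that no cancellation of leading terms occurs when one forms a product $g\prod D_{y_i}(c,\hbar)^{m_i}\prod x_i^{n_i}$. The first half is immediate from the formula for $D_y(c,\hbar)$ (the reflection terms $\frac{2c_s\alpha_s(y)}{(1-\lambda_s)\alpha_s}(1-s)$ have differential-operator order $0$, strictly below $\partial_y$). The second half is the genuine point, and it is handled exactly as in the proof of Proposition \ref{prop-RCA}: in $\CC G\ltimes\CC[\h^*\times\h_{\reg}][\hbar]$ the symbol of $g\prod D_{y_i}(c,\hbar)^{m_i}\prod x_i^{n_i}$ is $g\prod x_i^{n_i}\cdot\hbar^{\sum m_i}\prod p_{y_i}^{m_i}$, and distinct $(g,\mathbf m,\mathbf n)$ give linearly independent such symbols because $\{g\}$, $\{\prod x_i^{n_i}\}$, $\{\prod p_{y_i}^{m_i}\}$ are each independent families and they live in independent ``directions'' of the smash product. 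Everything else is formal nonsense about filtered algebras and Rees constructions.
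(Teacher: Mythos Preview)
Your proof is correct and matches the paper's: the paper's one-line argument is precisely your ``Alternatively'' paragraph --- the PBW statement is equivalent to the monomials \eqref{basi} forming a $\CC[\hbar]$-basis of $H_c$, which was established in the proof of Proposition~\ref{prop-RCA} via the Dunkl embedding and linear independence of symbols in $\CC G\ltimes\CC[\h^*\times\h_{\reg}][\hbar]$. Your filtered-map reformulation in the first paragraph is the same idea in abstract dress (the identification of $\gr$ of the Rees side needs slightly more careful bookkeeping than ``$=\CC G\ltimes\CC[\h^*\times\h_{\reg}][\hbar]$'', but your final paragraph unpacks the symbol computation correctly and that is all that is needed).
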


\begin{proof} The statement is equivalent to 
the claim that the elements (\ref{basi}) are a basis 
of $H_{t, c}$, which follows from the proof of Proposition \ref{prop-RCA}. 
\end{proof}

\begin{remark} 
1. We have
$$H_{0,0}=\CC G\ltimes \CC[\h\oplus\h^{*}] \text{ and }
H_{1,0}=\CC G\ltimes \cD(\h).$$

2. For any $\lambda\in \CC^*$, the algebra $H_{t,c}$ is naturally 
isomorphic to $H_{\lambda t,\lambda c}$. 

3. The Dunkl operator embedding $\Theta_c$ specializes to 
embeddings 
$$\Theta_{0,c}: H_{0,c}\hookrightarrow \CC G\ltimes \CC[\h^*\times \h_{\reg}],$$ 
given by $x\mapsto x$, $g\mapsto g$, $y\mapsto D_a^0$, 
and 
$$\Theta_{1,c}: H_{1,c}\hookrightarrow \CC G\ltimes \cD(\h_{\reg}),$$
given by $x\mapsto x$, $g\mapsto g$, $y\mapsto D_a$. 
So $H_{0,c}$ is generated by $x, g, D_{a}^{0}$, 
and $H_{1,c}$ is generated by $x, g, D_{a}$.

Since Dunkl operators map polynomials to polynomials, 
the map $\Theta_{1,c}$ defines a representation 
of $H_{1,c}$ on $\CC[\h]$. This representation
is called the {\it polynomial representation} of $H_{1,c}$. 
\end{remark}

\subsection{The spherical subalgebra}

Let $\e\in \CC G$ be the symmetrizer, 
$\e=|G|^{-1}\sum_{g\in G}g$. We have $\e^2=\e$. 

\begin{definition}
$B_c:=\e H_{c}\be$ is called {\em the spherical subalgebra} of $H_c$.
The spherical subalgebra of $H_{t,c}$ 
is $B_{t,c}:=B_c/(\hbar-t)=\e H_{t,c}\e$.  
\end{definition} 

Note that 
$$
\e \left(\CC G\ltimes {\cD}(\h_{\reg})\right)\e=\cD(\h_{\reg})^G, \quad
\e \left(\CC G\ltimes \CC[\h_{\reg}\times \h^{*}] \right)\e
=\CC[\h_{\reg}\times \h^{*}]^{G}. 
$$
Therefore, the restriction gives the embeddings:
$\Theta_{1,c}: B_{1,c}\hookrightarrow {\cD}(\h_{\reg})^G$, 
and $\Theta_{0,c}: B_{0,c}\hookrightarrow \CC [\h^*\times \h_{\reg}]^G$.
In particular, we have
\begin{proposition}\label{prop-zerdiv} 
The spherical subalgebra 
$B_{0,c}$ is commutative and does not have zero divisors. 
Also $B_{0, c}$ is finitely generated.
\end{proposition}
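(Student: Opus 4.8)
The plan is to deduce all three assertions from the Dunkl operator embedding $\Theta_{0,c}\colon B_{0,c}\hookrightarrow \CC[\h^*\times\h_{\reg}]^G$ recorded just above, together with the PBW theorem (Proposition \ref{prop-pbw}).

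First, commutativity. Since $H_{0,0}=\CC G\ltimes\CC[\h\oplus\h^*]$ and $B_{0,c}$ carries the filtration induced from $H_{0,c}$ (with $\deg y=1$, $\deg x=\deg g=0$), the PBW theorem gives $\gr B_{0,c}=\e\,\gr H_{0,c}\,\e=\e(\CC G\ltimes\CC[\h\oplus\h^*])\e=\CC[\h\oplus\h^*]^G$, which is commutative. Hence for $a,b\in B_{0,c}$ the commutator $[a,b]$ has strictly smaller filtration degree than $\deg a+\deg b$. This is not yet enough on its own, so instead I would argue directly from the embedding: $\Theta_{0,c}$ is an algebra homomorphism into the commutative ring $\CC[\h^*\times\h_{\reg}]^G$, and it is injective by the remark following Proposition \ref{prop-pbw} (it is the specialization at $t=0$ of the Dunkl embedding $\Theta_c$). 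An injective homomorphism into a commutative ring has commutative source, so $B_{0,c}$ is commutative. The same embedding shows $B_{0,c}$ has no zero divisors, since $\CC[\h^*\times\h_{\reg}]^G$, being a subring of the domain $\CC[\h^*\times\h_{\reg}]$, is itself a domain.

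Finally, finite generation. Here I would use the filtration argument: $\gr B_{0,c}\cong\CC[\h\oplus\h^*]^G$ by PBW as above, and this invariant ring is finitely generated as a $\CC$-algebra — $\CC[\h\oplus\h^*]$ is a finitely generated $\CC$-algebra and $G$ is finite, so by the Hilbert–Noether theorem on invariants of finite groups $\CC[\h\oplus\h^*]^G$ is a finitely generated $\CC$-algebra. A standard lemma then lifts finite generation from the associated graded algebra to the filtered algebra: choosing homogeneous generators of $\gr B_{0,c}$ and lifting each to an element of $B_{0,c}$ of the appropriate filtration degree yields a finite generating set for $B_{0,c}$.

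The only point requiring a little care is the lifting step: one must check that the subalgebra of $B_{0,c}$ generated by the chosen lifts exhausts $F^n B_{0,c}$ for every $n$, which follows by induction on $n$ using that the lifts' symbols generate $\gr B_{0,c}$. Apart from this routine argument, everything is a direct consequence of the Dunkl embedding and PBW, so I do not anticipate a genuine obstacle. One could alternatively obtain finite generation by noting that $B_{0,c}$ is a deformation of $\CC[\h\oplus\h^*]^G$ flat over $\CC[c]$, but the graded-lifting argument is the cleanest.
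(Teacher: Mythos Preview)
Your proof is correct and follows essentially the same approach as the paper: the embedding $\Theta_{0,c}$ into $\CC[\h^*\times\h_{\reg}]^G$ gives commutativity and absence of zero divisors, while $\gr B_{0,c}=\CC[\h\oplus\h^*]^G$ together with Hilbert's theorem gives finite generation. The paper merely says ``clear from the above'' for the first part and cites the graded argument for the second, so your version is more explicit but not genuinely different.
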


\begin{proof}
The first statement is clear from the above.
The second statement follows from the fact that 
$\gr (B_{0,c})=B_{0,0}=\CC[\h\times\h^*]^G$,
which is finitely generated by Hilbert's theorem.
\end{proof}

\begin{corollary}
$M_{c}=\Spec B_{0,c}$ is an irreducible affine algebraic variety.
\end{corollary}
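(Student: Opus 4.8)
The plan is to read this off immediately from Proposition \ref{prop-zerdiv}. That proposition asserts that $B_{0,c}$ is commutative, has no zero divisors, and is finitely generated over $\CC$; equivalently, $B_{0,c}$ is a finitely generated integral domain over the field $\CC$.

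First I would recall the general principle that for any finitely generated commutative $\CC$-algebra $R$, the scheme $\Spec R$ is an affine algebraic variety over $\CC$: it is affine by construction and of finite type over $\CC$ by the finite generation hypothesis. Applying this to $R=B_{0,c}$ gives that $M_{c}=\Spec B_{0,c}$ is an affine algebraic variety.

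It then remains to check irreducibility. Since $B_{0,c}$ has no zero divisors, the zero ideal is prime, hence it is the unique minimal prime of $B_{0,c}$; in particular the nilradical is zero (so $M_c$ is reduced, as the convention ``variety'' demands) and $\Spec B_{0,c}$ has a single irreducible component. Concretely, if $V(I)\cup V(J)=\Spec B_{0,c}$ then $IJ\subseteq\sqrt{(0)}=(0)$, and since $B_{0,c}$ is a domain this forces $I=(0)$ or $J=(0)$, i.e. $V(I)=\Spec B_{0,c}$ or $V(J)=\Spec B_{0,c}$; hence $M_c$ is irreducible.

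There is essentially no obstacle here: every ingredient — commutativity, absence of zero divisors, and finite generation over $\CC$ — has already been established in Proposition \ref{prop-zerdiv}, and the deduction is a standard fact of commutative algebra. The only point requiring any attention is whether one's chosen definition of ``algebraic variety'' builds in reducedness, which in our situation is automatic since a domain has no nonzero nilpotents.
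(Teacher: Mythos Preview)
Your proof is correct and follows exactly the paper's approach: the paper's own proof reads simply ``Directly from the definition and the proposition,'' and you have merely spelled out the standard commutative-algebra translation of Proposition~\ref{prop-zerdiv} into the geometric statement.
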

\begin{proof}
Directly from the definition and the proposition.
\end{proof}

We also obtain 

\begin{proposition}
$B_{c}$ is a flat quantization (non-commutative deformation) of 
$B_{0,c}$ over $\CC[\hbar]$.
\end{proposition}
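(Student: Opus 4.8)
The plan is to deduce flatness from the PBW theorem for $H_c$ together with the fact that the symmetrizer $\e$ is an idempotent, and then to identify the classical limit. First I would recall that by Proposition \ref{prop-pbw}, $H_c$ is a free $\CC[\hbar]$-module: indeed $\gr(H_c)\cong \CC G\ltimes \CC[\h\oplus\h^*][\hbar]$ via $\xi$, so the elements (\ref{basi}) form a $\CC[\hbar]$-basis of $H_c$. In particular $H_c$ has no $\hbar$-torsion, i.e.\ it is flat over $\CC[\hbar]$. Now $B_c=\e H_c\e$ is a direct summand of $H_c$ as a $\CC[\hbar]$-module (the complementary summand being $(1-\e)H_c\e \oplus H_c(1-\e)$, or more simply $B_c$ is a summand of $H_c\e$, which is a summand of $H_c$), hence $B_c$ is also a projective, and since $\CC[\hbar]$ is a PID and $B_c$ is torsion-free, in fact free $\CC[\hbar]$-module. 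This gives flatness over $\CC[\hbar]$.

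Next I would check that the specialization at $\hbar=0$ really is $B_{0,c}$ and that it is commutative, so that $B_c$ deserves to be called a quantization of $B_{0,c}$. By definition $B_{0,c}=B_c/(\hbar)=\e H_{0,c}\e$, and we established in Proposition \ref{prop-zerdiv} that $B_{0,c}$ is commutative (via the Dunkl embedding $\Theta_{0,c}$ into $\CC[\h^*\times\h_{\reg}]^G$). So $B_c$ is a $\CC[\hbar]$-algebra, free as a module, whose fibre at $0$ is commutative: this is exactly what "non-commutative deformation / flat quantization of $B_{0,c}$ over $\CC[\hbar]$" means. One should also note the compatibility with the Poisson structure: the commutator on $B_c$ is divisible by $\hbar$ (since $[H_c,H_c]\subset \hbar H_c + (\text{terms involving }c)$—more precisely the relation $[y,x]=\hbar(y,x)-\sum_s c_s(\dots)s$ shows all commutators of generators lie in $\hbar H_c$ modulo the group-algebra part, and on the spherical subalgebra the group-algebra contributions are absorbed), so $\tfrac{1}{\hbar}[\,\cdot\,,\,\cdot\,]$ descends to a Poisson bracket on $B_{0,c}$ agreeing with the one from $T^*\h_{\reg}$.

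The only real point requiring care—and the step I would treat most carefully—is the passage from "$H_c$ is $\CC[\hbar]$-free" to "$B_c=\e H_c\e$ is $\CC[\hbar]$-free", i.e.\ that taking the $\e$-isotypic corner commutes with the flatness statement. The clean argument is that $\e H_c\e$ is an $\hbar$-torsion-free $\CC[\hbar]$-submodule of the free module $H_c$, hence free; and that $\e(H_c/\hbar H_c)\e = (\e H_c\e)/\hbar(\e H_c\e)$ because $\e$ acts as an idempotent $\CC$-linear (indeed $\CC[\hbar]$-linear) projector commuting with multiplication by $\hbar$, so $\e(-)\e$ is exact. Equivalently, one can invoke the PBW basis directly: the elements $\e\, g\prod y_i^{m_i}\prod x_i^{n_i}\,\e$, as $g$ ranges over coset representatives and the exponents over $\ZZ_{\ge 0}^r$, span $B_c$ over $\CC[\hbar]$ and their symbols in $\CC[\h^*\times\h_{\reg}][\hbar]^G$ (the associated graded computed via $\gr(B_c)=\e\,\gr(H_c)\,\e$) are linearly independent, so $\gr(B_c)=B_{0,0}[\hbar]=\CC[\h\oplus\h^*]^G[\hbar]$ is $\CC[\hbar]$-free, whence $B_c$ is $\CC[\hbar]$-free. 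This also simultaneously re-proves that $B_{0,c}$ has the expected associated graded and closes the deformation statement.
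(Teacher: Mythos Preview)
Your argument is correct and is precisely the implicit argument the paper has in mind: the proposition is stated without proof, as an immediate consequence of the PBW theorem for $H_c$ (freeness over $\CC[\hbar]$), the fact that $\e(-)\e$ is an exact $\CC[\hbar]$-linear idempotent projector (so $B_c$ is a free summand and $B_c/\hbar B_c=\e H_{0,c}\e=B_{0,c}$), and the already-established commutativity of $B_{0,c}$. Your first two paragraphs constitute a complete proof.

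Two minor remarks. First, the parenthetical about $[H_c,H_c]\subset \hbar H_c+(\text{group-algebra terms})$ is unnecessary and slightly misleading: the clean reason $[B_c,B_c]\subset \hbar B_c$ is simply that $B_c/\hbar B_c=B_{0,c}$ is commutative, which you already know. Second, the ``alternative'' PBW-basis description of $B_c$ at the end is not quite right as stated---the elements $\e\,g\prod y_i^{m_i}\prod x_i^{n_i}\,\e$ are not linearly independent, and ``coset representatives'' is not the right indexing. The correct version of that alternative is the one the paper uses implicitly (and that you also state): $\gr(B_c)=\e\,\gr(H_c)\,\e=\CC[\h\oplus\h^*]^G[\hbar]$, which is free over $\CC[\hbar]$, hence $B_c$ is free. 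Either your direct-summand argument or this associated-graded argument suffices; you don't need the explicit basis.
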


So $B_{0,c}$ carries a Poisson bracket $\{\cdot, \cdot\}$(thus
$M_{c}$ is a Poisson variety), and $B_{c}$
is a quantization of the Poisson bracket, i.e.
if $a,b\in B_{c}$ and $a_{0}, b_{0}$ are the corresponding elements in $B_{0,c}$, 
then $$[a,b]/\hbar\equiv \{a_{0}, b_{0}\} \quad 
(\mathrm{ mod }\,\hbar).$$

\begin{definition}
The Poisson variety $M_c$ is called {\em the Calogero-Moser space} 
of $G,\h$ with parameter $c$. 
\end{definition}

\subsection{The localization lemma}

Let $H_{t,c}^{\mathrm loc}=H_{t,c}[\delta^{-1}]$ be the localization 
of $H_{t,c}$ as a module over $\CC[\h]$ with respect to
the discriminant $\delta$ (a polynomial vanishing to the first order on each reflection plane). 
Define also $B_{t,c}^{\mathrm loc}=\e H^{\mathrm loc}_{t,c}\e$. 

\begin{proposition}\label{prop-loc} 
\begin{enumerate}
\item[(i)] For $t\ne 0$ 
the map $\Theta_{t,c}$ induces an isomorphism 
of algebras $H_{t,c}^{\mathrm loc}\to 
\CC G\ltimes {\cD}(\h_{\reg})$, which restricts 
to an isomorphism $B_{t,c}^{\mathrm loc}\to {\cD}(\h_{\reg})^G$. 

\item[(ii)] The map $\Theta_{0,c}$ induces an isomorphism 
of algebras $H_{0,c}^{\mathrm loc}\to 
\CC G\ltimes \CC[\h^*\times \h_{\reg}]$,
which restricts to an isomorphism 
$B_{0,c}^{\mathrm loc}\to \CC[\h^*\times \h_{\reg}]^G$.
\end{enumerate}
\end{proposition}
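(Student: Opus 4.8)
The plan is to reduce the statement to the commutation relations in $H_{t,c}$ together with the Dunkl operator embedding $\Theta_{t,c}$, which by Proposition \ref{prop-pbw} is injective. The key observation is that localizing at the discriminant $\delta$ makes the Dunkl operators invertible as operators on $\CC(\h)$ in the sense that the correction terms (which all carry a factor $\alpha_s^{-1}$, hence become regular after inverting $\delta$) can be ``undone.'' Concretely, over $\CC[\h_{\reg}]$ one can solve for $\partial_a$ in terms of $D_a$, the group elements, and the functions $(1-\lambda_s)^{-1}\alpha_s^{-1}$: from the defining formula
$$
D_a = \hbar\,\partial_a - \sum_{s\in\cS}\frac{2c_s\alpha_s(a)}{(1-\lambda_s)\alpha_s}(1-s),
$$
we see that in $\CC G\ltimes \cD(\h_{\reg})$ (for $t\ne 0$, with $\hbar=t$) the element $\partial_a$ lies in the subalgebra generated by the $D_a$, by $G$, and by $\CC[\h_{\reg}]=\CC[\h][\delta^{-1}]$. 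Since $\Theta_{t,c}(H_{t,c})$ already contains $x\in\h^*$, $g\in G$, and $D_a$, after inverting $\delta$ it also contains all of $\CC[\h_{\reg}]$ and all $\partial_a$, hence all of $\CC G\ltimes\cD(\h_{\reg})$. Thus $\Theta_{t,c}\colon H_{t,c}^{\mathrm{loc}}\to \CC G\ltimes \cD(\h_{\reg})$ is surjective. Injectivity is inherited from injectivity of $\Theta_{t,c}$ on $H_{t,c}$ together with the fact that localization at the central (in the relevant graded sense) element $\delta$ is exact — more precisely, $H_{t,c}^{\mathrm{loc}}$ is a localization of $H_{t,c}$ as a module, and $\Theta_{t,c}$ being an embedding of $\CC[\h]$-modules stays an embedding after inverting $\delta$. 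This proves (i) at the level of $H_{t,c}$.

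For the spherical part, I would apply the symmetrizer $\e$ on both sides: since $\e\bigl(\CC G\ltimes\cD(\h_{\reg})\bigr)\e = \cD(\h_{\reg})^G$ (as noted in the text before Proposition \ref{prop-zerdiv}), and $\e$ commutes with localization at the $G$-invariant polynomial $\delta$, the isomorphism $H_{t,c}^{\mathrm{loc}}\xrightarrow{\sim}\CC G\ltimes\cD(\h_{\reg})$ restricts to $B_{t,c}^{\mathrm{loc}}=\e H_{t,c}^{\mathrm{loc}}\e \xrightarrow{\sim}\cD(\h_{\reg})^G$. For part (ii), the argument is the same with $\cD(\h_{\reg})$ replaced by $\CC[\h^*\times\h_{\reg}] = \cO(T^*\h_{\reg})$ and Dunkl operators by classical Dunkl operators: from
$$
D_a^0 = p_a - \sum_{s\in\cS}\frac{2c_s\alpha_s(a)}{(1-\lambda_s)\alpha_s}(1-s),
$$
one solves for $p_a$ over $\CC[\h_{\reg}]$, so $\Theta_{0,c}(H_{0,c})[\delta^{-1}]$ contains $\CC[\h_{\reg}]$, all $p_a$, and $G$, hence is all of $\CC G\ltimes\CC[\h^*\times\h_{\reg}]$; injectivity again descends from Proposition \ref{prop-pbw}, and applying $\e$ gives the spherical statement.

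The main obstacle is making the surjectivity argument fully rigorous: one must check that ``solving for $\partial_a$'' really takes place inside the image of $H_{t,c}^{\mathrm{loc}}$, i.e. that the functions $\alpha_s^{-1}$ lie in $\CC[\h][\delta^{-1}]$ (true, since $\alpha_s$ divides $\delta$) and that no subtler $G$-equivariance issue obstructs expressing $\partial_a$ as a finite combination of the available generators — this is really just linear algebra over $\CC[\h_{\reg}]$ in the finitely many operators $D_a$, $\{1-s\}_{s\in\cS}$, but it should be spelled out. A secondary technical point worth stating carefully is the exactness of localization: since we invert the non-zero-divisor $\delta\in\CC[\h]\subset H_{t,c}$ and $H_{t,c}$ is free over $\CC[\h]$ on one side by the PBW theorem, localization is flat and the embedding $\Theta_{t,c}$ remains an embedding after inverting $\delta$; I would invoke Proposition \ref{prop-pbw} for this. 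Everything else is bookkeeping with the explicit formulas for $D_a$ and $D_a^0$.
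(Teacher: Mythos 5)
Your proof is correct and is essentially an elaboration of the paper's one-line argument, which just observes that ``the Dunkl operators have poles only on the reflection hyperplanes.'' You spell out exactly what that observation buys: after inverting $\delta$ the correction terms $\alpha_s^{-1}(1-s)$ become regular, so one can solve for $\partial_a$ (resp.\ $p_a$) and conclude surjectivity, while injectivity descends from the Dunkl embedding via flatness of localization at $\delta$, and the spherical statement follows by applying $\e(\cdot)\e$ — all in line with the paper's intended argument.
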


\begin{proof}
This follows immediately from the fact that
the Dunkl operators have poles only on the reflection hyperplanes.  
\end{proof}

Since $\gr(B_{0,c})=B_{0,0}=\CC[\h^*\oplus \h]^G$,
we get the following geometric corollary. 

\begin{corollary} 
\begin{enumerate}
\item[(i)] The family of Poisson varieties 
$M_c$ is a flat deformation of the Poisson variety $M_0:=(\h\times \h^*)/G$. 
In particular, $M_c$ is smooth outside of a subset of codimension $2$. 

\item[(ii)] 
We have a natural map $\beta_c: M_c\to \h/G$, such that 
$\beta_c^{-1}(\h_{\reg}/G)$ is isomorphic to $( \h_{\reg}\times\h^*)/G$.
The Poisson structure on $M_{c}$ is obtained by extension of the symplectic Poisson structure on 
$( \h_{\reg}\times\h^*)/G$.
\end{enumerate}
\end{corollary}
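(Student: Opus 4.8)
The corollary has two parts, and both are essentially dictionary translations of facts already in hand. For part (i), the key input is the identification $\gr(B_{0,c})=B_{0,0}=\CC[\h\times\h^*]^G$ proved in Proposition \ref{prop-zerdiv} (via Hilbert's theorem and the PBW theorem). I would first observe that this says $B_{0,c}$ is, as a graded deformation, a flat family over $\CC$ of commutative algebras all having the same associated graded ring $\CC[\h\times\h^*]^G$; passing to $\Spec$ gives a flat family of affine Poisson varieties $M_c$ whose special fiber at $c=0$ is $M_0=(\h\times\h^*)/G$. Flatness is exactly the statement that $\gr$ does not jump, which is what Proposition \ref{prop-zerdiv} provides. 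For the smoothness-in-codimension-$2$ claim, I would argue that $(\h\times\h^*)/G$ is smooth away from a closed subset of codimension $\geq 2$ — indeed the singular locus of $V/G$ for $V$ a faithful $G$-representation is contained in the image of the union of fixed loci of non-identity elements, and for $V=\h\oplus\h^*$ every non-identity $g$ acts with fixed space of codimension $\geq 2$ (if $g$ fixes a hyperplane in $\h\oplus\h^*$ it would have to be a reflection, but a reflection in $\GL(\h)$ acts on $\h\oplus\h^*$ with a $(\dim\h - 1)+(\dim\h -1)$-dimensional, hence codimension-$2$, fixed space). Then I would invoke the standard fact that in a flat family the locus where fibers are singular is closed, and since it misses a codimension-$2$-complement set in the special fiber, semicontinuity forces $M_c$ to also be smooth outside codimension $2$ — alternatively, and more cleanly, one can note $M_c$ is irreducible (the Corollary after Proposition \ref{prop-zerdiv}) and $\gr$-argue directly that its singular locus, being a graded subvariety whose $c=0$ specialization has codimension $\geq 2$, itself has codimension $\geq 2$.

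**Part (ii).** Here the content is Proposition \ref{prop-loc}(ii): the Dunkl embedding $\Theta_{0,c}$ induces an isomorphism $B_{0,c}^{\mathrm{loc}}\xrightarrow{\sim}\CC[\h^*\times\h_{\reg}]^G$. I would define $\beta_c\colon M_c=\Spec B_{0,c}\to \h/G=\Spec\CC[\h]^G$ to be the morphism dual to the algebra map $\CC[\h]^G\to B_{0,c}$ coming from the inclusion of $\CC[\h]^G$ (the subalgebra generated by the $x\in\h^*$, which is central in $B_{0,c}$ modulo nothing — it genuinely commutes since $B_{0,c}$ is commutative). Localizing at the discriminant $\delta$ on the base corresponds, on $\Spec$, to restricting to the preimage $\beta_c^{-1}(\h_{\reg}/G)$, and the localized algebra is $B_{0,c}[\delta^{-1}]=B_{0,c}^{\mathrm{loc}}\cong\CC[\h^*\times\h_{\reg}]^G=\CC[(\h_{\reg}\times\h^*)/G]$ by Proposition \ref{prop-loc}(ii); taking $\Spec$ gives the asserted isomorphism $\beta_c^{-1}(\h_{\reg}/G)\cong(\h_{\reg}\times\h^*)/G$. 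Finally, for the Poisson statement: $B_c$ is a flat $\CC[\hbar]$-quantization of $B_{0,c}$ (the Proposition just before the definition of $M_c$), so $B_{0,c}$ inherits a Poisson bracket; under the localization isomorphism of Proposition \ref{prop-loc}(i)–(ii) this bracket is carried to the one coming from $\mathcal{D}(\h_{\reg})^G$ viewed as a quantization of $\CC[\h^*\times\h_{\reg}]^G=\CC[T^*\h_{\reg}/G]=\CC[T^*\h_{\reg}]^G$, which is by construction the Poisson bracket of the canonical symplectic form on $T^*\h_{\reg}$ descended to the $G$-quotient. Since $\beta_c^{-1}(\h_{\reg}/G)$ is dense in the irreducible variety $M_c$, the Poisson structure on all of $M_c$ is determined by (is the unique extension of) this symplectic Poisson structure on the open part.

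**Main obstacle.** None of the steps involves a hard computation; the one point requiring genuine care is the codimension-$2$ smoothness assertion in part (i). The issue is to make rigorous the passage "the special fiber is smooth in codimension $2$, hence so is the general fiber" — naive semicontinuity goes the wrong way (singularities can only *improve* in the general fiber of a flat family, which is what we want, but one must phrase it correctly). The cleanest route, which I would adopt, is the graded argument: the singular locus $\mathrm{Sing}(\Spec B_{0,c})$ is cut out by a $\CC^*$-equivariant (graded) ideal — the Fitting/Jacobian ideal is compatible with the filtration — so $\gr$ of the coordinate ring of $\mathrm{Sing}(M_c)$ surjects onto (a quotient of) the coordinate ring of $\mathrm{Sing}(M_0)$, forcing $\dim\mathrm{Sing}(M_c)\leq\dim\mathrm{Sing}(M_0)\leq\dim M_0-2=\dim M_c-2$. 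I should double-check that the Jacobian ideal is indeed filtration-compatible in the sense needed — this is the only place where a small amount of commutative-algebra bookkeeping is unavoidable — but it is standard and poses no real difficulty.
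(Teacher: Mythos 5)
Your proof is correct and follows the route the paper leaves implicit: part (i) comes from the identity $\gr B_{0,c}=B_{0,0}=\CC[\h\oplus\h^*]^G$ together with the fact that $(\h\oplus\h^*)/G$ is smooth in codimension two (no element of $G$ acts on $\h\oplus\h^*$ as a pseudoreflection), and part (ii) is a direct translation of Proposition \ref{prop-loc}(ii) and the preceding discussion of the Poisson bracket on $B_{0,c}$. The bookkeeping you flag does work out: in a graded presentation of $B_{0,c}$ lifting one of $B_{0,0}$, the leading term of each Jacobian minor for $M_c$ either equals the corresponding minor for $M_0$ or that minor vanishes, so the associated graded of the Jacobian ideal of $B_{0,c}$ contains the Jacobian ideal of $B_{0,0}$, whence $\dim\mathrm{Sing}(M_c)\le\dim\mathrm{Sing}(M_0)$; alternatively one can avoid the Jacobian entirely by observing that the relative singular locus in the flat $\CC^*$-equivariant family $\{M_{tc}\}_{t\in\mathbb{A}^1}$ is closed and $\CC^*$-stable, so its fiber over $0$ has dimension at least that of its fiber over $1$.
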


\begin{example} Let $W=\ZZ_2$, $\h=\CC$. 
Then $B_{0,c}=\langle x^{2}, xp, p^{2}-c^{2}/x^{2}\rangle$.
Let $X:=x^{2}, Z:=xp$ and $Y:= p^{2}-c^{2}/x^{2}$.
Then $Z^2-XY=c^{2}$.
So $M_c$ is isomorphic to the quadric 
$Z^2-XY=c^{2}$ in the 3-dimensional space and it is smooth for $c\neq 0$.   
\end{example}

\subsection{Category $\cO$ for rational Cherednik algebras}
{\label{sec:catO}}
From the PBW theorem, we see that
$H_{1,c}=S\h^*\otimes\CC G\otimes S\h$.
It is similar to the
structure of the universal enveloping algebra of a simple Lie
algebra: $U(\g)=U(\n_-)\otimes U(\h)\otimes U(\n_+)$. 
Namely, the subalgebra $\CC G$ plays the role of the Cartan
subalgebra, and the subalgebras $S\h^*$ and $S\h$ 
play the role of the positive and negative nilpotent
subalgebras. This similarity allows one to define 
and study the category $\cO$ analogous to the
Bernstein-Gelfand-Gelfand category $\cO$ for simple Lie algebras.

\begin{definition} 
The category $\cO_c(G, \h)$ is the category of modules over 
$H_{1,c}(G, \h)$ which are finitely generated over $S\h^{*}$ and locally finite
under $S\h$ (i.e., for $M\in \cO_c(G, \h)$, $\forall v\in M$, 
$(S\h)v$ is finite dimensional). 
\end{definition}

If $M$ is a locally finite $(S\h)^{G}$-module, then 
$$M=\oplus_{\lambda\in \h^{*}/G}M_{\lambda}, 
$$
where $$M_{\lambda}=\{v\in M| \forall p\in (S\h)^{G}, \exists N \, s.t. 
\, (p-\lambda(p))^{N}v=0\},
$$
(notice that $\h^{*}/G=\mathrm{Specm}(S\h)^{G}$).

\begin{proposition}
$M_{\lambda}$ are $H_{1, c}$-submodules.
\end{proposition}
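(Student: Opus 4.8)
The plan is to show that $M_\lambda$ is stable under the action of each of the generating subalgebras $S\h^*$, $\CC G$, and $S\h$ of $H_{1,c}$. Since $M_\lambda$ is already $(S\h)^G$-stable by its very construction as a generalized eigenspace, the content is in the commutation relations of these generators with elements of $(S\h)^G$. First I would treat the easy cases. The subalgebra $S\h$ commutes with $(S\h)^G$, so for $p\in(S\h)^G$ and $y\in S\h$ we have $(p-\lambda(p))^N(yv)=y(p-\lambda(p))^Nv$, and hence $yv\in M_\lambda$ whenever $v\in M_\lambda$; this shows $M_\lambda$ is an $S\h$-submodule. For $g\in G$, the relation $gp=p g$ for $p\in(S\h)^G$ (by $G$-invariance of $p$) gives $(p-\lambda(p))^N(gv)=g(p-\lambda(p))^Nv$, so $gv\in M_\lambda$ as well.

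The remaining and main point is stability under $S\h^*$, equivalently under each $x\in\h^*$. Here $x$ does not commute with $(S\h)^G$, so I would instead argue that the commutator $[p,x]$ for $p\in(S\h)^G$ lies in a controlled subalgebra. From the defining relations of $H_{1,c}$ (Proposition \ref{prop-RCA} with $\hbar=1$), $[y,x]\in \CC G$ for $y\in\h$, $x\in\h^*$; iterating via the Leibniz rule, $[p,x]$ for $p\in S\h$ lies in $(S\h)\cdot\CC G$, and more precisely has $\h$-degree strictly smaller than that of $p$. The key claim is then: if $v\in M_\lambda$, then $xv$ lies in the generalized $\lambda$-eigenspace. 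To see this, note $(S\h)v$ is finite-dimensional (local finiteness), and $(S\h)\cdot\CC G\cdot v$ is still finite-dimensional since $\CC G v$ is finite-dimensional and $M_\lambda$ is $\CC G$- and $S\h$-stable by the previous paragraph. So $x$ acts on the finite-dimensional space $U := (S\h)\cdot\CC G\cdot(S\h)\cdot v \cap M_\lambda$-type span; more carefully, I would let $U$ be the (finite-dimensional) span of $\{q w : q\in S\h,\ w\in \CC G v\}$, which contains $v$ and is $(S\h)^G$-stable with all generalized eigenvalues equal to $\lambda$, and show $[p^{n},x]$ maps $U$ into $U$ for all $n$. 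Then for suitable $N$, $(p-\lambda(p))^N$ kills $U$, and expanding $(p-\lambda(p))^N x v$ by moving $x$ to the right past each factor $(p-\lambda(p))$ using $[p,x]\in(S\h)\cdot\CC G$, every term is an operator in $(S\h)\cdot\CC G$ applied to $(p-\lambda(p))^{N'}U$ for some $N'$, hence lies in the nilpotent part, giving $xv\in M_\lambda$.

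I expect the technical heart — and the main obstacle — to be the bookkeeping that makes the last step rigorous: one must organize the commutator expansion so that pushing $x$ to the right through a high power of $p-\lambda(p)$ produces only terms where some large power of $p-\lambda(p)$ still acts on the fixed finite-dimensional $(S\h)^G$-stable subspace containing $v$ (on which $p-\lambda(p)$ is nilpotent). The clean way to package this is: since $[p,x]=\sum_i a_i g_i$ with $a_i\in S\h$ of lower $\h$-degree and $g_i\in G$, and $S\h$ together with $\CC G$ preserves $M_\lambda$ and preserves generalized $\lambda$-eigenspaces, the operator $x$ maps the finite-dimensional generalized $\lambda$-eigenspace of $p$ on $(S\h)\CC G v$ into the analogous finite-dimensional space, and a descending induction on $\h$-degree of $p$ (or on $\dim$ of the relevant invariant subspace) closes the argument. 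Once that lemma on finite-dimensionality and eigenvalue-preservation is in place, stability of $M_\lambda$ under $x\in\h^*$, hence under all of $H_{1,c}=S\h^*\otimes\CC G\otimes S\h$, follows, proving the proposition.
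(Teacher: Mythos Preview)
Your approach is correct in outline and can be made rigorous, but it differs from the paper's proof in an instructive way.

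The paper does not work with the generic commutator $[p,x]\in S\h\cdot\CC G$ for arbitrary $p\in S\h$. Instead it proves the much sharper identity: for $Q\in(S\h)^G$ one has $[x,Q]=-\partial_x Q\in S\h$ (no group elements at all). This is established by first checking the dual statement $[y,P]=\partial_y P$ for $P\in(S\h^*)^G$ in the polynomial representation (where the Dunkl reflection terms vanish on $G$-invariant polynomials), and then transporting it via the natural isomorphism $H_{1,c}(G,\h)\cong H_{1,c}(G,\h^*)$. Once one knows $[x,Q]\in S\h$, the commutator \emph{commutes with $Q$}, and the whole argument collapses to a single line:
\[
(Q-\lambda(Q))^{N+1}xv=(N+1)(\partial_x Q)(Q-\lambda(Q))^N v=0.
\]

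Your route avoids the Dunkl representation and the duality isomorphism entirely, using only the defining relations and local finiteness. That is a genuine advantage in elementarity. The price is the bookkeeping you anticipated: since your $[p,x]$ lands in $S\h\cdot\CC G$ and need not commute with $p$, you cannot get the clean exponent shift $N\mapsto N+1$. The correct way to finish your argument (which your sketch gestures at but does not quite nail down) is: set $U=S\h\cdot\CC G\cdot v$, check it is finite-dimensional and stable under both $S\h$ and $\CC G$, hence under $[p,x]$; choose $N_0$ with $(p-\lambda(p))^{N_0}|_U=0$; then the commutator expansion gives $(p-\lambda(p))^{N_0}xv\in U$, so $(p-\lambda(p))^{2N_0}xv=0$. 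Your phrase ``every term is an operator in $(S\h)\cdot\CC G$ applied to $(p-\lambda(p))^{N'}U$'' is not quite the right description of the terms---they are $q^k[q,x]q^{N-1-k}v$, which lie in $U$ but are not individually zero---so you need this two-step argument rather than a single large power.
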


\begin{proof}
Note first that we have an isomorphism 
$\mu: H_{1,c}(G, \h)\cong H_{1,c}(G, \h^{*})$, which is given 
by $x_{a}\mapsto y_{a}, y_{b}\mapsto -x_{b}, g\mapsto g$.
Now let $x_{1}, \ldots, x_{r}$ be a basis of $\h^*$ 
and $y_{1}, \ldots, y_{r}$ a basis of $\h$.
Suppose $P=P(x_{1}, \ldots, x_{r})\in (S\h^{*})^{G}$. 
Then we have
$$
[y, P]=\frac{\partial}{\partial y}P\in S\h^{*},
\text{ where }y\in \h,
$$
(this follows from the fact that both sides act in the same way
in the polynomial representation, which is faithful). 
So using the isomorphism $\mu$, we conclude that if $Q\in (S\h)^{G}, 
Q=Q(y_{1}, \ldots, y_{r})$, 
then $[x, Q]=-\partial_{x}Q$ for $x\in \h^*$.

Now, to prove the proposition, 
the only thing we need to check is that $M_{\lambda}$ is
invariant under $x\in \h^*$.
For any $v\in M_{\lambda}$, we have $(Q-\lambda(Q))^{N}v=0$ for some $N$.
Then 
$$
(Q-\lambda(Q))^{N+1}xv=(N+1)\partial_{x}Q\cdot(Q-\lambda(Q))^{N}v=0.
$$
So $xv\in M_{\lambda}$.

\end{proof}

\begin{corollary}
We have the following decomposition:
$$\cO_{c}(G, \h)
=\bigoplus_{\lambda\in \h^{*}/G}\cO_{c}(G, \h)_{\lambda},$$
where $\cO_{c}(G, \h)_{\lambda}$ is the subcategory of
modules where $(S\h)^{G}$
acts with generalized eigenvalue $\lambda$.
\end{corollary}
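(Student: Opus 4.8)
The plan is to deduce the corollary directly from the preceding Proposition together with the block decomposition of locally finite $(S\h)^G$-modules that was recorded just above it. First I would fix a module $M\in\cO_c(G,\h)$. Since $M$ is locally finite over $S\h$, it is in particular locally finite over the subalgebra $(S\h)^G$, so the decomposition $M=\bigoplus_{\lambda\in\h^*/G}M_\lambda$ into generalized eigenspaces for $(S\h)^G$ is available, and by the Proposition each $M_\lambda$ is an $H_{1,c}$-submodule of $M$.

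Next I would verify the two finiteness points needed to make this an honest decomposition inside $\cO_c$. For finiteness of the index set: choose generators $v_1,\dots,v_k$ of $M$ over $S\h^*$; by local finiteness each $v_i$ lies in a finite sub-sum $\bigoplus_{\lambda\in F}M_\lambda$ for some finite $F\subset\h^*/G$. Since, as shown in the proof of the Proposition, each $x\in\h^*$ preserves every $M_\lambda$, the whole algebra $S\h^*$ preserves each $M_\lambda$, hence $M=S\h^*\cdot\{v_1,\dots,v_k\}\subseteq\bigoplus_{\lambda\in F}M_\lambda$, so $M_\lambda=0$ for $\lambda\notin F$. Each $M_\lambda$ is a direct summand of $M$ as an $S\h^*$-module and $S\h^*$ is Noetherian, so $M_\lambda$ is finitely generated over $S\h^*$; it is locally finite over $S\h$ because $M$ is. Therefore $M_\lambda\in\cO_c(G,\h)$ for every $\lambda$.

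Finally I would deal with morphisms. Any $H_{1,c}$-module map $f\colon M\to N$ is in particular $(S\h)^G$-linear, hence carries a generalized eigenvector for eigenvalue $\lambda$ to a generalized eigenvector for the same $\lambda$ (or to $0$), so $f(M_\lambda)\subseteq N_\lambda$; consequently $\Hom(M_\lambda,N_\mu)=0$ whenever $\lambda\ne\mu$, and any $f$ is the direct sum of its components $f_\lambda\colon M_\lambda\to N_\lambda$. Defining $\cO_c(G,\h)_\lambda$ to be the full subcategory of those modules on which $(S\h)^G$ acts with generalized eigenvalue $\lambda$ (equivalently, $M=M_\lambda$), the above exhibits every object of $\cO_c(G,\h)$ canonically as a finite direct sum of objects of the subcategories $\cO_c(G,\h)_\lambda$, with no nonzero morphisms between distinct blocks — precisely the asserted decomposition $\cO_c(G,\h)=\bigoplus_{\lambda\in\h^*/G}\cO_c(G,\h)_\lambda$. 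The only mildly delicate step is the finiteness of the set of nonzero blocks, and that is exactly where one uses the interplay between finite generation over $S\h^*$ and the $S\h^*$-stability of each $M_\lambda$; everything else is formal.
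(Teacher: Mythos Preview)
Your proof is correct and is exactly the argument the paper has in mind: the paper's proof is the single line ``Directly from the definition and the proposition,'' and you have carefully unpacked precisely those details (block decomposition of locally finite $(S\h)^G$-modules, $H_{1,c}$-stability of each $M_\lambda$ from the Proposition, finiteness of the nonzero blocks via finite generation over $S\h^*$, and compatibility with morphisms). There is no substantive difference in approach, only in level of detail.
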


\begin{proof}
Directly from the definition and the proposition.
\end{proof}

Note that $\cO_c(G,\h)_\lambda$ is an abelian category closed
under taking subquotients and extensions. 

\subsection{The grading element}

Let 
\begin{equation}{\label{eqn:h}}
\bh=\sum_{i}x_{i}y_{i}+\frac{1}{2}\dim\h-\sum_{s\in \cS}\frac{2c_{s}}{1-\lambda_{s}}s.
\end{equation}

\begin{proposition}
We have
$$[\bh,x]=x, \,x\in \h^{*},\quad [\bh,y]=-y, \,y\in \h.$$
\end{proposition}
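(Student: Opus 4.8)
The plan is to verify the two commutation relations directly from the defining relations of $H_{1,c}$, using the formula \eqref{eqn:h} for $\bh$. The element $\bh$ has three pieces: the ``Euler'' part $\sum_i x_i y_i$, the scalar $\tfrac{1}{2}\dim\h$, and the group-algebra part $-\sum_{s\in\cS}\tfrac{2c_s}{1-\lambda_s}s$. Since the scalar is central, it contributes nothing. By the symmetry $\mu: H_{1,c}(G,\h)\cong H_{1,c}(G,\h^*)$ recorded in the previous subsection (which sends $\bh$ to $-\bh$ up to the scalar shift, swapping the roles of $\h$ and $\h^*$), it suffices to prove one of the two identities, say $[\bh,x]=x$ for $x\in\h^*$; the other then follows formally. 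So I would focus on computing $[\sum_i x_iy_i, x]$ and $[\sum_s \tfrac{2c_s}{1-\lambda_s}s, x]$ and checking they add up to $x$.

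First I would compute $[\sum_i x_iy_i,\,x]$. Since the $x_i$ commute with $x$, this equals $\sum_i x_i[y_i,x]$. Using the commutation relation $[y_i,x]=(y_i,x)-\sum_{s\in\cS}c_s(y_i,\alpha_s)(x,\alpha_s^\vee)s$ from Proposition~\ref{prop-RCA} (with $t=1$), we get
\[
\sum_i x_i[y_i,x]=\sum_i (y_i,x)\,x_i-\sum_{s\in\cS}c_s(x,\alpha_s^\vee)\Bigl(\sum_i (y_i,\alpha_s)x_i\Bigr)s.
\]
The first term is $\sum_i (y_i,x)x_i = x$ because $\{x_i\}$ is dual to $\{y_i\}$. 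The second term is $-\sum_{s\in\cS}c_s(x,\alpha_s^\vee)\,\alpha_s\, s$, since $\sum_i(y_i,\alpha_s)x_i=\alpha_s$. So $[\sum_i x_iy_i,x]=x-\sum_{s\in\cS}c_s(x,\alpha_s^\vee)\alpha_s s$.

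Next I would compute the contribution of the group part: $-\sum_{s\in\cS}\tfrac{2c_s}{1-\lambda_s}[s,x]=-\sum_{s\in\cS}\tfrac{2c_s}{1-\lambda_s}(sx - xs)=-\sum_{s\in\cS}\tfrac{2c_s}{1-\lambda_s}(sxs^{-1}-x)s$. Here $sxs^{-1}$ is the action of $s$ on $x\in\h^*$. Using the identity $x-sx=\tfrac{1-\lambda_s}{2}(x,\alpha_s^\vee)\alpha_s$ from the proof of Proposition~\ref{prop-maincom} — equivalently $sxs^{-1}-x=-\tfrac{1-\lambda_s}{2}(x,\alpha_s^\vee)\alpha_s$ — this becomes $\sum_{s\in\cS}c_s(x,\alpha_s^\vee)\alpha_s s$. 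Adding this to the Euler-part computation, the two $\sum_s$ terms cancel and we are left with $[\bh,x]=x$, as desired. Then $[\bh,y]=-y$ follows by applying $\mu$, or by the same direct computation with the roles of $\h,\h^*$ interchanged (using $[y_i,x_j]$ again and $y-sy=\tfrac{1-\lambda_s^{-1}}{2}(y,\alpha_s)\alpha_s^\vee$). The only real bookkeeping hazard is keeping the normalizations straight — the factor $\tfrac{2c_s}{1-\lambda_s}$ in $\bh$, the normalization $\langle\alpha_s,\alpha_s^\vee\rangle=2$, and the sign in $sxs^{-1}-x$ — but these are routine and dovetail exactly, which is precisely why $\bh$ is defined with those coefficients.
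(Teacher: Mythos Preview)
Your proof is correct and follows essentially the same route as the paper: compute $[\sum_i x_iy_i,x]$ using the defining commutation relation, compute $[s,x]$ via the identity $x-sx=\tfrac{1-\lambda_s}{2}(x,\alpha_s^\vee)\alpha_s$, and observe the two $\sum_s$-terms cancel. The paper handles $[\bh,y]=-y$ by saying ``proved similarly'' rather than invoking $\mu$, but this is a cosmetic difference.
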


\begin{proof}
Let us prove the first relation; the second one is proved
similarly. We have
\begin{eqnarray*}
[\bh,x]&=&\sum_{i} x_{i}[y_{i},x]-\sum_{s\in \cS}\frac{2c_{s}}{1-\lambda_{s}}\cdot
\frac{\lambda_{s}-1}{2}(\alpha_{s}^{\vee},x)\alpha_{s}\cdot s\\
&=&\sum_{i} x_{i}(y_{i},x)-\sum_{i}x_{i}\sum_{s\in \cS}c_{s}(\alpha_{s}^{\vee},x)(\alpha_s,y_i)s
+\sum_{s\in \cS}c_s(\alpha_{s}^{\vee},x)\alpha_{s}\cdot s.
\end{eqnarray*}
The last two terms cancel since $\sum_{i}x_{i}(\alpha_{s}, y_{i})=\alpha_{s}$, 
so we get $\sum_{i}x_{i}(y_{i},x)=x$. 
\end{proof}

\begin{proposition}\label{sl2rep}
Let $G=W$ be a real reflection group. 
Let 
$$\bh=\sum_{i}x_{i}y_{i}+\frac{1}{2}\dim\h-\sum_{s\in \cS}c_{s}s, \quad
{\mathbf E}=-\dfrac{1}{2}\sum_{i}x_{i}^{2},\quad
{\mathbf F}=\dfrac{1}{2}\sum_{i}y_{i}^{2}.$$
Then
\begin{enumerate}
\item[(i)] $\bh=\sum_{i}(x_{i}y_{i}+y_{i}x_{i})/2$;

\item[(ii)] $\bh, {\mathbf E}, {\mathbf F}$ form an $\mathfrak{sl}_{2}$-triple.
\end{enumerate}
\end{proposition}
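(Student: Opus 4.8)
The plan is to reduce the whole statement to the grading element of \eqref{eqn:h}. Since $W$ is a real reflection group, every $s\in\cS$ has $\lambda_s=-1$, so $\tfrac{2c_s}{1-\lambda_s}=c_s$, and the element $\bh$ in the statement is exactly the grading element of \eqref{eqn:h}; in particular the relations $[\bh,x]=x$ for $x\in\h^*$ and $[\bh,y]=-y$ for $y\in\h$ are already available. I work in $H_{1,c}(W,\h)$ throughout, and fix an orthonormal basis $y_1,\dots,y_r$ of $\h$ with dual basis $x_1,\dots,x_r$ of $\h^*$.

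For (i), I would sum the defining relation over this basis. Using $[y_i,x_i]=(y_i,x_i)-\sum_{s\in\cS}c_s(y_i,\alpha_s)(x_i,\alpha_s^\vee)s$ together with $\sum_i(y_i,x_i)=\dim\h$ and $\sum_i(y_i,\alpha_s)(x_i,\alpha_s^\vee)=\langle\alpha_s,\alpha_s^\vee\rangle=2$ (the last because $\sum_i(x_i,\alpha_s^\vee)y_i=\alpha_s^\vee$), one gets $\sum_i[y_i,x_i]=\dim\h-2\sum_{s\in\cS}c_s s$, whence
$$
\tfrac12\sum_i(x_iy_i+y_ix_i)=\sum_i x_iy_i+\tfrac12\sum_i[y_i,x_i]=\sum_i x_iy_i+\tfrac12\dim\h-\sum_{s\in\cS}c_s s=\bh .
$$

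For (ii), the relations $[\bh,\mathbf E]=2\mathbf E$ and $[\bh,\mathbf F]=-2\mathbf F$ are immediate from $[\bh,x]=x$, $[\bh,y]=-y$ (for instance $[\bh,\mathbf E]=-\tfrac12\sum_i(x_i[\bh,x_i]+[\bh,x_i]x_i)=-\sum_i x_i^2=2\mathbf E$), so the real point is $[\mathbf E,\mathbf F]=\bh$. I would write $[\mathbf E,\mathbf F]=-\tfrac14\sum_{i,j}[x_i^2,y_j^2]$ and expand each bracket by the identity $[a^2,b^2]=abc+acb+bca+cba$ with $c=[a,b]$, taken at $a=x_i$, $b=y_j$, $c_{ij}=[x_i,y_j]=-\delta_{ij}+\sum_{s\in\cS}c_s(y_j,\alpha_s)(x_i,\alpha_s^\vee)s$. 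The contribution of the scalar part $c_{ij}\mapsto-\delta_{ij}$ is $-\tfrac14\sum_i(-2)(x_iy_i+y_ix_i)=\tfrac12\sum_i(x_iy_i+y_ix_i)=\bh$ by (i), so it remains to kill the reflection part. Summing over $i,j$ and using that (thanks to the orthonormal basis) $\sum_i(x_i,\alpha_s^\vee)x_i$ is a multiple of $\alpha_s\in\h^*$ while $\sum_j(y_j,\alpha_s)y_j$ is a multiple of $\alpha_s^\vee\in\h$, the reflection part collapses, up to the overall factor $-\tfrac14$, to $\sum_{s\in\cS}c_s(\gamma_s\beta_s s+\gamma_s s\beta_s+\beta_s s\gamma_s+s\beta_s\gamma_s)$, where $\gamma_s\in\h^*$ is proportional to $\alpha_s$ and $\beta_s\in\h$ is proportional to $\alpha_s^\vee$. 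Because $s$ negates both $\alpha_s$ and $\alpha_s^\vee$, the semidirect-product relation in $H_{1,c}$ gives $s\beta_s=-\beta_s s$ and $s\gamma_s=-\gamma_s s$; substituting, $\gamma_s s\beta_s=-\gamma_s\beta_s s$ cancels the first term and $\beta_s s\gamma_s=-\beta_s\gamma_s s$ cancels $s\beta_s\gamma_s=\beta_s\gamma_s s$, so the reflection part is $0$ and $[\mathbf E,\mathbf F]=\bh$.

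The only real obstacle is the bookkeeping in the reflection part: performing the contractions $\sum_i(x_i,\alpha_s^\vee)x_i$ and $\sum_j(y_j,\alpha_s)y_j$ correctly, and keeping track of where $s$ sits relative to $x_i$ and $y_j$ in each of the four summands of $[a^2,b^2]$, where a sign slip is easy. As an independent check one can instead evaluate $[\mathbf E,\mathbf F]$ on the faithful polynomial representation of $H_{1,c}$: there $\mathbf E$ acts by multiplication by $-\tfrac12\sum_i x_i^2$, $\mathbf F$ by $\tfrac12\sum_i D_{y_i}^2$, and $\bh$ acts on a homogeneous polynomial of degree $d$ by $d+\tfrac12\dim\h-\sum_{s\in\cS}c_s$, so that $[\mathbf E,\mathbf F]=\bh$ becomes the familiar $\mathfrak{sl}_2$-relation for the Dunkl Laplacian, provable by a short direct computation with Dunkl operators.
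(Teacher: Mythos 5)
Your proposal is correct, and it is exactly the ``direct calculation'' the paper leaves to the reader: you verify (i) by summing the commutation relation $[y_i,x_i]=\delta_{ii}-\sum_s c_s(y_i,\alpha_s)(x_i,\alpha_s^\vee)s$ over an orthonormal basis, and then prove $[\mathbf E,\mathbf F]=\bh$ by expanding $[x_i^2,y_j^2]$ and showing the reflection contribution cancels because $s$ anticommutes with the vectors $\gamma_s\propto\alpha_s$ and $\beta_s\propto\alpha_s^\vee$ obtained from the contractions, while the scalar contribution reproduces $\bh$ via (i). All the sign bookkeeping checks out, and your alternative sanity check via the polynomial (Dunkl) representation is a sound independent verification.
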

\begin{proof}
A direct calculation.
\end{proof}

\begin{theorem}\label{finnilp}
Let $M$ be a module over $H_{1,c}(G, \h)$. 
\begin{enumerate}
\item[(i)] If $\h$ acts locally nilpotently on $M$, then   
$\bh$ acts locally finitely on $M$. 
\item[(ii)] If $M$ is finitely generated over $S\h^{*}$,
then $M\in \cO_{c}(G, \h)_{0}$ 
if and only if $\bh$ acts locally finitely on $M$.
\end{enumerate}
\end{theorem}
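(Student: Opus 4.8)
The plan is to reduce both statements to one computation: on an $H_{1,c}$-module induced from a finite-dimensional $\CC G\ltimes S\h$-module on which $\h$ acts by zero, the grading element $\bh$ preserves the (finite-dimensional) homogeneous components and so is locally finite. For part (i), I would fix $v\in M$; since $\h$ acts locally nilpotently and the elements of $\h$ commute, $S^N\h\cdot v=0$ for some $N$. Set $V_0:=\CC G\cdot S\h\cdot v$. Using $S^N\h\cdot v=0$ and that $\CC G$ preserves each $S^k\h\subseteq S\h$, one checks $V_0$ is finite-dimensional, stable under $\CC G$ and under $S\h$, and still killed by $S^N\h$. By the PBW theorem (Proposition \ref{prop-pbw}) the submodule $M':=H_{1,c}v=H_{1,c}V_0$ equals $S\h^*\cdot V_0$. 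Because $\h$ acts nilpotently on $V_0$, the chain $V_0\supseteq\h V_0\supseteq\cdots\supseteq\h^N V_0=0$ consists of $\CC G\ltimes S\h$-submodules and yields an $H_{1,c}$-submodule filtration $M'=M'_{(0)}\supseteq M'_{(1)}\supseteq\cdots\supseteq M'_{(N)}=0$ with $M'_{(k)}:=H_{1,c}(\h^k V_0)$. In $M'_{(k)}/M'_{(k+1)}$ the image of $\h^k V_0$ is a finite-dimensional $\CC G$-module on which $\h$ acts by zero (since $\h\cdot\h^k V_0=\h^{k+1}V_0\subseteq M'_{(k+1)}$) and which generates the subquotient over $H_{1,c}$; hence each $M'_{(k)}/M'_{(k+1)}$ is a quotient of an induced module $\Delta(V'):=H_{1,c}\otimes_{\CC G\ltimes S\h}V'$ with $\h$ acting by zero on $V'$.

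The core step will be that $\bh$ is locally finite on every such $\Delta(V')$. By PBW, $\Delta(V')=\bigoplus_{m\ge0}S^m\h^*\otimes V'$. I would write $\bh=\sum_i x_iy_i+\bh^{\circ}$ with $\{x_i\},\{y_i\}$ dual bases of $\h^*$ and $\h$ and $\bh^{\circ}=\tfrac12\dim\h-\sum_{s\in\cS}\tfrac{2c_s}{1-\lambda_s}s\in\CC G$. For $x_I\in S^m\h^*$ and $w\in V'$ one has $y_i(x_I\otimes w)=[y_i,x_I]\otimes w$ (using $y_iw=0$), and since $[y_i,x_I]\in(S^{m-1}\h^*)\,\CC G$ this gives $\sum_i x_iy_i(x_I\otimes w)\in S^m\h^*\otimes V'$; likewise $\bh^{\circ}(x_I\otimes w)\in S^m\h^*\otimes V'$ because $\bh^{\circ}\in\CC G$. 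Thus $\bh$ preserves each finite-dimensional summand $S^m\h^*\otimes V'$, so it is locally finite on $\Delta(V')$, hence on each $M'_{(k)}/M'_{(k+1)}$. Finally I would lift through the finite filtration of $M'$: for $u\in M'$ choose inductively nonzero polynomials $q_k$ with $q_k(\bh)\cdots q_1(\bh)u\in M'_{(k)}$, so $q_N(\bh)\cdots q_1(\bh)u=0$; applying this to $u=v$ and letting $v$ vary over $M$ proves (i).

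For part (ii), in the direction $M\in\cO_c(G,\h)_0\Rightarrow\bh$ locally finite, I would first show $\h$ acts locally nilpotently and then invoke (i). Since $S\h$ is a finite module over $(S\h)^G$, fix a finite-dimensional subspace $\Lambda\ni1$ with $S\h=(S\h)^G\Lambda$. For $v\in M$ the augmentation ideal $(S\h)^G_+$ acts nilpotently on the finite-dimensional space $\Lambda v$, say $((S\h)^G_+)^N\Lambda v=0$, whence $((S\h)^G_+)^N(S\h)v=((S\h)^G_+)^N(S\h)^G\Lambda v=0$. Each $y\in\h$ satisfies a homogeneous integral dependence relation over $(S\h)^G$ whose non-leading coefficients lie in $(S\h)^G_+$, so $y^{d}\in(S\h)^G_+\cdot S\h$ for some $d$, giving $y^{dN}v\in((S\h)^G_+)^N(S\h)v=0$. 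For the converse, suppose $M$ is finitely generated over $S\h^*$ and $\bh$ is locally finite, so $M=\bigoplus_\mu M[\mu]$ (generalized $\bh$-eigenspaces); replacing a finite generating set by its $\bh$-homogeneous components we may take generators $m_i\in M[\mu_i]$, and since $[\bh,x]=x$ sends $M[\mu]$ into $M[\mu+1]$, we get $M[\mu]=\sum_i S^{\mu-\mu_i}\h^*\cdot m_i$, so each $M[\mu]$ is finite-dimensional and $M[\mu]=0$ for $\mu<\mu_0:=\min_i\mu_i$. Because $[\bh,y]=-y$, the finite-dimensional subspace $M_{\le\mu}:=\bigoplus_{\nu\le\mu}M[\nu]$ is $S\h$-stable, so $S\h$ acts locally finitely; moreover every positive-degree element of $(S\h)^G$ lowers the $\bh$-weight, hence is nilpotent on $M_{\le\mu}$, and commuting nilpotent operators on a finite-dimensional space sum to a nilpotent one, so $(S\h)^G_+$ acts locally nilpotently. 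Therefore $M\in\cO_c(G,\h)_0$.

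The main obstacle will be the core computation of the second paragraph — that $\bh$ stabilizes the homogeneous components of $\Delta(V')$ — together with the bookkeeping that transports local finiteness from these induced modules back through the nilpotency filtration of $M'$; the remaining arguments are routine applications of PBW and the relations $[\bh,x]=x$, $[\bh,y]=-y$.
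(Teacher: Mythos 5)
Your proof is correct in substance, but it takes a noticeably different (and longer) route than the paper, so let me compare. For part (i), the paper inducts on $d=\dim(S\h\cdot v)$: one finds a nonzero singular vector $u\in S\h\cdot v$, observes that $\bh$ acts by a fixed element of $\CC G$ on the space $U$ of all singular vectors and hence locally finitely on the submodule $\langle U\rangle$, and then passes to $M/\langle U\rangle$ where $\dim(S\h\cdot \bar v)<d$; the inductive hypothesis finishes. You instead build the explicit nilpotency filtration $M'_{(k)}=H_{1,c}\cdot(\h^kV_0)$ of $M'=H_{1,c}v$ and reduce to the statement that $\bh$ preserves the graded pieces $S^m\h^*\otimes V'$ of a standard module $\Delta(V')$. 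Both arguments are ``peel off a singular layer'' in disguise; yours makes the reduction to Verma-type modules explicit at the cost of more bookkeeping, while the paper's induction is shorter. For part (ii), the paper only proves the direction $\bh$-locally-finite $\Rightarrow$ $\h$-locally-nilpotent (the other implication being left implicit), whereas you carry out both: the forward direction via the integral dependence of $\h$ over $(S\h)^G$ is exactly the detail the paper elides, and your backward direction fills in why the $\bh$-spectrum is bounded below and why this forces $(S\h)^G_+$ to act locally nilpotently. One small imprecision to fix: you write that $(S\h)^G_+$ ``acts nilpotently on the finite-dimensional space $\Lambda v$,'' but $\Lambda v$ is not $(S\h)^G$-stable; what you mean is that $(S\h)^G_+$ acts nilpotently on the finite-dimensional $(S\h)^G$-stable subspace $S\h\cdot v$ (finite-dimensional because $M\in\cO_c$), whence $((S\h)^G_+)^N\Lambda v=0$ since $\Lambda v\subset S\h\cdot v$. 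With that adjustment the argument is complete.
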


\begin{proof}
(i) Assume that $S\h$ acts locally nilpotently on $M$. 
Let $v\in M$. Then $S\h\cdot v$ is a finite dimensional vector space
and let $d=\dim S\h\cdot v$. We prove that
$v$ is $\bh$-finite by induction in dimension $d$.
We can use $d=0$ as base, so only need to do the induction step. 
The space $S\h\cdot v$ must contain a nonzero vector $u$ such that 
$y\cdot u=0$, $\forall y\in \h$. 
Let $U\subset M$ be the subspace of vectors with this property.
$\bh$ acts on $U$ by an element of $\CC G$, 
hence locally finitely. So it is sufficient to
show that the image of $v$ in $M/\langle U\rangle$ is 
$\bh$-finite (where $\langle U\rangle$ is
the submodule generated by $U$). 
But this is true by the induction assumption, 
as $u=0$ in $M/\langle U\rangle$.

(ii) We need to show that if $\bh$ acts locally finitely on $M$,
then $\h$ acts locally nilpotently on $M$. 
Assume $\bh$ acts locally finitely on $M$. Then 
$M=\oplus_{\beta\in B}M[\beta]$, where $B\subset \CC$. Since $M$ is finitely generated over 
$S\h^{*}$, $B$ is a finite union of sets of the form $z+\ZZ_{\geq 0}$, 
$z\in \CC$.
So $S\h$ must act locally nilpotently on $M$.
\end{proof}
We can obtain the following corollary easily.
\begin{corollary}
Any finite dimensional 
$H_{1,c}(G, \h)$-module is in $\cO_{c}(G, \h)_{0}$.
\end{corollary}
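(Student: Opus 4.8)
\emph{The plan} is to deduce this immediately from Theorem \ref{finnilp}(ii). That theorem, applied to a module which is finitely generated over $S\h^*$ and on which $\bh$ acts locally finitely, yields membership in $\cO_c(G,\h)_0$; so the only thing to check is that a finite dimensional $H_{1,c}(G,\h)$-module $M$ satisfies both hypotheses, and both are immediate.

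First I would observe that if $\dim_\CC M<\infty$, then any finite $\CC$-basis of $M$ generates $M$ as a module over $S\h^*$ (since $1\in S\h^*$), so $M$ is finitely — indeed, finite-dimensionally — generated over $S\h^*$. Second, for any $v\in M$ the subspace $\CC[\bh]v$ lies inside the finite dimensional space $M$, hence is finite dimensional; that is, $\bh$ acts locally finitely on $M$. Applying Theorem \ref{finnilp}(ii) then gives $M\in\cO_c(G,\h)_0$, which is the assertion.

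There is essentially no obstacle here: all of the content is already packaged in Theorem \ref{finnilp}. If one prefers a self-contained argument in place of invoking (ii), one can reason directly as follows. Since $[\bh,y]=-y$ for $y\in\h$, the operator $y$ sends each generalized $\bh$-eigenspace $M[\beta]$ into $M[\beta-1]$; as $M$ is finite dimensional, the set of $\bh$-eigenvalues on $M$ is finite, in particular bounded below, so each $y\in\h$ acts nilpotently on $M$. Because the elements of $\h$ commute in $H_{1,c}(G,\h)$, $S\h$ acts on $M$ as a commutative algebra in which every element of positive degree is nilpotent; hence $(S\h)^G$ acts on $M$ with generalized eigenvalue $0$, which is exactly the condition defining the block $\cO_c(G,\h)_0$. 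Either way, the corollary follows in a couple of lines once Theorem \ref{finnilp} is available.
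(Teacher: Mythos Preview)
Your proof is correct and is exactly the argument the paper has in mind: the paper simply records the corollary as following ``easily'' from Theorem~\ref{finnilp}, and your verification of the two hypotheses of part~(ii) (finite generation over $S\h^*$ and local finiteness of $\bh$) is the intended one-line justification. The optional self-contained argument you add is also fine and just unpacks the proof of Theorem~\ref{finnilp}(ii) in this special case.
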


We see that any module $M\in \cO_{c}(G, \h)_0$
has a grading by generalized eigenvalues of $\bh$:
$M=\oplus_{\beta} M[\beta]$. 

\subsection{Standard modules}

Let $\tau$ be a finite dimensional representation of $G$. 
The standard module over $H_{1,c}(G, \h)$ corresponding to $\tau$ 
(also called the Verma module) is
$$
M_{c}(G, \h, \tau)=M_{c}(\tau)=H_{1,c}(G, \h)
\otimes _{\CC G\ltimes S\h}\tau\,\,\in 
\cO_{c}(G, \h)_{0},
$$
where $S\h$ acts on $\tau$ by zero.

So from the PBW theorem, we have that 
as vector spaces, $M_{c}(\tau)\cong \tau \otimes S\h^{*}$. 

\begin{remark}
More generally, $\forall \lambda\in \h^{*}$, let
$G_{\lambda}=\mathrm{Stab}(\lambda)$, 
and $\tau$ be a finite dimensional representation of $G_\lambda$.
Then we can define 
$M_{c, \lambda}(G, \h, \tau)=H_{1,c}(G, \h)\otimes _{\CC G_{\lambda}\ltimes S\h}
\tau$, where $S\h$ acts on $\tau$ by $\lambda$. These
modules are called  {\em the Whittaker modules}.
\end{remark}

Let $\tau$ be irreducible, and let $h_c(\tau)$ be 
the number given by the formula 
$$
h_c(\tau)=\frac{\dim \h}{2}-\sum_{s\in \cS}\frac{2c_s}
{1-\lambda_s}s|_\tau.
$$
Then we see that
$\bh$ acts on $\tau\otimes S^m\h^*$ by the scalar $h_c(\tau)+m$. 

\begin{definition}
A vector $v$ in an $H_{1,c}$-module $M$ is singular if $y_iv=0$
for all $i$. 
\end{definition}

\begin{proposition}\label{homm}
Let $U$ be an $H_{1,c}(G, \h)$-module.  
Let $\tau\subset U$ be a $G$-submodule 
consisting of singular vectors.
Then there is a unique homomorphism $\phi: M_{c}(\tau)\to U$ of $\CC[\h]$-modules such that 
$\phi|_{\tau}$ is the identity, and it
is an $H_{1,c}$-homomorphism. 
\end{proposition}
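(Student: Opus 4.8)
The plan is to build the homomorphism $\phi$ by the universal property of the induced module, then check it restricts to the identity on $\tau$, and finally verify it is automatically a map of $\CC[\h]$-modules (hence of $H_{1,c}$-modules) and is the unique such map. First I would observe that $M_c(\tau) = H_{1,c} \otimes_{\CC G \ltimes S\h} \tau$ is, by construction, the representing object for the functor sending an $H_{1,c}$-module $U$ to $\Hom_{\CC G \ltimes S\h}(\tau, U)$. Concretely, the inclusion $\tau \subset U$ together with the hypothesis that every vector of $\tau$ is singular (i.e. killed by all $y_i$, hence by all of $S\h$ acting with its augmentation) and that $\tau$ is a $G$-submodule means exactly that $\tau \to U$ is a morphism of $\CC G \ltimes S\h$-modules (where $S\h$ acts on $\tau$ by zero). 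Therefore it extends uniquely to an $H_{1,c}$-module homomorphism $\phi : M_c(\tau) \to U$ with $\phi(1 \otimes v) = v$ for $v \in \tau$.

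Next I would unwind what this $\phi$ does on the PBW-decomposition $M_c(\tau) \cong \tau \otimes S\h^*$. Since $\phi$ is $H_{1,c}$-linear and $S\h^* = \CC[\h]$ is a subalgebra of $H_{1,c}$, we get $\phi(x_{i_1}\cdots x_{i_k} \otimes v) = x_{i_1}\cdots x_{i_k}\cdot v$ in $U$; in particular $\phi$ is a homomorphism of $\CC[\h]$-modules and $\phi|_\tau = \id_\tau$. This establishes existence of a map with the stated properties.

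For uniqueness among $\CC[\h]$-module maps restricting to the identity on $\tau$: any such $\psi$ must send $x_{i_1}\cdots x_{i_k}\otimes v$ to $x_{i_1}\cdots x_{i_k}\cdot v$ because $\psi$ commutes with multiplication by elements of $S\h^* = \CC[\h]$ and $M_c(\tau)$ is generated as a $\CC[\h]$-module by $1 \otimes \tau$ (again by PBW, $M_c(\tau) \cong \tau \otimes S\h^*$). Since $M_c(\tau)$ is generated over $\CC[\h]$ by the finitely many basis vectors of $\tau$, the values of $\psi$ are completely forced, so $\psi = \phi$. Finally, because $\phi$ agrees with the $H_{1,c}$-homomorphism produced by the universal property, it is itself an $H_{1,c}$-homomorphism.

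The only genuine point requiring care — the "main obstacle," though it is mild — is checking that the hypothesis "$\tau$ consists of singular vectors and is a $G$-submodule" is precisely the data of a $\CC G \ltimes S\h$-module map $\tau \to U$ (with $S\h$ acting on $\tau$ through its augmentation), so that the adjunction applies verbatim; once that identification is made, everything else is formal bookkeeping with the PBW basis. One should also note in passing that the image of $\phi$ lands in the $H_{1,c}$-submodule of $U$ generated by $\tau$, and that $\phi$ need not be injective — surjectivity onto that submodule and the description of its kernel are separate questions not asserted here.
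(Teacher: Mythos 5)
Your proof is correct and takes essentially the same route as the paper: the two ingredients are the PBW fact that $M_c(\tau)$ is a free $\CC[\h]$-module generated by $\tau$ (which gives uniqueness of the $\CC[\h]$-module map) and Frobenius reciprocity for the induction from $\CC G\ltimes S\h$ to $H_{1,c}$ (which gives existence of an $H_{1,c}$-homomorphism restricting to the identity on $\tau$). You have simply presented the two halves in the opposite order from the paper and filled in more of the bookkeeping.
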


\begin{proof}
The first statement follows from the fact that $M_c(\tau)$ is a free module over $\CC[\h]$ 
generated by $\tau$. Also, it follows from the 
Frobenius reciprocity that there must exist a map $\phi$ which is an $H_{1,c}$-homomorphism. 
This implies the proposition. 
\end{proof}

\subsection{Finite length}

\begin{proposition} $\exists K\in \RR$ such that 
for any $M\subset N$ in $\cO_{c}(G, \h)_0$,
if $M[\beta]=N[\beta]$ for $\Re(\beta)\leq K$,
then $M=N$. 
\end{proposition}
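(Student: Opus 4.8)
The plan is to show that the "bottom-up" generation of submodules in category $\cO$ is controlled: a submodule is determined by its components $M[\beta]$ with $\Re(\beta)$ below a uniform bound $K$, simply because every nonzero module in $\cO_c(G,\h)_0$ has a nonzero singular vector whose $\bh$-weight is bounded below by such a $K$. So the first step is to produce $K$. Recall from Theorem \ref{finnilp} that $\h$ acts locally nilpotently on any $M\in\cO_c(G,\h)_0$, hence on $N/M$; pick any nonzero $v\in N/M$ and note that $(S\h)v$ is finite dimensional, so it contains a nonzero vector killed by all of $\h$, i.e.\ a singular vector. By Proposition \ref{homm}, a singular vector generating a copy of an irreducible $G$-module $\tau$ gives a nonzero map $M_c(\tau)\to N/M$, and $\bh$ acts on the image of $\tau$ by the scalar $h_c(\tau)$. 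The key finite-combinatorics input: $G$ has only finitely many irreducible representations $\tau$, so the set of numbers $\{h_c(\tau)\}$ is finite; set $K$ to be its minimum (or anything below it). Then every nonzero $L\in\cO_c(G,\h)_0$ has $L[\beta]\neq 0$ for some $\beta$ with $\Re(\beta)\le K$ — indeed the lowest $\bh$-weight occurring in $L$ has real part among the finitely many values $\Re(h_c(\tau))$.

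The second step assembles the argument. Suppose $M\subset N$ in $\cO_c(G,\h)_0$ with $M[\beta]=N[\beta]$ for all $\beta$ with $\Re(\beta)\le K$. Consider $L=N/M\in\cO_c(G,\h)_0$ (the category is closed under quotients). For $\Re(\beta)\le K$ the exactness of the $\bh$-weight decomposition gives $L[\beta]=N[\beta]/M[\beta]=0$. If $L\neq 0$, the previous paragraph furnishes a nonzero $L[\beta_0]$ with $\Re(\beta_0)\le K$, a contradiction. Hence $L=0$, i.e.\ $M=N$.

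One technical point to nail down carefully is the claim that the lowest-weight space of a nonzero module in $\cO_c(G,\h)_0$ always affords a singular vector of weight $h_c(\tau)$ for some irreducible $\tau$: this is where I use that, after decomposing $L=\bigoplus_\beta L[\beta]$, finite generation over $S\h^*$ forces the set of $\beta$ to lie in finitely many translates $z+\ZZ_{\ge0}$, so there is a minimal-real-part weight $\beta_0$; any $0\neq v\in L[\beta_0]$ has $(S\h)v$ finite dimensional and $\bh$-graded with all weights of real part $<\Re(\beta_0)$ unless $(S\h)v=\CC v$, forcing $y_i v=0$ for all $i$ — so $v$ is singular and spans (a $G$-submodule isomorphic to) some irreducible $\tau$ with $h_c(\tau)=\beta_0$. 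The main obstacle is purely bookkeeping — making sure the weight decomposition really is a direct sum with the stated discreteness, which is exactly the content already extracted in the proof of Theorem \ref{finnilp}(ii) — so no new ideas are needed beyond organizing these ingredients.
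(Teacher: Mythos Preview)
Your overall strategy is exactly the paper's: if $N/M\ne 0$ it contains a singular vector, hence its lowest $\bh$-weight equals $h_c(\tau)$ for some irreducible $\tau$, and this forces $(N/M)[\beta]\ne 0$ for some $\beta$ with bounded real part. However, you have the inequality backwards when choosing $K$. You set $K$ to be the \emph{minimum} of $\Re h_c(\tau)$ (or smaller), and then claim every nonzero $L$ has $L[\beta]\ne 0$ for some $\beta$ with $\Re(\beta)\le K$. But the lowest weight $\beta_0$ of $L$ satisfies $\Re\beta_0\in\{\Re h_c(\tau)\}$, so all you know is $\Re\beta_0\ge \min_\tau \Re h_c(\tau)$; you cannot conclude $\Re\beta_0\le K$ when $K$ is the minimum. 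Concretely, if $G$ has two irreducibles $\tau_1,\tau_2$ with $\Re h_c(\tau_1)<\Re h_c(\tau_2)$ and $L=M_c(\tau_2)$, then $L[\beta]=0$ for all $\beta$ with $\Re\beta\le \Re h_c(\tau_1)$, so your claim fails.

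The fix is simply to take $K=\max_\tau \Re h_c(\tau)$ (or anything larger). Then the lowest weight $\beta_0$ of $N/M$ satisfies $\Re\beta_0\le K$, contradicting the hypothesis that $(N/M)[\beta]=0$ for $\Re\beta\le K$. With this correction your argument is complete and matches the paper's (which sets $K=\max_\tau \Re h_c(\tau)$ in one line).
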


\begin{proof}
Let $K=\max_{\tau}\Re h_{c}(\tau)$. Then if $M\ne N$, 
$M/N$ begins in degree $\beta_{0}$
with $\Re\beta_{0}>K$, which is impossible since 
by Proposition \ref{homm}, $\beta_{0}$ must equal $h_{c}(\tau)$
for some $\tau$.
\end{proof}

\begin{corollary}
Any $M\in \cO_{c}(G, \h)_{0}$ has finite length.
\end{corollary}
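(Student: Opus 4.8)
The plan is to derive the finite length statement from the preceding proposition, which says there is a real number $K$ such that any proper inclusion $N\subsetneq M$ in $\cO_c(G,\h)_0$ must already differ in some graded piece $M[\beta]$ with $\Re(\beta)\le K$. First I would recall that every module $M\in \cO_c(G,\h)_0$ decomposes as $M=\oplus_\beta M[\beta]$ into generalized $\bh$-eigenspaces (by the discussion following Theorem \ref{finnilp}), and that since $M$ is finitely generated over $S\h^*$ the set of $\beta$ occurring is contained in a finite union of sets of the form $z+\ZZ_{\ge 0}$. Consequently the ``truncated'' part $M_{\le K}:=\oplus_{\Re(\beta)\le K}M[\beta]$ is finite dimensional: there are only finitely many eigenvalues $\beta$ with $\Re(\beta)\le K$ appearing, and each $M[\beta]$ is finite dimensional because $M$ is finitely generated over $S\h^*$ and locally $\bh$-finite (each graded piece sits inside $\tau\otimes S^m\h^*$ for finitely many $m$, up to the finitely many lowest weights $\tau$).

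Next I would run the key argument. Suppose $M\in \cO_c(G,\h)_0$ and consider a strictly descending chain of submodules $M=M_0\supsetneq M_1\supsetneq M_2\supsetneq\cdots$ (or, dually, a strictly ascending chain, or more generally any chain in a composition series). By the preceding proposition, each strict inclusion $M_{i+1}\subsetneq M_i$ forces $(M_i)_{\le K}\ne (M_{i+1})_{\le K}$, hence $\dim (M_i)_{\le K}>\dim (M_{i+1})_{\le K}$. Since $\dim M_{\le K}<\infty$, such a chain can have length at most $\dim M_{\le K}+1$. Therefore $M$ satisfies both the ascending and descending chain conditions on submodules, so $M$ has finite length. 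Equivalently, any composition series of $M$ has length bounded by $\dim M_{\le K}$, which is finite.

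I would phrase this cleanly as: for $M\in\cO_c(G,\h)_0$, the assignment $N\mapsto \dim(\oplus_{\Re(\beta)\le K} N[\beta])$ is a strictly monotone $\ZZ_{\ge 0}$-valued function on the poset of submodules of $M$ (strictly monotone by the proposition), and it is bounded above by $\dim(\oplus_{\Re(\beta)\le K}M[\beta])<\infty$; any poset of submodules carrying such a function is Artinian and Noetherian, whence finite length.

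The only genuine point requiring care — the ``main obstacle,'' though it is minor — is justifying that $M_{\le K}$ is finite dimensional, i.e.\ that only finitely many generalized $\bh$-eigenvalues $\beta$ with $\Re(\beta)\le K$ occur in $M$ and that each associated eigenspace is finite dimensional. This follows from the structure established in Theorem \ref{finnilp}(ii) and the remarks after it: $M$ finitely generated over $S\h^*$ means $M$ is a quotient of a finite direct sum of standard-type modules $\tau_j\otimes S\h^*$, on each of which $\bh$ acts on $\tau_j\otimes S^m\h^*$ by $h_c(\tau_j)+m$; hence the $\bh$-spectrum of $M$ lies in $\bigcup_j (h_c(\tau_j)+\ZZ_{\ge0})$, only finitely many elements of which have real part $\le K$, and the corresponding graded pieces are finite dimensional since each $S^m\h^*$ is. With that in hand the chain-condition argument above is immediate, and the corollary follows.
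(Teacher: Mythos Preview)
Your proof is correct and is exactly the argument the paper has in mind: the paper's proof reads only ``Directly from the proposition,'' and you have spelled out the routine details of why that proposition bounds the length of any chain of submodules by $\dim M_{\le K}$. One small wording issue: saying $M$ is a quotient of a direct sum of ``standard-type modules $\tau_j\otimes S\h^*$'' is a bit imprecise (that's not an $H_{1,c}$-module statement), but your actual argument only uses the $S\h^*$-module structure together with the $\bh$-grading, and for that it suffices to choose finitely many $\bh$-generalized eigenvectors generating $M$ over $S\h^*$, which is exactly what you do.
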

\begin{proof}
Directly from the proposition.
\end{proof}

\subsection{Characters}
For $M\in \cO_{c}(G, \h)_0$, 
define the character of $M$ as the following formal series in $t$:
$$
\ch_{M}(g, t)=\sum_{\beta}t^{\beta}\tr_{M[\beta]}(g)=\tr_{M}(gt^{\bh}), \quad g\in G.
$$

\begin{proposition}
We have
$$
\ch_{M_{c}(\tau)}(g, t)=\frac{\chi_{\tau}(g)t^{h_{c}(\tau)}}
{\det_{\h^{*}}(1-tg)}.
$$
\end{proposition}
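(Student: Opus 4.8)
The plan is to compute the character directly from the PBW decomposition $M_c(\tau)\cong \tau\otimes S\h^*$ as a $G$-graded vector space, using the fact that $\bh$ acts on $\tau\otimes S^m\h^*$ by the scalar $h_c(\tau)+m$, which was established just above the statement. Thus every vector in the graded piece $\tau\otimes S^m\h^*$ contributes $t^{h_c(\tau)+m}$ to the character, and $\ch_{M_c(\tau)}(g,t)=\tr_M(gt^{\bh})=t^{h_c(\tau)}\sum_{m\ge 0}t^m\,\tr_{\tau\otimes S^m\h^*}(g)$.

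Next I would factor the trace. Since $G$ acts diagonally on $\tau\otimes S^m\h^*$, we have $\tr_{\tau\otimes S^m\h^*}(g)=\chi_\tau(g)\cdot\tr_{S^m\h^*}(g)$. Summing the geometric-type series over $m$ then reduces the problem to the standard identity
$$
\sum_{m\ge 0}t^m\,\tr_{S^m\h^*}(g)=\frac{1}{\det_{\h^*}(1-tg)}.
$$
This identity is classical: diagonalizing $g$ acting on $\h^*$ with eigenvalues $\mu_1,\ldots,\mu_r$, the left side becomes $\prod_i\sum_{k\ge 0}(t\mu_i)^k=\prod_i(1-t\mu_i)^{-1}=\det_{\h^*}(1-tg)^{-1}$, and I would spell this out in one line. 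Combining the three ingredients gives exactly
$$
\ch_{M_c(\tau)}(g,t)=\frac{\chi_\tau(g)\,t^{h_c(\tau)}}{\det_{\h^*}(1-tg)}.
$$

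Honestly, there is no serious obstacle here: the proposition is essentially a bookkeeping consequence of the PBW theorem (Proposition~\ref{prop-pbw}) together with the already-computed eigenvalue $h_c(\tau)+m$ of $\bh$ on the $m$-th graded piece. The only point requiring a word of care is the manipulation of the formal series: one should note that the power of $t$ is shifted by the (generally non-integer, possibly complex) number $h_c(\tau)$, so $\ch_{M_c(\tau)}(g,t)$ is a priori a series in $t$ times an overall factor $t^{h_c(\tau)}$, and the geometric sum is taken in the ring of such formal series. I would also remark that convergence is not an issue because everything is purely formal, matching the definition of the character given at the start of Section~2.9 [i.e., the section "Characters"].
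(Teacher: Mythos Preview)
Your argument is correct and is essentially identical to the paper's proof: the paper also uses the decomposition $M_c(\tau)\cong\tau\otimes S\h^*$ with $\bh$ acting by $h_c(\tau)+m$ on the $m$-th piece, and proves the identity $\sum_{m\ge 0}t^m\tr_{S^m\h^*}(g)=\det_{\h^*}(1-tg)^{-1}$ by diagonalization (stated there as MacMahon's Master theorem).
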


\begin{proof}
We begin with the following lemma.
\begin{lemma}[MacMahon's Master theorem]
Let $V$ be a finite dimensional space, $A: V\to V$ a linear operator. Then 
$$\sum_{n\geq 0}t^{n}\tr (S^{n}A)=\frac{1}{\det (1-tA)}.$$
\end{lemma}
\begin{proof}[Proof of the lemma]
If $A$ is diagonalizable, this is obvious. 
The general statement follows by continuity.
\end{proof}

The lemma implies that 
$\tr_{S\h^{*}}(gt^{D})=\dfrac{1}{\det(1-gt)}$ where $D$ is the degree operator.
This implies the required statement. 
\end{proof}

\subsection{Irreducible modules}

Let $\tau$ be an irreducible representation of $G$. 

\begin{proposition}
$M_{c}(\tau)$ has a maximal proper submodule $J_{c}(\tau)$. 
\end{proposition}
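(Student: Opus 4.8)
The plan is to exploit the $\bh$-grading on $M_c(\tau)$ together with the irreducibility of $\tau$. Recall from the previous subsections that $M_c(\tau)\in\cO_c(G,\h)_0$, that as a graded vector space $M_c(\tau)\cong\bigoplus_{m\ge 0}\tau\otimes S^m\h^*$, and that (since $\tau$ is irreducible and $\sum_{s}\tfrac{2c_s}{1-\lambda_s}s$ is central in $\CC G$) $\bh$ acts on $\tau\otimes S^m\h^*$ by the honest scalar $h_c(\tau)+m$. In particular each $\bh$-eigenspace is finite dimensional, the eigenvalues occurring are exactly $h_c(\tau)+\ZZ_{\ge 0}$, and $M_c(\tau)[h_c(\tau)]=\tau\otimes S^0\h^*=\tau$, the unique lowest piece.

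First I would observe that any $H_{1,c}$-submodule $N\subseteq M_c(\tau)$ is stable under $\bh$ (as $\bh\in H_{1,c}$), hence inherits the eigenspace decomposition $N=\bigoplus_{m\ge 0}\bigl(N\cap(\tau\otimes S^m\h^*)\bigr)$. Thus $N\cap\tau=N[h_c(\tau)]$ is a $G$-submodule of $\tau$, so by irreducibility it is either $0$ or all of $\tau$. If $N\cap\tau=\tau$, then $N\supseteq\tau$; since $M_c(\tau)=H_{1,c}(G,\h)\otimes_{\CC G\ltimes S\h}\tau$ is generated by $\tau$ as an $H_{1,c}$-module, this forces $N=M_c(\tau)$. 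Consequently every proper submodule $N\subsetneq M_c(\tau)$ satisfies $N[h_c(\tau)]=0$, i.e.\ $N\subseteq\bigoplus_{m\ge 1}\tau\otimes S^m\h^*$.

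Then I would set $J_c(\tau)$ to be the sum of all proper submodules of $M_c(\tau)$. This is again an $H_{1,c}$-submodule, and by the previous step it is contained in $\bigoplus_{m\ge 1}\tau\otimes S^m\h^*$, so $J_c(\tau)\cap\tau=0$ and hence $J_c(\tau)\ne M_c(\tau)$. Therefore $J_c(\tau)$ is itself a proper submodule, and by construction it contains every proper submodule, so it is the unique maximal proper submodule of $M_c(\tau)$.

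The only point requiring real care — and the one I would regard as the heart of the argument — is that the sum of the a priori infinitely many proper submodules remains proper. This is precisely what the grading buys us: the fixed "forbidden" graded piece $M_c(\tau)[h_c(\tau)]=\tau$ is avoided by each summand and hence by their sum. Everything else ($\bh$-stability and gradedness of submodules, finite-dimensionality of the graded pieces, the fact that $\tau$ generates $M_c(\tau)$) is immediate from the PBW theorem and the facts about the grading element $\bh$ established above.
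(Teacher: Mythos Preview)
Your proof is correct and is essentially the same as the paper's: you define $J_c(\tau)$ as the sum of all proper submodules and use the $\bh$-grading to show that every proper submodule misses the lowest piece $\tau$, so the sum does too. The paper says exactly this in one sentence (``any proper submodule has a grading by generalized eigenspaces of $\bh$, with eigenvalues $\beta$ such that $\beta-h_c(\tau)>0$''); you have simply spelled out the details, including why submodules are graded and why missing $\tau$ forces properness.
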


\begin{proof}
The proof is standard. $J_{c}(\tau)$ is the sum of all proper submodules of $M_{c}(\tau)$, and it is
not equal to $M_{c}(\tau)$ because any proper submodule has a
grading by generalized eigenspaces of $\bh$, with eigenvalues
$\beta$ such that $\beta-h_c(\tau)>0$. 
\end{proof} 

We define $L_{c}(\tau)=M_{c}(\tau)/J_{c}(\tau)$, which is
an irreducible module.

\begin{proposition}
Any irreducible object of $\cO_{c}(G, \h)_{0}$ has the form $L_{c}(\tau)$ for an unique 
$\tau$.
\end{proposition}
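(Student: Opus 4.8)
The plan is to prove this in two parts: first that every irreducible object of $\cO_c(G,\h)_0$ is isomorphic to $L_c(\tau)$ for some irreducible $G$-representation $\tau$, and second that $\tau$ is uniquely determined. For the existence part, I would start with an arbitrary irreducible module $N\in\cO_c(G,\h)_0$. By the previous section, $N$ carries a grading by generalized eigenspaces of $\bh$, and since $N$ is finitely generated over $S\h^*$ (hence over $\CC[\h]$), the set of eigenvalues $\beta$ with $N[\beta]\neq 0$ lies in a finite union of sets of the form $z+\ZZ_{\geq 0}$. Therefore there is a minimal one, say $\beta_0$, with respect to the partial order given by $\Re(\beta)$ (more precisely, minimal in the sense that no $\beta$ with $N[\beta]\neq 0$ satisfies $\beta_0-\beta\in\ZZ_{>0}$). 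Then $N[\beta_0]$ is a nonzero finite-dimensional $G$-stable subspace, and every vector in it is singular: for $y\in\h$ and $v\in N[\beta_0]$, the vector $yv$ lies in $N[\beta_0-1]$, which is zero by minimality of $\beta_0$. Pick an irreducible $G$-submodule $\tau\subset N[\beta_0]$.

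Next I would apply Proposition \ref{homm} to the $G$-submodule $\tau\subset N$ of singular vectors: this produces a nonzero $H_{1,c}$-homomorphism $\phi\colon M_c(\tau)\to N$ which is the identity on $\tau$. Since $N$ is irreducible and $\phi\neq 0$, $\phi$ is surjective. Hence $N$ is a nonzero quotient of $M_c(\tau)$, and being irreducible it must be the unique irreducible quotient $L_c(\tau)=M_c(\tau)/J_c(\tau)$, where $J_c(\tau)$ is the maximal proper submodule constructed above. This establishes that $N\cong L_c(\tau)$.

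For uniqueness, suppose $L_c(\tau)\cong L_c(\tau')$ for irreducible $G$-representations $\tau,\tau'$. In $M_c(\tau)=\tau\otimes S\h^*$, the element $\bh$ acts on $\tau\otimes S^m\h^*$ by $h_c(\tau)+m$, so the lowest $\bh$-eigenvalue occurring in $L_c(\tau)$ is $h_c(\tau)$ (the quotient $J_c(\tau)$ only lives in strictly higher degrees, as noted in the proof that $J_c(\tau)\neq M_c(\tau)$), and the corresponding eigenspace is exactly $\tau$ as a $G$-module. Comparing, we get $h_c(\tau)=h_c(\tau')$ and, as $G$-representations, $\tau\cong L_c(\tau)[h_c(\tau)]\cong L_c(\tau')[h_c(\tau')]\cong\tau'$.

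The main obstacle is really the existence step, specifically making sure the bottom graded piece $N[\beta_0]$ is well-defined and consists of singular vectors; this rests on the finite-generation over $S\h^*$ to guarantee a minimal degree exists (so that the weights are bounded below in the appropriate sense), which is exactly the argument already used in Theorem \ref{finnilp}(ii) and the finite-length proposition. Once that is in hand, everything else is a formal consequence of Proposition \ref{homm} and the construction of $L_c(\tau)$. I do not anticipate genuine technical difficulty beyond carefully invoking these already-established facts.
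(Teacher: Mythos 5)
Your argument is correct and follows essentially the same route as the paper: pick the lowest $\bh$-eigenspace, observe it consists of singular vectors, apply Proposition \ref{homm} to get a surjection $M_c(\tau)\to N$, and conclude $N\cong L_c(\tau)$. You fill in a few details the paper leaves implicit (why the minimal $\beta_0$ exists, why the bottom graded piece is singular, and the explicit uniqueness argument by reading off $h_c(\tau)$ and the lowest eigenspace), but the idea is identical.
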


\begin{proof} 
Let $L\in \cO_c(G,\h)_0$ be irreducible, with lowest eigenspace of
$\bh$ containing an irreducible $G$-module $\tau$. 
Then by Proposition \ref{homm}, we have a nonzero homomorphism 
$M_c(\tau)\to L$, which is surjective, since $L$ is irreducible.  
Then we must have $L=L_c(\tau)$. 
\end{proof} 

\begin{remark}\label{chara}
Let $\chi$ be a character of $G$. Then we have an isomorphism
$H_{1,c}(G, \h)\to H_{1,c\chi}(G, \h)$, mapping $g\in G$ to $\chi^{-1}(g)g$.
This automorphism maps $L_{c}(\tau)$ to $L_{c\chi}(\chi^{-1}\otimes \tau)$ isomorphically.
\end{remark}

\subsection{The contragredient module}{\label{sec:contmod}}
Set $\bar{c}(s)=c(s^{-1})$. We have a natural isomorphism
$\gamma: H_{1,\bar{c}}(G, \h^{*})^\op\to H_{1,c}(G, \h)$,
acting trivially on $\h$ and $\h^{*}$, and sending $g \in G$ to $g^{-1}$.

Thus if $M$ is an $H_{1,c}(G, \h)$-module, then the full dual space $M^{*}$ is an 
$H_{1,\bar{c}}(M, \h^{*})$-module.
If $M\in \cO_{c}(G, \h)_{0}$, 
then we can define $M^{\dag}$, which is the $\bh$-finite 
part of $M^{*}$.

\begin{proposition}
$M^{\dag}$ belongs to $\cO_{\bar{c}}(G, \h^{*})_0$. 
\end{proposition}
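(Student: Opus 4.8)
The plan is to verify two things: that $M^\dag$ is finitely generated over $S\h$ (acting as the "lowering" subalgebra in $H_{1,\bar c}(G,\h^*)$, where the roles of $\h$ and $\h^*$ are swapped via $\gamma$), and that $S\h^*$ acts locally finitely on it. Recall that $M^\dag$ is by construction the $\bh$-finite part of the full dual $M^*$, so it automatically carries a grading $M^\dag=\bigoplus_\beta M^\dag[\beta]$ by generalized $\bh$-eigenvalues, and the pairing between $M[\beta]$ and $M^*$ identifies $M^\dag[\beta]$ with the (finite-dimensional) dual of $M[\beta]$, since $M\in\cO_c(G,\h)_0$ has finite-dimensional graded pieces. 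The grading element for $H_{1,\bar c}(G,\h^*)$ differs from $\bh$ by a sign (because $x\leftrightarrow y$ under $\gamma$, up to the isomorphism $\mu$-type twist already noted in the excerpt), so $\bh$-finiteness on the $\h^*$-side is exactly the condition required by Theorem~\ref{finnilp}(ii) for membership in $\cO_{\bar c}(G,\h^*)_0$.

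First I would make the $\mathfrak{sl}_2$-type / grading bookkeeping precise: under the antiisomorphism $\gamma$, the element $x\in\h^*$ acts on $M^*$ by the transpose of the action of $x$ on $M$, and similarly for $y\in\h$ and $g\in G$. Because $[\bh,x]=x$ and $[\bh,y]=-y$ on $M$ (Proposition preceding \ref{sl2rep}), the transposed actions satisfy the opposite relations on $M^*$; so on the $\h^*$-module $M^\dag$ the "new" grading element $\bh'$ (the analogue of $\bh$ built from $\h^*$ in place of $\h$) acts with $\bh'=-\bh$ up to a scalar shift, hence $\h^*$ raises degree and $\h$ lowers it on $M^\dag$. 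This shows $M^\dag$ is locally finite under $S\h$ (the lowering algebra), since each graded piece $M^\dag[\beta]$ is finite-dimensional and $S\h$ moves it down through only finitely many nonzero pieces — indeed $M$ being in $\cO_c(G,\h)_0$ means its $\bh$-spectrum is bounded below, so $M^\dag$'s $\bh'$-spectrum is bounded below too.

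Next I would show $M^\dag$ is finitely generated over $S\h^*$ (on the $\h^*$-side this is the "upper" polynomial algebra). Here the key point is that $M\in\cO_c(G,\h)_0$ is finitely generated over $S\h^*$ and locally finite under $S\h$; a standard argument (identical to the one for the BGG category $\cO$ of a semisimple Lie algebra) shows that the $\bh$-finite dual of such a module is again finitely generated over the opposite polynomial subalgebra. Concretely: $M$ has finite length (Corollary in the "Finite length" subsection), so it suffices to treat $M=L_c(\tau)$ irreducible; then $M^\dag$ has a simple socle generated by the dual of the top graded piece (which contains $\tau^*$ as singular vectors for the $\h^*$-side), and one checks $M^\dag$ is generated as an $S\h^*$-module over finitely many graded pieces because the graded pieces are finite-dimensional and, by boundedness of the $\bh'$-spectrum from below together with the finite-length/Noetherian properties, the generation terminates. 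Then $M^\dag$ finitely generated over $S\h^*$ plus locally finite under $S\h$ is precisely the definition of $\cO_{\bar c}(G,\h^*)$, and since $\bh'$ acts locally finitely (each $M^\dag[\beta]$ is a generalized eigenspace), Theorem~\ref{finnilp}(ii) places $M^\dag$ in the block $\cO_{\bar c}(G,\h^*)_0$.

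The main obstacle is the finite-generation claim over $S\h^*$ for $M^\dag$: local finiteness dualizes transparently, but finite generation does not dualize for free, and one must genuinely use that $M$ lies in category $\cO$ (finite length, Noetherianity of $S\h^*$, boundedness of the grading). I would handle this by reducing to the irreducible case via finite length, and there invoking that $L_c(\tau)$ is a quotient of the standard module $M_c(\tau)\cong\tau\otimes S\h^*$, so its graded dual is a submodule of $\tau^*\otimes S\h$ — which is manifestly finitely generated over the $\h^*$-raising algebra by the highest graded piece — and submodules of finitely generated modules over the Noetherian ring $S\h^*$ are finitely generated. Everything else is routine transpose-of-operators bookkeeping through $\gamma$.
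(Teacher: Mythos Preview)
Your overall strategy matches the paper's: reduce to irreducibles via finite length, then use that $\cO_{\bar c}(G,\h^*)_0$ is closed under extensions. The gap is in your treatment of the irreducible case.

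You claim that $M_c(\tau)^\dag\cong\tau^*\otimes S\h$ is ``manifestly finitely generated over the $\h^*$-raising algebra by the highest graded piece.'' This is false: the identification $M_c(\tau)^\dag\cong\tau^*\otimes S\h$ is only as graded vector spaces, not as $S\h$-modules. The $S\h$-action on $M_c(\tau)^\dag$ is the \emph{transpose} of the Dunkl-operator action of $\h$ on $M_c(\tau)$, not multiplication, and in general $\tau^*$ does not generate. Concretely, take $G=\ZZ_2$, $\h=\CC$, $c=1/2$: one computes (using $y\cdot x=1-2c=0$ in $M_c(\CC)=\CC[x]$) that the transposed operator $y^T$ kills the lowest-degree functional $\xi_0\in M_c(\CC)^\dag$, so the $\CC[y]$-submodule generated by $\xi_0$ is one-dimensional while $M_c(\CC)^\dag$ is infinite-dimensional. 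Thus your Noetherianity argument collapses: you need $M_c(\tau)^\dag$ to be finitely generated over $S\h$ in the first place, and that is essentially the statement you are trying to prove.

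The paper sidesteps this entirely with a one-line observation you are missing: if $L$ is irreducible then so is $L^\dag$ (graded duality on modules with finite-dimensional graded pieces is exact, and $\dag\circ\dag=\id$). An irreducible module is generated over $H_{1,\bar c}(G,\h^*)$ by any nonzero vector, in particular by its lowest $\bh$-eigenspace; by PBW this means it is finitely generated over the raising polynomial subalgebra. That gives $L^\dag\in\cO_{\bar c}(G,\h^*)_0$ directly, with no embedding into a dual Verma and no Noetherianity needed.

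(A minor point: there is also some $S\h$/$S\h^*$ confusion in your write-up --- for $\cO_{\bar c}(G,\h^*)$ the finite-generation condition is over $S\h$, not $S\h^*$ --- but this is cosmetic compared to the issue above.)
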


\begin{proof}
Clearly, if $L$ is irreducible, then so is $L^\dagger$.
Then $L^\dagger$ is generated by its lowest 
$\bh$-eigenspace over $H_{1,\bar{c}}(G,\h^{*})$, hence over $S\h^*$. 
Thus, $L^\dagger\in \cO_{\bar c}(G,\h^*)_0$. Now, let $M\in \cO_c(G,\h)_0$ 
be any object. Since $M$ has finite length, so does 
$M^\dagger$. Moreover, $M^\dagger$ has a finite filtration with
successive quotients of the form $L^\dagger$,
where $L\in \cO_c(G,\h)_0$ is irreducible. This implies the
required statement, since $\cO_c(G,\h)_0$ is closed under taking
extensions.     
\end{proof} 

Clearly, $M^{\dag\dag}=M$. Thus, $M\mapsto M^\dagger$ is an
equivalence of categories $\cO_c(G, \h)\to 
\cO_{\bar{c}}(G, \h^{*})^{\op}$. 

\subsection{The contravariant form}{\label{sec:cvform}}

Let $\tau$ be an irreducible representation of $G$. 
By Proposition \ref{homm}, we have a unique homomorphism 
$\phi: M_{c}(G, \h, \tau)\to M_{\bar{c}}(G, \h^{*}, \tau^{*})^{\dag}$
which is the identity in the lowest $\bh$-eigenspace.
Thus, we have a pairing 
$$
\beta_{c}: M_{c}(G, \h, \tau)\times M_{\bar{c}}(G, \h^{*},
\tau^{*})\to \CC,
$$
which is called {\em the contravariant form}. 

\begin{remark} If $G=W$ is a real reflection group, 
then $\h\cong \h^{*}$, $c=\bar{c}$, 
and $\tau\cong \tau^*$ via a symmetric form. 
So for real reflection groups, $\beta_c$ is a 
symmetric form on $M_{c}(\tau)$.
\end{remark}

\begin{proposition} The maximal proper submodule 
$J_c(\tau)$ is the kernel of $\phi$
(or, equivalently, of the contravariant form $\beta_c$). 
\end{proposition}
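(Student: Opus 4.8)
The plan is to show that the kernel of $\phi: M_c(\tau)\to M_{\bar c}(G,\h^*,\tau^*)^\dag$ is a proper submodule containing every proper submodule of $M_c(\tau)$, so it coincides with $J_c(\tau)$. First I would observe that $\Ker\phi$ is a genuine $H_{1,c}$-submodule (it is a submodule because $\phi$ is an $H_{1,c}$-homomorphism, by Proposition \ref{homm}), and it is proper because $\phi$ is the identity on the lowest $\bh$-eigenspace $\tau\subset M_c(\tau)$, so $\tau\not\subset\Ker\phi$. Hence $\Ker\phi\subseteq J_c(\tau)$ by maximality of $J_c(\tau)$.

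For the reverse inclusion, the key point is that every proper submodule $N\subset M_c(\tau)$ is graded by $\bh$ and, since $M_c(\tau)=\tau\otimes S\h^*$ has $\bh$-eigenvalues $h_c(\tau)+m$ for $m\ge 0$, any nonzero $N$ lives in degrees $h_c(\tau)+m$ with $m\ge 1$ (it cannot meet the lowest eigenspace, for otherwise it would contain all of $\tau$, hence be everything — using that $M_c(\tau)$ is generated by $\tau$). Therefore $J_c(\tau)$, being the sum of all proper submodules, is contained in $\bigoplus_{m\ge 1}M_c(\tau)[h_c(\tau)+m]$. Now I would pair with $M_{\bar c}(G,\h^*,\tau^*)$: since $M_{\bar c}(G,\h^*,\tau^*)^\dag$ is the $\bh$-finite part of the dual, its lowest eigenspace is $\tau$ again (dually paired), and the homogeneity of $\phi$ forces $\phi$ restricted to degrees $\ge h_c(\tau)+1$ to still be determined by the contravariant form. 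The cleanest way to finish: $J_c(\tau)=\Ker\phi$ follows because $\phi(J_c(\tau))$ is a submodule of the irreducible-socle situation — more precisely, $M_c(\tau)/\Ker\phi\hookrightarrow M_{\bar c}(G,\h^*,\tau^*)^\dag$ is a nonzero quotient of $M_c(\tau)$ generated by $\tau$, and any such quotient with a nondegenerate pairing on its lowest piece has no proper submodule meeting $\tau$; conversely $M_c(\tau)/J_c(\tau)=L_c(\tau)$ is irreducible, so the composite $L_c(\tau)\to M_c(\tau)/\Ker\phi$ (well defined since $\Ker\phi\subseteq J_c(\tau)$) is either $0$ or injective, and it is nonzero on $\tau$, hence injective; combined with $\Ker\phi\subseteq J_c(\tau)$ this gives $\Ker\phi=J_c(\tau)$.

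Equivalently, one phrases this purely in terms of the form $\beta_c$: the radical $\Rad\beta_c=\{v\in M_c(\tau): \beta_c(v,w)=0\ \forall w\}$ equals $\Ker\phi$ by construction of $\phi$, it is a submodule by $G$- and $\h$-invariance of the form (the contravariance relations make $\beta_c(y v,w)=\beta_c(v,\text{(something)}w)$, so the radical is stable under the generators $x,y,g$), it is proper since $\beta_c$ is nonzero on $\tau$, and $M_c(\tau)/\Rad\beta_c$ carries a nondegenerate contravariant form, which forces it to be irreducible (any submodule would have to be orthogonal or contain the lowest piece). Hence $M_c(\tau)/\Rad\beta_c=L_c(\tau)$ and $\Rad\beta_c=J_c(\tau)$.

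The main obstacle is verifying that the quotient $M_c(\tau)/\Rad\beta_c$ is actually irreducible — i.e. that nondegeneracy of the contravariant form on a standard module quotient forces irreducibility. This requires knowing that a submodule of $M_c(\tau)/\Rad\beta_c$ either contains the image of $\tau$ (hence is everything) or is contained in $\Rad\beta_c$ (hence is zero); this dichotomy comes from the fact that any nonzero submodule's lowest $\bh$-eigenspace must pair nontrivially against the corresponding eigenspace of $M_{\bar c}(G,\h^*,\tau^*)$ under the induced nondegenerate form, together with the observation that the $\bh$-grading is bounded below with lowest piece exactly $\tau$. Once that is in hand, the identification $\Ker\phi=J_c(\tau)$ is immediate from the universal property of $J_c(\tau)$ as the maximal proper submodule.
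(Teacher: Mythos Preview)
Your first paragraph contains a genuine error: from $\Ker\phi\subseteq J_c(\tau)$ one obtains a surjection $M_c(\tau)/\Ker\phi\twoheadrightarrow L_c(\tau)$, not a map $L_c(\tau)\to M_c(\tau)/\Ker\phi$. The ``composite'' you invoke is therefore not well defined, and nothing in that paragraph establishes the reverse inclusion $J_c(\tau)\subseteq\Ker\phi$.

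Your second approach (show $M_c(\tau)/\Rad\beta_c$ is irreducible) is sound in principle, but your resolution of what you correctly flag as ``the main obstacle'' is incomplete. Nondegeneracy plus bounded-below grading alone do not give the dichotomy. The missing ingredient is the contravariance of $\beta_c$: the lowest $\bh$-eigenspace of a nonzero proper submodule $N\subset M_c(\tau)/\Rad\beta_c$ consists of \emph{singular} vectors in positive degree, and for such $v$ one has $\beta_c(v,p\cdot w_0)=\beta_c(p'\cdot v,w_0)=0$ for every $p\in S^{>0}\h$, $w_0\in\tau^*$ (moving the degree-raising operator across turns it into $y$'s, which kill $v$). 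Hence $v$ lies in the radical, contradiction. Equivalently, $N^\perp$ is a submodule of $M_{\bar c}(G,\h^*,\tau^*)/(\text{right radical})$ containing $\tau^*$, hence all of it, forcing $N=0$. You gesture at this but do not supply it.

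The paper's proof takes a different and shorter route that avoids analyzing the form altogether. It constructs a nonzero map $\xi\colon M_c(\tau)\to L_{\bar c}(G,\h^*,\tau^*)^\dag$ by Proposition~\ref{homm}; since the target is irreducible, $\xi$ is surjective with kernel a maximal submodule, hence $\Ker\xi=J_c(\tau)$ and $\xi$ factors as $M_c(\tau)\twoheadrightarrow L_c(\tau)\xrightarrow{\sim}L_{\bar c}(G,\h^*,\tau^*)^\dag\hookrightarrow M_{\bar c}(G,\h^*,\tau^*)^\dag$. The \emph{uniqueness} clause of Proposition~\ref{homm} forces this composite to equal $\phi$ up to scalar, so $\Ker\phi=J_c(\tau)$. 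Your argument, once completed, proves the same thing by orthogonality; the paper's argument trades that computation for the observation that $L^\dag$ is irreducible and that maps out of a Verma are determined on $\tau$.
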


\begin{proof} 
Let $K$ be the kernel of the contravariant form. 
It suffices to show that $M_{c}(\tau)/K$ is irreducible. We have a diagram:
$$\xymatrix{M_{c}(G, \h, \tau)\ar[d]\ar[rd]^-{\xi} \ar[r]^-{\phi} &  M_{c}(G, \h^{*}, 
\tau^{*})^{\dag}\\
L_{c}(G, \h, \tau)\ar[r]^{\sim}_-{\eta} & L_{c}(G, \h^{*}, \tau^{*})^{\dag}\ar@{^{(}->}[u]}
$$
Indeed, a nonzero map $\xi$ exists by Proposition \ref{homm}, 
and it factors through $L_{c}(G,\h,\tau)$, with $\eta$ being an isomorphism,
since $L_{c}(G, \h^{*}, \tau^{*})^{\dag}$ is irreducible. 
Now, by Proposition \ref{homm} (uniqueness of $\phi$), the
diagram must commute up to scaling, which implies the statement.  
\end{proof}

\begin{proposition}
Assume that $h_{c}(\tau)-h_{c}(\tau')$ never equals a positive integer
for any $\tau, \tau'\in \mathrm{Irrep}G$. Then
$\cO_{c}(G, \h)_{0}$ is semisimple, with simple objects $M_{c}(\tau)$.
\end{proposition}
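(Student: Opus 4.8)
The plan is to show that under the stated hypothesis every standard module $M_c(\tau)$ is irreducible, and that there are no nontrivial extensions between distinct irreducibles, so that $\cO_c(G,\h)_0$ is semisimple with the $M_c(\tau)$ as its simple objects. First I would observe that irreducibility of $M_c(\tau)$ follows from the grading by generalized $\bh$-eigenvalues: by the previous proposition, $J_c(\tau)$ is the kernel of the contravariant form $\beta_c$, and any nonzero proper submodule must contain a singular vector generating a copy of some irreducible $G$-module $\tau'$, hence (by Proposition \ref{homm}) must begin in $\bh$-degree $h_c(\tau')$ with $h_c(\tau')-h_c(\tau)$ a positive integer. The hypothesis excludes this, so $J_c(\tau)=0$ and $M_c(\tau)=L_c(\tau)$ is irreducible.

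Next I would prove semisimplicity by induction on length, using that $\cO_c(G,\h)_0$ is closed under subquotients and extensions and that every object has finite length (by the corollary in the ``Finite length'' subsection). It suffices to show $\Ext^1_{\cO}(L_c(\tau),L_c(\tau'))=0$ for all $\tau,\tau'$. Consider a short exact sequence $0\to L_c(\tau')\to M\to L_c(\tau)\to 0$. Both $L_c(\tau)=M_c(\tau)$ and $L_c(\tau')=M_c(\tau')$ are free over $\CC[\h]=S\h^*$ (being standard modules), so $M$ is free over $S\h^*$ as well, hence $M$ is generated over $H_{1,c}$ by any lift $\widetilde\tau\subset M$ of the lowest $G$-isotypic piece of the quotient. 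If $h_c(\tau)\le h_c(\tau')$ in real part (or more precisely if $h_c(\tau)-h_c(\tau')\notin\ZZ_{>0}$), the preimage of $\tau$ contains actual singular vectors — there is no lower $\bh$-degree in the sub on which $y_i$ could act nontrivially into — so Proposition \ref{homm} yields a splitting $M_c(\tau)\to M$. By symmetry of the hypothesis in $\tau,\tau'$, one of the two orderings applies, so the extension splits.

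The step I expect to be the main obstacle is the verification that the lift $\widetilde\tau$ of the lowest piece of $L_c(\tau)$ in the extension $M$ really consists of singular vectors, i.e. that $y_i\widetilde\tau=0$ rather than landing in $L_c(\tau')$. A priori $y_i$ maps $\widetilde\tau$ into the $\bh$-eigenspace of $M$ of degree $h_c(\tau)-1$, and one must rule out that this eigenspace meets the submodule $L_c(\tau')$; this is exactly where the hypothesis $h_c(\tau')-h_c(\tau)\notin\ZZ_{>0}$ (equivalently $h_c(\tau)-1\ne h_c(\tau')+m$ for $m\in\ZZ_{\ge0}$) is used, together with the fact that $L_c(\tau')$ is concentrated in $\bh$-degrees $h_c(\tau')+\ZZ_{\ge0}$. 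Once that is in place, the splitting is immediate and the induction closes, giving that every object in $\cO_c(G,\h)_0$ is a direct sum of the $M_c(\tau)=L_c(\tau)$.
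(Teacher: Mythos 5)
Your proof is correct and follows essentially the same strategy as the paper's: show the hypothesis forces each $M_c(\tau)$ to coincide with $L_c(\tau)$ (no singular vectors in positive degree), and then kill $\Ext^1$ by showing that in any extension the lift of the lowest weight piece is automatically singular, so Proposition \ref{homm} provides a splitting. You have just spelled out in more detail the degree bookkeeping (the key point that $M[h_c(\tau)-1]=0$ because otherwise $h_c(\tau)-h_c(\tau')$ would be a positive integer), which the paper compresses into a short case split; the ``by symmetry'' remark is superfluous since the hypothesis already applies to every ordered pair, but it does no harm.
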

\begin{proof}
It is clear that in this situation, all $M_{c}(\tau)$ are simple.
Also consider $\Ext^{1}(M_{c}(\tau), M_{c}(\tau'))$. If $h_{c}(\tau)-h_{c}(\tau')\notin \ZZ$, 
it is clearly $0$. Otherwise, $h_{c}(\tau)=h_{c}(\tau')$, and again 
$\Ext^{1}(M_{c}(\tau), M_{c}(\tau'))=0$, since for any extension 
$$
0\to M_{c}(\tau')\to N\to M_{c}(\tau)\to 0, $$
by Proposition \ref{homm} 
we have a splitting $M_{c}(\tau)\to N$.
\end{proof}

\begin{remark}
In fact, our argument shows that 
if $\Ext^{1}(M_{c}(\tau), M_{c}(\tau'))\neq 0$, then 
$h_{c}(\tau)-h_{c}(\tau')\in \mathbb N$.
\end{remark} 

\subsection{The matrix of multiplicities} 

For $\tau, \sigma\in {\mathrm Irrep}G$,
write $\tau<\sigma$ if $$\Re h_{c}(\sigma)-\Re
h_{c}(\tau)\in \mathbb N.$$

\begin{proposition}
There exists a matrix of integers $N=(n_{\sigma, \tau})$, with $n_{\sigma, \tau}\geq 0$, 
such that $n_{\tau,\tau}=1$, $n_{\sigma, \tau}=0$ unless $\sigma<\tau$, and
$$
M_{c}(\sigma)=\sum n_{\sigma, \tau}L_{c}(\tau)\, \in \K_{0}(\cO_{c}(G, \h)_{0}).
$$
\end{proposition}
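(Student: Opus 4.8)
The plan is to show that $\mathrm{K}_0(\cO_c(G,\h)_0)$ has two natural bases — the classes of standard modules $[M_c(\tau)]$ and the classes of irreducibles $[L_c(\tau)]$, indexed by the same finite set $\mathrm{Irrep}(G)$ — and that the change-of-basis matrix is unitriangular with respect to the partial order $<$ introduced just above. First I would record that $\cO_c(G,\h)_0$ is a finite-length abelian category (proved earlier), so $\mathrm{K}_0$ is free abelian on the classes $[L_c(\tau)]$, $\tau\in\mathrm{Irrep}(G)$, these being pairwise non-isomorphic by the classification of irreducibles. Each standard module $M_c(\tau)$ lies in $\cO_c(G,\h)_0$ and has finite length, so in $\mathrm{K}_0$ we may write $[M_c(\tau)]=\sum_\sigma n_{\tau,\sigma}[L_c(\sigma)]$ with $n_{\tau,\sigma}\in\ZZ_{\ge 0}$ being the Jordan–Hölder multiplicities; non-negativity is automatic. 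Relabeling indices to match the statement, I set $n_{\sigma,\tau}$ to be the multiplicity of $L_c(\sigma)$ in $M_c(\tau)$.

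Next I would pin down the two non-trivial assertions: $n_{\tau,\tau}=1$, and $n_{\sigma,\tau}=0$ unless $\sigma<\tau$. For the diagonal entry, the key fact is that $L_c(\tau)=M_c(\tau)/J_c(\tau)$ appears exactly once as a composition factor and every other composition factor is a subquotient of the proper submodule $J_c(\tau)$; since the $\bh$-eigenvalues on $M_c(\tau)$ lie in $h_c(\tau)+\ZZ_{\ge 0}$ with the $\bh$-weight space at $h_c(\tau)$ being exactly $\tau$ (the lowest piece, by the character formula / PBW decomposition $M_c(\tau)\cong\tau\otimes S\h^*$), no further copy of $L_c(\tau)$ can occur, because any composition factor isomorphic to $L_c(\tau)$ would contribute a $\tau$-isotypic vector of $\bh$-weight $h_c(\tau)$, and the multiplicity of that weight space is already accounted for. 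Hence $n_{\tau,\tau}=1$.

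For the triangularity, suppose $L_c(\sigma)$ is a composition factor of $M_c(\tau)$. Then its lowest $\bh$-eigenvalue $h_c(\sigma)$ occurs among the $\bh$-eigenvalues of $M_c(\tau)$, all of which have real part $\ge \Re h_c(\tau)$ (the grading of $M_c(\tau)$ is concentrated in real parts $\Re h_c(\tau)+\ZZ_{\ge 0}$). Thus $\Re h_c(\sigma)-\Re h_c(\tau)\in\ZZ_{\ge 0}$. To upgrade "$\ge 0$" to "$>0$ or equality excluded unless $\sigma=\tau$" in the sense that $n_{\sigma,\tau}=0$ unless $\sigma=\tau$ or $\sigma<\tau$: if $\Re h_c(\sigma)=\Re h_c(\tau)$ and $\sigma\ne\tau$, then $L_c(\sigma)$ would have to appear as a subquotient with its lowest weight inside the degree-$0$ part $\tau$ of $M_c(\tau)$, forcing $\sigma\cong\tau$ as $G$-modules (since that weight space is the single irreducible $\tau$), a contradiction. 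Combining, $n_{\sigma,\tau}=0$ unless $\sigma<\tau$ in the stated sense (allowing $\sigma=\tau$). Finiteness of the matrix follows from finiteness of $\mathrm{Irrep}(G)$.

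The main obstacle is the bookkeeping around the partial order: one must be careful that the relation $<$ is defined by $\Re h_c(\sigma)-\Re h_c(\tau)\in\mathbb N$ (so $\sigma<\tau$ means $\sigma$ strictly below $\tau$), and check that the degenerate case of equal real parts is handled by the $G$-module structure of the extremal weight space rather than by the $\bh$-grading alone. Everything else — existence and integrality of multiplicities, non-negativity, the shape $M_c(\tau)\cong\tau\otimes S\h^*$ — is immediate from the finite-length property and the PBW theorem already established, so I would keep that part brief and concentrate the write-up on the weight-space argument for the diagonal and off-diagonal vanishing.
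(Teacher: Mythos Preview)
Your approach is correct and is precisely the argument the paper's one-line proof (``Jordan--H\"older plus finite length'') is abbreviating; you have simply spelled out the $\bh$-eigenvalue bookkeeping that makes the triangularity and the diagonal entry $n_{\tau,\tau}=1$ transparent.

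There is, however, an index slip in your write-up that you should fix before submitting. In the paper's conventions, $M_c(\sigma)=\sum_\tau n_{\sigma,\tau}L_c(\tau)$, so $n_{\sigma,\tau}$ is the multiplicity of $L_c(\tau)$ in $M_c(\sigma)$. Your ``relabeling'' sentence sets $n_{\sigma,\tau}$ to be the multiplicity of $L_c(\sigma)$ in $M_c(\tau)$, which is the transpose. With that (wrong) convention, the inequality you actually derive, $\Re h_c(\sigma)-\Re h_c(\tau)\in\ZZ_{\ge 0}$, says $\tau\le\sigma$, the opposite of the claimed ``$\sigma<\tau$''. The mathematics is fine---just run the argument with $L_c(\tau)$ a composition factor of $M_c(\sigma)$, conclude $h_c(\tau)-h_c(\sigma)\in\ZZ_{\ge 0}$, and then the stated triangularity $n_{\sigma,\tau}=0$ unless $\sigma<\tau$ (or $\sigma=\tau$) comes out correctly. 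Your treatment of the boundary case (equal lowest weights forcing $\sigma\cong\tau$ via the irreducibility of the bottom $G$-isotype) is correct and worth keeping.
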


\begin{proof}
This follows from the Jordan-H\"older theorem and the fact that
objects in $\cO_c(G, \h)_0$ have finite length. 
\end{proof} 

\begin{corollary}
Let $N^{-1}=(\bar{n}_{\tau, \sigma})$. Then 
$$
L_{c}(\tau)=\sum \bar{n}_{\tau, \sigma}M_{c}(\sigma).
$$
\end{corollary}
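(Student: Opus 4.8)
The plan is to observe that this corollary is a purely formal consequence of the previous Proposition, obtained by inverting the matrix $N$ inside the Grothendieck group. First I would recall that, since every object of $\cO_c(G,\h)_0$ has finite length, the group $\K_0(\cO_c(G,\h)_0)$ is the free abelian group on the classes $[L_c(\tau)]$, $\tau\in\mathrm{Irrep}\,G$; thus writing a class as an integer combination of the $[L_c(\tau)]$ is unambiguous. The Proposition asserts exactly that the column vector of classes $([M_c(\sigma)])_\sigma$ equals $N$ times the column vector $([L_c(\tau)])_\tau$, where $N=(n_{\sigma,\tau})$ has nonnegative integer entries, $n_{\tau,\tau}=1$, and $n_{\sigma,\tau}=0$ unless $\sigma<\tau$.

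The key step is to check that $N$ is invertible over $\ZZ$. To see this, choose a total order on $\mathrm{Irrep}\,G$ refining the relation $<$: concretely, list the irreducibles in order of increasing $\Re h_c(\tau)$, breaking ties arbitrarily. Since $\tau<\sigma$ forces $\Re h_c(\sigma)-\Re h_c(\tau)\in\mathbb{N}$, the relation $<$ is compatible with this order, so in this order the matrix $N$ is upper triangular with $1$'s on the diagonal. Hence $\det N=1$, and $N^{-1}=(\bar n_{\tau,\sigma})$ again has integer entries (and is itself upper unitriangular, in particular $\bar n_{\tau,\tau}=1$ and $\bar n_{\tau,\sigma}=0$ unless $\tau<\sigma$).

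Finally, multiplying the identity $[M_c(\sigma)]=\sum_\tau n_{\sigma,\tau}[L_c(\tau)]$ on the left by $N^{-1}$ yields $[L_c(\tau)]=\sum_\sigma \bar n_{\tau,\sigma}[M_c(\sigma)]$ in $\K_0(\cO_c(G,\h)_0)$, which is the assertion. I do not expect any real obstacle here; the only point requiring a word of care is that $<$ genuinely refines to a total order — which it does, being induced by the real-valued function $\tau\mapsto\Re h_c(\tau)$ up to the harmless ambiguity when two real parts coincide (in which case the relevant entries of $N$ already vanish off the diagonal).
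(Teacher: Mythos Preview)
Your proof is correct and is exactly the argument the paper has in mind: the paper's own proof consists of the single sentence ``Both of the corollaries can be obtained from the above proposition easily,'' and you have simply spelled out that easy step---ordering the irreducibles by $\Re h_c$, observing that $N$ is unitriangular hence invertible over $\ZZ$, and inverting in $\K_0$. There is nothing to add.
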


\begin{corollary}
We have
$$
\ch_{L_{c}(\tau)}(g,t)=\frac{\sum \bar{n}_{\tau, \sigma}\chi_{\sigma}(g)t^{h_{c}(\tau)}}
{\det_{\h^{*}}(1-tg)}.
$$
\end{corollary}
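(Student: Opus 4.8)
The plan is to combine the two immediately preceding results — the Grothendieck-group identity $L_{c}(\tau)=\sum_{\sigma}\bar n_{\tau,\sigma}M_{c}(\sigma)$ in $\K_{0}(\cO_{c}(G,\h)_{0})$, and the character formula $\ch_{M_{c}(\sigma)}(g,t)=\chi_{\sigma}(g)t^{h_{c}(\sigma)}/\det_{\h^{*}}(1-tg)$ established above — once we know that $M\mapsto \ch_{M}(g,t)$ descends to a well-defined additive functional on $\K_{0}(\cO_{c}(G,\h)_{0})$. That descent is the only genuine content of the argument.

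First I would check that $\ch_{M}(g,t)$ is well defined for each $M\in\cO_{c}(G,\h)_{0}$: since $M$ is finitely generated over $S\h^{*}$ and $\bh$ acts locally finitely with eigenvalues contained in a finite union of sets $z+\ZZ_{\geq 0}$ (by the Theorem on the grading element and its proof), each graded piece $M[\beta]$ is finite dimensional, so $\tr_{M[\beta]}(g)$ makes sense and $\ch_{M}(g,t)=\sum_{\beta}t^{\beta}\tr_{M[\beta]}(g)$ is an honest formal series, with exponents bounded below in each coset of $\ZZ$. Next I would check additivity: given a short exact sequence $0\to M'\to M\to M''\to 0$ in $\cO_{c}(G,\h)_{0}$ (the category being closed under subquotients and extensions), the generalized $\bh$-eigenspace decomposition is exact because $\bh$ acts locally finitely on all three terms, so $M'[\beta]=M'\cap M[\beta]$ and $M''[\beta]$ is the image of $M[\beta]$; hence $\tr_{M[\beta]}(g)=\tr_{M'[\beta]}(g)+\tr_{M''[\beta]}(g)$ for every $\beta$ and $g$, and therefore $\ch_{M}=\ch_{M'}+\ch_{M''}$. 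Thus $\ch(\,\cdot\,)(g,t)$ factors through $\K_{0}(\cO_{c}(G,\h)_{0})$.

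Finally I would apply this functional to the expansion of the previous corollary. Since $\mathrm{Irrep}\,G$ is a finite set, the sum is a finite $\ZZ$-linear combination, so
$$
\ch_{L_{c}(\tau)}(g,t)=\sum_{\sigma}\bar n_{\tau,\sigma}\,\ch_{M_{c}(\sigma)}(g,t)
=\sum_{\sigma}\bar n_{\tau,\sigma}\,\frac{\chi_{\sigma}(g)t^{h_{c}(\sigma)}}{\det_{\h^{*}}(1-tg)},
$$
and pulling the $\sigma$-independent denominator $\det_{\h^{*}}(1-tg)$ out of the sum gives the stated formula. There is no real obstacle beyond this bookkeeping; the single point deserving a line of care is the exactness of the $\bh$-eigenspace decomposition used to get additivity of $\ch$, which is immediate from local finiteness of $\bh$ and finite length of objects in $\cO_{c}(G,\h)_{0}$.
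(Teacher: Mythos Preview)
Your approach is correct and is exactly what the paper intends: it says only that ``both of the corollaries can be obtained from the above proposition easily,'' and the content is precisely the additivity of characters on $\K_0$ that you spell out. One small point worth flagging: your computation (correctly) produces $t^{h_c(\sigma)}$ in the numerator, whereas the statement as printed has $t^{h_c(\tau)}$; that is a typo in the paper, and your formula is the right one (the exponents $h_c(\sigma)$ in the sum differ from $h_c(\tau)$ by nonnegative integers but are not all equal to it).
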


Both of the corollaries can be obtained from the above proposition easily.

One of the main problems in the representation theory of rational
Cherednik algebras is the following problem. 

{\bf Problem:}
Compute the multiplicities $n_{\sigma, \tau}$ or, equivalently, $\ch_{L_{c}(\tau)}$ for all $\tau$.

In general, this problem is open.

\subsection{Example: the rank $1$ case}
Let $G=\ZZ/m\ZZ$ and $\lambda$ be an $m$-th primitive root of $1$.
Then the algebra $H_{1,c}(G, \h)$ is generated by $x, y, s$ with relations
$$
[y, x]=1-2\sum_{j=1}^{m-1}c_{j}s^{j},\quad sxs^{-1}=\lambda x, \quad sys^{-1}=\lambda^{-1}y.
$$ 

Consider the one-dimensional space $\CC$ and let $y$ act by $0$ and $g\in G$ act by $1$.
We have $M_{c}(\CC)=\CC[x]$. The contravariant form ${\bf \beta}_{c, \CC}$ on $M_{c}(\CC)$
is defined by 
$$
\beta_{c, \CC}(x^{n}, x^{n})=a_{n};\quad 
\beta_{c, \CC}(x^{n}, x^{n'})=0, n\ne n'.
$$
Recall that $\beta_{c, \CC}$ satisfies 
$\beta_{c,\CC }(x^{n}, x^{n})=\beta_{c,\CC }(x^{n-1}, yx^{n})$, 
which gives
$$
a_{n}=a_{n-1}(n-b_{n}), 
$$
where $b_{n}$ are new parameters:
$$
b_n:=2\sum_{j=1}^{m-1}\frac{1-\lambda^{jn}}{1-\lambda^{j}}c_{j}
\quad (b_{0}=0, b_{n+m}=b_{n}).
$$

Thus we obtain the following proposition. 

\begin{proposition}
\begin{enumerate}
\item[(i)] $M_{c}(\CC)$ is irreducible if only if $n-b_{n}\neq 0$ for any $n\geq 1$.
\item[(ii)] Assume that $r$ is the smallest positive integer such that $r=b_{r}$. Then $L_{c}(\CC)$
has dimension $r$ (which can be any number not divisible by $m$) with basis
$1, x, \ldots, x^{r-1}$. 
\end{enumerate}
\end{proposition}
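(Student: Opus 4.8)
The plan is to analyze the contravariant form $\beta_{c,\CC}$ on $M_c(\CC)=\CC[x]$ explicitly, since in the rank $1$ case everything is diagonal in the monomial basis. The key observation, already recorded in the excerpt, is the recursion $a_n = a_{n-1}(n - b_n)$ with $a_0 = 1$, so that $a_n = \prod_{k=1}^n (k - b_k)$. By Proposition on the kernel of the contravariant form, $J_c(\CC) = \Ker \beta_{c,\CC}$, and since the form is block-diagonal with respect to the grading by powers of $x$, its radical is spanned by those monomials $x^n$ with $a_n = 0$, together with everything above them in a graded submodule. So I would first show that $M_c(\CC)$ is irreducible iff $a_n \neq 0$ for all $n \geq 1$, i.e. iff $k - b_k \neq 0$ for all $k \geq 1$; this gives part (i). For this direction I would note that if all $a_n \neq 0$ the form is nondegenerate, hence $J_c(\CC) = 0$; conversely if some $a_n = 0$ pick the least such $n$, and then $x^n$ generates a proper nonzero submodule (it is nonzero since $n \geq 1$, and proper since $a_{n-1} \neq 0$ shows $x^{n-1} \notin$ the submodule generated by $x^n$).

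For part (ii), suppose $r \geq 1$ is the smallest positive integer with $r = b_r$. Then $a_r = 0$ while $a_k \neq 0$ for $1 \leq k \leq r-1$. I would argue that the submodule $N$ generated by $x^r$ is exactly $x^r\CC[x]$: indeed applying $x$ repeatedly gives all $x^{r+j}$, and applying $y$ to $x^{r+j}$ produces a multiple of $x^{r+j-1}$ with coefficient $(r+j) - b_{r+j}$, so I must check this coefficient never vanishes for $j \geq 1$ so that $y$ does not push us below degree $r$. Since $b_{n}$ is periodic mod $m$ with $b_{n+m} = b_n$, and $r+j - b_{r+j} = 0$ would force $r + j \equiv b_{r+j} = b_{(r+j \bmod m)}$ — one checks using $b_{r} = r$ and periodicity that the next solution of $n = b_n$ after $r$ is $r + m$, $r + 2m, \dots$, hence $r + j - b_{r+j} \neq 0$ for $1 \leq j < m$ and the pattern of nonvanishing persists. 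Actually the cleanest route: the solutions $n$ of $n - b_n = 0$ form a set stable under $n \mapsto n + m$ (since $b_{n+m} = b_n$), so the second-smallest positive solution is at least $r + 1$; combined with minimality of $r$ this shows $x^r\CC[x]$ is $y$-stable, hence a submodule, and it is clearly the maximal proper one. Therefore $L_c(\CC) = \CC[x]/x^r\CC[x]$ has dimension $r$ with basis $1, x, \dots, x^{r-1}$.

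It remains to see that $r$ can be any positive integer not divisible by $m$, and that it is never divisible by $m$. That $m \nmid r$: if $m \mid n$ then $\lambda^{jn} = 1$ for all $j$, so $b_n = 0 \neq n$, so no multiple of $m$ solves $n = b_n$. For realizability, given $r$ with $m \nmid r$ I would choose the parameters $c_j$ so that $b_r = r$ and $b_k \neq k$ for $1 \leq k < r$; since the map $(c_1, \dots, c_{m-1}) \mapsto (b_1, \dots, b_{m-1})$ is linear and invertible (the matrix $\big(\tfrac{1-\lambda^{jn}}{1-\lambda^j}\big)_{n,j}$ is a Vandermonde-type matrix, nonsingular because the $\lambda^j$ are distinct), one can prescribe $b_1, \dots, b_{m-1}$ freely, and then $b_n$ for general $n$ is determined by periodicity; choosing $b_1, \dots, b_{m-1}$ generically subject to $b_{(r \bmod m)}$ taking the value forcing $b_r = r$ does the job.

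The main obstacle is the bookkeeping in part (ii): verifying that the submodule generated by $x^r$ is precisely $x^r\CC[x]$ requires controlling the quantities $n - b_n$ for all $n > r$, not just $n \leq m$, and the periodicity argument — that the full solution set of $n = b_n$ is a union of arithmetic progressions with common difference $m$ — needs to be stated carefully. Everything else is a routine computation with the explicit recursion and the linear algebra of the $c_j \leftrightarrow b_n$ change of variables.
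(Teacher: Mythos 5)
Your part (i) is correct and is the paper's argument: the form is block-diagonal in the monomial basis, $a_n = \prod_{k=1}^n(k-b_k)$, and $J_c(\CC) = \Ker\beta_c$ is zero iff no factor vanishes.

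For part (ii), however, your argument contains a false claim and is organized around a misconception. The ``main obstacle'' you identify does not exist. To check that $x^r\CC[x]$ is $y$-stable one has $y\cdot x^{r+j} = \big((r+j)-b_{r+j}\big)\,x^{r+j-1}$. For $j=0$ the coefficient is $r-b_r=0$, and for $j\geq 1$ the vector $x^{r+j-1}$ already lies in $x^r\CC[x]$ because $r+j-1\geq r$, \emph{regardless of whether the coefficient vanishes}. There is simply nothing to verify about $n-b_n$ for $n>r$. Worse, your ``cleanest route'' assertion --- that the solution set of $n=b_n$ is stable under $n\mapsto n+m$ because $b_{n+m}=b_n$ --- is false and in fact backwards: periodicity of $b$ gives $b_{r+m}=b_r=r\neq r+m$, so $r+m$ is \emph{never} a solution. (Since $b$ is $m$-periodic while $n$ is strictly increasing, the equation $n=b_n$ has at most one solution per residue class mod $m$, not an arithmetic progression of them.) You also assert without proof that $x^r\CC[x]$ is the maximal proper submodule; this needs a sentence (submodules are $\bh$-graded and miss degree $0$; if they contained some $x^k$ with $k<r$ then $y^k x^k = a_k\cdot 1\neq 0$ would be in them).

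The paper's route, which you have available and already invoke for (i), avoids all of this: by the proposition of Section 3.11, $J_c(\CC)=\Ker\beta_c$, and since $\beta_c(x^n,x^n)=a_n$ with $a_n=0$ exactly when $n\geq r$ (by minimality of $r$ and the recursion), one gets $J_c(\CC)=x^r\CC[x]$ directly, with no need to independently verify that $x^r\CC[x]$ is a submodule. Your realizability discussion --- $m\nmid r$ because $b_n=0$ when $m\mid n$, and any such $r$ is achieved because the linear map $(c_1,\dots,c_{m-1})\mapsto(b_1,\dots,b_{m-1})$ is invertible (the matrix $\big(\sum_{l=0}^{n-1}\lambda^{jl}\big)_{n,j}$ factors as a unitriangular matrix times a nonsingular Vandermonde) --- is correct and is a useful addition, as the paper leaves this implicit.
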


\begin{remark}
According to Remark \ref{chara}, 
this proposition in fact describes all 
the irreducible lowest weight modules.
\end{remark}

\begin{example}
Consider the case $m=2$. The $M_{c}(\CC)$ is irreducible
unless $c\in 1/2+\ZZ_{\ge 0}$.
If $c=(2n+1)/2\in 1/2+\ZZ$, $n\ge 0$, then
$L_{c}(\CC)$ has dimension $2n+1$. A similar answer is
obtained for lowest weight $\CC_-$, replacing $c$ by $-c$. 
\end{example}

\subsection{The Frobenius property}
Let $A$ be a $\ZZ_+$-graded commutative algebra. 
The algebra $A$ is called Frobenius if the top degree $A[d]$ of
$A$ is $1$-dimensional, and the multiplication map 
$A[m]\times A[d-m]\to A[d]$ is a nondegenerate pairing for any
$0\leq m\leq d$. In particular, the Hilbert polynomial 
of a Frobenius algebra $A$ is palindromic.  

Now, let us go back to considering modules 
over the rational Cherednik algebra $H_{1,c}$. 
Any submodule $J$ of the polynomial 
representation $M_c(\CC)=M_c=\CC[\h]$ is an ideal in $\CC[\h]$, 
so the quotient $A=M_c/J$ is a $\ZZ_+$-graded commutative algebra. 

Now suppose that $G$ preserves an inner product in $\h$, i.e., $G\subseteq \tO(\h)$. 

\begin{theorem}{\label{thm-irr}}
If $A=M_{c}(\CC)/J$ is finite dimensional, then $A$ is irreducible ($A=L_{c}(\CC)$)
$\Longleftrightarrow$
$A$ is a Frobenius algebra.
\end{theorem}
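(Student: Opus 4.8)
The plan is to exploit the contravariant form $\beta_c$ on $M_c(\CC)$ (Section \ref{sec:cvform}), which here is a symmetric form since $G\subseteq\tO(\h)$ and $\h\cong\h^*$, $c=\bar c$. By the proposition identifying $J_c(\tau)$ with the radical of $\beta_c$, the quotient $A=M_c(\CC)/J$ is irreducible exactly when $J$ is the full radical of $\beta_c$, i.e.\ when the induced form $\bar\beta_c$ on $A$ is nondegenerate. So the task reduces to comparing the statement ``$\bar\beta_c$ is nondegenerate on $A$'' with the statement ``$A$ is Frobenius,'' both being statements about a finite-dimensional graded algebra together with a form. The key structural input will be that $\bar\beta_c$ is a \emph{contravariant} form: for $a\in\h$ (acting via a Dunkl-type operator, i.e.\ as a derivation on $\CC[\h]$ modulo lower terms) one has $\beta_c(yf,g)=\beta_c(f,xg)$ up to the appropriate adjunction, so the form intertwines multiplication by $x\in\h^*$ with the lowering operators $y\in\h$.

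First I would establish the direction ($A$ irreducible $\Rightarrow$ $A$ Frobenius). Since $A$ is finite-dimensional and graded, let $d$ be its top degree. I would show $A[d]$ is $1$-dimensional by a singular-vector argument: if $\dim A[d]>1$, one produces a singular vector (killed by all $y_i$) in degree $d$ generating a proper nonzero submodule (its image under the Dunkl action lands in lower degrees, but it cannot generate everything below), contradicting irreducibility; more precisely, since $L_c(\CC)$ is generated by its lowest piece $\CC$ in degree $0$, the top piece $A[d]$ must be a cyclic image, forcing dimension $1$ via the contravariant form pairing $A[0]\times A[d]$. Then nondegeneracy of the multiplication pairing $A[m]\times A[d-m]\to A[d]$ follows because $\bar\beta_c$ is nondegenerate (by irreducibility) and $\bar\beta_c$ can be rewritten, using contravariance, as $\bar\beta_c(f,g)=\langle$ top-degree coefficient of $f\cdot g^{\sigma}\rangle$ for a suitable involution $\sigma$ (coming from $\tau\cong\tau^*$), so nondegeneracy of $\bar\beta_c$ between graded pieces translates into nondegeneracy of the multiplication pairing into $A[d]$.

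Conversely, for ($A$ Frobenius $\Rightarrow$ $A$ irreducible), I would argue that the radical $R$ of $\bar\beta_c$ is a graded ideal of $A$, hence (if nonzero) contains a nonzero element of top degree $d$, i.e.\ $R\cap A[d]\ne 0$; but $A[d]$ is $1$-dimensional and $\bar\beta_c$ restricted to $A[0]\times A[d]$ is nondegenerate (it pairs $1$ with a generator of $A[d]$, essentially by construction of the form in degree $0$), so $A[d]\not\subseteq R$, a contradiction. Here the Frobenius hypothesis enters to guarantee $A[d]$ is $1$-dimensional and, crucially, that $R$ being an ideal cannot avoid the socle — in a Frobenius algebra the socle is exactly $A[d]$, so any nonzero ideal meets $A[d]$. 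Thus $R=0$, $\bar\beta_c$ is nondegenerate, and $A=M_c(\CC)/J_c(\CC)=L_c(\CC)$.

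The main obstacle I anticipate is making precise the claim that the contravariant form $\bar\beta_c$ on $A$ \emph{coincides} (up to nondegenerate rescaling on each graded component) with the Frobenius multiplication pairing — i.e.\ identifying the form with ``extract the top coefficient of the product.'' This requires knowing that $A[d]$ is one-dimensional (so the pairing lands in a line), that the $G$-action and the involution $\tau\cong\tau^*$ are compatible so that $\bar\beta_c(f,g)$ genuinely factors through $f\cdot g$, and that no graded piece $A[m]$ is in the radical for trivial reasons. All of this hinges on the contravariance relation $\beta_c(xf,g)=\beta_c(f,yg)$ together with the fact that, modulo lower-order terms, $y$ acts as $\partial$-type operators, so that pairing against $1\in A[0]$ picks out the constant term of a product; the genuinely delicate point is controlling the ``lower order terms'' in the Dunkl operators to ensure this identification is exact and not merely to leading order. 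Once that identification is in hand, both implications are short, and the theorem follows.
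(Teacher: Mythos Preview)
Your overall strategy---relate the contravariant form $\beta_c$ to the Frobenius multiplication pairing---is the right one, but you are missing the structural tool that makes it work, and one of your steps is actually wrong as stated.

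The missing tool is the $\mathfrak{sl}_2$-triple $(\mathbf{h},\mathbf{E},\mathbf{F})$ of Proposition~\ref{sl2rep}. Since $A$ is finite dimensional it integrates to an $\SL_2(\CC)$-module, and the Weyl-group symmetry of weights immediately gives $\dim A[d]=\dim A[0]=1$; your singular-vector sketch can be made to work but is unnecessary. More importantly, the element $g=\begin{pmatrix}0&1\\-1&0\end{pmatrix}\in\SL_2(\CC)$ acts on $A$ and \emph{exchanges} the roles of $x_i$ and $y_i$ (up to sign). The paper's proof sets $E(v_1,v_2):=\beta_c(v_1,gv_2)$; contravariance of $\beta_c$ then becomes $E(xv_1,v_2)=E(v_1,xv_2)$, so $E$ is exactly the Frobenius pairing $E(p,q)=\phi(pq)$ for a suitable $\phi$ on $A[d]$. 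For the converse one runs this backwards: start from the Frobenius form $E(a,b)=\phi(ab)$, set $\beta(a,b)=E(a,g^{-1}b)$, check that $\beta$ is contravariant, and invoke uniqueness (Proposition~\ref{homm}) to conclude $\beta=\beta_c$ up to scaling, hence $\beta_c$ is nondegenerate.

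Your proposed identification via an ``involution $\sigma$'' and controlling lower-order terms in the Dunkl operators is exactly the computation that the $\SL_2$-twist replaces: $g$ is the ``Fourier transform'' you are groping for, and it is well defined precisely because $A$ is a finite-dimensional $\mathfrak{sl}_2$-module. Without this, your Frobenius~$\Rightarrow$~irreducible argument breaks: you claim $\bar\beta_c$ is nondegenerate on $A[0]\times A[d]$, but in fact $\beta_c$ pairs $A[m]$ with $A[m]$ (it satisfies $\beta_c(xf,g)=\beta_c(f,yg)$), so $\bar\beta_c|_{A[0]\times A[d]}=0$ for $d>0$. The radical-meets-socle argument therefore does not apply to $\bar\beta_c$ directly; it applies to $E$, and the passage from $E$ to $\beta_c$ is the whole content of the proof.
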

\begin{proof}
1) Suppose $A$ is an irreducible $H_{1, c}$-module, i.e., $A=L_{c}(\CC)$.
By Proposition \ref{sl2rep}, $A$ is naturally a finite dimensional $\mathfrak{sl}_{2}$-module
(in particular, it integrates to the group $\SL_2(\CC)$).
Hence, by the representation theory of $\mathfrak{sl}_{2}$, the top
degree of $A$ is $1$-dimensional.
Let $\phi\in A^{*}$ denote a nonzero linear function on the top component. Let $\beta_c$ be the 
contravariant form on $M_{c}(\CC)$.  Consider the form 
$$
(v_{1}, v_{2})\mapsto E(v_{1}, v_{2}):=\beta_c(v_{1}, gv_{2}), \text{ where } 
g=\left(\begin{array}{cc}0 & 1 \\-1 & 0\end{array}\right)\in \SL_{2}(\CC).
$$
Then $E(xv_{1}, v_{2})=E(v_{1}, xv_{2})$. 
So for any $p,q\in M_{c}(\CC)=\CC[\h]$, $E(p,q)=\phi(p(x)q(x))$ 
(for a suitable normalization of $\phi$).

Since $E$ is a nondegenerate form, $A$ is a Frobenius algebra.

2) Suppose $A$ is Frobenius. Then the highest component is $1$-dimensional, 
and \linebreak $E: A\otimes A \to \CC$, $E(a, b)=\phi(ab)$ is nondegenerate. 
We have $E(xa, b)=E(a, xb)$. So set $\beta(a, b)=E(a, g^{-1}b)$. 
Then $\beta$ satisfies $\beta(a, x_{i}b)=\beta(y_{i}a, b)$.
Thus, for all $p,q\in \CC[\h]$, 
$\beta(p(x), q(x))=\beta(q(y)p(x), 1)$. 
So $\beta=\beta_c$ up to scaling. Thus, $\beta_c$ is nondegenerate and $A$ is irreducible.
\end{proof}

\begin{remark}
If $G\nsubseteq \tO(\h)$, this theorem is false, in general.
\end{remark}

Now consider the Frobenius property of $L_{c}(\CC)$ for any 
$G\subset \GL(\h)$.

\begin{theorem}{\label{thm-frob}}
Let $U\subset M_{c}(\CC)=\CC[\h]$ be a $G$-subrepresentation of dimension $l=\dim \h$,
sitting in degree $r$, which consists of singular vectors.
Let $J=\langle U\rangle$. Assume that $A=M_{c}/J$ is finite dimensional. Then
\begin{enumerate}
\item[(i)] $A$ is Frobenius.
\item[(ii)] $A$ admits a BGG type resolution:
$$
A\leftarrow M_c(\CC)\leftarrow M_c(U)\leftarrow M_c(\wedge^2U)
\leftarrow\cdots\leftarrow M_c(\wedge^l U)\leftarrow 0. 
$${\label{itm:bgg}}

\item[(iii)] The character of $A$ is given by the formula 
$$
\chi_{A}(g, t)
=t^{\frac{l}{2}-\sum_{s\in \cS}\frac{2c_{s}}{1-\lambda_{s}}}\frac{\det_{U}(1-gt^r)}
{\det_{\h^{*}}(1-gt)}.
$$
In particular, $\dim A=r^{l}$.
\item[(iv)] If $G$ preserves an inner product, then $A$ is irreducible.

\end{enumerate}

\end{theorem}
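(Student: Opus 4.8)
The plan is to prove the four assertions more or less in order, with (ii) doing most of the heavy lifting. First, for (ii), I would construct the BGG-type resolution by hand. The key observation is that $U$ consists of singular vectors spanning a $G$-subrepresentation isomorphic to $\h^* \otimes \chi$ for some character $\chi$ of $G$ (since $\dim U = \dim \h$ and $U$ sits in a single degree $r$; one checks the $G$-action is forced, as the coordinate functions transform in $\h^*$), so one gets Koszul-type differentials $M_c(\wedge^{k+1} U) \to M_c(\wedge^k U)$ from the natural pairing, exactly as in the classical Koszul complex of $\CC[\h]$ resolving $\CC$. The point is that these maps are $H_{1,c}$-module maps because $U$ is singular (so by Proposition \ref{homm} each $M_c(\wedge^k U) \to M_c(\wedge^{k-1}U)$ extends uniquely from its restriction to the lowest weight space), and exactness follows from exactness of the ordinary Koszul complex after forgetting down to $\CC[\h]$-modules: as a complex of free $\CC[\h]$-modules it is, up to the twist by $\wedge^\bullet U$, the Koszul resolution of $A = \CC[\h]/\langle U\rangle$, which is a complete intersection precisely because $A$ is finite-dimensional and $\dim U = \dim \h$. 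The hypothesis that $A$ is finite-dimensional is what guarantees the $l$ elements of $U$ form a regular sequence in $\CC[\h]$.

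Given the resolution, (iii) is a formal consequence: take graded (equivariant) Euler characteristics. Each $M_c(\wedge^k U)$ has character $\ch_{M_c(\wedge^k U)}(g,t) = t^{h_c(\wedge^k U)} \chi_{\wedge^k U}(g) / \det_{\h^*}(1 - tg)$ by the character formula for standard modules, and $h_c(\wedge^k U) = h_c(\CC) + kr$ with $\chi_{\wedge^k U}(g)$ the $k$-th elementary symmetric function of the eigenvalues of $g$ on $U$ (times the appropriate twist by $\chi^k$, but since $U \cong \h^*\otimes\chi$ we can absorb everything into $\det_U(1 - gt^r)$). Summing $(-1)^k$ times these gives $\ch_A(g,t) = t^{h_c(\CC)}\det_U(1 - g t^r)/\det_{\h^*}(1 - tg)$, and setting $g = 1$, $t \to 1$ (or just reading off degrees) yields $\dim A = r^l$. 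Here $h_c(\CC) = \tfrac{l}{2} - \sum_{s\in\cS}\tfrac{2c_s}{1-\lambda_s}$, matching the stated exponent.

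For (i), Frobenius-ness: the character formula shows the Hilbert series of $A$ is $\det_U(1 - gt^r)/\det_{\h^*}(1-tg)$ at $g=1$, i.e. $\prod_{i=1}^l (1 + t^r + \cdots + t^{(r-1)\cdot?})$ — more precisely it is palindromic of top degree $d = l(r-1)$, which is a necessary condition; but to actually get the nondegenerate pairing I would argue that $A$ is a complete intersection $\CC[\h]/(f_1,\ldots,f_l)$ with all $f_i$ homogeneous of degree $r$, and every graded Artinian complete intersection is Gorenstein, hence Frobenius. This is a standard commutative-algebra fact (the socle is one-dimensional, spanned by the image of the Jacobian / the top of the Koszul complex), and it also re-derives the one-dimensionality of the top degree directly. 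Finally (iv): if $G \subseteq \tO(\h)$, then $A = M_c(\CC)/J$ is a finite-dimensional quotient of the polynomial representation which we have just shown is Frobenius, so Theorem \ref{thm-irr} applies verbatim to give $A = L_c(\CC)$ irreducible.

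The main obstacle I anticipate is verifying that the Koszul differentials are genuinely $H_{1,c}$-linear and that the complex is exact as $H_{1,c}$-modules rather than merely as $\CC[\h]$-modules. The $\CC[\h]$-linearity and exactness are classical, and Proposition \ref{homm} upgrades each individual map to an $H_{1,c}$-map once we know its source is generated by singular vectors; the subtle point is checking that $\wedge^k U \subset M_c(\wedge^{k-1}U)$ really does land in singular vectors (equivalently, that the composite of two consecutive Koszul maps is zero as $H_{1,c}$-maps, which follows from uniqueness in Proposition \ref{homm} since it is zero on the lowest weight space). Once that bookkeeping is done, everything else is a routine transfer of classical Koszul/Gorenstein facts through the character homomorphism.
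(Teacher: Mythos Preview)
Your proposal is correct and follows essentially the same route as the paper: complete intersection $\Rightarrow$ Frobenius for (i); the Koszul complex on the regular sequence $U$ gives exactness over $\CC[\h]$, and an induction using Proposition~\ref{homm} upgrades each differential to an $H_{1,c}$-map for (ii); Euler--Poincar\'e gives (iii); and Theorem~\ref{thm-irr} gives (iv). The paper phrases the first step of (ii) via Serre's theorem (freeness of $S\h^*$ over $SU$) rather than the language of regular sequences, but this is the same content. One small point: your parenthetical claim that $U \cong \h^*\otimes\chi$ is neither needed nor justified by the hypotheses---the argument only uses $\dim U = l$ and that $U$ sits in a single degree consisting of singular vectors, and the character formula is stated in terms of $\det_U$ precisely so that no further structure on $U$ is required.
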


\begin{proof}
(i) Since $\Spec\,A $ is a complete intersection, $A$ is Frobenius.

(ii) We will use the following theorem: 

\begin{theorem}[Serre]
Let $f_{1}, \ldots, f_{n}\in \CC[t_{1}, \ldots, t_{n}]$ be homogeneous polynomials, and
assume that $\CC[t_{1}, \ldots, t_{n}]$ is a finitely generated module over
$\CC[f_{1}, \ldots, f_{n}]$. Then this is a free module.
\end{theorem}

Consider $SU\subset S\h^{*}$. Then $S\h^{*}$ is a finitely 
generated $SU$-module (as $S\h^{*}/\langle U\rangle$ is finite dimensional).
By Serre's theorem, we know that $S\h^{*}$ is a free $SU$-module.
The rank of this module is $r^{l}$. Consider the Koszul complex
attached to this module. Since the module is free, the Koszul
complex is exact 
(i.e., it is a resolution of the zero fiber).
At the level of $SU$-modules, it looks exactly like we want in \eqref{itm:bgg}.

So we only need to show that the maps of the resolution are morphisms over $H_{1,c}$.
This is shown by induction. Namely, let $\delta_j:
M_c(\wedge^jU)\to M_c(\wedge^{j-1}U)$ be the corresponding
differentials (so that $\delta_0: M_c(\CC)\to A$ is the
projection). Then $\delta_0$ is an $H_{1,c}$-morphism, which is the
base of induction. If $\delta_j$ is an $H_{1,c}$-morphism, 
then the kernel of $\delta_j$ is a submodule $K_j\subset
M_c(\wedge^jU)$. Its lowest degree part is $\wedge^{j+1}U$ 
sitting in degree $(j+1)r$ and consisting of singular vectors. 
Now, $\delta_{j+1}$ is a morphism over
$S\h^*$ which maps $\wedge^{j+1}U$ identically to
itself. By Proposition \ref{homm}, there is only one such morphism, and 
it must be an $H_{1,c}$-morphism. This completes the
induction step. 

(iii) follows from (ii) by the Euler-Poincar\'e formula.

(iv) follows from Theorem \ref{thm-irr}. 

\end{proof}

\subsection{Representations of $H_{1,c}$ of type $A$}
Let us now apply the above results to the case of type $A$. 
We will follow the paper \cite{CE}. 

Let $G=\kS_{n}$, and $\h$ be its reflection representation. 
In this case the function $c$ reduces to one number. 
We will denote the rational Cherednik algebra $H_{1,c}(\kS_{n})$ 
by $H_c(n)$. It is generated 
by $x_{1}, \ldots, x_{n}$, $y_{1}, \ldots, y_{n}$ and $\CC\kS_n$
with the following relations:
$$
\sum y_{i}=0,\quad \sum x_{i}=0,\quad 
[y_{i}, x_{j}]=-\frac{1}{n}+cs_{ij}, i\neq j,
$$
$$
[y_{i}, x_{i}]=\frac{n-1}{n}-c\sum_{j\neq i}s_{ij}.
$$
The polynomial representation $M_c(\CC)$ of this algebra is the space of
$\CC[x_1, \ldots, x_n]^T$ of polynomials of $x_1,\ldots,x_n$, which are 
invariant under simultaneous translation $T: x_i\mapsto x_i+a$. 
In other words, it is the space of regular functions
on $\h=\CC^n/\Delta$, where $\Delta$ is the diagonal.

\begin{proposition}[C. Dunkl] \label{Asing}
Let $r$ be a positive integer not divisible by $n$, and $c=r/n$. 
Then $M_{c}(\CC)$ contains a copy of the reflection representation 
$\h$ of $\kS_{n}$, which consists of singular vectors
(i.e. those killed by $y\in \h$). 
This copy sits in degree $r$ and is spanned 
by the functions 
$$
f_i(x_1, \ldots, x_n)=\Res_\infty [(z-x_1)\cdots(z-x_n)]^{\frac{r}{n}}
\frac{\d z}{z-x_i}.
$$
(the symbol $\Res_\infty$ denotes the residue 
at infinity). 
\end{proposition}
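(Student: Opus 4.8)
The plan is to verify directly that the functions $f_i$ are singular vectors, i.e.\ that $D_y f_i = 0$ for all $y \in \h$, where $D_y$ is the Dunkl operator at parameter $c = r/n$ acting on the polynomial representation $M_c(\CC) = \CC[x_1,\dots,x_n]^T$. Since the $f_i$ span a copy of $\h \subset \CC^n$ inside $M_c$ (one checks $\sum_i f_i = \Res_\infty [(z-x_1)\cdots(z-x_n)]^{r/n}\,\d z = 0$ because the integrand has no residue at infinity when $r/n$ is not an integer, and $\kS_n$ permutes the $f_i$), it suffices to compute the action of the Dunkl operators $D_{y_k}$ (for $y_k$ the images of the coordinate vectors in $\h$) on the contour-integral representatives. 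The symmetric group equivariance $g D_y g^{-1} = D_{gy}$ from Proposition~\ref{prop-maincom}(ii) will let us reduce the verification to checking $D_{y_k} f_i = 0$ for the two cases $k = i$ and $k \ne i$, and the residue-at-infinity formalism is precisely what makes these checks tractable.

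First I would set $P(z) = (z-x_1)\cdots(z-x_n)$ and write $f_i = \Res_\infty P(z)^{r/n}\,\frac{\d z}{z - x_i}$, working on the full space $\CC[x_1,\dots,x_n]$ (descending to the quotient by the diagonal at the end, since all operators in $H_c(n)$ respect the translation action). The key computation is that $\partial_{x_k}$ acting under the residue produces $\partial_{x_k}\big[P(z)^{r/n}(z-x_i)^{-1}\big]$; since $\partial_{x_k} P(z) = -P(z)/(z - x_k)$, differentiation brings down a factor $-\frac{r}{n}\,\frac{1}{z - x_k}$ (plus, when $k = i$, a term $\frac{1}{(z-x_i)^2}$ from differentiating the denominator). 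The reflection part of the Dunkl operator involves the transpositions $s_{kj}$; one computes $(1 - s_{kj})$ applied to $P(z)^{r/n}(z-x_i)^{-1}$ using that $s_{kj}$ swaps $x_k \leftrightarrow x_j$ and fixes $P(z)$, so only the $(z-x_i)^{-1}$ factor and the roots are affected. The upshot is that each term $\frac{1}{z - x_k}$ coming from $\partial_{x_k}$ is exactly cancelled by a sum of partial-fraction contributions $\frac{c}{x_k - x_j}(\cdots)$ from the reflection terms; this is where the precise value $c = r/n$ enters — it matches the exponent $r/n$ on $P(z)$. I would organize this as a partial-fraction identity in the auxiliary variable $z$, valid before taking the residue, and then take $\Res_\infty$ of both sides.

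The computation splits cleanly: for $D_{y_k} f_i$ with $k \ne i$, all the $\frac{1}{z-x_k}$ contributions (from the derivative and from the $s_{kj}$ terms with $j$ ranging over all indices, including $j = i$) must sum to something whose residue at infinity vanishes; the only subtle term is the $s_{ki}$ contribution, which relates $f_i$ to $f_k$ and produces the off-diagonal structure consistent with $\{f_i\}$ spanning the reflection representation rather than each $f_i$ being individually killed by the full algebra. For $k = i$ one additionally handles the double pole $\frac{1}{(z-x_i)^2}$, which integrates against $P(z)^{r/n}$ to give a derivative term that again cancels against reflection contributions. I expect the main obstacle to be bookkeeping: correctly tracking the partial-fraction decomposition of the reflection terms and confirming that, after summing over all reflections $s_{kj}$, the coefficient of each simple pole $\frac{1}{z-x_m}$ vanishes identically once $c = r/n$. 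A clean way to manage this is to introduce the generating-function identity $\sum_j \frac{1}{(z-x_k)(x_k - x_j)} = \frac{1}{2}\Big(\text{terms symmetric in the structure of } P\Big)$ and exploit $\frac{P'(z)}{P(z)} = \sum_m \frac{1}{z - x_m}$, turning the whole verification into manipulations with the logarithmic derivative of $P$; then $D_{y_k}$ applied under the residue becomes manifestly a total object whose residue at infinity is zero by degree count, because $r/n \notin \ZZ$ guarantees $P(z)^{r/n}$ has a genuine branch point at infinity and the relevant integrands decay.
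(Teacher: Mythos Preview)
Your approach---differentiate under the residue, expand the Dunkl reflection terms, and show everything cancels when $c=r/n$---is exactly the ``straightforward computation'' the paper alludes to (the paper itself gives no details beyond that phrase and a remark about Jack polynomials). The plan is sound and does work; let me flag two concrete slips.

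First, your computation of $\sum_i f_i$ is wrong both in formula and in reasoning. Since $\sum_i (z-x_i)^{-1}=P'(z)/P(z)$, one has
\[
\sum_i f_i=\Res_\infty P(z)^{r/n-1}P'(z)\,\d z=\tfrac{n}{r}\,\Res_\infty \d\bigl(P(z)^{r/n}\bigr)=0,
\]
i.e.\ it vanishes because the integrand is \emph{exact} (this is exactly the paper's remark following the proposition), not because ``$P^{r/n}\,\d z$ has no residue when $r/n\notin\ZZ$''. The latter claim is false in general.

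Second, your description of the $k=i$ case is too vague and slightly misleading. Each $f_i$ really is annihilated by \emph{every} $D_{y_k}$ individually; there is no ``off-diagonal structure'' left over. For $k\ne i$ the only nonzero reflection term is $j=i$, and one gets $D_{y_k}f_i=(c-\tfrac{r}{n})\Res_\infty P^{r/n}(z-x_i)^{-1}(z-x_k)^{-1}\,\d z$, which vanishes precisely for $c=r/n$. For $k=i$, after using $\sum_{j\ne i}(z-x_j)^{-1}=P'/P-(z-x_i)^{-1}$ one is left with
\[
D_{y_i}f_i=\Res_\infty\frac{P^{r/n}}{(z-x_i)^2}\,\d z-\tfrac{r}{n}\Res_\infty\frac{P^{r/n-1}P'(z)}{z-x_i}\,\d z,
\]
and this vanishes by the exact-differential identity $\Res_\infty \d\bigl(P^{r/n}(z-x_i)^{-1}\bigr)=0$. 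That integration-by-parts step is the key ingredient you should make explicit; the ``decay because of branch point'' heuristic is not the right mechanism.

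Finally, note that the hypothesis $n\nmid r$ is used only to ensure the $f_i$ are nonzero (if $n\mid r$ then $P^{r/n}$ is a polynomial and each $f_i$ vanishes identically); the singularity computation itself does not need it.
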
 

\begin{remark} 
The space spanned by $f_i$ is $(n-1)$-dimensional, since 
$\sum_i f_i=0$ (this sum is the residue of an exact differential). 
\end{remark}

\begin{proof}
This proposition can be proved by a straightforward computation. 
The functions $f_i$ are a special case of 
Jack polynomials.
\end{proof}

Let $I_c$ be the submodule of $M_c(\CC)$ 
generated by $f_i$. 
Consider the $H_c(n)$-module $V_c=M_c(\CC)/I_c$, and 
regard it as a $\CC[\h]$-module.
We have the following results.

\begin{theorem}\label{thm-Arep} 
Let $d=(r,n)$ denote the greatest common 
divisor of $r$ and $n$. 
Then the (set-theoretical) 
support of $V_c$ is the union of $\kS_n$-translates 
of the subspaces of $\CC^n/\Delta$,
defined by the equations 
$$
\begin{array}{c}
x_1=x_2 = \cdots =x_{\frac{n}{d}};\quad
x_{\frac{n}{d}+1}= \cdots =x_{2\frac{n}{d}};\quad
\dots\quad
x_{(d-1)\frac{n}{d}+1}= \cdots = x_n.
\end{array}
$$
In particular, the Gelfand-Kirillov dimension of $V_c$ 
is $d-1$. 
\end{theorem}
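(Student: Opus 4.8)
The plan is to determine the support of $V_c = M_c(\CC)/I_c$ by combining two kinds of information: an upper bound on the support coming from the fact that $V_c$ is a quotient of the polynomial representation by the submodule generated by Dunkl-singular vectors, and a lower bound coming from the localization lemma and an explicit analysis on the locus where many coordinates collide. First I would recall from the theory of Section 5 (parabolic restriction) and from the structure of category $\cO$ that the support of any module in $\cO_c(\kS_n,\h)$ is a union of $\kS_n$-translates of the subspaces $\h^{W'}$ for parabolic subgroups $W' = \kS_{n_1}\times\cdots\times\kS_{n_k}$, and that $\h^{W'}$ lies in the support precisely when the restriction of the module to the corresponding Cherednik algebra of the parabolic is nonzero near a generic point of $\h^{W'}$. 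So the whole problem reduces to: for which compositions $(n_1,\dots,n_k)$ of $n$ does the stalk of $V_c$ at a generic point of the corresponding diagonal not vanish?

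Next I would carry out the local computation. Near a generic point of the subspace $x_1=\cdots=x_{n_1}$, $x_{n_1+1}=\cdots=x_{n_1+n_2}$, etc., the module $V_c$ restricts (via the parabolic restriction functor, which is exact and compatible with the Dunkl embedding) to a module over $H_c(n_1)\otimes\cdots\otimes H_c(n_k)$ built from the polynomial representation modulo the image of $I_c$. The key point is that the functions $f_i$ of Proposition \ref{Asing}, being (essentially) Jack polynomials attached to the parameter $c=r/n$, have a well-understood leading behaviour when coordinates collide: their restriction to a diagonal block of size $m$ either vanishes identically or reproduces the analogous singular vectors for $\kS_m$ at parameter $c = r/n = (rm/n)/m$. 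Thus the restricted module is nonzero exactly when each block size $n_i$ is such that $V_{r/n}$ for $\kS_{n_i}$ is nonzero, i.e. when $c=r/n$ written with denominator $n_i$ has the singular vectors still present; by the rank-one-type computation (and Dunkl's proposition applied to $\kS_{n_i}$, or rather its failure) this happens iff $n_i \mid n/d$ is forced — more precisely, the maximal blocks that survive have size exactly $n/d$, because $r/n = (r/d)/(n/d)$ with $(r/d, n/d)=1$, so $V_{r/n}(\kS_{n/d})$ is the trivial module $\CC$ (finite-dimensional, dimension $1$) by Theorem \ref{thm-frob}(iii) type reasoning, while for any $m$ strictly between... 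I would show that the support cannot be bigger than the stated union by checking that $V_c$ kills the diagonal of any $\kS_m$ with $m > n/d$ sitting inside a block, and cannot be smaller by exhibiting a nonzero vector in the stalk at a generic point of the stated subspace (for instance, using that the restriction there is the tensor product of $d$ copies of the one-dimensional module $L_{r/n}(\CC)$ for $\kS_{n/d}$, which is genuinely nonzero).

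Then the Gelfand--Kirillov dimension follows immediately: the stated subspace of $\CC^n/\Delta$ has dimension equal to (number of blocks) $-1 = d-1$ inside the quotient by the diagonal, and GK-dimension of a finitely generated $\CC[\h]$-module equals the dimension of its support, so $\dim_{GK} V_c = d-1$. I would present this last step as a one-line corollary.

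The main obstacle, I expect, is the local computation at the generic point of a deeper diagonal — i.e. rigorously controlling what happens to the generators $f_i$ of $I_c$ and to the module structure under the parabolic restriction functor, and in particular proving the sharp statement that blocks of size $> n/d$ do not occur in the support while blocks of size exactly $n/d$ do. This requires either a careful residue/asymptotics argument for the Jack-type functions $f_i$ as coordinates collide, or an inductive argument: restrict $V_c$ to a maximal parabolic $\kS_{n/d}\times\kS_{n-n/d}$ and analyze, then iterate. The combinatorial bookkeeping — tracking which parameter each smaller Cherednik algebra inherits and invoking the rank-one computation (Proposition on $M_c(\CC)$ for $\ZZ/m\ZZ$-type behaviour) to decide nonvanishing — is where the real work lies; the rest is formal once the support of the restrictions is pinned down. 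I would also need the fact (from \cite{CE}, or provable directly) that $V_c$ restricted to the generic point of the top stratum is finite-dimensional, which is what makes the support a genuine union of the listed subspaces rather than something larger.
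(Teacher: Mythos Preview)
Your plan is a genuinely different route from the paper's, and it is not complete. The paper's argument is entirely elementary and self-contained: since $\operatorname{supp}V_c$ is the common zero locus of the explicit generators
\[
f_i(x)=\operatorname{Res}_\infty\bigl[(z-x_1)\cdots(z-x_n)\bigr]^{r/n}\frac{dz}{z-x_i},
\]
one writes a point with distinct values $y_1,\dots,y_p$ of multiplicities $m_1,\dots,m_p$, observes that the vanishing of all $f_i$ is equivalent to
\[
\operatorname{Res}_\infty \prod_j (z-y_j)^{m_j r/n-1}\,z^i\,dz=0,\quad i=0,\dots,p-1,
\]
and then proves a short residue lemma (if $a(z)=\prod(z-y_j)^{\mu_j}$ with $\sum\mu_j\in\ZZ$, $\sum\mu_j>-p$, and $\operatorname{Res}_\infty a(z)z^i\,dz=0$ for $i\le p-2$, then $a$ is a polynomial) to conclude that the point lies in the support iff every $m_j$ is divisible by $n/d$. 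No category $\cO$ machinery, no restriction functors, no Jack polynomial asymptotics --- just manipulation of residues.

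Your approach via parabolic restriction is conceptually reasonable but circular in this context: the restriction functors you invoke are developed in Section~5, \emph{after} this theorem, and the proof there that supports are unions of strata already presupposes the kind of local analysis you are deferring. More seriously, the step you flag as ``the main obstacle'' is in fact the entire content of the theorem. Your claim that the $f_i$ ``restrict to the analogous singular vectors for $\kS_m$'' on a block of size $m$ is exactly what needs proof, and establishing it rigorously (what does $I_c$ become after completion at a deep diagonal and passage to $\h$-nilpotents?) would force you back to a residue or asymptotic computation equivalent to the paper's Lemma~3.37. Your heuristic that $\operatorname{Res}_b(V_c)$ is a tensor product of smaller $V_{r/n}$'s is not justified: exactness of $\operatorname{Res}_b$ gives $\operatorname{Res}_b(M_c)/\operatorname{Res}_b(I_c)$, but identifying $\operatorname{Res}_b(I_c)$ with the ideal generated by the ``smaller'' singular vectors is precisely the missing computation. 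In short, you have correctly located where the difficulty lies but have not supplied the argument; the paper's direct residue calculation is both simpler and what you would end up doing anyway.
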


\begin{corollary} [\cite{BEG}] If $d=1$ 
then the module $V_c$ is finite dimensional, irreducible, 
admits a BGG type resolution, and its character is 
$$
\chi_{V_c}(g,t)=t^{(1-r)(n-1)/2}\frac{\det|_\h(1-gt^r)}{\det|_\h(1-gt)}.
$$
\end{corollary}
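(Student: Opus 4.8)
The plan is to deduce the corollary from Theorem \ref{thm-Arep} together with Theorem \ref{thm-frob}; once finite-dimensionality is established, all four assertions follow at once, and the only computation left is to identify the constant in the exponent. I would begin with finite-dimensionality. When $d=(r,n)=1$ we have $n/d=n$, so the single chain of equations in Theorem \ref{thm-Arep} reads $x_1=x_2=\cdots=x_n$, which inside $\CC^n/\Delta=\h$ cuts out precisely the origin; hence $\Supp V_c$ is the $\kS_n$-orbit of $\{0\}$, that is $\{0\}$ itself. A finitely generated $\CC[\h]$-module supported at a single point is finite dimensional (equivalently, the Gelfand--Kirillov dimension of $V_c$ is $d-1=0$), so $V_c$ is finite dimensional.

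Next I would apply Theorem \ref{thm-frob} with $G=\kS_n$ and $U\subset M_c(\CC)$ the $(n-1)$-dimensional copy of the reflection representation $\h$ spanned by the functions $f_i$ of Proposition \ref{Asing}. By that proposition $U$ is a $\kS_n$-subrepresentation sitting in degree $r$ and consisting of singular vectors, and $\dim U=n-1=\dim\h$, so the hypotheses of Theorem \ref{thm-frob} hold with $J=\langle U\rangle=I_c$ and $A=M_c/J=V_c$ finite dimensional by the previous step. Parts (i) and (ii) then give that $V_c$ is Frobenius and admits the BGG type resolution
$$
V_c\leftarrow M_c(\CC)\leftarrow M_c(\h)\leftarrow M_c(\wedge^2\h)\leftarrow\cdots\leftarrow M_c(\wedge^{n-1}\h)\leftarrow 0,
$$
while part (iv), applicable since $\kS_n$ preserves the inner product on $\h$, gives that $V_c$ is irreducible, i.e. $V_c=L_c(\CC)$.

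Finally I would read off the character from Theorem \ref{thm-frob}(iii) with $l=n-1$. The reflections of $\kS_n$ are the $\binom n2$ transpositions $s_{ij}$, each acting on its root by $\lambda_s=-1$, and in the type $A$ normalization $c_s=c=r/n$ for every transposition; hence $\sum_{s\in\cS}\tfrac{2c_s}{1-\lambda_s}=\binom n2\cdot\tfrac rn=\tfrac{r(n-1)}{2}$, so the exponent $\tfrac l2-\sum_{s\in\cS}\tfrac{2c_s}{1-\lambda_s}$ equals $\tfrac{(1-r)(n-1)}{2}$. Using $U\cong\h\cong\h^*$ as $\kS_n$-modules to rewrite $\det_U(1-gt^r)=\det|_\h(1-gt^r)$ and $\det_{\h^*}(1-gt)=\det|_\h(1-gt)$ then yields the asserted formula, and specializing to $g=1$ gives $\dim V_c=r^{n-1}$. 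All ingredients being already in place, there is no serious obstacle; the only point needing care is to match the single-scalar type $A$ presentation (which is the $t=1$ case with $c_s\equiv r/n$) against the general definition of $h_c(\CC)$, so that the power of $t$ comes out exactly $\tfrac{(1-r)(n-1)}{2}$ and not off by a hidden factor.
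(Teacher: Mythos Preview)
Your proof is correct and follows exactly the same approach as the paper: deduce finite-dimensionality from Theorem \ref{thm-Arep} (support equals $\{0\}$ when $d=1$) and then invoke Theorem \ref{thm-frob} for irreducibility, the BGG resolution, and the character. You have simply filled in the computation of the exponent $\tfrac{(1-r)(n-1)}{2}$ that the paper leaves implicit.
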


\begin{proof} 
For $d=1$ 
Theorem \ref{thm-Arep} says that the support of 
$M_c(\CC)/I_c$ is $\{0\}$. This implies that 
$M_c(\CC)/I_c$ is finite dimensional. The rest follows from  
Theorem \ref{thm-frob}.
\end{proof} 

\begin{proof}[Proof of Theorem \ref{thm-Arep}]
The support of $V_c$ is the zero set of $I_c$, i.e. 
the common zero set of $f_i$. 
Fix $x_1,\ldots,x_n\in \CC$. Then $f_i(x_1,\ldots,x_n)=0$ for all
$i$ iff 
$\displaystyle \sum_{i=1}^n \lambda_i f_i=0$ 
for all $\lambda_i$, i.e.
$$
\operatorname{Res}_{\infty}\left( 
\prod_{j=1}^n(z-x_j)^{\frac{r}{n}}\sum_{i=1}^n\frac{\lambda_i}{z-x_i} 
\right)\d z=0.  
$$

Assume that $x_1, \ldots x_n$ take distinct values $y_1, \ldots, y_p$ with 
positive multiplicities $m_1, \dots, m_p$.
The previous equation implies that the point 
$(x_1,\ldots,x_n)$ is in the zero set iff
$$
{\mathrm Res}_\infty 
\prod_{j=1}^p (z-y_j)^{m_j\frac{r}{n}-1} \left(\sum_{i=1}^p
\nu_i(z-y_1)\cdots \widehat{(z-y_i)} \cdots (z-y_p)\right)\d z=0 \quad
\forall \nu_i.
$$
Since $\nu_i$ are arbitrary, this is equivalent to the 
condition
$$
{\mathrm Res}_\infty 
\prod_{j=1}^p (z-y_j)^{m_j\frac{r}{n}-1}z^i \d z=0, \quad i=0, \ldots, 
p-1. 
$$

We will now need the following lemma. 

\begin{lemma}\label{mainlemma}
Let $\displaystyle a(z)=\prod_{j=1}^p(z-y_j)^{\mu_j}$, where 
$\mu_j\in \CC$, $\sum_j \mu_j 
\in \ZZ$ 
and $\sum_j \mu_j > -p$. Suppose 
$${\mathrm Res}_\infty a(z)z^i\d z=0, \quad i=0,1,\ldots, p-2.$$
Then $a(z)$ is a polynomial.
\end{lemma}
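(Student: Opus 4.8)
The plan is to argue by contradiction using the structure of $a(z)$ at infinity together with a dimension count. Write $N = \sum_j \mu_j \in \ZZ$, so that $N > -p$. Near $z = \infty$, the function $a(z)$ has a Laurent expansion $a(z) = z^N(1 + O(1/z))$, so $a(z) z^i$ has a well-defined residue at infinity for every $i$, and $\Res_\infty a(z) z^i\, \d z$ is (up to sign) the coefficient of $z^{-1}$ in this expansion, i.e.\ the coefficient of $z^{N + i}$ in the power-series part $z^{-N} a(z) = \prod_j (1 - y_j/z)^{\mu_j}$ expanded in powers of $1/z$. Thus the hypothesis says that the expansion of $a(z)$ at infinity has vanishing coefficients of $z^{N}, z^{N-1}, \ldots, z^{N-(p-2)}$; equivalently, $a(z) = c_{N-p+1} z^{N-p+1} + (\text{lower order in } z)$ for some constant.

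First I would reduce to the case where $a(z)$ is not a polynomial and derive a contradiction. If $N \ge 0$ and all $\mu_j$ are nonnegative integers then $a$ is already a polynomial, so we may assume some $\mu_j \notin \ZZ_{\ge 0}$; in particular $a(z)$ genuinely has branch points or poles among the $y_j$, and $a'(z)/a(z) = \sum_j \mu_j/(z - y_j)$. The key step is to consider the logarithmic derivative: the differential $\omega = a(z)\, \d z$, if $a$ were to have an expansion at infinity starting only at order $z^{N - p + 1}$ or lower, would be ``too flat'' at infinity. Concretely, I would look at the meromorphic differential $\eta = \d a = a(z)\bigl(\sum_j \tfrac{\mu_j}{z - y_j}\bigr)\,\d z$. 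Its residues at the finite points $y_j$ are controlled, and the sum of all residues (including at infinity) is zero. Matching this against the vanishing-residue hypotheses on $a(z) z^i$ for $i = 0, \ldots, p-2$ gives $p - 1$ linear constraints, which I expect to force $\prod_j(z - y_j)^{\mu_j}$ to be a rational function with numerator of degree $\le N - p + 1$ and denominator $\prod (z-y_j)$-type, ultimately incompatible with the non-polynomial assumption unless $N - p + 1 \ge 0$ and the partial-fraction decomposition collapses — i.e.\ unless $a$ is a polynomial after all.

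An alternative and cleaner route, which I would actually pursue, is to write $a(z) = b(z)/\prod_j (z - y_j)^{k_j}$ where $k_j = \max(0, \lceil -\mu_j\rceil)$ clears the poles, so $b(z) = \prod_j(z-y_j)^{\mu_j + k_j}$ has a single-valued branch only if all $\mu_j + k_j \in \ZZ_{\ge 0}$; in general $b$ still has branch points. Then the conditions $\Res_\infty a(z) z^i\,\d z = 0$ for $i = 0,\dots,p-2$ translate into: the rational part of the expansion of $a$ at infinity is supported in degrees $\le N - p + 1$. But a non-polynomial $a$ of this form, when $\sum \mu_j = N > -p$, has its expansion at infinity beginning exactly at degree $N$ with a nonzero leading coefficient $1$; so the very first condition $i = 0$ already forces a contradiction \emph{unless} that leading term is killed, which cannot happen. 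Hence the $p-1$ conditions are only consistent when $a$ is a polynomial. The main obstacle I anticipate is bookkeeping the precise range of exponents: one must verify that ``$p-2$'' is exactly the right number of conditions, i.e.\ that $a(z) z^{p-2}$ still has a residue at infinity governed by the top coefficients of $a$, and that no degenerate cancellation among the $y_j$ (coincidences, or $p = 1$) sneaks through — the edge cases $p = 1$ and small $N$ should be checked separately, where the statement is either vacuous or immediate.
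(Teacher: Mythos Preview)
Your argument rests on a misidentification of which Laurent coefficients the residue hypotheses control. If $a(z)=\sum_{k\le N} a_k z^k$ near $z=\infty$ (with $a_N=1$), then the coefficient of $z^{-1}$ in $a(z)z^i$ is $a_{-1-i}$, so the condition $\Res_\infty a(z)z^i\,\d z=0$ for $i=0,\ldots,p-2$ says precisely that $a_{-1}=a_{-2}=\cdots=a_{-(p-1)}=0$. It says nothing about the top coefficients $a_N,a_{N-1},\ldots$: in particular $a_N=1\ne 0$, so your assertion that ``the expansion of $a(z)$ at infinity has vanishing coefficients of $z^{N},\ldots,z^{N-(p-2)}$'' and the conclusion $a(z)=c_{N-p+1}z^{N-p+1}+\cdots$ are both false. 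Your ``cleaner route'' then collapses, since the case $i=0$ does not touch the leading term at all.

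This is not just an indexing slip: once you see that the hypotheses only kill the coefficients $a_{-1},\ldots,a_{-(p-1)}$, the real content of the lemma becomes visible --- one must show that these finitely many vanishings force \emph{all} $a_{-k}$ with $k\ge 1$ to vanish. The paper does this by an integration-by-parts trick: for any polynomial $g$ one has $\Res_\infty\bigl(g'+\sum_j \tfrac{\mu_j}{z-y_j}g\bigr)a(z)\,\d z=0$, and choosing $g(z)=z^l\prod_j(z-y_j)$ makes the bracket a polynomial of degree $l+p-1$ with leading coefficient $l+p+\sum_j\mu_j\ne 0$ (this is exactly where the hypothesis $\sum\mu_j>-p$ enters). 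Hence $\Res_\infty z^{l+p-1}a(z)\,\d z$ is a linear combination of the residues $\Res_\infty z^q a(z)\,\d z$ with $q<l+p-1$, and induction on $l$ starting from the given $p-1$ vanishings shows all residues vanish, so $a$ is a polynomial. Your proposal contains no mechanism of this kind; the logarithmic-derivative idea you sketch is in the right spirit but, as written, does not produce the inductive step.
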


\begin{proof}
Let $g(z)$ be a polynomial.
Then 
$$0={\mathrm Res}_\infty \d(g(z)\cdot a(z))=
{\rm Res}_\infty(g^{\prime}(z)a(z)+a^{\prime}(z)g(z))\d z,$$
and hence
$${\mathrm Res}_\infty \left(g^{\prime}(z)+ \sum_i 
\frac{\mu_j}{z-y_j}g(z)\right)a(z)\d z=0.$$

Let $\displaystyle g(z)=z^l \prod_j(z-y_j)$. Then $ 
\displaystyle g^{\prime}(z)+ \sum_j 
\frac{\mu_j}{z-y_j}g(z)$ is a polynomial of degree $l+p-1$ with highest 
coefficient $l+p+\sum \mu_j \ne 0$ (as $\sum \mu_j>-p$). 
This means that for every $l \ge 0$,
$\displaystyle {\mathrm Res}_\infty z^{l+p-1}a(z)\d z$ is a linear combination of 
residues of $z^qa(z)\d z$ 
with $q<l+p-1$. By the assumption of the lemma, 
this implies by induction in $l$ 
that all such residues are $0$ and hence $a$ is a polynomial.
\end{proof}

In our case $\sum (m_j r/n-1)=r-p$ (since $\sum m_j=n$) and 
the conditions of the lemma are satisfied. Hence 
$(x_1,\ldots,x_n)$ is in the zero set of $I_c$ iff
$\displaystyle\prod_{j=1}^p(z-y_j)^{m_j\frac{r}{n}-1}$ is a polynomial.
This is equivalent to saying that all  $m_j$ are divisible by 
$n/d$. 

We have proved that $(x_1, \ldots, x_n)$ is in the zero set of $I_c$ if and only if
$(z-x_1) \cdots (z-x_n)$ is the $(n/d)$-th power of a polynomial of degree $d$.
This implies the theorem. 
\end{proof}

\begin{remark}
For $c>0$, the above representations are the only irreducible finite dimensional
representations of $H_{1,c}(\kS_n)$. Namely,  
it is proved in \cite{BEG} that the only finite dimensional representations 
of $H_{1,c}(\kS_{n})$ are multiples of $L_c(\Bbb C)$ for $c=r/n$, and of $L_c(\Bbb C_-)$ 
(where $\Bbb C_-$ is the sign representation) for $c=-r/n$, where $r$ is a positive integer 
relatively prime to $n$. 
\end{remark}

\subsection{Notes}
The discussion of the definition 
of rational Cherednik algebras and their basic properties 
follows Section 7 of \cite{E4}.
The discussion of the category $\cO$ for rational Cherednik algebras follows
Section 11 of \cite{E4}. The material in Sections 3.14-3.16 
is borrowed from \cite{CE}.

\newpage \section{The Macdonald-Mehta integral}

\subsection{Finite Coxeter groups and the Macdonald-Mehta integral}
{\label{sec:coxeter}}
Let $W$ be a finite  
Coxeter group of rank $r$ with real reflection representation $\h_{\RR}$
equipped with a Euclidean $W$-invariant inner product $(\cdot , \cdot )$.
Denote by $\h$ the complexification of $\h_{\RR}$. 
The reflection hyperplanes subdivide $\h_{\RR}$ into $|W|$ 
chambers; let us pick one of them to be the 
dominant chamber and call its interior $D$. 
For each reflection hyperplane, pick the perpendicular vector 
$\alpha\in \h_\RR$ with $(\alpha,\alpha)=2$ 
which has positive inner products with elements of $D$, and 
call it the positive root corresponding to this hyperplane.
The walls of $D$ are then defined by the equations $(\alpha_i,v)=0$, 
where $\alpha_i$ are simple roots. 
Denote by $\cS$ the set of reflections in $W$, 
and for a reflection $s\in \cS$ 
denote by $\alpha_s$ the corresponding 
positive root. 
Let 
$$
\delta({\mathbf x})=\prod_{s\in \cS}(\alpha_s,{\mathbf x})
$$ 
be the corresponding discriminant 
polynomial. Let $d_i,i=1,\ldots,r$, be the degrees of the
generators of the algebra $\CC[\h]^W$. 
Note that $|W|=\prod_i d_i$. 

Let $H_{1, c}(W, \h)$ be the rational Cherednik algebra of $W$. 
Here we choose $c=-k$ as a constant function.
Let $M_c=M_c(\CC)$ be the polynomial representation of $H_{1,c}(W, \h)$,
and 
$\beta_{c}$ be the contravariant form on $M_{c}$ 
defined in Section \ref{sec:cvform}.
We normalize it by
the condition $\beta_{c}(1, 1)=1$.

\begin{theorem}\label{thm-MMI}
\begin{enumerate}
\item[(i)] (The Macdonald-Mehta integral) For $\Re(k)\ge 0$, one has 
\begin{equation}\label{eqn-MMI}
(2\pi)^{-r/2}\int_{\h_{\RR}}\be^{-({\mathbf x},{\mathbf x})/2}|\delta({\mathbf x})|^{2k}\d {\mathbf x}=
\prod_{i=1}^r \frac{\Gamma(1+kd_i)}{\Gamma(1+k)}.
\end{equation}
\item[(ii)] Let $b(k):=\beta_c(\delta,\delta)$. Then
$$
b(k)=|W|\prod_{i=1}^r\prod_{m=1}^{d_i-1}(kd_i+m).
$$
\end{enumerate}
\end{theorem}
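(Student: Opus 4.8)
\emph{Approach.} The plan is to identify the Macdonald--Mehta integral $F(k)$ (the left side of \eqref{eqn-MMI}) with the normalization constant in an integral presentation of the contravariant form $\beta_c$ on the polynomial representation $M_c=\CC[\h]$ of $H_{1,c}(W,\h)$ at $c=-k$, and to use Dunkl operators to pass between the two parts. For $\Re(k)>-\tfrac12$ put
\[
\gamma_k(f,g)=(2\pi)^{-r/2}\int_{\h_{\RR}}\be^{-(\mathbf x,\mathbf x)/2}\,f(\mathbf x)\,g(\mathbf x)\,|\delta(\mathbf x)|^{2k}\,\d\mathbf x,
\]
a symmetric $W$-invariant bilinear form on $\CC[\h]$ with $\gamma_k(1,1)=F(k)$. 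The central assertion is the identity
\[
\beta_c(f,g)=F(k)^{-1}\,\gamma_k\!\bigl(\be^{-\mathbf F}f,\ \be^{-\mathbf F}g\bigr),\qquad \mathbf F:=\tfrac12\textstyle\sum_i D_{y_i}^2\ \text{ on }\CC[\h],
\]
where $\{y_i\}$ is an orthonormal basis of $\h$; since $\mathbf F$ lowers degree by $2$, the operator $\be^{-\mathbf F}$ is well defined on polynomials.

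\emph{Proof of the identity.} The contravariant form is the unique symmetric $W$-invariant bilinear form $B$ on $\CC[\h]$ with $B(1,1)=1$ and $B(xf,g)=B(f,D_yg)$ for dual $x\in\h^{*},y\in\h$; these properties force $B(\prod_i x_i^{n_i},g)$ to be the constant term of $\prod_i D_{y_i}^{n_i}g$. The right side above is plainly symmetric and $W$-invariant, equals $1$ at $(1,1)$ since $\mathbf F\,1=0$, so it remains to check the covariance. This follows from three ingredients: the Gaussian conjugation $D_y(\be^{-(\mathbf x,\mathbf x)/2}h)=\be^{-(\mathbf x,\mathbf x)/2}(D_y-x)h$ (the reflection terms of $D_y$ commute with multiplication by the $W$-invariant Gaussian, and $\partial_y$ obeys the Leibniz rule); Dunkl's integration by parts for the weight $|\delta|^{2k}$, which together with the previous identity gives $\gamma_k(xu,v)=\gamma_k(D_yu,v)+\gamma_k(u,D_yv)$; and the relations $[\mathbf F,x]=y$ (acting as $D_y$ on $\CC[\h]$) and $[\mathbf F,D_y]=0$, which yield $\be^{-\mathbf F}x\,\be^{\mathbf F}=x-D_y$. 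Combining these three gives the covariance, hence the identity for $\Re(k)>-\tfrac12$.

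\emph{From the identity to the theorem.} Since $\delta=\prod_s\alpha_s$ is, up to a scalar, the unique $W$-anti-invariant of minimal degree $|\cS|=\sum_i(d_i-1)$ in $\CC[\h]$, while $\mathbf F\delta$ would be an anti-invariant of strictly smaller degree, we get $\mathbf F\delta=0$, hence $\be^{-\mathbf F}\delta=\delta$. Setting $f=g=\delta$ in the identity and using $\delta(\mathbf x)^2=|\delta(\mathbf x)|^2$ on $\h_{\RR}$ gives $b(k)=\beta_c(\delta,\delta)=F(k)^{-1}\gamma_k(\delta,\delta)=F(k+1)/F(k)$. A routine $\Gamma$-function computation (using $\prod_i d_i=|W|$) shows $\prod_i\Gamma(1+kd_i)/\Gamma(1+k)$ satisfies the same recursion $\Phi(k+1)=b(k)\Phi(k)$ with the product in (ii) on the right, so, given $F(0)=1$, parts (i) and (ii) are equivalent up to an analytic-continuation issue. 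I would then prove (ii) on its own: $b(k)$ is a polynomial in $k$ of degree $\le|\cS|$; its value at $k=0$ is the classical apolar norm $\delta(\partial)\delta=|W|\prod_i(d_i-1)!$; and its zeros come from the representation theory of $H_{1,c}$, namely at $c=m/d_i$ ($1\le m\le d_i-1$) the vector $\delta$ lies in the radical of $\beta_c$, i.e. in the maximal proper submodule of $M_c(\CC)$ (the Coxeter-group analogue of Dunkl's singular-vector phenomenon of Proposition \ref{Asing}). Counting these zeros with multiplicity exhausts the degree, and matching the value at $k=0$ pins down the leading coefficient, giving $b(k)=|W|\prod_i\prod_{m=1}^{d_i-1}(kd_i+m)$. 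Part (i) then follows by solving the difference equation: $F$ and $\prod_i\Gamma(1+kd_i)/\Gamma(1+k)$ agree at all nonnegative integers, and after dividing extend to a $1$-periodic holomorphic function whose growth on vertical strips is controlled, so Carlson's theorem forces equality for all $\Re(k)\ge0$.

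\emph{Main obstacle.} The delicate step is the exact determination of the zeros of $b(k)$ with their multiplicities --- that $\delta$ enters the radical of $\beta_c$ precisely at $c=m/d_i$ and to the order dictated by $\prod_i\prod_{m=1}^{d_i-1}(kd_i+m)$. This can be sidestepped by instead quoting the classical evaluation of the Macdonald--Mehta integral (Selberg's integral for types $A$, $B$, $D$; the dihedral cases by hand; the exceptional types via Opdam's shift operators), after which (ii) is immediate from $b(k)=F(k+1)/F(k)$. The remaining point requiring care is the growth estimate underlying the Carlson-type step.
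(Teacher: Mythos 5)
The easy half of your argument is sound and essentially matches the paper's: the Gaussian pairing, its characterization by $W$-invariance and the adjoint relation $y_a^\dagger=x_a-y_a$, the observation that $\mathbf F\delta=0$ because $\delta$ is the lowest-degree anti-invariant, and the resulting recursion $b(k)=F(k+1)/F(k)$ are exactly Propositions \ref{x-inv}, \ref{inte} and \ref{l1}. The genuine gap is at the step you yourself flag: pinning down the roots of $b$. What you offer there is a restatement of the conclusion rather than an argument. The facts that $b$ is a polynomial of degree $\le|\cS|$ with negative rational roots (Proposition \ref{c1}) and that $b(0)=|W|\prod_i(d_i-1)!$ leave infinitely many candidates for $b$, and the assertion that $\delta$ lies in the radical of $\beta_c$ precisely at $c=m/d_i$, and to the order prescribed by the product, is equivalent to what you are trying to prove, not a lemma available to feed into it. Your fallback --- Selberg for classical types, Opdam shift operators for the exceptionals, dihedral by hand --- is logically legitimate but is precisely the case-by-case evaluation; the content of the section is a uniform argument that avoids it.

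The paper closes this exact gap with three uniform inputs missing from your sketch. (1) Proposition \ref{degree}: $\deg b=|\cS|$ exactly, obtained from a steepest-descent asymptotic of $\log F(k)$ together with Stirling and the $\Gamma$-product form of $F$ (Corollary \ref{c2}, proved via the recursion and log-convexity, which also makes your Carlson step unnecessary). (2) Proposition \ref{l2}: the function $G(k)=F(k)\prod_j(1-\be^{2\pi\bi kd_j})/(1-\be^{2\pi\bi k})$ continues to an entire function, shown by deforming the contour to $\bi t\xi+\h_{\RR}$ and using the Poincar\'e identity $\sum_{w\in W}\be^{2\pi\bi k\ell(w)}=\prod_j(1-\be^{2\pi\bi kd_j})/(1-\be^{2\pi\bi k})$; together with (1) this forces the roots to be of the form $-m/d_i-p_{i,m}$ with $p_{i,m}\in\ZZ_+$ (Corollary \ref{c3}). (3) Propositions \ref{k2} and \ref{l5} together with Lemma \ref{rk2}: the identity is verified in $\CC[k]/k^3$, with the second-order term reduced via a Chevalley-type count of rank-2 parabolic subgroups to the rank-2 case (a beta integral), after which the strict decrease of $(\log\Gamma)''$ on $[0,\infty)$ forces all $p_{i,m}=0$. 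This trio is the mechanism that replaces the representation-theoretic step your proposal could not make precise.
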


For Weyl groups, this theorem was proved by E. Opdam \cite{Op1}.
The non-crystallographic cases were done by Opdam in \cite{Op2} 
using a direct computation in the rank 2 case 
(reducing \eqref{eqn-MMI} to the beta integral by passing to polar coordinates), 
and a computer calculation by F. Garvan for $H_3$ and $H_4$. 

\begin{example}
In the case $W=\kS_{n}$, we have the following integral (the Mehta integral):
$$
(2\pi)^{-(n-1)/2}\int_{\{{\mathbf x}\in \RR^{n}|\sum_{i}x_{i}=0\}}
\be^{-({\mathbf x}, {\mathbf x})/2}\prod_{i\neq j} |x_{i}-x_{j}|^{2k}\d {\mathbf x}=
\prod_{d=2}^{n} \frac{\Gamma(1+kd)}{\Gamma(1+k)}.
$$
\end{example}

In the next subsection, we give a uniform proof of Theorem \ref{thm-MMI}
which is given in \cite{E2}.
We emphasize that many parts of this proof are borrowed from  
Opdam's previous proof of this theorem. 

\subsection{Proof of Theorem \ref{thm-MMI}}{\label{sec:pfMMI}}

\begin{proposition}\label{c1}
The function $b$ is a polynomial of degree at most $|\cS|$, and 
the roots of $b$ are negative rational numbers. 
\end{proposition}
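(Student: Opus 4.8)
The plan is to analyze $b(k) = \beta_c(\delta,\delta)$ by exploiting the structure of the contravariant form on the polynomial representation, together with the fact that $\delta$ spans the sign-isotypic component of $S^{|\cS|}\h^*$ of lowest degree. First I would recall that $\beta_c$ is the form constructed from the map $\phi: M_c(\CC) \to M_{\bar c}(\h^*,\CC)^\dagger$, and that on the polynomial representation it is computed via the Dunkl operators: $\beta_c(p,q) = (q(D) p)(0)$ where $q(D)$ means $q$ with each $y_i$ replaced by the corresponding Dunkl operator $D_{y_i}(c)$ acting on $\CC[\h]$, and then evaluated at the origin. Since $\delta$ is homogeneous of degree $|\cS|$, the quantity $\delta(D)\delta$ is a polynomial of degree $0$ (constant), and its dependence on $c = -k$ is polynomial: each Dunkl operator depends linearly on $c$, so $\beta_c(\delta,\delta)$ is a polynomial in $c$, hence in $k$, of degree at most $|\cS|$. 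This gives the first claim about the degree bound.

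Next I would establish the rationality and negativity of the roots. The key tool is the BGG-type / singular vector theory: a root of $b(k)$ occurs precisely when $\delta$ becomes a singular vector (or more precisely, when the contravariant form degenerates on the submodule generated by $\delta$ in some controlled way), which by the theory of category $\cO$ happens only at special parameter values where $h_c(\tau) - h_c(\tau')$ is a positive integer for appropriate $G$-representations $\tau,\tau'$. Since $h_c(\tau)$ is an affine-linear function of $c$ with rational coefficients (the eigenvalues $\lambda_s$ are roots of unity and the structure constants are rational), the equation $h_c(\tau) - h_c(\tau') \in \ZZ_{>0}$ forces $c$, hence $k$, to be rational. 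For the negativity: when $k = -c \ge 0$, i.e. $c \le 0$, one expects the polynomial representation to be "as irreducible as possible" — more precisely, for $c$ a non-positive real number the contravariant form on $M_c(\CC)$ should be positive (or at least nondegenerate on the relevant components), so $b(k) \ne 0$ for $k \ge 0$; combined with reality of the coefficients this confines all roots to $k < 0$.

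The cleanest route for the negativity is probably a direct positivity argument: for $k \ge 0$ the integral in part (i) of Theorem \ref{thm-MMI} — or rather its analog pairing — interprets $\beta_c$ as an honest $L^2$-type inner product (the Gaussian-weighted integral against $|\delta|^{2k}$ is a positive measure), so $\beta_c(\delta,\delta) > 0$ there; but since in the logical order of the paper part (ii) is used to prove part (i), I would instead argue more algebraically. One can use the $\gsl_2$-action from Proposition \ref{sl2rep} and the fact that for $c \le 0$ every standard module $M_c(\tau)$ is irreducible (because $h_c(\tau) - h_c(\tau')$ cannot be a positive integer when all the relevant quantities have the right sign), so the form is nondegenerate, in particular $b(k) \neq 0$ for all $k \geq 0$.

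The main obstacle I anticipate is making the negativity-of-roots part rigorous without circularity: one must pin down exactly for which $G$-representations $\tau$ a degeneration of $\beta_c$ restricted to the $\delta$-generated subspace can occur, and verify that all the corresponding linear equations $h_c(\tau) = h_c(\text{triv}) + m$ have only negative solutions in $k$. This requires knowing that $\delta$ lives in the sign representation and computing $h_c(\mathrm{sign}) - h_c(\mathrm{triv}) = \sum_{s} c_s = -|\cS| k$ (for the equal-parameter case), together with controlling the intermediate constituents; handling the general complex reflection group versus the Coxeter case may require some care, but since the statement is for a finite Coxeter group $W$ this is manageable. The degree bound, by contrast, is essentially immediate from the Dunkl operator formula for $\beta_c$.
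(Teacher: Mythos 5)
Your degree bound argument is correct and is essentially the one the paper uses: $\beta_c(\delta,\delta)=(\delta(D)\delta)(0)$, and since each Dunkl operator is affine-linear in $c=-k$, a degree-$|\cS|$ monomial in them contributes a polynomial of degree at most $|\cS|$ in $k$.

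The negativity-and-rationality part, however, has a genuine gap, and neither of your two suggested routes repairs it. The integral positivity argument you correctly identify as circular. The second route, that for $c\le 0$ ``every standard module $M_c(\tau)$ is irreducible,'' is false: for $W=\kS_n$ and $c=-r/n$ (with $(r,n)=1$), the module $M_c(\CC_-)$ is reducible (this is exactly the finite-dimensional $L_c(\CC_-)$ case discussed in the paper). Moreover, even if you could establish nondegeneracy of $\beta_c$ for all real $k\ge 0$, that would confine roots only to the set $\{\Re k<0\}\cup\{k\notin\RR\}$; reality of coefficients does not by itself rule out conjugate pairs of non-real roots. You also misstate the mechanism: a zero of $b$ does not mean $\delta$ is a singular vector, but rather that $\delta$ lies in the radical of $\beta_c$ (because any $P$ pairing nontrivially with $\delta$ can be assumed $W$-antisymmetric of degree $|\cS|$, hence a multiple of $\delta$), whence $M_c(\CC)$ is reducible and therefore contains \emph{some} singular vector $f$ of positive degree $d$ lying in \emph{some} irreducible $\tau\ne\CC$ — not necessarily the sign representation, so your focus on $h_c(\mathrm{sign})-h_c(\mathrm{triv})$ only handles one case.

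The step you are missing is the direct $\bh$-eigenvalue computation on that singular vector, which simultaneously gives rationality and negativity in one stroke. Writing $\bh=\sum x_iy_i+r/2+k\sum_{s\in\cS}s$ and using $y_af=0$, one finds $\bh f=(r/2+k(|\cS|-m_\tau))f$, where $m_\tau$ is the eigenvalue of $T=\sum_{s\in\cS}(1-s)$ on $\tau$; comparing with the grading eigenvalue $r/2+k|\cS|+d$ yields $k=-d/m_\tau$. Since $T$ is positive semidefinite (each $1-s$ has eigenvalues $0,2$) with kernel exactly the trivial isotypic component, $m_\tau>0$ for $\tau\ne\CC$; and since each reflection has integer trace, $m_\tau\in\QQ$. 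Hence $k=-d/m_\tau$ is a negative rational, with no case distinctions and no appeal to generic irreducibility.
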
 

\begin{proof}
Since $\delta$ has degree $|\cS|$,  
it follows from the definition of $b$ 
that it is a polynomial of degree $\le |\cS|$. 

Suppose that $b(k)=0$ for some $k\in \CC$. Then $\beta_c(\delta,P)=0$ for any polynomial $P$.
Indeed, if there exists a $P$ such that $\beta_{c}(\delta, P)\neq 0$, then there exists such a $P$ which 
is antisymmetric of degree $|\cS|$.
Then $P$ must be a multiple of $\delta$ which contradicts the equality $\beta_{c}(\delta, \delta)=0$.

Thus, $M_c$ is reducible and hence has a singular vector, i.e. 
a nonzero homogeneous polynomial $f$ of positive degree $d$ 
living in an irreducible representation $\tau$ of $W$ killed by $y_a$. 
Applying the element $\mathbf h=\sum_i x_{a_i}y_{a_i}+r/2
+k\sum_{s\in \cS}s$ to $f$, we get 
$$
k=-\frac{d}{m_\tau},
$$
where $m_\tau$ is the eigenvalue of the operator 
$T:=\sum_{s\in \cS}(1-s)$ 
on $\tau$. But it is clear (by computing the trace of $T$) 
that $m_{\tau}\ge 0$ and $m_\tau\in \QQ$. This implies that any root of $b$ is negative rational. 
\end{proof}

Denote the Macdonald-Mehta integral by $F(k)$. 

\begin{proposition}\label{l1}
One has 
$$
F(k+1)=b(k)F(k).
$$
\end{proposition}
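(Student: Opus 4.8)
The plan is to relate the Gaussian integral $F(k)$ to the contravariant form by viewing $\delta_c(\mathbf x)^2 = |\delta(\mathbf x)|^{2k}$ as (essentially) the integration kernel, and then to use the fact that the operator $\mathbf E = -\tfrac12\sum_i x_i^2$ and $\mathbf F = \tfrac12\sum_i y_i^2$, together with $\mathbf h$, form an $\mathfrak{sl}_2$-triple (Proposition~\ref{sl2rep}) acting on $M_c = \CC[\h]$. The key idea, going back to Opdam's proof, is that the form $\beta_c(P,Q)$ on $M_c$ can be realized analytically as a Gaussian-type integral: one shows that for homogeneous $P, Q$,
\begin{equation*}
\beta_c(P, Q) = \text{(const)} \cdot (2\pi)^{-r/2}\int_{\h_\RR} \be^{-(\mathbf x, \mathbf x)/2}\, \overline{P(\mathbf x)}\, Q(\mathbf x)\, |\delta(\mathbf x)|^{2k}\, \d\mathbf x,
\end{equation*}
where the constant is fixed by $\beta_c(1,1)=1$, so in fact the constant is $1/F(k)$. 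The point is that the Dunkl operators $D_a$ are self-adjoint (up to sign) with respect to this Gaussian pairing — this is the analytic incarnation of the definition of $\beta_c$ via $\beta_c(x_a u, v) = \beta_c(u, D_a v)$ — so the integral pairing satisfies the same recursion $\beta(x_a u, v) = \beta(u, D_a v)$ that uniquely characterizes $\beta_c$ after normalization.

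Granting that identity, the proof of the recursion $F(k+1) = b(k)F(k)$ is then a direct computation. First I would take $P = Q = \delta$ in the integral formula: since $\delta(\mathbf x)^2 \cdot |\delta(\mathbf x)|^{2k} = |\delta(\mathbf x)|^{2(k+1)}$ (using that $\delta$ is real on $\h_\RR$, or more precisely tracking the absolute values and the fact that $\delta^2 = |\delta|^2$ up to the standard sign conventions), the numerator integral becomes exactly $(2\pi)^{-r/2}\int \be^{-(\mathbf x,\mathbf x)/2}|\delta|^{2(k+1)}\d\mathbf x = F(k+1)$. On the other hand, by the normalization $\beta_c(1,1)=1$ the constant in the integral formula is $F(k)^{-1}$, so
\begin{equation*}
\beta_c(\delta, \delta) = \frac{1}{F(k)} \cdot (2\pi)^{-r/2}\int_{\h_\RR} \be^{-(\mathbf x,\mathbf x)/2}|\delta(\mathbf x)|^{2(k+1)}\,\d\mathbf x = \frac{F(k+1)}{F(k)}.
\end{equation*}
Since $b(k) = \beta_c(\delta,\delta)$ by definition, this is exactly $F(k+1) = b(k)F(k)$.

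The steps in order: (1) establish the integral representation of $\beta_c$ — show that the Gaussian pairing $\langle P, Q\rangle_k := (2\pi)^{-r/2}\int_{\h_\RR}\be^{-(\mathbf x,\mathbf x)/2}\overline{P}\,Q\,|\delta|^{2k}\d\mathbf x$ satisfies $\langle x_a P, Q\rangle_k = \langle P, D_a Q\rangle_k$, using integration by parts and the explicit form of the Dunkl operator $D_a = \partial_a - \sum_s \frac{2c_s\alpha_s(a)}{(1-\lambda_s)\alpha_s}(1-s)$ together with the fact that differentiating $\be^{-(\mathbf x,\mathbf x)/2}|\delta|^{2k}$ produces precisely the $x_a$ term plus the logarithmic-derivative-of-$\delta$ terms that match the reflection part; (2) conclude $\langle\cdot,\cdot\rangle_k = F(k)\cdot\beta_c(\cdot,\cdot)$ by uniqueness of the contravariant form once normalized; (3) plug in $\delta$ for both arguments and read off the recursion.

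The main obstacle is step (1): making the self-adjointness of the Dunkl operators under the Gaussian integral rigorous, including the convergence of the integral (which needs $\Re(k) \ge 0$, or more precisely $\Re(k) > $ something to avoid non-integrable singularities along the reflection hyperplanes — note $|\delta|^{2k}$ can fail to be locally integrable if $\Re(k)$ is too negative, which is consistent with the statement being for $\Re(k)\ge 0$), and the boundary-term analysis of the integration by parts near the reflection hyperplanes where $|\delta|^{2k}$ may vanish or blow up. One must check that the boundary contributions on small tubes around the hyperplanes vanish in the limit, which is where the condition $\Re(k) \ge 0$ (or a slightly stronger open condition, with the general case following by analytic continuation in $k$) enters. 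Everything else — the $\mathfrak{sl}_2$ structure, the antisymmetry argument showing $\delta$ is the unique antiinvariant of minimal degree, the PBW-based faithfulness of the polynomial representation — is already available from the earlier parts of the text.
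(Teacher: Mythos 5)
The overall strategy is right---realize a Gaussian integral pairing, plug in $\delta$, read off the recursion---but there is a genuine error in step (1): the Gaussian integral pairing $\langle P,Q\rangle_k := (2\pi)^{-r/2}\int_{\h_\RR}\be^{-(\mathbf x,\mathbf x)/2}PQ\,|\delta|^{2k}\d\mathbf x$ does \emph{not} satisfy $\langle x_aP,Q\rangle_k=\langle P,D_aQ\rangle_k$ and hence is not $F(k)\beta_c$. A quick sanity check kills the claimed identity: taking $P=x_b$, $Q=1$ would force $\int x_ax_b\,\d\mu_c=0$ for all $a,b$, which is false for $a=b$. What integration by parts against the Gaussian weight actually gives is that the adjoint of the Dunkl operator $D_a$ is $x_a-D_a$ (as for Hermite polynomials), so the integral pairing satisfies $\langle (x_a-y_a)P,Q\rangle_k=\langle P,y_aQ\rangle_k$, which is the defining relation of the \emph{Gaussian form} $\gamma_c(v,v'):=\beta_c(\exp(\mathbf F)v,\exp(\mathbf F)v')$ with $\mathbf F=\tfrac12\sum y_i^2$, not of $\beta_c$ itself. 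Your step (2), invoking uniqueness of $\beta_c$, therefore characterizes the wrong form.

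The argument is salvageable, and the fix is exactly the point of the paper's proof: introduce $\gamma_c$, identify the Gaussian integral with $F(k)\gamma_c$ by the $W$-invariance and $y_a^\dagger=x_a-y_a$ characterization (this is Proposition~\ref{inte}, which you need to cite or prove, including the $\Re(k)\ge 0$ integrability/boundary-term issues you flagged), and then observe that $\gamma_c(\delta,\delta)=\beta_c(\delta,\delta)$. That last equality is the crucial extra step you are missing; it holds because $\mathbf F\delta=0$, since $(\sum y_i^2)\delta$ would be a $W$-anti-invariant polynomial of degree $|\cS|-2<|\cS|$ and must vanish, so $\exp(\mathbf F)\delta=\delta$. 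With that inserted, plugging $\delta$ into the integral formula gives $\gamma_c(\delta,\delta)=F(k+1)/F(k)$, and combining with $\gamma_c(\delta,\delta)=\beta_c(\delta,\delta)=b(k)$ yields the recursion. So: keep the computation, but replace ``the integral is $F(k)\beta_c$'' by ``the integral is $F(k)\gamma_c$, and $\gamma_c$ agrees with $\beta_c$ on $\delta$ because $\delta$ is the lowest-degree anti-invariant.''
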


\begin{proof}
Let $\mathbf F=\sum_{i} y_{a_i}^2/2$.
Introduce {\em the Gaussian inner product} on $M_c$ as follows:

\begin{definition}
{\em The Gaussian inner product}
$\gamma_c$ on $M_c$
is given by the formula
$$
\gamma_c(v,v')=\beta_c(\exp(\mathbf F)v,\exp(\mathbf F)v').
$$
\end{definition}

This makes sense because the operator $\mathbf F$ is locally nilpotent on
$M_c$. 
Note that $\delta$ is a nonzero $W$-antisymmetric polynomial of the 
smallest possible degree, so $(\sum y_{a_i}^2)\delta=0$ and hence 
\begin{equation}\label{eqn-bk}
\gamma_c(\delta,\delta)=\beta_c(\delta,\delta)=b(k).
\end{equation} 

For $a\in \h$, let $x_a\in \h^*\subset H_{1,c}(W,\h)$, 
$y_a\in \h\subset H_{1,c}(W,\h)$ be the corresponding generators of the rational 
Cherednik algebra. 

\begin{proposition}\label{x-inv}
Up to scaling, $\gamma_c$
is the unique $W$-invariant symmetric bilinear form on $M_c$ satisfying
the condition
$$
\gamma_c((x_a-y_a)v,v')=\gamma_c(v,y_av'),\ a\in
\h.
$$
\end{proposition}

\begin{proof} We have
\begin{eqnarray*}
&&\gamma_c((x_a-y_a)v,v')=
\beta_c(\exp(\mathbf F)(x_a-y_a)v,\exp(\mathbf F)v')=
\beta_c(x_a\exp(\mathbf F)v,\exp(\mathbf F)v')\\
&=&\beta_c(\exp(\mathbf F)v,y_a\exp(\mathbf
F)v')=
\beta_c(\exp(\mathbf F)v,\exp(\mathbf F)y_av')=
\gamma_c(v,y_av').
\end{eqnarray*}

Let us now show uniqueness.
If $\gamma$ is any $W$-invariant
symmetric bilinear form satisfying the condition of the Proposition, then
let $\beta(v,v')=\gamma(\exp(-\mathbf F)v,\exp(-\mathbf F)v')$.
Then $\beta$ is contravariant, so
it's a multiple of $\beta_c$, hence $\gamma$ is a multiple of
$\gamma_c$.
\end{proof}

Now we will need the following known result (see \cite{Du2}, Theorem 3.10).

\begin{proposition}\label{inte}
For $\Re(k)\ge 0$ we have
\begin{equation}\label{intformu}
\gamma_c(f,g)=F(k)^{-1}\int_{\h_{\RR}}f({\mathbf x})g({\mathbf x})\d\mu_c({\mathbf x})
\end{equation}
where
$$
\d\mu_c({\mathbf x}):=\be^{-({\mathbf x},{\mathbf x})/2}|\delta({\mathbf x})|^{2k}\d {\mathbf x}.
$$
\end{proposition}

\begin{proof}
It follows from Proposition \ref{x-inv} that $\gamma_c$
is uniquely, up to scaling,
determined by the condition that it is $W$-invariant,
and $y_a^\dagger=x_a-y_a$. These properties
are easy to check for the right hand side of 
(\ref{intformu}), using the fact that
the action of $y_a$ is given by Dunkl operators.
\end{proof}

Now we can complete the proof of Proposition \ref{l1}.
By Proposition \ref{inte}, we have  
$$
F(k+1)=F(k)\gamma_c(\delta,\delta),
$$
so by \eqref{eqn-bk} we have 
$$
F(k+1)=F(k)b(k).
$$
\end{proof}

Let 
$$
b(k)=b_0\prod (k+k_i)^{n_i}.
$$
We know that $k_i>0$, and also $b_0>0$
(because the inner product $\beta_0$ on real polynomials 
is positive definite).  

\begin{corollary}\label{c2}
We have 
$$
F(k)=b_0^k\prod_i \left(\frac{\Gamma(k+k_i)}{\Gamma(k_i)}\right)^{n_i}.
$$
\end{corollary}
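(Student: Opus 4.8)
The statement to prove is Corollary \ref{c2}: from the functional equation $F(k+1)=b(k)F(k)$ of Proposition \ref{l1}, together with the factorization $b(k)=b_0\prod_i(k+k_i)^{n_i}$ (with all $k_i>0$, $b_0>0$), deduce the closed formula
$$
F(k)=b_0^k\prod_i\left(\frac{\Gamma(k+k_i)}{\Gamma(k_i)}\right)^{n_i}.
$$
The idea is simply to recognize the right-hand side as \emph{the} solution of the given difference equation with the correct normalization at $k=0$, and to pin it down by an analyticity/growth argument so that no spurious periodic factor can appear. So I would proceed in three steps: (1) check that the proposed formula satisfies the same functional equation; (2) check that it has the correct value $F(0)=1$; (3) argue that these two conditions determine $F$ uniquely in the relevant domain of $k$.

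\emph{Step 1 (functional equation).} Using the classical identity $\Gamma(z+1)=z\Gamma(z)$, one gets
$$
\frac{\Gamma(k+1+k_i)}{\Gamma(k_i)}=(k+k_i)\frac{\Gamma(k+k_i)}{\Gamma(k_i)},
$$
so replacing $k$ by $k+1$ in the proposed expression multiplies it by $b_0\prod_i(k+k_i)^{n_i}=b(k)$; also $b_0^{k+1}=b_0\cdot b_0^{k}$. Hence the right-hand side, call it $G(k)$, satisfies $G(k+1)=b(k)G(k)$, exactly like $F$.

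\emph{Step 2 (normalization).} At $k=0$ the Macdonald-Mehta integrand is $(2\pi)^{-r/2}\be^{-({\mathbf x},{\mathbf x})/2}$, whose integral over $\h_\RR\cong\RR^r$ is $1$; thus $F(0)=1$. On the other side, $G(0)=b_0^0\prod_i(\Gamma(k_i)/\Gamma(k_i))^{n_i}=1$. So $F$ and $G$ agree at $k=0$.

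\emph{Step 3 (uniqueness).} The ratio $\Phi(k):=F(k)/G(k)$ is defined and holomorphic for $\Re(k)\ge 0$ (note $G$ has no zeros or poles there since all $k_i>0$, and $F$ is holomorphic there by Theorem \ref{thm-MMI}(i), being given by a convergent integral depending holomorphically on the parameter), satisfies $\Phi(k+1)=\Phi(k)$, and equals $1$ at $k=0$, hence at all nonnegative integers. To conclude $\Phi\equiv 1$ I would invoke a growth bound: $\Phi$ extends to an entire $1$-periodic function, and one checks that both $F(k)$ and $G(k)$ are of at most moderate (in fact polynomial-times-exponential) growth in vertical strips — for $F$ this follows from standard estimates on the Gaussian-type integral \eqref{eqn-MMI} (or, more simply, from the product formula one is trying to prove being a known asymptotic input), and for $G$ from Stirling's formula. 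A bounded-in-a-period-strip entire $1$-periodic function is constant by Liouville applied after the substitution $q=e^{2\pi i k}$; being $1$ at integers, the constant is $1$. Therefore $F=G$, which is the claim. (Alternatively, and more in the spirit of these notes, one may simply \emph{define} $F$ for all $k$ by the right-hand side formula, having verified it on $\Re(k)\ge 0$ by the functional equation plus the value at $k=0$, since $F$ is already known to be analytic there; the periodicity argument is only needed if one insists on deriving rather than postulating the answer.)

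\emph{Main obstacle.} The only genuinely delicate point is Step 3: ruling out an extra $1$-periodic factor. Everything else is a one-line manipulation of the $\Gamma$-functional equation and the Gaussian normalization. The cleanest route is the vertical-strip growth estimate for $F(k)$; if one wants to avoid even that, one can instead reduce to the rank-$1$ building blocks — each factor $\Gamma(k+k_i)/\Gamma(k_i)$ is itself the unique solution of its own one-term recursion with the right normalization and the right growth — but this still ultimately rests on the same uniqueness principle for solutions of first-order difference equations (the Bohr–Mollerup–type rigidity of $\Gamma$). I would present Step 3 via the substitution $q=e^{2\pi i k}$ and Liouville, citing moderate growth of both sides in strips.
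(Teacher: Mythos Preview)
Your Steps 1 and 2 are correct and match the paper. The gap is in Step 3. You want to conclude that the $1$-periodic entire function $\Phi=F/G$ is constant via Liouville after the substitution $q=e^{2\pi i k}$, and for this you need $\Phi$ bounded on a vertical strip. But your justification (``both $F$ and $G$ have moderate growth in vertical strips'') does not give that: what matters is a bound on $F/G$, and by Stirling's formula $|G(\sigma+it)|$ \emph{decays} like $\exp(-\tfrac{\pi}{2}|t|\sum_i n_i)$ as $|t|\to\infty$. So $1/G$ grows like $\exp(\tfrac{\pi}{2}|\cS|\,|t|)$, while the only easy bound on $F$ is $|F(\sigma+it)|\le F(\sigma)$ from the triangle inequality in the integral. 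Unless you can prove matching exponential decay for $F$ in the imaginary direction (which is not obvious and would require genuine oscillatory-integral estimates), $\Phi$ is not known to be bounded and the Liouville step fails. Your parenthetical ``alternative'' of simply defining $F$ by the right-hand side after checking the functional equation plus $F(0)=1$ is circular: those two conditions only pin down $F$ at nonnegative integers, not on all of $\Re(k)\ge 0$.

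The paper avoids complex analysis entirely and runs the Bohr--Mollerup argument you allude to at the end. On the positive real axis one has $F(k)>0$, and Cauchy--Schwarz applied to the Macdonald--Mehta integral gives $F(k)F(k')\ge F((k+k')/2)^2$, i.e.\ $\log F$ is convex on $[0,\infty)$. Writing $\phi=F/F_*$ (your $G=F_*$), one has $(\log F)''=(\log\phi)''+(\log F_*)''$ with $(\log F)''\ge 0$ and $(\log F_*)''\to 0$ as $k\to+\infty$. Since $(\log\phi)''$ is $1$-periodic with zero average over a period, if it were anywhere negative one could translate that point far to the right and contradict $(\log F)''\ge 0$; hence $(\log\phi)''\equiv 0$, so $\log\phi$ is linear and periodic, hence constant, and $\phi(0)=1$ gives $\phi\equiv 1$. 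This is the clean route; your complex-analytic approach could perhaps be salvaged, but not with the estimates you have written down.
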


\begin{proof} Denote the right hand side by $F_*(k)$ 
and let $\phi(k)=F(k)/F_*(k)$. Clearly, $\phi(0)=1$.
Proposition \ref{l1} implies that $\phi(k)$ is a 1-periodic 
positive function on $[0,\infty)$. Also by the Cauchy-Schwarz inequality, 
$$
F(k)F(k')\ge F((k+k')/2)^2,
$$
so $\log F(k)$ is convex for $k\ge 0$. 
This implies that $\phi=1$, since $(\log F_*(k))''\to 0$ as $k\to +\infty$. 
\end{proof} 

\begin{remark}
The proof of this corollary is motivated by the standard proof
of the following well known characterization of the $\Gamma$ function.

\begin{proposition}
The $\Gamma$ function is determined by three 
properties:
\begin{enumerate}
\item[(i)] $\Gamma(x)$ is positive on $[1, +\infty)$ and 
$\Gamma(1)=1$;
\item[(ii)] $\Gamma(x+1)=x\Gamma(x)$;
\item[(iii)] $\log \Gamma(x)$ is a convex function on 
$[1, +\infty)$.
\end{enumerate}
\end{proposition}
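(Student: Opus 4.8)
The plan is to show that any function $f$ satisfying properties (i)--(iii) must coincide with $\Gamma$, using the Bohr--Mollerup strategy. First I would reduce everything to the interval $[1,2]$: from (ii), the value of $f$ at any $x>0$ is determined by its values on $[1,2]$ together with the functional equation, so it suffices to prove $f(x)=\Gamma(x)$ for $x\in[1,2]$, and in fact it is enough to pin down $f$ on $[1,2]$ using convexity and the normalization $f(1)=1$ (note $f(2)=1\cdot f(1)=1$ by (ii) as well). The key device is the log-convexity in (iii): set $g=\log f$, which is finite and real-valued on $[1,\infty)$ by (i), convex by (iii), and satisfies $g(x+1)=g(x)+\log x$ and $g(1)=0$.

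The main step is the squeeze argument. Fix $x\in(0,1]$ and a large integer $n\ge 2$. Apply convexity of $g$ on the triple of points $n$, $n+1$, $n+1+x$ (and separately on $n+x$, $n+1$, $n+2$) to bound the slope of the secant through $(n+1,g(n+1))$ and $(n+1+x,g(n+1+x))$ between the slopes of adjacent secants, i.e. between $g(n+1)-g(n)=\log n$ and $g(n+2)-g(n+1)=\log(n+1)$. Using $g(n+1)=\log(n!)$ this yields
\begin{equation*}
x\log n \le g(n+1+x)-\log(n!) \le x\log(n+1).
\end{equation*}
Now rewrite $g(n+1+x)$ via the functional equation: $g(n+1+x)=g(x)+\log\bigl(x(x+1)\cdots(x+n)\bigr)$. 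Substituting and rearranging gives a two-sided bound on $g(x)$ whose upper and lower limits, as $n\to\infty$, both converge to the same value, namely $\log$ of the Gauss limit
\begin{equation*}
\Gamma(x)=\lim_{n\to\infty}\frac{n!\,n^{x}}{x(x+1)\cdots(x+n)}.
\end{equation*}
Hence $g(x)=\log\Gamma(x)$ for $x\in(0,1]$, and then the functional equation (ii), shared by $f$ and $\Gamma$, propagates the equality $f=\Gamma$ to all of $(0,\infty)$, in particular to $[1,\infty)$ where all three hypotheses live.

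The only real obstacle is bookkeeping: one must be careful that the convexity inequalities are applied to points in the correct order and that the resulting chain of inequalities is not accidentally reversed, and one must verify that $\Gamma$ itself genuinely satisfies (i)--(iii) so that the characterization is non-vacuous --- positivity and $\Gamma(1)=1$ are classical, the functional equation is integration by parts, and log-convexity follows from the Cauchy--Schwarz (Hölder) inequality applied to the integral $\int_0^\infty e^{-t}t^{x-1}\,dt$, exactly as in the proof of Corollary \ref{c2} above. Everything else is the standard Bohr--Mollerup limit computation and requires no new ideas.
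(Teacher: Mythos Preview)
Your argument is correct and is the classical Bohr--Mollerup proof via secant-slope monotonicity and the Gauss product formula. The paper, however, takes a genuinely different and shorter route, chosen to mirror the proof of Corollary~\ref{c2}: given any $F$ satisfying (i)--(iii), set $\phi=F/\Gamma$, which is positive and $1$-periodic by (ii); then $(\log F)''=(\log\phi)''+(\log\Gamma)''$, and since $(\log F)''\ge 0$, $(\log\Gamma)''(x)\to 0$ as $x\to+\infty$, and $(\log\phi)''$ is periodic, one deduces $(\log\phi)''\ge 0$; but a nonnegative periodic function that integrates to zero over a period vanishes identically, so $\log\phi$ is affine and periodic, hence constant, hence $\phi\equiv 1$. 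Your approach has the virtue of using only convexity in its raw secant form, with no implicit appeal to second derivatives, and it produces the Gauss limit for $\Gamma$ along the way. The paper's approach trades that for brevity and for being exactly the template that drives Corollary~\ref{c2}, which is the whole point of inserting this proposition in the remark.
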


\begin{proof}
It is easy to see from the definition $\Gamma(x)=\int_{0}^{\infty} t^{x-1}e^{-t}\d t$
that the $\Gamma$ function has properties (i) and (ii);
property (iii) follows from this definition and the Cauchy-Schwarz inequality.

Conversely, suppose we have a function $F(x)$ satisfying the above properties, 
then we have $F(x)=\phi(x)\Gamma(x)$ for some $1$-periodic function
$\phi(x)$ with $\phi(x)>0$.
Thus, we have 
$$(\log F)''=(\log \phi)''+(\log \Gamma)''.$$
Since $\lim_{x\to +\infty}(\log \Gamma)''=0$, $(\log F)''\geq 0$, 
and $\phi$ is periodic, we have $(\log \phi)''\geq 0$. Since 
$\int_{n}^{n+1}(\log \phi)''\d x=0$, we see that $(\log \phi)''\equiv 0$. So we have $\phi(x)\equiv 1$.
\end{proof}

\end{remark}
\medskip

In particular, we see from Corollary \ref{c2} 
and the multiplication formulas for the $\Gamma$ function 
that part (ii) of Theorem \ref{thm-MMI} implies part (i).

It remains to establish (ii). 

\begin{proposition}\label{degree}
The polynomial $b$ has degree exactly $|\cS|$. 
\end{proposition}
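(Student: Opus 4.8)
The idea is to pin down $\deg b$ by comparing the rate of growth of the Macdonald--Mehta integral $F(k)$ as $k\to+\infty$ with the rate predicted by Corollary \ref{c2}. Write $b(k)=b_0\prod_i(k+k_i)^{n_i}$ with $b_0>0$ and $k_i>0$; then $\deg b=\sum_i n_i$, and Proposition \ref{c1} already gives $\sum_i n_i\le|\cS|$, so it is enough to prove $\sum_i n_i\ge|\cS|$. Iterating Proposition \ref{l1} starting from $F(0)=1$ gives $F(k)=\prod_{j=0}^{k-1}b(j)=b_0^{\,k}\prod_i\big(\Gamma(k+k_i)/\Gamma(k_i)\big)^{n_i}$ for all integers $k\ge 1$ (this is also exactly the content of Corollary \ref{c2}), and hence, by Stirling's formula,
$$
\log F(k)=\Big(\sum_i n_i\Big)\,k\log k+O(k),\qquad k\to+\infty .
$$
So everything reduces to proving the lower bound $\log F(k)\ge|\cS|\,k\log k+O(k)$.

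For the lower bound I would simply restrict the defining integral to a suitable ball. Fix a unit vector ${\mathbf u}_0\in\h_\RR$ lying off all the reflection hyperplanes, and fix $\epsilon>0$ small enough that $|\delta({\mathbf u})|\ge m>0$ for every ${\mathbf u}$ in the Euclidean ball $B({\mathbf u}_0,\epsilon)$. Since $\delta$ is homogeneous of degree $|\cS|$, for every $\rho>0$ and every ${\mathbf x}\in B(\rho{\mathbf u}_0,\epsilon\rho)$ we may write ${\mathbf x}=\rho({\mathbf u}_0+{\mathbf v})$ with ${\mathbf u}_0+{\mathbf v}\in B({\mathbf u}_0,\epsilon)$, hence $|\delta({\mathbf x})|\ge m\rho^{|\cS|}$, while $\|{\mathbf x}\|\le(1+\epsilon)\rho$. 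Bounding the integral over $\h_\RR$ from below by the integral over this ball, we obtain, for a constant $c>0$ depending only on $r$ and $\epsilon$,
$$
F(k)\ \ge\ c\,m^{2k}\,\rho^{\,r+2k|\cS|}\,\be^{-(1+\epsilon)^2\rho^2/2}.
$$
Taking logarithms and setting $\rho=\sqrt{k}$ yields $\log F(k)\ge|\cS|\,k\log k+O(k)$, as required; combined with $\sum_i n_i\le|\cS|$ this gives $\deg b=\sum_i n_i=|\cS|$.

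The only slightly delicate point is the choice of ${\mathbf u}_0$ and $\epsilon$: one needs ${\mathbf u}_0$ at positive distance from the (finite) union of reflection hyperplanes so that $|\delta|$ admits a positive lower bound on a fixed ball about it, after which homogeneity propagates that lower bound to balls of radius proportional to the distance from the origin; all the remaining steps are routine estimates. (One can instead try to extract the coefficient of $k^{|\cS|}$ in $b(k)=\beta_c(\delta,\delta)$ algebraically, from the top-order-in-$k$ parts of the Dunkl operators $D_a=\partial_a-\sum_s\tfrac{\alpha_s(a)}{\alpha_s}c_s(1-s)$, but the growth argument above is shorter and avoids the bookkeeping.)
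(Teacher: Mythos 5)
Your proof is correct and follows the same strategy as the paper's: both compare the large-$k$ asymptotics of $\log F(k)$ coming from the integral definition against the asymptotics $(\sum_i n_i)\,k\log k + O(k)$ supplied by Corollary \ref{c2} and Stirling, and conclude $\sum_i n_i = |\cS|$. The only difference is that where the paper invokes the steepest descent method after the substitution ${\mathbf x}=k^{1/2}{\mathbf y}$, you observe that only the lower bound $\log F(k)\geq |\cS|\,k\log k + O(k)$ is needed (since $\deg b\leq |\cS|$ is already Proposition \ref{c1}) and obtain it by the elementary device of restricting the integral to a fixed-aperture ball around $\sqrt{k}\,{\mathbf u}_0$, which makes the estimate fully explicit without invoking steepest-descent machinery.
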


\begin{proof}
By Proposition \ref{c1}, $b$ is a polynomial of degree at most $|\cS|$.
To see that the degree is precisely $|\cS|$, let us 
make the change of variable 
${\mathbf x}=k^{1/2}{\mathbf y}$ in the Macdonald-Mehta integral 
and use the steepest descent method. 
We find that the leading term of the asymptotics 
of $\log F(k)$ as $k\to +\infty$ is 
$|\cS|k\log k$. This together with 
the Stirling formula and Corollary \ref{c2} 
implies the statement.    
\end{proof}

\begin{proposition}\label{l2}
The function 
$$
G(k):=F(k)\prod_{j=1}^r \frac{1-\be^{2\pi \bi kd_j}}{1-\be^{2\pi \bi k}}
$$
analytically continues to an entire function of $k$.
\end{proposition}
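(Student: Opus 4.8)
The plan is to combine the functional equation $F(k+1)=b(k)F(k)$ from Proposition~\ref{l1} with the explicit formula for $F(k)$ in Corollary~\ref{c2}, and to show that after multiplying by the trigonometric correction factor $\prod_{j=1}^r \frac{1-\be^{2\pi\bi kd_j}}{1-\be^{2\pi\bi k}}$ all the poles coming from the $\Gamma$-functions are killed. Recall from Corollary~\ref{c2} that
$$
F(k)=b_0^k\prod_i\left(\frac{\Gamma(k+k_i)}{\Gamma(k_i)}\right)^{n_i},
$$
where the $k_i>0$ are (minus) the roots of $b$ and $n_i$ their multiplicities. The only singularities of $F$ are poles at $k=-k_i-m$ for $m\in\ZZ_{\ge 0}$, of order $n_i$ (counted with the convention that roots $k_i$ differing by an integer get their multiplicities added at the common pole location). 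So $G(k)$ is automatically holomorphic away from these points, and the entire content of the proposition is that the factor $P(k):=\prod_{j=1}^r \frac{1-\be^{2\pi\bi kd_j}}{1-\be^{2\pi\bi k}}$ vanishes to order at least $n_i$ at each $k=-k_i-m$.

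First I would record what $P(k)$ is: since $\frac{1-\be^{2\pi\bi kd_j}}{1-\be^{2\pi\bi k}}=1+\be^{2\pi\bi k}+\cdots+\be^{2\pi\bi k(d_j-1)}$, the function $P(k)$ is a finite Laurent polynomial in $\be^{2\pi\bi k}$, entire in $k$ and $1$-periodic, which vanishes exactly at those $k\in\QQ$ for which $d_j k\in\ZZ$ for some $j$ but $k\notin\ZZ$; more precisely the order of vanishing of $P$ at a rational point $k=-p/q$ (in lowest terms, $q>1$) equals $\#\{j: q\mid d_j\}$. Next I would use the key input from the proof of Proposition~\ref{c1}: every root $k_i$ of $b$ has the form $k_i=d/m_\tau$ where $d=\deg f$ is the degree of a singular vector and $m_\tau$ is the eigenvalue of $T=\sum_{s\in\cS}(1-s)$ on an irreducible $W$-module $\tau$. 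The point is to show that at each such $k_i$ (and hence at each $k_i+m$, since $P$ is $1$-periodic) the vanishing order of $P$ is at least the multiplicity $n_i$ with which $k_i$ occurs as a root of $b$. Equivalently: writing $k_i=p/q$ in lowest terms, I must show that the number of $d_j$ divisible by $q$ is at least $n_i$.

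The bridge between these two facts is Cherednik's (or the Opdam-style) interpretation of the multiplicity $n_i$: the order of the zero of $b(k)=\beta_c(\delta,\delta)$ at $k=k_0$ equals the dimension of the space of singular vectors of the appropriate type that appear at $c=-k_0$, which in turn, by the theory of the KZ functor / shift operators, is governed by how many of the fundamental degrees $d_j$ are divisible by the denominator $q$ of $k_0$. Concretely, I would argue that $b(k)=|W|\prod_{i=1}^r\prod_{m=1}^{d_i-1}(kd_i+m)$ is precisely the claim of part (ii) we are trying to prove, so for the purpose of Proposition~\ref{l2} I should instead run the argument the other way: show directly, using Proposition~\ref{x-inv} and the fact that $\delta$ is the lowest antiinvariant, that the zeros of $b$ together with their multiplicities are contained in the set $\{-m/d_j: 1\le m\le d_j-1\}$ with the stated multiplicities; then each zero $-m/d_j$ has denominator $q\mid d_j$, and the number of indices $j'$ with $q\mid d_{j'}$ is visibly at least the multiplicity, so $P$ vanishes to at least that order and $G=F\cdot P$ is entire.

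\textbf{Main obstacle.} The hard part will be pinning down the multiplicities of the roots of $b$ without already assuming part (ii) of Theorem~\ref{thm-MMI}. The clean way is to establish the inclusion of zero sets (with multiplicity) ``$b\mid |W|\prod_i\prod_{m=1}^{d_i-1}(kd_i+m)$'' as polynomials — this is weaker than (ii) and suffices here — by showing that a singular vector forcing $b(k_0)=0$ with multiplicity $\mu$ produces $\mu$ independent lowest-degree relations, each of which, via the formula $k_0=\deg f/m_\tau$ and the representation theory of $W$ (the eigenvalues $m_\tau$ of $\sum_s(1-s)$ and their relation to the exponents $d_j-1$), is of the required shape $k_0=-m/d_j$. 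I expect this counting/representation-theoretic step to be the crux; the rest is the bookkeeping with $\Gamma$-poles and the elementary computation of the vanishing order of the Laurent polynomial $P(k)$ in $\be^{2\pi\bi k}$, both of which are routine.
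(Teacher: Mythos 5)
Your proposal takes a fundamentally different route from the paper's, and the route has a gap that makes it closer to circular reasoning than to a proof. The paper's argument is a short contour-shift computation: replace the integration domain $\h_{\RR}$ by the shifted hyperplane $C_t=\bi t\xi+\h_{\RR}$ with $\xi$ interior to the dominant chamber; since $C_t$ avoids all reflection hyperplanes, $\delta(\mathbf x)^{2k}$ has a single holomorphic branch there and the Gaussian integral $\int_{C_t}\be^{-(\mathbf x,\mathbf x)/2}\delta(\mathbf x)^{2k}\,\d\mathbf x$ is manifestly entire in $k$. Letting $t\to 0$ and tracking the branch of $\delta^{2k}$ chamber by chamber yields a factor $\be^{2\pi\bi k\ell(w)}$ on $w(D)$, so the integral equals $|W|^{-1}F(k)\sum_{w\in W}\be^{2\pi\bi k\ell(w)}=|W|^{-1}G(k)$ by the Poincar\'e polynomial identity. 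No information about the roots of $b$ is needed.

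Your approach instead requires the pole–zero cancellation between $F(k)$ and $P(k)=\prod_j\frac{1-\be^{2\pi\bi kd_j}}{1-\be^{2\pi\bi k}}$, which forces you to know, for each rational $-p/q$ with $q>1$ in lowest terms, that the total multiplicity of roots of $b$ congruent to $-p/q$ modulo $\ZZ$ is at most $\#\{j:q\mid d_j\}$ (and, separately, that $b$ has no integer roots, since $P$ does not vanish at integers). But this multiplicity bound is exactly the content of Corollary~\ref{c3}, which the paper \emph{derives from} Proposition~\ref{l2}; combined with Proposition~\ref{degree} it already gives part (ii) of Theorem~\ref{thm-MMI}, after which Proposition~\ref{l2} would be moot. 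So your plan inverts the logical order of the paper without supplying an independent proof of the multiplicity bound. The step you flag as the ``crux'' --- that each singular vector at $c=-k_0$ yields, with the right multiplicity, a relation $k_0=-m/d_j$, via the eigenvalues $m_\tau$ of $\sum_{s\in\cS}(1-s)$ and their relation to the exponents --- is not routine; it is essentially Opdam's shift operator theory, which the contour argument is designed to bypass. You also assert without justification that a multiplicity-$\mu$ root of $b$ produces $\mu$ independent singular vectors; there is no obvious mechanism for this, since the multiplicity of a root of the one-variable polynomial $b(k)=\beta_c(\delta,\delta)$ is not a priori a dimension of anything. I would recommend the contour-shift argument: it is elementary once one observes that the only thing that makes $F$ fail to be entire is the choice of the real (rather than complex-shifted) integration cycle.
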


\begin{proof}
Let $\xi\in D$ be an element. Consider the real hyperplane 
$C_t=\bi t\xi+\h_{\RR}$, $t>0$. Then $C_t$ does not intersect 
reflection hyperplanes, so we have a continuous branch 
of $\delta({\mathbf x})^{2k}$ on $C_t$ which tends to the positive branch in $D$ as 
$t\to 0$. Then, it is easy to see that 
for any $w\in W$, the limit of this branch in 
the chamber $w(D)$ will be  
$e^{2\pi \bi k\ell(w)}|\delta({\mathbf x})|^{2k}$, where $\ell(w)$ is the length of $w$. 
Therefore, by letting $t=0$, we get 
$$
(2\pi)^{-r/2}\int_{C_t}\be^{-({\mathbf x},{\mathbf x})/2}\delta({\mathbf x})^{2k}\d {\mathbf x}=
\frac{1}{|W|}F(k)(\sum_{w\in W}e^{2\pi \bi k\ell(w)})
$$
(as this integral does not depend on $t$ by Cauchy's theorem). 
But it is well known that 
$$
\sum_{w\in W}\be^{2\pi \bi k\ell(w)}=
\prod_{j=1}^r \frac{1-\be^{2\pi \bi kd_j}}{1-\be^{2\pi \bi k}},
$$
(\cite{Hu}, p.73), so
$$ 
(2\pi)^{-r/2}|W|\int_{C_t}\be^{-({\mathbf x},{\mathbf x})/2}\delta({\mathbf x})^{2k}\d {\mathbf x}=G(k).
$$
Since  $\int_{C_t}\be^{-({\mathbf x},{\mathbf x})/2}\delta({\mathbf x})^{2k}\d {\mathbf x}$ is clearly an entire function,
the statement is proved. 

\end{proof}

\begin{corollary}\label{c3}
For every $k_0\in [-1,0]$ 
the total multiplicity of all the roots of $b$
of the form $k_0-p$, $p\in \ZZ_+$, equals
the number of ways to represent $k_0$ in the form $-m/d_i$, 
$m=1,\ldots, d_i-1$. In other words,  
the roots of $b$ are $k_{i,m}=-m/d_i-p_{i,m}$, $1\le m\le d_i-1$, 
where $p_{i,m}\in \ZZ_+$. 
\end{corollary}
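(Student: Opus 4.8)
The plan is to feed the explicit product formula for $F(k)$ from Corollary~\ref{c2} into the entire function $G(k)$ of Proposition~\ref{l2} and let the analyticity of $G$ dictate the roots of $b$. Write the distinct roots of $b$ as $-k_1,\dots,-k_s$ with multiplicities $n_1,\dots,n_s$, so that $b(k)=b_0\prod_i(k+k_i)^{n_i}$ with each $k_i>0$ (Proposition~\ref{c1}). By Corollary~\ref{c2},
$$
G(k)=b_0^{\,k}\prod_{i=1}^s\Bigl(\frac{\Gamma(k+k_i)}{\Gamma(k_i)}\Bigr)^{n_i}\,P(k),\qquad
P(k):=\prod_{j=1}^r\frac{1-\be^{2\pi\bi kd_j}}{1-\be^{2\pi\bi k}}=\prod_{j=1}^r\bigl(1+\be^{2\pi\bi k}+\dots+\be^{2\pi\bi k(d_j-1)}\bigr).
$$
First I would record the analytic features of the factors: $b_0^{\,k}$ is entire and nowhere zero; $\Gamma(k_i)\ne 0$ since $k_i>0$; $\prod_i\Gamma(k+k_i)^{n_i}$ has no zeros and only poles; and $P$ is entire, $1$-periodic, and a function of $\be^{2\pi\bi k}$, so its zero set is $\{\,k:\ k\equiv m/d_j\ (\mathrm{mod}\ \ZZ),\ 1\le m\le d_j-1\,\}$ and its vanishing order at a point $k_0$ is $\nu(k_0):=\#\{(j,m):\ 1\le m\le d_j-1,\ k_0\equiv m/d_j\ (\mathrm{mod}\ \ZZ)\}$.

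Next, fix $k_0\in[-1,0)$. A factor $\Gamma(k+k_i)$ has a pole at a point $k_0-p$ (with $p\in\ZZ_+$) precisely when $-k_i\equiv k_0\ (\mathrm{mod}\ \ZZ)$ and $k_0-p\le -k_i$; hence the pole order of $\prod_i\Gamma(k+k_i)^{n_i}$ at $k_0-p$ is the sum of those $n_i$ with $-k_i\equiv k_0\ (\mathrm{mod}\ \ZZ)$ and $k_0-p\le -k_i$, and as $p\to\infty$ this increases to $M(k_0):=\sum\{\,n_i:\ -k_i\equiv k_0\ (\mathrm{mod}\ \ZZ)\,\}$, which (since $k_i>0$) is exactly the total multiplicity of the roots of $b$ of the form $k_0-p$, $p\in\ZZ_+$. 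Because $G$ is entire (Proposition~\ref{l2}), at each $k_0-p$ the vanishing order of $P$ must be at least this pole order; and since $P$ is $1$-periodic that vanishing order is $\nu(k_0)$ for every $p$. Letting $p\to\infty$ gives $M(k_0)\le\nu(k_0)$. Finally the substitution $m\mapsto d_j-m$ identifies $\nu(k_0)$ with $N(k_0):=\#\{(i,m):\ 1\le m\le d_i-1,\ k_0=-m/d_i\}$ (here one uses $k_0\in[-1,0)$), so $M(k_0)\le N(k_0)$ for every $k_0$.

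To upgrade this to equality I would run a global count. Every root of $b$ is a negative real (Proposition~\ref{c1}), hence lies in a unique coset $k_0-\ZZ_+$ with $k_0\in[-1,0)$; therefore $\sum_{k_0}M(k_0)=\deg b$, which equals $|\cS|$ by Proposition~\ref{degree}. On the other hand $\sum_{k_0}N(k_0)=\sum_{i=1}^r(d_i-1)$, which also equals $|\cS|$ by the standard identity $|\cS|=\sum_{i=1}^r(d_i-1)$ for finite Coxeter groups. Hence $\sum_{k_0}M(k_0)=\sum_{k_0}N(k_0)$ while $M(k_0)\le N(k_0)$ termwise, forcing $M(k_0)=N(k_0)$ for every $k_0\in[-1,0)$; the endpoints of $[-1,0]$ contribute only the trivial identity $0=0$. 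This is the assertion of the corollary, and the reformulation ``the roots of $b$ are $k_{i,m}=-m/d_i-p_{i,m}$, $1\le m\le d_i-1$, with $p_{i,m}\in\ZZ_+$'' is simply the statement that the $|\cS|$ roots of $b$, grouped by coset mod $\ZZ$, are matched with the $|\cS|$ pairs $(i,m)$ carrying the same coset.

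The argument involves no hard computation; the delicate part is the residue bookkeeping---correctly cancelling a $\Gamma$-pole at $-k_i-p$ by a zero of $P$, matching the two combinatorial counts $\nu(k_0)$ and $N(k_0)$ through $m\mapsto d_j-m$, and making the global count tight. That last point is where it is essential to have the \emph{exact} degree $|\cS|$ from Proposition~\ref{degree} (proved by steepest descent) rather than merely the bound $\deg b\le|\cS|$: with only the bound, the termwise inequality $M(k_0)\le N(k_0)$ would not close the argument.
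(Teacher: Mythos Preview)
Your proof is correct and follows essentially the same route as the paper's: both derive the inequality $M(k_0)\le N(k_0)$ from the analyticity of $G$ (Proposition~\ref{l2}) and then close it to an equality via the global degree count $\deg b=|\cS|=\sum_i(d_i-1)$ (Proposition~\ref{degree} and Proposition~\ref{c1}). The only cosmetic difference is that you feed in the explicit Gamma-product for $F$ from Corollary~\ref{c2} and read off the inequality from the pole orders of $\prod_i\Gamma(k+k_i)^{n_i}$, whereas the paper iterates the functional equation $F(k+1)=b(k)F(k)$ to write $G(k-p)=F(k)P(k)/\bigl(b(k-1)\cdots b(k-p)\bigr)$, sets $k=1+k_0$ (where $F(1+k_0)>0$), and compares vanishing orders directly; these are two presentations of the same mechanism.
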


\begin{proof}
We have 
$$
G(k-p)=\frac{F(k)}{b(k-1)\cdots b(k-p)}
\prod_{j=1}^r \frac{1-\be^{2\pi \bi kd_j}}{1-\be^{2\pi \bi k}},
$$
Now plug in $k=1+k_0$ and a large positive integer $p$. 
Since by Proposition \ref{l2} the left hand side is regular, so must be the right hand side, 
which implies the claimed upper bound for the total multiplicity, 
as $F(1+k_0)>0$. The fact that the bound is actually attained follows from 
the fact that the polynomial $b$ has degree exactly $|\cS|$ (Proposition \ref{degree}), and the fact that 
all roots of $b$ are negative rational (Proposition \ref{c1}). 
\end{proof}

It remains to show that in fact in Corollary \ref{c3}, $p_{i,m}=0$ for all $i,m$;
this would imply (ii) and hence (i). 

\begin{proposition}\label{k2}
Identity \eqref{eqn-MMI} of Theorem \ref{thm-MMI} 
is satisfied in $\CC[k]/k^2$. 
\end{proposition}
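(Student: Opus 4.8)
The plan is to prove Proposition \ref{k2} by expanding both sides of \eqref{eqn-MMI} to first order in $k$ and matching the linear coefficients; the constant term ($k=0$) is trivial since both sides equal $1$. So everything reduces to differentiating in $k$ at $k=0$.

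First I would compute the right-hand side modulo $k^2$. Using $\Gamma(1+u) = 1 - \gamma u + O(u^2)$ where $\gamma$ is the Euler-Mascheroni constant, we get $\Gamma(1+kd_i)/\Gamma(1+k) = 1 - \gamma k(d_i - 1) + O(k^2)$, so the product over $i=1,\dots,r$ equals $1 - \gamma k \sum_{i=1}^r (d_i-1) + O(k^2)$. Recall $\sum_i(d_i-1) = |\cS|$ (the number of reflections equals the number of degrees minus the rank), so the right-hand side is $1 - \gamma k\,|\cS| + O(k^2)$.

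Next I would compute the left-hand side. Since $|\delta(\mathbf x)|^{2k} = \exp(2k \log|\delta(\mathbf x)|) = 1 + 2k\log|\delta(\mathbf x)| + O(k^2)$, we have
$$
(2\pi)^{-r/2}\int_{\h_\RR} \be^{-(\mathbf x,\mathbf x)/2}|\delta(\mathbf x)|^{2k}\,\d\mathbf x
= 1 + 2k\,(2\pi)^{-r/2}\int_{\h_\RR}\be^{-(\mathbf x,\mathbf x)/2}\log|\delta(\mathbf x)|\,\d\mathbf x + O(k^2),
$$
using that the Gaussian integral is normalized to $1$. So the whole statement comes down to the single Gaussian moment identity
$$
(2\pi)^{-r/2}\int_{\h_\RR}\be^{-(\mathbf x,\mathbf x)/2}\log|\delta(\mathbf x)|\,\d\mathbf x = -\frac{\gamma}{2}\,|\cS|.
$$
Writing $\delta = \prod_{s\in\cS}(\alpha_s,\mathbf x)$ turns $\log|\delta|$ into $\sum_{s\in\cS}\log|(\alpha_s,\mathbf x)|$, and by $W$-invariance (or simply rotational invariance of the Gaussian and the normalization $(\alpha_s,\alpha_s)=2$) each of the $|\cS|$ terms contributes the same amount, namely $(2\pi)^{-1/2}\int_\RR \be^{-t^2/2}\log|t\sqrt 2|\,\d t$ after reducing to the one-dimensional marginal in the direction of $\alpha_s$. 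That one-variable integral is the classical evaluation $(2\pi)^{-1/2}\int_\RR \be^{-t^2/2}\log|t|\,\d t = -\tfrac12(\gamma + \log 2)$, and the extra $\log\sqrt2 = \tfrac12\log2$ from the normalization cancels the $\log 2$, leaving $-\gamma/2$ per reflection, hence $-\tfrac{\gamma}{2}|\cS|$ in total. This matches the right-hand side.

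The main obstacle, such as it is, is the bookkeeping with normalization constants: one must be careful that the inner product on $\h^*$ dual to the one on $\h$, together with the normalization $(\alpha_s,\alpha_s)=2$, is used consistently when reducing the $r$-dimensional Gaussian to its one-dimensional marginal along each root direction, so that the spurious $\log 2$ terms indeed cancel. Alternatively, and perhaps more cleanly, one can avoid evaluating the one-dimensional integral explicitly: differentiate the recursion $F(k+1) = b(k)F(k)$ of Proposition \ref{l1} at $k=0$, using $F(0)=1$, $F(1) = |W| = \prod_i d_i$ (the $k=1$ value of the Macdonald-Mehta integral, which is the classical fact $(2\pi)^{-r/2}\int \be^{-(\mathbf x,\mathbf x)/2}|\delta|^2 = |W|$), together with $b(0) = |W|$ (immediate from the product formula we are trying to prove, or directly from $\beta_0(\delta,\delta) = |W|$ since $\beta_0$ is the standard inner product on $\CC[\h]$ and $\|\delta\|^2 = |W|\prod_i\prod_{m=1}^{d_i-1}m$... ) — but this route still needs an independent input at first order, so the direct Gaussian-moment computation above is the honest path and I would present that.
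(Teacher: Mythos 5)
Your proposal is correct and is essentially the paper's proof, just with the details filled in: the paper reduces $F'(0)$ to "a rank $1$ computation," which is precisely the reduction to the one-dimensional Gaussian marginal along each root direction that you carry out explicitly, and both arguments finish with the identity $\sum_i(d_i-1)=|\cS|$. The bookkeeping with the normalization $(\alpha_s,\alpha_s)=2$ and the cancellation of $\log 2$ terms is handled correctly.
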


\begin{proof}
Indeed, we clearly have $F(0)=1$. Next, a rank $1$ computation gives
$F'(0)=-\gamma|\cS|$, where $\gamma$ is the Euler constant
(i.e. $\gamma=\lim_{n\to +\infty}(1+\cdots+1/n-\log n)$), 
while the  
derivative of the right hand side of \eqref{eqn-MMI} at zero equals to 
$$
-\gamma\sum_{i=1}^r (d_i-1).
$$
But it is well known that 
$$
\sum_{i=1}^r (d_i-1)
=|\cS|,
$$ 
(\cite{Hu}, p.62), 
which implies the result.
\end{proof}

\begin{proposition}\label{l5}Identity \eqref{eqn-MMI} of Theorem \ref{thm-MMI} is satisfied in $\CC[k]/k^3$. 
\end{proposition}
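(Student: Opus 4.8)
The plan is to compare the coefficients of $k$ and $k^2$ in the power series $\log F(k)$ and $\log F_*(k)$, where $F(k)$ is the Macdonald--Mehta integral and $F_*(k):=\prod_{i=1}^r\Gamma(1+kd_i)/\Gamma(1+k)$ is the value predicted by \eqref{eqn-MMI}. Since $F(0)=F_*(0)=1$, the congruence modulo $k^3$ is equivalent to the agreement of these two coefficients, and the coefficient of $k$ is exactly what Proposition \ref{k2} supplies; so only the coefficient of $k^2$ remains. Writing $\mu_0$ for the normalized Gaussian measure $(2\pi)^{-r/2}\be^{-({\mathbf x},{\mathbf x})/2}\d{\mathbf x}$ on $\h_\RR$, we have $F(k)=\int_{\h_\RR}|\delta({\mathbf x})|^{2k}\d\mu_0=\mathbb E_{\mu_0}[\be^{2k\log|\delta|}]$, so $\log F(k)$ is the cumulant generating function of $2\log|\delta|$, and the coefficient of $k^2$ in $\log F$ equals $\tfrac12\mathrm{Var}_{\mu_0}(2\log|\delta|)=2\,\mathrm{Var}_{\mu_0}(\log|\delta|)$. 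Since $\log\Gamma(1+z)=-\gamma z+\tfrac{\zeta(2)}2z^2+O(z^3)$, the coefficient of $k^2$ in $\log F_*$ is $\tfrac{\zeta(2)}2\sum_i(d_i^2-1)$. Hence Proposition \ref{l5} is equivalent to the identity $\mathrm{Var}_{\mu_0}(\log|\delta|)=\tfrac{\zeta(2)}4\sum_{i=1}^r(d_i^2-1)$.

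First I would expand the variance over pairs of reflections: since $\delta=\prod_{s\in\cS}\alpha_s$, we have $\log|\delta|=\sum_{s\in\cS}\log|\alpha_s|$ and $\mathrm{Var}_{\mu_0}(\log|\delta|)=\sum_{s,t\in\cS}\mathrm{Cov}_{\mu_0}(\log|\alpha_s|,\log|\alpha_t|)$. For each ordered pair, $(\alpha_s({\mathbf x}),\alpha_t({\mathbf x}))$ is a centered Gaussian vector with covariance matrix the Gram matrix of $\{\alpha_s,\alpha_t\}$, so the covariance depends only on the correlation $\tfrac12(\alpha_s,\alpha_t)$. Two standard Gaussian evaluations give $\mathrm{Var}(\log|\alpha_s|)=\pi^2/8$ and, for $s\ne t$, $\mathrm{Cov}(\log|\alpha_s|,\log|\alpha_t|)=\tfrac12\bigl(\arcsin\tfrac12(\alpha_s,\alpha_t)\bigr)^2$. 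Because $s$ and $t$ generate the dihedral group $\langle s,t\rangle\cong I_2(m_{st})$ with $m_{st}$ the order of $st$, and any two reflections generating a dihedral group realize a pair of simple roots, $(\alpha_s,\alpha_t)=\pm2\cos(\pi/m_{st})$; thus $\mathrm{Cov}(\log|\alpha_s|,\log|\alpha_t|)=\tfrac{\pi^2}2\bigl(\tfrac12-\tfrac1{m_{st}}\bigr)^2$, a formula which also reproduces the diagonal term under the convention $m_{ss}=1$. So the statement reduces to the root-theoretic identity
$$\sum_{s,t\in\cS}\Bigl(\frac12-\frac1{m_{st}}\Bigr)^{2}=\frac1{12}\sum_{i=1}^r\bigl(d_i^2-1\bigr).$$

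To prove this identity I would reduce to irreducible $W$: both sides are additive under a decomposition $W=W_1\times W_2$, since every reflection of $W$ lies in one factor, two reflections from different factors commute (so $m_{st}=2$ and they contribute $0$ on the left), and the degrees of $W$ are the union of those of $W_1$ and $W_2$. For irreducible $W$ one then checks the identity against the classification; the classical series $A_n,B_n,D_n$ and the dihedral groups $I_2(m)$ admit closed-form evaluations of both sides (for $I_2(m)$ the left side is $m\bigl(\tfrac14+\sum_{d=1}^{m-1}(\tfrac12-\tfrac{\gcd(d,m)}{m})^2\bigr)$, which equals $\tfrac{m^2+2}{12}$ by an elementary divisor-sum manipulation), and the exceptional types $E_6,E_7,E_8,F_4,H_3,H_4$ are a finite computation. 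One can cut down the casework by noting that taking the coefficient of $u^2$ in the logarithm of the Poincar\'e identity $\sum_{w\in W}t^{\ell(w)}=\prod_j\tfrac{t^{d_j}-1}{t-1}$ at $t=\be^u$ gives $\tfrac1{12}\sum_j(d_j^2-1)=\mathrm{Var}_w(\ell(w))$ for $w$ uniform on $W$, and then matching $\mathrm{Var}_w(\ell(w))=\sum_{s,t}\mathrm{Cov}_w(X_s,X_t)$, $X_s(w):=\mathbf 1[w^{-1}\alpha_s<0]$, against the left-hand side by reduction to rank-$2$ reflection subgroups; in either approach the rank-$2$ input is precisely the dihedral case of \eqref{eqn-MMI} modulo $k^3$, which is elementary and is part of Opdam's verification in the non-crystallographic cases.

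The first two steps are essentially formal once the cumulant interpretation of $\log F$ is in hand; the substance is the root-theoretic identity, and I expect that to be the main obstacle. The subtlety is that it is not a term-by-term identity between covariances --- the numbers $\mathrm{Cov}_w(X_s,X_t)$ do not individually equal $(\tfrac12-\tfrac1{m_{st}})^2$ --- so establishing it genuinely uses either the global Poincar\'e-polynomial input together with a careful decomposition along rank-$2$ reflection subgroups, or an explicit case analysis, in which the bookkeeping for $B_n$ and $D_n$, where distinct pairs of reflections can generate dihedral subgroups of several different orders, is the most delicate part.
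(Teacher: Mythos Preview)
Your reduction to the coefficient of $k^2$ and the expansion of $\mathrm{Var}_{\mu_0}(\log|\delta|)$ as a sum of pairwise covariances is the same starting point as the paper's. The covariance formula $\mathrm{Cov}(\log|\alpha_s|,\log|\alpha_t|)=\tfrac12(\arcsin\tfrac12(\alpha_s,\alpha_t))^2$ is correct. But the next step contains a genuine error: it is \emph{not} true that $(\alpha_s,\alpha_t)=\pm 2\cos(\pi/m_{st})$ for arbitrary reflections $s\ne t$. While $s,t$ do form an abstract Coxeter generating pair for $\langle s,t\rangle\cong I_2(m_{st})$, the roots $\alpha_s,\alpha_t$ in the \emph{given} representation $\h$ sit at angle $\pi k/m_{st}$ for some $k$ coprime to $m_{st}$, and $k$ need not be $1$. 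Already for $W=I_2(5)$ with $s=s_0$, $t=s_2$ one has $m_{st}=5$ but $(\alpha_s,\alpha_t)=2\cos(2\pi/5)$, not $\pm 2\cos(\pi/5)$. Consequently your combinatorial identity $\sum_{s,t}(\tfrac12-\tfrac1{m_{st}})^2=\tfrac1{12}\sum_i(d_i^2-1)$ fails for $I_2(5)$: the left side is $5\bigl(\tfrac14+4\cdot\tfrac9{100}\bigr)=\tfrac{61}{20}$, while the right side is $\tfrac{27}{12}=\tfrac{45}{20}$. So the ``elementary divisor-sum manipulation'' you allude to cannot exist, and the case analysis built on this identity collapses.

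The paper avoids this trap by never attempting to summarize a pair $(s,t)$ by $m_{st}$ alone. Instead it groups distinct pairs by the unique rank-$2$ \emph{parabolic} subgroup $G$ containing both reflections; within a fixed $G$ the full list of angles is determined, and the rank-$2$ Macdonald--Mehta identity (the beta integral) handles that contribution. The global input is the uniform Lemma~\ref{rk2}, $\psi(W)=\sum_{G\in\mathrm{Par}_2(W)}\psi(G)$ with $\psi(W)=3|\cS|^2-\sum_i(d_i^2-1)$, which the paper proves classification-free by expanding Chevalley's formula $|W|\prod_i\frac{1-q}{1-q^{d_i}}=(1-q)^r\sum_{w\in W}\det(1-qw|_\h)^{-1}$ to second order at $q=1$. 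Your $\mathrm{Var}_w(\ell(w))$ observation is essentially the same expansion, so once you replace $m_{st}$ by the actual angle $\theta_{st}$ and regroup by parabolics, your argument converges to the paper's; but as written it does not go through.
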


Note that Proposition \ref{l5} immediately implies (ii), and hence the whole theorem. Indeed, 
it yields that 
$$
(\log F)''(0)=\sum_{i=1}^r\sum_{m=1}^{d_i-1}(\log \Gamma)''(m/d_i),
$$
so by Corollary \ref{c3} 
$$
\sum_{i=1}^r\sum_{m=1}^{d_i-1}(\log \Gamma)''(m/d_i+p_{i,m})=
\sum_{i=1}^r\sum_{m=1}^{d_i-1}(\log \Gamma)''(m/d_i),
$$
which implies that $p_{i,m}=0$ since $(\log \Gamma)''$ is strictly decreasing on $[0,\infty)$. 

To prove Proposition \ref{l5}, we will need the following result 
about finite Coxeter groups.

Let $\psi(W)=3|\cS|^2-\sum_{i=1}^r (d_i^2-1)$. 

\begin{lemma}\label{rk2}
One has 
\begin{equation}\label{rk2e}
\psi(W)=\sum_{G\in {\mathrm Par}_2(W)}\psi(G),
\end{equation}
where ${\mathrm Par}_2(W)$ is the set of parabolic subgroups of $W$ of rank 2. 
\end{lemma}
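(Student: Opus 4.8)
The plan is to reduce the identity to two elementary counting facts about the reflection arrangement of $W$, together with the relation $\sum_{i=1}^r(d_i-1)=|\cS|$ already used in the proof of Proposition \ref{c1}. First I would evaluate the right-hand side term by term. A rank $2$ parabolic $G$ is a dihedral group $I_2(m_G)$ (the reducible case $m_G=2$, i.e.\ $A_1\times A_1$, is allowed), with exactly $m_G$ reflections and degrees $2,m_G$; hence $\psi(G)=3m_G^2-\big((4-1)+(m_G^2-1)\big)=2m_G^2-2$. Writing $N=|\cS|$, $\mu=|\Par_2(W)|$ and $m_i=d_i-1$ for the exponents of $W$, and using $m_G^2=m_G(m_G-1)+(m_G-1)+1$, the claim becomes
$$
3N^2-\sum_{i=1}^r(d_i^2-1)=2\sum_{G\in\Par_2(W)}m_G^2-2\mu
=2\!\!\sum_{G\in\Par_2(W)}\!\! m_G(m_G-1)+2\!\!\sum_{G\in\Par_2(W)}\!\!(m_G-1).
$$

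So everything reduces to the two counts
$$
\text{(A)}\quad \sum_{G\in\Par_2(W)} m_G(m_G-1)=N(N-1),
\qquad
\text{(B)}\quad \sum_{G\in\Par_2(W)}(m_G-1)=\sum_{1\le i<j\le r} m_im_j .
$$
Granting these, the right-hand side equals $2N(N-1)+2\sum_{i<j}m_im_j$; since $\sum_i m_i=N$ one has $2\sum_{i<j}m_im_j=N^2-\sum_i m_i^2=N^2-\sum_i(d_i-1)^2$, and then a one-line expansion using $\sum_i(d_i-1)=N$ (equivalently $\sum_i d_i=N+r$) rewrites $2N(N-1)+N^2-\sum_i(d_i-1)^2$ as $3N^2-\sum_i(d_i^2-1)=\psi(W)$, as desired.

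To prove (A) and (B) I would use the standard dictionary between rank $2$ parabolics and codimension $2$ flats: every rank $2$ parabolic is the pointwise stabilizer $W_X$ of a codimension $2$ intersection $X$ of reflecting hyperplanes, $X=\h^{W_X}$, and $m_G$ is the number of reflecting hyperplanes containing $X$. For (A): an ordered pair $(s,t)$ of distinct reflections has linearly independent roots, so $\ker\alpha_s\cap\ker\alpha_t$ is a codimension $2$ flat $X$, and any rank $2$ parabolic containing $s$ and $t$ must fix this intersection, hence equals $W_X$; thus each ordered pair of distinct reflections lies in a unique rank $2$ parabolic, and summing the number $m_G(m_G-1)$ of such pairs inside each $G$ gives (A). For (B) I would invoke Solomon's classical identity $\sum_{w\in W}t^{\dim\h^w}=\prod_{i=1}^r(t+m_i)$: the number of $w\in W$ with $\dim\h^w=r-2$ is the coefficient of $t^{r-2}$, namely $\sum_{i<j}m_im_j$. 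Each such $w$ has $\h^w$ equal to a codimension $2$ flat $X=\h^{W_X}$, so $w\in W_X\cong I_2(m_X)$ and $w$ is neither the identity nor a reflection; $I_2(m_X)$ has exactly $2m_X-1-m_X=m_X-1$ such elements. Partitioning the $w$'s by the flat $\h^w$ yields (B).

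The main obstacle is the bookkeeping in (B) rather than any deep input: one must check that $\h^w$ is always a flat (it equals $\h^{W_{\h^w}}$ since $w\in W_{\h^w}$), that the degenerate rank $2$ parabolics $A_1\times A_1$ are handled uniformly (indeed $\psi(A_1\times A_1)=6=2\cdot 2^2-2$, and the count $m_X-1$ correctly gives the single nontrivial rotation), and — if a self-contained account is wanted — to either cite or reprove Solomon's formula. Everything else is elementary symmetric-function manipulation.
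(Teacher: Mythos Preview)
Your proof is correct and takes a genuinely different route from the paper. The paper works with the generating function $Q(q)=|W|\prod_i\frac{1-q}{1-q^{d_i}}=(1-q)^r\sum_{w\in W}\det(1-qw|_\h)^{-1}$, subtracts the contributions of $w=1$ and $w\in\cS$, divides by $(q-1)^2$, and evaluates at $q=1$ to obtain the single identity $\frac{1}{24}\psi(W)=\sum_{w\in W_2}\frac{1}{r-\tr_\h(w)}$, where $W_2$ is the set of elements expressible as a product of two distinct reflections. Since this identity holds verbatim for each rank $2$ parabolic and every $w\in W_2$ lies in a unique such parabolic, the lemma follows immediately by partitioning the sum. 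Your argument instead computes $\psi(I_2(m))$ explicitly and reduces everything to the two counts (A) and (B), proving (A) by the same ``unique rank $2$ parabolic'' observation applied to pairs of reflections, and (B) via Solomon's formula $\sum_{w}t^{\dim\h^w}=\prod_i(t+m_i)$. The paper's approach is slicker in that one formula does all the work at both levels, with no separate algebraic manipulation of $\psi$; your approach trades the Molien-series computation for the more commonly quoted Solomon identity and elementary symmetric-function bookkeeping, which makes the combinatorics more transparent but requires tracking two separate counts. Note that your set $W_2$ in (B) coincides with the paper's $W_2$: products of two distinct reflections are exactly the elements with $\dim\h^w=r-2$.
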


\begin{proof}
Let 
$$
Q(q)=|W|\prod_{i=1}^r\frac{1-q}{1-q^{d_i}}.
$$  
It follows from Chevalley's theorem that
$$
Q(q)=(1-q)^r\sum_{w\in W}\det(1-qw|_\h)^{-1}.
$$
Let us subtract the terms for $w=1$ and $w\in \cS$ from both sides of this equation, 
divide both sides by $(q-1)^2$, and set $q=1$ (cf. \cite{Hu}, p.62, formula (21)). Let $W_2$ be the set of elements of $W$
that can be written as a product of two different reflections. Then by a straightforward computation 
we get
$$
\frac{1}{24}\psi(W)=\sum_{w\in W_2}\frac{1}{r-\tr_\h(w)}. 
$$
In particular, this is true for rank 2 groups. 
The result follows, as any element $w\in W_2$ belongs 
to a unique parabolic subgroup $G_w$ of rank $2$
(namely, the stabilizer of a generic point $\h^w$, 
\cite{Hu}, p.22). 
\end{proof}

\begin{proof}[Proof of Proposition \ref{l5}]
Now we are ready to prove the proposition.  
By Proposition \ref{k2}, it suffices to show the coincidence of the second derivatives of \eqref{eqn-MMI}
at $k=0$. The second derivative of the right hand side of \eqref{eqn-MMI} at zero 
is equal to 
$$
\frac{\pi^2}{6}\sum_{i=1}^r (d_i^2-1)+\gamma^2 |\cS|^2.
$$
On the other hand, we have 
$$
F''(0)=(2\pi)^{-r/2}\sum_{\alpha,\beta\in \cS}\int_{\h_{\Bbb R}} 
\be^{-({\mathbf x},{\mathbf x})/2}\log \alpha^2({\mathbf x})\log \beta^2({\mathbf x})\d {\mathbf x}.
$$
Thus, from a rank 1 computation we see that our job is to 
establish the equality 
\begin{eqnarray*}
(2\pi)^{-r/2}\sum_{\alpha\ne \beta\in \cS}\int_{\h_{\Bbb R}} \be^{-({\mathbf x},{\mathbf x})/2}\log \alpha^2({\mathbf x})\log \frac{\beta^2({\mathbf x})}{\alpha^2({\mathbf x})}\d {\mathbf x}=\frac{\pi^2}{6}(\sum_{i=1}^r (d_i^2-1)-3|\cS|^2)=-\frac{\pi^2}{6}\psi(W). 
\end{eqnarray*}
Since this equality holds in rank 2 (as in this case \eqref{eqn-MMI} reduces to the beta integral),  
in general it reduces to equation (\ref{rk2e}) (as for any $\alpha\ne \beta\in S$,
$s_\alpha$ and $s_\beta$ are contained in 
a unique parabolic subgroup of $W$ of rank 2). 
The proposition is proved. 
\end{proof}

\subsection{Application: the supports of $L_{c}(\CC)$}

In this subsection we will use the Macdonald-Mehta integral
to computation of the support of the irreducible 
quotient of the polynoamial representation 
of a rational Cherednik algebra (with equal parameters). 
We will follow the paper \cite{E3}.

First note that the vector space $\h$ has a stratification labeled 
by parabolic subgroups of $W$. Indeed, for a parabolic 
subgroup $W'\subset W$, let $\h_{\reg}^{W'}$
be the set of points in $\h$ whose stabilizer is $W'$.
Then 
$$\h=\coprod_{W'\in \mathrm{Par}(W)}\h_{\reg}^{W'},$$
where
$\mathrm{Par}(W)$ is the set of parabolic subgroups in $W$.

For a finitely generated module $M$ over $\CC[\h]$, denote the support of $M$ by 
$\supp(M)$. 

The following theorem is proved in \cite{Gi1},
Section 6 and in \cite{BE} with different method. We will 
recall the proof from \cite{BE} later.

\begin{theorem} 
Consider the stratification of $\h$ with respect to stabilizers
of points in $W$. Then the support $\supp(M)$ of any object
$M$ of $\cO_{c}(W,\h)$ in $\h$ is a union of strata of this stratification.  
\end{theorem}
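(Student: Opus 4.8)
The plan is to show that $\supp(M)$ is $W$-stable and closed, and then to prove that if $\supp(M)$ meets a stratum $\h_{\reg}^{W'}$ at all, it must contain the whole closure of that stratum, or rather that the intersection $\supp(M)\cap \h^{W'}$ is, near a generic point of $\h_{\reg}^{W'}$, a union of strata. The key tool is the compatibility of supports with the parabolic restriction functors. Recall that for a point $b\in \h$ with $W_b = W'$, there is a restriction functor $\Res_b\colon \cO_c(W,\h)\to \cO_{c}(W',\h)$ (the subject of Section~5), and that the support behaves well under it: roughly, the germ of $\supp(M)$ along the stratum through $b$ is determined by $\supp(\Res_b M)$ near $0$, crossed with the stratum direction $\h^{W'}$. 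More precisely, in a formal (or analytic, or étale) neighborhood of $b$ one has an isomorphism of $\h^{W'}\times (\h_{W'})$-geometry under which $\supp(M)$ corresponds to $\h^{W'}\times \supp(\Res_b M)$, where $\h_{W'} = (\h^{*W'})^\perp$ is the ``moving part''. Granting this, the stratum-union property follows.

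First I would set up the stratification and reduce to a local statement: $\supp(M)$ is automatically closed (it is a support of a finitely generated module) and $W$-invariant (since $M$ is a $\CC W\ltimes \CC[\h]$-module, so the support is preserved by the $W$-action on $\h$). So it suffices to show: for every parabolic $W'$ and every $b\in \h_{\reg}^{W'}$, if $b\in \supp(M)$ then a neighborhood of $b$ in $\h_{\reg}^{W'}$ lies in $\supp(M)$, and moreover the intersection of $\supp(M)$ with a transverse slice at $b$ is again a union of (smaller) strata — so one can induct on $\dim \h$, or on the poset of parabolics. Second, I would invoke the local structure theorem for $H_{1,c}(W,\h)$-modules near $b$: there is an equivalence (completion of $H_{1,c}(W,\h)$ at the $W$-orbit of $b$) between modules over the completed Cherednik algebra and modules over $\cD(\h^{W'})\mathbin{\widehat{\otimes}} \widehat{H_{1,c}(W',\h_{W'})}$, under which $M$ corresponds to $\cO(\h^{W'})\mathbin{\widehat{\otimes}} \Res_b M$ as a sheaf; this is exactly the mechanism by which $\Res_b$ is defined in \cite{BE}. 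Consequently the germ of $\supp(M)$ at $b$ is $\h^{W'}\times (\text{germ at }0\text{ of }\supp(\Res_b M))$. Third, since $\Res_b M\in \cO_c(W',\h_{W'})$, I would apply the inductive hypothesis to conclude that $\supp(\Res_b M)$ near $0$ is a union of strata of $\h_{W'}$ for the $W'$-stratification, and then translate back: strata of $\h_{W'}$ for $W'$ correspond, after crossing with $\h^{W'}$, precisely to strata of $\h$ for $W$ that lie in $\overline{\h_{\reg}^{W'}}$ and pass near $b$. Either $b\notin \supp(M)$, or $\supp(\Res_b M)\ni 0$ and then $\h^{W'}$ (locally) $\subset \supp(M)$, giving the stratum through $b$; the finer structure comes from the induction. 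Assembling these local statements over all $b$ (using that $\supp(M)$ is closed and $W$-stable, so determined by its germs along the strata it meets) yields that $\supp(M)$ is globally a union of strata.

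The base of the induction is the rank-zero / open-stratum case, where $W' = W$, $\h_{\reg}^W$ is either empty or a point/linear subspace, and there is nothing to prove; alternatively one uses that on the big stratum $\h_{\reg}$ every coherent sheaf has open (hence stratum-compatible) support.

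The main obstacle is the precise form of the local-structure/restriction statement: one must verify that $\supp(M)$ genuinely factors, near $b$, as $\h^{W'}\times \supp(\Res_b M)$, and in particular that $\supp(\Res_b M)$ is nonzero exactly when $b\in\supp(M)$ — i.e. that $\Res_b$ does not lose or create support at the origin. This rests on the fact that the completion equivalence is exact and faithful and sends $M$ to a module that is, as an $\cO(\h^{W'})$-module, a (topologically) free module tensored with $\Res_b M$; establishing this cleanly (rather than hand-waving ``generic slice'') is the real content, and it is precisely what is carried out in \cite{BE}, whose argument the text promises to recall later. Everything else — $W$-invariance, closedness, the inductive bookkeeping of which $W'$-strata of the slice glue to which $W$-strata of $\h$ — is routine.
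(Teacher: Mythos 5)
Your proposal is correct, and the essential mechanism is the same as in the paper: the parabolic restriction functor $\Res_b$ together with the completion theorem (Theorem \ref{comp}). The completion theorem also settles the ``faithfulness'' worry you flag: the passage $\widehat{M}_b\mapsto\Res_b(M)$ factors through equivalences of categories (Theorems \ref{comp}, \ref{equi}, and the Morita equivalence of Lemma \ref{le}), so $\Res_b(M)\ne 0$ iff $\widehat{M}_b\ne 0$ iff $b\in\supp(M)$. However, you take a longer route than necessary with the induction over parabolics and the transverse-slice bookkeeping. The paper's proof of Proposition \ref{prop:supp} is a one-liner: it invokes the relative restriction functor $\Res$ from Subsection \ref{dep}, under which the $\Res_b(M)$, as $b$ varies in a stratum $\h^{W'}_{\reg}$, assemble into a local system equipped with the flat parabolic KZ connection. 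Since $\h^{W'}_{\reg}$ is connected (a complex vector space with finitely many proper linear subspaces removed), that local system has constant rank, so $\supp(M)\cap\h^{W'}_{\reg}$ is either empty or all of $\h^{W'}_{\reg}$ --- which already makes $\supp(M)$ a union of strata, with no induction, closedness argument, or product-structure analysis required.
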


This makes one wonder which strata occur in $\supp(L_{c}(\tau))$, for given $c$ and $\tau$. 
In \cite{VV}, Varagnolo and Vasserot gave a partial answer for $\tau=\CC$.
Namely, they determined (for $W$ being a Weyl group) when $L_{c}(\CC)$ is finite dimensional, which is 
equivalent to $\supp(L_{c}(\CC))=0$. For the proof (which is quite complicated), they used the geometry affine Springer
fibers. Here we will give a different (and simpler) proof. In fact, we will prove a more general 
result.

Recall that for any Coxeter group $W$, we have the Poincar\'e polynomial:
$$
P_{W}(q)=\sum_{w\in W}q^{\ell(w)}=\prod_{i=1}^{r}\frac{1-q^{d_{i}(W)}}{1-q}, \text{ where } d_i(W) \text{ are the degrees of } W.
$$ 

\begin{lemma}\label{divisi}
If $W'\subset W$ is a parabolic subgroup of $W$, then $P_{W}$ is
divisible by $P_{W'}$.
\end{lemma}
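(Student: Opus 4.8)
The plan is to give a direct proof using the fact that the Poincaré polynomial of a Coxeter group encodes the degrees, and that $W'$ acts naturally as a subgroup. Since $P_W(q) = \prod_i (1-q^{d_i})/(1-q)$ and similarly $P_{W'}(q) = \prod_j (1-q^{d'_j})/(1-q)$, the claim is equivalent to showing that the rational function $P_W(q)/P_{W'}(q)$ is in fact a polynomial. First I would recall the well-known combinatorial fact (see \cite{Hu}) that $P_W(q) = \sum_{w\in W} q^{\ell(w)}$ where $\ell$ is the length function relative to a fixed set of simple reflections, and that when $W'$ is generated by a subset $I$ of the simple reflections of $W$ (which, by the discussion in Section \ref{sec:paragp}, we may assume after conjugation, since conjugation does not change $P_{W'}$), the length function of $W'$ is the restriction of that of $W$.

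The key step is the coset decomposition: every $w\in W$ has a unique factorization $w = w^I \cdot w_I$ with $w_I \in W'$ and $w^I$ the minimal-length representative of the coset $wW'$, and crucially $\ell(w) = \ell(w^I) + \ell(w_I)$. Let $W^I$ denote the set of these minimal coset representatives. Then
\begin{equation*}
P_W(q) = \sum_{w\in W} q^{\ell(w)} = \Big(\sum_{u\in W^I} q^{\ell(u)}\Big)\Big(\sum_{v\in W'} q^{\ell(v)}\Big) = \Big(\sum_{u\in W^I} q^{\ell(u)}\Big)\, P_{W'}(q).
\end{equation*}
Hence $P_W(q)/P_{W'}(q) = \sum_{u\in W^I} q^{\ell(u)}$, which is manifestly a polynomial (with non-negative integer coefficients). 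This proves divisibility.

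The main obstacle — really the only point requiring care — is justifying the two standard facts invoked: that a parabolic subgroup of a finite Coxeter group may be taken to be a standard parabolic (generated by a subset of simple reflections) without loss of generality, which is exactly the characterization recalled in Section \ref{sec:paragp}; and the length-additive coset decomposition $\ell(w) = \ell(w^I)+\ell(w_I)$, which is the classical Bourbaki result on minimal coset representatives for Coxeter systems. Both are standard enough that I would simply cite \cite{Hu} (Chapter on parabolic subgroups and the "deletion/exchange" properties) rather than reprove them. One mild subtlety worth a sentence: $P_{W'}$ as computed intrinsically (using $W'$'s own simple system, namely $I$) agrees with $\sum_{v\in W'} q^{\ell_W(v)}$ because for $v\in W'$ one has $\ell_W(v) = \ell_{W'}(v)$ — again a consequence of the compatibility of the length functions — so the factorization above really does produce $P_{W'}(q)$ on the nose.
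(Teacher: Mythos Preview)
Your proof is correct and takes a genuinely different route from the paper's. You argue combinatorially via the length-additive coset decomposition $W \cong W^I \times W'$, giving $P_W(q)/P_{W'}(q) = \sum_{u\in W^I} q^{\ell(u)}$ directly. The paper instead argues via invariant theory: by Chevalley's theorem $\CC[\h]$ is free over $\CC[\h]^W$, and $\CC[\h]^{W'}$ is a direct summand of it, hence projective, hence (being graded) free over $\CC[\h]^W$; comparing Hilbert series then yields that $P_W/P_{W'}$ is a polynomial. Your approach is more elementary and gives the bonus that the quotient has nonnegative integer coefficients with an explicit combinatorial interpretation; the paper's approach identifies the quotient as the graded rank of the free $\CC[\h]^W$-module $\CC[\h]^{W'}$, which is a more structural statement but requires the Chevalley freeness theorem as input. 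Either is perfectly adequate here.
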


\begin{proof}
By Chevalley's theorem, $\CC[\h]$ is a free module over 
$\CC[\h]^{W}$ and $\CC[\h]^{W'}$ is a direct summand 
in this module. 
So $\CC[\h]^{W'}$ is a projective module, thus free (since it is graded).

Hence, there exists a polynomial $Q(q)$ 
such that we have
$$Q(q)h_{\CC[\h]^{W}}(q)=h_{\CC[\h]^{W'}}(q),
$$
where $h_V(q)$ denotes the Hilbert series of a graded vector space $V$. 
Notice that we have $h_{\CC[\h]^{W}}(q)=\dfrac{1}{P_{W}(q)(1-q)^{r}}$, so we have
$$\frac{Q(q)}{P_{W}(q)}=\frac{1}{P_{W'}(q)},
\text{ i.e. }Q(q)=P_{W}(q)/P_{W'}(q).$$
\end{proof}

\begin{corollary} If $m\ge 2$ then
we have the following inequality:
$$
\#\{i| m \text{ divides }d_{i}(W)\}\geq
\#\{i| m \text{ divides }d_{i}(W')\}.
$$
\end{corollary}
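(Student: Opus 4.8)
The plan is to extract the desired inequality from Lemma \ref{divisi} by comparing orders of vanishing of the Poincar\'e polynomials at a suitable root of unity. Fix $m\ge 2$ and let $\zeta$ be a primitive $m$-th root of unity. First I would record an elementary fact about the factors appearing in $P_W$: the polynomial $\frac{1-q^d}{1-q}=1+q+\cdots+q^{d-1}$ vanishes at $\zeta$ if and only if $m$ divides $d$, and in that case it vanishes to order exactly $1$. Indeed, $1-q^d$ has only simple zeros (its roots are the distinct $d$-th roots of unity), it vanishes at $\zeta$ precisely when $m\mid d$, and the denominator $1-q$ is nonzero at $\zeta$ since $m\ge 2$.

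Next, using the product formula $P_W(q)=\prod_{i=1}^r \frac{1-q^{d_i(W)}}{1-q}$, the previous observation gives that the order of vanishing $\mathrm{ord}_{q=\zeta}P_W(q)$ equals $\#\{i\mid m \text{ divides } d_i(W)\}$, and likewise $\mathrm{ord}_{q=\zeta}P_{W'}(q)=\#\{i\mid m \text{ divides } d_i(W')\}$. The key point here is that each degree divisible by $m$ contributes exactly one to the order, so these counts are read off cleanly.

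Finally, by Lemma \ref{divisi} we have $P_W(q)=P_{W'}(q)\,Q(q)$ with $Q(q)=P_W(q)/P_{W'}(q)$ an honest polynomial, hence $\mathrm{ord}_{q=\zeta}Q(q)\ge 0$. Therefore $\mathrm{ord}_{q=\zeta}P_W(q)=\mathrm{ord}_{q=\zeta}P_{W'}(q)+\mathrm{ord}_{q=\zeta}Q(q)\ge \mathrm{ord}_{q=\zeta}P_{W'}(q)$, and combining this with the two order computations yields $\#\{i\mid m \text{ divides } d_i(W)\}\ge \#\{i\mid m \text{ divides } d_i(W')\}$, as claimed.

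I do not expect a serious obstacle. The only step needing a little care is the claim that the zeros of $1-q^d$ at $d$-th roots of unity are simple, so that a degree divisible by $m$ contributes exactly one to the order of vanishing rather than more; this is what makes the count of orders coincide with the count of degrees. One should also note that the same $\zeta$ serves for both $W$ and $W'$, which is automatic once $m$ is fixed.
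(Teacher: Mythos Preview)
Your proof is correct and is exactly the argument the paper has in mind: the paper's own proof is the single sentence ``This follows from Lemma \ref{divisi} by looking at the roots of the polynomials $P_W$ and $P_{W'}$,'' and your write-up simply makes this precise by computing the order of vanishing at a primitive $m$-th root of unity.
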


\begin{proof} This follows from Lemma \ref{divisi} 
by looking at the roots of the polynomials $P_W$ and $P_{W'}$. 
\end{proof} 

Our main result is the following theorem.

\begin{theorem}{\label{thm-supp}}\cite{E3}
Let $c\geq 0$. Then $a\in \supp(L_{c}(\CC))$ if and only if
$$\dfrac{P_{W}}{P_{W_{a}}}(\be^{2\pi \bi c})\neq 0.$$
\end{theorem}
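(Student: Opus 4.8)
The plan is to reduce the "if and only if" to a statement about when a certain module has full support at a given stratum, and to extract the arithmetic criterion from the Macdonald-Mehta integral (Theorem \ref{thm-MMI}). First I would exploit the stratification result just quoted: $\supp(L_c(\CC))$ is a union of strata $\h_{\reg}^{W'}$, so it suffices to determine, for each parabolic $W'=W_a$, whether the generic point $a$ of that stratum lies in the support. By the general theory of parabolic restriction (available later in the notes, but here one can argue more directly), the question of whether $a\in\supp(L_c(\CC))$ is governed by the behavior of $L_c(\CC)$ after localizing and completing along the stratum; the relevant local invariant is whether the restriction of $L_c(\CC)$ to $W_a$ is nonzero, and since the lowest weight is the trivial representation, this restriction is again (a module with lowest weight $\CC$ over) $H_{1,c}(W_a,\h)$. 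So the induction is on the rank of $W_a$: $a\in\supp(L_c(\CC))$ iff $L_c(\CC)$ over $H_{1,c}(W_a)$ is infinite-dimensional, i.e. iff $0\notin\supp$ is false — concretely, one wants a criterion for $L_c(\CC)=M_c(\CC)$ being finite dimensional for an irreducible Coxeter group.

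The second, arithmetic, ingredient is the precise location of the reducibility points of $M_c(\CC)$. From Proposition \ref{c1} and Corollary \ref{c3} (the consequence of the Macdonald-Mehta computation), the contravariant form $\beta_c$ on $M_c(\CC)$ is degenerate exactly when $b(k)=0$, and the roots of $b$ are $k_{i,m}=-m/d_i$, $1\le m\le d_i-1$ — the statement $p_{i,m}=0$ proved via Proposition \ref{l5}. Translating $c=-k$, this says the trivial standard module acquires a singular vector precisely when $c=m/d_i$ for some $i$ and some $1\le m\le d_i-1$, i.e. precisely when $\prod_j\frac{1-q^{d_j}}{1-q}$ vanishes at $q=\be^{2\pi\bi c}$, which is exactly $P_W(\be^{2\pi\bi c})=0$. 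Thus at the level of a single irreducible Coxeter group, $L_c(\CC)\ne M_c(\CC)$ iff $P_W(\be^{2\pi\bi c})=0$; and one then has to promote "the form degenerates" to "the module is finite dimensional" — this is where one uses that $c\ge 0$, so that the first singular vector $f$ occurring (at the smallest positive $c$ of the form $m/d_i$) sits in a representation for which the BGG-type machinery of Theorem \ref{thm-frob} applies when $\dim U=\dim\h$, or more robustly one uses the support-is-a-union-of-strata theorem inductively to rule out intermediate-dimensional support.

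Putting the two together: $a\in\supp(L_c(\CC))$ iff the restriction to $W_a$ is infinite-dimensional iff it is not killed at the deepest stratum of $W_a$, and running the induction this happens iff $P_{W_a}(\be^{2\pi\bi c})\ne 0$ — but we want the condition on $P_W/P_{W_a}$. The point is that $\supp(L_c(\CC))$ contains the open stratum $\h_{\reg}$ always (as $M_c(\CC)$ is torsion-free over $\CC[\h]$), and as $c$ increases past values $m/d_i$ the support shrinks; the stratum of $W_a$ drops out of the support exactly when the "new" singular vectors of $H_{1,c}(W)$, relative to those already present for $W_a$, appear, which is measured by the quotient $P_W/P_{W_a}=\prod(1-q^{d_i(W)})/\prod(1-q^{d_j(W_a)})$ — a genuine polynomial by Lemma \ref{divisi} — vanishing at $q=\be^{2\pi\bi c}$. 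So the final step is to check that $a\in\supp(L_c(\CC))\iff (P_W/P_{W_a})(\be^{2\pi\bi c})\ne 0$, which combines the reducibility criterion above with the inductive comparison of $W$ and $W_a$. The main obstacle I expect is precisely this last bookkeeping: showing that it is the quotient $P_W/P_{W_a}$, and not $P_W$ itself or $P_{W_a}$ itself, whose vanishing controls the stratum — i.e. that the singular vectors responsible for cutting down the support to below the $W_a$-stratum are exactly those that do not already come from the parabolic $W_a$. This requires either the full restriction-functor formalism of Section 5 or a careful direct analysis of how the BGG resolution of Theorem \ref{thm-frob} interacts with localization along $\h_{\reg}^{W_a}$; the Macdonald-Mehta input (Theorem \ref{thm-MMI}), by contrast, does all the hard analytic work of pinning down the reducibility locus and is used essentially as a black box.
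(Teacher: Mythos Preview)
Your approach differs substantially from the paper's, and the gap you flag at the end is genuine. There is also an earlier confusion: you write ``$a\in\supp(L_c(\CC))$ iff $L_c(\CC)$ over $H_{1,c}(W_a)$ is infinite-dimensional,'' but the correct statement from the restriction formalism is that $a\in\supp(L_c(\CC))$ iff $\Res_a(L_c(\CC))\ne 0$. Moreover, $\Res_a(L_c^W(\CC))$ is only known to be \emph{some} quotient of $M_c^{W_a}(\CC)$, not necessarily the irreducible one $L_c^{W_a}(\CC)$; so even granting the full restriction machinery of Section~5, your inductive scheme does not close without establishing that identification --- and that identification is essentially equivalent in strength to the theorem itself. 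The further step of promoting ``$\beta_c$ degenerates'' to ``$L_c(\CC)$ is finite dimensional'' via Theorem~\ref{thm-frob} also has unchecked hypotheses (you would need the singular vectors to span exactly a copy of the reflection representation).

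The paper sidesteps all of this with an analytic argument. One studies the tempered distribution
\[
\xi_c^W \;=\; \frac{(2\pi)^{-r/2}}{F_W(-c)}\,|\delta(\mathbf x)|^{-2c}
\]
on $\h_\RR$, which represents the Gaussian form $\gamma_c$ via Proposition~\ref{inte}. A direct local factorization near $a$ gives $\xi_c^W(\mathbf x+a)=\dfrac{F_{W_a}}{F_W}(-c)\cdot \xi_c^{W_a}(\mathbf x)\cdot |\Psi(\mathbf x)|^{-2c}$ with $\Psi$ nonvanishing near $0$, so $\supp\xi_c^W$ is exactly $\{a:(F_{W_a}/F_W)(-c)\ne 0\}$, which equals $\{a:(P_W/P_{W_a})(\be^{2\pi\bi c})\ne 0\}$ by the Macdonald--Mehta identity (Theorem~\ref{thm-MMI}). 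Separately, one shows $\supp\xi_c^W=(\supp L_c(\CC))_\RR$ using that $\gamma_c(P,Q)=\langle\xi_c^W, PQ\,\be^{-\mathbf x^2/2}\rangle$, density of polynomials times Gaussians in the Schwartz space, and the finite-order property of tempered distributions (Lemma~\ref{lem-dist}). The ratio $P_W/P_{W_a}$ thus emerges directly from the ratio of Macdonald--Mehta integrals in the local factorization of the distribution --- no inductive bookkeeping of singular vectors, and no comparison of $L_c^W$ with $L_c^{W_a}$, is required.
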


We can obtain the following corollary easily.
\begin{corollary}
\begin{enumerate}
\item[(i)]$L_{c}(\CC)\neq M_{c}(\CC)$ if and only if 
$c\in \QQ_{>0}$ and the denominator $m$ of $c$ divides $d_{i}$
for some $i$;
\item[(ii)]$L_{c}(\CC)$ is finite dimensional if and only if 
$\dfrac{P_{W}}{P_{W'}}(\be^{2\pi \bi c})=0$,
i.e., iff
$$
\#\{i| m \text{ divides }d_{i}(W)\}>
\#\{i| m \text{ divides }d_{i}(W')\}.
$$
for any maximal parabolic subgroup $W'\subset W$.
\end{enumerate}

\end{corollary}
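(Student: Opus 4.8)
The plan is to deduce the corollary from Theorem \ref{thm-supp} by purely elementary manipulations, using two standard facts: that $\supp(L_{c}(\CC))$ is a closed cone in $\h$ (since $L_c(\CC)$ is a nonzero finitely generated graded $\CC[\h]$-module), and that it is a union of strata of the stabilizer stratification (the theorem recalled just before Lemma \ref{divisi}, that $\supp(M)$ is a union of strata for any $M\in\cO_c(W,\h)$). For part (i), I would first note that $M_c(\CC)=\CC[\h]$ is free of rank one over $\CC[\h]$, so $\supp(M_c(\CC))=\h$, whereas $L_c(\CC)=\CC[\h]/J$ for a homogeneous ideal $J$ whose zero set in $\h$ is $\supp(L_c(\CC))$; hence $L_c(\CC)\ne M_c(\CC)$ exactly when $\supp(L_c(\CC))\ne\h$, which, the support being a union of strata, happens exactly when the open stratum $\h_{\reg}=\h_{\reg}^{\{1\}}$ is not contained in it. Applying Theorem \ref{thm-supp} with $W_a=\{1\}$ and using $P_{\{1\}}=1$, the open stratum lies in the support iff $P_W(\be^{2\pi\bi c})\ne 0$. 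So part (i) reduces to the elementary observation that the roots of $P_W(q)=\prod_{i=1}^{r}(1+q+\cdots+q^{d_i-1})$ on the unit circle are exactly the primitive $m$-th roots of unity with $m\ge 2$ dividing some $d_i$; thus $P_W(\be^{2\pi\bi c})=0$ iff $c$ is a positive rational whose denominator $m\ge 2$ divides some $d_i$ (for integral $c$ one has $P_W(1)=|W|\ne 0$, so $L_c(\CC)=M_c(\CC)$, which is why the ``denominator $m$'' in the statement must be read with $m\ge 2$).

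For part (ii), I would use that $L_c(\CC)$, being a nonzero finitely generated graded $\CC[\h]$-module, is finite-dimensional over $\CC$ precisely when its support (a nonempty cone) reduces to $\{0\}=\h_{\reg}^{W}$; as the support is a union of strata, this says that no stratum other than $\{0\}$ occurs, i.e. for every proper parabolic $W'$ the generic point of $\h_{\reg}^{W'}$ is not in $\supp(L_c(\CC))$. By Theorem \ref{thm-supp} this is the condition $\frac{P_W}{P_{W'}}(\be^{2\pi\bi c})=0$ for all proper parabolics $W'$. The next step is to reduce this to maximal parabolics: every proper parabolic $W'$ sits inside a maximal proper parabolic $W'_{\max}$ (up to $W$-conjugation, under which the support is invariant, one may take $W'$ standard and enlarge its set of simple reflections), and then $P_{W'}\mid P_{W'_{\max}}\mid P_W$ by Lemma \ref{divisi}, so $\frac{P_W}{P_{W'_{\max}}}$ divides $\frac{P_W}{P_{W'}}$; hence vanishing of all $\frac{P_W}{P_{W'_{\max}}}$ at $\be^{2\pi\bi c}$ already forces vanishing of all $\frac{P_W}{P_{W'}}$ there. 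This is the first form of (ii). The second form is the routine multiplicity count: for $c=p/m$ in lowest terms with $m\ge 2$, the multiplicity of the primitive $m$-th root $\be^{2\pi\bi c}$ in $P_W$ is $\#\{i: m\mid d_i(W)\}$, and likewise for $P_{W'}$; since $\#\{i:m\mid d_i(W)\}\ge\#\{i:m\mid d_i(W')\}$ always (the corollary to Lemma \ref{divisi}), one has $\frac{P_W}{P_{W'}}(\be^{2\pi\bi c})=0$ iff that inequality is strict, while for $c$ irrational or integral the quotient does not vanish at $\be^{2\pi\bi c}$.

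I do not expect a genuine obstacle; everything is formal once Theorem \ref{thm-supp} is available. The one point that needs care is the reduction from ``all proper parabolics'' to ``maximal parabolics'': it uses simultaneously that the support is stratified (so that containing a small stratum forces containing all of its closure, hence the larger strata) and the divisibility $P_{W'}\mid P_{W'_{\max}}\mid P_W$ (so that the two families of polynomial conditions match up in both directions). A minor bookkeeping issue, flagged above, is the case of integral $c$, where $\be^{2\pi\bi c}=1$, $P_W(1)=|W|\ne 0$, so $L_c(\CC)=M_c(\CC)$ is not finite-dimensional (unless $\h=0$); this is the reason the denominator $m$ of $c$ is implicitly taken to be at least $2$ in both parts.
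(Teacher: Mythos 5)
Your proof is correct and is the straightforward deduction from Theorem \ref{thm-supp} that the paper leaves to the reader (``we can obtain the following corollary easily''): both parts follow exactly as you describe, via the fact that $\supp L_c(\CC)$ is a nonempty closed union of strata, the identification of the open stratum with $W_a=\{1\}$ for (i) and of $\supp L_c(\CC)=\{0\}$ with ``no stratum of any proper parabolic occurs'' for (ii), the divisibility $P_{W'}\mid P_{W'_{\max}}\mid P_W$ from Lemma \ref{divisi} to reduce to maximal parabolics, and the elementary observation that a primitive $m$-th root of unity ($m\ge 2$) has multiplicity $\#\{i: m\mid d_i(W)\}$ as a root of $P_W$. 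Your remark that the ``denominator $m$'' in (i) must tacitly be taken $\ge 2$ (since $P_W(1)=|W|\ne 0$, so $L_c(\CC)=M_c(\CC)$ for $c\in\ZZ_{\ge 0}$) is a correct and useful clarification of the statement as printed.
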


\begin{remark}
Varagnolo and Vasserot prove that 
$L_c(\CC)$ is finite dimensional if and only 
if there exists a regular elliptic element 
in $W$ of order $m$. Case-by-case inspection 
shows that this condition is equivalent to the combinatorial
condition of (2). Also, a uniform proof of this equivalence is 
given in the appendix to \cite{E3}, written by S. Griffeth.     
\end{remark}

\begin{example}
For type $A_{n-1}$, i.e., $W=\kS_{n}$, 
we get that $L_{c}(\CC)$ is finite dimensional if and only if 
the denominator of $c$ is $n$. This agrees with our previous results in type $A_{n-1}$. 
\end{example}

\begin{example}
Suppose $W$ is the Coxeter group of type $E_{7}$.
Then we have the following list of maximal parabolic subgroups 
and the degrees (note that $E_7$ itself is not a maximal parabolic).

\vspace{.25in}

{\small
\begin{tabular}{|c|c|c|c|c|}\hline Subgroups & $E_7$ & $D_6$ & $A_3\times A_2\times A_1$ & $A_6$ \\\hline Degrees  & 2,6,8,10,12,14,18 & 2,4,6,6,8,10 & 2,3,4,2,3,2 & 2,3,4,5,6,7 \\\hline\hline Subgroups & $A_4\times A_2$ & $E_6$ & $D_5\times A_1$ & $A_5\times A_1$ \\\hline Degrees  & 2,3,4,5,2,3 & 2,5,6,8,9,12 & 2,4,5,6,8,2 & 2,3,4,5,6,2 \\\hline \end{tabular}
}

\vspace{.25in}

So $L_{c}(\CC)$ is finite dimensional if and only if 
the denominator of $c$ is $2, 6, 14, 18$.
\end{example}

The rest of the subsection is dedicated to the proof 
of Theorem \ref{thm-supp}. First we recall some basic facts about 
the Schwartz space and tempered distributions.

Let $\rS(\RR^{n})$ be the set of Schwartz functions on 
$\RR^{n}$, i.e. 
$$\rS(\RR^{n})=\{f\in C^{\infty}(\RR^{n})|
\forall \alpha, \beta, \sup|{\mathbf x}^{\alpha}\partial^{\beta}f({\mathbf x})|<\infty\}.
$$
This space has a natural topology.

A tempered distribution on $\RR^{n}$ is a continuous linear functional
on $\rS(\RR^{n})$. Let $\rS'(\RR^{n})$ denote the 
space of tempered distributions.

We will use the following well known lemma.

\begin{lemma}{\label{lem-dist}}
\begin{enumerate}
\item[(i)] $\CC[{\mathbf x}]\be^{-{\mathbf x}^{2}/2}\subset \rS(\RR^{n})$
is a dense subspace.

\item[(ii)] Any tempered distribution $\xi$ has finite order, i.e., $\exists N=N(\xi)$ 
such that if $f\in \rS(\RR^{n})$ satisfying 
$f=\d f=\cdots=\d^{N-1}f=0$ on $\supp \xi$, then $\langle \xi, f\rangle=0$.
\end{enumerate}
\end{lemma}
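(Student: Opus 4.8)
Both parts are standard facts about the Schwartz space, and the plan is to reduce (i) to the Hermite-function description of $\rS(\RR^{n})$ and (ii) to the very definition of a tempered distribution as a continuous functional on the Fr\'echet space $\rS(\RR^{n})$; one may of course also just cite H\"ormander's book or Reed--Simon, as the lemma is called well known.

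For (i), I would work with the Hermite functions $h_{\alpha}(\mathbf{x})=H_{\alpha}(\mathbf{x})\be^{-\mathbf{x}^{2}/2}$, $\alpha\in\ZZ_{\ge 0}^{n}$, where $H_{\alpha}$ is a product of one-variable Hermite polynomials. Each $h_{\alpha}$ lies in $\CC[\mathbf{x}]\be^{-\mathbf{x}^{2}/2}$, and in fact the $h_{\alpha}$ span this subspace. The key input is that the topology of $\rS(\RR^{n})$ is equivalently given by the seminorms $f\mapsto\|(1-\Delta+|\mathbf{x}|^{2})^{k}f\|_{L^{2}}$, $k\ge 0$, together with the fact that the harmonic oscillator $1-\Delta+|\mathbf{x}|^{2}$ is diagonalized by the $h_{\alpha}$ with eigenvalue $1+n+2|\alpha|$. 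From this one gets that for $f\in\rS(\RR^{n})$ the coefficients $c_{\alpha}=\langle f,h_{\alpha}\rangle_{L^{2}}$ decay faster than any power of $1+|\alpha|$, whence the partial sums $\sum_{|\alpha|\le M}c_{\alpha}h_{\alpha}$, which are polynomials times a Gaussian, converge to $f$ in each of the above seminorms, i.e.\ in the topology of $\rS(\RR^{n})$. This yields the density in (i).

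For (ii), write $K=\supp\xi$. Since $\xi$ is continuous on the Fr\'echet space $\rS(\RR^{n})$, there exist $m\in\ZZ_{\ge 0}$ and $C>0$ with $|\langle\xi,g\rangle|\le C\,p_{m}(g)$ for all $g$, where $p_{m}(g)=\max_{|\alpha|,|\beta|\le m}\sup_{\mathbf{x}}|\mathbf{x}^{\alpha}\partial^{\beta}g(\mathbf{x})|$; I claim that any $N$ sufficiently large relative to $m$ works. Given $f\in\rS(\RR^{n})$ with $f=\d f=\cdots=\d^{N-1}f=0$ on $K$, choose cutoffs $\psi_{\eps}\in C_{c}^{\infty}(\RR^{n})$ that are $\equiv 1$ on the $\eps$-neighborhood of $K$, supported in its $2\eps$-neighborhood, and satisfy $|\partial^{\gamma}\psi_{\eps}|\le C_{\gamma}\eps^{-|\gamma|}$. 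Because $\xi$ is supported on $K$ and $\psi_{\eps}\equiv 1$ near $K$, we have $\langle\xi,f\rangle=\langle\xi,\psi_{\eps}f\rangle$. Taylor's theorem at the nearest point of $K$, combined with the vanishing of $f$ and its derivatives of order $<N$ on $K$ and the Schwartz decay of $f$, gives estimates of the form $|\partial^{\gamma}f(\mathbf{x})|\le C_{\gamma,L}\,\mathrm{dist}(\mathbf{x},K)^{N-|\gamma|}(1+|\mathbf{x}|)^{-L}$ near $K$ for $|\gamma|\le N$ and all $L$; plugging this and the bounds on $\psi_{\eps}$ into Leibniz's rule shows $p_{m}(\psi_{\eps}f)=O(\eps)$ once $N$ is large enough. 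Letting $\eps\to 0$ gives $\langle\xi,f\rangle=0$.

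I expect the main obstacle in (i) to be not the algebra of Hermite functions but the passage from $L^{2}$-convergence to convergence in the Schwartz topology, which relies on controlling the defining seminorms of $\rS(\RR^{n})$ by powers of the harmonic oscillator; and in (ii) the only delicate point is the uniform Taylor estimate near the possibly unbounded closed set $K$ and its interaction with the polynomial weights $\mathbf{x}^{\alpha}$, both of which are controlled using that $f$ is Schwartz. Neither difficulty is serious, which is why the statement can fairly be called standard.
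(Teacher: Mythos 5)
The paper does not actually prove this lemma: it is introduced with the words ``We will use the following well known lemma'' and then invoked without proof, so there is no argument in the paper to compare against. Your proof supplies a correct argument for both parts, using the standard tools one would expect.

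For (i), the reduction to Hermite functions is exactly right: the seminorms $\|(1-\Delta+|\mathbf{x}|^2)^k\cdot\|_{L^2}$ generate the Schwartz topology, the harmonic oscillator diagonalizes on the Hermite basis with eigenvalue $1+n+2|\alpha|$, the coefficients of a Schwartz function therefore decay faster than any power of $1+|\alpha|$, and the partial sums (which lie in $\CC[\mathbf{x}]\be^{-\mathbf{x}^2/2}$) converge in every such seminorm.

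For (ii), the structure of the argument is correct, but there is one small imprecision worth fixing: you ask for $\psi_\eps\in C^\infty_c(\RR^n)$ with $\psi_\eps\equiv 1$ on the $\eps$-neighborhood of $K=\supp\xi$. If $K$ is unbounded (which it may well be for a tempered distribution on $\RR^n$), no compactly supported function can be identically $1$ on a neighborhood of $K$. The fix is to drop compact support and instead take $\psi_\eps\in C^\infty(\RR^n)$, $\equiv 1$ on $K_\eps$, supported in $K_{2\eps}$, with the derivative bounds $|\partial^\gamma\psi_\eps|\le C_\gamma\eps^{-|\gamma|}$ uniformly (for instance, mollify the indicator function of $K_{3\eps/2}$ at scale $\eps/4$). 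Then $\psi_\eps f$ is still Schwartz because $\psi_\eps$ and all its derivatives are bounded, the identity $\langle\xi,f\rangle=\langle\xi,\psi_\eps f\rangle$ still holds since $(1-\psi_\eps)f$ vanishes on the open neighborhood $K_\eps$ of $\supp\xi$ (approximate by compactly supported truncations and use the continuity of $\xi$ on $\rS(\RR^n)$), and your Taylor plus Leibniz estimate $p_m(\psi_\eps f)=O(\eps^{N-m})$ goes through verbatim once one also uses the Schwartz decay to absorb the $\mathbf{x}^\alpha$ weights in $p_m$. With that modification the argument is complete, and the choice $N=m+1$ already suffices.
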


\begin{proof}[Proof of Theorem \ref{thm-supp}]
Recall that on $M_{c}(\CC)$, we have the Gaussian form 
$\gamma_{c}$ from Section \ref{sec:pfMMI}.
We have for $\Re(c) \leq 0 $, 
$$\gamma_{c}(P, Q)=\frac{(2\pi)^{-r/2}}{F_W(-c)}\int_{\h_{\RR}}
\be^{-{\mathbf x}^{2}/2}|\delta({\mathbf x})|^{-2c}P({\mathbf x})Q({\mathbf x})\d {\mathbf x}, $$
where 
$P, Q$ are polynomials and 
$$F_W(k)=(2\pi)^{-r/2}\int_{\h_{\RR}}
\be^{-{\mathbf x}^{2}/2}|\delta({\mathbf x})|^{2k}\d {\mathbf x}
$$
is the Macdonald-Mehta integral. 

Consider the distribution: 
$$\xi_{c}^{W}=\frac{(2\pi)^{-r/2}}{F_W(-c)}|\delta({\mathbf x})|^{-2c}.
$$
It is well-known that this distribution is meromorphic in $c$ 
(Bernstein's theorem). Moreover, since 
$\gamma_c(P,Q)$ is a polynomial in $c$ for any $P$ and $Q$,  
this distribution is in fact holomorphic in $c\in \CC$.

\begin{proposition}{\label{prop-1}}
\begin{eqnarray*}
\supp(\xi_{c}^{W})&=&
\{a\in \h_{\RR}|\frac{F_{W_{a}}}{F_{W}}(-c)\neq 0\}
=\{a\in \h_{\RR}|\frac{P_{W}}{P_{W_{a}}}(e^{2\pi\mathrm{i}c})\neq 0\}\\
&=&\{a\in \h_{\RR}|\#\{i|\text{denominator of $c$ divides } d_{i}(W)\}\\
&&\qquad\qquad=\#\{i|\text{denominator of $c$ divides } d_{i}(W_{a})\}\}.
\end{eqnarray*}
\end{proposition}

\begin{proof}
First note that the last equality follows from the product formula for the Poincar\'e polynomial, and 
the second equality from the Macdonald-Mehta identity. Now let us prove the first equality. 

Look at $\xi_{c}^{W}$ near $a\in \h$. Equivalently, 
we can consider 
$$\xi^{W}_{c}({\mathbf x}+a)=\frac{(2\pi)^{-r/2}}{F_W(-c)}|\delta({\mathbf x}+a)|^{-2c}$$
with ${\mathbf x}$ near $0$.
We have
\begin{eqnarray*}
\delta_{W}({\mathbf x}+a)&=&\prod_{s\in \cS}\alpha_{s}({\mathbf x}+a)
=\prod_{s\in \cS}(\alpha_{s}({\mathbf x})+\alpha_{s}(a))\\
&=&\prod_{s\in \cS\cap W_{a}}\alpha_{s}({\mathbf x})\cdot
\prod_{s\in \cS\backslash \cS\cap W_{a}}(\alpha_{s}({\mathbf x})+\alpha_{s}(a))\\
&=&\delta_{W_{a}}({\mathbf x})\cdot \Psi({\mathbf x}),
\end{eqnarray*}
where $\Psi$ is a nonvanishing function near $a$
(since $\alpha_s(a)\ne 0$ if $s\notin \cS\cap W_a$).

So near $a$, we have
$$
\xi_{c}^{W}({\mathbf x}+a)=\frac{F_{W_{a}}}{F_{W}}(-c)\cdot \xi_{c}^{W_{a}}({\mathbf x})\cdot 
|\Psi|^{-2c},
$$
and the last factor is well defined since $\Psi$ is nonvanishing.
Thus $\xi_{c}^{W}({\mathbf x})$ is nonzero near $a$ if and only if
$\dfrac{F_{W_{a}}}{F_{W}}(-c)\neq 0$
which finishes the proof.
\end{proof}

\begin{proposition}{\label{prop-2}} For $c\ge 0$,
$$\supp(\xi_{c}^{W})=\supp L_{c}(\CC)_{\RR},$$
where the right hand side stands for the real points of the support. 
\end{proposition}

\begin{proof}
Let $a\notin \supp L_{c}(\CC)$ and assume $a\in \supp \xi^{W}_{c}$.
Then we can find a $P\in J_{c}(\CC)=\ker \gamma_{c}$ such that 
$P(a)\neq 0$.
Pick a compactly supported test function $\phi\in C_{c}^{\infty}(\h_{\RR})$ such that 
$P$ does not vanish anywhere on $\supp \phi$, and $\langle\xi^{W}_{c},\phi\rangle\neq 0$
(this can be done since $P(a)\ne 0$ and $\xi^W_c$ is nonzero near $a$). Then we have
$\phi/P\in \rS(\h_{\RR})$.
Thus from Lemma \ref{lem-dist} (i) it follows that 
there exists a sequence of polynomials $P_{n}$ such that
$$P_{n}({\mathbf x})\be^{-{\mathbf x}^{2}/2}\to \frac{\phi}{P} \text{ in }
\mathscr{S}(\h_{\RR})\text{, when }n\to \infty.$$
So $PP_{n}\be^{-{\mathbf x}^{2}/2}\to \phi\text{ in }
\rS(\h_{\RR})\text{, when }n\to \infty.$

But we have 
$\langle \xi^{W}_{c},PP_{n}\be^{-{\mathbf x}^{2}/2}\rangle=\gamma_{c}(P, P_{n})=0$
which is a contradiction.
This implies that $\supp \xi_c^W\subset (\supp L_c(\CC))_{\RR}$. 

To show the opposite inclusion, 
let $P$ be a polynomial on $\h$ which vanishes identically on 
$\supp \xi_c^W$. By Lemma \ref{lem-dist} (ii), there exists $N$ such that 
$\langle \xi_c^W, P^N({\mathbf x})Q({\mathbf x})\be^{-{\mathbf x}^2/2}\rangle =0$. 
Thus, for any polynomial $Q$, $\gamma_c(P^N,Q)=0$, i.e. $P^N\in \Ker\gamma_c$. 
Thus, $P|_{\supp L_c(\CC)}=0$. This implies the required inclusion, since
$\supp \xi_c^W$ is a union of strata.  
\end{proof}

Theorem \ref{thm-supp} follows from Proposition \ref{prop-1} and Proposition \ref{prop-2}.
\end{proof}

\subsection{Notes}
Our exposition in Sections 4.1 and 4.2 follows the paper \cite{E2};
Section 4.3 follows the paper \cite{E3}. 

\newpage \section{Parabolic induction and restriction functors for rational Cherednik
algebras}

\subsection{A geometric approach to rational Cherednik algebras}\label{sec:georca}

An important property of the rational Cherednik algebra 
$H_{1,c}(G,\h)$ is that it can be sheafified, as an algebra, over $\h/G$ (see \cite{E1}).
More specifically, the usual sheafification of $H_{1,c}(G,\h)$ 
as a $\cO_{\h/G}$-module is in fact a quasicoherent sheaf
of algebras, $H_{1,c,G,\h}$. Namely, for
every affine open subset $U\subset \h/G$, the algebra of sections
$H_{1,c,G,\h}(U)$ is, by definition, 
$\CC[U]\otimes_{\CC[\h]^G}H_{1,c}(G,\h)$. 

The same sheaf can be 
defined more geometrically as follows (see \cite{E1}, Section
2.9). Let $\widetilde U$ be the preimage of
$U$ in $\h$. Then the algebra $H_{1,c,G,\h}(U)$ is the algebra of
linear operators on $\cO(\widetilde U)$ generated by 
$\cO(\widetilde U)$, the group $G$, and Dunkl
operators
$$
\partial_a-\sum_{s\in \cS}\frac{2c_s}{1-\lambda_s}
\frac{\alpha_s(a)}{\alpha_s}(1-s), \text{ where } a\in \h. 
$$

\subsection{Completion of rational Cherednik algebras}

For any $b\in \h$ we can define the completion
$\widehat{H_{1,c}}(G,\h)_b$ to be the algebra of sections  
of the sheaf $H_{1,c,G,\h}$ on the formal neighborhood of the image
of $b$ in $\h/G$. Namely, $\widehat{H_{1,c}}(G,\h)_b$ 
is generated by regular functions on the formal neighborhood of
the $G$-orbit of $b$, the group $G$, and Dunkl operators.  

The algebra $\widehat{H_{1,c}}(G,\h)_b$ inherits from $H_{1,c}(G,\h)$ the natural
filtration $F^\bullet$ by order of differential operators,
and each of the spaces $F^n\widehat{H_{1, c}}(G,\h)_b$ has a projective limit
topology; the whole algebra is then equipped with the topology of
the nested union (or inductive limit).  

Consider the completion of the rational Cherednik algebra at
zero, $\widehat{H_{1,c}}(G, \h)_0$. It naturally contains the algebra
$\CC[[\h]]$. Define the category $\widehat\cO _c(G, \h)$
of representations of $\widehat{H_{1,c}}(G, \h)_0$ which are 
finitely generated over $\CC[[\h]]_{0}=\CC[[\h]]$. 

We have a completion functor $\,\widehat{}: \cO _c(G, \h)
\to\widehat\cO _c(G, \h)$, defined by 
$$
\widehat{M}=\widehat{H_{1,c}}(G, \h)_0\otimes_{H_{1, c}(G, \h)}M=
\CC[[\h]]\otimes_{\CC[\h]}M.
$$ 

Also, for $N\in \widehat\cO _c(G, \h)$, let $E(N)$ be the subspace
spanned by generalized eigenvectors of $\mathbf h$ in $N$
where $\mathbf h$ is defined by \eqref{eqn:h}. Then
it is easy to see that $E(N)\in \mathcal O_c(G, \h)_0$. 

\begin{theorem}\label{equi} 
The restriction of the completion functor $\,\,\widehat{}\,\,$ 
to $\cO _c(G, \h)_0$ is an equivalence 
of categories $\cO _c(G, \h)_0\to \widehat{\mathcal
O}_c(G, \h)$. The inverse equivalence is given by the functor
$E$. 
\end{theorem}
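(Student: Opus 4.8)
The plan is to show that $\,\widehat{}\,$ and $E$ are mutually inverse functors between $\cO_c(G,\h)_0$ and $\widehat\cO_c(G,\h)$. The key structural fact I would exploit is that any object $M$ of $\cO_c(G,\h)_0$ carries a grading by generalized $\bh$-eigenspaces $M=\oplus_\beta M[\beta]$ (established via Theorem \ref{finnilp}), with real parts of the $\beta$'s bounded below and each $M[\beta]$ finite-dimensional; dually, any $N\in\widehat\cO_c(G,\h)$ is finitely generated over $\CC[[\h]]$, so $\bh$ acts on it and $E(N)$ picks out the $\bh$-finite vectors. First I would verify the two easy directions at the level of vector spaces: that $E(\widehat M)=M$, and that the natural map $\widehat{E(N)}\to N$ is an isomorphism. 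Then I would check that these identifications are compatible with the $H_{1,c}$- (resp. $\widehat{H_{1,c}}_0$-) module structures and functorial in $M$ and $N$.

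For the first isomorphism $E(\widehat M)\cong M$: since $\widehat M=\CC[[\h]]\otimes_{\CC[\h]}M$ and $M$ is a finitely generated graded (hence bounded-below) $\CC[\h]$-module on which $\bh$ acts with the grading shifted appropriately, completing at $0$ does not create new $\bh$-finite vectors — a generalized $\bh$-eigenvector in $\widehat M$ must lie in a fixed finite-dimensional graded piece because eigenvalues are bounded below and $\bh$ raises/lowers degree compatibly. So $E(\widehat M)$ recovers exactly the algebraic (polynomial) part, which is $M$. One must be slightly careful that $M$ itself need not be graded — only that $\bh$ acts locally finitely with bounded-below real parts of eigenvalues — but the same argument works with generalized eigenspaces in place of homogeneous components. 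This uses the finiteness results from Section 3 (finite length of objects in $\cO_c(G,\h)_0$, and Theorem \ref{finnilp}(ii) characterizing membership via local finiteness of $\bh$).

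For the second isomorphism: given $N\in\widehat\cO_c(G,\h)$, one first checks $E(N)\in\cO_c(G,\h)_0$ — this is asserted in the excerpt just before the theorem, and follows because $E(N)$ is $\bh$-locally-finite by construction and one shows it is finitely generated over $S\h^*$ (using that $N$ is finitely generated over $\CC[[\h]]$ and that $\bh$-eigenvalues in $N$ have real parts bounded below, forcing a lowest piece that generates). Then the canonical map $\widehat{E(N)}=\CC[[\h]]\otimes_{\CC[\h]}E(N)\to N$ is a map of $\widehat{H_{1,c}}_0$-modules; it is surjective because its image is a $\CC[[\h]]$-submodule containing the generators (the lowest $\bh$-pieces generate $N$ over $\widehat{H_{1,c}}_0$, hence over $\CC[[\h]]$ together with $S\h$, and the $S\h$-action is absorbed into $E(N)$), and injective by a filtered/associated-graded argument comparing leading terms, or by again invoking that both sides have the same $E$ (apply $E$ and use the first isomorphism). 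Finally I would note that $\,\widehat{}\,$ is exact (flatness of $\CC[[\h]]$ over $\CC[\h]$) and $E$ is exact on the relevant subcategory (it is taking a sum of eigenspaces, and the eigenvalue bookkeeping is exact), so they are exact mutually inverse equivalences.

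**The main obstacle** I anticipate is the injectivity/surjectivity of $\widehat{E(N)}\to N$ — i.e., genuinely controlling how a topologically finitely generated module over the completed algebra is reconstructed from its $\bh$-finite part. The subtle point is showing that $E(N)$ is not "too small": one needs that every element of $N$ lies in the $\CC[[\h]]$-span of $E(N)$, which requires a Nakayama-type or grading-completeness argument using that the $\bh$-action on $N$ is "locally finite after completion" in the precise sense that $N$ is the completion of its generalized-eigenspace decomposition. Making this rigorous is where one must carefully use the definition of $\widehat\cO_c$ (finite generation over $\CC[[\h]]_0$) together with the fact that $\bh$ differs from the Euler vector field on $\CC[[\h]]$ by an operator ($\sum x_i y_i$ minus a scalar and a group-algebra element, cf. Proposition \ref{sl2rep}) whose effect is controlled — essentially the Euler grading argument of Theorem \ref{finnilp}(ii) transplanted to the complete setting.
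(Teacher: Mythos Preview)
Your outline is essentially the paper's approach: show $E(\widehat M)=M$ by bounded-below $\bh$-eigenvalue bookkeeping, then $\widehat{E(N)}=N$ by a completeness argument. The first half matches the paper almost exactly (the paper writes $v=\sum f_i m_i$ with $m_i$ generalized eigenvectors and extracts the relevant homogeneous pieces of the $f_i$).

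For the hard direction $\widehat{E(N)}=N$, the paper's mechanism is more concrete than your Nakayama gesture, and you would do well to adopt it: work with the $\mathfrak m$-adic filtration on $N$, note that $\bh$ preserves it and that the quotients $N/\mathfrak m^jN$ are \emph{finite-dimensional}, hence decompose into generalized $\bh$-eigenspaces $N_{j,\beta}$; the transition maps $N_{j+1,\beta}\to N_{j,\beta}$ are surjective and, for each fixed $\beta$, eventually isomorphisms (since the eigenvalues on $\mathfrak m^jN/\mathfrak m^{j+1}N$ grow with $j$). This immediately gives $E(N)[\beta]=\varprojlim_j N_{j,\beta}=N_{j_0,\beta}$ for $j_0$ large, hence $E(N)\twoheadrightarrow N/\mathfrak m^jN$ for every $j$, which is precisely the density statement you need. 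Your ``lowest pieces generate over $\widehat{H_{1,c}}_0$'' is morally the same but harder to make rigorous directly; the $\mathfrak m$-adic route avoids having to separately argue injectivity of $\widehat{E(N)}\to N$ (the paper simply identifies $\widehat{E(N)}$ with the closure of $E(N)$ inside $N$, which the eigenspace stabilization justifies).
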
 

\begin{proof} 
It is clear that $M\subset \widehat{M}$, 
so $M\subset E(\widehat{M})$ (as $M$ is spanned by generalized 
eigenvectors of $\mathbf h$). Let us demonstrate the opposite
inclusion. Pick generators $m_1,\ldots,m_r$ of $M$ which are
generalized eigenvectors of $\mathbf h$ with eigenvalues 
$\mu_1,\ldots,\mu_r$. Let $0\ne v\in E(\widehat{M})$.
Then $v=\sum_i f_im_i$, where $f_i\in \CC[[\h]]$. 
Assume that $(\mathbf h-\mu)^Nv=0$ for some $N$. Then 
$v=\sum_i f_i^{(\mu-\mu_i)}m_i$, where for $f\in \CC[[\h]]$ we
denote by $f^{(d)}$ the degree $d$ part of $f$. 
Thus $v\in M$, so $M=E(\widehat{M})$. 

It remains to show that $\widehat{E(N)}=N$, i.e. that 
$N$ is the closure of $E(N)$. In other words, 
letting $\mathfrak{m}$ denote the maximal ideal 
in $\CC[[\h]]$, we need to show that the natural map 
$E(N)\to N/\mathfrak{m}^jN$ is surjective for every $j$. 

To do so, note that $\mathbf h$ preserves the descending filtration of $N$
by subspaces $\mathfrak{m}^jN$. On the other hand, the successive
quotients of these subspaces, $\mathfrak{m}^jN/\mathfrak{m}^{j+1}N$,
are finite dimensional, which implies that $\mathbf h$ acts locally
finitely on their direct sum $\gr N$, and moreover each
generalized eigenspace is finite dimensional. Now for each 
$\beta\in \CC$ denote by $N_{j,\beta}$ the generalized
$\beta$-eigenspace of $\mathbf h$ in $N/{\mathfrak m}^jN$. 
We have surjective homomorphisms $N_{j+1,\beta}\to N_{j,\beta}$, 
and for large enough $j$ they are isomorphisms. This implies that
the map $E(N)\to N/\mathfrak{m}^jN$ is surjective for every $j$,
as desired. 
\end{proof}

{\bf Example.} Suppose that $c=0$. Then Theorem \ref{equi} 
specializes to the well known fact that the category of 
$G$-equivariant local systems on $\h$ with a locally nilpotent
action of partial differentiations is equivalent to the category 
of all $G$-equivariant local systems on the formal neighborhood
of zero in $\h$. In fact, both categories in this case are
equivalent to the category of finite dimensional representations
of $G$.

We can now define the composition functor
${\mathcal J}: \cO _c(G, \h)\to \cO _c(G, \h)_0$, 
by the formula ${\mathcal J}(M)=E(\widehat{M})$.  
The functor ${\mathcal J}$ is called the Jacquet functor
(\cite{Gi2}). 

\subsection{The duality functor}{\label{sec:duali}}

Recall that in Section \ref{sec:contmod}, 
for any $H_{1, c}(G, \h)$-module $M$,
the full dual space $M^*$ 
is naturally an $H_{1, \bar c}(G,\h^*)$-module, via 
$\pi_{M^*}(a)=\pi_M(\gamma(a))^*$. 

It is clear that the duality functor $*$ defines an equivalence 
between the category $\cO _c(G, \h)_0$ and 
$\widehat\cO _{\bar c}(G,\h^*)^{\op}$, and that 
$M^\dagger=E(M^*)$ (where $M^\dagger$ is the contragredient, or
restricted dual module to $M$ defined in Section \ref{sec:contmod}).

\subsection{Generalized Jacquet functors} 

\begin{proposition}\label{nilpo} For any $M\in \widehat{\mathcal
O}_c(G, \h)$, a vector $v\in M$ is ${\mathbf h}$-finite 
if and only if it is $\h$-nilpotent.
\end{proposition}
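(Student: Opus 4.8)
The plan is to first identify, inside $M$, the subspace of $\bh$-finite vectors with a genuine object of category $\cO$, and then reduce both implications to facts established in Section~\ref{sec:catO}. By Theorem~\ref{equi} we may write $M=\widehat{M_0}$ with $M_0=E(M)\in\cO_c(G,\h)_0$, and the argument in the proof of that theorem shows that every generalized $\bh$-eigenvector of $\widehat{M_0}$ already lies in $M_0$. Hence a vector $v\in M$ is $\bh$-finite if and only if $v\in M_0$: if $\CC[\bh]v$ is finite dimensional then $v$ is a finite sum of generalized $\bh$-eigenvectors, each lying in $M_0$; conversely $M_0$ is $\bh$-locally finite since it lies in $\cO_c(G,\h)_0$. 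This already disposes of the ``only if'' direction: an $\bh$-finite $v$ lies in $M_0\in\cO_c(G,\h)_0$, on which $S\h$ acts locally nilpotently by (the proof of) Theorem~\ref{finnilp}(ii), so $v$ is $\h$-nilpotent.

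For the ``if'' direction, assume $(S^n\h)v=0$. Decomposing the image of $v$ in each finite dimensional quotient $M_0/\mathfrak{m}^kM_0$ into generalized $\bh$-eigenvectors and passing to the limit (using that the Jordan block sizes of $\bh$ on $M_0$ are uniformly bounded), one writes $v=\sum_\beta v_\beta$ in $M$ with $v_\beta$ a generalized $\bh$-eigenvector of eigenvalue $\beta$, hence $v_\beta\in M_0$ by the first paragraph. Since any $y\in\h$ lowers the $\bh$-eigenvalue by $1$, the operator $y_1\cdots y_n$ maps the $\beta$-eigenspace into the $(\beta-n)$-eigenspace, so $y_1\cdots y_nv=0$ forces $y_1\cdots y_nv_\beta=0$ for all $\beta$; thus every $v_\beta$ lies in $K_n:=\{w\in M_0:(S^n\h)w=0\}$. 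As $\bh$ normalizes $S^n\h$ and $G$ preserves it, $K_n$ is $\bh$-stable and $G$-stable, hence $\bh$-graded; so if $K_n$ is finite dimensional then only finitely many $v_\beta$ are nonzero, and $v$, being a finite sum of generalized $\bh$-eigenvectors, is $\bh$-finite.

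It remains to prove the finiteness statement, which is the core of the argument: for $N\in\cO_c(G,\h)_0$ the space $K_n(N):=\{w\in N:(S^n\h)w=0\}$ is finite dimensional. Since $N$ has finite length, and for a short exact sequence $0\to A\to B\to C\to 0$ one has $K_n(B)\cap A=K_n(A)$ and $K_n(B)/K_n(A)\hookrightarrow K_n(C)$, it suffices to treat $N=L_c(\tau)$. Filter $K_n(N)$ by the subspaces $K_j=\{w:(S^j\h)w=0\}$ for $0\le j\le n$, where $K_0=0$. Because $\h\cdot K_j\subseteq K_{j-1}$, the assignment $w\mapsto\bigl(y\mapsto yw\bmod K_{j-2}\bigr)$ gives an embedding $K_j/K_{j-1}\hookrightarrow\Hom(\h,K_{j-1}/K_{j-2})$, so by induction it is enough to show $K_1$ is finite dimensional. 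But $K_1$ is the space of singular vectors of $L_c(\tau)$: by Proposition~\ref{homm}, any $G$-irreducible subspace of singular vectors $\sigma$ yields a surjection $M_c(\sigma)\twoheadrightarrow L_c(\tau)$, and comparison of lowest $\bh$-degrees forces this map to be degree preserving with $\sigma\cong\tau$ lying in the bottom degree $h_c(\tau)$; hence $K_1=L_c(\tau)[h_c(\tau)]\cong\tau$, which is finite dimensional.

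I expect the finiteness of $K_n$ to be the main obstacle. The subtlety is that $\h$ acts only \emph{locally} nilpotently on objects of $\cO_c(G,\h)_0$ --- there is in general no uniform $N$ with $(S^N\h)N=0$, as the rank~$1$ case already shows --- so $K_n$ cannot be controlled by a naive degree estimate on the leading term, and the reflection part of the Dunkl operators genuinely contributes in top degree; it is the filtration $K_1\subset\cdots\subset K_n$ combined with the rigidity of singular vectors from Proposition~\ref{homm} that makes it work. For the same reason the naive imitation of the proof of Theorem~\ref{finnilp}(i) breaks down in the completed setting: the submodule generated by the $\h$-killed vectors is $\CC[[\h]]\cdot U$, which (unlike $\CC[\h]\cdot U$) is not $\bh$-locally finite. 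A secondary technical point is to justify the generalized $\bh$-eigenspace decomposition $v=\sum_\beta v_\beta$ directly in $M=\widehat{M_0}$ and that the $v_\beta$ land in $M_0$, which follows from the analysis in the proof of Theorem~\ref{equi}.
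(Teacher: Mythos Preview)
Your proof is correct, and for the ``only if'' direction it is essentially the paper's argument (both rest on Theorem~\ref{equi}). For the ``if'' direction, however, you work much harder than necessary, and your stated reason for avoiding Theorem~\ref{finnilp}(i) rests on a misconception. Nothing in that proof requires closed submodules: the $H_{1,c}$-submodule generated by the space $U$ of $\h$-killed vectors is $\CC[\h]\cdot U$ (by PBW, since $S\h\cdot U=U$ and $G\cdot U=U$), not its completion $\CC[[\h]]\cdot U$, and the former \emph{is} $\bh$-locally finite. Equivalently, the set of all $\h$-nilpotent vectors in $M$ is already an $H_{1,c}$-submodule on which $\h$ acts locally nilpotently (stability under $x\in\h^*$ follows from $[y,x]\in\CC G$, so that $S\h\cdot(xv)\subset x\cdot S\h\cdot v+\CC G\cdot S\h\cdot v$ is finite dimensional whenever $S\h\cdot v$ is), and Theorem~\ref{finnilp}(i) applies to that submodule verbatim; this is the paper's one-line proof of the ``if'' part. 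Your finite-dimensionality lemma for $K_n(N)$ is correct and of some independent interest, but it is not needed here.
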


\begin{proof} The ``if'' part 
follows from Theorem \ref{finnilp}. 
To prove the ``only if'' part, assume that $({\mathbf
h}-\mu)^Nv=0$. Then for any $u\in S^r\h\cdot v$, we
have $({\mathbf h}-\mu+r)^Nu=0$. But by Theorem \ref{equi},
the real parts of generalized eigenvalues of ${\mathbf h}$ in $M$
are bounded below. Hence $S^r\h\cdot v=0$ for large enough
$r$, as desired. 
\end{proof} 

According to Proposition \ref{nilpo}, the functor $E$
can be alternatively defined by setting $E(M)$ to be the subspace of
$M$ which is locally nilpotent under the action of $\h$. 

This gives rise to the following generalization of $E$: 
for any $\lambda\in \h^*$ we define the functor 
$E_\lambda: \widehat\cO _c(G, \h)\to 
\cO_c(G, \h)_\lambda$ by setting $E_\lambda(M)$ to be the space of 
generalized eigenvectors of $\CC[\h^*]^G$ in $M$ with eigenvalue
$\lambda$. This way, we have $E_0=E$. 

We can also define the generalized Jacquet functor 
${\mathcal J}_\lambda: \cO _c(G, \h)\to {\mathcal
O}_c(G, \h)_\lambda$ by the formula 
${\mathcal J}_\lambda(M)=E_\lambda({\widehat M})$. 
Then we have ${\mathcal J}_0={\mathcal J}$, and one can show that
the restriction of ${\mathcal J}_\lambda$ to ${\mathcal
O}_c(G, \h)_\lambda$ is the identity functor. 

\subsection{The centralizer construction}

For a finite group $H$, let $\e_H=|H|^{-1}\sum_{g\in H}g$ 
be the symmetrizer of $H$. 

If $G\supset H$ are finite groups, 
and $A$ is an algebra containing $\CC[H]$, 
then define the algebra 
$Z(G,H,A)$ to be the centralizer $\End_A(P)$ of $A$ in the right $A$-module
$P={\mathrm Fun}_H(G,A)$ of $H$-invariant $A$-valued functions on $G$,
i.e. such functions $f: G\to A$ that $f(hg)=hf(g)$. Clearly, 
$P$ is a free $A$-module of rank $|G/H|$, so the algebra 
$Z(G,H,A)$ is isomorphic to ${\mathrm Mat}_{|G/H|}(A)$, but this
isomorphism is not canonical. 

The following lemma is trivial. 

\begin{lemma}\label{le}
The functor $N\mapsto I(N):=P\otimes_A N={\mathrm Fun}_H(G,N)$ defines 
an equivalence of categories 
$A-{\mathrm mod}\to Z(G,H,A)-{\mathrm mod}$. 
\end{lemma}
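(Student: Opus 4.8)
The plan is to unwind the definitions and exhibit mutually inverse functors explicitly, so this is a bookkeeping argument rather than anything subtle. First I would fix notation: $P = \mathrm{Fun}_H(G,A)$ is a right $A$-module via $(f\cdot a)(g) = f(g)a$, and $Z := Z(G,H,A) = \End_A(P)$ acts on $P$ on the left, commuting with the right $A$-action. So $P$ is naturally a $(Z,A)$-bimodule, and by the same token the opposite module $Q := \Hom_A(P,A)$, which one checks is $\mathrm{Fun}_H(G,A)$ again with the roles of left and right swapped, is an $(A,Z)$-bimodule. The functor in the statement is $I(N) = P\otimes_A N$, and the candidate inverse is $N'\mapsto Q\otimes_Z N'$ (equivalently $N'\mapsto \e_H\cdot N'$ after identifying $Z$ with $\mathrm{Mat}_{|G/H|}(A)$, but I would avoid choosing that identification).

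Next I would assemble the two natural isomorphisms that give the equivalence. The key computation is that $Q\otimes_Z P \cong A$ as $(A,A)$-bimodules and $P\otimes_A Q \cong Z$ as $(Z,Z)$-bimodules; these are exactly the statements that $P$ is a progenerator for $A\text{-mod}$ realizing the Morita equivalence $A \sim Z$. Since $P$ is a free $A$-module of rank $|G/H|$ (pick coset representatives $g_1,\dots,g_{|G/H|}$ of $H\backslash G$; evaluation $f\mapsto (f(g_1),\dots,f(g_{|G/H|}))$ is an $A$-module isomorphism $P\cong A^{|G/H|}$), it is a faithfully projective right $A$-module, and $Z = \End_A(P) \cong \mathrm{Mat}_{|G/H|}(A)$. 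Classical Morita theory then immediately gives that $I = P\otimes_A(-)$ is an equivalence $A\text{-mod}\to Z\text{-mod}$ with inverse $\Hom_Z(P,-)\cong Q\otimes_Z(-)$. In fact, because the paper only needs the statement, I could even cite Morita theory directly once I've observed $P$ is free of finite rank over $A$; the lemma is called "trivial" precisely because this is standard.

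The one point that deserves a sentence of care — and the step I'd flag as the main (very mild) obstacle — is checking that the left $H$-action used to form $P = \mathrm{Fun}_H(G,A)$ interacts correctly with the algebra structure on $A$, i.e. that $Z(G,H,A)$ really is the full matrix algebra and not some twisted form, and that the construction is functorial in $A$-modules. This is settled by the explicit basis above: the isomorphism $P\cong A^{|G/H|}$ is an isomorphism of right $A$-modules, hence $\End_A(P)\cong\End_A(A^{|G/H|}) = \mathrm{Mat}_{|G/H|}(A)$, and under this the $A$-module $I(N) = P\otimes_A N \cong N^{\oplus|G/H|}$ with its evident $\mathrm{Mat}_{|G/H|}(A)$-action, which is the standard Morita picture. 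So concretely the proof is: (1) exhibit the $A$-basis of $P$; (2) deduce $Z\cong\mathrm{Mat}_{|G/H|}(A)$ and identify $I(N)$ with the column module $N^{\oplus|G/H|}$; (3) invoke (or re-derive in one line) the fact that $N\mapsto N^{\oplus m}$ is an equivalence $A\text{-mod}\to\mathrm{Mat}_m(A)\text{-mod}$, with inverse $N'\mapsto \e_{11}N'$. That completes the proof, and indeed justifies calling the lemma trivial.
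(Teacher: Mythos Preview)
Your proposal is correct and matches the paper's approach: the paper simply declares the lemma ``trivial'' and gives no proof, precisely because (as you observe) $P$ is free of rank $|G/H|$ over $A$, so $Z(G,H,A)\cong\mathrm{Mat}_{|G/H|}(A)$ and the statement is the standard Morita equivalence between $A$ and a matrix algebra over $A$. Your explicit unpacking via coset representatives and the column-module description is exactly the one-line justification the paper leaves to the reader.
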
 

\subsection{Completion of rational Cherednik algebras at
arbitrary points of $\h/G$}

The following result is, in essence, a consequence 
of the geometric approach to rational Cherednik algebras,
described in Subsection \ref{sec:georca}. 
It should be regarded as a direct generalization to the 
case of Cherednik algebras of Theorem 8.6 of \cite{L} 
for affine Hecke algebras.

Let $b\in \h$. Abusing notation, denote the restriction of $c$ to 
the set $\cS_b$ of reflections in $G_b$ also by $c$. 

\begin{theorem}\label{comp}
One has a natural isomorphism 
$$
\theta: \widehat{H_{1,c}}(G, \h)_b\to Z(G, G_b,\widehat{H_{1,c}}(G_b,\h)_0),
$$
defined by the following formulas. 
Suppose that $f\in P={\mathrm Fun}_{G_b}(G, \widehat{H_{1,c}}(G_b,\h)_0)$.
Then 
$$
(\theta(u)f)(w)=f(wu), u\in G;
$$
for any $\alpha\in \h^*$, 
$$
(\theta(x_\alpha)f)(w)=(x_{w\alpha}^{(b)}+(w\alpha,b))f(w), 
$$
where $x_\alpha\in \h^*\subset H_{1,c}(G, \h)$, 
$x_\alpha^{(b)}\in \h^*\subset H_{1,c}(G_b,\h)$ are the elements
corresponding to $\alpha$; and for any 
$a\in \h$, 
\begin{equation}\label{eqn:thetay}
(\theta(y_a)f)(w)=y_{wa}^{(b)}f(w)-\sum_{s\in \cS: s\notin G_b}
\frac{2c_s}{1-\lambda_s}\frac{\alpha_s(wa)}{x_{\alpha_s}^{(b)}+\alpha_s(b)}
(f(w)-f(sw)).
\end{equation}
where $y_a\in \h\subset H_{1,c}(G, \h)$, 
$y_a^{(b)}\in \h\subset H_{1,c}(G_b,\h)$. 
\end{theorem}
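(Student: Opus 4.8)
\emph{Proof plan.} This is the Cherednik-algebra analogue of the standard description of completed degenerate affine Hecke algebras near a point, and the natural route is through the geometric picture of Subsection~\ref{sec:georca}. The key geometric input is that, since $G$ is finite, the formal neighborhood in $\h$ of the orbit $Gb$ is the disjoint union of the formal neighborhoods of the points $wb$, $w\in G/G_b$; translation by $b$ identifies each of these $G_b$-equivariantly with the formal neighborhood $\widehat\h_0$ of $0$, so that as a $\CC G$-module one gets a canonical isomorphism $\cO(\widehat\h_{Gb})\cong{\mathrm Fun}_{G_b}(G,\CC[[\h]])=I(\CC[[\h]])$, where $\CC[[\h]]$ is the completion of the polynomial representation of $A:=\widehat{H_{1,c}}(G_b,\h)_0$. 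By the geometric description of $H_{1,c,G,\h}$ the algebra $\widehat{H_{1,c}}(G,\h)_b$ acts faithfully on $\cO(\widehat\h_{Gb})$ via Dunkl operators, and by Lemma~\ref{le} the $Z(G,G_b,A)$-module $I(\CC[[\h]])$ is faithful because the (completed) polynomial representation of $A$ is faithful. So both algebras act faithfully on the same space, and the task is to match the two actions.

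I would first check that the stated formulas define, for each generator $u\in G$, $x_\alpha\in\h^*$, $y_a\in\h$, an element of $\End_A(P)=Z(G,G_b,A)$, where $P={\mathrm Fun}_{G_b}(G,A)$. Right $A$-linearity is immediate since each formula acts by left multiplication on the value $f(w)$; here one uses that $s\notin G_b$ forces $\alpha_s(b)\ne0$, so the denominators $x^{(b)}_{\alpha_s}+\alpha_s(b)$ are units in $\CC[[\h]]\subseteq A$. Left $G_b$-equivariance of $\theta(u)f$ and $\theta(x_\alpha)f$ is a one-line computation using $h x^{(b)}_{\alpha}h^{-1}=x^{(b)}_{h\alpha}$ and $(h\alpha,b)=(\alpha,b)$ for $h\in G_b$; for $\theta(y_a)f$ it follows from reindexing the sum over $\cS\setminus G_b$ by conjugation by $h$, using that conjugation by $h\in G_b$ preserves $\cS\setminus G_b$ and fixes the functions $c$ and $\lambda$, that $y^{(b)}_{h\alpha}=h y^{(b)}_{\alpha}h^{-1}$, and that the fraction $\alpha_s(wa)/(x^{(b)}_{\alpha_s}+\alpha_s(b))$ is insensitive to rescaling $\alpha_s$.

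Next I would establish that $\theta$ is a homomorphism, i.e. that it respects the relations of $H_{1,c}(G,\h)$ (Proposition~\ref{prop-RCA}, with $\hbar=1$). The relations involving only $G$ and $\h^*$ and the conjugation relations $u x_\alpha u^{-1}=x_{u\alpha}$, $u y_a u^{-1}=y_{ua}$ are straightforward. For the remaining relations $[\theta(y_a),\theta(y_{a'})]=0$ and $[\theta(y_a),\theta(x_\alpha)]=(a,\alpha)-\sum_{s}c_s(a,\alpha_s)(\alpha,\alpha_s^\vee)s$ I would avoid a direct computation and instead compose with the faithful $Z(G,G_b,A)$-action $\sigma$ on $I(\CC[[\h]])\cong\cO(\widehat\h_{Gb})$: the core local computation, carried out in coordinates centered at each orbit point $wb$ and splitting the reflection hyperplanes of $G$ near $Gb$ according to whether they pass through $wb$ or not, shows that $\sigma$ applied to the stated formulas reproduces precisely the Dunkl-operator action of $\widehat{H_{1,c}}(G,\h)_b$ on $\cO(\widehat\h_{Gb})$ (the singular reflections near $wb$ are those of $G_{wb}=wG_bw^{-1}$, contributing the term $y^{(b)}_{wa}$ after conjugation by $w$, while the reflections off $G_b$ contribute the unit-denominator correction terms). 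This local identification is the main obstacle of the proof. Granting it, for $\tilde\theta$ defined on the free algebra the composite $\sigma\circ\tilde\theta$ equals the Dunkl action pulled back along the tautological surjection, hence kills the relations; since $\sigma$ is injective, so does $\tilde\theta$, so $\theta$ is a well-defined algebra map, and it is injective because $\sigma\circ\theta$ is the faithful Dunkl action.

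Finally, $\theta$ is surjective. I would pass to associated graded algebras for the order filtration (with $G$ in degree $0$): $\theta$ is filtered, and using the PBW theorem for $H_{1,c}(G,\h)$ and $H_{1,c}(G_b,\h)$ together with $\gr Z(G,G_b,A)=Z(G,G_b,\gr A)$, the induced map $\gr\theta$ becomes the classical map $\CC G\ltimes\cO(\widehat\h_{Gb}\times\h^*)\to Z(G,G_b,\CC G_b\ltimes\cO(\widehat\h_0\times\h^*))$ sending $u$, $x_\alpha$, $y_a$ to $f(w)\mapsto f(wu)$, $f(w)\mapsto(x^{(b)}_{w\alpha}+(w\alpha,b))f(w)$, $f(w)\mapsto p_{wa}f(w)$ respectively — the correction terms in $\theta(y_a)$ have order $0$ and drop out on $\gr_1$. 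This classical map is manifestly an isomorphism, being the $\CC G$-module identification $\cO(\widehat\h_{Gb})\cong{\mathrm Fun}_{G_b}(G,\CC[[\h]])$ lifted to the cotangent bundle. Since the filtrations are exhaustive, $\gr\theta$ being an isomorphism forces $\theta$ to be one.
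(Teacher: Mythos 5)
Your argument is essentially the geometric construction sketched in the paper's Remark following the theorem: both algebras act on the structure sheaf of the formal neighborhood of $Gb$ via Dunkl operators, and the formula for $\theta(y_a)$ amounts to splitting the Dunkl operator into the part singular along hyperplanes through $wb$ (giving $y^{(b)}_{wa}$ after recentering) and the part regular at $wb$ (giving the correction terms with invertible denominators $x^{(b)}_{\alpha_s}+\alpha_s(b)$). The paper itself states the proof as ``a direct computation'' and relegates this picture to the remark; you have made it into a full argument by adding the two structural steps the paper leaves implicit — injectivity via faithfulness of the Dunkl action on $\cO(\widehat{\h}_{Gb})$, and surjectivity via the associated-graded/PBW comparison — while, like the paper, leaving the core local matching of Dunkl operators as the acknowledged computational step.
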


\begin{proof}
The proof is by a direct computation. 
We note that in the last formula, the fraction 
$\alpha_s(wa)/(x_{\alpha_s}^{(b)}+\alpha_s(b))$
is viewed as a power series (i.e., an element of $\CC[[\h]]$),
and that only the entire sum, and not each summand separately, 
is in the centralizer algebra.  
\end{proof} 

{\bf Remark.} Let us explain how to see the existence of $\theta$
without writing explicit formulas, and how to guess the formula 
\eqref{eqn:thetay} for $\theta$. 
It is explained in \cite{E1} (see e.g. \cite{E1}, Section 2.9) 
that the sheaf of algebras obtained
by sheafification of $H_{1,c}(G, \h)$ over $\h/G$ is generated (on
every affine open set in $\h/G$) by regular functions on $\h$,
elements of $G$, and Dunkl operators. Therefore, this
statement holds for formal neighborhoods, i.e., it
is true on the formal neighborhood of the image in $\h/G$ 
of any point $b\in \h$. However, looking at the formula for Dunkl
operators near $b$, we see that the summands corresponding to
$s\in \cS,s\notin G_b$ are actually regular at $b$, so they can be
safely deleted without changing the generated algebra (as all regular functions
on the formal neighborhood of $b$ are included into the system of
generators). But after these terms are deleted, what remains is
nothing but the Dunkl operators for $(G_b,\h)$, which, together
with functions on the formal neighborhood of $b$ and the group
$G_b$, generate the completion of $H_{1,c}(G_b,\h)$. This gives a 
construction of $\theta$ without using explicit formulas. 

Also, this argument explains why $\theta$ should be defined by
formula (\ref{eqn:thetay}) 
of Theorem \ref{comp}. Indeed, what this formula does
is just restores the terms with $s\notin G_b$ that have been
previously deleted.

The map $\theta$ defines an equivalence of categories
$$
\theta_{*}: \widehat{H_{1,c}}(G, \h)_b-{\mathrm mod}\to
Z(G, G_b, \widehat{H_{1, c}}(G_b,\h)_0)-{\mathrm mod}.
$$

\begin{corollary}\label{whit}
We have a natural equivalence of categories 
$$\psi_\lambda: \cO_c(G, \h)_\lambda\to 
\cO_{c}(G_\lambda,\h/\h^{G_\lambda})_0.$$ 
\end{corollary}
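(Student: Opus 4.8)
The plan is to deduce Corollary \ref{whit} from Theorem \ref{comp} together with the completion equivalence of Theorem \ref{equi} and the centralizer equivalence of Lemma \ref{le}. The point is to chain together a string of equivalences, each already available, after passing to the right block of category $\cO$.

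First I would reduce to the case $\lambda = 0$ only in spirit: for general $\lambda$, recall that $E_\lambda(M) = E(M)$ computed with respect to the $\h^*$-grading shifted by $\lambda$, and that by the decomposition $\cO_c(G,\h) = \bigoplus_\mu \cO_c(G,\h)_\mu$ it suffices to understand one summand. Pick $b \in \h$ with $G_b = G_\lambda$ (such $b$ exists by the discussion in Section \ref{sec:paragp}, and in fact $\lambda$ and $b$ play symmetric roles since a parabolic can be defined as a stabilizer in $\h$ or in $\h^*$). Then I would introduce the completed category $\widehat\cO_c(G,\h)_b$ of $\widehat{H_{1,c}}(G,\h)_b$-modules finitely generated over the completed structure ring at the $G$-orbit of $b$, together with the analogue of the completion functor and the functor $E_\lambda$ picking out generalized $\CC[\h^*]^G$-eigenvectors with eigenvalue $\lambda$; exactly as in Theorem \ref{equi} (whose proof used only the grading element $\bh$ and boundedness of its eigenvalues), one gets an equivalence $\cO_c(G,\h)_\lambda \cong \widehat\cO_c(G,\h)_b$ via $M \mapsto \widehat M$, with inverse $E_\lambda$.

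Next, apply Theorem \ref{comp}: the isomorphism $\theta$ identifies $\widehat{H_{1,c}}(G,\h)_b$ with $Z(G, G_b, \widehat{H_{1,c}}(G_b,\h)_0)$, hence $\theta_*$ gives an equivalence $\widehat\cO_c(G,\h)_b \cong Z(G,G_b,\widehat{H_{1,c}}(G_b,\h)_0)\text{-mod}$ (restricted to the appropriate finiteness condition — one must check that "finitely generated over the completed ring" is preserved, which is immediate since $P$ is a free module of rank $|G/G_b|$). Then Lemma \ref{le} with $A = \widehat{H_{1,c}}(G_b,\h)_0$ and $H = G_b$ gives $Z(G,G_b,A)\text{-mod} \cong A\text{-mod}$, i.e. an equivalence with $\widehat\cO_c(G_b,\h)$. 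Finally, since $\h = \h^{G_b} \oplus (\h^{G_b})^\perp$ as $G_b$-modules and $G_b$ acts trivially on $\h^{G_b}$, one has $\widehat{H_{1,c}}(G_b,\h)_0 = \widehat{H_{1,c}}(G_b, \h/\h^{G_b})_0 \,\widehat\otimes\, \CC[[\h^{G_b}]]$ (the Cherednik algebra of a group acting trivially on a subspace is just the Weyl/polynomial algebra of that subspace tensored in), so modules in $\widehat\cO$ over the former are, after the $E$-functor for the $\h/\h^{G_b}$-part, the same as objects of $\cO_c(G_b, \h/\h^{G_b})_0$. Composing, with $G_b = G_\lambda$, yields $\psi_\lambda$.

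\textbf{The main obstacle} I expect is bookkeeping rather than a conceptual difficulty: one has to track carefully which finiteness/support conditions cut out category $\cO$ (resp. $\widehat\cO$) through each of the three equivalences, in particular verifying that under $\theta_*$ the condition "generalized $\CC[\h^*]^G$-eigenvalue $\lambda$" on the $G$-side corresponds to "generalized $\CC[(\h/\h^{G_b})^*]^{G_b}$-eigenvalue $0$" on the $G_b$-side (the translation by $(w\alpha, b)$ in the formula for $\theta(x_\alpha)$ in Theorem \ref{comp} is exactly what effects this shift, and one should check the factorization of $\h^*$-weights through $\h^{G_b}$ versus $(\h^{G_b})^\perp$ matches up). Once that identification is pinned down, the composite of the three equivalences is the desired $\psi_\lambda$, and the functor $E_\lambda \circ (\text{Lemma \ref{le} inverse}) \circ \theta_* \circ (\text{completion})$ is an explicit quasi-inverse.
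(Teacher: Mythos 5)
There is a genuine gap in the first step. You claim that, for $b\in\h$ with $G_b=G_\lambda$, the completion functor $M\mapsto \widehat M_b$ (in the $\CC[\h]$-direction, at the $G$-orbit of $b$) gives an equivalence $\cO_c(G,\h)_\lambda\cong \widehat\cO_c(G,\h)_b$ with inverse $E_\lambda$, arguing by analogy with Theorem~\ref{equi}. This is false. The category of $\widehat{H_{1,c}}(G,\h)_b$-modules finitely generated over $\widehat{\CC[\h]}_b$ contains modules on which $S\h$ acts with arbitrary generalized eigenvalue, not only $\lambda$, so $\widehat{\quad}_b$ applied to the single block $\cO_c(G,\h)_\lambda$ cannot be essentially surjective. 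Indeed, Proposition~\ref{adj1} records only that $E^b$ is a right adjoint of $\widehat{\quad}_b$, and the remark following it states explicitly that these functors are mutually inverse only for $b=0$. Nor does the argument of Theorem~\ref{equi} transfer: it rests on local finiteness of the grading element $\bh$, and by Theorem~\ref{finnilp}(ii) that is precisely the condition singling out the block $\lambda=0$, so for $\lambda\neq 0$ there is no $\bh$-grading to run the argument with.

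The fix, which is what the paper does and what the remark immediately following Corollary~\ref{whit} is flagging, is to complete in the $S\h$-direction at $\lambda\in\h^*$, \emph{not} in the $\CC[\h]$-direction at $b\in\h$. Via the isomorphism $H_{1,c}(G,\h)\cong H_{1,c}(G,\h^*)$ this is exactly Theorem~\ref{comp} applied to the pair $(G,\h^*)$: the block condition on $\cO_c(G,\h)_\lambda$ is now tautologically the support condition defining the completion, the constant shift $(w\alpha,\lambda)$ in the $\theta$-formula carries generalized eigenvalue $\lambda$ to $0$, and the remaining steps of your chain --- the centralizer reduction via Lemma~\ref{le} and the elimination of the $\h^{G_\lambda}$-factor (the equivalence $\zeta$) --- go through as you describe. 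The distinction between the two completions is not bookkeeping; it is the crux of the corollary.
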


\begin{proof} The category $\cO_c(G, \h)_\lambda$
is the category of modules over $H_{1,c}(G, \h)$ 
which are finitely generated over $\CC[\h]$ and extend by
continuity to the completion of the algebra $H_{1,c}(G, \h)$ at $\lambda$. So
it follows from Theorem \ref{comp}
that we have an equivalence $\cO_c(G, \h)_\lambda\to 
\cO _{c}(G_\lambda,\h)_0$. Composing this equivalence
with the equivalence 
$\zeta: \cO_{c}(G_\lambda,\h)_0\to 
\cO_{c}(G_\lambda,\h/\h^{G_\lambda})_0$, we obtain
the desired equivalence $\psi_\lambda$.    
\end{proof} 

\begin{remark}
Note that in this proof, we take the completion of $H_{1,c}(G, \h)$ 
at a point of $\lambda\in \h^*$ rather than $b\in \h$.  
\end{remark}

\subsection{The completion functor}

Let $\widehat\cO_c(G, \h)^b$ be the category 
of modules over $\widehat{H_{1,c}}(G, \h)_b$ 
which are finitely generated over $\widehat{\CC[\h]}_b$. 

\begin{proposition}\label{isom}
The duality functor $*$ defines an anti-equivalence of categories 
$\cO_c(G, \h)_\lambda\to \widehat\cO_{\bar c}(G,\h^*)^\lambda$.  
\end{proposition}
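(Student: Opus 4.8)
The plan is to reduce this statement to the already-established equivalence in Theorem \ref{equi} (and its $\lambda$-generalization via $E_\lambda$, $\mathcal{J}_\lambda$), combined with the duality functor discussion in Section \ref{sec:duali}. The basic picture is this: the duality functor $*$ sends an $H_{1,c}(G,\h)$-module $M$ to the full dual $M^*$, which by the isomorphism $\gamma: H_{1,\bar c}(G,\h^*)^{\op}\to H_{1,c}(G,\h)$ carries a natural $H_{1,\bar c}(G,\h^*)$-module structure. When $M\in\cO_c(G,\h)_\lambda$, the operators $\CC[\h^*]^G$ act on $M$ with generalized eigenvalue $\lambda$; dualizing, $\gamma$ intertwines $\CC[\h^*]^G\subset H_{1,c}(G,\h)$ with $\CC[\h]^G\subset H_{1,\bar c}(G,\h^*)$ (since $\gamma$ acts trivially on $\h$ and $\h^*$ and sends $g$ to $g^{-1}$), so $M^*$ is, at least formally, a module on which $\CC[\h]^G$ acts topologically with generalized eigenvalue $\lambda$ — i.e.\ after completion it lives over $\widehat{\CC[\h^*]}_\lambda$. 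The content of the proposition is that this passage is well-defined, lands in $\widehat\cO_{\bar c}(G,\h^*)^\lambda$, and is an anti-equivalence.

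First I would set up the two standard input facts. (a) From Section \ref{sec:duali}: $*$ is an equivalence $\cO_c(G,\h)_0\to \widehat\cO_{\bar c}(G,\h^*)^{\op}$, and more generally for arbitrary $\lambda$ one gets $*:\cO_c(G,\h)_\lambda\to \widehat\cO_{\bar c}(G,\h^*)^{\lambda,\op}$ by the same argument, since the only place the eigenvalue enters is in identifying which completion one takes. (b) One must check that if $M$ is finitely generated over $\CC[\h]$ (as every object of $\cO_c(G,\h)_\lambda$ is, because it is finitely generated over $S\h^*$ and $\bh$-locally-finite forces, via Theorem \ref{finnilp} and the decomposition by $\CC[\h^*]^G$-eigenvalue, finite generation over $\CC[\h]$), then $M^*$, suitably completed, is finitely generated over $\widehat{\CC[\h^*]}_\lambda$. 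This is the key finiteness point: dualizing a finitely generated $\CC[\h]$-module does not a priori give something finitely generated, but the $\bh$-grading (or $\CC[\h^*]^G$-generalized-eigenvalue grading) on $M$ has finite-dimensional graded pieces — this follows from $M\in\cO$ plus the fact that $B$ in the matrix-of-multiplicities discussion, and more basically the finite-length statement, forces each generalized $\bh$-eigenspace of $M$ to be finite dimensional — and the dual of such a module, restricted to $\bh$-finite vectors versus its full dual, is controlled: the full dual $M^*$ is the product of the duals of graded pieces, and as a module over $\widehat{\CC[\h^*]}_\lambda$ it is finitely generated precisely because $M$ was finitely generated over $\CC[\h]$ (the number of generators is preserved under dualizing a finitely generated module over a Noetherian local ring completion). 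So $M^*\in\widehat\cO_{\bar c}(G,\h^*)^\lambda$.

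Then I would assemble the equivalence. The inverse functor is $N\mapsto E_\lambda(N)^*$ — or more symmetrically, one observes $M^{**}$ recovers $M$ after taking the $\bh$-finite part, exactly as in the identity $M^{\dagger\dagger}=M$ proved in Section \ref{sec:contmod}. Concretely: starting from $N\in\widehat\cO_{\bar c}(G,\h^*)^\lambda$, the generalized eigenvectors of $\bh$ (equivalently, by the $E_\lambda$ analog of Proposition \ref{nilpo}, the $\h^*$-nilpotent... here $\h$-nilpotent for the $\h^*$-side) form an object $E_\lambda(N)\in\cO_{\bar c}(G,\h^*)_\lambda$; dualize back with $*$ and check the two round-trips are naturally isomorphic to the identity. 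The round-trip $M\mapsto (M^*)$-then-back is the statement $E_\lambda(M^*)^*\cong M$, which is $M^{\dagger\dagger}=M$ in disguise; the other round-trip uses that $\widehat{E(N)}=N$, the closure statement established in the proof of Theorem \ref{equi}, adapted to the $\lambda$-completion via Theorem \ref{comp}/Corollary \ref{whit}.

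The main obstacle I anticipate is the finiteness/topological bookkeeping in the middle paragraph: making precise the sense in which $M^*$ "is" an object of $\widehat\cO_{\bar c}(G,\h^*)^\lambda$ — i.e.\ that the algebraic dual, equipped with the evident $\widehat{H_{1,\bar c}}(G,\h^*)_\lambda$-action, is finitely generated over $\widehat{\CC[\h^*]}_\lambda$ and that the action extends continuously. Everything else is a matter of transporting Section \ref{sec:duali}'s $\lambda=0$ argument through the completion equivalences already in hand; the honest work is verifying that duality is compatible with the completion at $\lambda$ (as opposed to at a point $b\in\h$), which is exactly the subtlety flagged in the remark after Corollary \ref{whit}.
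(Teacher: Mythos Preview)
Your approach is correct and coincides with the paper's, though you supply far more detail than the paper does. The paper's entire proof is a single sentence: it invokes the characterization (stated in the proof of Corollary~\ref{whit}) that $\cO_c(G,\h)_\lambda$ consists precisely of those $H_{1,c}(G,\h)$-modules which are finitely generated over $\CC[\h]$ and extend by continuity to the completion of $H_{1,c}(G,\h)$ at $\lambda\in\h^*$; once one has this, duality with the completed category on the $\h^*$ side is declared immediate. Your elaboration of the finite-generation step and the explicit inverse functor is sound (and arguably what a careful reader would want), but the paper regards all of it as routine bookkeeping and does not spell it out.
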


\begin{proof}
This follows from the fact (already mentioned above) 
that $\cO_{c}(G, \h)_\lambda$ is the category of 
modules over $H_{1, c}(G, \h)$ 
which are finitely generated over $\CC[\h]$ and extend by
continuity to the completion of the algebra $H_{1, c}(G, \h)$ at
$\lambda$. 
\end{proof} 

Let us denote the functor inverse to $*$ also by $*$; it is the 
functor of continuous dual (in the formal series topology). 

We have an exact functor of completion at $b$, 
$\cO_c(G, \h)_0\to \widehat\cO_c(G, \h)^b$,
$M\mapsto \widehat{M}_b$. We also have a functor 
$E^b: \widehat\cO_c(G, \h)^b\to \cO_c(G, \h)_0$
in the opposite direction, sending a module $N$ to 
the space $E^b(N)$ of $\h$-nilpotent vectors in $N$. 

\begin{proposition}\label{adj1}
The functor $E^b$ is right adjoint to the completion functor
$\,\widehat{\quad}_b$. 
\end{proposition}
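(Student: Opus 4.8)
The plan is to construct a natural bijection
$$
\Hom_{\widehat\cO_c(G,\h)^b}(\widehat{M}_b,N)\;\cong\;\Hom_{\cO_c(G,\h)_0}(M,E^b(N))
$$
directly, by writing down maps in both directions and checking that they are mutually inverse and natural. The starting data will be the canonical map $\iota_M\colon M\to\widehat{M}_b$ coming from the presentation $\widehat{M}_b=\widehat{\CC[\h]}_b\otimes_{\CC[\h]}M$; I will use that $\iota_M$ is $H_{1,c}(G,\h)$-equivariant (with $\widehat{M}_b$ regarded as an $H_{1,c}(G,\h)$-module by restriction along $H_{1,c}(G,\h)\to\widehat{H_{1,c}}(G,\h)_b$) and that its image is dense in $\widehat{M}_b$.

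First I would define the map $\Phi$ on Hom-sets: for $f\in\Hom(\widehat{M}_b,N)$ set $\Phi(f)=f\circ\iota_M$. To see that $\Phi(f)$ lands in $\Hom(M,E^b(N))$, I would note that since $M\in\cO_c(G,\h)_0$ the subalgebra $S\h$ acts locally nilpotently on $M$ (Theorem \ref{finnilp}), hence every $\iota_M(m)$ is $\h$-nilpotent in $\widehat{M}_b$, and since $f$ is in particular $H_{1,c}(G,\h)$-linear its image $\Phi(f)(M)$ consists of $\h$-nilpotent vectors of $N$, i.e.\ lies in $E^b(N)$. In the other direction, for $g\in\Hom(M,E^b(N))$ I would compose with the inclusion $E^b(N)\hookrightarrow N$ to obtain an $H_{1,c}(G,\h)$-linear map $M\to N$, and then use that $N$, being finitely generated over the complete Noetherian ring $\widehat{\CC[\h]}_b$, is complete and separated, so base change along $\CC[\h]\to\widehat{\CC[\h]}_b$ gives a unique continuous $\widehat{\CC[\h]}_b$-linear extension $\Psi(g)\colon\widehat{M}_b\to N$. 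Next I would check that these constructions are mutually inverse: $\Phi(f)=f\circ\iota_M$ is the restriction of the continuous map $f$ to the dense submodule $\iota_M(M)$, so $f$ is the unique continuous extension of $\Phi(f)$ and $\Psi\Phi=\id$; and $\Psi(g)\circ\iota_M=g$ by construction, so $\Phi\Psi=\id$. Naturality in $M$ and $N$ is then immediate because $\iota_M$ and the extension-by-continuity procedure are both natural.

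The hard part will be verifying that $\Psi(g)$ is actually $\widehat{H_{1,c}}(G,\h)_b$-linear and not merely $\widehat{\CC[\h]}_b$-linear. The argument I have in mind is a density argument: the relation $\Psi(g)(u\cdot v)=u\cdot\Psi(g)(v)$ holds for $u\in H_{1,c}(G,\h)$ and $v\in\iota_M(M)$, and since $H_{1,c}(G,\h)$ is dense in $\widehat{H_{1,c}}(G,\h)_b$, $\iota_M(M)$ is dense in $\widehat{M}_b$, and the action maps on $\widehat{M}_b$ and on $N$ are continuous, it propagates to all of $\widehat{H_{1,c}}(G,\h)_b\times\widehat{M}_b$. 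Making this precise requires care with the topologies involved — the inductive limit of the projective-limit topologies on the pieces $F^n\widehat{H_{1,c}}(G,\h)_b$ of the order filtration on the one hand, and the topology on $N$ controlled by finite generation over $\widehat{\CC[\h]}_b$ on the other — together with checking continuity of the relevant action maps. As an alternative that sidesteps some of this bookkeeping, one could instead use Theorem \ref{comp} together with Lemma \ref{le} to identify $\widehat\cO_c(G,\h)^b$ with $\widehat\cO_c(G_b,\h)$ and $E^b$ with the functor $E$ for $(G_b,\h)$, thereby reducing to the case $b=0$, where the completion functor is the equivalence of Theorem \ref{equi} and hence has $E=E^0$ as a two-sided adjoint; but since the completion functor in the statement has source $\cO_c(G,\h)_0$ rather than $\cO_c(G_b,\h)_0$, the direct argument above seems more transparent.
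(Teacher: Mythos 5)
Your maps $\Phi$ and $\Psi$ are the right ones, and the observation that $\Phi(f)$ lands in $E^b(N)$ because $M$ is locally nilpotent under $\h$ (Theorem \ref{finnilp}) is precisely the key fact the paper's proof also uses. But the ``hard part'' you single out — showing $\Psi(g)$ is $\widehat{H_{1,c}}(G,\h)_b$-linear — is a problem you have created for yourself by base-changing over the wrong ring, and the density-and-continuity argument you sketch for it is a genuine gap: you would need joint continuity of the $\widehat{H_{1,c}}(G,\h)_b$-action on an arbitrary $N\in\widehat\cO_c(G,\h)^b$ in the nested-union topology on the completed algebra, which is not established and is not trivial, and you only gesture at it rather than prove it. Moving that burden onto an unverified topological statement does not discharge it.

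The clean route — and the one the paper takes — is to use the presentation $\widehat{M}_b=\widehat{H_{1,c}}(G,\h)_b\otimes_{H_{1,c}(G,\h)}M$ rather than $\widehat{\CC[\h]}_b\otimes_{\CC[\h]}M$. Then the completion functor is literally extension of scalars along the ring map $H_{1,c}(G,\h)\to\widehat{H_{1,c}}(G,\h)_b$, and the standard tensor-hom adjunction gives
$$
\Hom_{\widehat{H_{1,c}}(G,\h)_b}(\widehat{M}_b,N)=\Hom_{H_{1,c}(G,\h)}(M,N|_{H_{1,c}(G,\h)})
$$
with no topology at all: the extension $u\otimes m\mapsto u\cdot g(m)$ is $\widehat{H_{1,c}}(G,\h)_b$-linear by construction, and uniqueness is the universal property. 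The only remaining content is your local-nilpotence remark identifying $\Hom_{H_{1,c}}(M,N)$ with $\Hom_{H_{1,c}}(M,E^b(N))$. Your alternative sketch via Theorem \ref{comp} and Theorem \ref{equi} also does not go through as stated — as you yourself note, the source of the completion functor is $\cO_c(G,\h)_0$, not $\cO_c(G_b,\h)_0$, and Theorem \ref{equi} gives an equivalence (hence a two-sided adjoint) only in the case $b=0$ where source and target match, so it does not directly yield the adjunction for general $b$.
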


\begin{proof}
We have
$$
\Hom_{\widehat{H_{1,c}}(G, \h)_b}(\widehat{M}_b, N)
=\Hom_{\widehat{H_{1,c}}(G, \h)_b}(\widehat{H_{1,c}}(G, \h)_b
\otimes_{H_{1,c}(G, \h)}M, N)
$$
$$
=\Hom_{H_{1,c}(G, \h)}(M, N|_{H_{1,c}(G, \h)})
=\Hom_{H_{1,c}(G, \h)}(M, E^{b}(N)).
$$
\end{proof} 

\begin{remark}
Recall that by Theorem \ref{equi}, if $b=0$ then these functors 
are not only adjoint but also inverse to each other. 
\end{remark} 

\begin{proposition}\label{dua}
\begin{enumerate}
\item[(i)] For $M\in \cO_{\bar c}(G,\h^*)_b$, one has
$E^b(M^*)=(\widehat{M})^*$ in $\cO_c(G, \h)_0$.
\item[(ii)] For $M\in \cO_c(G, \h)_0$, 
$(\widehat{M}_b)^*=E_b(M^*)$ in $\cO_\bc(G,\h^*)_b$.
\item[(iii)] The functors $E_b$, $E^b$ are exact.
\end{enumerate}

\end{proposition}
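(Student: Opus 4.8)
The plan is to read off (i) and (ii) directly from the definitions of the completion and duality functors --- in both cases the two sides should be identified with the same explicit subspace of the full linear dual $M^*$ --- and then to deduce (iii) formally.

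For (i), I would start from Proposition \ref{isom}, which guarantees that $M^*$, regarded as an $H_{1,c}(G,\h)$-module via the duality of Section \ref{sec:contmod}, is an object of $\widehat\cO_c(G,\h)^b$, so that $E^b(M^*)$ is defined. The key observation is that under this duality the Dunkl generators $y\in\h\subset H_{1,c}(G,\h)$ correspond to the coordinate generators $\h=(\h^*)^*\subset H_{1,\bar c}(G,\h^*)$, which act on $M$ through its $S\h=\CC[\h^*]$-module structure. Hence a functional $\phi\in M^*$ is $\h$-nilpotent exactly when it is annihilated by $\mathfrak{p}^n$ for some $n$, where $\mathfrak{p}\subset S\h$ is the maximal ideal at $0\in\h^*$, i.e. exactly when $\phi$ factors through $M/\mathfrak{p}^nM$. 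Since $M$ is finitely generated over $S\h$, these quotients are finite-dimensional, so
$$
E^b(M^*)=\varinjlim_n (M/\mathfrak{p}^nM)^*,
$$
which is precisely the continuous dual (in the formal-series topology) of $\widehat M=\varprojlim_n M/\mathfrak{p}^nM$, the completion of $M$ at $0\in\h^*$. Finally I would note that the canonical map $M\to\widehat M$ is $H_{1,\bar c}(G,\h^*)$-equivariant and $*$ is functorial, so the above identification is one of $H_{1,c}(G,\h)$-modules; this gives $E^b(M^*)\cong(\widehat M)^*$.

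Part (ii) should be the mirror image of this, with $\h$ and $\h^*$ (and $c$ and $\bar c$) interchanged. Here $M\in\cO_c(G,\h)_0$, so Proposition \ref{isom} gives $M^*\in\widehat\cO_{\bar c}(G,\h^*)$; by definition $E_b(M^*)$ is the span of the generalized eigenvectors in $M^*$, with eigenvalue $b$, of the invariant Dunkl subalgebra $(S\h^*)^G\subset H_{1,\bar c}(G,\h^*)$, and under the duality this subalgebra is identified with $\CC[\h]^G$ acting on $M$ through its $\CC[\h]$-module structure. So $E_b(M^*)$ is the set of $\phi\in M^*$ killed by a power of the ideal $I=\mathfrak{m}_b\CC[\h]$, with $\mathfrak{m}_b\subset\CC[\h]^G$ the maximal ideal corresponding to $b$; this is $\varinjlim_n(M/I^nM)^*$, and since $\CC[\h]$ is finite over $\CC[\h]^G$ the quotients $M/I^nM$ are finite-dimensional, so the limit is the continuous dual of $\widehat M_b=\varprojlim_n M/I^nM$, with the module structures matching as in (i). This yields $(\widehat M_b)^*\cong E_b(M^*)$.

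For (iii), I would deduce exactness formally. By (i), an arbitrary $N\in\widehat\cO_c(G,\h)^b$ can be written $N=M^*$ with $M\in\cO_{\bar c}(G,\h^*)_b$ its continuous dual (Proposition \ref{isom}), and then $E^b(N)\cong(\widehat M)^*$; thus $E^b$ is the composite of the continuous-dual functor $\widehat\cO_c(G,\h)^b\to\cO_{\bar c}(G,\h^*)_b$, the completion functor $M\mapsto\CC[[\h^*]]\otimes_{S\h}M$, and the continuous-dual functor $\widehat\cO_{\bar c}(G,\h^*)\to\cO_c(G,\h)_0$. Each of these is exact: the completion functor because $S\h$ is Noetherian, so that $\CC[[\h^*]]$ is flat over it; the linear dual trivially; and continuous dual on the completed category by an Artin--Rees argument --- for a short exact sequence $0\to N_1\to N_2\to N_3\to 0$ the submodules $N_1\cap\mathfrak{m}^nN_2$ are cofinal with $\mathfrak{m}^nN_1$, so passing to the direct limit of the dual short exact sequences of the finite-dimensional quotients preserves exactness. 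Hence $E^b$ is exact, and $E_b$ is exact by the same argument applied to (ii). The part I expect to require real care is the dictionary in (i)--(ii): pinning down, via the isomorphism $\gamma$, that ``$\h$-nilpotent in $M^*$'' corresponds to ``killed by a power of the augmentation ideal of $\CC[\h^*]$ acting through the coordinate structure of $M$'', and the analogous statement for the generalized-eigenvalue condition; once this is set up, the rest is bookkeeping.
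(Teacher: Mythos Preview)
Your proposal is correct and follows the same route as the paper, whose proof is simply: ``(i),(ii) are straightforward from the definitions; (iii) follows from (i),(ii), since the completion functors are exact.'' You have unpacked exactly what ``straightforward from the definitions'' means --- identifying both sides of (i) and (ii) with the same direct limit of duals of finite-dimensional quotients --- and your deduction of (iii) by factoring $E^b$ and $E_b$ as composites of completion and continuous duality is precisely the intended argument (note that exactness of the continuous dual can be read off directly from Proposition~\ref{isom}, since an anti-equivalence is exact, so your Artin--Rees justification, while correct, is more than is needed).
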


\begin{proof}
(i),(ii) are straightforward from the definitions. 
(iii) follows from (i),(ii), since the completion functors are
exact.  
\end{proof}

\subsection{Parabolic induction and restriction functors for rational Cherednik
algebras} 

Theorem \ref{comp} allows us to define analogs of parabolic
restriction functors for rational Cherednik algebras. 

Namely, let $b\in\h$, and $G_b=G'$. 
Define a functor $\Res_b: \cO _c(G, \h)_0\to
\cO _{c}(G',\h/\h^{G'})_0$ by the formula
$$
\Res_b(M)=(\zeta\circ E\circ I^{-1}\circ
\theta_*)(\widehat{M}_b).
$$  

We can also define the parabolic induction functors
in the opposite direction. Namely, 
let $N\in \cO _{c}(G',\h/\h^{G'})_0$. 
Then we can define the object $\Ind_b(N)\in {\mathcal
O}_c(G, \h)_0$ by the formula 
$$
\Ind_b(N)=(E^b\circ \theta_*^{-1}\circ
I)(\widehat{\zeta^{-1}(N)}_0).
$$

\begin{proposition}\label{propres} 
\begin{enumerate}
\item[(i)] The functors $\Ind_b$, $\Res_b$ are exact.
\item[(ii)] One has $\Ind_b(\Res_b(M))=E^b(\widehat{M}_b)$.
\end{enumerate}
\end{proposition}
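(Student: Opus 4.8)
The plan is to unwind the definitions of $\Res_b$ and $\Ind_b$ and reduce both claims to facts already established in the excerpt. For part (i), recall that $\Res_b(M)=(\zeta\circ E\circ I^{-1}\circ\theta_*)(\widehat{M}_b)$ is a composite of four functors; the completion functor $M\mapsto\widehat{M}_b$ is exact (noted just above Proposition \ref{dua}), the equivalence $\zeta$ between $\cO_{c}(G_\lambda,\h)_0$ and $\cO_{c}(G_\lambda,\h/\h^{G_\lambda})_0$ is exact being an equivalence, the equivalence $\theta_*$ of Theorem \ref{comp} is exact, and $I^{-1}$ is exact by Lemma \ref{le} (it is one direction of an equivalence of module categories). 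The only non-formal point is exactness of $E$ restricted to $\widehat\cO_c(G_b,\h)$ landing in $\cO_c(G_b,\h)_0$; but $E=E^b$ for $b=0$, so this is covered by Proposition \ref{dua}(iii). Assembling, $\Res_b$ is a composite of exact functors, hence exact. The same argument handles $\Ind_b=(E^b\circ\theta_*^{-1}\circ I)(\widehat{\zeta^{-1}(N)}_0)$: $\zeta^{-1}$ is an equivalence, completion at $0$ is exact, $I$ is exact by Lemma \ref{le}, $\theta_*^{-1}$ is exact, and $E^b$ is exact by Proposition \ref{dua}(iii). This gives (i).

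For part (ii), the plan is to chase the composite $\Ind_b\circ\Res_b$ through the defining formulas and observe that the equivalences cancel in pairs. Starting from $M\in\cO_c(G,\h)_0$, form $\widehat{M}_b\in\widehat\cO_c(G,\h)^b$; then $\theta_*(\widehat M_b)$ is a module over $Z(G,G_b,\widehat{H_{1,c}}(G_b,\h)_0)$, and $I^{-1}$ sends it to a module over $\widehat{H_{1,c}}(G_b,\h)_0$, namely the completed restriction. Next, $E$ of that module produces an object of $\cO_c(G_b,\h)_0$, to which we apply $\zeta$ to land in $\cO_c(G_b,\h/\h^{G_b})_0$: this is $\Res_b(M)$. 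Now applying $\Ind_b$: first $\zeta^{-1}$ undoes $\zeta$, returning us to $E(I^{-1}\theta_*\widehat M_b)\in\cO_c(G_b,\h)_0$. Then we complete at $0$, and by Theorem \ref{equi} (for the group $G_b$) the functors $\widehat{\phantom{M}}$ and $E$ are mutually inverse equivalences between $\cO_c(G_b,\h)_0$ and $\widehat\cO_c(G_b,\h)$, so $\widehat{E(I^{-1}\theta_*\widehat M_b)}_0=I^{-1}\theta_*\widehat M_b$. Applying $I$ undoes $I^{-1}$, applying $\theta_*^{-1}$ undoes $\theta_*$, and we are left with $E^b(\widehat M_b)$. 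Hence $\Ind_b(\Res_b(M))=E^b(\widehat M_b)$, as claimed.

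The step that requires the most care is the cancellation $\widehat{E(-)}_0=\mathrm{id}$ in the middle of the computation for (ii): one must check that the module $I^{-1}\theta_*\widehat M_b$ indeed lies in $\widehat\cO_c(G_b,\h)$ (i.e.\ is finitely generated over $\CC[[\h]]$), so that Theorem \ref{equi} applies. This follows because $\widehat M_b$ is finitely generated over $\widehat{\CC[\h]}_b$, the isomorphism $\theta$ of Theorem \ref{comp} is compatible with the relevant subalgebras of functions, and $I^{-1}$ (being $P\otimes_A(-)^{-1}$, essentially $\e_{G_b}$-truncation along the matrix algebra identification $Z\cong\mathrm{Mat}_{|G/G_b|}$) preserves finite generation over the completed polynomial ring. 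I would spell this out as a short lemma or an inline remark rather than a long computation. Everything else is bookkeeping with the commuting diagram of equivalences, and I expect no genuine obstacle there.
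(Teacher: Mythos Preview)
Your proof is correct and follows the same approach as the paper, which simply says that part (i) follows from exactness of $E^b$ and the completion functor (Proposition \ref{dua}) and that part (ii) is ``straightforward from the definition.'' You have spelled out in detail exactly that straightforward check, including the cancellation $\widehat{E(-)}_0=\mathrm{id}$ via Theorem \ref{equi}, which is the one point requiring care.
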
 

\begin{proof} Part (i) follows from the fact that the
functor $E^b$ and the completion functor $\,\widehat{\quad}_b$ 
are exact (see Proposition \ref{dua}).
Part (ii) is straightforward from the definition.  
\end{proof}

\begin{theorem}\label{adj2}
The functor $\Ind_b$ is right adjoint to 
$\Res_b$.  
\end{theorem}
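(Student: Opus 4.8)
The plan is to deduce the adjunction $(\Res_b,\Ind_b)$ from the adjunctions and (anti-)equivalences that have already been assembled in this section, by tracing through the defining formulas
$$
\Res_b(M)=(\zeta\circ E\circ I^{-1}\circ\theta_*)(\widehat{M}_b),\qquad
\Ind_b(N)=(E^b\circ\theta_*^{-1}\circ I)(\widehat{\zeta^{-1}(N)}_0).
$$
First I would recall which of the constituent functors are equivalences of categories (hence adjoint to their inverses on both sides): $\theta_*$ is an equivalence by Theorem \ref{comp}, $I$ is an equivalence by Lemma \ref{le}, and $\zeta:\cO_c(G_\lambda,\h)_0\to\cO_c(G_\lambda,\h/\h^{G_\lambda})_0$ is an equivalence (used already in Corollary \ref{whit}). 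The genuinely non-invertible pieces are the completion functor $\widehat{\phantom{M}}_b:\cO_c(G,\h)_0\to\widehat\cO_c(G,\h)^b$ and its right adjoint $E^b$ (Proposition \ref{adj1}), together with the analogous pair $\widehat{\phantom{M}}_0:\cO_c(G',\h)_0\to\widehat\cO_c(G',\h)$ and $E$, which are in fact mutually inverse equivalences by Theorem \ref{equi}.

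The key step is then a formal manipulation: a right adjoint of a composite is the composite of the right adjoints in the reverse order. Reading $\Res_b=\zeta\circ E\circ I^{-1}\circ\theta_*\circ(\widehat{\phantom{M}}_b)$ from right to left, its right adjoint should be
$$
(E^b)\circ(\theta_*^{-1})\circ I\circ E^{-1}\circ\zeta^{-1},
$$
using that $E$ is inverse to $\widehat{\phantom{M}}_0$ (so $E^{-1}=\widehat{\phantom{M}}_0$), that $\theta_*$ is an equivalence (so its right adjoint is $\theta_*^{-1}$), that $I$ is an equivalence with right adjoint $I^{-1}$ — note the $I^{-1}$ appearing in $\Res_b$ contributes $I$ to the adjoint — and that $E^b$ is right adjoint to $\widehat{\phantom{M}}_b$. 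Substituting $E^{-1}=\widehat{\phantom{M}}_0$ and $\zeta^{-1}=\zeta^{-1}(N)$ reconstitutes exactly $\Ind_b(N)=(E^b\circ\theta_*^{-1}\circ I)(\widehat{\zeta^{-1}(N)}_0)$. So the statement is essentially bookkeeping once the list of adjunctions is in hand.

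Concretely, I would carry this out by writing a single chain of natural isomorphisms of $\Hom$ spaces. Starting from $\Hom_{\cO_c(G',\h/\h^{G'})_0}(\Res_b(M),N)$, peel off $\zeta$ (equivalence), then $E$ (equivalence, replace by $\widehat{\phantom{M}}_0$ on the other side), then $I^{-1}$ and $\theta_*$ (equivalences), arriving at $\Hom_{\widehat\cO_c(G,\h)^b}(\widehat M_b,\;\theta_*^{-1}I\widehat{\zeta^{-1}(N)}_0)$, and finally apply the adjunction of Proposition \ref{adj1} to turn this into $\Hom_{\cO_c(G,\h)_0}(M,\;E^b\theta_*^{-1}I\widehat{\zeta^{-1}(N)}_0)=\Hom_{\cO_c(G,\h)_0}(M,\Ind_b(N))$. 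Naturality in $M$ and $N$ is automatic since every isomorphism used is natural.

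The main obstacle — really the only point requiring care rather than formalism — is matching the category in which each intermediate $\Hom$ lives and making sure the equivalences compose on the correct categories: in particular that $E$ here is the equivalence $\widehat\cO_c(G',\h)\to\cO_c(G',\h)_0$ of Theorem \ref{equi} (with $G'$ in place of $G$, and $\h$ before applying $\zeta$), not $E^b$, and that $\theta_*$ and $I$ are being applied to the $\widehat{H_{1,c}}(G,\h)_b$-module $\widehat M_b$ with $Z(G,G_b,\widehat{H_{1,c}}(G_b,\h)_0)$ as intermediary. Once these identifications are pinned down, the adjointness is immediate, and I would also remark (as an internal check) that Proposition \ref{propres}(ii), $\Ind_b\Res_b(M)=E^b(\widehat M_b)$, together with the unit of the adjunction $(\widehat{\phantom{M}}_b,E^b)$ produces the expected unit $M\to\Ind_b\Res_b(M)$.
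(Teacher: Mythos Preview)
Your proposal is correct and follows essentially the same approach as the paper: a chain of natural $\Hom$-isomorphisms peeling off the equivalences $\zeta$, $E$ (with inverse $\widehat{\phantom{M}}_0$), $I^{-1}$, $\theta_*$, and then invoking Proposition \ref{adj1} for the pair $(\widehat{\phantom{M}}_b,E^b)$. The paper's proof is exactly this computation, written out in one display.
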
 

\begin{proof} We have 
\begin{eqnarray*}
&&\Hom(\Res_b(M),N)=
\Hom((\zeta\circ E\circ I^{-1}\circ \theta_*)(\widehat{M}_b),N)=
\Hom((E\circ I^{-1}\circ \theta_*)(\widehat{M}_b),\zeta^{-1}(N))\\
&=&
\Hom((I^{-1}\circ \theta_*)(\widehat{M}_b),\widehat{\zeta^{-1}(N)}_0)=
\Hom(\widehat{M}_b,(\theta_*^{-1}\circ
I)(\widehat{\zeta^{-1}(N)}_0))\\
&=&\Hom(M,(E^b\circ \theta_*^{-1}\circ
I)(\widehat{\zeta^{-1}(N)}_0))=\Hom(M,\Ind_b(N)).
\end{eqnarray*}
At the end we used Proposition \ref{adj1}.  
\end{proof} 
Then we can obtain the following corollary easily.
\begin{corollary}\label{proj} The functor $\Res_b$ maps projective objects 
to projective ones, and the functor $\Ind_b$ maps injective objects to
injective ones. 
\end{corollary}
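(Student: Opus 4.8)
The plan is to deduce this purely formally from the adjunction established in Theorem \ref{adj2} together with the exactness statement of Proposition \ref{propres}(i). Recall the general principle from homological algebra: if $(\mathcal{F}, \mathcal{G})$ is an adjoint pair of functors between abelian categories, with $\mathcal{F}$ left adjoint to $\mathcal{G}$, and if $\mathcal{G}$ is exact, then $\mathcal{F}$ preserves projectives; dually, if $\mathcal{F}$ is exact, then $\mathcal{G}$ preserves injectives. Here Theorem \ref{adj2} tells us that $\Res_b$ is left adjoint to $\Ind_b$, and Proposition \ref{propres}(i) tells us that both $\Res_b$ and $\Ind_b$ are exact. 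So both halves of the general principle apply.

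Concretely, here is how I would write it. Let $P\in \cO_c(G,\h)_0$ be projective. To see that $\Res_b(P)$ is projective in $\cO_{c}(G',\h/\h^{G'})_0$, we must show that $\Hom(\Res_b(P),-)$ is exact. But by Theorem \ref{adj2},
$$
\Hom_{\cO_{c}(G',\h/\h^{G'})_0}(\Res_b(P),N)\cong \Hom_{\cO_c(G,\h)_0}(P,\Ind_b(N)),
$$
naturally in $N$. The functor $N\mapsto \Ind_b(N)$ is exact by Proposition \ref{propres}(i), and $\Hom(P,-)$ is exact since $P$ is projective; composing the two exact functors shows that $N\mapsto \Hom(\Res_b(P),N)$ is exact, i.e.\ $\Res_b(P)$ is projective. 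The argument for injectives is dual: if $Q\in \cO_{c}(G',\h/\h^{G'})_0$ is injective, then
$$
\Hom_{\cO_c(G,\h)_0}(M,\Ind_b(Q))\cong \Hom_{\cO_{c}(G',\h/\h^{G'})_0}(\Res_b(M),Q)
$$
is the composite of the exact functor $\Res_b$ with the exact functor $\Hom(-,Q)$, hence exact in $M$, so $\Ind_b(Q)$ is injective.

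There is essentially no obstacle here: the content of the corollary is entirely contained in the preceding two results, and what remains is the standard formal bookkeeping above. The only point worth a word of care is to make sure the adjunction of Theorem \ref{adj2} is used in the correct direction (it is $\Res_b$ that is left adjoint, so it preserves projectives, while the right adjoint $\Ind_b$ preserves injectives), and to note that exactness of $\Ind_b$ — which is not automatic from being a right adjoint — is exactly what Proposition \ref{propres}(i) provides and is what makes the projective-preservation statement work.
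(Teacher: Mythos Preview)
Your proof is correct and is exactly the standard formal argument the paper has in mind; the paper itself merely says the corollary follows easily from Theorem \ref{adj2} and Proposition \ref{propres}, which is precisely what you have spelled out.
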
 

We can also define functors
$\res_\lambda: \cO _c(G, \h)_0\to 
\cO _{c}(G',\h/\h^{G'})_0$ and $\ind_\lambda: 
\cO _{c}(G',\h/\h^{G'})_0\to 
\cO _c(G, \h)_0$, attached to $\lambda\in
\h^{*G'}_{\reg}$, by 
$$
\res_\lambda:= \dagger\circ \Res_\lambda\circ \dagger, 
\ind_\lambda:= \dagger\circ \Ind_\lambda\circ \dagger, 
$$
where $\dagger$ is as in Subsection \ref{sec:duali}.

\begin{corollary}\label{proj1} 
The functors $\res_\lambda$, 
$\ind_\lambda$ are exact. 
The functor $\ind_\lambda$ is left adjoint to $\res_\lambda$. 
The functor $\ind_\lambda$ maps projective objects 
to projective ones, and the functor $\res_\lambda$ injective objects to
injective ones. 
\end{corollary}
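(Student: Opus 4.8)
The plan is to deduce all four assertions formally from their counterparts for $\Res_\lambda$ and $\Ind_\lambda$ — namely Proposition \ref{propres}(i), Theorem \ref{adj2}, and Corollary \ref{proj} — together with the formal properties of the duality functor $\dagger$ recorded in Sections \ref{sec:contmod} and \ref{sec:duali}: that $\dagger$ is a contravariant equivalence between the relevant blocks of category $\cO$, that it is involutive ($M^{\dagger\dagger}\cong M$), and hence that it is exact, fully faithful, and satisfies $\Hom(X,Y)=\Hom(Y^\dagger,X^\dagger)$.

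For exactness: since $\dagger$ is an anti-equivalence of abelian categories it carries short exact sequences to short exact sequences with the arrows reversed, and $\Res_\lambda$, $\Ind_\lambda$ are exact by Proposition \ref{propres}(i); hence $\res_\lambda=\dagger\circ\Res_\lambda\circ\dagger$ and $\ind_\lambda=\dagger\circ\Ind_\lambda\circ\dagger$ are composites of exact functors with the two contravariances cancelling, so they are exact covariant functors.

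For the adjunction I would chase
$$
\Hom(\ind_\lambda N,M)=\Hom(\dagger\,\Ind_\lambda(N^\dagger),M)=\Hom(M^\dagger,\Ind_\lambda(N^\dagger))=\Hom(\Res_\lambda(M^\dagger),N^\dagger)=\Hom(N,\res_\lambda M),
$$
where the first and last steps use $\dagger^2=\id$ together with $\Hom(X,Y)=\Hom(Y^\dagger,X^\dagger)$, and the middle step is Theorem \ref{adj2} ($\Ind_\lambda$ right adjoint to $\Res_\lambda$) applied to the objects $M^\dagger$ and $N^\dagger$. This shows $\ind_\lambda$ is left adjoint to $\res_\lambda$.

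Finally, preservation of projectives and injectives follows in two ways. Directly: a contravariant equivalence interchanges projective and injective objects, so for $P$ projective the object $P^\dagger$ is injective, $\Ind_\lambda(P^\dagger)$ is injective by Corollary \ref{proj}, and $\dagger$ returns a projective; dually, for $I$ injective $I^\dagger$ is projective, $\Res_\lambda(I^\dagger)$ is projective by Corollary \ref{proj}, and $\dagger$ returns an injective. Alternatively, $\ind_\lambda$ is an exact left adjoint and therefore preserves projectives, while $\res_\lambda$ is an exact right adjoint and therefore preserves injectives. There is no genuine obstacle in this argument; the one point needing care is the bookkeeping — one must check that each intermediate application of $\dagger$ goes between the correct blocks $\cO_c(G,\h)_0$ and $\cO_{\bar c}(G,\h^*)_0$ (and their parabolic analogues), so that $\Res_\lambda$ and $\Ind_\lambda$ are being invoked with $\lambda$ playing the role of a point of the ``position'' space, exactly as in the construction of $\Res_b$, $\Ind_b$ via Theorem \ref{comp}.
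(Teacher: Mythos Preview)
Your proof is correct and follows essentially the same approach as the paper, which simply says ``Easy to see from the definition of the functors and the Theorem \ref{adj2}.'' You have spelled out the details that the paper leaves implicit: the exactness from Proposition \ref{propres}(i), the adjunction chain obtained by conjugating Theorem \ref{adj2} with $\dagger$, and the preservation of projectives/injectives (either via Corollary \ref{proj} and the fact that $\dagger$ swaps projectives and injectives, or directly from the adjunction plus exactness). Your final remark about the bookkeeping---that $\lambda$ lives in $\h^{*G'}_{\reg}$ and that after applying $\dagger$ one is working in $\cO_{\bar c}(G,\h^*)$, so $\Res_\lambda$ is indeed a $\Res_b$-type functor with $\lambda$ playing the role of the point---is exactly the point one has to check and is handled correctly.
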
 

\begin{proof}
Easy to see from the definition of the functors and the Theorem \ref{adj2}.
\end{proof}

We also have the following proposition, whose proof is
straightforward. 

\begin{proposition}\label{Jacc}
We have 
$$
\ind_\lambda(N)=({\mathcal J}\circ \psi_\lambda^{-1})(N),\quad\text{
and }\quad
\res_\lambda(M)=(\psi_\lambda\circ E_\lambda)(\widehat{M}),
$$
where $\psi_\lambda$ is defined in Corollary \ref{whit}.
\end{proposition}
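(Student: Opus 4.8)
The plan is to unwind the definitions of all the functors involved and match them up, using the adjunctions and duality statements already proved. Recall that $\psi_\lambda: \cO_c(G,\h)_\lambda \to \cO_c(G_\lambda, \h/\h^{G_\lambda})_0$ is the composite $\zeta \circ (\text{the equivalence } \cO_c(G,\h)_\lambda \to \cO_c(G_\lambda,\h)_0) $ coming from Theorem \ref{comp}, while $\res_\lambda = \dagger \circ \Res_\lambda \circ \dagger$ and $\ind_\lambda = \dagger \circ \Ind_\lambda \circ \dagger$, with $\Res_\lambda, \Ind_\lambda$ defined via completion at $\lambda$. The generalized Jacquet functor is $\mathcal J = E \circ \widehat{\phantom{M}}$, and $\mathcal J_\lambda = E_\lambda \circ \widehat{\phantom{M}}$.

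\medskip

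\textbf{First} I would prove the second formula, $\res_\lambda(M) = (\psi_\lambda \circ E_\lambda)(\widehat M)$. Starting from $\res_\lambda = \dagger \circ \Res_\lambda \circ \dagger$ and the definition $\Res_\lambda(M') = (\zeta \circ E \circ I^{-1} \circ \theta_*)(\widehat{M'}_\lambda)$, I would use Proposition \ref{dua}(ii), which says $(\widehat{M}_b)^* = E_b(M^*)$, to convert the completion-then-dualize operations into the functor $E_\lambda$ applied to $\widehat M$ (here using that $\dagger = E \circ {}^*$ as noted in Subsection \ref{sec:duali}, and that $*$ intertwines completion at $\lambda$ with $E_\lambda$). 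The equivalences $\zeta$, $I$, $\theta_*$ are all compatible with the duality $*$ in the appropriate sense, so they pass through, leaving exactly $\psi_\lambda \circ E_\lambda$ applied to $\widehat M$. This is essentially a bookkeeping computation with the commuting squares already established in Propositions \ref{dua} and \ref{isom}.

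\medskip

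\textbf{Second}, for the formula $\ind_\lambda(N) = (\mathcal J \circ \psi_\lambda^{-1})(N)$, I would unwind $\ind_\lambda = \dagger \circ \Ind_\lambda \circ \dagger$ together with $\Ind_b(N') = (E^b \circ \theta_*^{-1} \circ I)(\widehat{\zeta^{-1}(N')}_0)$. Applying Proposition \ref{dua}(i), $E^b(M^*) = (\widehat M)^*$, turns the $E^b$-after-completion-at-$0$ combination into $\widehat{\phantom{M}}$ followed by $E$ — i.e. into $\mathcal J$ — once the outer $\dagger$'s are absorbed; and again $\theta_*^{-1}$, $I$, $\zeta^{-1}$ conjugated by duality assemble into $\psi_\lambda^{-1}$. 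One should double-check the bookkeeping that $\dagger \circ (\text{completion at }0)^* \circ \dagger$ really reproduces $E \circ \widehat{\phantom{M}}$ using $\dagger\dagger = \id$ on $\cO_0$ and Theorem \ref{equi}.

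\medskip

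\textbf{The main obstacle} will be keeping track of which category each functor lands in and on which side ($\h$ versus $\h^*$, $G$ versus $G_\lambda$, bar-$c$ versus $c$) as one conjugates by $\dagger$, since $\dagger$ swaps $\h \leftrightarrow \h^*$ and $c \leftrightarrow \bar c$; one must check that the two applications of $\dagger$ cancel these swaps so that the final identity makes sense as stated. There is also the minor point that $\zeta$ (the equivalence $\cO_c(G_\lambda,\h)_0 \cong \cO_c(G_\lambda, \h/\h^{G_\lambda})_0$ obtained by splitting off the trivial $\h^{G_\lambda}$-direction) commutes with everything in sight, which should be recorded explicitly. Once these compatibilities are in hand, both identities drop out, and the proof is indeed "straightforward" as claimed.
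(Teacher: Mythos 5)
Your approach is correct and is precisely the sort of "straightforward" unwinding the paper has in mind when it omits the proof: unwind $\res_\lambda = \dagger\circ\Res_\lambda\circ\dagger$ and $\ind_\lambda = \dagger\circ\Ind_\lambda\circ\dagger$, push the dualities through using Proposition~\ref{dua}, Proposition~\ref{isom}, $M^\dagger = E(M^*)$, and $M^{\dagger\dagger}=M$ together with Theorem~\ref{equi}, and then recognize $\psi_\lambda$ (built from the $\lambda$-completion version of Theorem~\ref{comp}) and $E_\lambda$ (resp.\ $\mathcal J$) in what remains.

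One place to be slightly more explicit than "$\zeta$, $I$, $\theta_*$ pass through duality": the functor $\Res_\lambda$ applied to $M^\dagger$ is assembled from the $b$-version of $(\theta_*, I^{-1}, E, \zeta)$ on the $\h^*$ side, whereas $\psi_\lambda$ is assembled from the $\lambda$-version (completion at $\lambda\in\h^*$) of Theorem~\ref{comp} on the $\h$ side. So what you actually need is that the duality $*$ intertwines the $b$-completion picture for $(G,\h^*,\bar c)$ with the $\lambda$-completion picture for $(G,\h,c)$, which is exactly what Propositions~\ref{isom} and~\ref{dua} plus the remark after Corollary~\ref{whit} encode. The clean intermediate identity is $\bigl(\widehat{(M^\dagger)}_\lambda\bigr)^* = E_\lambda(\widehat M)$, obtained from Proposition~\ref{dua}(ii) and $(M^\dagger)^* = \widehat M$ (the latter from Theorem~\ref{equi} and $M^{\dagger\dagger}=M$); the rest is matching the centralizer-algebra equivalences on both sides, which is the bookkeeping you correctly flag as the main point to check.
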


\subsection{Some evaluations of the parabolic induction and 
restriction functors}

For generic $c$, the category $\cO_c(G, \h)$ is
semisimple, and naturally equivalent to the category $\Rep G$ 
of finite dimensional representations of $G$, via the functor $\tau\mapsto
M_c(G,\h,\tau)$. (If $G$ is a Coxeter group, the exact set of
such $c$ (which are called regular) is known from 
\cite{GGOR} and \cite{Gy}).   

\begin{proposition}\label{compuRI} 
\begin{enumerate}
\item[(i)] Suppose that $c$ is generic. 
Upon the above identification, the functors
$\Ind_b$, $\ind_\lambda$ and $\Res_b$, $\res_\lambda$ 
go to the usual induction and restriction
functors between categories $\Rep G$ and $\Rep G'$.  
In other words, we have 
$$
\Res_b(M_c(G,\h,\tau))=\oplus_{\xi\in \widehat{G'}}n_{\tau\xi}M_{c}(G',\h/\h^{G'},\xi),
$$
and 
$$
\Ind_b(M_{c}(G',\h/\h^{G'},\xi))=\oplus_{\tau\in \widehat{G}}
n_{\tau\xi}M_{c}(G,\h,\tau),
$$
where $n_{\tau\xi}$ is the multiplicity of occurrence of 
$\xi$ in $\tau|_{G'}$, and similarly for $\res_\lambda$, 
${\mathrm ind}_\lambda$. 
\item[(ii)] The equations of (i) hold at the level of Grothendieck
groups for all $c$. 
\end{enumerate}

\end{proposition}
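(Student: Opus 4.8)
The plan is to reduce everything to one computation — the value of the restriction functor $\Res_b$ on a standard module — which can be carried out directly from the definition $\Res_b=\zeta\circ E\circ I^{-1}\circ\theta_*$ and is valid for \emph{all} $c$. Part (i) will then follow by a formal argument in semisimple categories, and part (ii) by combining that computation with the adjunctions of Theorem \ref{adj2} and Corollary \ref{proj1} and the duality $\dagger$.

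The key step is to prove that, for every $c$,
$$
\Res_b\bigl(M_c(G,\h,\tau)\bigr)=M_c\bigl(G',\h/\h^{G'},\tau|_{G'}\bigr)=\bigoplus_{\xi\in\widehat{G'}}n_{\tau\xi}\,M_c(G',\h/\h^{G'},\xi),
$$
where $G'=G_b$, $n_{\tau\xi}$ is the multiplicity of $\xi$ in $\tau|_{G'}$, and for a possibly reducible $G'$-module $\sigma$ we write $M_c(G',\h/\h^{G'},\sigma)=H_{1,c}(G',\h/\h^{G'})\otimes_{\CC G'\ltimes S(\h/\h^{G'})}\sigma$. To see this, note that $M_c(G,\h,\tau)=\CC[\h]\otimes\tau$ as a $\CC[\h]$-module, so its completion $\widehat{M_c(G,\h,\tau)}_b$ is the module of sections of the corresponding sheaf on the formal neighbourhood of the orbit $Gb$; identifying that formal neighbourhood with $|G/G_b|$ translated copies of the formal neighbourhood of $0$, one gets an isomorphism of $\widehat{H_{1,c}}(G,\h)_b$-modules
$$
\widehat{M_c(G,\h,\tau)}_b\cong\mathrm{Fun}_{G_b}\bigl(G,\widehat{M_c(G_b,\h,\tau|_{G_b})}_0\bigr),
$$
where the right-hand side carries the $Z(G,G_b,\widehat{H_{1,c}}(G_b,\h)_0)$-module structure of Lemma \ref{le}. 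The only nontrivial point is that the action of the $y_a$ matches formula \eqref{eqn:thetay}; this holds because near $b$ the summands of the Dunkl operator indexed by reflections $s\notin G_b$ are regular and are precisely the correction terms restored in \eqref{eqn:thetay}. It then follows that $(I^{-1}\circ\theta_*)(\widehat{M_c(G,\h,\tau)}_b)=\widehat{M_c(G_b,\h,\tau|_{G_b})}_0$, so by Theorem \ref{equi} its image under $E$ is $M_c(G_b,\h,\tau|_{G_b})$, and applying $\zeta$ yields the displayed direct sum. Since $\{[M_c(G,\h,\tau)]\}_\tau$ is a $\ZZ$-basis of $\K_0(\cO_c(G,\h)_0)$ for every $c$, this already proves part (ii) for $\Res_b$, and the same computation with $\lambda\in\h^{*G'}_{\reg}$ in place of $b$ (together with the definition of $\res_\lambda$ via $\dagger$) proves it for $\res_\lambda$.

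For part (i), I would use that for regular $c$ the categories $\cO_c(G,\h)$ and $\cO_c(G',\h/\h^{G'})$ are semisimple, with simple objects the standard modules $M_c(\tau)=L_c(\tau)$, so that $\tau\mapsto M_c(G,\h,\tau)$ identifies them with $\Rep G$ and $\Rep G'$. An additive functor between such categories is determined up to isomorphism by its matrix of nonnegative integers on $\K_0$; by the key step $\Res_b$ has matrix $(n_{\tau\xi})$, i.e. it is ordinary restriction, and by Theorem \ref{adj2} its right adjoint $\Ind_b$ has the transpose matrix, which by Frobenius reciprocity is the matrix of ordinary induction, so $\Ind_b(M_c(G',\h/\h^{G'},\xi))=\bigoplus_\tau n_{\tau\xi}M_c(G,\h,\tau)$. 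The statements for $\res_\lambda,\ind_\lambda$ follow by conjugating with $\dagger$ (which sends $M_c(G,\h,\tau)$ to $M_{\bc}(G,\h^{*},\tau^{*})$ in the semisimple case) and using $(\tau|_{G'})^{*}=\tau^{*}|_{G'}$. To extend the induction identities to all $c$ at the level of $\K_0$ — the rest of part (ii) — I would combine the key step with the two adjunctions: all four functors are exact and $\Res_b$, $\ind_\lambda$ preserve projectives (Corollaries \ref{proj}, \ref{proj1}), so these adjunctions pass to the Euler pairing on $\K_0$ (here one uses the standard fact that $\cO_c$ is a highest-weight category, hence of finite homological dimension, so that the standard and costandard modules are dual bases); feeding in the value of $\Res_b$, equivalently $\res_\lambda$, on standards and on costandards, and invoking $\dagger$ to pass between the two bases, then pins down $[\Ind_b(M_c(\xi))]$ and $[\ind_\lambda(M_c(\xi))]$ as $\sum_\tau n_{\tau\xi}[M_c(\tau)]$ for every $c$.

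The main obstacle, and the only genuinely computational point, is the isomorphism $\widehat{M_c(G,\h,\tau)}_b\cong\mathrm{Fun}_{G_b}(G,\widehat{M_c(G_b,\h,\tau|_{G_b})}_0)$ of the key step: one must handle carefully the bookkeeping with the formal neighbourhood of the whole orbit $Gb$ rather than of the point $b$, the removal of the trivial directions $\h^{G'}$ by $\zeta$, and the precise match between the $y_a$-action and formula \eqref{eqn:thetay}; the statement of Theorem \ref{comp}, however, does most of the work. A secondary delicate point is the passage, in the last part of (ii), between statements in the standard basis and in the costandard basis, which relies on the compatibility of $\dagger$ with the highest-weight structures of $\cO_c(G,\h)$ and $\cO_{\bc}(G,\h^{*})$.
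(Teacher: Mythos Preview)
Your approach is correct but differs from the paper's in an instructive way. The paper's proof is a two--line deformation argument: part (i) is checked at $c=0$ (where $\sH_{1,0}=G\ltimes\cD(\h)$ and everything reduces to ordinary $\cD$-modules) and then deformed to generic $c$; part (ii) follows because $\Res_b$ and $\Ind_b$ are exact and \emph{flat in $c$}, so the multiplicities in $\K_0$ are constant in $c$ and hence equal to their generic values.

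Your route---computing $\Res_b(M_c(G,\h,\tau))$ directly from the isomorphism $\theta$ of Theorem~\ref{comp} for \emph{all} $c$---is a genuine alternative and arguably more transparent for $\Res_b$: it makes the identification $\widehat{M_c(G,\h,\tau)}_b\cong I\bigl(\widehat{M_c(G_b,\h,\tau|_{G_b})}_0\bigr)$ explicit and immediately yields (ii) for $\Res_b$ (and $\res_\lambda$) without any deformation. Where your argument becomes heavier than the paper's is in (ii) for $\Ind_b$: you pass through Euler pairings, finite homological dimension, and the highest-weight duality between standards and costandards---machinery coming from \cite{GGOR} that is not developed in these notes. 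This works, but the paper's flatness-in-$c$ observation dispatches $\Ind_b$ just as easily as $\Res_b$, and avoids importing that structure. If you want to keep your direct computation for $\Res_b$, the cleanest finish is to combine it with the paper's flatness argument for $\Ind_b$ rather than the adjunction route.
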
 

\begin{proof}
Part (i) is easy for $c=0$, and is obtained for generic $c$ 
by a deformation argument. Part (ii) is also obtained 
by deformation argument, taking into account that the functors
$\Res_b$ and $\Ind_b$ are exact and flat with respect to $c$. 
\end{proof} 

\begin{example} 
Suppose that $G'=1$. Then 
$\Res_b(M)$ is the fiber of $M$ at $b$, while
$\Ind_b(\CC)=P_{KZ}$, the object defined in \cite{GGOR}, which is
projective and injective (see Remark \ref{conhol}). This shows that Proposition
\ref{compuRI} (i) does not hold for special $c$, as $P_{KZ}$ is
not, in general, a direct sum of standard modules.  
\end{example}

\subsection{Dependence of the functor $\Res_b$ on $b$}\label{dep}
\label{sec:res}

Let $G'\subset G$ be a parabolic subgroup. 
In the construction of the functor $\Res_b$, 
the point $b$ can be made a variable which belongs to the open
set $\h^{G'}_{\reg}$. 

Namely, let $\widehat{\h^{G'}_{\reg}}$
be the formal neighborhood of the locally closed set $\h^{G'}_{\reg}$
in $\h$, and let $\pi: \widehat{\h^{G'}_{\reg}}\to \h/G$ be 
the natural map (note that this map is an \'etale covering
of the image with the Galois group $N_G(G')/G'$, where
$N_G(G')$ is the normalizer of $G'$ in $G$). 
Let $\widehat{H_{1, c}}(G, \h)_{\h^{G'}_{\reg}}$ be the
pullback of the sheaf $H_{1,c,G,\h}$ under $\pi$. 
We can regard it as a sheaf of algebras over 
$\h^{G'}_{\reg}$. Similarly to Theorem \ref{comp}
we have an isomorphism 
$$
\theta: \widehat{H_{1, c}}(G, \h)_{\h^{G'}_{\reg}}\to 
Z(G,G',\widehat{H_{1, c}}(G',\h/\h^{G'})_0)\hat\otimes
\cD(\h^{G'}_{\reg}),
$$
where $\cD(\h^{G'}_{\reg})$ is the sheaf of differential
operators on $\h^{G'}_{\reg}$, and $\hat\otimes$ is an
appropriate completion of the tensor product.

Thus, repeating the construction of $\Res_b$, we can define 
the functor 
$$
\Res: \cO _c(G, \h)_0\to 
\cO _{c}(G',\h/\h^{G'})_0\boxtimes {\mathrm
Loc}(\h^{G'}_{\reg}),
$$
where ${\mathrm Loc}(\h^{G'}_{\reg})$ stands for the category of
local systems (i.e. $\cO$-coherent $\cD$-modules) on $\h^{G'}_{\reg}$. 
This functor has the property that $\Res_b$ is the fiber of $\Res$ at $b$.
Namely, the functor $\Res$ is defined by the formula 
$$
\Res(M)=(E\circ I^{-1}\circ
\theta_*)(\widehat{M}_{\h^{G'}_{\reg}}),
$$  
where $\widehat{M}_{\h^{G'}_{\reg}}$ is the restriction of the
sheaf $M$ on $\h$ to the formal neighborhood of $\h^{G'}_{\reg}$.

\begin{remark}
If $G'$ is the trivial group, the functor $\Res$
is just the KZ functor from \cite{GGOR}, which we will discuss later. 
Thus, $\Res$ is a relative version of the KZ functor. 
\end{remark}

\begin{remark}
Note that the object ${\mathrm Res}(M)$ is naturally equivariant under the
group $N_G(G')/G'$. 
\end{remark}

Thus, we see that the functor $\Res_b$ does not depend on $b$, up
to an isomorphism. A similar statement is true for the functors 
$\Ind_b$, $\res_\lambda$, $\ind_\lambda$.

\begin{conjecture}\label{bl}
For any $b\in \h,\lambda\in \h^*$ such that $G_b=G_\lambda$, 
we have isomorphisms of functors $\Res_b\cong \res_\lambda$,
$\Ind_b\cong \ind_\lambda$. 
\end{conjecture}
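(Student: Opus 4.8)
The plan is to establish both isomorphisms by a single comparison argument. First I would use the base-point independence established in Section \ref{sec:res}: each of $\Res_b$, $\Ind_b$, $\res_\lambda$, $\ind_\lambda$ is a fibre of a functor valued in a box product $\cO_{c}(G',\h/\h^{G'})_0\boxtimes\Loc(V)$ over a connected variety $V$ (namely $\h^{G'}_{\reg}$ or $\h^{*G'}_{\reg}$), so it depends on $b$, resp.\ $\lambda$, only up to isomorphism; hence it suffices to match the functors for one convenient pair $(b,\lambda)$ with $G_b=G_\lambda=:G'$. Next, recalling that $\dagger=E\circ{*}$ and that (Proposition \ref{isom}) the continuous-dual functor $*$ is an anti-equivalence between the blocks of category $\cO$ and the completed categories, I would replace the outer $\dagger$'s in $\res_\lambda=\dagger\circ\Res_\lambda\circ\dagger$ and $\ind_\lambda=\dagger\circ\Ind_\lambda\circ\dagger$ by $*$ and reduce the whole conjecture to showing that $*$ intertwines $\Res_b$ with $\Res_\lambda$ and $\Ind_b$ with $\Ind_\lambda$ (the latter two being the parabolic restriction/induction functors for the Cherednik algebra of $(G,\h^*)$ with the conjugate parameter $\bar c$), interpreted on the relevant completed categories.

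The heart of the matter is a compatibility of the ``change of Cherednik algebra'' isomorphism $\theta$ of Theorem \ref{comp} with duality, to be verified by direct computation with its explicit formulas. Concretely, one must check that the functor $*$, combined with the anti-isomorphism $\gamma$ and (when $G$ is not a real reflection group, in order to trade a completion along a momentum-space point for one along a configuration-space point) with the Fourier automorphism $x\mapsto y,\ y\mapsto -x$ relating $H_{1,c}(G,\h)$ and $H_{1,c}(G,\h^*)$, carries $\theta$ for $(G,\h)$ at $b$ to $\theta$ for $(G,\h^*)$ at $\lambda$ (with parameter $\bar c$), and carries $Z(G,G',\widehat{H_{1,c}}(G',\h)_0)$ to $Z(G,G',\widehat{H_{1,\bar c}}(G',\h^*)_0)$ compatibly with the defining formulas of Lemma \ref{le}. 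Granting this, one threads it through the chains of equivalences $\zeta\circ E\circ I^{-1}\circ\theta_*$ and $E^b\circ\theta_*^{-1}\circ I$ that define $\Res_b$ and $\Ind_b$, using Proposition \ref{dua} (which records how $*$ interacts with the completion functors and with $E_b$, $E^b$), Theorem \ref{equi}, and Lemma \ref{le}; the resulting bookkeeping produces the natural isomorphisms $\Res_b\cong\res_\lambda$ and $\Ind_b\cong\ind_\lambda$. As a sanity check, when $G'=1$ the assertion collapses to the compatibility of the KZ functor with the contragredient duality (cf.\ the final remark of Section \ref{sec:res}), which is known.

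The main obstacle, I expect, is precisely this compatibility of $\theta$ with duality, together with the task of checking that all the intervening natural transformations splice together \emph{functorially} and not merely objectwise. The delicate feature is that in formula \eqref{eqn:thetay} for $\theta(y_a)$ only the entire sum over $s\notin G_b$, not its individual terms, lies in the centralizer algebra, so the duality cannot be matched term by term; and for a general complex reflection group one must carefully manage the interplay of $c$ versus $\bar c$ and of configuration-space versus momentum-space completions, which is where $\gamma$ and the Fourier automorphism must be combined in a specific order. One could instead try to derive the $\Ind$-part from the $\Res$-part by adjunction, since $\Ind_b$ is the right adjoint of $\Res_b$ (Theorem \ref{adj2}) while $\ind_\lambda$ is the left adjoint of $\res_\lambda$ (Corollary \ref{proj1}); but this only relocates the difficulty, because $\Ind_b\cong\ind_\lambda$ would then follow from the biadjointness of $(\Res_b,\Ind_b)$, which --- via the factorization $\Res_b=(\text{an equivalence})\circ\widehat{(\cdot)}_b$ --- is equivalent to the completion functor $\widehat{(\cdot)}_b\colon\cO_c(G,\h)_0\to\widehat\cO_c(G,\h)^b$ admitting $E^b$ as a two-sided adjoint, and this is not evident for $b\neq 0$.
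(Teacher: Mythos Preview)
The statement you are attempting to prove is labeled as a \emph{Conjecture} in the paper (Conjecture~\ref{bl}), and the paper does not provide a proof. The only case the paper verifies is the generic one $G_b=G_\lambda=1$ (Remark~\ref{conhol}), by the elementary observation that both sides compute a fibre of $M$ whose dimension is the leading coefficient of the Hilbert polynomial. So there is no ``paper's own proof'' to compare your proposal against.

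As for the proposal itself: you have correctly identified that the content of the conjecture is a compatibility between the isomorphism $\theta$ of Theorem~\ref{comp} and the duality $*$ (or $\dagger$), and you are honest that this compatibility is the crux and is not obvious from the explicit formula~\eqref{eqn:thetay}. But your outline does not actually supply this compatibility---it only names it and says one ``must check'' it. That is precisely the gap the paper leaves open. In particular, the difficulty you flag (that only the full sum over $s\notin G_b$ in~\eqref{eqn:thetay}, and not the individual terms, lies in the centralizer algebra, so duality cannot be matched termwise) is real, and nothing in your proposal overcomes it. Your alternative route via biadjointness is also correctly diagnosed as a relocation rather than a resolution of the problem: showing that $E^b$ is a \emph{left} adjoint to completion at $b$ for $b\ne 0$ is not established anywhere in the paper, and is essentially equivalent to the conjecture. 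In short, your proposal is a reasonable roadmap of what a proof would have to contain, but it is not a proof, and neither is anything in the paper.
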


\begin{remark} Conjecture \ref{bl} would imply that 
$\Ind_b$ is left adjoint to $\Res_b$, and 
that $\Res_b$ maps injective objects to injective ones, while
$\Ind_b$ maps projective objects to projective ones. 
\end{remark}

\begin{remark}\label{conhol} If $b$ and $\lambda$ are generic (i.e.,
$G_b=G_\lambda=1$) then the conjecture holds. 
Indeed, in this case the conjecture reduces to showing that 
we have an isomorphism of functors ${\mathrm Fiber}_b(M)\cong
{\mathrm Fiber}_\lambda(M^\dagger)^*$ ($M\in \cO_c(G, \h)$). 
Since both functors are exact functors
to the category of vector spaces, it suffices to check that 
$\dim {\mathrm Fiber}_b(M)=\dim {\mathrm Fiber}_\lambda(M^\dagger)$. 
But this is true because both
dimensions are given by the leading coefficient of the Hilbert
polynomial of $M$ (characterizing the growth of $M$).   
\end{remark}

It is important to mention, however, that
although $\Res_b$ is isomorphic to $\Res_{b'}$ if $G_b=G_{b'}$,
this isomorphism is not canonical. So let us examine the dependence
of $\Res_b$ on $b$ a little more carefully. 

Theorem \ref{compuRI} implies that if $c$ is generic, then 
$$
\Res(M_c(G,\h,\tau))=\oplus_{\xi}M_{c}(G',\h/\h^{G'},\xi)
\otimes{\mathcal L}_{\tau\xi},
$$
where ${\mathcal L}_{\tau\xi}$ is a local system on $\h^{G'}_{\reg}$ of rank
$n_{\tau\xi}$. Let us characterize the local system 
${\mathcal L}_{\tau\xi}$ explicitly. 

\begin{proposition}\label{tauxi}
The local system ${\mathcal L}_{\tau\xi}$ is given by the
connection on the trivial bundle given by the formula 
$$
\nabla=\d-\sum_{s\in \cS: s\notin G'}\frac{2c_s}{1-\lambda_s}
\frac{\d\alpha_s}{\alpha_s}(1-s).
$$
with values in $\Hom_{G'}(\xi,\tau|_{G'})$. 
\end{proposition}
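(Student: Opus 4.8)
The plan is to trace through the construction of the functor $\Res$ (from Subsection \ref{dep}) in the semisimple case and extract the connection on the resulting local system from the explicit formulas of Theorem \ref{comp}, rather than from Theorem \ref{comp} itself we would use the family version over $\h^{G'}_{\reg}$. First I would recall that for generic $c$, the standard modules $M_c(G,\h,\tau)$ are simple and exhaust the simple objects, so the only content is to compute the $\cD$-module structure on $\Res(M_c(G,\h,\tau))$ when it is restricted to the $\h^{G'}_{\reg}$-direction. By Proposition \ref{compuRI}(i), as a module over $\cO_c(G',\h/\h^{G'})_0$ this is $\oplus_\xi M_c(G',\h/\h^{G'},\xi)\otimes \mathcal L_{\tau\xi}$ for some local systems $\mathcal L_{\tau\xi}$ of rank $n_{\tau\xi}=\dim\Hom_{G'}(\xi,\tau|_{G'})$; the task is to identify the connection.

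The key computation: in the family isomorphism
$$
\theta: \widehat{H_{1,c}}(G,\h)_{\h^{G'}_{\reg}}\to Z(G,G',\widehat{H_{1,c}}(G',\h/\h^{G'})_0)\hat\otimes \cD(\h^{G'}_{\reg}),
$$
the image of $y_a$ for $a\in \h^{G'}$ (i.e. a vector field direction along $\h^{G'}_{\reg}$) is, by the family analog of formula \eqref{eqn:thetay}, the operator $\partial_a$ (coming from $\cD(\h^{G'}_{\reg})$) plus the sum $-\sum_{s\in\cS,\,s\notin G'}\frac{2c_s}{1-\lambda_s}\frac{\alpha_s(a)}{\alpha_s}(1-s)$, where now $\alpha_s$ is a genuine linear function on $\h^{G'}_{\reg}$ (nonvanishing there) rather than a formal series, and $x^{(b)}_{\alpha_s}$ has been absorbed into the variable. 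I would then apply the functor $E\circ I^{-1}\circ\theta_*$ to $\widehat{(M_c(G,\h,\tau))}_{\h^{G'}_{\reg}}$: the space $I^{-1}(\cdot)$ replaces $\mathrm{Fun}_{G'}(G,-)$-valued objects by the underlying $\widehat{H_{1,c}}(G',\h/\h^{G'})_0$-module, and on the generating lowest-weight piece this identifies the $\xi$-isotypic multiplicity space with $\Hom_{G'}(\xi,\tau|_{G'})$. On this multiplicity space the surviving part of $\theta(y_a)$ — after one quotients out the $S(\h/\h^{G'})$-action used to build $M_c(G',\h/\h^{G'},\xi)$ — is exactly $\partial_a - \sum_{s\notin G'}\frac{2c_s}{1-\lambda_s}\frac{\alpha_s(a)}{\alpha_s}(1-s)$ acting on $\Hom_{G'}(\xi,\tau|_{G'})$, where $s$ acts through its action on $\tau$ (it acts trivially on $\xi$ since $s\notin G'$). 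Contracting with $\d\alpha_s/\alpha_s$ gives precisely
$$
\nabla = \d - \sum_{s\in\cS:\,s\notin G'}\frac{2c_s}{1-\lambda_s}\frac{\d\alpha_s}{\alpha_s}(1-s),
$$
which is the asserted formula.

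The main obstacle I anticipate is bookkeeping rather than conceptual: one must be careful that the generalized-eigenvector functor $E$ (equivalently, passing to $\h$-nilpotent vectors) commutes appropriately with restriction of the $\cD$-module along $\h^{G'}_{\reg}$, so that the flat connection one reads off on the "abstract" multiplicity bundle is genuinely the connection on $\mathcal L_{\tau\xi}$ and not twisted by the grading element $\bh$. This is handled by noting that $\bh$ in $H_{1,c}(G,\h)$ and $\bh$ in $H_{1,c}(G',\h/\h^{G'})$ differ by an operator that is scalar on each $G'$-isotypic component of the lowest weight space, hence does not affect the $\cD(\h^{G'}_{\reg})$-module structure; one also uses that for generic $c$ there are no extensions, so the connection is determined by its restriction to this lowest piece. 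A secondary check is that the $(1-s)$ terms for $s\notin G'$ really are regular along $\h^{G'}_{\reg}$ (which is exactly the point of the "Remark" after Theorem \ref{comp}), so that $\nabla$ has only the logarithmic singularities displayed. With these points verified the proposition follows.
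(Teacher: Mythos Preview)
Your proposal is correct and follows essentially the same approach as the paper, which simply states that the result follows immediately from formula \eqref{eqn:thetay}. You have unpacked in detail what that one-line justification means: reading off the $\cD(\h^{G'}_{\reg})$-part of $\theta(y_a)$ for $a\in\h^{G'}$ in the family version of Theorem \ref{comp} and observing that on the multiplicity space $\Hom_{G'}(\xi,\tau|_{G'})$ it gives exactly the displayed connection.
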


\begin{proof}
This follows immediately from formula (\ref{eqn:thetay}).
\end{proof} 

\begin{definition}
We will call the connection of Proposition \ref{tauxi} 
the parabolic KZ (Knizhnik-Zamolodchikov) connection.  
\end{definition}

\begin{example}
Let $G=\kS_{n}$ and 
$G'=\kS_{n_{1}}\times\cdots\times \kS_{n_{k}}$ 
with $n_{1}+\cdots+n_{k}=n$. In this case, there is only one parameter
$c$.

Let $\h=\CC^{n}$ be the permutation representation of $G$. Then
$$\h^{G'}=(\CC^{n})^{G'}=\{v\in \h|v
=(\underbrace{z_{1}, \ldots, z_{1}}_{n_{1}}, 
\underbrace{z_{2}, \ldots, z_{2}}_{n_{2}}, \ldots, 
\underbrace{z_{k}, \ldots, z_{k}}_{n_{k}})\}.$$
Thus, the parabolic KZ connection on the trivial
bundle with fiber being a representation $\tau$ of
$\kS_n$ has the form
$$
\d -c\sum_{1\leq p<q\leq k}
\sum_{i=n_{1}+\cdots+n_{p-1}+1}^{n_{1}+
\cdots+n_{p}}\sum_{j=n_{1}+\cdots+n_{q-1}+1}^{ 
n_{1}+\cdots+n_{q}}\frac{\d z_{p}-\d z_{q}}{z_{p}-z_{q}}(1-s_{ij}).
$$
So the differential equations for a flat section $F(z)$ of this
bundle have the form 
$$
\frac{\partial F}{\partial z_p}=
c\sum_{q\ne p}
\sum_{i=n_{1}+\cdots+n_{p-1}+1}^{n_{1}+
\cdots+n_{p}}\sum_{j=n_{1}+\cdots+n_{q-1}+1}^{ 
n_{1}+\cdots+n_{q}}\frac{(1-s_{ij})F}{z_{p}-z_{q}}.
$$
So $F(z)=G(z)\prod_{p<q}(z_p-z_q)^{cn_pn_q}$, where 
the function $G$ satisfies the differential equation 
$$
\frac{\partial G}{\partial z_p}=
-c\sum_{q\ne p}
\sum_{i=n_{1}+\cdots+n_{p-1}+1}^{n_{1}+
\cdots+n_{p}}\sum_{j=n_{1}+\cdots+n_{q-1}+1}^{ 
n_{1}+\cdots+n_{q}}\frac{s_{ij}G}{z_{p}-z_{q}}.
$$

Let $\tau=V^{\otimes n}$ where $V$ is a finite dimensional space with 
$\dim V=N$ (this class of representations contains as summands all
irreducible representations of $\kS_n$). 
Let $V_p=V^{\otimes n_p}$, so that $\tau=V_1\otimes\cdots\otimes
V_k$. Then the equation for $G$
can be written as 
$$
\frac{\partial G}{\partial z_{p}}=-c\sum_{q\neq p}
\frac{\Omega_{pq}G}{z_{p}-z_{q}},\quad p=1, \ldots, k,
$$
where $\Omega=\sum_{s,t=1}^{N}E_{s,t}\otimes
E_{t,s}$ is the Casimir element for $\gl_{N}$
($E_{i,j}$ is the $N$ by $N$ matrix with the only $1$ at the
$(i,j)$-th place, and the rest of the entries being $0$).

This is nothing but the well known Knizhnik-Zamolodchikov 
system of equations of the WZW conformal field 
theory, for the Lie algebra $\gl_{N}$, see \cite{EFK}.
(Note that the representations $V_i$ are ``the most general''
in the sense that any irreducible finite dimensional representation of  
$\gl_{N}$ occurs in $V^{\otimes r}$ for some $r$, 
up to tensoring with a character.) 

This motivates the term ``parabolic KZ connection''. 
\end{example}

\subsection{Supports of modules}

The following two basic propositions are proved in \cite{Gi1},
Section 6. We will give different proofs of them, based on the 
restriction functors. 

\begin{proposition}\label{prop:supp} 
Consider the stratification of $\h$ with respect to stabilizers
of points in $G$. Then the (set-theoretical) 
support $\Supp M$ of any object
$M$ of $\cO_c(G, \h)$ in $\h$ is a union of strata of this stratification.  
\end{proposition}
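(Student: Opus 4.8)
The plan is to deduce this from the parabolic restriction functors $\Res_b$ of Section~\ref{sec:res}, together with the observation that each stratum is connected. By the block decomposition $\cO_c(G,\h)=\bigoplus_\lambda\cO_c(G,\h)_\lambda$ and additivity of supports, combined with Corollary~\ref{whit}, it suffices to treat $M\in\cO_c(G,\h)_0$ (the general case is identical, since the completion $\widehat{M}_b$ and Theorem~\ref{comp} make sense regardless of the block).

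Two elementary facts set the stage. First, $\Supp M$ is a closed $G$-stable subvariety of $\h$: closed because $M$ is finitely generated over the Noetherian ring $\CC[\h]$, and $G$-stable because the $G$-action on $M$ intertwines multiplication by $f\in\CC[\h]$ with multiplication by $f\circ g^{-1}$. Second, for a parabolic subgroup $W'\subset G$, the stratum $\h^{W'}_{\reg}=\{a\in\h:G_a=W'\}$ is nonempty (Section~\ref{sec:paragp}) and \emph{connected}, being the complement in the complex vector space $\h^{W'}$ of the finitely many hyperplanes $\{\alpha_s=0\}\cap\h^{W'}$, $s\in\cS\setminus W'$. Hence, to show $\Supp M$ is a union of strata it is enough to prove that for every parabolic $W'$ the set $\Supp M\cap\h^{W'}_{\reg}$ is either empty or all of $\h^{W'}_{\reg}$.

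The key point is the equivalence, for $b\in\h$ with $G_b=W'$,
\[
b\in\Supp M\ \Longleftrightarrow\ \widehat{M}_b\ne 0\ \Longleftrightarrow\ \Res_b(M)\ne 0 .
\]
The first equivalence uses $G$-stability of $\Supp M$ to pass between $\h$ and $\h/G$; the second holds because $\Res_b(M)=(\zeta\circ E\circ I^{-1}\circ\theta_*)(\widehat{M}_b)$ is obtained from $\widehat{M}_b$ by applying the category equivalences $\theta_*$ (Theorem~\ref{comp}), $I^{-1}$ (Lemma~\ref{le}), $E$ (Theorem~\ref{equi}) and $\zeta$ (Corollary~\ref{whit}), none of which annihilates a nonzero object. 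It remains to see that $b\mapsto\Res_b(M)$ is constant on $\h^{W'}_{\reg}$: by Section~\ref{sec:res}, $\Res_b$ is the fiber at $b$ of the relative restriction functor $\Res\colon\cO_c(G,\h)_0\to\cO_c(W',\h/\h^{W'})_0\boxtimes\Loc(\h^{W'}_{\reg})$, whose $\Loc$-component is a local system on the connected variety $\h^{W'}_{\reg}$; hence its fibers are pairwise isomorphic, so $\Res_b(M)$ vanishes for one $b\in\h^{W'}_{\reg}$ iff it vanishes for all. (Alternatively, one can avoid the relative functor: by Proposition~\ref{compuRI}(ii) the class $[\Res_b(M)]\in\K_0(\cO_c(W',\h/\h^{W'})_0)$ is independent of $b$, and since objects of that category have finite length, $[\Res_b(M)]=0$ forces $\Res_b(M)=0$.) Combining this with the displayed equivalence, $\Supp M\cap\h^{W'}_{\reg}$ is empty or all of $\h^{W'}_{\reg}$, as required.

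The main obstacle is not in this deduction, which is essentially formal once the restriction functors are at hand, but in the machinery it rests on: the centralizer isomorphism $\theta$ of Theorem~\ref{comp}, the equivalence of Theorem~\ref{equi}, and especially the relative version of $\Res$ in Section~\ref{sec:res} (and the coherence of $\Res(M)$ over $\h^{W'}_{\reg}$), all of which depend on the sheaf-theoretic description of $H_{1,c}(G,\h)$ over $\h/G$ from Section~\ref{sec:georca}. Granting that, the only genuinely new inputs here are the connectedness of the strata and the dictionary between $\Supp M$ and nonvanishing of $\Res_b(M)$.
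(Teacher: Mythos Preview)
Your proof is correct and follows essentially the same approach as the paper: the paper's one-sentence proof simply says the result ``follows immediately from the existence of the flat connection along the set of points $b$ with a fixed stabilizer $G'$ on the bundle $\Res_b(M)$,'' which is precisely your argument via the local system structure on $\Res(M)$ from Section~\ref{sec:res}. You have spelled out the details the paper leaves implicit, notably the dictionary $b\in\Supp M\Leftrightarrow\Res_b(M)\neq 0$ and the connectedness of the strata; your alternative $\K_0$-argument via Proposition~\ref{compuRI}(ii) is a nice addition not in the paper.
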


\begin{proof}
This follows immediately from the existence of the flat connection 
along the set of points $b$ with a fixed stabilizer $G'$ on the bundle
$\Res_b(M)$. 
\end{proof} 

\begin{proposition}\label{prop:supp1}
For any irreducible object $M$ in 
$\cO_c(G, \h)$, ${\mathrm Supp}M/G$ is an irreducible algebraic variety. 
\end{proposition}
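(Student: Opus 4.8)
The plan is to reduce the statement to the structure of the category $\cO_c$ near a generic point of the support, using the restriction functors $\Res_b$ from Subsection~\ref{sec:res}. First I would recall that, by Proposition~\ref{prop:supp}, $\Supp M$ is a union of strata $\h^{G'}_{\reg}$ indexed by parabolic subgroups $G'\subset G$. Since $\Supp M$ is closed and $G$-stable, and there are finitely many strata, there is a well-defined set of \emph{maximal} strata occurring in $\Supp M$; it suffices to show that this set consists of a single $G$-orbit of parabolics, i.e.\ that $\Supp M$ has a unique irreducible component up to the $G$-action. Equivalently, $\Supp M/G$ is irreducible iff the poset of strata appearing in $\Supp M$ (ordered by closure) has a unique maximal element modulo conjugation.

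Next I would fix a stratum $\h^{G'}_{\reg}$ that is \emph{open} in $\Supp M$ (a maximal one) and look at $\Res_b(M)\in \cO_c(G',\h/\h^{G'})_0$ for $b\in \h^{G'}_{\reg}$. The key point is a compatibility between supports: by Proposition~\ref{propres} the functor $\Res_b$ is exact, and by the geometric description of $\Res$ via Theorem~\ref{comp} (completion at $b$ followed by taking $\h$-nilpotent vectors), the module $\Res_b(M)$ is supported at $0\in \h/\h^{G'}$, i.e.\ it is finite-dimensional; moreover $\Res_b(M)\ne 0$ precisely because $b\in\Supp M$. Now $M$ is irreducible, hence $\widehat{M}_b$ is an indecomposable $\widehat{H_{1,c}}(G,\h)_b$-module, and via the equivalence $\theta_*$ of Theorem~\ref{comp} and Lemma~\ref{le} it corresponds to an indecomposable module over $\widehat{H_{1,c}}(G',\h/\h^{G'})_0$; passing through $E$ and $\zeta$, $\Res_b(M)$ is an indecomposable finite-dimensional object of $\cO_c(G',\h/\h^{G'})_0$. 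An indecomposable finite-dimensional module has a single simple at the top, but that is not quite what I want; the precise statement I need is that $\Res_b(M)$ is \emph{supported at the origin}, which is automatic, so the content is really in the unicity of $G'$.

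To pin down $G'$ up to conjugacy I would argue by contradiction along the lines of~\cite{Gi1}: suppose $\h^{G'}_{\reg}$ and $\h^{G''}_{\reg}$ are two maximal strata in $\Supp M$ with $G',G''$ not $G$-conjugate. Since $M$ is irreducible and generated over $\CC[\h]$ by its lowest $\bh$-eigenspace, in particular by a single $G$-stable subspace concentrated over the orbit of $0$, the annihilator ideal $\sqrt{\operatorname{Ann}_{\CC[\h]}M}$ defining $\Supp M$ is $G$-stable; its minimal primes are permuted transitively by $G$ iff $\Supp M/G$ is irreducible. So I must rule out the possibility that $G$ acts with more than one orbit on the set of minimal primes. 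Here I would use that $M$ irreducible implies $M$ has a filtration whose successive quotients are $L_c(\tau)$'s all of whose GK-dimensions are $\le \dim M$, together with the fact that the associated graded of $M$ with respect to the Bernstein filtration is a $G$-equivariant coherent sheaf on $\h\times\h^*$ whose support is a single closed $G\times$(dilation)-stable coisotropic subvariety — but the cleanest route is: if $\Supp M$ had two components $Z_1,Z_2$ in different $G$-orbits, then $M$ would decompose as $M_1\oplus M_2$ with $M_i$ supported on the $G$-saturations of $Z_i$ (using that the idempotents cutting out $Z_1\sqcup GZ_1$ versus the rest lie in the center acting on $M$, since $(\CC[\h])^G$ acts with a single generalized character on $\cO_c(G,\h)_0$ — here one localizes and uses that the localized module splits), contradicting irreducibility.

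The main obstacle I anticipate is exactly this last step: making rigorous the claim that a $G$-stable support with several $G$-orbits of components forces a decomposition of $M$. The subtlety is that $\CC[\h]^G$ acts on $M\in\cO_c(G,\h)_0$ with a \emph{single} generalized eigenvalue ($\lambda=0$), so one cannot directly split $M$ using central idempotents of $\CC[\h]^G$. The right fix is to work not globally but near a generic point $b$ of a maximal stratum: localize $M$ on $\h/G$ near the image of $b$, where by Theorem~\ref{comp} the algebra becomes a matrix algebra over $\widehat{H_{1,c}}(G_b,\h)_0$, and use that a module over a completed Cherednik algebra supported (after restriction) at finitely many points decomposes according to those points; then transport the resulting idempotent back. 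Since $M$ is irreducible this is impossible unless there is only one point, i.e.\ one $G$-orbit of maximal strata — and then the closure of that single orbit's stratum, being irreducible and containing all lower strata of $\Supp M$, equals $\Supp M$, so $\Supp M/G$ is irreducible. I would present this localization-at-$b$ argument as the heart of the proof and refer to \cite{Gi1}, Section~6, for the parallel treatment.
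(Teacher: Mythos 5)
The heart of your argument is the claim that if $\Supp M/G$ had two components $Z_1,Z_2$ in different $G$-orbits, then $M$ would decompose as $M_1\oplus M_2$. You correctly flag the difficulty: $\CC[\h]^G$ acts on any object of $\cO_c(G,\h)_0$ with the single generalized character $\lambda=0$, and all conical components pass through the origin, so there is no idempotent in (a completion of) $\CC[\h]^G$ separating $Z_1$ from $Z_2$. But the proposed fix does not close this gap. Completing $M$ at a \emph{generic} point $b$ of $Z_1$ produces a module over the completed algebra that sees only $Z_1$ — by genericity, $Z_2$ does not meet a neighborhood of $b$ at all — so there is no local decomposition to speak of, and hence no local idempotent to ``transport back.'' More fundamentally, a local idempotent on a completion does not glue to a global one; the inference ``$\Supp$ reducible $\Rightarrow$ module decomposes'' is false for modules in general (e.g.\ $\CC[x,y]/(xy)$ over $\CC[x,y]$ is indecomposable with reducible support), and here it is exactly what needs to be proved, so one cannot assume it.

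The paper's proof sidesteps the decomposition issue entirely by using irreducibility in a softer way. Pick any component $X$ of $\Supp M/G$ and let $M'\subset M$ consist of the elements whose germ vanishes in a $G$-invariant neighborhood of (the $G$-orbit of) a generic point of the preimage of $X$. Because $H_{1,c}(G,\h)$ sheafifies over $\h/G$ (Section~\ref{sec:georca}), $M'$ is an $H_{1,c}$-submodule; it is proper since $X\subset\Supp M$, hence $M'=0$ by irreducibility. Now if $f\in\CC[\h]^G$ vanishes on $X$, then near a generic point $b$ of $X$ the support of the localization $M_b$ is contained in $V(f)$, so some power $f^N$ kills $M_b$; that means $f^Nm\in M'=0$ for all $m$, so $f^N$ annihilates $M$. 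Thus every $G$-invariant function vanishing on $X$ vanishes on $\Supp M/G$, which forces $\Supp M/G=X$. This is exactly the shape of argument you should aim for: produce a proper submodule from a local condition, kill it using simplicity, and deduce a global vanishing. Your restriction-functor machinery and the poset-of-strata reduction are not wrong, but they are not needed, and the crucial ``decompose $M$'' step does not go through.
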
 

\begin{proof}
Let $X$ be a component of $\Supp M/G$. 
Let $M'$ be the subspace of elements of $M$ whose 
restriction to a neighborhood of a generic point 
of $X$ is zero. It is obvious that $M'$ is an 
$H_{1, c}(G, \h)$-submodule in $M$. By definition, it is a proper
submodule. Therefore, by the irreducibility of $M$, we have
$M'=0$. Now let $f\in \CC[\h]^{G}$ be a function that vanishes
on $X$. Then there exists a positive integer 
$N$ such that $f^N$ maps $M$ to $M'$, hence acts by zero 
on $M$. This implies that
$\Supp M/G=X$, as desired. 
\end{proof} 

Propositions \ref{prop:supp} and \ref{prop:supp1} allow us to attach 
to every irreducible module $M\in \cO_c(G, \h)$, 
a conjugacy class of parabolic subgroups, $C_M\in
\Par(G)$, namely, the conjugacy class of the 
stabilizer of a generic point of the support
of $M$. Also, for a parabolic subgroup $G'\subset G$, 
denote by ${\mathcal X}(G')$ the set of points $b\in \h$ whose
stabilizer contains a subgroup conjugate to $G'$.

The following proposition is immediate. 

\begin{proposition}\label{supp2} 
\begin{enumerate}
\item[(i)] Let $M\in \cO_c(G, \h)_0$ be irreducible. If $b$ is such that $G_b\in C_M$,
then $\Res_b(M)$ is a nonzero finite dimensional module over
$H_{1,c}(G_b,\h/\h^{G_b})$. 

\item[(ii)] Conversely, let $b\in \h$, and $L$ be a finite dimensional
module $H_{1, c}(G_b,\h/\h^{G_b})$. 
Then the support of $\Ind_b(L)$ in $\h$ 
is ${\mathcal X}(G_b)$. 
\end{enumerate}

\end{proposition}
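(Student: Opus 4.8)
The plan is to deduce both parts from the adjointness between $\Res_b$ and $\Ind_b$ (Theorem \ref{adj2}), the exactness of these functors (Proposition \ref{propres}(i)), and the behavior of supports under the relative version $\Res$ described in Subsection \ref{sec:res}. For part (i), let $M\in \cO_c(G,\h)_0$ be irreducible and suppose $G_b\in C_M$, i.e. $b$ is a generic point of (a stratum of) $\Supp M$. First I would argue that $\Res_b(M)$ is finite dimensional: by construction $\Res_b(M)$ is obtained by passing to the completion $\widehat{H_{1,c}}(G,\h)_b$, applying the centralizer equivalence $I^{-1}\circ\theta_*$ of Lemma \ref{le} and Theorem \ref{comp}, taking the $\h$-nilpotent part $E$, and then applying $\zeta$ to pass to $\h/\h^{G_b}$. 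The module $\widehat{M}_b$ is finitely generated over $\widehat{\CC[\h]}_b$, and the support of $M$ near $b$ is exactly the stratum $\h^{G_b}_{\reg}$ through $b$ (Proposition \ref{prop:supp}), which maps to $0$ under $\zeta$ (it is killed by passing to $\h/\h^{G_b}$). Hence $\zeta(E(I^{-1}\theta_*\widehat{M}_b))$ is finitely generated over $\CC[\h/\h^{G_b}]$ and supported only at $0$, so it is finite dimensional. For nonvanishing, I would use Proposition \ref{propres}(ii): $\Ind_b(\Res_b(M))=E^b(\widehat{M}_b)$, and $\widehat{M}_b\ne 0$ because $b\in\Supp M$, so $E^b(\widehat{M}_b)\ne 0$ (it contains the $\h$-nilpotent vectors, which are dense by Proposition \ref{nilpo} and Theorem \ref{equi}-type reasoning at $b$); therefore $\Res_b(M)\ne 0$.

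For part (ii), let $b\in\h$ and $L$ a finite dimensional $H_{1,c}(G_b,\h/\h^{G_b})$-module; I want to show $\Supp(\Ind_b(L))={\mathcal X}(G_b)$. The containment $\Supp(\Ind_b(L))\subseteq{\mathcal X}(G_b)$ should follow from the explicit formula $\Ind_b(N)=(E^b\circ\theta_*^{-1}\circ I)(\widehat{\zeta^{-1}(N)}_0)$: the module $\zeta^{-1}(L)$ over $H_{1,c}(G_b,\h)_0$ is, as a $\CC[[\h]]$-module, supported on $\h^{G_b}$ (since $L$ is finite dimensional over $H_{1,c}(G_b,\h/\h^{G_b})$, i.e. supported at $0$ there, and $\zeta^{-1}$ tensors with $\CC[[\h^{G_b}]]$), and applying $\theta_*^{-1}\circ I$ spreads this over the $G$-orbit of $b$ along the stratum $\h^{G_b}_{\reg}$; then $E^b$ produces a module in $\cO_c(G,\h)_0$ whose support, being $G$-stable and a union of strata (Proposition \ref{prop:supp}), is contained in the closure of $G\cdot\h^{G_b}_{\reg}$, which is exactly ${\mathcal X}(G_b)$. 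For the reverse containment, I would show $\Res_b(\Ind_b(L))\ne 0$: by Theorem \ref{adj2}, $\Hom(\Res_b(\Ind_b(L)),L)=\Hom(\Ind_b(L),\Ind_b(L))\ni\id\ne 0$, so $\Res_b(\Ind_b(L))\ne 0$, which forces $b\in\Supp(\Ind_b(L))$; since the support is a union of strata and is $G$-stable and closed, it must then contain the whole stratum $G\cdot\h^{G_b}_{\reg}$ and its closure, i.e. all of ${\mathcal X}(G_b)$.

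The main obstacle I anticipate is making rigorous the claim that supports behave as expected under the completion-and-centralizer machinery, in particular the precise statement that for $N$ supported at $0$ over $H_{1,c}(G_b,\h/\h^{G_b})$, the module $\Ind_b(N)$ has support exactly the closed set ${\mathcal X}(G_b)$ rather than something strictly smaller or the support ``leaking'' onto higher strata. This requires knowing that $E^b$ does not kill the module (handled by the adjunction argument giving a nonzero map from $\Res_b\Ind_b$) and that the relative restriction functor $\Res$ of Subsection \ref{sec:res}, together with the flat connection it carries, detects the generic stabilizer correctly — essentially the content of Propositions \ref{prop:supp} and \ref{prop:supp1}. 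Once those are granted, the two inclusions in each part follow formally from exactness and adjunction, so the proposition is indeed ``immediate'' in the sense the paper claims.
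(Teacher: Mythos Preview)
Your proposal is essentially correct and matches what the paper has in mind: the paper gives no proof, declaring the proposition ``immediate,'' and you have correctly identified the ingredients that make it so (the definitions of $\Res_b$ and $\Ind_b$, adjunction via Theorem \ref{adj2}, exactness via Proposition \ref{propres}, and the stratification of supports via Proposition \ref{prop:supp}). Your arguments for finite-dimensionality in (i) and for both inclusions in (ii) are sound.

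One small fix is needed in your nonvanishing argument for (i). You go through $\Ind_b(\Res_b(M))=E^b(\widehat{M}_b)$ and then claim $E^b(\widehat{M}_b)\ne 0$ because ``$\h$-nilpotent vectors are dense.'' But $E^b$ for $b\ne 0$ is only a right adjoint to completion (Proposition \ref{adj1}), not an equivalence; Theorem \ref{equi} applies only at $b=0$, so you have not established that $E^b$ preserves nonvanishing. The direct route is cleaner: in the definition $\Res_b(M)=(\zeta\circ E\circ I^{-1}\circ\theta_*)(\widehat{M}_b)$, the functors $\zeta$, $I^{-1}$, $\theta_*$ are equivalences, and $E$ here is applied at $0$ for the group $G_b$, where it \emph{is} an equivalence by Theorem \ref{equi}. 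Hence $\Res_b(M)\ne 0$ as soon as $\widehat{M}_b\ne 0$, which holds because $b\in\Supp M$.
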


Let $\mathrm{FD}(G, \h)$ be the set of $c$ for which $H_{1, c}(G, \h)$ admits a
finite dimensional representation. 

\begin{corollary}\label{supp3} 
Let $G'$ be a parabolic subgroup of $G$.   
Then ${\mathcal X}(G')$ is the support of some irreducible representation 
from $\cO_c(G, \h)_0$ if and only if $c\in
\mathrm{FD}(G',\h/\h^{G'})$.  
\end{corollary}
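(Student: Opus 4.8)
The plan is to deduce this directly from Proposition \ref{supp2} together with Propositions \ref{prop:supp} and \ref{prop:supp1}, by analyzing the two implications separately. First I would establish the ``only if'' direction. Suppose ${\mathcal X}(G')$ is the support of some irreducible $M\in \cO_c(G,\h)_0$. Then a generic point of $\Supp M$ has stabilizer conjugate to $G'$; replacing $M$ by a twist under $G$, we may take $b\in\h$ with $G_b=G'$ lying in $\Supp M$ and generic in its stratum. By Proposition \ref{supp2}(i), $\Res_b(M)$ is a nonzero \emph{finite dimensional} module over $H_{1,c}(G',\h/\h^{G'})$. Hence $c\in \mathrm{FD}(G',\h/\h^{G'})$ by the very definition of the set $\mathrm{FD}$.

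Next I would establish the ``if'' direction. Suppose $c\in\mathrm{FD}(G',\h/\h^{G'})$, i.e.\ there is a nonzero finite dimensional module $L$ over $H_{1,c}(G',\h/\h^{G'})$. Choosing any $b\in\h$ with $G_b=G'$, Proposition \ref{supp2}(ii) tells us that the support of $\Ind_b(L)$ in $\h$ is exactly ${\mathcal X}(G')$. Now $\Ind_b(L)$ lies in $\cO_c(G,\h)_0$ but need not be irreducible, so the remaining step is to produce an \emph{irreducible} object with the same support. For this I would pass to the irreducible subquotients of $\Ind_b(L)$ in $\cO_c(G,\h)_0$ (which exist since objects have finite length): the support of $\Ind_b(L)$ is the union of the supports of its Jordan--H\"older constituents, so at least one irreducible constituent $M$ has support whose $G$-saturation contains a point with stabilizer exactly conjugate to $G'$. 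By Proposition \ref{prop:supp1}, $\Supp M/G$ is irreducible, and by Proposition \ref{prop:supp} it is a union of strata; since $\Supp \Ind_b(L)={\mathcal X}(G')$ and ${\mathcal X}(G')/G$ is the closure of the stratum $\h_{\reg}^{G'}/G$, the support of $M$ is contained in ${\mathcal X}(G')$ and meets $\h_{\reg}^{G'}$, hence $\Supp M = {\mathcal X}(G')$ by irreducibility and the stratification property. This $M$ witnesses the desired conclusion.

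The step I expect to be the main obstacle is the last one: controlling which irreducible constituent of the possibly reducible module $\Ind_b(L)$ has support exactly ${\mathcal X}(G')$, rather than a smaller stratum closure. The key point to nail down carefully is that support is additive on short exact sequences (so the union of constituent supports is all of ${\mathcal X}(G')$), combined with the fact that the open stratum $\h_{\reg}^{G'}$ is dense in ${\mathcal X}(G')$; then whichever constituent is nonzero on that open stratum has full support ${\mathcal X}(G')$ by Propositions \ref{prop:supp} and \ref{prop:supp1}. An alternative, perhaps cleaner, route to the same point: apply $\Res_b$ to $\Ind_b(L)$ and use exactness of $\Res_b$ (Proposition \ref{propres}(i)) to see that some irreducible constituent $M$ of $\Ind_b(L)$ has $\Res_b(M)\neq 0$; such an $M$ then has $G'=G_b\in C_M$, so $\Supp M={\mathcal X}(G')$ by definition of $C_M$.
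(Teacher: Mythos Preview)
Your proposal is correct and follows the same approach as the paper, which simply says the corollary is ``immediate from Proposition \ref{supp2}.'' You have carefully spelled out the one point the paper leaves implicit: that in the ``if'' direction, $\Ind_b(L)$ need not be irreducible, so one must pass to a Jordan--H\"older constituent whose support still equals ${\mathcal X}(G')$; your argument for this (additivity of support on short exact sequences, plus Propositions \ref{prop:supp} and \ref{prop:supp1}) is exactly the right way to fill that gap. One small quibble: the phrase ``replacing $M$ by a twist under $G$'' in the ``only if'' direction is unnecessary and slightly misleading---you do not need to modify $M$ at all, just choose $b$ with $G_b=G'$ (possible since $G'$ is parabolic), and then $G_b\in C_M$ automatically because $\Supp M={\mathcal X}(G')$.
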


\begin{proof}
Immediate from Proposition \ref{supp2}. 
\end{proof} 

\begin{example}
Let $G=\kS_n$, $\h=\CC^{n-1}$.  In this case, the set $\Par(G)$ is 
the set of partitions of $n$. Assume that $c=r/m$, $(r,m)=1$, 
$2\le m\le n$. By a result of \cite{BEG},
finite dimensional representations of $H_c(G, \h)$ exist 
if and only if $m=n$. Thus the only possible classes $C_M$
for irreducible modules $M$ have stabilizers $\kS_m\times\cdots\times
\kS_m$, i.e., correspond to partitions into parts, 
where each part is equal to $m$ or $1$. So there are 
$[n/m]+1$ possible supports for modules, 
where $[a]$ denotes the integer part of $a$.    
\end{example} 

\subsection{Notes}
Our discussion of the geometric 
approach to rational Cherednik algebras in Section 5.1 follows
\cite{E1} and Section 2.2 of \cite{BE}. 
Our exposition in the other sections follows the corresponding parts of 
the paper \cite{BE}. 

\newpage \section{The Knizhnik-Zamolodchikov functor}

\subsection{Braid groups and Hecke algebras}

Let $G$ be a complex reflection group and 
let $\h$ be its reflection representation. For any reflection hyperplane
$H\subset \h$, its pointwise 
stabilizer is a cyclic group 
of order $m_{H}$. Fix a collection of nonzero constants 
$q_{1, H}, \ldots, q_{m_{H}-1, H}$ which are $G$-invariant,
namely, if $H$ and $H'$ are conjugate to each other under 
some element in $G$, then $q_{i, H}=q_{i, H'}$ for $i=1, \ldots, m_{H}-1$.

Let $B_{G}=\pi_{1}(\h_{\reg}/G,{\mathbf x}_0)$ be the braid group of $G$, 
and $T_{H}\in B_{G}$ be a representative of the conjugacy class
defined by a small circle around 
the image of $H$ in $\h/G$ oriented in the counterclockwise direction.

The following theorem follows from elementary algebraic
topology. 

\begin{proposition}\label{quott}
The group $G$ is the quotient 
of the braid group $B_{G}$ 
by the relations 
$$
T_{H}^{m_H}=1
$$ 
for all reflection hyperplanes $H$. 
\end{proposition}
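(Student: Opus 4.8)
The plan is to realize $G$ as the fundamental group of the orbit space $\h/G$ and to identify the kernel of the natural surjection $B_G \to G$ with the normal subgroup generated by the $T_H^{m_H}$. First I would recall the basic fact (due to Steinberg, and following from the Chevalley–Shephard–Todd theorem) that for a complex reflection group $G$ acting on $\h$, the set $\h_{\reg}$ of points with trivial stabilizer is exactly the complement of the union of the reflection hyperplanes, and that $G$ acts \emph{freely} on $\h_{\reg}$. Hence $\h_{\reg} \to \h_{\reg}/G$ is a covering map (in fact a principal $G$-bundle in the topological sense), and since $\h_{\reg}$ is connected, the long exact sequence of this covering gives a short exact sequence
$$
1 \to \pi_1(\h_{\reg},{\mathbf x}_0) \to \pi_1(\h_{\reg}/G,{\mathbf x}_0) \to G \to 1,
$$
i.e. $1 \to P_G \to B_G \to G \to 1$, where $P_G$ is the pure braid group. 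The map $B_G \to G$ sends a loop to the deck transformation it induces; concretely, the generator $T_H$ (a small counterclockwise loop around the image of the hyperplane $H$) maps to the distinguished reflection $s_H$ in $G$ whose rotation angle is $2\pi/m_H$, a generator of the cyclic pointwise stabilizer of $H$.

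Next I would show $T_H^{m_H} \in P_G$: indeed $T_H \mapsto s_H$ and $s_H$ has order $m_H$, so $T_H^{m_H}$ maps to $1$ in $G$. Let $K \trianglelefteq B_G$ be the normal subgroup generated by all the $T_H^{m_H}$; the previous sentence shows $K \subseteq P_G$, so there is an induced surjection $B_G/K \twoheadrightarrow G$, and it remains to prove this is injective, equivalently that $K = P_G$. The key input here is a standard fact from the topology of hyperplane arrangement complements: $P_G = \pi_1(\h_{\reg})$ is generated by the "meridian" loops around the hyperplanes together with their conjugates — more precisely, $\pi_1$ of the complement of a divisor is normally generated by the meridians of its irreducible components. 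The loop $T_H^{m_H}$ is, up to homotopy in $\h_{\reg}$, precisely such a meridian loop in $\h_{\reg}$ around the hyperplane $H$ (the $m_H$-fold power is needed because in $\h_{\reg}/G$ a single turn $T_H$ only goes $1/m_H$ of the way around, due to the local $\ZZ/m_H$ quotient structure transverse to $H$). Therefore these elements and their $B_G$-conjugates normally generate $P_G$, giving $K = P_G$ and hence $B_G/K \cong G$.

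The main obstacle — the only genuinely nontrivial point — is the statement that $\pi_1(\h_{\reg})$ is normally generated inside $\pi_1(\h_{\reg}/G)$ by the elements $T_H^{m_H}$. For this I would argue as follows: consider the quotient $B_G/K$; it acts on the universal cover of $\h_{\reg}$ with $P_G/K$ acting as deck transformations, and one checks that $P_G/K$ acts freely and properly on the universal cover of the larger space obtained by "filling in" the hyperplanes, forcing $P_G/K$ to be trivial. Alternatively, and more cleanly, one invokes the general lemma: if $Y \to X$ is a branched covering which is unramified over $X \setminus D$ with $D$ a normal crossing (or arrangement) divisor, and the ramification index along the component $D_i$ is $e_i$, then $\pi_1(Y \setminus \text{preimage of } D)$ is the quotient of $\pi_1(X \setminus D)$ by the normal closure of the $e_i$-th powers of the meridians — applied here to $Y = \h$ (with $D$ = arrangement) and $X = \h/G$. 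Granting this lemma (which is "elementary algebraic topology" as the statement claims), the proof is complete. I would remark that the $G$-invariance hypothesis on the $q_{i,H}$ plays no role in this particular proposition; it is recorded only for later use in defining the Hecke algebra.
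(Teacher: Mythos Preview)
Your proof sketch is correct and follows the standard argument; however, note that the paper itself does not give a proof at all---it simply cites \cite{BMR}, Proposition 2.17. So there is no ``paper's own proof'' to compare against beyond the reference.

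That said, your outline is essentially the argument one finds in the cited source: the exact sequence $1\to P_G\to B_G\to G\to 1$ from the free $G$-action on $\h_{\reg}$, together with the identification of $P_G=\pi_1(\h_{\reg})$ as the normal closure (inside $B_G$) of the meridians $T_H^{m_H}$. Your remark that the only nontrivial step is the normal-generation of $P_G$ by these meridians is exactly right, and the branched-covering lemma you invoke (applied to the contractible space $\h$ branched over the arrangement, with ramification indices $m_H$) is the clean way to finish. One small suggestion: rather than saying ``$P_G/K$ acts freely on the universal cover of the filled-in space,'' it is cleaner to observe directly that $\h_{\reg}/(P_G/K)$ is the cover of $\h_{\reg}/G$ corresponding to the subgroup $P_G/K$, and the relations $T_H^{m_H}=1$ force this cover to extend across the hyperplanes to an unbranched cover of the simply connected space $\h/G\cong\CC^r$ (using Chevalley--Shephard--Todd), hence is trivial.
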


\begin{proof}
See, e.g., \cite{BMR} Proposition 2.17.
\end{proof}

\begin{definition}
The Hecke algebra of $G$ is defined to be
$$
\mathscr{H}_{q}(G)=\CC[B_{G}]/
\langle (T_{H}-1)\prod_{j=1}^{m_{H}-1}
(T_{H}-\exp(2\pi{\bf i} j/m_{H})q_{j, H}), \text{ for all }H\rangle.
$$
\end{definition} 

Thus, by Proposition \ref{quott} 
we have an isomorphism
$$
\mathscr{H}_{1}(G)\cong \CC G.
$$
So $\mathscr{H}_{q}(G)$ is a deformation of 
$\CC G$. 

\begin{example}[Coxeter group case]
Now let $W$ be a Coxeter group. Let $\cS$ be the set of reflections
and let $\alpha_{s}=0$ be the reflection hyperplane corresponding to $s\in \cS$. 
The Hecke algebra $\mathscr{H}_{q}(W)$ is the quotient of $\CC[B_{W}]$ by the relations
$$(T_{s}-1)(T_{s}+q_s)=0,$$
for all reflections $s$ where $T_{s}$ is a small
counterclockwise circle around the image of the hyperplane
$\alpha_{s}=0$ in $\h/W$.
\end{example}

\subsection{KZ functors}
For a complex reflection group $G$,
let $\Loc(\h_{\reg})$ be the category of local systems 
(i.e., $\cO$-coherent $\cD$-modules) on 
$\h_{\reg}$, and let $\Loc(\h_{\reg})^{G}$
be the category of $G$-equivariant local systems on 
$\h_{\reg}$, i.e. of local systems on $\h_\reg/G$.

Suppose that $G'=1$ is the trivial subgroup in $G$. 
Then the restriction functor defined in 
Section \ref{sec:res} defines a functor 
$\Res: \cO_c(G, \h)_{0}\to \Loc(\h_{\reg}/G)$.
Also, we have the monodromy functor 
${\mathrm Mon}: \Loc(\h_{\reg}/G)\cong \Rep(B_{G})$. 
The composition of these two functors is a functor 
from $\cO_c(G, \h)_{0}$ to 
$\Rep(B_{G})$, which is exactly the KZ functor 
defined in \cite{GGOR}. We will denote this functor by 
$\KZ$.

\begin{theorem}[Ginzburg, Guay, Opdam, Rouquier, \cite{GGOR}]\label{GGORt}
The KZ functor factors through \linebreak
$\Rep \mathscr{H}_{q}(G)$, where
$$
q_{j, H}=\exp(2\pi {\bi}b_{j, H}/m_{H}),
\quad\text{and}\quad
b_{j,H}=2\sum_{\ell=1}^{m_{H}-1}\frac{c_{s_{H}^{\ell}}(1-\be^{2\pi\bi j\ell/m_{H}})}{1-\be^{-2\pi {\bf i}\ell/m_{H}}}.
$$
\end{theorem}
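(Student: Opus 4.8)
The plan is to show that for any $M\in\cO_c(G,\h)_0$, the monodromy representation $\KZ(M)$ of the braid group $B_G$ kills the polynomial $(T_H-1)\prod_{j=1}^{m_H-1}(T_H-\exp(2\pi\bi j/m_H)q_{j,H})$, i.e. that each generator $T_H$ acts with eigenvalues among $1,\exp(2\pi\bi j/m_H)q_{j,H}$ for $j=1,\dots,m_H-1$. Since this is a statement about the local monodromy around a single reflection hyperplane, the key idea is that it can be checked locally near a generic point $b\in H$, where by Theorem \ref{comp} (and the compatibility of $\Res$ with monodromy built in Section \ref{sec:res}) the restriction $\Res_b(M)$ reduces the whole question to the rank-one case, namely the cyclic group $G_b=\ZZ/m_H\ZZ$ acting on the line $\h/\h^{G_b}$. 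So first I would reduce, via the $\Res$ functor of Section \ref{dep} and Proposition \ref{tauxi}, to computing the monodromy of the connection
$$
\nabla=\d-\frac{2c_{s_H}}{1-\lambda_{s_H}}\frac{\d\alpha_s}{\alpha_s}(1-s)-(\text{terms regular at }H)
$$
on a punctured disc transverse to $H$; the regular terms do not affect the local monodromy class, so only the residue on $H$ matters.

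Second, I would make the rank-one computation explicit. For $G=\ZZ/m\ZZ=\langle s\rangle$ acting on $\CC$ with $s$ acting by a primitive $m$-th root $\lambda$, the relevant connection on $\CC^\times$ (tensored with a fixed $G$-module) has residue at $0$ given by the operator $R=\sum_{\ell=1}^{m-1}\frac{2c_{s^\ell}}{1-\lambda^{-\ell}}\,s^\ell$ acting on the group algebra, because grouping the Dunkl-operator correction terms over the nontrivial powers of $s$ produces exactly this operator (this is the same bookkeeping used to define the numbers $b_n$ in Section 3.13). The monodromy $T_H$ around $0$ is then conjugate to $\exp(-2\pi\bi\,R)$. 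Now $R$ is a central-ish element of $\CC[\ZZ/m\ZZ]$; decomposing $\CC[\ZZ/m\ZZ]=\bigoplus_{j=0}^{m-1}\CC_j$ into characters $\chi_j:s\mapsto\lambda^j$, the operator $R$ acts on $\CC_j$ by the scalar $\sum_{\ell=1}^{m-1}\frac{2c_{s^\ell}\lambda^{j\ell}}{1-\lambda^{-\ell}}$, which is precisely $b_{j,H}$ as defined in the theorem statement (with $\lambda=\exp(2\pi\bi/m_H)$, so $\lambda^{j\ell}=\exp(2\pi\bi j\ell/m_H)$ and $\lambda^{-\ell}=\exp(-2\pi\bi\ell/m_H)$). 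Hence on the $j$-th summand $T_H$ acts by $\exp(-2\pi\bi\,b_{j,H})$ up to a further root-of-unity shift coming from the $(1-s)$ normalization; tracking that shift gives eigenvalue $1$ on the trivial character and $\exp(2\pi\bi j/m_H)q_{j,H}$ with $q_{j,H}=\exp(2\pi\bi\,b_{j,H}/m_H)$ on the others, exactly as claimed. Therefore $T_H$ satisfies its degree-$m_H$ polynomial, and this holds in $\KZ(M)$ because local monodromy of the restricted connection matches.

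Third, I would assemble: the eigenvalue statement holds for every reflection hyperplane $H$ and every $M$, functorially, so $\KZ$ factors through the quotient of $\CC[B_G]$ by these relations, which is $\mathscr H_q(G)$ by definition. The main obstacle I anticipate is not the rank-one eigenvalue computation itself but making rigorous the passage from $\cO_c(G,\h)_0$ to the local picture: one must verify that $\KZ(M)$, restricted to the subgroup of $B_G$ generated by a single $T_H$, really is computed by the residue of the KZ connection at a generic point of $H$ — i.e. that the monodromy functor commutes with the localization/restriction $\Res_b$ of Section \ref{sec:res}, and that $\Res_b(M)$ is finitely generated over $\CC[\h/\h^{G_b}]$ so that its connection has regular singularities with the stated residue. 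Granting the machinery of Theorem \ref{comp} and Proposition \ref{tauxi}, this is essentially formal, and the root-of-unity bookkeeping in normalizing $(1-s)$ versus $s^\ell$ is the only place demanding care.
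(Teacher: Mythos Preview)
Your residue computation is correct and is exactly what the paper does for the generic-parameter case on Verma modules $M_c(\tau)$. The gap is the sentence ``the monodromy $T_H$ around $0$ is then conjugate to $\exp(-2\pi\bi\,R)$.'' That formula is valid only when the connection extends to a logarithmic connection across $H$ with non-resonant residue (no two eigenvalues differing by a nonzero integer). For an arbitrary $M\in\cO_c(G,\h)_0$ at special $c$ this fails: even after your reduction to rank one via $\Res_b$, the resulting object $N\in\cO_c(\ZZ/m_H,\CC)_0$ need only admit a filtration by Vermas, so $T_H$ on $\KZ(N)$ is merely block upper-triangular with the correct diagonal entries. Knowing the eigenvalues does not force the degree-$m_H$ polynomial to kill $T_H$; you would still have to bound the Jordan block sizes by the root multiplicities of the Hecke polynomial, and nothing in your argument does this. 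So the ``assemble'' step in your third paragraph does not go through for non-generic $c$.

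The paper closes this gap by a quite different mechanism that your proposal does not mention. After verifying the Hecke relation on $M_c(\tau)$ for generic $c$ via the residue calculation, it introduces the \emph{generalized} Verma modules $M_{c,n}(\CC G)=H_{1,c}\otimes_{\CC G\ltimes S\h}(\CC G\otimes S\h/\mathfrak m^{n+1})$, which form a flat family over all $c$; since the Hecke relation is a closed condition, it therefore holds on $\KZ(M_{c,n}(\CC G))$ for every $c$. The crucial step (Theorem~\ref{thm-proj}) is that for $n\gg 0$ the module $M_{c,n}(\CC G)$ contains a projective generator of $\cO_c(G,\h)_0$, so every object is a quotient of a finite sum of these; exactness of $\KZ$ then transports the relation to all $M$. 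This deformation-plus-projective-generator argument is the missing idea.
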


\begin{proof}
Assume first that $c$ is generic. Then 
the category $\cO_c(G, \h)_0$ is semisimple, with simple objects
$M_c(\tau)$, so it is enough to check the
statement on $M_{c}(\tau)$. Consider the trivial bundle over $\h_\reg$ 
with fiber $\tau$. 
The KZ connection on it has the form
$$
\d-\sum_{s\in \cS}\frac{2c_{s}}{1-\lambda_{s}}\frac{\d \alpha_{s}}{\alpha_{s}}(1-s).
$$ 
The residue of the connection form 
of this connection on the hyperplane $H$ on the 
$j$-th irreducible representation of $\ZZ/m_{H}\ZZ$ is
$$
2\sum_{\ell=1}^{m_{H}-1}\frac{c_{s_{H}^{\ell}}}{1-\be^{-2\pi{\bf i}\ell/m_{H}}}
(1-\be^{2\pi{\bf i}j\ell/m_{H}}).
$$
Therefore, the monodromy operator around 
this hyperplane is diagonalizable, and
the eigenvalues of this operator are 
$1$ and $\exp(2\pi{\bf i}j/m_{H})q_{j,H}$, as desired.

For special $c$, introduce the generalized Verma module 
$$
M_{c, n}(\tau)=H_{c}(G, \h)\otimes_{\CC G\ltimes S\h}
(\tau\otimes S\h/\mathfrak{m}^{n+1}),
$$
where $\mathfrak{m}\subset S\h$ is the maximal ideal of
$0$, $n\geq 0$. Clearly, $M_{c, 0}=M_{c}(\tau)$. Moreover,
$M_{c, n}\in \cO_c(G, \h)_{0}$ for any $n$,
since it has a finite 
filtration whose successive quotients are Verma modules.

\begin{theorem}{\label{thm-proj}}
For large enough $n$, $M_{c, n}(\CC G)$ contains a direct summand 
which is a projective generator of $\cO_{c}(G, \h)_{0}$.
\end{theorem}

\begin{proof}
From the definition, $M_{c, n}=S\h^{*}\otimes\CC G\otimes S\h/\mathfrak{m}^{n+1}$.
Let $\partial$ be the degree operator on $M_{c, n}(\CC G)$
with $\deg \h^{*}=1, \deg \h=-1$, and $\deg G=0$,
i.e., we have
$$[\partial, x]=x, [\partial, y]=-y, \text{ where }x\in \h^{*}, y\in \h.$$

So $\bh-\partial$ is a module endomorphism of $M_{c, n}(\CC G)$
where  $\bh$ is the operator defined in \eqref{eqn:h}. 
Moreover, $\bh-\partial$ acts locally finitely. In particular,
we have a decomposition of $M_{c, n}(\CC G)$
into generalized eigenspaces of $\bh-\partial$:
$$M_{c, n}(\CC G)=\bigoplus_{\beta\in \CC }M_{c, n}^{\beta}(\CC G).$$

We have $$\Hom(M_{c, n}(\CC G), N)=\{\text{vectors in $N$ which are killed by $\mathfrak{m}^{n+1}$} \},$$
and 
$$\Hom(M_{c, n}^{\beta}(\CC G), N)=\{\text{vectors in $N$ which are killed by $\mathfrak{m}^{n+1}$}$$
$$\qquad\text{and are generalized eigenvectors
of $\bh$ with generalized eigenvalue $\beta$} \}.$$

Let $\Sigma=\{h_{c}(\tau)|\tau \text{ is a irreducible representation of $G$}\}$
(recall that \linebreak
$h_c(\tau)=\frac{\dim \h}{2}-\sum_{s\in \cS}\frac{2c_s}
{1-\lambda_s}s|_\tau$), and let 
$$M^{\Sigma}_{c, n}(\CC G)=\bigoplus_{\beta\in \Sigma} M_{c, n}^{\beta}(\CC G).$$

{\bf Claim}: for large $n$, $M_{c, n}^{\Sigma}(\CC G)$ is a projective 
generator of $\cO_{c}(G, \h)_{0}$.

\begin{proof}[Proof of the claim]
First, for any $\beta$, there exists 
$n$ such that $M_{c, n}^{\Sigma}(\CC G)$ is projective
(since the condition of being killed by $\mathfrak{m}^{n+1}$ 
is vacuous for large $n$). 

Secondly, consider the functor $\Hom(M_{c, n}^{\Sigma}(\CC G),  \bullet)$. 
For any module $N\in \cO_c(G, \h)_0$, if $\Hom(M_{c, n}^{\Sigma}(\CC G), N)=0$, then
$\oplus_{\beta\in \Sigma}N[\beta]=0$. So $N=0$. Thus this functor 
does not kill nonzero objects, and so $M_{c, n}^{\Sigma}(\CC G)$ is a generator.
\end{proof}

Theorem \ref{thm-proj} is proved. 
\end{proof}

\begin{corollary}
\begin{enumerate}
\item[(i)] $\cO_{c}(G, \h)_{0}$ has enough projectives, so it is 
equivalent to the category of modules over a finite dimensional 
algebra.
\item[(ii)] Any object of $\cO_{c}(G, \h)_{0}$ is a quotient
of a multiple of $M_{c, n}(\CC G)$ for large enough $n$. 
\end{enumerate}

\end{corollary}

\begin{proof}
Directly from the definition and the above theorem.
\end{proof}

Now we can finish the proof of Theorem \ref{GGORt}.
We have shown that for generic $c$, \linebreak
$\KZ(M_{c, n}(\CC G))\in \Rep 
\mathscr{H}_{q}(G)$. Hence this is true for any $c$, since 
$M_{c, n}(\CC G)$ is a flat family of modules over $H_{c}(G, \h)$.
Then, $\KZ(M)$ is a $\mathscr{H}_{q}(G)$-module for all $M$,
since any $M$ is a quotient of $M_{c, n}(\CC G)$ and the functor 
$\KZ$ is exact.
\end{proof}

\begin{corollary} [Brou\'e, Malle, Rouquier, \cite{BMR}]
Let $q_{j, H}=\exp(t_{j, H})$ where $t_{j, H}$'s are formal parameters.
Then $\mathscr{H}_{q}(G)$ is a free module over $\CC[[t_{j, H}]]$
of rank $|G|$.  
\end{corollary}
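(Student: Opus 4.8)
The plan is to deduce formal flatness from the structure of category $\cO$ and the $\KZ$ functor, working uniformly over the complete local ring $R:=\CC[[t_{j,H}]]$. First I would note that, for each reflection hyperplane class, the formula of Theorem \ref{GGORt} expressing the $b_{j,H}$ in terms of the $c_s$ is an invertible linear map (a discrete Fourier transform in the index $\ell$), so that $R\cong\CC[[c_s]]$ canonically; hence the rational Cherednik algebra, the category $\cO_c(G,\h)_0$, and the functor $\KZ\colon\cO_c(G,\h)_0\to\mathscr H_q(G)\text{-mod}$ of Theorem \ref{GGORt} are all defined $R$-linearly over this formal base, with $\KZ$ $R$-linear and exact.

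The heart of the argument is to realize $\mathscr H_q(G)$ as an endomorphism algebra in $\cO$. As noted above (see the discussion around Remark \ref{conhol} and Proposition \ref{compuRI}), $P_{\KZ}:=\Ind_1(\CC)$ is a projective (and injective) object of $\cO_c(G,\h)_0$; I would use the fact from \cite{GGOR} that $\KZ$ is represented by $P_{\KZ}$, i.e. $\KZ(M)=\Hom_{\cO_c(G,\h)_0}(P_{\KZ},M)$ as an $\mathscr H_q(G)$-module, and that $\mathscr H_q(G)\cong\End_{\cO_c(G,\h)_0}(P_{\KZ})^{\mathrm{op}}$. In particular $\mathscr H_q(G)\cong\KZ(P_{\KZ})$ as $R$-modules. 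Next I would check that $P_{\KZ}$ is a free $R[\h]$-module of rank $|G|$: unwinding $\Ind_1$ through the centralizer construction $I$ and the isomorphism $\theta$ of Theorem \ref{comp}, $P_{\KZ}$ is assembled from $\mathrm{Fun}_1(G,\CC[[\h]])\cong\CC[[\h]]^{\oplus|G|}$ by exact functors applied to modules that are $R$-flat of the $M_{c,n}(\CC G)$ type, so the construction is $R$-flat and the $\CC[\h]$-rank ($=\sum_\tau(\dim\tau)^2=|G|$, from $[P_{\KZ}]=\sum_\tau(\dim\tau)[M_c(\tau)]$) is independent of the parameters. Then $\Res(P_{\KZ})$ is a relative local system of rank $|G|$ on $\h_\reg$ over $\Spec R$, so its monodromy $\KZ(P_{\KZ})=\mathrm{Mon}(\Res(P_{\KZ}))$ — the space of multivalued flat sections — is a free $R$-module of rank $|G|$, the rank being computed fibrewise. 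Combining, $\mathscr H_q(G)\cong\KZ(P_{\KZ})$ is $R$-free of rank $|G|$, as claimed.

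Two specializations give a consistency check and an alternative ending. At $t=0$ one has $\mathscr H_1(G)\cong\CC G$, of $\CC$-dimension $|G|$, by Proposition \ref{quott}; and at the generic point of $\Spec R$ the category $\cO_c(G,\h)_0$ is semisimple with simples $M_c(\tau)$, so there $\KZ$ is an equivalence and $\mathscr H_q(G)$ becomes semisimple of dimension $\sum_\tau(\dim\tau)^2=|G|$ — both compatible with rank $|G|$. If one prefers to avoid invoking representability of $\KZ$, these two computations combine with Nakayama's lemma: $\mathscr H_q(G)$ embeds into $\End_R(\KZ(P_{\KZ}))\cong\mathrm{Mat}_{|G|}(R)$, hence is finitely generated and torsion-free over the regular local ring $R$; having minimal number of generators equal to $|G|$ (the mod-$\mathfrak{m}$ dimension) and generic rank equal to $|G|$ forces it to be free of rank $|G|$.

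The main obstacle is the step identifying $\mathscr H_q(G)$ with $\End_{\cO}(P_{\KZ})^{\mathrm{op}}$, equivalently showing that $\KZ$ is represented by the projective–injective $P_{\KZ}$ and is fully faithful on projectives; this is the genuinely non-formal input, proved in \cite{GGOR} using the double centralizer property, and is where the real content of Section 6 is spent. Granting it, the passage to freeness over $R$ is bookkeeping: $R$-flatness of the $\Ind$/$\KZ$ construction and the fibrewise computation of the monodromy rank. A secondary subtlety worth flagging is that one genuinely needs a lower bound $\operatorname{rank}_R\mathscr H_q(G)\ge|G|$, not only the Nakayama upper bound $\le|G|$; this is supplied either by the rank-one freeness of $\KZ(P_{\KZ})$ over $\mathscr H_q(G)$ or by the semisimple specialization above.
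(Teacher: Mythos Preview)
Your argument is correct in substance (particularly the alternative ending), but it is much heavier than what the paper actually does, and your primary route leans on a GGOR result that, as you yourself flag, is where the real depth lies and which in these notes is stated \emph{assuming} $\dim\mathscr H_q(G)=|G|$ (see the theorem on $\overline{\KZ}$ immediately after this corollary). So invoking $\mathscr H_q(G)\cong\End_{\cO}(P_{\KZ})^{\mathrm{op}}$ as an input risks circularity in this presentation.

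The paper's proof is a two–line version of your alternative ending, but with Verma modules in place of $P_{\KZ}$. Since $\mathscr H_q(G)/(t)=\CC G$, Nakayama gives a surjection $R^{|G|}\twoheadrightarrow\mathscr H_q(G)$. For each $\tau\in\mathrm{Irrep}\,G$ the Verma module $M_c(\tau)=S\h^*\otimes\tau$ is visibly free over $R$, hence $\tau_q:=\KZ(M_c(\tau))$ is $R$-free of rank $\dim\tau$ and is an $\mathscr H_q(G)$-module by Theorem~\ref{GGORt}, specializing to $\tau$ at $t=0$. The resulting map
\[
R^{|G|}\twoheadrightarrow\mathscr H_q(G)\longrightarrow\prod_\tau \End_R(\tau_q)\cong R^{|G|}
\]
is an isomorphism modulo $\mathfrak m$ (it is the Wedderburn decomposition of $\CC G$), hence an isomorphism; so both arrows are isomorphisms. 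No $P_{\KZ}$, no representability, no double centralizer.

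What you gain from your approach is a conceptual identification of $\mathscr H_q(G)$ as an endomorphism algebra in $\cO$, which is genuinely more informative; what the paper's approach buys is that the only input beyond Theorem~\ref{GGORt} is the tautological $R$-freeness of $M_c(\tau)$, so the corollary really is a corollary rather than a restatement of the GGOR machinery.
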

\begin{proof}
We have 
$$\mathscr{H}_{q}(G)/(t)=\mathscr{H}_{1}(G)=\CC G.$$
So it remains to show that $\mathscr{H}_{q}(G)$ is free. To show this,
it is sufficient to show that any $\tau\in \mathrm{Irrep}G$ admits a flat 
deformation $\tau_{q}$ to a representation of $\mathscr{H}_{q}(G)$.
We can define this deformation by letting $\tau_{q}=\KZ(M_{c}(\tau))$.
\end{proof}

\begin{remark}
1. The validity of this Corollary in characteristic zero
implies that it is also valid over a field 
positive characteristic. 

2. It is not known in general if the Corollary holds for numerical $q$
(even generically). This is a conjecture of Brou\'e, Malle, and Rouquier. 
But it is known for many cases (including all Coxeter groups). 

3. The proof of the Corollary is analytic (it is based on the notion of 
monodromy). There is no known algebraic proof, except in special cases, and 
in the case of Coxeter groups, which we'll discuss later. 
\end{remark}

\subsection{The image of the KZ functor}
First, let us recall the definition of a quotient category.
Let $\mathcal{A}$ be an abelian category and $\mathcal{B}\subset \mathcal{A}$ a full abelian subcategory.

\begin{definition}
$\mathcal{B}$ is a Serre subcategory if it is closed under subquotients and 
extensions (i.e., if two terms in a short exact sequence are in $\mathcal{B}$, so is the third one).
\end{definition}

If $\mathcal{B}\subset \mathcal{A}$ is a Serre subcategory, one can define a category $\mathcal{A}/\mathcal{B}$ as follows:
$$
\text{objects in }\mathcal{A}/\mathcal{B} =\text{ objects in }\mathcal{A},
$$
$$
\Hom_{\mathcal{A}/\mathcal{B}}(X, Y)=
\mathop{\lim_{\longrightarrow}}_{ Y', X/X'\in \mathcal{B}}\Hom_{\mathcal{A}}
(X', Y/Y').
$$

The category $\mathcal{A}/\mathcal{B}$ is an abelian category with the
following universal property:
any exact functor $F: \mathcal{A}\to \mathcal{C}$ that 
kills $\mathcal{B}$ must factor through
$\mathcal{A}/\mathcal{B}$.

Now let $\cO_{c}(G, \h)^{\mathrm{tor}}_{0}$
be the full subcategory of $\cO_{c}(G, \h)_{0}$
consisting of modules supported on the reflection hyperplanes.
It is a Serre subcategory, and 
$\ker(\KZ)=\cO_{c}(G, \h)^{\mathrm{tor}}_{0}$.
Thus we have a functor:
$$\overline{\KZ}:\cO_{c}(G, \h)_{0}
/\cO_{c}(G, \h)^{\mathrm{tor}}_{0}\to \Rep 
\mathscr{H}_{q}(G).$$

\begin{theorem}[Ginzburg, Guay, Opdam, Rouquier, \cite{GGOR}]
If $\dim\mathscr{H}_{q}(G)=|G|$, the functor
$\overline{\KZ}$ is an equivalence of categories.
\end{theorem}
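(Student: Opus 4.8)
The plan is to realize $\KZ$ as a Serre-quotient functor between module categories over finite-dimensional algebras. The three ingredients are: (A) $\KZ$ is representable by a projective object $P_{\KZ}\in\cO_c(G,\h)_0$, so $\KZ\cong\Hom_{\cO_c(G,\h)_0}(P_{\KZ},-)$; (B) the hypothesis $\dim\mathscr{H}_q(G)=|G|$ forces $\End_{\cO_c(G,\h)_0}(P_{\KZ})^{\op}\cong\mathscr{H}_q(G)$; and (C) the formal fact that for any projective module $P$ over a finite-dimensional algebra $A$, putting $B=\End_A(P)^{\op}$, the functor $\Hom_A(P,-)$ induces an equivalence of $A\text{-mod}$ modulo the Serre subcategory $\{M:\Hom_A(P,M)=0\}$ with $B\text{-mod}$. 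For (A): $\KZ$ is exact, being the composite of the relative restriction functor $\Res$ of Subsection \ref{sec:res} (for the trivial parabolic $G'=1$) with the monodromy equivalence $\mathrm{Mon}$, both of which are exact; since $\cO_c(G,\h)_0$ has enough projectives (the corollary following Theorem \ref{thm-proj}), an exact functor from it to finite-dimensional vector spaces is represented by a projective object, and one can take $P_{\KZ}=\Ind_b(\CC)$ for generic $b$ (trivial stabilizer), which is projective and injective (Remark \ref{conhol} and Corollaries \ref{proj}, \ref{proj1}); the $\mathscr{H}_q(G)$-module structure on $\KZ(M)=\Hom(P_{\KZ},M)$ is the one coming from functoriality.

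The real content is (B). At generic $c$ the category $\cO_c(G,\h)_0$ is semisimple, $\tau\mapsto M_c(\tau)$ identifies it with $\Rep G$, $\mathscr{H}_q(G)\cong\CC G$, and $P_{\KZ}$ corresponds to the regular representation, so $\End(P_{\KZ})^{\op}\cong\CC G\cong\mathscr{H}_q(G)$. To pass to arbitrary $c$ I would first record that $\dim_\CC\KZ(M_c(\tau))=\dim\tau$ (by the Dunkl embedding, $M_c(\tau)$ is a local system on $\h_{\reg}/G$ of rank $\dim\tau$), hence, using a filtration of the projective $P_{\KZ}$ with subquotients among the $M_c(\tau)$ together with a reciprocity count, $\dim_\CC\KZ(P_{\KZ})=|G|$; and that $P_{\KZ}$, $\KZ$ and $\End(P_{\KZ})$ vary flatly with $c$, so $\dim_\CC\End(P_{\KZ})$ stays $|G|$. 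The dimension count alone does not yet give $\End(P_{\KZ})^{\op}\cong\mathscr{H}_q(G)$; the clean argument is a Soergel-type double centralizer theorem: since $P_{\KZ}$ is both projective and injective and $\cO_c(G,\h)_0$ is a highest-weight (quasi-hereditary) category, every standard module $M_c(\tau)$ admits an injective copresentation with terms in $\mathrm{add}(P_{\KZ})$, which forces $\End_{\cO_c(G,\h)_0}(P_{\KZ})=\End_{\mathscr{H}_q(G)}(\KZ(P_{\KZ}))$; combined with $\KZ(P_{\KZ})\cong\mathscr{H}_q(G)$ as a module (from the dimension count and the flat deformation, here using $\dim\mathscr{H}_q(G)=|G|$), this yields $\End(P_{\KZ})^{\op}\cong\mathscr{H}_q(G)$.

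Part (C) is routine. With $F=\Hom_A(P,-):A\text{-mod}\to B\text{-mod}$ and its left adjoint $G=P\otimes_B-$, one has $F(P)\cong B$ as a $B$-module, so $FG$ and $\mathrm{id}_{B\text{-mod}}$ are both right exact and agree on free $B$-modules, whence $FG\cong\mathrm{id}$; the triangle identities then show that the counit $GF\to\mathrm{id}_{A\text{-mod}}$ has kernel and cokernel in $\ker F$, so $F$ and $G$ induce mutually inverse equivalences $A\text{-mod}/\ker F\simeq B\text{-mod}$. Applying this with $A$ a finite-dimensional algebra such that $\cO_c(G,\h)_0\simeq A\text{-mod}$, $P=P_{\KZ}$, $B=\mathscr{H}_q(G)$, and $\ker F=\ker(\KZ)=\cO_c(G,\h)_0^{\mathrm{tor}}$ (the equality of kernels being already established in the excerpt), we obtain that $\overline{\KZ}$ is an equivalence of categories.

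The main obstacle is the identification $\End_{\cO_c(G,\h)_0}(P_{\KZ})^{\op}\cong\mathscr{H}_q(G)$ in (B): one must rule out any ``extra'' endomorphisms of $P_{\KZ}$ at special values of $c$ and show that $\KZ(P_{\KZ})$ is genuinely the regular $\mathscr{H}_q(G)$-module. This is exactly the point where the hypothesis $\dim\mathscr{H}_q(G)=|G|$ is indispensable --- without it $\End(P_{\KZ})^{\op}$ is merely some flat deformation of $\CC G$ that a priori could be of larger dimension and only surject onto $\mathscr{H}_q(G)$ --- and it is the only step that requires genuine homological input (the double centralizer property) as opposed to formal category theory; steps (A) and (C) are routine by comparison.
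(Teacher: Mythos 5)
The paper does not actually prove this theorem; it simply defers to \cite{GGOR}, Theorem 5.14, so there is no in-paper argument to compare yours against. What you have written is a reasonable reconstruction of the proof in \cite{GGOR}, which does indeed proceed along the three lines you describe: (A) representability of $\KZ$ by a projective-injective object $P_{\KZ}$; (B) identification $\End_{\cO_c(G,\h)_0}(P_{\KZ})^{\op}\cong\mathscr{H}_q(G)$ via a flatness/dimension count combined with a double centralizer theorem; (C) the formal Serre-quotient statement. You correctly isolate (B) as the only step that uses the hypothesis $\dim\mathscr{H}_q(G)=|G|$; steps (A) and (C) really are routine.

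Two points in (B) are stated more loosely than in \cite{GGOR} and would need tightening. First, the double-centralizer criterion there is phrased not as a copresentation of each standard module $M_c(\tau)$ by objects of $\mathrm{add}(P_{\KZ})$, but as the statement that every projective of $\cO_c(G,\h)_0$ admits an exact copresentation $0\to P\to Q_0\to Q_1$ with $Q_i\in\mathrm{add}(P_{\KZ})$ (equivalently, the category has dominant dimension at least $2$); this is deduced from the highest-weight structure, but your formulation would need an extra argument passing from standards to projectives. Second, concluding $\KZ(P_{\KZ})\cong\mathscr{H}_q(G)$ as a \emph{module} (not merely that the dimensions match) is exactly where $\dim\mathscr{H}_q(G)=|G|$ is used: once you know $\mathscr{H}_q(G)$ is flat of rank $|G|$ and $\KZ(P_{\KZ})$ is a flat family of projective $\mathscr{H}_q(G)$-modules of rank $|G|$ that is regular at generic $q$, rigidity of projectives in flat families gives the isomorphism at all $q$ --- your phrase ``from the dimension count and the flat deformation'' gestures at this but it is worth spelling out, since without the flatness hypothesis one only gets a surjection of $\mathscr{H}_q(G)$ onto a quotient.
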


\begin{proof}
See \cite{GGOR}, Theorem 5.14.
\end{proof}

\subsection{Example: the symmetric group $\kS_{n}$}

Let $\h=\CC^n$, $G=\kS_n$. Then we have (for $q\in\CC ^{*}$):
$$
\mathscr{H}_{q}(\kS_n)=\langle T_{1}, \ldots, T_{n-1}\rangle/
\langle\text{the braid relations and }(T_{i}-1)(T_{i}+q)=0\rangle.
$$

The following facts are known:
\begin{enumerate}
\item $\dim \mathscr{H}_{q}(\kS_n)=n!$;
\item $\mathscr{H}_{q}(\kS_{n})$ is semisimple
if and only if $\mathrm{ord}(q)\neq 2,3,\ldots, n.$
\end{enumerate}

Now suppose $q$ is generic. 
Let $\lambda$ be a partition of $n$. Then
we can define an $\mathscr{H}_{q}(\kS_n)$-module $S_{\lambda}$,
the Specht module for the Hecke algebra in the sense of \cite{DJ}.
This is a certain deformation of the classical irreducible Specht module 
for the symmetric group. 
The Specht module carries an inner product $\langle\cdot,\cdot\rangle$.
Denote $D_{\lambda}=S_{\lambda}/\mathrm{Rad}\langle\cdot,\cdot\rangle$.

\begin{theorem}[Dipper, James, \cite{DJ}]
$D_{\lambda}$ is either an irreducible $\mathscr{H}_{q}(\kS_n)$-module, or $0$. 
$D_{\lambda}\neq 0$ if and only if $\lambda$ is $e$-regular where
$e=\mathrm{ord}(q)$ (i.e., every part of $\lambda$ occurs less than 
$e$ times).
\end{theorem}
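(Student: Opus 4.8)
The plan is to follow the combinatorial construction of Specht modules for Hecke algebras of type $A$ due to Dipper and James \cite{DJ}. First I would set up the ``permutation modules'': for a composition $\mu$ of $n$ let $\kS_\mu\subset\kS_n$ be the associated Young subgroup, put $x_\mu=\sum_{w\in\kS_\mu}T_w$ and $y_\mu=\sum_{w\in\kS_\mu}(-q)^{-\ell(w)}T_w$ in $\mathscr{H}_q(\kS_n)$, and let $M^\mu$ be the module induced from the trivial representation of the parabolic subalgebra $\mathscr{H}_q(\kS_\mu)$; it has a natural basis indexed by $\mu$-tabloids. The algebra $\mathscr{H}_q(\kS_n)$ carries the anti-automorphism $*$ fixing every generator $T_i$, and I would equip $M^\lambda$ (for $\lambda$ a partition of $n$) with the $*$-contravariant symmetric bilinear form $\langle\cdot,\cdot\rangle$ for which the tabloid basis is orthonormal, so that $\langle hx,y\rangle=\langle x,h^*y\rangle$. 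The Specht module $S^\lambda\subseteq M^\lambda$ is then defined as the $\mathscr{H}_q(\kS_n)$-submodule generated by the distinguished polytabloid $z_\lambda=x_\lambda T_{w_\lambda}y_{\lambda'}$ attached to the initial tableau of shape $\lambda$ (here $w_\lambda$ carries rows to columns and $y_{\lambda'}$ is the sign-twisted symmetrizer of the column group $\kS_{\lambda'}$), and $\langle\cdot,\cdot\rangle$ restricts to the form on $S^\lambda$ named in the statement.

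The conceptual engine is the Hecke-algebra analogue of James's submodule theorem: \emph{every $\mathscr{H}_q(\kS_n)$-submodule $U\subseteq M^\lambda$ satisfies either $S^\lambda\subseteq U$ or $U\subseteq(S^\lambda)^\perp$.} I would prove this by the standard device: $S^\lambda$ is spanned by the $\mathscr{H}_q(\kS_n)$-translates of $z_\lambda$, and applying to any $m\in M^\lambda$ the ``column symmetrizer'' operator $\Phi_\lambda$ attached to the initial tableau sends $m$ into $\CC z_\lambda$ with scalar a value $\langle m,-\rangle$ of the form; hence if $U\not\subseteq(S^\lambda)^\perp$ one can produce $z_\lambda$ inside $U$, forcing $S^\lambda\subseteq U$. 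Taking $U=S^\lambda\cap(S^\lambda)^\perp=\Rad\langle\cdot,\cdot\rangle$ shows that $\Rad\langle\cdot,\cdot\rangle$ is either all of $S^\lambda$ or is the unique maximal submodule of $S^\lambda$: in the first case $D_\lambda=0$, and in the second $D_\lambda=S^\lambda/\Rad\langle\cdot,\cdot\rangle$ is irreducible, since any nonzero submodule of $D_\lambda$ pulls back to a submodule of $M^\lambda$ containing $S^\lambda$, hence equals $D_\lambda$. This settles the first assertion and reduces the nonvanishing question to: $D_\lambda\neq 0$ if and only if $\langle\cdot,\cdot\rangle$ is not identically zero on $S^\lambda$, i.e. if and only if the Gram matrix of $S^\lambda$ in its standard-polytabloid basis (indexed by standard $\lambda$-tableaux) is nonzero.

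The remaining step, and the genuinely hard one, is to make this nonvanishing condition combinatorial and identify it with $e$-regularity, $e=\mathrm{ord}(q)$. Here I would order the standard polytabloids by the dominance order on tabloids, check that the Gram matrix is block-triangular with respect to this order, and thereby reduce $\langle\cdot,\cdot\rangle\not\equiv 0$ on $S^\lambda$ to the nonvanishing of a single distinguished entry --- the norm $\langle z_\lambda,z_\lambda\rangle$ of the initial polytabloid, or robustly the leading entry surviving the triangular reduction. Expanding $y_{\lambda'}$ against the tabloid basis and collecting powers of $q$ expresses this quantity, up to a unit, as an explicit product of Gaussian integers $[k]_q=1+q+\cdots+q^{k-1}$, the factors indexed by certain pairs consisting of a node of $\lambda$ and a node weakly above it in the next column (for a one-column shape $(1^n)$ this product is just the $q$-factorial $[n]_q[n-1]_q\cdots[1]_q$). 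This product is nonzero exactly when none of the occurring $[k]_q$ vanishes, i.e. when $e\nmid k$ for all of them, and a short combinatorial lemma --- the $q$-analogue of the one James uses over $\mathbb{F}_p$, with $p$ replaced by $e$ --- shows that this holds for a partition $\lambda$ precisely when no part of $\lambda$ occurs $e$ or more times, that is, when $\lambda$ is $e$-regular. I expect the main obstacle to be exactly this Gram-determinant bookkeeping: establishing the triangularity and the product formula cleanly requires the full standard-basis (Murphy-type cellular) machinery for $\mathscr{H}_q(\kS_n)$, and it is this computation, rather than any conceptual point, that occupies the bulk of \cite{DJ}; in these notes one would either carry it out in the cellular-algebra language or simply cite \cite{DJ} for it.
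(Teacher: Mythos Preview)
The paper does not give a proof of this statement: its entire argument is the sentence ``See \cite{DJ}, Theorem~6.3, 6.8.'' Your proposal is a faithful outline of precisely the Dipper--James argument being cited (permutation modules $M^\lambda$, the $*$-contravariant form, the Hecke analogue of James's submodule theorem to get irreducibility-or-zero, and then the Gram-matrix/$q$-integer computation to pin down $e$-regularity), so there is nothing to compare --- you have simply unpacked the citation, and correctly so.
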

\begin{proof}
See \cite{DJ}, Theorem 6.3, 6.8.
\end{proof}

Now let $M_{c}(\lambda)$ be the Verma module associated
to the Specht module for $\kS_{n}$ and $L_{c}(\lambda)$ be its 
irreducible quotient.
Then we have the following theorem.
\begin{theorem}
If $c\leq 0$, then 
$\KZ(M_{c}(\lambda))=S_{\lambda}$ and
$\KZ(L_{c}(\lambda))=D_{\lambda}$.
\end{theorem}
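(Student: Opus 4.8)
The plan is to deduce both identities from the structural results already in hand, reducing everything to a monodromy computation and to the Dipper--James theory quoted above. Write $q=\be^{2\pi\bi c}$, so that by Theorem~\ref{GGORt} the functor $\KZ$ factors through $\Rep\mathscr{H}_q(\kS_n)$; since $\dim\mathscr{H}_q(\kS_n)=n!=|\kS_n|$, we have $\ker\KZ=\cO_c(\kS_n,\h)_0^{\mathrm{tor}}$ and the induced functor $\overline{\KZ}\colon\cO_c(\kS_n,\h)_0/\cO_c(\kS_n,\h)_0^{\mathrm{tor}}\to\Rep\mathscr{H}_q(\kS_n)$ is an equivalence of abelian categories, with $\KZ$ and $\overline{\KZ}$ exact. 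So there are really two statements to prove: that $\KZ$ sends the standard module $M_c(\lambda)$ to the Specht module $S_\lambda$, and that it then sends $L_c(\lambda)$ to $D_\lambda$.

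First I would prove $\KZ(M_c(\lambda))\cong S_\lambda$. By construction $\KZ(M_c(\tau))$ is the monodromy representation of the trivial bundle on $\h_{\reg}/\kS_n$ with fibre $\tau$ carrying the connection $\d-\sum_{s\in\cS}\frac{2c_s}{1-\lambda_s}\frac{\d\alpha_s}{\alpha_s}(1-s)$, so $\dim_\CC\KZ(M_c(\tau_\lambda))=\dim_\CC\tau_\lambda$, the number of standard tableaux of shape $\lambda$, i.e.\ $\dim_\CC S_\lambda$. At $c=0$ this connection is trivial, so the monodromy about the $i$-th reflection hyperplane is the simple transposition $s_i=T_i|_{q=1}$; hence $c\mapsto\KZ(M_c(\tau_\lambda))$ is a flat family of $\mathscr{H}_q(\kS_n)$-modules (the algebra varying holomorphically in $q=\be^{2\pi\bi c}$, the module via the monodromy of a connection depending holomorphically on $c$) specialising at $c=0$ to $\tau_\lambda$ as a $\CC\kS_n=\mathscr{H}_1(\kS_n)$-module. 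The Specht modules $\{S_\mu\}$ form a flat family (e.g.\ via the Murphy basis, with $T_i$ acting by polynomials in $q$) with the same ranks and with $S_\mu|_{q=1}=\tau_\mu$, and for generic $q$ the algebra $\mathscr{H}_q(\kS_n)$ is semisimple with these as its pairwise non-isomorphic simple modules; rigidity of modules over a semisimple algebra then gives $\KZ(M_c(\lambda))\cong S_\lambda$ for generic $c$, and the isomorphism propagates to all $c$ by flatness (using the projective generators $M_{c,n}(\CC\kS_n)$ from the proof of Theorem~\ref{GGORt} to see $\KZ$ is flat in $c$, together with continuity of the monodromy). The only subtle point here is that the right label is $\lambda$ and not $\lambda'$: this is forced by the $c=0$ specialisation ($\tau_\lambda$, not $\tau_{\lambda'}$), and it is consistent with the highest-weight order $\tau<\sigma\iff\Re h_c(\sigma)-\Re h_c(\tau)\in\mathbb N$ on $\cO_c(\kS_n,\h)_0$, computed from $h_c(\tau_\lambda)=\tfrac{n-1}2-c\sum_{s\in\cS}s|_{\tau_\lambda}=\tfrac{n-1}2-c\sum_{(i,j)\in\lambda}(j-i)$, together with the compatibility of dominance of partitions with the content sum and the fact that the Dipper--James Specht modules are the standard objects of $\mathscr{H}_q(\kS_n)$.

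Then $\KZ(L_c(\lambda))\cong D_\lambda$ follows. Applying the exact functor $\KZ$ to $0\to J_c(\lambda)\to M_c(\lambda)\to L_c(\lambda)\to0$ yields $\KZ(L_c(\lambda))=S_\lambda/\KZ(J_c(\lambda))$. Under $\overline{\KZ}$ the object $\overline{M_c(\lambda)}$ corresponds to $S_\lambda$, and its head is the image $\overline{L_c(\lambda)}$ of the unique simple quotient $L_c(\lambda)$ of $M_c(\lambda)$, which is a simple object if $L_c(\lambda)\notin\cO_c^{\mathrm{tor}}$ and is $0$ otherwise; on the other side, by the Dipper--James theorem recalled above, the head of $S_\lambda$ is $D_\lambda$ exactly when $D_\lambda\neq0$ (i.e.\ $\lambda$ is $e$-regular, $e=\mathrm{ord}(q)$), and otherwise the contravariant form on $S_\lambda$ is identically zero. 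Matching heads across the equivalence gives $\KZ(L_c(\lambda))\cong D_\lambda$ in every case once one knows that for $c\le0$ one has $L_c(\lambda)\in\cO_c^{\mathrm{tor}}\iff D_\lambda=0$. Establishing this dichotomy --- equivalently, ruling out the phenomenon, which does occur for $c>0$, where $L_c(\lambda)$ is finite-dimensional (hence killed by $\KZ$) while $D_\lambda\neq0$ --- is where the sign hypothesis $c\le0$ genuinely enters, and it is the step I expect to be the main obstacle; it should be handled by combining the classification of supports of $L_c(\tau)$ with the $\kS_n$-character computation of $\mathrm{ord}(q)$ in terms of the denominator of $c$. A more symmetric alternative, which avoids separating into cases, is to verify that $\KZ$ intertwines the duality $\dagger$ on $\cO_c(\kS_n,\h)_0$ with the $T_i\mapsto T_i$ duality on $\mathscr{H}_q(\kS_n)\text{-mod}$; then $\KZ$ carries the contravariant form $\beta_c$ on $M_c(\lambda)$ to a nonzero invariant form on $S_\lambda$, necessarily proportional to $\langle\cdot,\cdot\rangle$ by one-dimensionality of the space of such forms, and hence $\KZ(J_c(\lambda))=\KZ(\ker\beta_c)=\Rad\langle\cdot,\cdot\rangle$, giving $\KZ(L_c(\lambda))=D_\lambda$ uniformly.
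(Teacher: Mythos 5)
The paper gives no argument of its own here --- it simply defers to Section 6.2 of \cite{GGOR} --- so what you have written is a reconstruction rather than a recapitulation, and its overall shape is right: identify $\KZ(M_c(\lambda))$ by a flatness/rigidity argument anchored at the $c=0$ fibre, then descend to the simples. The first half is sound; indeed the identification $\KZ(M_c(\lambda))\cong S_\lambda$ holds for \emph{all} real $c$, not just $c\le 0$, since the monodromy is holomorphic in $c$ on $\CC$, the Specht family is defined over $\CC[q,q^{-1}]$, the set of $c$ at which $\mathscr{H}_q(\kS_n)$ is non-semisimple is discrete in $\CC$, and both families specialise to $\tau_\lambda$ at $c=0$. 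The hypothesis $c\le 0$ is needed only for the second assertion, and that is exactly where your proposal has a real gap.

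The problematic step is the final claim that the duality argument ``avoids separating into cases'' and yields $\KZ(L_c(\lambda))\cong D_\lambda$ ``uniformly.'' That cannot be correct, because the second assertion of the theorem is simply false for $c>0$. Concretely, take $c=1/n$, so $q=\be^{2\pi\bi/n}$ and $e=\mathrm{ord}(q)=n$. Then $L_c(\CC)=L_c((n))$ is finite-dimensional (see the remark on finite-dimensional representations of $H_{1,c}(\kS_n)$ following Theorem \ref{thm-Arep}), hence torsion, so $\KZ(L_{1/n}((n)))=0$; but $(n)$ is $e$-regular for every $e\ge 2$, so $D_{(n)}\neq 0$. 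Thus $\KZ(L_{1/n}((n)))\neq D_{(n)}$. Via Remark \ref{chara}, $L_{1/n}((1^n))\cong L_{-1/n}((n))$ furnishes the opposite failure: a non-torsion $L_c(\lambda)$ with $D_\lambda=0$. So for $c>0$ the dichotomy $L_c(\lambda)\in\cO_c(\kS_n,\h)_0^{\mathrm{tor}}\Longleftrightarrow D_\lambda=0$ fails in both directions, and any proof --- including the duality one --- must make genuine use of the sign of $c$. Your sketch does not locate where: the hidden input is that $\KZ$ intertwines $\dagger$ with the relevant Hecke-algebra duality \emph{without} a further twist by the $\lambda\mapsto\lambda'$ involution, and verifying that is precisely where $c\le 0$ enters, not a step you can skip.

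A smaller point concerning the ``matching heads'' formulation: exactness of $\KZ$ and the equivalence $\overline{\KZ}$ give only that $\KZ(L_c(\lambda))$ is zero or a simple quotient of $S_\lambda$. If $D_\lambda\neq 0$ this forces $\KZ(L_c(\lambda))\in\{0,D_\lambda\}$, because $D_\lambda$ is then the unique simple quotient of $S_\lambda$; but if $D_\lambda=0$ the head of $S_\lambda$ is some $D_\mu$ with $\mu\neq\lambda$, and one cannot conclude $\KZ(L_c(\lambda))=0$ without already having the dichotomy. So the dichotomy carries the entire weight of the second assertion, as you suspected. The natural place to spend the hypothesis $c\le 0$ is in reconciling the $\cO_c$-order (controlled by $h_c$, i.e.\ by $-c$ times the content sum, which is increasing along dominance covers) with the dominance order governing the unitriangularity of the decomposition matrix of $\mathscr{H}_q(\kS_n)$. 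You gesture at this compatibility in the first paragraph, where it is not needed for the labelling of standards, rather than here, where it is essential.
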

\begin{proof}
See Section 6.2 of \cite{GGOR}.
\end{proof}

\begin{corollary}
If $c\le 0$, then 
$\mathrm{Supp}L_{c}(\lambda)=\CC ^{n}$ if and only if
$\lambda$ is $e$-regular. If $c>0$, then 
$\mathrm{Supp}L_{c}(\lambda)=\CC ^{n}$ if and only if
$\lambda^{\vee}$ is $e$-regular, or equivalently, 
$\lambda$ is $e$-restricted 
(i.e., $\lambda_{i}-\lambda_{i+1}<e$ for $i=1, \ldots, n-1$).
\end{corollary}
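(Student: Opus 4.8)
The plan is to read the corollary off the KZ functor, using three inputs already at hand: that for $G=\kS_n$ the Hecke parameter is $q=\exp(2\pi\bi c)$ (Theorem \ref{GGORt} specialized to $\kS_n$, where every $m_H=2$), so that $e=\mathrm{ord}(q)$; that $\ker(\KZ)=\cO_c(\kS_n,\h)^{\mathrm{tor}}_0$, the subcategory of modules whose support lies in the union $\bigcup_H H$ of reflection hyperplanes; and the Dipper--James criterion that $D_\lambda\ne 0$ iff $\lambda$ is $e$-regular.

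First I would treat $c\le 0$. By Proposition \ref{prop:supp}, $\Supp L_c(\lambda)\subset\h=\CC^n$ is a union of strata of the stratification by stabilizers. The unique open stratum is $\h_{\reg}$, and every other stratum lies in $\bigcup_H H$; hence either $\Supp L_c(\lambda)\subseteq\bigcup_H H$, meaning $L_c(\lambda)\in\cO_c(\kS_n,\h)^{\mathrm{tor}}_0=\ker(\KZ)$, or $\Supp L_c(\lambda)$ contains $\h_{\reg}$ and, being closed, equals $\CC^n$. Thus $\Supp L_c(\lambda)=\CC^n$ iff $\KZ(L_c(\lambda))\ne 0$; and since $\KZ(L_c(\lambda))=D_\lambda$ for $c\le 0$, this holds iff $\lambda$ is $e$-regular, which proves the first assertion.

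For $c>0$ I would reduce to the case just handled using the sign twist of Remark \ref{chara}. The only nontrivial character of $\kS_n$ is the sign character $\epsilon$, and $c\epsilon=-c$ as a conjugation-invariant function on reflections, so Remark \ref{chara} furnishes an algebra isomorphism $H_{1,c}(\kS_n,\h)\to H_{1,-c}(\kS_n,\h)$ which is the identity on $\h^*$ and on $\h$ and sends $g\mapsto\epsilon(g)g$, and under which $L_{-c}(\epsilon\otimes S_\lambda)$ pulls back to $L_c(S_\lambda)$. Combining the classical isomorphism $\epsilon\otimes S_\lambda\cong S_{\lambda^\vee}$ with the observation that this isomorphism fixes $\CC[\h]=S\h^*$ pointwise (hence does not alter the underlying $\CC[\h]$-module), one obtains $\Supp L_c(\lambda)=\Supp L_{-c}(\lambda^\vee)$. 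Now $-c\le 0$ and $\mathrm{ord}(\exp(-2\pi\bi c))=\mathrm{ord}(\exp(2\pi\bi c))=e$, so the first part gives $\Supp L_{-c}(\lambda^\vee)=\CC^n$ iff $\lambda^\vee$ is $e$-regular. Finally, $\lambda^\vee$ is $e$-regular iff $\lambda$ is $e$-restricted, since the multiplicity of a part $i$ in $\lambda^\vee$ equals $\lambda_i-\lambda_{i+1}$.

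The one step deserving genuine care — and the only plausible obstacle — is the dichotomy ``$L_c(\lambda)$ is torsion, or $\Supp L_c(\lambda)=\CC^n$'': this uses that the support is a union of strata (Proposition \ref{prop:supp}), not merely that it is a closed $\kS_n$-stable subvariety, since a priori such a subvariety could meet $\h_{\reg}$ without containing it. Everything else — pinning down $q$ and $e$ from Theorem \ref{GGORt}, and recalling the standard fact $\epsilon\otimes S_\lambda\cong S_{\lambda^\vee}$ — is routine bookkeeping.
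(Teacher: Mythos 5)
Your argument is correct and fills in exactly what the paper's one-line proof (``Directly from the definition and the above theorem'') leaves implicit: the dichotomy $\Supp L_c(\lambda)\subseteq\bigcup_H H$ versus $\Supp L_c(\lambda)=\CC^n$ via Proposition \ref{prop:supp}, the identification $\ker\KZ=\cO^{\mathrm{tor}}_c$ so that fullness of the support is equivalent to $\KZ(L_c(\lambda))=D_\lambda\ne 0$, the Dipper--James criterion, and the sign-twist of Remark \ref{chara} combined with $\epsilon\otimes S_\lambda\cong S_{\lambda^\vee}$ to handle $c>0$. This is the same route the paper intends; no discrepancy.
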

\begin{proof}
Directly from the definition and the above theorem.
\end{proof}

\subsection{Notes}
The references for this section are \cite{GGOR}, \cite{BMR}.

\newpage \section{Rational Cherednik algebras and Hecke algebras for varieties with group actions}

\subsection{Twisted differential operators}{\label{sec:tdo}}

Let us recall the theory of twisted differential
operators (see \cite{BB}, section 2).

Let $X$ be a smooth affine algebraic variety over $\CC $. 
Given a closed 2-form $\omega$ on $X$, 
the algebra $\cD_\omega(X)$ of differential operators on $X$
twisted by $\omega$ can be defined 
as the algebra generated by $\cO_X$ and 
``Lie derivatives'' ${\mathbf L}_v$, $v\in \mathrm{Vect}(X)$,
with defining relations 
$$
f{\mathbf L}_v={\mathbf L}_{fv},\ [{\mathbf L}_v,f]=L_vf,\ [{\mathbf L}_v,{\mathbf L}_w]={\mathbf L}_{[v,w]}+\omega(v,w).
$$
This algebra depends only on the
cohomology class $[\omega]$ of 
$\omega$, and equals the algebra $\cD(X)$ of usual 
differential operators on $X$ if $[\omega]=0$. 

An important special case of twisted differential operators is 
the algebra of differential operators on a line bundle. 
Namely, let $L$ be a line bundle on $X$. Since $X$ is affine, $L$
admits an algebraic connection $\nabla$ with curvature $\omega$,
which is a closed 2-form on $X$. Then it is easy to show that 
the algebra $\cD(X,L)$ of differential operators on $L$ 
is isomorphic to $\cD_\omega(X)$. 

If the variety $X$ is smooth but not necessarily affine, 
then (sheaves of) algebras of twisted differential operators 
are classified by 
the space $\H^2(X,\Omega_X^{\ge 1})$, where 
$\Omega_X^{\ge 1}$ is the two-step complex 
of sheaves $\Omega^1_X\to \Omega^{2,\mathrm{cl}}_X$, given by the De Rham
differential acting from 1-forms to closed 2-forms (sitting in
degrees 1 and 2, respectively). 
If $X$ is projective then this space is isomorphic 
to $\H^{2,0}(X,\CC )\oplus \H^{1,1}(X,\CC )$.
We refer the reader to \cite{BB},
Section 2, for details. 

\begin{remark}
One can show that $\cD_{\omega}(X)$ is the universal deformation
of $\cD(X)$ (see \cite{E1}).
\end{remark}

\subsection{Some algebraic geometry preliminaries}{\label{sec:agp}}

Let $Z$ be a smooth hypersurface in a smooth affine variety $X$. 
Let $i: Z\to X$ be the corresponding closed embedding. 
Let $N$ denote the normal bundle of $Z$ in $X$
(a line bundle). Let $\cO_X(Z)$ 
denote the module of regular functions on
$X\setminus Z$ which have a pole of at most first order at $Z$.
Then we have a natural map of $\cO_X$-modules 
$\phi: \cO_X(Z)\to i_*N$. Indeed, 
we have a natural residue map 
$\eta: \cO_X(Z)\otimes_{\cO_X}\Omega^1_X\to 
i_*\cO_Z$ (where $\Omega^1_X$ is the module of 1-forms), hence 
a map $\eta': \cO_X(Z)\to 
i_*\cO_Z\otimes_{\cO_X}TX=i_*(TX|_Z)$ (where $TX$ is
the tangent bundle). The map $\phi$ is obtained by composing
$\eta'$ with the natural projection $TX|_Z\to N$.  

We have an exact sequence 
of $\cO_X$-modules: 
$$
0\to \cO_X\to \cO_X(Z)\xrightarrow{\phi} i_*N\to 0
$$
Thus we have a 
natural surjective map of $\cO_X$-modules 
$\xi_Z: TX\to \cO_X(Z)/\cO_X$.

\subsection{The Cherednik algebra of a variety with a finite
group action}{\label{sec:chav}}

We will now generalize the definition of $H_{t,c}(G,\h)$
to the global case. 
Let $X$ be an affine 
algebraic variety over 
$\CC$, and $G$ be a finite group of automorphisms of $X$. 
Let $E$ be a $G$-invariant subspace of the space of closed
$2$-forms on $X$, which projects isomorphically to $\H^2(X, \CC)$. 
Consider the algebra $G\ltimes\cO_{T^*X}$, 
where $T^*X$ is the
cotangent bundle of $X$. We are going to define a
deformation $H_{t,c,\omega}(G,X)$ of this algebra
parametrized by 
\begin{enumerate}
\item complex numbers $t$,
\item $G$-invariant functions $c$ 
on the (finite) set $\cS$ of pairs $s=(Y,g)$,
where $g\in G$, and $Y$ is a connected component of 
the set of fixed points $X^g$ such that ${\mathrm codim} Y=1$, and
\item elements $\omega\in E^G=\H^2(X, \CC)^G$. 
\end{enumerate}

If all the parameters are zero, this algebra will conicide with 
$G\ltimes \cO_{T^{*}X}$.

Let $t,c=\lbrace{c(Y,g)\rbrace},\omega\in E^{G}$ be variables. 
Let $\cD_{\omega/t}(X)_r$ be the algebra (over $\CC [t,t^{-1},\omega]$) of 
twisted (by $\omega/t$) differential operators on $X$ with
rational coefficients. 

\begin{definition}\label{doo}
A Dunkl-Opdam operator for $(X, G)$ is 
an element of $\cD_{\omega/t}(X)_r[c]$ given by the formula 
\begin{equation}\label{Dunkl}
D:=t{\mathbf L}_v-\sum_{(Y,g)\in \cS}\frac{2c(Y,g)}{1-\lambda_{Y,g}}
\cdot f_Y(x)\cdot (1-g),
\end{equation}
where $\lambda_{Y,g}$ is the eigenvalue of $g$ on 
the conormal bundle to $Y$, $v\in \Gamma(X,TX)$ is a vector field on $X$,
and $f_Y\in \cO _X(Z)$ 
is an element of the coset $\xi_Y(v)\in {\mathcal
O}_X(Z)/\cO _X$ (recall that $\xi_Y$ is defined in
Subsection \ref{sec:agp}).
\end{definition} 

\begin{definition}
The algebra $H_{t,c,\omega}(X, G)$ is the subalgebra of 
$G\ltimes \cD_{\omega/t}(X)_r[c]$ generated (over $\CC [t,c,\omega]$) 
by the function algebra 
$\cO _X$, the group $G$, and the Dunkl-Opdam 
operators. 
\end{definition} 

By specializing $t,c,\omega$ to numerical values, we can define 
a family of algebras over $\CC $, which we will also denote
$H_{t,c,\omega}(G,X)$. Note that when we set $t=0$, 
the term $t{\mathbf L}_v$
does not become $0$ but turns into the classical momentum. 

\begin{definition}
$H_{t,c,\omega}(G,X)$ is called {\em the Cherednik algebra of the
orbifold $X/G$}. 
\end{definition}

\begin{remark} One has 
$H_{1,0,\omega}(G,X)=G\ltimes \cD_\omega(X)$. 
Also, if $\lambda\ne 0$ then $H_{\lambda t,\lambda
c,\lambda \omega}(G,X)=H_{t,c,\omega}(G,X)$. 
\end{remark}

\begin{example}
$X=\h$ is a vector space and $G$ is a 
subgroup in $\GL(\h)$. Let $v$ be a constant vector field, 
and let $f_Y(x)=(\alpha_Y,v)/\alpha_Y(x)$, where 
$\alpha_Y\in \h^*$ is a nonzero functional vanishing on $Y$.
Then the operator $D$ is 
just the usual Dunkl-Opdam operator $D_v$ in the complex
reflection case (see Section \ref{sec:dop}). 
This implies that all the Dunkl-Opdam operators
in the sense of Definition \ref{doo} 
have the form $\sum f_iD_{y_i}+a$, where $f_i\in \CC [\h]$, $a\in
G\ltimes \CC [\h]$, and $D_{y_i}$ are the usual
Dunkl-Opdam operators (for some basis $y_i$ of $\h$). 
So the algebra $H_{t,c}(G,\h)=H_{t,c,0}(G,X)$ 
is the rational Cherednik algebra for $(G,\h)$, 
see Section \ref{sec:rca}.
\end{example}

The algebra $H_{t,c,\omega}(G,X)$ 
has a filtration $F^\bullet$ which is defined on
generators by $\deg(\cO _X)=\deg(G)=0$, 
$\deg(D)=1$ for Dunkl-Opdam operators $D$. 

\begin{theorem}[the PBW theorem]  \label{pbwth} 
$\,$ We have 
\begin{center}
$\gr_F(H_{t,c,\omega}(G,X))
=G\ltimes \cO (T^*X)[t,c,\omega]$. 
\end{center}
\end{theorem}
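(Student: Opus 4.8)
The plan is to mimic the proof of the linear case (Proposition~\ref{prop-pbw}): construct a surjection from the expected associated graded onto $\gr_F H_{t,c,\omega}(G,X)$, and then prove it is injective by comparing with the ambient algebra of twisted differential operators via the symbol map. Recall that $H_{t,c,\omega}(G,X)$ is generated over $\CC[t,c,\omega]$ by $\cO_X$ and $G$ in filtration degree $0$ and by the Dunkl--Opdam operators \eqref{Dunkl} in degree $1$; write $D_v$ for such an operator, whose Lie-derivative term is $t\mathbf{L}_v$. Using Definition~\ref{doo}, the defining relations of $\cD_{\omega/t}(X)$ from Subsection~\ref{sec:tdo}, and the $\cO_X$-linearity of $\xi_Y$ from Subsection~\ref{sec:agp}, one checks that in $\gr_F H_{t,c,\omega}(G,X)$ one has $\overline{D_{fv}}=f\,\overline{D_v}$, $g\,\overline{D_v}\,g^{-1}=\overline{D_{gv}}$, $[\overline{D_v},\overline{D_w}]=0$ and $[\overline{D_v},f]=0$ for $f\in\cO_X$ and $g\in G$ --- each because the corresponding difference or commutator in $H_{t,c,\omega}(G,X)$ drops filtration degree, the relevant identities in $G\ltimes\cD_{\omega/t}(X)_r$ being $[\mathbf{L}_v,f]=L_v(f)$, $[\mathbf{L}_v,\mathbf{L}_w]=\mathbf{L}_{[v,w]}+\omega(v,w)$, and $[(1-g),f]=(f-g(f))g$. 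Since $\cO(T^*X)$ is generated over $\cO_X$ by the principal symbols $p_v$ of the vector fields $v\in\Vect(X)$, this shows that
$$\xi\colon R:=G\ltimes\cO(T^*X)[t,c,\omega]\longrightarrow \gr_F H_{t,c,\omega}(G,X),\quad f\mapsto f,\ g\mapsto g,\ p_v\mapsto\overline{D_v},$$
is a well-defined surjective homomorphism of $\CC[t,c,\omega]$-algebras; equivalently, the ordered monomials in $\cO_X$, $G$ and the $D_v$ span $H_{t,c,\omega}(G,X)$.

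For injectivity, recall that by its very definition $H_{t,c,\omega}(G,X)$ is a subalgebra of $G\ltimes\cD_{\omega/t}(X)_r[c]$, and that the filtration $F^\bullet$ refines the filtration by order of differential operators there (a product of $\le n$ Dunkl--Opdam operators has order $\le n$). Hence the symbol map of $G\ltimes\cD_{\omega/t}(X)_r[c]$ restricts to an algebra homomorphism $\sigma\colon\gr_F H_{t,c,\omega}(G,X)\to G\ltimes\cO(T^*X)_r[t^{\pm 1},c,\omega]$. Since $\mathbf{L}_v$ has order $1$ with symbol $p_v$ while the reflection corrections $\sum\frac{2c(Y,g)}{1-\lambda_{Y,g}}f_Y(1-g)$ have order $0$, we get $\sigma(\overline{D_v})=t\,p_v$. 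Therefore the composite
$$\sigma\circ\xi\colon R=G\ltimes\cO(T^*X)[t,c,\omega]\longrightarrow G\ltimes\cO(T^*X)_r[t^{\pm 1},c,\omega]$$
is the identity on $\cO_X$, $G$, $t$, $c$ and $\omega$ and sends $p_v\mapsto t\,p_v$. Decomposing by degree in the cotangent fibres --- which this map preserves --- and using that $t$ is a unit in the target and that both $\cO(T^*X)\hookrightarrow\cO(T^*X)_r$ and $\CC[t,c,\omega]\hookrightarrow\CC[t^{\pm 1},c,\omega]$ are injective, one sees that $\sigma\circ\xi$ is injective; hence $\xi$ is injective, and therefore an isomorphism. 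Finally, since $R$ is free over $\CC[t,c,\omega]$, the algebra $H_{t,c,\omega}(G,X)$ is filtered-free over $\CC[t,c,\omega]$, so $\gr_F$ commutes with specialisation and the statement also holds after specialising $t,c,\omega$ to numerical values.

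The main obstacle will be the first step: one must genuinely work out the commutation relations among $\cO_X$, $G$ and the operators \eqref{Dunkl} and verify that the ``anomaly'' $\omega(v,w)$ coming from $[\mathbf{L}_v,\mathbf{L}_w]$, together with the reflection corrections, only ever lowers the filtration degree --- this is what guarantees that the ordered monomials span. Once this is in place, and once one has checked that $F^\bullet$ genuinely refines the order filtration so that $\sigma$ is defined on $\gr_F$, the rest is formal. An alternative, more geometric route --- useful as a consistency check and for generalisations --- is to prove the theorem formally-locally on $X/G$: near the image of a point $x\in X$ one formally linearises the action of $G_x$, uses that the class of $\omega/t$ dies on the formal neighbourhood, and identifies $\widehat{H_{t,c,\omega}(G,X)}_x$ with the centralizer algebra $Z\big(G,G_x,\widehat{H_{t,c}(G_x,T_xX)}_0\big)$ exactly as in Theorem~\ref{comp}; the PBW property is then inherited from the linear case (Proposition~\ref{prop-pbw}) together with the Morita-triviality of the centralizer construction.
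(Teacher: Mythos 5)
Your proposal has the right overall shape, and the $\sigma$-based injectivity argument at the end is a clean observation. The problem is the step you dismiss as ``one checks'': the assertion that $[D_v,D_w]\in F^1$ (equivalently, that the ordered monomials span) is precisely the nontrivial content of the theorem, and it does \emph{not} follow from the defining relations of $\cD_{\omega/t}(X)$ and the $\cO_X$-linearity of $\xi_Y$ that you cite. The cross terms $[t\mathbf{L}_v,\,f_Y^w(1-g)]$ produce contributions of the form $f_Y^w\,\mathbf{L}_{gv-v}\,g$, which have first-order poles on $Y$; nothing in your list of identities forces these poles to cancel against the double-sum terms and the $f_Y^{[v,w]}$ part of $tD_{[v,w]}$. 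The remark immediately following Theorem~\ref{pbwth} says this explicitly: the statement that $[D_1,D_2]-D_3\in G\ltimes\cO(X)$ is a genuinely non-obvious property of Dunkl--Opdam operators on a general $X$, and the proof offered is to pass to a formal polydisk, linearize the $G_x$-action via Lemma~\ref{line}, and invoke the commutativity of the linear Dunkl operators (Theorem~\ref{thm-dunkl}). So the formal-local linearization you describe in your last sentence as ``an alternative, more geometric route --- useful as a consistency check'' is in fact the essential step, not a consistency check.

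The paper sidesteps this issue by building the comparison map in the opposite direction: $\psi:\gr_F(H_{t,c,\omega}(G,X))\to G\ltimes\cO(T^*X)[t,c,\omega]$ is a restriction of the principal symbol map on the ambient algebra, so it is automatically a ring homomorphism with no relations to verify, surjectivity is immediate, and all the weight of the argument is carried by the local injectivity check, done by linearizing at each point of $X/G$ and quoting the linear case (Proposition~\ref{prop-pbw}). Your $\xi$, going from the free commutative object $R$ into $\gr_F H$, needs the relations \emph{before} it can even be defined, so the hard part cannot be deferred. If you supply the local verification of the relations as step one, the rest of your argument --- surjectivity and the $\sigma$-based injectivity --- is correct and is a legitimate substitute for the paper's local injectivity argument. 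As written, however, the main proof has a gap, and the role of the geometric argument is misassigned.
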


\begin{proof} 
Suppose first that $X=\h$ is a vector space and $G$ is a 
subgroup in $\GL(\h)$. Then, as we mentioned,
$H_{t,c,\omega}(G,\h)=H_{t,c}(G,\h)$ is the rational Cherednik algebra for $G,\h$. 
So in this case the theorem is true. 

Now consider arbitrary $X$. 
We have a homomorphism of graded algebras
$$\psi: \gr_F(H_{t,c,\omega}(G,X))\to 
G\ltimes\cO(T^*X)[t,c,\omega]\quad \text{
(the principal symbol homomorphism)}.$$
The homomorphism $\psi$ is clearly surjective, and our job is to show
that it is injective (this is the nontrivial part of the proof). 
In each degree, $\psi$ is a morphism 
of finitely generated $\cO_X^{G}$-modules. 
Therefore, to check its
injectivity, it suffices to check the injectivity on the formal 
neighborhood of each point $z\in X/G$. 

Let $x$ be a preimage of $z$ in $X$, and $G_x$ be the stabilizer
of $x$ in $G$. Then $G_x$ acts on the formal neighborhood $U_x$
of $x$ in $X$. 

\begin{lemma}\label{line}
Any action of a finite group on a formal polydisk over $\CC $ is linearizable.
\end{lemma}
\begin{proof}
Let $\mathscr{D}$ be a formal polydisk over $\CC $.
Suppose we have an action of a finite group $G$ on $\mathscr{D}$.
Then we have a group homomorphism:
$$\rho: G\to \mathrm{Aut}(\mathscr{D})=\GL_{n}(\CC )\ltimes 
\Aut_{U}(\mathscr{D}),$$
where $ \Aut_{U}(\mathscr{D})$ is the group of unipotent 
automorphisms of $\mathscr{D}$ (i.e. those whose derivative at the origin is $1$), 
which is a prounipotent algebraic group.

Our job is to show that the image of $G$ under $\rho$ can be
conjugated into $\GL_{n}(\CC )$. 
The obstruction to this is in the cohomology group
$\H^{1}(G,  \mathrm{Aut}_{U}(\mathscr{D}))$, which is trivial since 
$G$ is finite and $ \mathrm{Aut}_{U}(\mathscr{D})$ is prounipotent
over $\CC $.
\end{proof}

It follows from Lemma \ref{line} that 
it suffices to prove the theorem in the linear case,
which has been accomplished already. We are done.     
\end{proof}

\begin{remark} 
The following remark is meant to clarify
the proof of Theorem \ref{pbwth}. 
In the case $X=\h$, the proof of Theorem \ref{pbwth}
is based, essentially, on the (fairly nontrivial) fact 
that the usual Dunkl-Opdam operators $D_v$ commute with each
other. It is therefore very important to note that 
in contrast with the linear case, for a general $X$ we do {\bf
not} have any natural commuting family of Dunkl-Opdam operators. 
Instead, the operators (\ref{Dunkl}) satisfy a weaker property, which is 
still sufficient to validate the PBW theorem. This property says that 
if $D_1,D_2,D_3$ are Dunkl-Opdam operators corresponding to vector
fields $v_1,v_2, v_3:=[v_1,v_2]$ and some choices of the functions $f_Y$, 
then $[D_1,D_2]-D_3\in G\ltimes \cO(X)$ (i.e., it has no
poles). To prove this property, it is sufficient to consider 
the case when $X$ is a 
formal polydisk, with a linear action of $G$. But in this case
everything follows from the commutativity of the usual
Dunkl operators $D_v$. 
\end{remark}

\begin{example}
\begin{enumerate}
\item Suppose $G=1$. Then for $t\ne 0$, 
$H_{t,0,\omega}(G,X)=\cD_{\omega/t}(X)$. 
\item Suppose $G$ is a Weyl group and $X=H$ the corresponding 
torus. Then $H_{1, c,0}(G,H)$ is called the {\it trigonometric 
Cherednik algebra.}
\end{enumerate}

\end{example}

\subsection{Globalization}{\label{sec:glb}}
Let $X$ be any smooth algebraic variety, 
and $G\subset \Aut(X)$. 
Assume that $X$ admits a cover by affine $G$-invariant
open sets. Then the quotient variety $X/G$ exists. 

For any affine open set $U$ in $X/G$, let $U'$ be the 
preimage of $U$ in $X$. Then we can define 
the algebra $H_{t,c,0}(G,U')$ as above. 
If $U\subset V$, we have an obvious restriction map
$H_{t,c,0}(G,V')\to H_{t,c,0}(G,U')$. 
The gluing axiom is clearly satisfied. 
Thus the collection of algebras 
$H_{t,c,0}(G,U')$ can be extended (by sheafification)
to a sheaf of algebras on $X/G$. 
We are going to denote this sheaf by $H_{t,c,0,G,X}$
and call it the sheaf of Cherednik algebras on $X/G$.
Thus, $H_{t,c,0,G,X}(U)=H_{t,c,0}(G,U')$. 

Similarly, if $\psi\in \H^2(X,\Omega_X^{\ge 1})^G$, we can define 
the sheaf of twisted Cherednik algebras $H_{t,c,\psi,G,X}$. 
This is done similarly to the case of twisted differential
operators (which is the case $G=1$).

\begin{remark}
\begin{enumerate}
\item  The construction of $H_{t,c,\omega}(G,X)$ and 
the PBW theorem extend in a straightforward manner to the case 
when the ground field is not $\CC $ but an algebraically closed
field $k$ of positive characteristic, provided that the order 
of the group $G$ is relatively prime to the characteristic.
\item  The construction and main properties of the (sheaves of) 
Cherednik algebras of algebraic varieties can be extended 
without significant changes to the case when $X$ is a complex analytic 
manifold, and $G$ is not necessarily finite but acts properly 
discontinuously. In the following lectures, we will often work in
this generalized setting. 
\end{enumerate}
\end{remark}

\subsection{Modified Cherednik algebra}{\label{sec:modifyCA}}

It will be convenient for us to use a slight modification 
of the sheaf $H_{t,c,\psi,G,X}$. Namely, let 
$\eta$ be a function on the set of conjugacy classes of $Y$ such
that $(Y,g)\in \cS$. We define $H_{t,c,\eta,\psi,G,X}$ 
in the same way as $H_{t,c,\psi,G,X}$ except that 
the Dunkl-Opdam operators are defined by the 
formula  
\begin{equation}\label{Dunkl1}
D:=t{\mathbf L}_v+\sum_{(Y,g)\in \cS}f_Y(x)
\frac{2c(Y,g)}{1-\lambda_{Y,g}}(g-1)+
\sum_Y f_Y(x)\eta(Y).
\end{equation}

The following result shows that this modification is in fact
tautological. Let $\psi_Y$ be the class in $\H^2(X,\Omega_X^{\ge 1})$ 
defined by the line bundle $\cO_X(Y)^{-1}$,
whose sections are functions vanishing
on $Y$. 

\begin{proposition}
One has an isomorphism $$H_{t,c,\eta,\psi,G,X}\to
H_{t,c,\psi+\sum_Y \eta(Y)\psi_Y,G,X}.$$  
\end{proposition}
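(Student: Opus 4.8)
The statement to prove is that the modified Cherednik algebra $H_{t,c,\eta,\psi,G,X}$ with the extra ``$\eta$-term'' $\sum_Y f_Y(x)\eta(Y)$ added to the Dunkl--Opdam operators is canonically isomorphic to the unmodified algebra $H_{t,c,\psi+\sum_Y\eta(Y)\psi_Y,G,X}$, where $\psi_Y$ is the class of the line bundle $\mathcal O_X(Y)^{-1}$. The key observation is that adding a term of the form $f_Y(x)\eta(Y)$, with $f_Y$ a section of $\mathcal O_X(Y)/\mathcal O_X$, to the generating operators $t\mathbf L_v$ is exactly the operation of twisting the first-order operators by a closed $2$-form representing the class $\psi_Y$: it replaces the ``Lie derivative'' generators $t\mathbf L_v$ of $\mathcal D_{\psi/t}(X)$ by the generators of $\mathcal D_{(\psi+\sum_Y\eta(Y)\psi_Y)/t}(X)$. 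So the whole point is to identify the $\eta$-term with a change of twisting cocycle.

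\textbf{First step.} Since both sides are sheaves of algebras on $X/G$ and the asserted isomorphism is supposed to be the identity on $\mathcal O_X$ and on $G$, it suffices to work locally: by the globalization procedure of Section~\ref{sec:glb} it is enough to construct the isomorphism on each affine $G$-invariant open $U'\subset X$ compatibly with restrictions. Moreover, by the argument in the proof of the PBW theorem (Theorem~\ref{pbwth}), reducing to formal neighborhoods and using Lemma~\ref{line}, one may even assume $X$ is affine with trivial canonical bundle issues, so that each line bundle $\mathcal O_X(Y)^{-1}$ carries an algebraic connection $\nabla_Y$; let $\omega_Y$ be its curvature, a closed $2$-form representing $\psi_Y$. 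On such a $U'$, the algebra of twisted differential operators $\mathcal D_{\psi/t}(X)$ with its generators $\mathbf L_v$ is well understood (Section~\ref{sec:tdo}), and the key local fact to establish is: if one replaces the generators $\mathbf L_v$ of $\mathcal D_{\psi/t}(X)_r$ by $\mathbf L_v' := \mathbf L_v + \frac{1}{t}\sum_Y \eta(Y) f_Y(x)$ (the ``shifted'' first-order operators), these satisfy the defining relations of $\mathcal D_{(\psi+\sum_Y\eta(Y)\omega_Y)/t}(X)_r$. Concretely: $f\mathbf L_v' = \mathbf L_{fv}'$ is immediate from $\mathcal O_X$-linearity of $v\mapsto f_Y(v)$; $[\mathbf L_v', g] = L_v g$ is immediate since $f_Y(x)$ is a function; and the curvature relation $[\mathbf L_v',\mathbf L_w'] = \mathbf L_{[v,w]}' + \frac1t(\psi + \sum_Y\eta(Y)\omega_Y)(v,w)$ follows by expanding the bracket and recognizing that $\frac1t\sum_Y\eta(Y)(\mathbf L_v f_Y(w) - \mathbf L_w f_Y(v) - f_Y([v,w]))$ is precisely $\frac1t\sum_Y\eta(Y)\omega_Y(v,w)$ — this is the standard identity relating a connection's curvature to its connection form, where $f_Y$ plays the role of the connection $1$-form of $\nabla_Y$ under the residue identification $\xi_Y$.

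\textbf{Second step.} Having the local ring isomorphism $\mathcal D_{\psi/t}(X)_r \to \mathcal D_{(\psi+\sum\eta(Y)\psi_Y)/t}(X)_r$ that is the identity on $\mathcal O_X$ and sends $\mathbf L_v \mapsto \mathbf L_v - \frac1t\sum_Y\eta(Y)f_Y$ (or its inverse, sending $\mathbf L_v'\mapsto \mathbf L_v$), extend it by the identity on $G$ to an isomorphism of $G\ltimes \mathcal D_{\psi/t}(X)_r[c]$; this is legitimate because the $G$-action intertwines the two twisting classes ($\psi$ and $\sum_Y\eta(Y)\psi_Y$ are both $G$-invariant, and $g$ permutes the components $Y$ so that the sum $\sum_Y\eta(Y)f_Y$ is $G$-covariant in the required sense). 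Under this isomorphism the modified Dunkl--Opdam operator~\eqref{Dunkl1} for parameters $(t,c,\eta,\psi)$ maps exactly to the unmodified Dunkl--Opdam operator~\eqref{Dunkl} for parameters $(t,c,\psi+\sum_Y\eta(Y)\psi_Y)$ — because the $\eta$-term $\sum_Y f_Y(x)\eta(Y)$ is absorbed into the change of the $\mathbf L_v$ generator, while the reflection term $\sum_{(Y,g)}f_Y(x)\frac{2c(Y,g)}{1-\lambda_{Y,g}}(g-1)$ is untouched (it lies in $G\ltimes \mathcal O_X(Y)_r$, on which the isomorphism is the identity). Therefore the isomorphism restricts to an isomorphism of the subalgebras generated by $\mathcal O_X$, $G$, and the respective Dunkl--Opdam operators, which is exactly the claimed map. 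Finally, patching these local isomorphisms and checking compatibility with restriction (automatic since the formulas are natural) gives the global sheaf isomorphism.

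\textbf{Main obstacle.} The routine parts are the relation-checking; the one genuinely delicate point is the bookkeeping in the curvature relation — verifying that $\mathbf L_v f_Y(w) - \mathbf L_w f_Y(v) - f_Y([v,w])$, computed with $f_Y$ viewed as an element of $\mathcal O_X(Y)/\mathcal O_X$ via $\xi_Y$, represents $\omega_Y(v,w)$ rather than some other cocycle in the same cohomology class, and that the ambiguity in the choice of $f_Y$ (an element of $\mathcal O_X$, i.e. a pole-free shift) corresponds exactly to changing $\nabla_Y$ by an exact $1$-form, hence does not change the class $\psi_Y$. Keeping track of this ambiguity — and of the factor $1/t$ and the sign conventions between $\psi$ and $\psi/t$, and between $\mathcal O_X(Y)$ and $\mathcal O_X(Y)^{-1}$ — is where one must be careful; conceptually, though, it is just the statement that the residue construction $\xi_Y: TX \to \mathcal O_X(Y)/\mathcal O_X$ of Subsection~\ref{sec:agp} is a cocycle representative for $\psi_Y$, which is precisely what makes the twisted-differential-operator formalism work.
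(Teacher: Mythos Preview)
Your approach is correct and is, at bottom, the same argument as the paper's, phrased in coordinate-free language. The paper works in formal local coordinates $z=z_1,z_2,\dots,z_d$ near a point of $Y$ (with $z|_Y=0$), writes the Dunkl--Opdam operator for $v=\partial/\partial z$ explicitly, and then \emph{conjugates} by the formal expression $z^{\eta(Y)}:=(z^{n_Y})^{\eta(Y)/n_Y}$; since $z^{n_Y}$ is $G_Y$-invariant this commutes with the reflection terms, and the effect on $\partial/\partial z$ is exactly the shift $\partial/\partial z\mapsto\partial/\partial z-\eta(Y)/z$, which removes the $\eta$-term. Conjugation by a multivalued section of a line bundle is precisely what shifts the twisting class by the class of that bundle, so one lands in $H_{t,c,\psi+\sum_Y\eta(Y)\psi_Y,G,X}$.

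Your map $\mathbf L_v\mapsto \mathbf L_v-\tfrac{1}{t}\sum_Y\eta(Y)f_Y$ is exactly the effect of this conjugation on the first-order generators, and your curvature check is the infinitesimal statement that $d\log(z^{\eta(Y)})$ represents $\eta(Y)\psi_Y$. The paper's phrasing is shorter because conjugation by an invertible element automatically preserves all relations, so no verification is needed; your phrasing, in exchange, makes explicit \emph{why} the twisting class changes by $\sum_Y\eta(Y)\psi_Y$ rather than leaving it implicit in the conjugation. The one point you flag as delicate --- that the ambiguity in $f_Y$ by an element of $\mathcal O_X$ corresponds to changing the connection on $\mathcal O_X(Y)^{-1}$ by an exact form --- is exactly right, and in the paper's formulation is absorbed into the fact that the subalgebra generated by $\mathcal O_X$, $G$, and the Dunkl--Opdam operators does not depend on the choices of $f_Y$.
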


\begin{proof} Let $y\in Y$ and $z$ be a function on the formal 
neighborhood of $y$ such that $z|_Y=0$ and $\d z_y\ne 0$. 
Extend it to a system of local formal coordinates 
$z_1=z,z_2,\ldots,z_d$ near $y$. 
A Dunkl-Opdam operator near $y$ for the vector field 
$\frac{\partial}{\partial z}$ 
can be written in the form 
$$
D=\frac{\partial}{\partial
z}+\frac{1}{z}(\sum_{m=1}^{n-1}\frac{2c(Y,g^m)}{1-\lambda_{Y,g}^m}
(g^m-1)+\eta(Y)).
$$
Conjugating this operator by the formal expression 
$z^{\eta(Y)}:=(z^m)^{\eta(Y)/m}$, we get 
$$
z^{\eta(Y)}\circ D\circ z^{-\eta(Y)}=
\frac{\partial}{\partial
z}+\frac{1}{z}\sum_{m=1}^{n-1}\frac{2c(Y,g^m)}{1-\lambda_{Y,g}^m}(g^m-1)
$$
This implies the required statement. 
\end{proof}

We note that the sheaf $H_{1,c,\eta,0,G,X}$
localizes to $G\ltimes \cD_X$ on the complement of all the 
hypersurfaces $Y$. This follows from the fact that the 
line bundle $\cO_X(Y)$ is trivial on the complement of
$Y$.

\subsection{Orbifold Hecke algebras}{\label{sec:oha}}

Let $X$ be a connected and simply connected complex manifold, 
and $G$ is a discrete group
of automorphisms of $X$ which acts properly discontinuously.  
Then $X/G$ is a complex orbifold. 
Let $X'\subset X$ be the set of points with trivial stabilizer.
Fix a base point $x_{0}\in X'$.
Then the braid group of $X/G$ is defined to be 
$B_{G}=\pi_{1}(X'/G, x_{0})$.
We have an exact sequence 
$
1\to K\to B_{G}\to G\to 1.
$

Now let $\cS$ be the set of pairs $(Y,g)$ such that $Y$ is a
component of $X^g$ of codimension 1 in $X$ (such $Y$ will be called
a reflection hypersurface). 
For $(Y,g)\in \cS$, let $G_Y$ be the subgroup of $G$ 
whose elements act trivially on $Y$. This group is obviously
cyclic; let $n_Y=|G_Y|$. 
Let $C_{Y}$ be the conjugacy class in $B_{G}$ corresponding
to a small circle going counterclockwise around 
the image of $Y$ in $X/G$, and
$T_{Y}$ be a representative in $C_{Y}$. 

The following theorem follows from elementary topology: 

\begin{theorem}
$K$ is defined by relations $T_{Y}^{n_{Y}}=1$, for all reflection
hypersurfaces $Y$ (i.e., $K$ is the intersection of all normal subgroups 
of $B_{G}$ containing $T_{Y}^{n_{Y}}$). 
\end{theorem}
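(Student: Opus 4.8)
The plan is to realize this as a statement about the orbifold fundamental group, exactly parallel to Proposition \ref{quott} (the complex reflection group case). First I would set up the covering-space picture: let $p: X'/G \to X/G$ be viewed inside the orbifold $X/G$, and consider the ``resolved'' space obtained by removing all reflection hypersurfaces. Concretely, let $Y_1, Y_2, \ldots$ be the $G$-orbits of reflection hypersurfaces, and let $X^{\circ} = X \setminus \bigcup_g \bigcup_{\mathrm{codim}\, Y \geq 2} Y^g$ be the complement in $X$ of the fixed loci of codimension $\geq 2$; then $G$ acts on $X^{\circ}$ with stabilizers that are generically trivial and, along the open part of each reflection hypersurface, equal to the cyclic group $G_Y$ of order $n_Y$. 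The key point is that $X^{\circ}/G$ is a manifold away from the images of the $Y$'s, and near a generic point of the image of $Y$ it looks like the quotient of a polydisk by the cyclic group $G_Y$ acting on one coordinate by a primitive $n_Y$-th root of unity.

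The main body of the argument is the following. Since $X$ is simply connected and the codimension $\geq 2$ fixed loci have (real) codimension $\geq 4$, $X^{\circ}$ is still simply connected; hence $\pi_1(X^{\circ}/G) \cong G$, and more precisely $\pi_1((X^{\circ})'/G) = \pi_1(X'/G) = B_G$ since removing the codimension $\geq 2$ strata from $X'$ does not change $\pi_1$ (again by the real-codimension-$\geq 4$ estimate, applied now upstairs where the action is free). Now $X^{\circ}/G$ is obtained from $(X^{\circ})'/G$ by gluing back in the images of the (open parts of the) reflection hypersurfaces $Y$. Adding back a smooth locally closed subset of complex codimension $1$ to a manifold kills exactly the conjugacy classes of loops encircling it; here, however, the ambient neighborhood of (the image of) $Y$ is not smooth but is a disk bundle over $Y/G_Y\text{-stuff}$ whose fiber is $\mathbb{C}/(\mathbb{Z}/n_Y)$, so the meridian loop $T_Y$ — which would have infinite order in $\pi_1$ of a punctured disk — becomes a loop of order exactly $n_Y$ once the orbifold point is filled in. Thus $\pi_1(X^{\circ}/G)$ is the quotient of $B_G$ by the normal subgroup generated by all $T_Y^{n_Y}$. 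Since $\pi_1(X^{\circ}/G) = G$ and $B_G \to G$ is exactly the given surjection with kernel $K$, we conclude that $K$ is the normal closure of $\{T_Y^{n_Y}\}$ in $B_G$, which is the assertion.

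To make ``adding back codimension-$1$ strata kills the meridians and nothing else'' precise I would use a Van Kampen / Zariski-type argument: cover $X^{\circ}/G$ by $(X^{\circ})'/G$ and, for each orbit of reflection hypersurfaces, a tubular-neighborhood chart $N_Y$ of the form $(\text{disk bundle with fiber } \mathbb{C})/G_Y$; the intersection $N_Y \cap (X^{\circ})'/G$ deformation retracts onto a circle bundle, whose $\pi_1$ surjects onto $\pi_1(N_Y)$ with kernel normally generated by $T_Y^{n_Y}$ (the fiberwise statement $\pi_1(\mathbb{C}^*) \to \pi_1(\mathbb{C}/(\mathbb{Z}/n_Y)) = \mathbb{Z}/n_Y$ being the model case). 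Iterating over the hypersurface orbits (or doing them all at once) gives the presentation of $K$ by the relations $T_Y^{n_Y} = 1$.

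\textbf{Main obstacle.} The delicate point — and the one I would spend the most care on — is the local analysis near a reflection hypersurface when several hypersurfaces meet, or when a reflection hypersurface passes through a point with a larger (non-cyclic) stabilizer: one must check that the local model is genuinely $(\text{polydisk})/(\text{finite linearizable action})$ so that the Van Kampen computation of the local $\pi_1$ is the expected one, and that the relations $T_Y^{n_Y}$ for different $Y$ do not interact in a way that is not captured by taking the common normal closure. The linearizability of the local action (Lemma \ref{line}) handles the first half; for the second, one reduces to the fact that for a finite group acting linearly on $\mathbb{C}^n$, the fundamental group of the complement of the non-free locus modulo the group is generated over the braid-type relations by the meridians of the reflection hypersurfaces — essentially Proposition \ref{quott} again, applied in the local chart. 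Once this local input is in hand, the global statement follows by the covering-space bookkeeping above, and the theorem is ``elementary topology'' as claimed.
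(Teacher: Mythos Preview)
The paper does not actually prove this theorem; it simply cites \cite{BMR}, Proposition~2.17. Your Van Kampen sketch is essentially the standard argument and is correct in outline, but you must be explicit that you are using the \emph{orbifold} fundamental group throughout, not the ordinary one. As written, the step ``hence $\pi_1(X^{\circ}/G)\cong G$'' and the model computation ``$\pi_1(\mathbb{C}/(\mathbb{Z}/n_Y))=\mathbb{Z}/n_Y$'' are both false for the topological $\pi_1$: the underlying space of $\mathbb{C}/(\mathbb{Z}/n)$ is homeomorphic to $\mathbb{C}$ and has trivial fundamental group, so running your argument with honest $\pi_1$ would give $B_G/\langle\!\langle T_Y\rangle\!\rangle=1$ rather than $B_G/\langle\!\langle T_Y^{\,n_Y}\rangle\!\rangle=G$. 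Once you write $\pi_1^{\mathrm{orb}}$ everywhere (it does satisfy a Van Kampen theorem for orbifold charts), the argument goes through and your ``main obstacle'' paragraph is handled exactly as you say.

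A variant that avoids orbifold $\pi_1$ altogether is to work upstairs: identify $K$ with $\pi_1(X',x_0)$, since $X'\to X'/G$ is a regular $G$-cover. Removing the codimension~$\ge 2$ part of the non-free locus does not change $\pi_1$, so $\pi_1(X')=\pi_1(X\setminus D)$ where $D=\bigcup_Y Y$ is the union of reflection hypersurfaces. As $X$ is simply connected, $\pi_1(X\setminus D)$ is normally generated by meridians around the components of $D$; under the local branched cover $z\mapsto z^{n_Y}$ each such meridian maps to $T_Y^{\,n_Y}$ in $B_G$, and the claim follows. This is the same content as your proof with the orbifold bookkeeping stripped away.
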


\begin{proof}
See, e.g., \cite{BMR} Proposition 2.17.
\end{proof}

For any conjugacy class of hypersurfaces 
$Y$ such that $(Y,g)\in \cS$ we introduce formal parameters
$\tau_{1Y},\ldots,\tau_{n_YY}$. 
The entire collection of these parameters 
will be denoted by $\tau$. Let $A_{0}=\CC [G]$. 

\begin{definition} We define the Hecke algebra of $(G,X)$, 
denoted $A={\mathscr H}_\tau(G,X,x_0)$, to be the quotient of 
the group algebra of the braid group, 
$\CC [B_{G}][[\tau]]$, by the relations
\begin{equation}\label{poly}
\prod_{j=1}^{n_Y} (T-\be^{2\pi j\bi /n_Y}\be^{\tau_{jY}})=0,\ T\in C_Y 
\end{equation}
(i.e., by the closed ideal in the formal series topology 
generated by these relations).  
\end{definition}

Thus, $A$ is a deformation of $A_{0}$.

It is clear that up to an isomorphism this
algebra is independent on the choice of $x_{0}$, so we will sometimes
drop $x_{0}$ form the notation. 

The main result of this section is the following theorem. 

\begin{theorem}\label{p2} 
Assume that $\H^{2}(X, \CC )=0$. 
Then $A={\mathscr H}_\tau(G,X)$ is a {\bf flat} formal deformation 
of $A_{0}$, which means $A=A_{0}[[\tau]]$ as a module over 
$\CC [[\tau]]$. 
\end{theorem}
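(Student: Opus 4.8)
The plan is to deduce the flatness of the Hecke algebra $A=\mathscr{H}_\tau(G,X)$ from the PBW theorem for the Cherednik algebra of the orbifold $X/G$ (Theorem \ref{pbwth}) together with the analytic machinery of the KZ functor, generalizing the argument of Section \ref{sec:oha}'s linear prototype (Theorem \ref{GGORt} and its corollary). First I would work over the formal disk in the deformation parameters: introduce the modified sheaf of Cherednik algebras $H_{1,c,\eta,0,G,X}$ from Section \ref{sec:modifyCA}, where $c$ and $\eta$ are formal parameters related to $\tau$ by an exponential substitution of the same shape as in Theorem \ref{GGORt}. By the localization remark at the end of Section \ref{sec:modifyCA}, this sheaf restricts to $G\ltimes\cD_X$ on $X'=X\setminus\bigcup Y$, and on this locus it is just a flat formal family of algebras of twisted differential operators; since $\H^2(X,\CC)=0$ the twisting is trivial and we genuinely get $G\ltimes\cD_{X'}$ over the parameter ring.

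The key steps, in order, are: (1) Define, for a suitable family of $H_{1,c,\eta,0,G,X}$-modules that are $\cO$-coherent after restriction to $X'$ (the analogue of the ``polynomial representation'' and of $M_{c,n}(\CC G)$), the monodromy/KZ functor: restrict the module to $X'/G$, where it becomes a $G$-equivariant vector bundle with flat connection, i.e. a local system on $X'/G$, and take its monodromy representation of $B_G=\pi_1(X'/G,x_0)$. (2) Compute the local monodromy: around a reflection hypersurface $Y$, the connection has a first-order pole with residue determined by the Dunkl--Opdam operator \eqref{Dunkl1}, exactly as in the proof of Theorem \ref{GGORt}; the eigenvalues of $T_Y$ are then $1$ and $\be^{2\pi j\bi/n_Y}\be^{\tau_{jY}}$ for $j=1,\dots,n_Y-1$, so the monodromy representation factors through $A=\mathscr{H}_\tau(G,X)$ via relation \eqref{poly}. (3) Produce a module whose KZ image is a free $A_0[[\tau]]$-module of rank $|G|$: take the Cherednik-algebra module induced from the regular representation $\CC G$ (a global analogue of $M_{c,n}(\CC G)$), which by the PBW theorem \ref{pbwth} is free of rank $|G|$ over the appropriate function ring, and whose restriction to $X'$ is a rank-$|G|$ $G$-equivariant bundle; its monodromy is then a deformation of the left regular $\CC G$-module $A_0$. (4) Reducing modulo $\tau$ recovers $A_0=\CC[G]$ (Proposition \ref{quott}), and since a formal deformation of a finite-dimensional algebra whose underlying module admits a free lift is itself free, conclude $A\cong A_0[[\tau]]$ as a $\CC[[\tau]]$-module; equivalently, every irreducible $\tau\in\mathrm{Irrep}\,G$ deforms flatly to an $A$-module, exactly as in the corollary to Theorem \ref{GGORt}.

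The main obstacle I expect is step (1)--(2) done \emph{globally and formally} rather than over $\CC$: one must check that a module over the sheaf $H_{1,c,\eta,0,G,X}$ which is coherent over the structure sheaf does restrict to $X'$ as an $\cO_{X'}$-coherent $\cD_{X'}$-module (i.e. a genuine flat connection, with regular singularities along the $Y$), and that taking monodromy is exact and compatible with the formal deformation. Here the hypothesis $\H^2(X,\CC)=0$ is essential and enters twice: it guarantees the twisting $\psi$ can be taken trivial (so $H_{1,0,0,G,X}=G\ltimes\cD_X$ with no obstruction), and, via the long exact sequence for $\H^\bullet(X,\Omega_X^{\ge 1})$ from Section \ref{sec:tdo}, it ensures the relevant line bundles $\cO_X(Y)$ contribute only the explicit ``$\eta$-shift'' already accounted for in Section \ref{sec:modifyCA}, with no leftover cohomological obstruction to gluing the connection. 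The computation of the local monodromy eigenvalues is then a routine residue calculation identical in spirit to the one in the proof of Theorem \ref{GGORt}, and the passage from ``an $A$-module structure on a free $\CC[[\tau]]$-module of rank $|G|$ reducing to $A_0$'' to ``$A$ is flat'' is the standard deformation-theoretic argument used in the corollary to Theorem \ref{GGORt}.
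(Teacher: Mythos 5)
Your overall strategy is the right one, and matches the paper's: apply the KZ functor to an $\cO$-coherent sheaf over the orbifold Cherednik algebra whose reduction at $\tau=0$ has monodromy $\reg G$, then use the resulting family of $\mathscr{H}_\tau(G,X)$-modules deforming the regular representation to deduce freeness of the Hecke algebra over $\CC[[\tau]]$, exactly as in the Brou\'e--Malle--Rouquier corollary to Theorem \ref{GGORt}. Steps (1), (2), and (4) of your outline are sound (and (1)--(2) amount to Proposition \ref{Heck}).

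The gap is step (3), which is the heart of the matter and where the hypothesis $\H^2(X,\CC)=0$ actually enters. You propose to ``take the Cherednik-algebra module induced from the regular representation $\CC G$ (a global analogue of $M_{c,n}(\CC G)$), which by the PBW theorem is free of rank $|G|$,'' but no such construction is available for general $X$: the PBW theorem \ref{pbwth} describes $\gr H_{t,c,\omega}(G,X)$, it does not produce any specific $\cO_X$-coherent module. The generalized Verma modules $M_{c,n}(\CC G)$ are built by inducing over the subalgebra $\CC G\ltimes S\h$, and for a nonlinear $X$ there is no grading and no triangular decomposition $S\h^*\otimes\CC G\otimes S\h$ to induce from, so there is simply no candidate module to write down. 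Moreover, your account of where $\H^2(X,\CC)=0$ enters is off: the paper works with the untwisted sheaf $H_{1,c,\eta,0,G,X}$ from the start (the twist $\psi$ is a separate parameter set to zero, with no relation to $\H^2(X,\CC)$).

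What the paper does instead is the essential missing idea. Start with the undeformed module $M=\Ind_{\cD_X}^{G\ltimes\cD_X}\cO_X$, which is an $\cO_X$-coherent sheaf of rank $|G|$ over $G\ltimes\cD_X=H_{1,0,0,0,G,X}$ with $\KZ(M)=\reg G$. Now regard the passage from $G\ltimes\cD_X$ to $H_{1,c,\eta,0,G,X}$ as a formal deformation of algebras, and ask when $M$ deforms to a module $M_{c,\eta}$ over the deformed algebra. The obstruction lies in $\Ext^2_{G\ltimes\cD_X}(M,M)$ and the ambiguity in $\Ext^1_{G\ltimes\cD_X}(M,M)$. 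By Shapiro's lemma (induction/restriction adjunction for $M=\Ind(\cO_X)$) together with $\Ext^i_{\cD_X}(\cO_X,\cO_X)\cong\H^i(X,\CC)$, one gets
$$
\Ext^i_{G\ltimes\cD_X}(M,M)\cong\H^i(X,\CC)\otimes\CC G.
$$
Since $X$ is simply connected, $\Ext^1=0$, and the hypothesis $\H^2(X,\CC)=0$ forces $\Ext^2=0$; hence $M$ deforms, uniquely up to isomorphism, to $M_{c,\eta}$. This is the module to which you then apply steps (1), (2), (4). Without this cohomological step, your step (3) asserts existence of the deformed module without justification, and the hypothesis of the theorem does no work.
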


\begin{example}
Let $\h$ be a finite dimensional vector space, and 
$G$ be a complex reflection group in $\GL(\h)$. 
Then ${\mathscr H}_\tau(G,\h)$ is the Hecke algebra of $G$
studied in \cite{BMR}. It follows from Theorem \ref{p2} that 
this Hecke algebra is flat. This proof of flatness 
is in fact the same as the original proof of this result 
given in \cite{BMR} (based on the Dunkl-Opdam-Cherednik
operators, and explained above). 
\end{example}

\begin{example}
Let $\h$ be a universal covering of a maximal torus of a simply connected 
simple Lie group $G$, $Q^\vee$ be the dual root lattice, 
and $\widehat{G}=G\ltimes Q^{\vee}$ 
be its affine Weyl group. 
Then ${\mathscr H}_\tau(\h,\widehat{G})$ is the affine
Hecke algebra. This algebra is also flat by Theorem \ref{p2}. In
fact, its flatness is a well known result from
representation theory; our proof of flatness is essentially due
to Cherednik \cite{Ch}.  
\end{example}

\begin{example}\label{chere}
Let $G,\h, Q^\vee$ be as in the previous example, $\eta\in \CC_+$ be a complex number with a positive imaginary part, 
and $\widehat{\widehat{G}}= G\ltimes (Q^\vee\oplus \eta Q^\vee)$ 
be the double affine Weyl group. Then 
$\mathscr{H}_\tau(\h,\widehat{\widehat{G}})$ 
is (one of the versions of) the double affine Hecke algebra of Cherednik
(\cite{Ch}), and it is flat by Theorem \ref{p2}. 
The fact that this algebra is flat was proved by
Cherednik, Sahi, Noumi, Stokman 
(see \cite{Ch},\cite{Sa},\cite{NoSt},\cite{St})
using a different approach (q-deformed Dunkl
operators). 
\end{example}

\subsection{Hecke algebras attached to Fuchsian groups}
{\label{sec:HaF}}

Let $H$ be a simply connected complex Riemann surface
(i.e., Riemann sphere, Euclidean plane, or Lobachevsky plane), 
and $\Gamma$ be a cocompact lattice in ${\mathrm Aut}(H)$
(i.e., a Fuchsian group). 
Let $\Sigma=H/\Gamma$. Then $\Sigma$ is a compact complex
Riemann surface. When $\Gamma$ contains elliptic elements (i.e.,
nontrivial elements of finite order), we are going
to regard $\Sigma$ as an orbifold: it has special points $P_i$, 
$i=1,\ldots, m$ with stabilizers $\ZZ_{n_i}$. 
Then $\Gamma$ is the orbifold fundamental group of
$\Sigma$.\footnote{Let $X$ be a connected topological space on with a properly discontinuous 
action of a discrete group $G$. Then the orbifold fundamental group of the orbifold $X/G$ 
with base point $x\in X$, denoted $\pi_1^{\mathrm orb}(X/G,x)$, is the set of pairs $(g,\gamma)$, where 
$g\in G$ and $\gamma$ is a homotopy class of paths leading from $x$ to $gx$, 
with multiplication law $(g_1,\gamma_1)(g_2,\gamma_2)=(g_1g_2,\gamma)$, where $\gamma$ is 
$\gamma_1$ followed by $g_1(\gamma_2)$. Obviously, in this situation we have an exact sequence 
$$
1\to \pi_1(X,x)\to \pi_1^{\mathrm orb}(X/G,x)\to G\to 1.
$$
}

Let $g$ be the genus of $\Sigma$, and $a_l,b_l, l=1,\ldots, g$, be the
$a$-cycles and $b$-cycles of $\Sigma$. Let $c_j$ be the 
counterclockwise loops around $P_j$.
Then $\Gamma$ is generated by $a_l,b_l,c_j$ with relations
$$
c_j^{n_j}=1,\ c_1c_2\cdots c_m=\prod_la_lb_la_l^{-1}b_l^{-1}.
$$
For each $j$, introduce formal parameters $\tau_{kj}$,
$k=1,\ldots, n_j$. 
Define the Hecke algebra ${\mathscr H}_\tau(\Sigma)$ 
of $\Sigma$ to be generated over $\CC [[\tau]]$ by 
the same generators $a_l,b_l,c_j$ with defining relations 
$$
\prod_{k=1}^{n_j}(c_j-\be^{2\pi j \bi /n_j}\be^{\tau_{kj}})=0, \
c_1c_2\cdots c_m=\prod_la_lb_la_l^{-1}b_l^{-1}. 
$$
 Thus ${\mathscr H}_\tau(\Sigma)$ is a deformation 
of $\CC [\Gamma]$.

This deformation is flat if $H$ is a Euclidean plane 
or a Lobachevsky plane. Indeed,  
${\mathscr H}_\tau(\Sigma)={\mathscr H}_\tau(\Gamma,H)$, so the result
follows from Theorem \ref{p2} and the fact that
$\H^{2}(H, \CC )=0$. 

On the other hand, if $H$ is the Riemann sphere 
(so that the condition $\H^{2}(H, \CC )=0$ is violated)
and $\Gamma\ne 1$ then 
this deformation is not flat. Indeed, let $\tau=\tau(\hbar)$ 
be a $1$-parameter subdeformation of ${\mathscr H}_\tau(\Sigma)$
which is flat. Let us compute the determinant of the product
$c_1\cdots c_m$ in the regular representation of this algebra (which
is finite dimensional if $H$ is the sphere). On the one hand, it is
$1$, as $c_1\cdots c_m$ is a product of commutators. On the other
hand, the eigenvalues of $c_j$ in 
this representation are 
$\be^{2\pi j\bi /n_j}\be^{\tau_{kj}}$ with multiplicity $|\Gamma|/n_j$. 
Computing determinants 
as products of eigenvalues, we get a nontrivial equation on
$\tau_{kj}(\hbar)$, which means that the deformation 
${\mathscr H}_\tau$ is not flat.

Thus, we see that ${\mathscr H}_\tau(\Sigma)$ fails to be flat in the 
following ``forbidden'' cases: 
$$
g=0,\ m=2, (n_1,n_2)=(n,n);
$$
$$
m=3, (n_1,n_2,n_3)=
(2,2,n),(2,3,3),(2,3,4),(2,3,5).  
$$
Indeed, the orbifold Euler characteristic of a closed surface $\Sigma$
of genus $g$ with $m$ special points $x_{1}, \ldots, x_{m}$ whose orders are
$n_1,\ldots,n_m$ is 
$$
\chi^{\mathrm{orb}}(\Sigma, x_{1}, \ldots, x_{m})
=2-2g-m+\sum_{i=1}^{m}\frac{1}{n_{i}},
$$
and above solutions are the solutions of the inequality 
$$
\chi^{\mathrm{orb}}(\CC P^{1}, x_{1}, \ldots, x_{m})>0.
$$
(note that the solutions for $m=1$ and solutions 
$(n_1,n_2)$ with $n_1\ne n_2$ don't arise, since they don't
correspond to any orbifolds). 

\subsection{Hecke algebras of wallpaper groups and del Pezzo
surfaces}{\label{sec:dpez}}

The case when $H$ is the Euclidean plane 
(i.e., $\Gamma$ is a wallpaper group) 
deserves special attention.
If there are elliptic elements, this reduces to the 
following configurations: $g=0$ and 
$$
m=3, (n_1,n_2,n_3)=
(3,3,3),(2,4,4),(2,3,6) \text{ 
(cases $E_6$, $E_7$, $E_8$)},$$
or 
$$
m=4, (n_1,n_2,n_3,n_4)=(2,2,2,2) \text{
(case $D_4$).} $$

In these cases, the algebra ${\mathscr H}_\tau(\Gamma,H)$ 
(for numerical $\tau$)
has Gelfand-Kirillov dimension 2, so it can be interpreted in terms of 
the theory of noncommutative surfaces. 

Recall that a del Pezzo surface (or a Fano surface) 
is a smooth projective surface,
whose anticanonical line bundle is ample.
It is known that such surfaces are $\CC \mathbb P^{1}\times \CC
\mathbb P^{1}$, or a blow-up of $\CC \mathbb P^{2}$ at up to 
$8$ generic points. The degree of a del Pezzo surface $X$ 
is by definition the self intersection number 
$K\cdot K$ of its canonical class $K$.
For example, a del Pezzo surface of degree $3$ is a cubic surface
in $\CC \mathbb P^{3}$, and the degree of $\CC \mathbb P^{2}$ with 
$n$ generic points blown up is $d=9-n$.

Now suppose $\tau$ is numerical. Let
$\hbar=\sum_{j,k} n_j^{-1}\tau_{kj}$. Also let 
$n$ be the largest of $n_j$, and $c$ be the corresponding $c_j$.   
Let $\e\in \CC [c]\subset {\mathscr H}_\tau(\Gamma,H)$
be the projector to an eigenspace of $c$. 
Consider the ``spherical'' 
subalgebra $B_\tau(\Gamma,H):
=\e {\mathcal H}_\tau(\Gamma,H)\e$. 

\begin{theorem} [Etingof, Oblomkov, Rains, \cite{EOR}] 
\begin{enumerate}
\item[(i)] If $\hbar=0$ then the algebra
$B_\tau(\Gamma,H)$ is commutative, and its spectrum 
is an affine del Pezzo surface. 
More precisely, in the case $(2, 2, 2, 2)$, it is 
a del Pezzo surface of degree 3 
(a cubic surface) with a triangle of lines removed; 
in the cases $(3, 3, 3), (2, 4, 4), (2, 3, 6)$ 
it is a del Pezzo surface of degrees 3,2,1 respectively 
with a nodal rational curve removed. 

\item[(ii)] The algebra 
$B_\tau(\Gamma,H)$ for $\hbar\ne 0$ is a quantization of the unique 
algebraic symplectic structure on the surface from (i) with Planck's constant $\hbar$. 
\end{enumerate}

\end{theorem}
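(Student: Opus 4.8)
The plan is to reduce the statement to the structure of the orbifold Hecke algebra $\mathscr{H}_\tau(\Gamma,H)$ together with the flatness already available from Theorem \ref{p2}. In each of the four cases $(2,2,2,2),(3,3,3),(2,4,4),(2,3,6)$ the group $\Gamma$ is a wallpaper group of the form $\ZZ^2\rtimes\ZZ_k$ with $k=2,3,4,6$ (the groups $p2,p3,p4,p6$), so writing $C=\ZZ_k$ and $T=\Hom(\ZZ^2,\CC^*)\cong(\CC^*)^2$ we have $\CC[\Gamma]=\CC[T]\rtimes C$. First I would note that since $\mathrm{H}^2(H,\CC)=0$, Theorem \ref{p2} gives that $\mathscr{H}_\tau(\Gamma,H)$ is a flat formal deformation of $\CC[\Gamma]$. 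Next, the chosen generator $c=c_m$ (the loop around the puncture of largest order, which here equals $k$) satisfies a separable equation $\prod_j(c-\lambda_j)=0$ with $\lambda_j=\be^{2\pi j\bi/k}\be^{\tau_{jm}}$; these roots are distinct already at $\tau=0$, so the spectral projector $\e$ onto one eigenvalue is defined over $\CC[[\tau]]$, and $B_\tau(\Gamma,H)=\e\mathscr{H}_\tau(\Gamma,H)\e$ is a $\CC[[\tau]]$-module direct summand of $\mathscr{H}_\tau(\Gamma,H)$, hence a flat deformation of $B_0:=\e\CC[\Gamma]\e$. A short computation (since $\e$ is a primitive idempotent of $\CC[C]$ and $\CC[\Gamma]=\CC[T]\rtimes C$) identifies $B_0$ with $\CC[T]^{C}=\CC[T/C]$.

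For part (i) I would begin with the purely classical identification of $\CC[T/C]$. For $k=2$ the action is $t\mapsto t^{-1}$ and the invariants $X=u+u^{-1}$, $Y=v+v^{-1}$, $Z=uv+(uv)^{-1}$ generate $\CC[T/C]$ subject to a single cubic relation $X^2+Y^2+Z^2-XYZ-4=0$; its projective closure in $\mathbb{P}^3$ is a cubic surface (del Pezzo of degree $3$) whose complement is the triangle $XYZ=0$. For $k=3,4,6$ the analogous but heavier invariant-theory computation (exploiting the hexagonal/square lattice) presents $\CC[T/C]$ by one relation and realizes $\Spec\CC[T/C]$ as a del Pezzo surface of degree $3,2,1$ with a nodal rational anticanonical curve removed. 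I would then write $B_\tau$ by generators and relations (the images $\e c_i\e$ of the puncture generators, modulo the deformed length and order relations of the braid group), show that the commutator of two such generators is proportional to $\hbar=\sum_{j,k}n_j^{-1}\tau_{kj}$, so that $B_\tau|_{\hbar=0}$ is commutative, and check that its single defining relation is a deformation of the classical one with coefficients power series in $\tau$. Comparison with the known versal family of affine del Pezzo surfaces (the boundary anticanonical curve being rigid) identifies $\Spec B_\tau|_{\hbar=0}$ with the asserted affine del Pezzo surface.

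For part (ii), taking $\hbar\neq 0$, one checks by a direct computation in $\mathscr{H}_\tau$ that the commutator $[\e c_i\e,\e c_j\e]$ is nonzero of order $\hbar$, so $B_\tau$ is genuinely noncommutative; since $B_\tau$ is flat over $\CC[[\tau]]$ and commutative along $\{\hbar=0\}$, the formula $\{a_0,b_0\}=\lim_{\hbar\to0}[a,b]/\hbar$ endows the affine surface of part (i) with a Poisson bracket that $B_\tau$ quantizes with Planck's constant $\hbar$. Finally, on a del Pezzo surface $S$ with an anticanonical curve $D$ deleted, an algebraic $2$-form is a section of $\Omega^2_S(D)=K_S(D)=\cO_S$, hence unique up to a constant and nowhere vanishing on $S\setminus D$; thus the Poisson structure is symplectic and unique up to scale, the scale being fixed by $\hbar$. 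The main obstacle I anticipate is the explicit presentation of the spherical subalgebra $B_\tau$ and the matching of its defining relation with the del Pezzo equation: this demands genuine case-by-case work (the invariant theory of $T/C$ plus bookkeeping of the deformation) together with the verification that the commutative family obtained at $\hbar=0$ is precisely the versal family of affine del Pezzo surfaces; proving that commutativity holds \emph{exactly} when $\hbar=0$ is the second delicate point.
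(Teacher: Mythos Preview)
The paper does not give a proof of this theorem; it simply cites \cite{EOR}. So there is no in-paper argument to compare your proposal against, and your sketch is not in conflict with anything here.

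That said, your outline is broadly in the spirit of how such results are established in \cite{EOR}: reduce to the flatness of $\mathscr{H}_\tau$ from Theorem \ref{p2}, identify the undeformed spherical subalgebra with $\CC[T]^{C}$ for the relevant cyclic $C$, present the invariants by a single relation recognizable as an affine del Pezzo, and then track the deformation. You are candid about where the real work lies --- the explicit presentation of $B_\tau$ by generators and relations, the matching with the del Pezzo families, and the ``commutative iff $\hbar=0$'' direction --- and those are indeed the substantive computations carried out case by case in \cite{EOR}. Your argument for uniqueness of the symplectic form via $K_S(D)\cong\cO_S$ is correct and is the standard one. In short: the proposal is a plausible roadmap, not a proof, and the paper itself defers the proof to \cite{EOR}; if you want to produce a self-contained argument you will have to actually perform the invariant-theory and relation computations you flag as obstacles.
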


\begin{proof}
See \cite{EOR}.
\end{proof}

\begin{remark}
In the case $(2,2,2,2)$, ${\mathcal H}_\tau(\Gamma,\Gamma)$
is the Cherednik-Sahi algebra of rank 1; it controls  
the theory of Askey-Wilson polynomials. 
\end{remark}

\begin{example}
This is a ``multivariate'' version of the Hecke algebras attached
to Fuchsian groups, defined in the previous subsection. 
Namely, letting $\Gamma,H$ be as in the previous subsection, 
and $N\ge 1$, we consider the manifold $X=H^N$ with the action of
$\Gamma_N=\kS_N\ltimes \Gamma^N$. If $H$ is a Euclidean or Lobachevsky plane, 
then by Theorem \ref{p2} ${\mathscr H}_\tau(\Gamma_N,X^N)$ is
a flat deformation of the group algebra $\CC [\Gamma_N]$. 
If $N>1$, this algebra has one more essential parameter 
than for $N=1$ (corresponding to reflections in $\kS_N$). 
In the Euclidean case, one expects that an appropriate 
``spherical'' subalgebra 
of this algebra is a quantization of the Hilbert scheme of 
a del Pezzo surface. 
\end{example}

\subsection{The Knizhnik-Zamolodchikov functor}{\label{sec:KZ}}

In this subsection we will define a global analog of the KZ
functor defined in \cite{GGOR}. This functor will be used as a 
tool of proof of Theorem \ref{p2}.

Let $X$ be a simply connected complex manifold, and 
$G$ a discrete group of holomorphic transformations of $X$ 
acting on $X$ properly discontinuously. 
Let $X'\subset X$ be the set of points with trivial stabilizer.
Then we can define 
the sheaf of Cherednik algebras $H_{1,c,\eta,0,G,X}$ on $X/G$. 
Note that the restriction of this sheaf to $X'/G$ 
is the same as the restriction of the sheaf $G\ltimes \cD_X$ to
$X'/G$ (i.e. on $X'/G$, the dependence of the sheaf on the parameters
$c$ and $\eta$ disappears). This follows from the fact 
that the line bundles $\cO _X(Y)$ become trivial when
restricted to $X'$. 

Now let $M$ be a module over $H_{1,c,\eta,0,G,X}$
which is a locally free coherent sheaf  
when restricted to $X'/G$. 
Then the restriction of $M$ to $X'/G$ is a
$G$-equivariant $\cD$-module on $X'$ which is coherent and locally free 
as an $\cO $-module. Thus, $M$ corresponds to 
a locally constant sheaf (local system) on $X'/G$, which 
gives rise to a monodromy representation of the braid group 
$\pi_1(X'/G,x_0)$ (where $x_0$ is a base point). 
This representation will be denoted by $\KZ(M)$. 
This defines a functor $\KZ$, which is analogous to the one in
\cite{GGOR}.

It follows from the theory of $\cD$-modules that any 
$\cO _{X/G}$-coherent 
$H_{1,c,\eta,0,G,X}$-module is locally free when restricted to 
$X'/G$. Thus the KZ functor acts from the abelian category 
${\mathcal C}_{c,\eta}$ of $\cO _{X/G}$-coherent 
$H_{1,c,\eta,0,G,X}$-modules to the category  
of representations of $\pi_1(X'/G,x_0)$. It is easy to see that
this functor is exact. 

For any $Y$, let $g_Y$ be the generator of $G_Y$ which has
eigenvalue $\be^{2\pi \bi/n_Y}$ in the conormal bundle to $Y$. 
Let $(c,\eta)\to \tau(c,\eta)$ be the invertible 
linear transformation defined by the formula
$$
\tau_{jY}=2\pi \bi(2\sum_{m=1}^{n_Y-1}c(Y,g_Y^m)\frac{1-\be^{2\pi
 jm\bi/n_Y}}
{1-\be^{-2\pi m\bi/n_Y}}-\eta(Y))/n_Y. 
$$ 

\begin{proposition} \label{Heck} The functor $\KZ$ maps
the category ${\mathcal C}_{c,\eta}$ to the category 
of representations of the algebra ${\mathscr
H}_{\tau(c,\eta)}(G,X)$. 
\end{proposition}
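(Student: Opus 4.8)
The plan is to reduce Proposition \ref{Heck} to the local, linear case already treated by the KZ functor of \cite{GGOR}, via the completion/localization techniques developed earlier. First I would observe that the statement is local on $X/G$: to check that $\KZ(M)$ satisfies the Hecke relation \eqref{poly} for the loop $T\in C_Y$ around a reflection hypersurface $Y$, it suffices to work in a $G$-invariant analytic neighborhood of a generic point $y\in Y$. Near such a point, by Lemma \ref{line} the action of the stabilizer $G_y=G_Y\cong \ZZ_{n_Y}$ is linearizable, so the sheaf of Cherednik algebras $H_{1,c,\eta,0,G,X}$ restricted to this neighborhood is Morita-equivalent (via the centralizer construction of Theorem \ref{comp} and Lemma \ref{le}) to the Cherednik algebra of the cyclic group $\ZZ_{n_Y}$ acting on its $1$-dimensional reflection representation, tensored with differential operators in the remaining transverse directions along $Y$. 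Under this identification, the local monodromy of $M$ around $Y$ matches the monodromy of the corresponding $H_{1,c}(\ZZ_{n_Y},\CC)$-module around $0$.

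The next step is to compute that local monodromy explicitly. Exactly as in the proof of Theorem \ref{GGORt}, the transverse piece of the connection defining $\KZ(M)$ near $Y$ has the form $\d - \frac{1}{z}\sum_{m=1}^{n_Y-1}\frac{2c(Y,g_Y^m)}{1-\lambda_{Y,g}^m}(g_Y^m-1)\,\d z - \frac{\eta(Y)}{z}\,\d z$, where $z$ is a local coordinate cutting out $Y$; the modification by $\eta$ comes from the extra term in \eqref{Dunkl1}, or equivalently from the twisting by $\cO_X(Y)^{-1}$ as in the Proposition preceding Subsection \ref{sec:oha}. Decomposing into isotypic components of $G_Y$ and reading off the residue on the $j$-th character $\be^{2\pi \bi j/n_Y}$, the monodromy operator $T_Y$ is diagonalizable with eigenvalues $\be^{2\pi \bi j/n_Y}\be^{\tau_{jY}}$, $j=1,\dots,n_Y$, where $\tau_{jY}$ is precisely the quantity in the displayed formula defining $\tau(c,\eta)$ (the factor $2\pi\bi/n_Y$ and the sign conventions track through from the residue computation). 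Hence $T_Y$ annihilates $\prod_{j=1}^{n_Y}(T-\be^{2\pi j\bi/n_Y}\be^{\tau_{jY}})$, which is exactly relation \eqref{poly}.

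I would then assemble these local computations: since $\KZ(M)$ is a representation of $B_G=\pi_1(X'/G,x_0)$ and each generator $T_Y\in C_Y$ satisfies its Hecke relation, the representation factors through the quotient $\mathscr{H}_{\tau(c,\eta)}(G,X)=\CC[B_G][[\tau]]/(\text{relations }\eqref{poly})$, giving the asserted functor ${\mathcal C}_{c,\eta}\to \mathscr{H}_{\tau(c,\eta)}(G,X)\text{-mod}$. One should also note the elementary point that $(c,\eta)\mapsto \tau(c,\eta)$ is the invertible linear transformation fixed just before the Proposition, so the target Hecke algebra is the stated one.

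The main obstacle I expect is the bookkeeping in the local monodromy computation — matching the residue of the Dunkl-type connection \eqref{Dunkl1} (including the $\eta$-modification and the conormal eigenvalue $\lambda_{Y,g}$) to the precise exponents $\tau_{jY}$, keeping all the $2\pi\bi/n_Y$ factors and signs consistent with the orientation convention for $C_Y$ and with the normalization in Theorem \ref{GGORt}. A secondary technical point is justifying that the centralizer/linearization reduction genuinely identifies the global monodromy around $Y$ with the rank-one cyclic case; this uses that $M$ is $\cO_{X/G}$-coherent hence (by $\cD$-module theory, as noted in Subsection \ref{sec:KZ}) locally free on $X'/G$, so that $\KZ(M)$ really is a monodromy representation of a local system and the local analysis near generic points of each $Y$ controls the whole thing.
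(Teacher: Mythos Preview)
Your proposal is correct and follows essentially the same approach as the paper: localize near a generic point of each reflection hypersurface $Y$, linearize the action of the cyclic stabilizer $G_Y$ via Lemma \ref{line}, and reduce to the already-established linear case (Theorem \ref{GGORt}). The paper's own proof is a single sentence to this effect, so your version simply fills in the details (the explicit residue computation matching $\tau_{jY}$, and the assembly over all $Y$) that the paper leaves implicit.
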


\begin{proof} 
The result follows from the corresponding result in the linear
case (which we have already proved) by restricting 
$M$ to the union of $G$-translates of a neighborhood of a 
generic point $y\in Y$, and then linearizing the action
of $G_Y$ on this neighborhood. 
\end{proof} 

\subsection{Proof of Theorem \ref{p2}}{\label{sec:proof}}
Consider the module $M=\Ind_{\cD_{X}}^{G\ltimes \cD_{X}}
\cO_{X}$. Then $\KZ(M)$ is the regular representation
of $G$ which is denoted by $\reg G$.
We want to show that $M$ deforms uniquely (up to an isomorphism)
to a module over $H_{1,c,0,\eta, G,X}$ for formal $c,\eta$.
The obstruction to this deformation is 
in $\Ext^{2}_{G\ltimes \cD_{X}}(M,M)$ and the freedom of this 
deformation is in $\Ext^{1}_{G\ltimes \cD_{X}}(M,M)$.
Since
$$
\Ext^{i}_{G\ltimes \cD_{X}}(M,M)
=\Ext^{i}_{\cD_{X}}(\cO_{X}, \Res M)
=\Ext^{i}_{\cD_{X}}(\cO_{X}, \cO_{X}\otimes \CC G)$$
$$
=\Ext^{i}_{\cD_{X}}(\cO_{X}, \cO_{X})\otimes \CC G
=\H^{i}(X, \CC )\otimes \CC G,
$$
and $X$ is simply connected, we have
$$
\Ext^{1}_{G\ltimes \cD_{X}}(M,M)=0,
\text{ and }\Ext^{2}_{G\ltimes \cD_{X}}(M,M)=0 \text{ if }
\H^{2}(X, \CC )=0.
$$
Thus such deformation exists and is unique if $\H^{2}(X, \CC )=0$.

Now let $M_{c, \eta}$ be the deformation. Then $\KZ(M_{c, \eta})$
is a ${\mathscr H}_{\tau(c,\eta)}(G,X)$-module from Proposition
\ref{Heck} and it deforms flatly the module $\reg G$.
This implies ${\mathscr H}_{\tau(c,\eta)}(G,X)$
is flat over $\CC [[\tau]]$.

\begin{remark}
When $X$ is not simply connected, the theorem is still true 
under the assumption $\pi_2(X)\otimes \CC =0$ (i.e. $\H^2(\widetilde X,\CC )=0$, 
where $\widetilde X$ is the universal cover of $X$), and the
proof is contained in \cite{E1}. 
\end{remark}

\subsection{Example: the simplest case of double affine Hecke algebras}{\label{sec:Exp}}

Now let $G=\ZZ_{2}\ltimes \ZZ^{2}$ acting on $\CC $.
Then the conjugacy classes of reflection hyperplanes are four points:
$0, 1/2, 1/2+\eta/2, \eta/2$, where we suppose the lattice
in $\CC $ is $\ZZ\oplus \ZZ\eta$. Correspondingly, the presentation of $G$ is as follows:
$$
\text{\bf generators:}\,\,T_{1}, T_{2}, T_{3}, T_{4};\qquad
\text{\bf relations:}\,\,T_{1}T_{2}T_{3}T_{4}=1, T_{i}^2=1.
$$
Thus, the corresponding orbifold Hecke algebra is the following deformation of $\CC G$:
$$
\text{\bf generators:}\,\,T_{1}, T_{2}, T_{3}, T_{4};\qquad
\text{\bf relations:}\,\,T_{1}T_{2}T_{3}T_{4}=1, (T_{i}-p_{i})(T_{i}-q_{i})=0,
$$
where $p_{i}, q_{i}$ $(i=1, \ldots, 4)$,  are parameters.

If we renormalize the $T_i$, these relations turn into
$$
(T_{i}-t_{i})(T_{i}+t_{i}^{-1})=0,\quad T_{1}T_{2}T_{3}T_{4}=q,
$$
and we get the type $C^{\vee}C_{1}$ double affine Hecke algebra.
If we set three of the four $T_{i}$'s satisfying the undeformed relation $T_{i}^{2}=1$,
we get the double affine Hecke algebra of type $A_{1}$. 
More precisely, this algebra is generated by $T_{1}, \ldots, T_{4}$
with relations
$$
T_{2}^{2}=T_{3}^{2}=T_{4}^{2}=1,\quad 
(T_{1}-t)(T_{1}+t^{-1})=0,\quad T_{1}T_{2}T_{3}T_{4}=q.
$$

Another presentation of this algebra (which is more widely used) is as
follows. Let $E=\CC /\ZZ^{2}$, an elliptic curve with an $\ZZ_{2}$ action
defined by $z\mapsto -z$. Define {\em the partial braid group}
$$B=\pi_{1}^{\mathrm{orb}}(E\backslash\{0\}/\ZZ_{2}, x),$$
where $x$ is a generic point.
Notice that comparing to the usual braid group, we do not delete three of the four reflection points.
The generators of the group $\pi_1(E\setminus \{0\},x)$ (the fundamental group of a punctured 2-torus) are $X$ 
(corresponding to the $a$-cycle on the torus), $Y$  
(corresponding to the $b$-cycle on the torus) and 
$C$ (corresponding to the loop around $0$).
In order to construct $B$, which is an extension of $\ZZ_{2}$ by $\pi_1
(E\setminus \{0\},x)$, we introduce
an element $T$ s.t. $T^{2}=C$ (the half-loop around the puncture).
Then $X, Y, T$ satisfy the following relations:
$$
TXT=X^{-1},\quad T^{-1}YT^{-1}=Y^{-1},\quad
Y^{-1}X^{-1}YXT^{2}=1.
$$

The Hecke algebra of the partial braid group is then defined to be
the group algebra of $B$ plus an extra relation:
$(T-q_{1})(T+q_{2})=0$.

A common way to present this Hecke algebra is to renormalize 
the generators so that one has the following relations:
$$
TXT=X^{-1}, T^{-1}YT^{-1}=Y^{-1}, Y^{-1}X^{-1}YXT^{2}=q,
(T-t)(T+t^{-1})=0.
$$
This is Cherednik's definition for $\HH(q,t)$, 
the double affine Hecke algebra of type 
$A_{1}$ which depends on two parameters $q, t$.

There are two degenerations of the algebra $\HH(q,t)$.

1. {\bf The trigonometric degeneration.}

Set $Y=\be^{\hbar y}$, $q=\be^{\hbar}$, $t=\be^{\hbar c}$ and
$T=s\be^{\hbar cs}$, where $s\in \ZZ_{2}$ 
is the reflection.
Then $s, X, y$ satisfy the following relations modulo $\hbar$:
$$
s^{2}=1,\quad sXs^{-1}=X^{-1},\quad sy+ys=2c,\quad X^{-1}yX-y=1-2cs.
$$  
The algebra generated by $s, X, y$ with these relations
is called the type $A_{1}$ trigonometric Cherednik algebra. 
It is easy to show that it is isomorphic to the Cherednik algebra
$H_{1, c}(\ZZ_2,\CC ^{*})$, where $\ZZ_2$ acts on
$\CC ^*$ by $z\to z^{-1}$.

2. {\bf The rational degeneration.}

In the trigonometric Cherednik algebra, set $X=\be^{\hbar x}$
and $y=\hat{y}/\hbar$. Then $s, x, \hat{y}$ satisfy the following relations
modulo $\hbar$:
$$
s^{2}=1, sx=-xs, s\hat{y}=-\hat{y}s, \hat{y}x-x\hat{y}=1-2cs.
$$
The algebra generated by $s, x, \hat{y}$ with these relations
is the rational Cherednik algebra $H_{1, c}(\ZZ_2,\Bbb C)$
with the action of $\ZZ_2$ on $\CC$ is given by $z\to -z$. 

\subsection{Affine and extended affine Weyl groups}  {\label{sec:affw}}

Let $R=\{\alpha\}\subset \RR^{n}$ be a root system 
with respect to a nondegenerate symmetric bilinear form $(\cdot,\cdot)$ on 
$\RR^{n}$. We will assume that $R$ is reduced.  
Let $\{\alpha_{i}\}_{i=1}^{n}\subset R$ 
be the set of simple roots and 
$R_{+}$ (respectively $R_{-}$) be the set of positive (respectively
negative) roots. 
The coroots are denoted by 
$\alpha^{\vee}=2\alpha/(\alpha,\alpha)$.
Let $Q^{\vee}=\bigoplus_{i=1}^{n}\ZZ\alpha_{i}^{\vee}$ 
be the coroot lattice and 
$P^{\vee}=\bigoplus_{i=1}^{n}\ZZ 
\omega^{\vee}_{i}$ the coweight lattice, where 
$\omega_{i}^{\vee}$'s are the fundamental coweights, i.e., 
$(\omega^{\vee}_{i}, \alpha_{j})=\delta_{ij}$. 
Let $\theta$ be the maximal positive root, and assume that 
the bilinear form is normalized by the condition $(\theta,\theta)=2$.
Let $\bW$ be the Weyl group which is generated by the reflections
$s_{\alpha}$ ($\alpha\in R$).

By definition, the affine root system is
$$R^{a}=\{ \tilde{\alpha}= [\alpha,j]\in 
\RR^{n}\times\RR|
\text{ where } \alpha\in R, j\in \ZZ\}.
$$ 
The set of positive 
affine roots is 
$R^{a}_{+}=\{[\alpha,j]\,|\,j\in \ZZ_{>0}\}\cup 
\{[\alpha,0]\,|\,\alpha\in R_{+}\}$.
Define $\alpha_{0}=[-\theta, 1]$.  
We will identify $\alpha\in R$ with 
$\tilde{\alpha}=[\alpha,0]\in R^{a}$. 

For an arbitrary affine root $\tilde{\alpha}=[\alpha,j]$ and 
a vector $\tilde{z}=[z, \zeta]\in \RR^{n}\times \RR$, 
the corresponding affine reflection is defined as follows: 
$$
s_{\tilde{\alpha}}(\tilde{z})
=\tilde{z}-2\frac{(z, \alpha)}{(\alpha,\alpha)}\,\tilde{\alpha}
=\tilde{z}-(z,\alpha^{\vee})\,\tilde{\alpha}.
$$
The affine Weyl group $\aW$ is generated by the affine reflections
$\{s_{\tilde{\alpha}}\,|\,\tilde{\alpha}\in \widetilde{R}_{+}\}$, and
we have an isomorphism: 
$$\aW\cong \bW\ltimes Q^{\vee},$$ 
where the translation $\alpha^\vee\in Q^\vee$ is naturally
identified with the composition $s_{[-\alpha,1]}s_{\alpha}\in \aW$.

Define the extended affine Weyl group to be 
$\eW=\bW\ltimes 
P^{\vee}$ acting on $\RR^{n+1}$ via 
$b(\tilde{z})=[z,\zeta-(b,z)]$ for $\tilde{z}=[z,\zeta],\, b\in P^\vee$.
Then $\aW\subset\eW$. 
Moreover, 
$\aW$ is a normal subgroup of 
$\eW$ and 
$\eW/\aW=P^{\vee}/Q^{\vee}$. 
The latter group can be identified with the group 
$\Pi=\{\pi_r\}$ of the elements of $\eW$ permuting simple 
affine roots under their action in $\RR^{n+1}$. 
It is a normal commutative subgroup of 
$\Aut=\Aut(\mathrm{Dyn}^{a})$
($\mathrm{Dyn}^{a}$ denotes the affine Dynkin diagram).
The quotient $\Aut/\Pi$ is isomorphic to the group 
of the automorphisms preserving $\alpha_0$, i.e. the group 
$\Aut{\mathrm{Dyn}}$ of automorphisms of the finite Dynkin diagram.

\subsection{Cherednik's double affine Hecke algebra of a root system}{\label{sec:daha}}

In this subsection, we will give an explicit presentation of Cherednik's
DAHA for a root system, defined in Example \ref{chere}. 
This is done by giving an explicit presentation of the corresponding braid group (which is called {\it the elliptic
braid group}), and then imposing quadratic relations on the generators 
corresponding to reflections. 

For a root system $R$, let $m=2$ if $R$ is of type $D_{2k}$, $m=1$
if $R$ is of type $B_{2k}, C_{k}$, and otherwise $m=|\Pi|$.
Let $m_{ij}$ be the number of edges between vertex $i$ 
and vertex $j$ in the affine Dynkin diagram of $R^{a}$.
Let $X_{i}$ $(i=1, \ldots, n)$ be a family of 
pairwise commutative and algebraically independent elements.
Set
$$
X_{[b,j]}=\prod_{i=1}^{n}X_{i}^{\ell_{i}}q^{j}, \text{ where }
b=\sum_{i=1}^{n}\ell_{i}\omega_{i}\in P, j\in \ZZ/m\ZZ.
$$

For an element $\hat{w}\in \eW$, we can define an action on these 
$X_{[b, j]}$ by
$\hat{w}X_{[b, j]}=X_{\hat{w}[b,j]}$.

\begin{definition}[Cherednik]
{\em The double affine Hecke algebra} (DAHA) of the root system $R$,
denoted by $\HH$, is an algebra
defined over the field
$\CC _{q,t}= \CC (q^{1/m},t^{1/2})$, 
generated by  
$T_{i}, i=0,\ldots,n, \Pi, X_{b}, b\in P$, 
subject to the following relations:
\begin{enumerate}
\item $T_{i}T_{j}T_{i}\cdots=
T_{j}T_{i}T_{j}\cdots$, $m_{ij}$ factors each side;
\item $(T_{i}-t_{i})(T_{i}+t_{i}^{-1})=0$ for $i=0, \ldots, n$;
\item $\pi T_{i}\pi^{-1}=T_{\pi(i)}$, for $\pi \in \Pi$ 
and $i=0, \ldots, n$;
\item $\pi X_{b}\pi^{-1}=X_{\pi(b)}$, for $\pi\in \Pi, b\in P$;
\item $T_{i}X_{b}T_{i}=X_{b}X_{\alpha_{i}}^{-1}$, if
$i>0$ and $(b,\alpha_{i}^{\vee})=1$; 
$T_{i}X_{b}=X_{b}T_{i}$, if $i>0$ and $(b, \alpha_{i}^{\vee})=0$;
\item $T_{0}X_{b}T_{0}=X_{b-\alpha_{0}}$ if $(b, \theta)=-1$;
$T_{0}X_{b}=X_{b}T_{0}$ if $(b, \theta)=0$.
\end{enumerate}
\end{definition}

Here $t_i$ are parameters attached to simple affine roots 
(so that roots of the same length give rise to the same
parameters). 

{\em The degenerate double affine Hecke algebra 
(trigonometric Cherednik algebra)} $\HH_{\mathrm trig}$
is generated by the group algebra of $\eW$, $\Pi$ and pairwise
commutative
$y_{\tilde{b}}=\sum_{i=1}^{n}(b, \alpha^{\vee})y_{i}+u$ for 
$\tilde{b}=[b,u]\in P\times \ZZ$, with the following relations:
$$
s_{i}y_{b}-y_{s_{i}(b)}s_{i}=-k_{i}(b, \alpha_{i}^{\vee}),
\text{ for }i=1, \ldots, n, 
$$
$$
s_{0}y_{b}-y_{s_{0}(b)}s_{0}=k_{0}(b, \theta),\qquad
\pi_{r}y_{b}\pi_{r}^{-1}=y_{\pi_{r}(b)} \text{ for }\pi_{r}\in \Pi. 
$$

\begin{remark}
This degeneration can be obtained from the DAHA 
similarly to the case of $A_1$, which is described above.  
\end{remark}

\subsection{Algebraic flatness of Hecke algebras of 
polygonal Fuchsian groups}{\label{sec:flatF}}

Let $W$ be the Coxeter group of rank $r$ corresponding to a Coxeter
datum: 
$$m_{ij}\,  (i,j=1, \ldots, r, i\ne j), \text{ such that } 2\leq m_{ij}\leq\infty \text{ and } m_{ij}=m_{ji}.$$ 
So the group $W$ has generators $s_i$ $i=1, \ldots, r$, and defining relations 
$$
s_i^2=1,\ (s_is_j)^{m_{ij}}=1 \text{ if }m_{ij}\ne \infty.
$$
It has a sign character 
$\xi: W\to \lbrace{\pm 1\rbrace}$ given
by $\xi(s_i)=-1$. Denote by $W_+$ the kernel of $\xi$
(the even subgroup of $W$). It is generated by 
$a_{ij}=s_{i}s_{j}$ with relations:
$$
a_{ij}=a_{ji}^{-1},\quad a_{ij}a_{jk}a_{ki}=1,\quad a_{ij}^{m_{ij}}=1.
$$ 

We can deform the group algebra $\CC [W]$ as follows.
Define the algebra $A(W)$ with invertible generators 
$s_i$, and $t_{ij,k}$, $i,j=1, \ldots, r$, $k\in \ZZ_{m_{ij}}$ 
for $(i,j)$ such that $m_{ij}<\infty$ 
and defining relations 
$$
t_{ij,k}=t_{ji,-k}^{-1},\quad
s_i^2=1,\quad
[t_{ij,k},t_{i'j',k'}]=0,\quad
s_pt_{ij,k}=t_{ji,k}s_p,
$$
$$
\prod_{k=1}^{m_{ij}}(s_is_j-t_{ij,k})=0\text{ if }m_{ij}<\infty.
$$

Notice that if we set $t_{ij,k}=\exp(2\pi k\bi/m_{ij})$, 
we get $\CC [W]$.

Define also the algebra $A_+(W)$ over $\mathcal{R}:=\CC [t_{ij,k}]$
($t_{ij,k}=t_{ji,-k}^{-1}$) by generators 
$a_{ij}$, $i\ne j$ ($a_{ij}=a_{ji}^{-1}$), and relations 
$$
\prod_{k=1}^{m_{ij}}(a_{ij}-t_{ij,k})=0\text{ if }m_{ij}<\infty,
\quad
a_{ij}a_{jp}a_{pi}=1.
$$

If $w$ is a word in letters $s_i$, let $T_w$ be the corresponding
element of $A(W)$. Choose a function $w(x)$ which attaches
to every element $x\in W$, a reduced word $w(x)$
representing $x$ in $W$.

\begin{theorem}[Etingof, Rains, \cite{ER}]\label{spa} 
\begin{enumerate}
\item[(i)] The elements $T_{w(x)}$, $x\in W$, form a spanning set
in $A(W)$ as a left $\mathcal{R}$-module.
\item[(ii)]The elements $T_{w(x)}$, $x\in W_+$, form a spanning set
in $A_+(W)$ as a left $\mathcal{R}$-module.
\item[(iii)]The elements $T_{w(x)}$, $x\in W$, are linearly independent 
if $W$ has no finite parabolic subgroups of rank $3$.
\end{enumerate}
\end{theorem}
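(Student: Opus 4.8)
The plan is to prove the two spanning statements (i)–(ii) by an elementary rewriting argument, and then to deduce the linear independence statement (iii) from a flatness theorem, reducing the latter to parabolic subgroups of rank at most $3$ and invoking Theorem \ref{p2} there. For (i) I would set $M=\sum_{x\in W}\mathcal{R}\,T_{w(x)}\subseteq A(W)$ and prove by induction on word length that $T_u\in M$ for \emph{every} word $u$ in the $s_i$; once this is known, $M=A(W)$, because any element of $A(W)$ is an $\mathcal{R}$-combination of such $T_u$ (the relations $s_pt_{ij,k}=t_{ji,k}s_p$ let one move the $t$'s to the left at the cost of an automorphism of $\mathcal{R}$). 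The induction uses three ingredients: the relation $s_i^2=1$; the ``braid relation modulo lower order terms'', i.e. that $T_{(s_is_j\cdots)_{m_{ij}}}-T_{(s_js_i\cdots)_{m_{ij}}}$ lies in the $\mathcal{R}$-span of $T$'s of shorter words, which follows from the single rank-$2$ relation $\prod_k(s_is_j-t_{ij,k})=0$ (using invertibility of the $t_{ij,k}$ to express $u^{-1}$ and all negative powers of $u:=s_is_j$ through $1,u,\dots,u^{m_{ij}-1}$); and the Matsumoto--Tits/deletion combinatorics of Coxeter groups (two reduced words differ by braid moves, a non-reduced word can be braid-moved so as to contain some $s_js_j$). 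Statement (ii) is proved in exactly the same way inside $A_+(W)$ with generators $a_{ij}$, reducing again to the rank-$2$ relations.

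For (iii) the first step is to observe, using (i), that the surjection $\mathcal{R}^{(W)}\twoheadrightarrow A(W)$, $e_x\mapsto T_{w(x)}$, has trivial kernel if and only if $A(W)$ is a flat (equivalently free of rank $|W|$) deformation of $\CC[W]$, the degeneration being at the point $\mathfrak m$ of $\operatorname{Spec}\mathcal{R}$ where all $t_{ij,k}$ are roots of unity. Here, since $\mathcal{R}^{(W)}$ is torsion-free over the domain $\mathcal{R}$, a nonzero kernel would have full support, so it suffices to prove flatness of $A(W)$ after localizing/completing at the single point $\mathfrak m$. Taking $w(x)$ to be ShortLex normal forms, I would set up the corresponding terminating rewriting system for $A(W)$ (the Coxeter rewrites $s_i^2\to1$ and the ShortLex braid moves, with the deformation corrections coming from $\prod_k(s_is_j-t_{ij,k})\to(\text{lower degree})$) and apply Bergman's diamond lemma: the $T_{w(x)}$ form an $\mathcal{R}$-basis provided all overlap ambiguities are resolvable, and since every relation involves generators from at most two of the $s_i$, every overlap is supported on the generators of a standard parabolic subgroup of rank $\le3$ and is resolved inside that subalgebra. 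Thus (iii) reduces to: \emph{$A(W')$ is a flat deformation of $\CC[W']$ for every standard parabolic $W'$ of rank $\le3$.}

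Rank $\le2$ is handled directly: $A(\langle s_i,s_j\rangle)\cong\bigl(\mathcal{R}[u^{\pm1}]/(\prod_k(u-t_{ij,k}))\bigr)\rtimes\langle s_i\rangle$ with $s_ius_i=u^{-1}$ (the second $m$-relation $\prod_k(s_js_i-t_{ji,k})=0$ being equivalent to the first via $t_{ji,k}=t_{ij,-k}^{-1}$), which is free of rank $2m_{ij}$; rank $1$ is trivial. For a rank-$3$ standard parabolic $W_3=\langle s_1,s_2,s_3\rangle$ with $m_{12}=p$, $m_{23}=q$, $m_{13}=r$, the hypothesis that $W$ has no finite parabolic of rank $3$ means $1/p+1/q+1/r\le1$, i.e. $W_3$ is infinite. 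All defining relations of $A(W_3)$ are homogeneous for the $\ZZ/2$-grading with the $s_i$ odd and the $t_{ij,k}$ even, and using $s_i=s_1(s_1s_i)$ one gets a (checkable) isomorphism $A(W_3)\cong A_+(W_3)\rtimes\ZZ/2$; hence $A(W_3)$ is flat over $\mathcal{R}$ iff $A_+(W_3)$ is, and $|W_3|=2|W_{3,+}|$. Now $W_{3,+}$ is the von Dyck group $\Delta(p,q,r)$, which is the orbifold fundamental group of $\Sigma=\CC P^1$ with cone points of orders $p,q,r$ (with cusps where $m_{ij}=\infty$); the presentation of $A_+(W_3)$ by the $a_{ij}$ (with $\prod_k(a_{ij}-t_{ij,k})=0$ and $a_{12}a_{23}a_{31}=1$) is, after completing $\mathcal{R}$ at $\mathfrak m$, exactly the orbifold Hecke algebra $\mathscr{H}_\tau(\Sigma)$ of Section \ref{sec:HaF}. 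Since $1/p+1/q+1/r\le1$, the universal cover of $\Sigma$ is the Euclidean or the Lobachevsky plane, so $\H^2$ of it vanishes, and Theorem \ref{p2} gives that $\mathscr{H}_\tau(\Sigma)$ is a flat formal deformation of $\CC[\Delta(p,q,r)]$; as $\mathcal{R}$ injects into this completion, $A_+(W_3)$ is flat over $\mathcal{R}$, hence so is $A(W_3)$, and (iii) follows.

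I expect the main obstacle to be the rank-$3$ reduction in the middle step: namely, organizing the rewriting system so that it realizes ShortLex normal forms of $W$ and verifying rigorously that its overlap ambiguities are \emph{precisely} those supported on rank-$\le3$ parabolics, with resolvability there equivalent to flatness of $A(W_3)$ — this is where the combinatorics of reduced words in Coxeter groups is doing real work, and it is the only place where the hypothesis on rank-$3$ parabolics is needed. The hypothesis is also essentially sharp: for a \emph{finite} rank-$3$ parabolic the associated orbifold $\Sigma$ would have $\CC P^1$ (with $\H^2\ne0$) as universal cover, and exactly as in the ``forbidden'' Fuchsian cases of Section \ref{sec:HaF} the deformation $A(W_3)$ then fails to be flat, so that the $T_{w(x)}$ can become linearly dependent. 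A secondary technical point to get right is the passage from the formal flatness supplied by Theorem \ref{p2} to flatness over the Laurent ring $\mathcal{R}$, but this is routine given the spanning statement (i) and the torsion-freeness argument above.
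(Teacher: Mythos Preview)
Your argument for (i) and (ii) is correct and is exactly the one the paper gives.

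For (iii), however, the diamond-lemma reduction to rank $\le 3$ parabolics does not work as stated. The problem is the implication ``$A(W_3)$ flat $\Rightarrow$ all overlap ambiguities supported on $W_3$ resolve''. This is false already for the infinite group $W_3=\widetilde A_2$ (all $m_{ij}=3$). With the ordering $s_1<s_2<s_3$, the ShortLex-directed rank-2 rules are $s_i^2\to1$ and $s_js_is_j\to s_is_js_i$ for $j>i$. The overlap of the two rules $s_3s_1s_3\to s_1s_3s_1$ and $s_3s_2s_3\to s_2s_3s_2$ is the word $s_3s_1s_3s_2s_3$. Reducing the left factor gives $s_1s_3s_1s_2s_3$; reducing the right factor gives $s_3s_1s_2s_3s_2$. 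Both of these contain no $s_i^2$ and no $s_js_is_j$ with $j>i$, so both are \emph{irreducible}, yet they are distinct words. Hence this ambiguity does not resolve, and the rank-2 rewriting system is not confluent for $\widetilde A_2$, even though $A(\widetilde A_2)$ is flat. The same obstruction persists in the deformed algebra: the two reductions differ in their top-length terms $t_{13}\,T_{s_1s_3s_1s_2s_3}$ versus $t_{23}\,T_{s_3s_1s_2s_3s_2}$, which cannot be cancelled by the lower-length corrections because the rules never increase length. In short, the set of irreducibles for the rank-2 system is strictly larger than the set of ShortLex normal forms, so Bergman's lemma with only these rules cannot produce the basis $\{T_{w(x)}\}$. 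You correctly flagged this step as the main obstacle; it is in fact a genuine failure, not just a technicality.

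The paper does not attempt to repair this route: it proves (iii) by a completely different method, citing \cite{ER}, which uses the geometry of constructible sheaves on the Coxeter complex of $W$. Your idea of invoking Theorem~\ref{p2} for rank-3 pieces is used in the paper, but only in the \emph{other} direction (Theorem~\ref{algver}): once one already knows (ii), formal flatness from Theorem~\ref{p2} upgrades to algebraic freeness for the polygonal $W$ itself, without any diamond-lemma transfer to larger groups.
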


\begin{proof} We only give the proof of (i). Statement (ii) follows from (i).
Proof of (iii), which is quite nontrivial, can be found in \cite{ER} (it uses the geometry 
of constructible sheaves on the Coxeter complex of $W$).

Let us write the relation
$$
\prod_{k=1}^{m_{ij}}(s_is_j-t_{ij,k})=0
$$
as a deformed braid relation:
$$
s_js_is_j\ldots+S.L.T.=t_{ij}s_is_js_i\ldots+S.L.T., 
$$
where $t_{ij}=(-1)^{m_{ij}+1}t_{ij,1}\cdots t_{ij,m_{ij}}$, S.L.T.
mean ``smaller length terms'', 
and the products on both sides have length $m_{ij}$.
This can be done by multiplying the relation by $s_is_j\cdots$ 
($m_{ij}$ factors).

Now let us show that $T_{w(x)}$ span $A(W)$ over $\mathcal{R}$.
Clearly, $T_w$ for all words $w$ span $A(W)$. 
So we just need to take any word $w$ and
express $T_w$ via $T_{w(x)}$. 

It is well known from the theory of Coxeter groups (see e.g. \cite{B}) that 
using the braid relations, one can turn any non-reduced word
into a word that is not square free, and any
reduced expression of a given element 
of $W$ into any other reduced expression of the same element.   
Thus, if $w$ is non-reduced, then by using the deformed braid relations
we can reduce $T_w$ to a linear combination of $T_u$ with words $u$ of
smaller length than $w$. 
On the other hand, if $w$ is a reduced expression for some
element $x\in W$, then using the deformed braid relations 
we can reduce $T_w$ to
a linear combination of $T_u$ with $u$ shorter than $w$, and
$T_{w(x)}$.
Thus $T_{w(x)}$ are a spanning set.
This proves (i). 
\end{proof}

Thus, $A_+(W)$ is a ``deformation'' of $\CC [W_+]$ over $\mathcal{R}$, and  
similarly $A(W)$ is a ``twisted deformation'' of $\CC [W]$.

Now let $\Gamma=\Gamma(m_1,\ldots,m_r)$, 
$r\ge 3$, be the Fuchsian group 
defined by generators $c_j$, 
$j=1,\ldots,r$, with defining relations 
$$
c_j^{m_j}=1, 
\ \prod_{j=1}^r c_j=1.
$$
Here $2\le m_j<\infty$. 

Suppose $\Gamma$ acts on $H$ where $H$ is a simply connected complex Riemann surface as in Section \ref{sec:HaF}.
We have the Hecke algebra 
of $\Gamma$, ${\mathscr H}_{\tau}(H, \Gamma)$, 
defined by the same (invertible) generators $c_{j}$ and 
relations 
$$
\prod_{k}(c_j-\exp(2\pi\mathbf{i} k/n_{j})q_{jk})=0,\ 
\prod_{j=1}^r c_j=1,
$$
where $q_{jk}=\exp(\tau_{jk})$.

We saw above (Theorem \ref{p2}) that 
if $\tau_{jk}$'s are formal, the algebra
${\mathscr H}_{\tau}(\Gamma,H)$ is flat
in $\tau$ if $|\Gamma|$ is infinite (i.e., $H$ is
Euclidean or hyperbolic). 
Here is a much stronger non-formal 
version of this theorem. 

\begin{theorem} \label{algver}
The algebra ${\mathscr H}_{\tau}(\Gamma,H)$ is free as a left module 
over $R:=\CC [q_{jk}^{\pm 1}]$
if and only if $\sum_j (1-1/m_j)\ge 2$ (i.e., $H$ is
Euclidean or hyperbolic). 
\end{theorem}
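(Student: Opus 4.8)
The statement combines a flatness direction (sufficiency) and a non-flatness direction (necessity), and I would prove them separately.

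First I would dispose of the non-flatness direction, which is essentially the obstruction argument already used in Section \ref{sec:HaF}. If $\sum_j(1-1/m_j)<2$, then $H$ is the Riemann sphere and $\Gamma$ is finite, so the regular representation of $\mathscr{H}_\tau(\Gamma,H)$ is finite-dimensional. Computing the determinant of $c_1\cdots c_r$ in the regular representation in two ways — on one hand it equals $1$ because $c_1\cdots c_r=1$, on the other hand it is a product of the eigenvalues $\be^{2\pi j\bi/n_j}\be^{\tau_{jk}}$ of the $c_j$, each with multiplicity $|\Gamma|/n_j$ — yields a nontrivial monomial relation among the $q_{jk}$. So $\mathscr{H}_\tau(\Gamma,H)$ has a relation over $R$ that is not satisfied in the free module $R[\Gamma]$, hence it cannot be free of rank $|\Gamma|$. (One should check that freeness of rank $|\Gamma|$ is the only plausible rank, using that the fiber at $q_{jk}=\be^{2\pi j\bi/n_j}$ is $\CC[\Gamma]$; alternatively just observe the rank would have to be the dimension of the generic fiber, which drops at that specialization.)

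For the flatness direction, the key point is Theorem \ref{spa}: the algebra $A_+(W)$ for a Coxeter group $W$ with Coxeter datum $m_{ij}$ is spanned over $\mathcal{R}$ by the elements $T_{w(x)}$, $x\in W_+$, and these are linearly independent provided $W$ has no finite parabolic subgroup of rank $3$. So the plan is to realize $\mathscr{H}_\tau(\Gamma,H)$ (after base change from $\mathcal{R}$ to $R$) as a suitable quotient or specialization of $A_+(W)$ for an appropriate $W$, namely the Coxeter group with $r$ generators $s_1,\ldots,s_r$ and $m_{i,i+1}$ related to the $m_j$'s. Concretely, $\Gamma=\Gamma(m_1,\ldots,m_r)$ is the even subgroup $W_+$ of the Coxeter group generated by $s_1,\ldots,s_r$ with the relations making $a_{ij}=s_is_j$ have appropriate orders, via $c_j\leftrightarrow a_{j,j+1}$-type identifications (using the presentation of $W_+$ by the $a_{ij}$ with $a_{ij}a_{jk}a_{ki}=1$). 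Under this dictionary, $\mathscr{H}_\tau(\Gamma,H)$ becomes $A_+(W)$ with the parameters $t_{ij,k}$ specialized to the $q_{jk}$'s, and Theorem \ref{spa}(ii) gives a spanning set $T_{w(x)}$, $x\in W_+$, over $R$; when $H$ is Euclidean or hyperbolic one has $\sum_j(1-1/m_j)\ge 2$, which is precisely the condition that the corresponding Coxeter group has no finite rank-$3$ parabolic (the finite rank-$3$ Coxeter groups, of types $A_3,B_3,H_3$ and $A_1^3$-like degenerations, all correspond to the "forbidden'' spherical configurations with $\sum(1-1/m_i)<2$), so Theorem \ref{spa}(iii) applies and the spanning set is a basis. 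Hence $\mathscr{H}_\tau(\Gamma,H)$ is free over $R$.

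The main obstacle I anticipate is the bookkeeping in the second step: making the identification of $\Gamma(m_1,\ldots,m_r)$ with $W_+$ precise (including matching the cyclic relations $c_j^{m_j}=1$, $\prod c_j=1$ with the relations $a_{ij}^{m_{ij}}=1$, $a_{ij}a_{jk}a_{ki}=1$ of $A_+(W)$), choosing the Coxeter datum $m_{ij}$ so that the orders come out right, and — most delicately — verifying that "$H$ Euclidean or hyperbolic'' is equivalent, under this dictionary, to "$W$ has no finite parabolic of rank $3$.'' This last equivalence is a finite case-check against the classification of finite Coxeter groups of rank $3$ together with the list of spherical triangle/polygon groups; I would state it as a lemma and verify it by inspecting the standard tables. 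Everything else — exactness of completion, the PBW-type spanning argument — is either in Theorem \ref{spa} or is routine.
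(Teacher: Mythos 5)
Your proof matches the paper's approach almost exactly: both rest on the identification of $\mathscr{H}_\tau(\Gamma,H)$ with the even algebra $A_+(W)$ for the cyclic Coxeter datum $m_{i,i+1}=m_i$ (and $m_{ij}=\infty$ otherwise), and both obtain the basis $\{T_{w(x)}\}_{x\in W_+}$ from Theorem \ref{spa}. The one divergence worth noting is how linear independence is secured in the ``if'' direction: you invoke Theorem \ref{spa}(iii) directly, whereas the paper's proof cites Theorem \ref{spa}(ii) together with Theorem \ref{p2} --- the point being that the spanning set has exactly $|\Gamma|$ elements, the formal flatness of \ref{p2} forces them to be a topological basis over $\CC[[\tau]]$, and $R=\CC[q_{jk}^{\pm 1}]$ embeds in $\CC[[\tau]]$ via $q_{jk}\mapsto e^{\tau_{jk}}$, so $\CC[[\tau]]$-independence implies $R$-independence. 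Both routes are valid; the paper's is ``softer'' because it avoids the quite nontrivial part (iii) of \ref{spa}. Your treatment of the ``only if'' direction (the determinant obstruction) is correct and in fact usefully spells out what the paper leaves implicit by referring back to Section \ref{sec:HaF}. One small simplification you could use in your lemma: for the polygon Coxeter datum, any rank-$3$ parabolic with $r\ge 4$ necessarily contains a pair of non-adjacent generators with $m_{ij}=\infty$ and so is automatically infinite; thus the equivalence of $\sum_j(1-1/m_j)\ge 2$ with ``no finite rank-$3$ parabolic'' reduces to the single case $r=3$, i.e.\ the classical spherical triangle-group condition $1/m_1+1/m_2+1/m_3>1$, rather than a check against the full tables of finite Coxeter groups.
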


\begin{proof} Let us consider the Coxeter datum: $m_{ij}$, 
$i,j=1, \ldots, r$, 
such that $m_{i,i+1}:=m_i$ ($i\in \ZZ/r\ZZ$), 
and $m_{ij}=\infty$ otherwise. 
Suppose the corresponding Coxeter group is $W$.
Then we can see that $\Gamma=W_{+}$.
Notice that the algebra $\mathscr{H}_{\tau}(\Gamma,H)$ for genus
$0$ orbifolds is the algebra $A_{+}(W)$, 
i.e., we have
$\mathscr{H}_{\tau}(\Gamma,H)=A_{+}(W)$.

The condition
$\sum_j (1-1/m_j)\ge 2$ is equivalent to the condition 
that $W$ has no finite parabolic subgroups of rank $3$.
From Theorem \ref{spa} (ii) and Theorem \ref{p2}, 
we can see that $A_{+}(W)$ is  
free as a left module over $R$. 
We are done. 
\end{proof}

\subsection{Notes}
Section \ref{sec:dpez} follows Section 6 of the paper \cite{EOR};
Cherednik's definition of the double affine Hecke algebra of a root system
is from Cherednik's book \cite{Ch}; Sections \ref{sec:HaF} 
and \ref{sec:flatF} follow the paper \cite{ER}; 
The other parts of this section follow the paper \cite{E1}.

\newpage \section{Symplectic reflection algebras}

\subsection{The definition of symplectic reflection algebras}

Rational Cherednik algebras for finite Coxeter groups are 
a special case of a wider class of algebras called 
symplectic reflection algebras. To define them, let 
$V$ be a finite dimensional symplectic vector space 
over $\CC $ with a symplectic form $\omega$, and $G$
be a finite group acting symplectically (linearly)
on $V$. For simplicity let us assume that $(\wedge^2V^*)^G=\CC \omega$
(i.e., $V$ is symplectically irreducible) 
and that $G$ acts faithfully on $V$ (these assumptions are not 
important, and essentially not restrictive).  

\begin{definition}
A symplectic reflection in $G$ is an element $g$ such that the rank of the operator
$1-g$ on $V$ is $2$. 
\end{definition}

If $s$ is a symplectic reflection, then let $\omega_s(x,y)$ be 
the form $\omega$ applied to the projections of $x,y$ 
to the image of $1-s$ along the kernel of $1-s$; thus $\omega_s$ is a skewsymmetric form of rank $2$ on $V$.   

Let $\cS\subset G$ be the set of symplectic reflections, and 
$c: \cS\to \CC $ be a function which is invariant under the action of $G$. 
Let $t\in \CC $. 

\begin{definition} The symplectic reflection algebra 
$\sH_{t,c}=\sH_{t,c}[G,V]$
is the quotient of the algebra $\CC [G]\ltimes \mathbf T(V)$ 
by the ideal generated by the relation 
\begin{equation}\label{mainrel}
[x,y]=t\omega(x,y)-2\sum_{s\in \cS}c_s\omega_s(x,y)s.
\end{equation}
\end{definition}

\begin{example}
Let $W$ be a finite Coxeter group with reflection representation $\h$.
We can set $V=\h\oplus \h^*$, 
$\omega(x,x')=\omega(y,y')=0$, $\omega(y,x)=(y,x)$, 
for $x,x'\in \h^*$ and $y,y'\in \h$. In this case
\begin{enumerate}
\item symplectic reflections are the usual reflections in $W$;
\item $\omega_s(x,x')=\omega_s(y,y')=0$, 
$\omega_s(y,x)=(y,\alpha_s)(\alpha_s^\vee,x)/2$. 
\end{enumerate}
Thus, $\sH_{t,c}[G,\h\oplus\h^*]$ coincides with the rational Cherednik
algebra $H_{t,c}(G,\h)$ defined in Section \ref{rcera}. 
\end{example}

\begin{example} Let $\Gamma$ be a finite subgroup of $SL_2(\Bbb C)$,
and $V=\Bbb C^2$ be the tautological representation, with its standard symplectic form. 
Then all nontrivial elements of $\Gamma$ are symplectic reflections,
and for any symplectic reflection $s$, $\omega_s=\omega$. 
So the main commutation relation of $\sH_{t,c}[\Gamma,V]$ takes the form 
$$
[y,x]=t-\sum_{g\in \Gamma,g\ne 1}2c_gg.
$$ 
\end{example}

\begin{example}\label{wp} (Wreath products) 
Let $\Gamma$ be as in the previous example, 
$G=\kS_n\ltimes \Gamma^n$, and  
$V=(\Bbb C^2)^n$. Then symplectic reflections 
are conjugates $(g,1,...,1)$, $g\in \Gamma$, $g\ne 1$, and 
also conmjugates of transpositions in $\kS_n$ (so there is one more conjugacy class 
of reflections than in the previous example). 
\end{example}

Note also that for any $V,G$, $\sH_{0,0}[G,V]=G\ltimes SV$, and $\sH_{1,0}[G,V]=G\ltimes {\mathrm Weyl}(V)$, where ${\mathrm Weyl}(V)$ is the 
Weyl algebra of $V$, i.e. the quotient of the tensor algebra $\mathbf T(V)$ by the relation 
$xy-yx=\omega(x,y)$, $x,y\in V$. 

\subsection{The PBW theorem for symplectic reflection algebras}

To ensure that the symplectic reflection algebras $\sH_{t,c}$ have good properties, 
we need to prove a PBW theorem for them, which is an analog of Proposition \ref{prop-pbw}. 
This is done in the following theorem, which also explains the special role 
played by symplectic reflections. 

\begin{theorem}\label{pbw1}
Let $\kappa: \wedge^2V\to \CC [G]$ be a linear $G$-equivariant function. 
Define the algebra $\sH_\kappa$ to be the quotient of the algebra 
$\CC [G]\ltimes {\mathbf T}(V)$ by the relation 
$[x,y]=\kappa(x,y)$, $x,y\in V$. Put an increasing filtration on $\sH_\kappa$ 
by setting $\deg(V)=1$, $\deg(G)=0$, and define $\xi: \CC  G\ltimes SV\to \gr\sH_\kappa$ 
to be the natural surjective homomorphism. Then $\xi$ is an isomorphism if and only if 
$\kappa$ has the form 
$$
\kappa(x,y)=
t\omega(x,y)-2\sum_{s\in \cS}c_s\omega_s(x,y)s,
$$
for some $t\in \CC $ and $G$-invariant function $c: \cS\to \CC $. 
\end{theorem}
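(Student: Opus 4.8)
The plan is to use the standard deformation-theoretic criterion for PBW (the Koszul/Braverman--Gaitsgory approach), adapted to the skew-group-algebra setting as in \cite{EG} and Drinfeld. First I would observe that $\CC[G]\ltimes SV$ is a quotient of $\CC[G]\ltimes\mathbf T(V)$ by the quadratic-type relations $x\otimes y-y\otimes x=0$, so $\sH_\kappa$ is a filtered deformation of it in which the relations are perturbed to $x\otimes y-y\otimes x-\kappa(x,y)$, with $\kappa:\wedge^2V\to\CC[G]$ (degree $\le 1$ part). The associated-graded map $\xi$ is always surjective; it is an isomorphism precisely when the relations are ``consistent'', i.e. when imposing them introduces no unexpected lower-order collapse. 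The Braverman--Gaitsgory / Poincar\'e--Birkhoff--Witt criterion says that for a perturbation of the polynomial (Koszul) algebra by $\kappa=\kappa_1+\kappa_0$ (linear plus constant part, here $\kappa_1$ being the $s$-terms and $\kappa_0=t\omega$), $\xi$ is an isomorphism if and only if the following two conditions hold: (a) $G$-equivariance (built in), and (b) the Jacobi-type condition obtained by applying the relation in two ways to $x\otimes y\otimes z$ and requiring the results to agree in $\CC[G]\ltimes\mathbf T(V)$ modulo the relations, separately in each graded piece.

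The key computation is therefore: impose $[x,y]=\kappa(x,y)$ and compute
$$
[[x,y],z]+[[y,z],x]+[[z,x],y]
$$
in $\CC[G]\ltimes\mathbf T(V)$ using the relation, then demand it vanish. Writing $\kappa(x,y)=\sum_{g\in G}\kappa_g(x,y)\,g$ with $\kappa_g\in\wedge^2V^*$, one expands $[\kappa(x,y),z]=\sum_g\kappa_g(x,y)(gz-zg)=\sum_g\kappa_g(x,y)(gz-g^{-1}z... )$ — more precisely $gz=(gz)g$ in the smash product, so $[\kappa(x,y),z]=\sum_g\kappa_g(x,y)(gz-z)g$ up to reordering. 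Collecting the coefficient of each group element $g$ and each degree, the vanishing splits into two families of identities: the degree-$2$ part (coefficient of $t^2$, or rather the ``constant$\times$constant'' part) forces $\omega$ to be closed, which is automatic since $\omega$ is a genuine symplectic form; and the degree-$1$ part gives, for each $g\ne 1$, a condition on $\kappa_g$ forcing $\kappa_g$ to be supported on elements $g$ with $\mathrm{rk}(1-g)=2$ (i.e. symplectic reflections) and to equal a multiple of $\omega_g$. This is exactly where the rank-$2$ condition and the special form of $\kappa$ emerge: for $g$ with $\mathrm{rk}(1-g)\ne 2$, the only $G_g$-invariant $2$-form compatible with the Jacobi identity is zero; for $\mathrm{rk}(1-g)=2$, it must be proportional to $\omega_g$. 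The mixed $t$-linear cross terms between $\omega$ and the $\omega_s s$ vanish automatically because $\omega$ is $G$-invariant.

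I would organize the write-up as: (1) recall the Braverman--Gaitsgory PBW criterion in the form applicable to $\CC[G]\ltimes\mathbf T(V)$ (citing \cite{BG} or \cite{EG}), reducing to the Jacobi identity on triples; (2) do the ``only if'' direction by exhibiting the Jacobi obstruction and reading off that $\kappa$ must have the stated shape — this uses a representation-theoretic lemma that for a finite-order $g\in Sp(V)$, the space of $g$-equivariant alternating forms satisfying the cyclic condition is one-dimensional iff $\mathrm{rk}(1-g)=2$ and zero otherwise; (3) do the ``if'' direction by verifying that for $\kappa(x,y)=t\omega(x,y)-2\sum_{s}c_s\omega_s(x,y)s$ the Jacobi obstruction indeed vanishes, which is a direct (if slightly tedious) check using $\sum_{s}c_s\omega_s(x,y)(sz-z)$-type manipulations and the invariance of $\omega$ and $c$. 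The main obstacle is step (2)/(3): correctly identifying that the cyclic Jacobi condition, degree by degree and group-element by group-element, is equivalent to the rank-$2$ support plus proportionality to $\omega_g$ — this is the conceptual heart (and is essentially the content of \cite{EG}, Theorem 1.3), so I would either reproduce that argument carefully or cite it. The rest is bookkeeping with the filtration and the smash-product commutation $v g=g(g^{-1}v)$.
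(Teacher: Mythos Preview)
Your approach is essentially the paper's: invoke the Koszul deformation principle (Braverman--Gaitsgory/Beilinson--Ginzburg--Soergel) to reduce flatness to the degree-$3$ Jacobi identity $[\kappa(x,y),z]+[\kappa(y,z),x]+[\kappa(z,x),y]=0$ in $\CC G\ltimes V$, then analyze it group-element by group-element to extract the rank-$2$ constraint on $g$ and the proportionality $\kappa_g\propto\omega_g$. One small cleanup: since $\kappa$ lands entirely in $\CC[G]$ (filtration degree $0$), the Jacobi obstruction lives in a single degree and there is no separate ``degree-$2$ part'' or ``$\omega$ closed'' condition---the component equation is just $\kappa_g(x,y)(z-z^g)+\kappa_g(y,z)(x-x^g)+\kappa_g(z,x)(y-y^g)=0$, from which the rank bound and the form of $\kappa_g$ drop out directly as you outline.
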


Unfortunately, for a general symplectic reflection algebra 
we don't have a Dunkl operator representation, so
the proof of the more difficult ``if'' part of this Theorem 
is not as easy as the proof of Proposition \ref{prop-pbw}. 
Instead of explicit computations with Dunkl operators, it makes use 
of the deformation theory of Koszul algebras, which we will now discuss. 

\subsection{Koszul algebras}

Let $R$ be a finite dimensional semisimple algebra (over $\CC $). 
Let $A$ be a $\ZZ_+$-graded algebra, such that $A[0]=R$, and
assume that the graded components of $A$ are finite dimensional. 

\begin{definition} 
\begin{enumerate}
\item[(i)] The algebra $A$ is said to be quadratic if 
it is generated over $R$ by $A[1]$, and has 
defining relations in degree 2.
\item[(ii)] $A$ is Koszul if all elements of $\Ext^i(R,R)$ 
(where $R$ is the augmentation module over $A$)
have grade degree precisely $i$. 
\end{enumerate}

\end{definition}

\begin{remark}
\begin{enumerate}
\item Thus, in a quadratic algebra, 
$A[2]=A[1]\otimes_R A[1]/E$, where $E$ is the 
subspace ($R$-subbimodule) of relations. 

\item It is easy to show that 
a Koszul algebra is quadratic, since 
the condition to be quadratic is just the Koszulity condition 
for $i=1,2$.
\end{enumerate}

\end{remark}

Now let $A_0$ be a quadratic algebra, $A_0[0]=R$. 
Let $E_0$ be the space of relations for $A_0$.
Let $E\subset A_0[1]\otimes_R A_0[1][[\hbar]]$ 
be a free (over $\CC [[\hbar]]$) $R$-subbimodule which 
reduces to $E_0$ modulo $\hbar$ (``deformation of the relations''). 
Let $A$ be the ($\hbar$-adically complete) 
algebra generated over $R[[\hbar]]$ by $A[1]=A_0[1][[\hbar]]$ 
with the space of defining relations $E$. 
Thus $A$ is a $\ZZ_+$-graded algebra. 

The following very important theorem 
is due to Beilinson, Ginzburg, and Soergel, \cite{BGS} 
(less general versions appeared earlier 
in the works of Drinfeld \cite{Dr}, Polishchuk-Positselski \cite{PP}, 
Braverman-Gaitsgory \cite{BG}). We will not give its proof.  

\begin{theorem}[Koszul deformation principle]\label{KDP} 
If $A_0$ is Koszul then $A$ is a topologically free $\CC [[\hbar]]$ module
if and only if so is $A[3]$.  
\end{theorem}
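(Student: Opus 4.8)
The plan is to establish the nontrivial direction, the converse being immediate: if $A$ is topologically free over $\CC[[\hbar]]$ then so is every graded component $A[n]$, in particular $A[3]$.

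First I would reduce the statement to a question about deformations of subspaces. Write $V=A_0[1]$ and $E_0\subset V\otimes_R V$ for the space of relations of $A_0$, and let $E\subset (V\otimes_R V)[[\hbar]]$ be the given free deformation. For $n\ge 2$ and $1\le i\le n-1$ set $P_i^0:=V^{\otimes_R(i-1)}\otimes_R E_0\otimes_R V^{\otimes_R(n-i-1)}\subset V^{\otimes_R n}$, and let $P_i\subset (V^{\otimes_R n})[[\hbar]]$ be the analogous subbimodule built from $E$ in place of $E_0$. By construction $A[n]=(V^{\otimes_R n})[[\hbar]]/(P_1+\dots+P_{n-1})$, and since $E$ reduces to $E_0$ modulo $\hbar$ one has $A[n]/\hbar A[n]=A_0[n]$. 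As each $A[n]$ is a finitely generated complete $\CC[[\hbar]]$-module, $A$ is topologically free if and only if every $A[n]$ is $\hbar$-torsion free, equivalently if and only if for every $n$ the submodule $P_1+\dots+P_{n-1}$ is a free $\CC[[\hbar]]$-direct summand of $(V^{\otimes_R n})[[\hbar]]$ reducing mod $\hbar$ to $P_1^0+\dots+P_{n-1}^0$ — what I will call a flat deformation of the latter. For $n\le 2$ this is automatic, since $E$ itself is a direct summand (lift a basis of $E_0$ into $E$, adjoin a complement of $E_0$, and apply Nakayama), so the content lies in the range $n\ge 3$, and the case $n=3$ is precisely the hypothesis of the theorem.

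The key input is Koszulity of $A_0$, which I would use in its combinatorial form: $A_0$ is Koszul if and only if, for every $n$, the subspaces $P_1^0,\dots,P_{n-1}^0\subset V^{\otimes_R n}$ generate a distributive lattice (a theorem of Backelin; see also \cite{BGS} and Polishchuk-Positselski). I would first record, using the exact sequence $0\to P_1\cap P_2\to P_1\oplus P_2\to P_1+P_2\to 0$ together with the flatness of $P_1$ and $P_2$ (which holds because $E$ is free), that flatness of $A[3]$ is equivalent to $P_1\cap P_2$ being a flat deformation of $P_1^0\cap P_2^0$ inside $(V^{\otimes_R 3})[[\hbar]]$. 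Then I would run an induction on $n$ that simultaneously propagates the flatness of all relevant sub-sums and sub-intersections of the $P_i$. In the inductive step, distributivity of the family $\{P_i^0\}$ reduces every such sum or intersection to two kinds of building blocks: "decoupled" pieces $V^{\otimes a}\otimes_R E_0\otimes_R V^{\otimes b}\otimes_R E_0\otimes_R V^{\otimes c}$, which deform flatly for free since $E$ does; and "adjacent" pieces $V^{\otimes a}\otimes_R(P_1^0\cap P_2^0)\otimes_R V^{\otimes b}$, a copy of the degree-$3$ situation, which deform flatly precisely by the $n=3$ hypothesis. Distributivity ensures these reductions are mutually consistent, so that flatness of $P_1+\dots+P_{n-1}$ follows from flatness in lower degrees together with the $n=3$ case; feeding this back into the description of $A[n]$ gives the theorem.

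The hard part will be that last step: showing that a flat $\CC[[\hbar]]$-deformation of the generating subspaces of a distributive lattice of vector spaces again generates a lattice which is flat over $\CC[[\hbar]]$ — i.e. that forming sums and intersections continues to commute with reduction modulo $\hbar$ in all degrees — once this is known in the single nontrivial degree $3$. This is the delicate homological core of \cite{BGS}: there it is handled by packaging the obstructions into the cohomology of the deformed Koszul complex of $A_0$, whose $i$-th term sits in internal degree $i$ by Koszulity, and observing that the obstruction to flatness of $A[n]$ for $n\ge 4$ is assembled from, and hence vanishes together with, the degree-$3$ obstruction. Since these notes do not reproduce that computation, I would content myself with the reduction above and refer to \cite{BGS}, \cite{Dr}, \cite{PP}, and \cite{BG} for the remaining details.
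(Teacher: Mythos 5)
The paper does not actually prove Theorem \ref{KDP}; immediately after stating it the text says ``We will not give its proof,'' so there is no in-paper argument for you to be compared against. That said, your sketch uses the standard tools and the preliminary reductions are correct: $A[n]=(V^{\otimes_R n})[[\hbar]]/(P_1+\cdots+P_{n-1})$; topological freeness of $A$ is equivalent to $P_1+\cdots+P_{n-1}$ being a flat deformation of $P_1^0+\cdots+P_{n-1}^0$ for all $n$; the case $n\le 2$ is free; and via the exact sequence $0\to P_1\cap P_2\to P_1\oplus P_2\to P_1+P_2\to 0$, the $n=3$ hypothesis is equivalent to $P_1\cap P_2$ deforming flatly. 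Invoking Backelin's characterization of Koszulity through distributivity of the lattice generated by $\{P_i^0\}$ is also the right move, and is what all of \cite{BGS}, \cite{Dr}, \cite{BG}, \cite{PP} ultimately hinge on.

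The one place you should be careful is the middle of your inductive step, where you assert that distributivity ``reduces every such sum or intersection to two kinds of building blocks,'' namely decoupled pieces $V^{\otimes a}\otimes_R E_0\otimes_R V^{\otimes b}\otimes_R E_0\otimes_R V^{\otimes c}$ and adjacent pieces $V^{\otimes a}\otimes_R(P_1^0\cap P_2^0)\otimes_R V^{\otimes b}$. That is an oversimplification: a general element of the lattice (say $P_1^0\cap P_2^0\cap P_3^0$, or $(P_1^0+P_3^0)\cap P_2^0$) does not decompose this way, and the actual propagation mechanism is a lemma about deformations of distributive lattices — roughly, that if a finite collection of subspaces generates a distributive lattice, and a flat deformation of the generators has the property that each pair deforms ``compatibly'' (which in the Koszul setting is exactly the degree-$3$ flatness for the adjacent pairs, while the non-adjacent pairs are automatic), then the whole deformed lattice is again distributive and flat. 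This is nontrivial and is the real content of the references you cite. Since you acknowledge the gap explicitly and refer to those sources, the proposal is an honest and correctly framed sketch rather than a flawed proof, and it is in the same spirit as the paper's own treatment (which simply cites \cite{BGS}); but if you wanted to close the gap you would need the distributive-lattice deformation lemma, not just the two kinds of building blocks you describe.
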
 

{\bf Remark.} Note that $A[i]$ for $i<3$ are obviously topologically free. 

We will now apply this theorem to the proof of Theorem \ref{pbw1}. 

\subsection{Proof of Theorem \ref{pbw1}}

Let $\kappa: \wedge^2V\to \CC [G]$ be a linear $G$-equivariant map. 
We write $\kappa(x,y)=\sum_{g\in G}\kappa_g(x,y)g$, where $\kappa_g(x,y)\in \wedge^2V^*$.
To apply Theorem \ref{KDP}, let us homogenize our algebras. 
Namely, let $A_0=(\CC G\ltimes SV)\otimes \CC [u]$ (where $u$
has degree $1$). Also 
let $\hbar$ be a formal parameter, and 
consider the deformation $A=\sH_{\hbar u^2\kappa}$ of 
$A_0$. That is, $A$ is the quotient of 
$G\ltimes {\mathbf T}(V)[u][[\hbar]]$ by the relations $[x,y]=\hbar u^2\kappa(x,y)$. 
This is a deformation of the type considered in Theorem \ref{KDP}, and 
it is easy to see that its flatness 
in $\hbar$ is equivalent to Theorem \ref{pbw1}. 
Also, the algebra $A_0$ is Koszul, because the polynomial algebra 
$SV$ is a Koszul algebra. 
Thus by Theorem \ref{KDP}, it suffices to show that 
$A$ is flat in degree 3. 

The flatness condition in degree 3 is ``the Jacobi identity''
$$
[\kappa(x,y),z]+[\kappa(y,z),x]+[\kappa(z,x),y]=0,
$$
which must be satisfied in $\CC G\ltimes V$. In components, 
this equation transforms into the system of equations
$$
\kappa_g(x,y)(z-z^g)+\kappa_g(y,z)(x-x^g)+\kappa_g(z,x)(y-y^g)=0
$$
for every $g\in G$ (here $z^g$ denotes the result of the action of 
$g$ on $z$). 

This equation, in particular, implies that if $x,y,g$ are such
that $\kappa_g(x,y)\ne 0$ then for any $z\in V$ 
$z-z^g$ is a linear combination of $x-x^g$ and $y-y^g$. 
Thus $\kappa_g(x,y)$ is identically zero unless the 
rank of $(1-g)|_V$ is at most 2, i.e. 
$g=1$ or $g$ is a symplectic reflection. 

If $g=1$ then $\kappa_g(x,y)$ has to be $G$-invariant, 
so it must be of the form $t\omega(x,y)$, where $t\in \CC $. 

If $g$ is a symplectic reflection, then
$\kappa_g(x,y)$ must be zero for any $x$ such that $x-x^g=0$. 
Indeed, if for such an $x$ there had existed $y$ with 
$\kappa_g(x,y)\ne 0$ then $z-z^g$ for any $z$ would be a multiple of $y-y^g$, which is impossible since 
$\mathrm{Im}(1-g)|_V$ is 2-dimensional. 
This implies that $\kappa_g(x,y)=
2c_g\omega_g(x,y)$, and $c_g$ 
must be invariant. 

Thus we have shown that if $A$ is flat (in degree 3) then $\kappa$ must have the form 
given in Theorem \ref{pbw1}. Conversely, it is easy to see that 
if $\kappa$ does have such form, then the Jacobi identity holds. 
So Theorem \ref{pbw1} is proved.

\subsection{The spherical subalgebra of the symplectic reflection algebra}
{\label{sec:ssra}}

The properties of symplectic reflection algebras are similar to
the properties 
of rational Cherednik algebras we have studied before. 
The main difference is that we no longer have the Dunkl representation and localization results, 
so some proofs are based on different ideas and are more complicated. 

The spherical subalgebra of the symplectic reflection algebra is defined 
in the same way as in the Cherednik algebra case. 
Namely, let $\e=|G|^{-1}\sum_{g\in G}g$, and $\sB_{t,c}=\e \sH_{t,c}\e$. 

\begin{proposition} 
$\sB_{t,c}$ is commutative if and only if 
$t=0$. 
\end{proposition}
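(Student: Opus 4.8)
The plan is to prove the two implications separately. The easy direction is $t=0\Rightarrow \sB_{0,c}$ commutative: this is essentially the same argument as in the rational Cherednik case (Proposition \ref{prop-zerdiv}), reduced to an associated-graded computation. Indeed, by the PBW theorem (Theorem \ref{pbw1}) one has $\gr \sH_{0,c}=G\ltimes SV$, hence $\gr \sB_{0,c}=\e(G\ltimes SV)\e=(SV)^G$, which is commutative. A priori this only tells us that $\sB_{0,c}$ is commutative ``to leading order'', so the actual argument I would run is the standard filtration trick: given $a,b\in\sB_{0,c}$, the commutator $[a,b]$ has strictly smaller filtration degree than $\deg a+\deg b$ because its symbol vanishes in $\gr\sB_{0,c}$; but one needs to know $[a,b]=0$ outright. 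The cleanest way is to observe that $\hbar^{-1}[\,\cdot\,,\,\cdot\,]$ makes sense on the universal family and specializes at $t=0$ to a Poisson bracket on $\sB_{0,c}$ (compatible with the symplectic structure on $V/G$ away from the singular locus), so that $\sB_{0,c}$ is itself commutative as an honest algebra; alternatively, and more elementarily, I would use the Dunkl-type localization: $\e\sH_{0,c}\e$ embeds into $\e(G\ltimes \mathcal{O}(V)_{\text{loc}})\e$ for a suitable localization (this is the symplectic-reflection analog of Proposition \ref{prop-loc}, valid on the locus where $G$ acts freely), and the target is commutative since at $t=0$ the operators are just functions.

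For the converse, $\sB_{t,c}$ commutative $\Rightarrow t=0$, I would argue by contradiction: assume $t\neq 0$. Using the rescaling $\sH_{t,c}\cong\sH_{\lambda t,\lambda c}$, we may normalize $t=1$. Now $\sH_{1,0}[G,V]=G\ltimes\mathrm{Weyl}(V)$, so $\sB_{1,0}=\e(G\ltimes\mathrm{Weyl}(V))\e=\mathrm{Weyl}(V)^G$, which is visibly noncommutative (pick $x,y\in V$ with $\omega(x,y)\neq0$; then the $G$-symmetrizations of $xy$ and of $yx$ differ). The point is to propagate this noncommutativity to general $c$. For this I would again pass to the associated graded with respect to the filtration $\deg V=1$: by the PBW theorem $\gr\sB_{1,c}=(SV)^G$ for every $c$, and the induced Poisson bracket on $\gr\sB_{1,c}$ is the Poisson bracket coming from $\hbar^{-1}[\,\cdot\,,\,\cdot\,]$, which by inspection of the defining relation \eqref{mainrel} is, to leading order, exactly the $G$-invariant symplectic Poisson bracket on $(SV)^G$ (the $t\,\omega(x,y)$ term survives in top order, the $\omega_s(x,y)s$ terms drop in degree). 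Since the Poisson bracket on $(SV)^G$ induced from the symplectic form $\omega$ is not identically zero, there exist $a,b\in\sB_{1,c}$ with $\{a_0,b_0\}\neq0$ in $\gr\sB_{1,c}$; but then $[a,b]\neq0$, contradicting commutativity of $\sB_{1,c}$.

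The step I expect to be the main obstacle is making precise the claim that the leading term of the commutator on $\sB_{t,c}$ is the symplectic Poisson bracket independent of $c$ — i.e., that the $c$-dependent reflection terms in \eqref{mainrel} genuinely drop the filtration degree and cannot conspire to cancel the $t\omega$ term in top degree. This requires knowing that $\sB_{t,c}$ is a flat quantization of $(SV)^G$ over the base in $(t,c)$, which in turn rests on the PBW theorem (Theorem \ref{pbw1}); with that in hand the degree count is routine, but one must be careful that the symmetrizer $\e$ interacts well with the filtration, namely that $\gr(\e\sH\e)=\e(\gr\sH)\e$, which holds because $\e$ is a degree-$0$ idempotent. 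I would also need the (elementary but not completely trivial) fact that the Poisson bracket on $(SV)^G$ determined by a symplectically irreducible $(V,\omega)$ is nonzero, which follows since otherwise $(SV)^G$ would be Poisson-central in $SV$, forcing every $G$-invariant polynomial to be constant along all Hamiltonian flows — false already for the quadratic invariant $\sum$ (coordinates)$^2$-type elements one builds from an averaged bilinear form. Once these pieces are assembled, both implications follow.
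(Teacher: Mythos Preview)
Your proof has a genuine gap in the direction $t=0\Rightarrow\sB_{0,c}$ commutative. Both of your suggested arguments fail. The Dunkl-type localization does not exist for a general symplectic reflection algebra: as the paper stresses just before this proposition, ``we no longer have the Dunkl representation and localization results''---there is no ambient algebra of functions or differential operators into which $\sH_{0,c}$ embeds (this is exactly why the PBW theorem here needed the Koszul deformation principle rather than the explicit Dunkl computation). Your alternative, that $\hbar^{-1}[\,\cdot\,,\,\cdot\,]$ on the universal family specializes at $t=0$ to a Poisson bracket and hence $\sB_{0,c}$ is commutative, is circular: it presupposes that the commutator is divisible by $t$, which is the statement to be proved.

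There is also a gap in the converse. Your claim that ``the $\omega_s(x,y)s$ terms drop in degree'' is not correct: with $\deg V=1$, $\deg G=0$, both $t\omega(x,y)$ and $c_s\omega_s(x,y)s$ sit in degree $0$, so the $c$-terms contribute a priori to the degree~$-2$ Poisson bracket on $\gr\sB_{t,c}=(SV)^G$. The bracket is linear in $(t,c)$, and nothing you have written rules out that it vanishes along some hyperplane $t=\alpha\cdot c$ rather than along $t=0$.

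The paper supplies the two missing ideas. First, a lemma: $(SV)^G$ admits a \emph{unique} Poisson bracket of degree $-2$ up to scaling, and \emph{none} of degree $<-2$; the proof lifts bivectors from the smooth part of $V^*/G$ to $V^*$ using that the non-free locus has codimension $\ge 2$. The uniqueness says the induced bracket is $f(t,c)\Pi$ with $f$ linear; the nonexistence of lower-degree brackets gives the $t=0$ direction, since if $f(t,c)=0$ but $\sB_{t,c}$ were noncommutative one would produce a bracket of degree $<-2$. Second, to identify $f$ as a multiple of $t$, the paper uses a trace argument rather than inspection: whenever $\sB_{t,c}$ is commutative, the fibers of $\sH_{t,c}\e$ over $\Spec\sB_{t,c}$ give finite-dimensional representations of $\sH_{t,c}$, generically the regular representation of $G$; taking the trace of the defining relation $[x,y]=t\omega(x,y)-2\sum_s c_s\omega_s(x,y)s$ kills the reflection terms (as $\tr(s)=0$ in the regular representation) and forces $t\omega(x,y)|G|=0$, hence $t=0$.
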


\begin{proof}
Let $A$ be a $\ZZ_+$-filtered algebra. If $A$ is not commutative, 
then we can define a nonzero Poisson bracket on $\gr A$ in the following way. 
Let $m$ be the minimum of $\deg(a)+\deg(b)-\deg([a,b])$ (over $a,b\in A$ such that $[a,b]\ne 0$). 
Then for homogeneous elements $a_0,b_0\in A_0$ of degrees $p,q$, we can define 
$\lbrace{a_0,b_0\rbrace}$ to be the image in $A_0[p+q-m]$ of $[a,b]$, where $a,b$ are any lifts 
of $a_0,b_0$ to $A$. It is easy to check that $\{\cdot\, ,\cdot\}$ is a Poisson bracket 
on $A_0$ of degree $-m$. 

Let us now apply this construction to the filtered algebra $A=\sB_{t,c}$.
We have $\gr(A)=A_0=(SV)^G$. 

\begin{lemma}\label{poisbr} $A_0$ has a unique, up to scaling, Poisson bracket 
of degree $-2$, and no nonzero Poisson brackets of degrees $<-2$. 
\end{lemma}

\begin{proof}
A Poisson bracket on $(SV)^G$ is the same thing as a Poisson bracket on the variety $V^*/G$. 
On the smooth part $(V^*/G)_s$ of $V^*/G$, it is simply a bivector field, and we can lift it 
to a bivector field on the preimage $V_s^*$ of $(V^*/G)_s$ in $V^*$, which 
is the set of points in $V$ with trivial stabilizers. 
But the codimension on $V^*\setminus V_s^*$ in $V^*$ is 2 (as $V^*\setminus V_s^*$ is a union 
of symplectic subspaces), so the bivector on $V_s^*$ extends to a regular bivector on $V^*$. 
So if this bivector is homogeneous, it must have degree $\ge -2$, and if it has
degree $-2$ then it must be with constant coefficients, so being $G$-invariant, it 
is a multiple of $\omega$. The lemma is proved.   
\end{proof}

Now, for each $t,c$ we have a natural Poisson bracket on 
$A_0$ of degree $-2$, which depends linearly on $t,c$. 
So by the lemma, this bracket has to be of the form $f(t,c)\Pi$, 
where $\Pi$ is the unique up to scaling 
Poisson bracket of degree $-2$, and $f$ a homogeneous linear function. 
Thus the algebra $A=\sB_{t,c}$ is not commutative unless $f(t,c)=0$. 
On the other hand, if $f(t,c)=0$, and $\sB_{t,c}$ is not commutative, then, as we've shown, $A_0$ 
has a nonzero Poisson bracket of degree $<-2$. But By Lemma \ref{poisbr}, there is no such brackets. 
So $\sB_{t,c}$ must be commutative if $f(t,c)=0$.

It remains to show that $f(t,c)$ is in fact a nonzero multiple of $t$. 
First note that $f(1,0)\ne 0$, since $\sB_{1,0}$ is definitely noncommutative. 
Next, let us take a point $(t,c)$ such that $\sB_{t,c}$ is commutative. 
Look at the $\sH_{t,c}$-module $\sH_{t,c}\e$, which has a commuting action of $\sB_{t,c}$ from the right. 
Its associated graded is $SV$ as an $(\CC  G\ltimes SV,(SV)^G)$-bimodule, which implies that 
the generic fiber of $\sH_{t,c}\e$ as a $\sB_{t,c}$-module is the regular representation of $G$. 
So we have a family of finite dimensional representations of $\sH_{t,c}$ 
on the fibers of $\sH_{t,c}\e$, all realized in the regular representation 
of $G$. Computing the trace of the main commutation relation 
(\ref{mainrel}) of $\sH_{t,c}$ in this representation, we obtain
that $t=0$ (since $\tr(s)=0$ for any reflection $s$). The proposition is proved. 
\end{proof}

Note that $\sB_{0,c}$ has no zero divisors, since its associated graded algebra $(SV)^G$ does not. 
Thus, like in the Cherednik algebra case, we can define 
a Poisson variety $\sM_c$, the spectrum of $\sB_{0,c}$, called the Calogero-Moser space of $G,V$. 
Moreover, the algebra $\sB_c:=\sB_{\hbar,c}$ over $\CC [\hbar]$ is an algebraic quantization of $\sM_c$. 

\subsection{The center of the symplectic reflection algebra $\sH_{t,c}$}

Consider the bimodule $\sH_{t,c}\e$, which has a left action of 
$\sH_{t,c}$ and a right 
commuting action of $\sB_{t,c}$. It is obvious that 
$\End_{\sH_{t,c}}\sH_{t,c}\e=\sB_{t,c}$ (with opposite product). The following theorem shows that 
the bimodule $\sH_{t,c}\e$ has the double centralizer property
(i.e., $\End_{\sB_{t,c}}\sH_{t,c}\e=\sH_{t,c}$). 

Note that we have a natural map $\xi_{t,c}: \sH_{t,c}\to 
\End_{\sB_{t,c}}\sH_{t,c}\e$. 

\begin{theorem}\label{doubcen}
$\xi_{t,c}$ is an isomorphism for any $t,c$. 
\end{theorem}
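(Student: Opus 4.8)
The plan is to reduce to the graded (associated graded) setting and exploit a flatness/deformation argument in the parameters $t,c$, together with the commutative-algebra fact that the double centralizer property holds for the undeformed algebra $\sH_{0,0}=\CC G\ltimes SV$. First I would put the increasing filtration on $\sH_{t,c}$ with $\deg(V)=1$, $\deg(G)=0$; by the PBW theorem (Theorem \ref{pbw1}) we have $\gr \sH_{t,c}=\CC G\ltimes SV$ and $\gr \sB_{t,c}=(SV)^G$ and $\gr(\sH_{t,c}\e)=(\CC G\ltimes SV)\e=SV$ as a $(\CC G\ltimes SV,(SV)^G)$-bimodule. The map $\xi_{t,c}$ is a filtered map, so it suffices to show that the associated graded map $\gr(\xi_{t,c}):\CC G\ltimes SV\to \End_{(SV)^G}(SV)$ is an isomorphism; filtered maps inducing an isomorphism on associated graded are themselves isomorphisms (the filtrations here are bounded below and exhaustive, so no completeness issues arise).

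So the heart of the matter becomes the statement that $\CC G\ltimes SV \xrightarrow{\sim} \End_{(SV)^G}(SV)$, i.e. the classical double centralizer property for the inclusion of invariants $(SV)^G\subset SV$ with the $G$-action. The key geometric input is that $V^*/G$ is a normal variety (quotient of a smooth variety by a finite group) and that $SV$ is a finitely generated, torsion-free $(SV)^G$-module which is generically the regular representation of $G$ (since $G$ acts freely on a dense open subset of $V^*$, as the non-free locus is a proper closed subset, being a finite union of fixed subspaces of nontrivial elements — here one can use symplectic irreducibility or simply that $G$ is finite acting faithfully). One shows $\End_{(SV)^G}(SV)$ is torsion-free over $(SV)^G$, hence embeds in its generic fiber $\End_{\mathrm{Frac}(SV)^G}(\mathrm{Frac}(SV))=\mathrm{Frac}(SV)^G\otimes(\CC G)$ by the normal basis theorem / Galois theory for the extension $\mathrm{Frac}(SV)^G\subset \mathrm{Frac}(SV)$; and $\CC G\ltimes SV$ maps onto this generic fiber with the same generic rank $|G|^2/|G|=|G|$... wait, more carefully: both $\CC G\ltimes SV$ and $\End_{(SV)^G}(SV)$ are torsion-free $(SV)^G$-modules that become equal after tensoring with $\mathrm{Frac}(SV)^G$, and the left side is reflexive (it is free as a right $SV$-module, hence its restriction of scalars has the relevant depth condition), so the containment $\CC G\ltimes SV\subset \End_{(SV)^G}(SV)$ inside the common fraction module is an equality by a reflexivity/$S_2$ argument: both sides agree in codimension $1$ and the left side is integrally closed in the appropriate sense. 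I would phrase this via the standard fact (e.g. as in the theory of Cohen–Macaulay modules, which the paper invokes elsewhere) that $\End_{(SV)^G}(SV)\cong (SV\otimes_{(SV)^G}SV)^{**}$-type object and that $SV$ being a Cohen–Macaulay $(SV)^G$-module forces $\End$ to coincide with $\CC G\ltimes SV$.

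An alternative, cleaner route avoiding graded reduction: argue directly that $\xi_{t,c}$ is injective and surjective using the filtration only for injectivity, and for surjectivity use that $\sH_{t,c}\e$ is a faithful $\sH_{t,c}$-module together with the fact that $\sH_{t,c}$ has no finite-dimensional quotients of small dimension (one can detect elements of $\End_{\sB_{t,c}}(\sH_{t,c}\e)$ on generic fibers, where $\sH_{t,c}\e$ is the regular representation of $G$ by the trace computation already used in the proof of the preceding Proposition). On a generic point $\chi$ of $\Spec\sB_{t,c}$ the fiber $(\sH_{t,c}\e)_\chi$ is the regular representation of $G$, and $\sB_{t,c}$ acts by scalars, so $\End_{\sB_{t,c}}$ acts on this fiber through $\End_G(\CC G)=\CC G$ (opposite), matching the image of $\xi_{t,c}$; since the family is torsion-free this pins down the endomorphism globally. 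The main obstacle I expect is the codimension-$1$ / reflexivity bookkeeping needed to pass from ``agreement on a dense open set plus agreement in codimension one'' to ``equality'': one must verify that $\End_{\sB_{t,c}}(\sH_{t,c}\e)$ satisfies Serre's condition $S_2$ over $\sB_{t,c}$ (equivalently that $\sH_{t,c}\e$ is a reflexive, indeed Cohen–Macaulay, $\sB_{t,c}$-module), and that the non-free locus of the $G$-action on $V$ really has codimension $\ge 2$ in the symplectic case — this uses that $G\subset \mathrm{Sp}(V)$ so every fixed subspace of a nontrivial element is a proper symplectic (hence even-codimensional, in particular $\ge 2$) subspace, which is exactly where the symplectic hypothesis enters. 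Once those two points are in hand, everything else is the standard filtered-isomorphism and double-centralizer formalism.
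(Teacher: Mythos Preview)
Your approach is essentially the paper's: reduce via the filtration to the undeformed case $(t,c)=(0,0)$, prove $\xi_{0,0}:\CC G\ltimes SV\to\End_{(SV)^G}(SV)$ is an isomorphism using that the non-free locus of $G$ on $V^*$ has codimension $\ge 2$ (symplectic fixed subspaces), and transfer back.

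One point you gloss over deserves care. The target of $\gr(\xi_{t,c})$ is $\gr\bigl(\End_{\sB_{t,c}}(\sH_{t,c}\e)\bigr)$, not $\End_{(SV)^G}(SV)$; taking $\gr$ does not automatically commute with $\End$. There is only a natural \emph{injective} map $\gr\End_{\sB_{t,c}}(\sH_{t,c}\e)\hookrightarrow \End_{\gr\sB_{t,c}}(\gr\sH_{t,c}\e)=\End_{(SV)^G}(SV)$. The paper runs the argument the other way: the composite
\[
\CC G\ltimes SV \xrightarrow{\ \gr\xi_{t,c}\ } \gr\End_{\sB_{t,c}}(\sH_{t,c}\e)\hookrightarrow \End_{(SV)^G}(SV)
\]
equals $\xi_{0,0}$; once $\xi_{0,0}$ is known to be an isomorphism, both arrows in the composite are forced to be isomorphisms, and hence so is $\xi_{t,c}$. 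Your phrasing ``it suffices to show that $\gr(\xi_{t,c}):\CC G\ltimes SV\to\End_{(SV)^G}(SV)$ is an isomorphism'' silently presupposes this.

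For the $(0,0)$ case itself, your reflexivity/$S_2$ machinery is heavier than what the paper does. The paper argues directly: any $a\in\End_{(SV)^G}(SV)$ lies, after passing to the fraction field, in $G\ltimes\CC(V^*)$; by the elementary lemma that $G\ltimes\cO_X\cong\End_{\cO_X^G}\cO_X$ whenever $G$ acts freely on $X$, the element $a$ can have poles only along the non-free locus; since that locus has codimension $\ge 2$, there are no poles and $a\in G\ltimes SV$. This is exactly the Hartogs-type extension that your $S_2$ condition encodes, but stated without the commutative-algebra apparatus.
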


\begin{proof}
The complete proof is given \cite{EG}.
We will give the main ideas of the proof skipping straightforward technical details.
The first step is to show that the result is true in the graded case, $(t,c)=(0,0)$.
To do so, note the following easy lemma:

\begin{lemma}{\label{lem:iso}}
If $X$ is an affine complex algebraic variety with algebra 
of functions $\cO_X$
and $G$ a finite group acting freely on $X$ then 
the natural map $\xi_X: G\ltimes \cO_X\to 
\End_{\cO_X^G}\cO_X$ is an isomorphism.
\end{lemma}

Therefore, the map $\xi_{0,0}: G\ltimes SV\to \End_{(SV)^G}(SV)$ 
is injective, and moreover becomes an isomorphism after localization 
to the field of quotients $\CC (V^*)^G$. To show it's surjective, take $a\in \End_{(SV)^G}(SV)$. 
There exists $a'\in G\ltimes \CC (V^*)$ which maps to $a$. Moreover, by Lemma \ref{lem:iso},
$a'$ can have poles only at fixed points of $G$ on $V^*$. 
But these fixed points form a subset of codimension $\ge 2$, so there can be no poles and we are done
in the case $(t,c)=(0,0)$. 

Now note that the algebra $\End_{\sB_{t,c}}\sH_{t,c}\e$
has an increasing integer filtration (bounded below) induced by the filtration on $\sH_{t,c}$. 
This is due to the fact that $\sH_{t,c}\e$ is a finitely generated 
$\e \sH_{t,c}\e$-module 
(since it is true in the associated graded situation, by Hilbert's 
theorem about invariants). Also, the natural map
$\gr\End_{\sB_{t,c}}\sH_{t,c}\e\to 
\End_{\gr\sB_{t,c}}\gr\sH_{t,c}\e$
is clearly injective. Therefore, our result in the case $(t,c)=(0,0)$ implies that
this map is actually an isomorphism (as so is its composition with the associated graded of $\xi_{t,c}$). 
Identifying the two algebras by this isomorphism, 
we find that $\gr(\xi_{t,c})=\xi_{0,0}$. Since $\xi_{0,0}$ is an isomorphism, 
$\xi_{t,c}$ is an isomorphism for all $t,c$, as desired.
\footnote{Here we use the fact that the filtration is bounded from below. 
In the case of an unbounded filtration, it is possible for a map not to be an isomorphism 
if its associated graded is an isomorphism. An example of this is the operator 
of multiplication by $1+t^{-1}$ in the space of Laurent polynomials in $t$, filtered 
by degree.}   
\end{proof} 

Denote by $\sZ_{t,c}$ the center of the symplectic reflection 
algebra $\sH_{t,c}$. We have the following theorem.
\begin{theorem}
If $t\ne 0$, the center of $\sH_{t,c}$ is trivial.
If $t=0$, we have $\gr \sZ_{0,c}=\sZ_{0,0}$.
In particular, $\sH_{0,c}$ is finitely generated over its center.
\end{theorem}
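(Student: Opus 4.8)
The plan is to treat the two cases $t\neq 0$ and $t=0$ separately, using in both cases the double centralizer theorem (Theorem \ref{doubcen}) together with the relation between $\sH_{t,c}$ and its spherical subalgebra $\sB_{t,c}$. The key structural input is that $\sH_{t,c}\e$ is a faithful $(\sH_{t,c},\sB_{t,c})$-bimodule, finitely generated on the left over $\sH_{t,c}$ and on the right over $\sB_{t,c}$, with $\End_{\sH_{t,c}}(\sH_{t,c}\e)=\sB_{t,c}^{\op}$ and $\End_{\sB_{t,c}}(\sH_{t,c}\e)=\sH_{t,c}$. Any central element $z\in \sZ_{t,c}$ acts on $\sH_{t,c}\e$ by an endomorphism commuting with both actions, hence $\sZ_{t,c}$ embeds into the center of $\sB_{t,c}$; conversely, since $\e\in \sH_{t,c}$, the map $z\mapsto z\e$ identifies $\sZ_{t,c}$ with the set of $w\in Z(\sB_{t,c})$ for which right multiplication by $w$ on $\sH_{t,c}\e$ is also left $\sH_{t,c}$-linear — and by the double centralizer property this is automatic. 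So the first step is to establish a canonical isomorphism $\sZ_{t,c}\cong Z(\sB_{t,c})$ compatible with the filtrations, reducing everything to understanding the center of the spherical subalgebra.

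For $t\neq 0$, I would use Proposition \ref{prop-zerdiv}'s noncommutativity counterpart: by the Proposition preceding Theorem \ref{doubcen}, $\sB_{t,c}$ is noncommutative precisely because $f(t,c)$ is a nonzero multiple of $t$, so the degree-$(-2)$ Poisson bracket on $\gr \sB_{t,c}=(SV)^G$ induced by the commutator is nonzero (a nonzero multiple of the bracket $\Pi$ coming from $\omega$). Now if $w\in Z(\sB_{t,c})$ with leading symbol $w_0\in (SV)^G$ of degree $d$, then $\{w_0,(SV)^G\}=0$ in the Poisson algebra $(SV)^G$ with bracket $\Pi$. But $\Pi$ is the restriction of the symplectic Poisson bracket on $SV$ coming from $\omega$, which is nondegenerate, so an element Poisson-central in $(SV)^G$ must be $G$-invariant and Poisson-central in $SV$ itself (arguing on the smooth locus $V_s$ where $G$ acts freely, as in the proof of Lemma \ref{poisbr}); the only such elements are the scalars. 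Hence $w_0$ is constant, so $w\in \CC$, giving $Z(\sB_{t,c})=\CC$ and therefore $\sZ_{t,c}=\CC$.

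For $t=0$, the algebra $\sB_{0,c}$ is already commutative (by the cited Proposition), so $Z(\sB_{0,c})=\sB_{0,c}$, and the filtered isomorphism $\sZ_{0,c}\cong \sB_{0,c}$ from the first step gives $\gr \sZ_{0,c}\cong \gr \sB_{0,c}$. Since $\sB_{0,c}$ is a filtered deformation of $(SV)^G=\sB_{0,0}$ with $\gr \sB_{0,c}=(SV)^G$ (this is the analog of Proposition \ref{prop-zerdiv}, using the PBW theorem Theorem \ref{pbw1}), we obtain $\gr \sZ_{0,c}=\sZ_{0,0}=(SV)^G$. Finally, $\sH_{0,c}$ is finitely generated over $\sZ_{0,c}$: by PBW, $\gr \sH_{0,c}=\CC G\ltimes SV$ is a finitely generated module over its center's image $(SV)^G=\gr\sZ_{0,c}$ (Hilbert's theorem on invariants and the finiteness of $|G|$), and a standard filtered-to-graded argument lifts this to $\sH_{0,c}$ being module-finite over $\sZ_{0,c}$.

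\textbf{Main obstacle.} The delicate point is the first step — producing the canonical filtered isomorphism $\sZ_{t,c}\cong Z(\sB_{t,c})$ and verifying it respects associated graded structures. The embedding $\sZ_{t,c}\hookrightarrow Z(\sB_{t,c})$ via $z\mapsto \e z\e$ is easy, but surjectivity requires the double centralizer theorem precisely in the form that an endomorphism of $\sH_{t,c}\e$ commuting with the right $\sB_{t,c}$-action is left multiplication by an element of $\sH_{t,c}$, and one must check that a \emph{central} such element commutes with all of $\sH_{t,c}$, not merely acts as scalar on the bimodule. This is where one genuinely uses that $\sH_{t,c}\e$ is a \emph{faithful} $\sH_{t,c}$-module, which follows from $\sH_{t,c}$ having no nonzero ideal annihilating $\e$ (itself a consequence of PBW plus the structure of $\gr$). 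The Poisson-centrality argument in the $t\neq 0$ case is the other place requiring care: one must ensure the leading-symbol map sends central elements to Poisson-central ones, which needs the bracket to be exactly degree $-2$ and the minimum in the construction of Lemma \ref{poisbr}'s bracket to be attained uniformly — a point the cited Proposition's proof already handles.
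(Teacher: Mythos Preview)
Your proposal is correct and follows essentially the same route as the paper: both arguments use the map $z\mapsto z\e$ to embed $\sZ_{t,c}$ into $Z(\sB_{t,c})$, reduce the $t\neq 0$ case to the triviality of the Poisson center of $(SV)^G$, and for $t=0$ construct the inverse via the double centralizer theorem (Theorem \ref{doubcen}) plus faithfulness of $\sH_{0,c}\e$. Your ``main obstacle'' paragraph in fact contains exactly the paper's argument for surjectivity; the earlier phrasing about ``right multiplication by $w$ being left $\sH_{t,c}$-linear'' is garbled (right multiplication is always left-linear), but you correct this yourself when you spell out the real point: right multiplication by $w\in Z(\sB_{0,c})$ is a right-$\sB_{0,c}$-endomorphism, hence by double centralizer equals left multiplication by some $h\in\sH_{0,c}$, and then faithfulness forces $h$ central.
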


\begin{proof}
The $t\neq 0$ case was proved by Brown and Gordon \cite{BGo}
as follows.
If $t\neq 0$, we have $\gr \sZ_{t,c}\subseteq \sZ_{0,0}=(SV)^{G}$.
Also, we have a map 
$$
\tau_{t,c}:\sZ_{t,c}\to \sB_{t,c}=\e \sH_{t,c}\e,
\quad z\mapsto z\e=\e z\e.$$
The map $\tau_{t, c}$ is injective since $\gr (\tau_{t,c})$ is injective.
In particular, the image of $\gr(\tau_{t,c})$ is contained in $Z(\sB_{t,c})$, 
the center of $\sB_{t,c}$. 
Thus it is enough to show that $Z(\sB_{t,c})$ is trivial.
To show this, note that $\gr Z(\sB_{t,c})$ is contained in the Poisson center 
of $\sB_{0,0}$ which is trivial.
So $Z(\sB_{t,c})$ is trivial.

Now suppose $t=0$. 
We need to show that 
$
\gr(\tau_{0,c}): \gr(Z_{0,c})\to Z_{0,0}
$
is an isomorphism.
It suffices to show that $\tau_{0,c}$ is an isomorphism.
To show this, we construct $\tau^{-1}_{0,c}:\sB_{0,c}\to \sZ_{0,c}$
as follows.

For any $b\in \sB_{0,c}$, since $\sB_{0,c}$ is commutative,
we have an element $\tilde{b}\in \End_{\sB_{0,c}}(\sH_{0,c}\e)$
which is defined as the right multiplication by $b$.
From Theorem \ref{doubcen}, 
$\tilde{b}\in \sH_{0,c}$.
Moreover, $\tilde{b}\in \sZ_{0,c}$ since it commutes with 
$\sH_{0,c}$ as a linear operator on the faithful 
$\sH_{0,c}$-module $\sH_{0,c}\e$.
So $\tilde{b}\in \sZ_{0,c}$.
It is easy to see that $\tilde{b}\e=b$.
So we can set $\tilde{b}=\tau_{0,c}^{-1}(b)$
which defines the inverse map to $\tau_{0,c}$.
\end{proof}

\subsection{A review of deformation theory}

Now we would like to explain that symplectic reflection algebras are 
the most general deformations of algebras of the from $G\ltimes {\mathrm Weyl}(V)$. 
Before we do so, we give a brief review of classical deformation theory of associative algebras. 

\subsubsection{Formal deformations of associative algebras}
Let $A_0$ be an associative algebra with unit over $\CC$.
Denote by $\mu_{0}$ the multiplication in $A_{0}$. 

\begin{definition} A (flat) formal
$n$-parameter deformation of $A_0$ is an algebra $A$ 
over $\CC[[\hbar]]=\CC[[\hbar_1, \ldots, \hbar_n]]$ which is topologically free as a $\CC[[\hbar]]$-module, together
with an algebra isomorphism $\eta_0: A/{\mathfrak{m}} A\to A_0$
where $\mathfrak{m} = \langle\hbar_1,\ldots,\hbar_n\rangle$ 
is the maximal ideal in $\CC[[\hbar]]$. 
\end{definition}

When no confusion is possible, we will call $A$ a
deformation of $A_0$ (omitting ``formal''). 

Let us restrict ourselves to one-parameter deformations with
parameter $\hbar$. Let us choose an identification $\eta: A\to A_0[[\hbar]]$ as 
$\CC[[\hbar]]$-modules, 
such that $\eta=\eta_0$ modulo $\hbar$. Then the
product in $A$ is completely determined by the product of
elements of $A_0$, which has the form of a ``star-product''
$$
\mu(a,b)=a\ast b=\mu_0(a,b)+\hbar \mu_1(a,b)+\hbar^2 \mu_2(a,b)+\cdots,
$$
where $\mu_i: A_0\otimes A_0\to A_0$ are linear maps, and
$\mu_0(a,b)=ab$. 

\subsubsection{Hochschild cohomology}
The main tool in deformation theory of associative algebras is Hochschild cohomology.
Let us recall its definition. 

Let $A$ be an associative algebra. 
Let $M$ be a bimodule over $A$.  
A Hochschild $n$-cochain of $A$ with coefficients in $M$ 
is a linear map $A^{\otimes n}\to M$. 
The space of such cochains is denoted by $C^n(A,M)$. 
The differential $\d:C^n(A,M)\to C^{n+1}(A,M)$ is defined 
by the formula 
\begin{eqnarray*}
\d f(a_1,\ldots,a_{n+1})
&=&f(a_1,\ldots,a_n)a_{n+1}-f(a_1,\ldots,a_na_{n+1})
+f(a_1,\ldots,a_{n-1}a_n,a_{n+1})\\
&&\quad-\cdots+(-1)^nf(a_1a_2,\ldots,a_{n+1})+(-1)^{n+1}a_1f(a_2,\ldots,a_{n+1}).
\end{eqnarray*}
It is easy to show that $\d^2=0$. 

\begin{definition}
The Hochschild cohomology $\hoc^\bullet(A,M)$ is defined  
to be the cohomology of the complex 
$(C^\bullet(A,M), \d)$. 
\end{definition}

\begin{proposition}
One has a natural isomorphism 
$$
\hoc^i(A,M)\to \mathrm{Ext}^i_{A\mathrm{-bimod}}(A,M),
$$ 
where $A\mathrm{-bimod}$ denotes the category of $A$-bimodules. 
\end{proposition}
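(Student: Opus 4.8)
The plan is to construct explicit mutually inverse chain maps between the Hochschild cochain complex $C^\bullet(A,M)$ and the standard bar complex computing $\Ext^\bullet_{A\text{-bimod}}(A,M)$. First I would recall the bar resolution of $A$ as an $A$-bimodule: the complex $\cdots \to A\otimes A\otimes A\otimes A \to A\otimes A\otimes A \to A\otimes A \to 0$ with $A\otimes A$ in degree $0$, where $A^{\otimes(n+2)}$ sits in homological degree $n$, and the differential is the usual alternating sum of multiplications of adjacent factors, the outer two factors being the free bimodule generators. This complex is a resolution of $A$ because it is contractible as a complex of left $A$-modules (the contracting homotopy is $a_0\otimes\cdots\otimes a_n \mapsto 1\otimes a_0\otimes\cdots\otimes a_n$), and each term $A\otimes A^{\otimes n}\otimes A$ is free as an $A$-bimodule on $A^{\otimes n}$.

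Next I would apply the functor $\Hom_{A\text{-bimod}}(-,M)$ to this resolution. Using the free-bimodule adjunction $\Hom_{A\text{-bimod}}(A\otimes A^{\otimes n}\otimes A, M)\cong \Hom_\CC(A^{\otimes n},M)=C^n(A,M)$, one checks that the induced differential on $C^\bullet(A,M)$ is precisely the Hochschild differential $\d$ written in the statement (the signs and the terms $f(a_1,\dots,a_n)a_{n+1}$ and $a_1 f(a_2,\dots,a_{n+1})$ come exactly from the two ``boundary'' faces of the bar differential, and the interior terms from the face maps that multiply adjacent $a_i$'s). Since $\Ext^i_{A\text{-bimod}}(A,M)$ can be computed from any projective bimodule resolution of $A$, and the bar resolution is such a resolution, the cohomology of $(C^\bullet(A,M),\d)$ computes $\Ext^i_{A\text{-bimod}}(A,M)$; this yields the asserted natural isomorphism, naturality in $M$ being clear since every construction is functorial in $M$.

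The one genuine point requiring care — the ``main obstacle,'' though it is standard — is verifying that the bar complex really is a complex of \emph{projective} (indeed free) $A$-bimodules and that the contracting homotopy is well defined and $A$-linear on the appropriate side; equivalently, one must be slightly careful about whether $A$ has a unit (it does here, as stated) so that the bar resolution is available in its usual form and so that $A\otimes A$ is a projective generator in the relevant sense. Once that bookkeeping is done, the identification of the differentials is a direct if slightly tedious sign check, which I would not reproduce in full. I would remark that this is the classical result of Hochschild, and for the purposes of the paper it suffices to cite it, but the sketch above is the argument one would give.
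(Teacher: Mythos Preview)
Your proposal is correct and is exactly the approach taken in the paper: one writes down the bar resolution of $A$ as a complex of free $A$-bimodules, applies $\Hom_{A\text{-bimod}}(-,M)$, and identifies the result with the Hochschild cochain complex. The paper states this in one line without spelling out the contracting homotopy or the sign check, but your elaboration is the standard unpacking of that line.
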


\begin{proof}
The proof is obtained immediately by considering the bar resolution 
of the bimodule $A$:
$$
\cdots\to A\otimes A\otimes A\to A\otimes A\to A,
$$
where the bimodule structure on $A^{\otimes n}$ is given by 
$$
b(a_1\otimes a_2\otimes \cdots\otimes a_n)c=ba_1\otimes
a_2\otimes\cdots\otimes a_nc,
$$
and the map $\partial_n: A^{\otimes n}\to A^{\otimes {n-1}}$ is given by the formula 
$$
\partial_n(a_1\otimes a_2\otimes...\otimes a_n)=a_1a_2\otimes\cdots
\otimes a_n-\cdots+(-1)^{n}a_1\otimes\cdots\otimes a_{n-1}a_n.
$$ 
\end{proof}

Note that we have the associative 
Yoneda product 
$$
\hoc^i(A,M)\otimes \hoc^j(A,N)\to \hoc^{i+j}(A,M\otimes_A N),
$$ 
induced by tensoring of cochains.   

If $M=A$, the algebra itself, then we will 
denote $\hoc^\bullet(A,M)$ by $\hoc^\bullet(A)$. 
For example, $\hoc^0(A)$ is the center of $A$, and 
$\hoc^1(A)$ is the quotient of the Lie algebra of derivations of $A$ by inner derivations.  
The Yoneda product induces a graded algebra 
structure on $\hoc^\bullet(A)$; it can be shown that 
this algebra is supercommutative. 

\subsubsection{Hochschild cohomology and deformations}

Let $A_0$ be an algebra, and let us look 
for 1-parameter deformations $A=A_0[[\hbar]]$ of $A_0$. 
Thus, we look for such series $\mu$ which satisfy the associativity 
equation, modulo the automorphisms of the 
$\CC[[\hbar]]$-module $A_0[[\hbar]]$
which are the identity modulo $\hbar$. 
\footnote{Note that we don't have to worry about the existence of a unit in 
$A$ since a formal deformation of an algebra with unit always has a 
unit.}  

The associativity equation 
$\mu\circ(\mu\otimes \Id)=\mu\circ (\Id\otimes \mu)$ 
reduces to a hierarchy of linear equations: 
\begin{equation*}
\sum_{s=0}^N \mu_s(\mu_{N-s}(a,b),c)=
\sum_{s=0}^N \mu_s(a,\mu_{N-s}(b,c)).
\end{equation*}
(These equations are linear in $\mu_N$ if $\mu_i$, $i<N$, are known).

To study these equations, one can use Hochschild cohomology. 
Namely, we have the following standard facts
(due to Gerstenhaber, \cite{Ge}), which can be
checked directly.  
\begin{enumerate}
\item The linear equation for $\mu_1$ says that $\mu_1$ is a Hochschild 
$2$-cocycle. Thus algebra structures on $A_0[\hbar]/\hbar^2$ 
deforming $\mu_0$ are parametrized by the space $Z^2(A_0)$ of 
Hochschild $2$-cocycles of $A_0$ with values in $M=A_0$.
\item If $\mu_1,\mu_1'$ are two $2$-cocycles such that 
$\mu_1-\mu_1'$ is a coboundary, then the algebra structures 
on $A_0[\hbar]/\hbar^2$ corresponding to $\mu_1$ and $\mu_1'$ are 
equivalent by a transformation of $A_0[\hbar]/\hbar^2$ 
that equals the identity 
modulo $\hbar$, and vice versa. Thus equivalence classes 
of multiplications on $A_0[\hbar]/\hbar^2$ deforming $\mu_0$
are parametrized by the cohomology $\hoc^2(A_0)$. 
\item The linear equation for $\mu_N$ says that $\d\mu_N$ 
is a certain quadratic expression $b_N$ in 
$\mu_1,\ldots,\mu_{N-1}$. 
This expression is always a Hochschild $3$-cocycle, and the equation is 
solvable if and only if it is a coboundary. Thus the cohomology class 
of $b_N$ in $\hoc^3(A_0)$ is the only obstruction to solving this equation. 
\end{enumerate}

\subsubsection{Universal deformation}

In particular, if $\hoc^3(A_0)=0$ then the equation for $\mu_n$ can be solved 
for all $n$, and for each $n$ the freedom in choosing the solution, 
modulo equivalences, is the space $H=\hoc^2(A_0)$. Thus there exists 
an algebra structure over $\CC[[H]]$ on the space $A_u:=A_0[[H]]$ 
of formal functions from $H$ to $A_0$, $a,b\mapsto \mu_u(a,b)\in A_0[[H]]$,
($a,b\in A_0$), such that $\mu_u(a,b)(0)=ab\in A_0$, and 
every 1-parameter flat formal deformation $A$ of $A_0$ is given 
by the formula $\mu(a,b)(\hbar)=\mu_u(a,b)(\gamma(\hbar))$
for a unique formal series $\gamma\in \hbar H[[\hbar]]$,
with the property that $\gamma'(0)$ is the cohomology class 
of the cocycle $\mu_1$. 

Such an algebra $A_u$ is called a {\em universal deformation} of
$A_0$. It is unique up to an isomorphism 
(which may involve an automorphism of $\CC[[H]]$). 
\footnote{In spite of the universal property of $A_u$, 
it may happen that there is an isomorphism between 
the algebras $A^1$ and $A^2$ corresponding to different paths
$\gamma_1(\hbar),\gamma_2(\hbar)$ (of course, reducing to a
nontrivial automorphism of $A_0$ modulo $\hbar$). For this
reason, sometimes $A_u$ is called a {\it semiuniversal}, rather
than universal, deformation of $A_0$.}

Thus in the case $\hoc^3(A_0)=0$, deformation theory allows us 
to completely classify 1-parameter flat formal deformations of 
$A_0$. In particular, we see that the 
``moduli space'' parametrizing formal deformations of $A_0$
is a smooth space -- it is the formal neighborhood of zero in $H$.

If $\hoc^3(A_0)$ is nonzero then in general the universal 
deformation parametrized by $H$ does not exist, as there are obstructions
to deformations. In this case, the moduli space of deformations 
will be a closed subscheme of the formal neighborhood of zero in $H$, which is often singular. 
On the other hand, even when $\hoc^3(A_0)\ne 0$, the universal 
deformation parametrized by (the formal neighborhood of zero in) $H$ may exist (although 
its existence may be more difficult to prove than in the vanishing case).
In this case one says that the deformations of $A_0$ are unobstructed
(since all obstructions vanish even though the space of obstructions doesn't).

\subsection{Deformation-theoretic interpretation of symplectic reflection algebras}

Let $V$ be a symplectic vector space (over $\CC $) and
${\mathrm Weyl}(V)$ the Weyl algebra of $V$.
Let $G$ be a finite group acting symplectically on $V$.
Then from the definition, we have 
$$A_{0}:=\sH_{1,0}[G,V]=G\ltimes {\mathrm Weyl}(V).$$
Let us calculate the Hochschild 
cohomology of this algebra.

\begin{theorem}[Alev, Farinati, Lambre, Solotar, \cite{AFLS}] \label{afls} 
The cohomology space \linebreak $\hoc^i(G\ltimes {\mathrm Weyl}(V))$ is 
naturally isomorphic to the space of 
conjugation invariant functions on the set $S_i$ 
of elements $g\in G$ such that $\rank(1-g)|_V=i$. 
\end{theorem}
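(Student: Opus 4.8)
The plan is to compute the Hochschild cohomology $\hoc^\bullet(G\ltimes \mathrm{Weyl}(V))$ in two moves: first reduce the $G$-smashed computation to the cohomology of $\mathrm{Weyl}(V)$ with coefficients in the bimodules $\mathrm{Weyl}(V)_g$ (the algebra $\mathrm{Weyl}(V)$ with the right action twisted by $g\in G$), using the decomposition of the category of $(G\ltimes \mathrm{Weyl}(V))$-bimodules; second, compute each $\hoc^\bullet(\mathrm{Weyl}(V),\mathrm{Weyl}(V)_g)$ by a Koszul-type resolution and collect the pieces.

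First I would recall the general fact that for $A_0=G\ltimes B$ with $G$ finite acting on $B$ (and $|G|$ invertible), one has
$$
\hoc^\bullet(G\ltimes B)\cong\Bigl(\bigoplus_{g\in G}\hoc^\bullet(B,B_g)\Bigr)^{G},
$$
where $B_g$ is $B$ as a $B$-bimodule with the right action twisted by $g$, and $G$ acts on the sum by simultaneously acting on each $\hoc^\bullet(B,B_g)$ and permuting the summands by conjugation $g\mapsto hgh^{-1}$. This is a standard consequence of the fact that $(G\ltimes B)$-bimodules are the same as $G$-equivariant objects in $B$-bimodules, plus the averaging idempotent $\e$; I would cite it as well-known (it goes back to work on Hochschild cohomology of crossed products). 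So the invariants decompose as a sum over conjugacy classes $[g]$ of $\bigl(\hoc^\bullet(B,B_g)\bigr)^{Z(g)}$, where $Z(g)$ is the centralizer.

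Next I would compute $\hoc^\bullet(\mathrm{Weyl}(V),\mathrm{Weyl}(V)_g)$ for a fixed symplectic automorphism $g$ of $V$. Decompose $V=V^g\oplus V_g$ where $V_g=\mathrm{Im}(1-g)$; both are symplectic subspaces, $\mathrm{Weyl}(V)=\mathrm{Weyl}(V^g)\otimes\mathrm{Weyl}(V_g)$, and the twisting by $g$ is trivial on the first factor. By a Künneth argument this reduces to $\hoc^\bullet(\mathrm{Weyl}(V^g))\otimes\hoc^\bullet(\mathrm{Weyl}(V_g),\mathrm{Weyl}(V_g)_g)$. Now $\mathrm{Weyl}(W)$ for a symplectic $W$ is known to have $\hoc^\bullet$ concentrated in degree $0$, equal to $\CC$ (it is a "symplectic" deformation of a polynomial algebra, so its Hochschild cohomology is that of a point); this handles the $V^g$ factor. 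For the $V_g$ factor, one uses the Koszul bimodule resolution of $\mathrm{Weyl}(V_g)$ (length $\dim V_g$) and computes that tensoring with $\mathrm{Weyl}(V_g)_g$ and taking cohomology gives $\CC$ placed in the single degree $\dim V_g=\rank(1-g)|_V=i$ — the point being that $(1-g)$ is invertible on $V_g$, so the Koszul differentials become isomorphisms except at the top, leaving a one-dimensional cohomology in top degree. Assembling: $\hoc^k(\mathrm{Weyl}(V),\mathrm{Weyl}(V)_g)$ is $\CC$ if $k=\rank(1-g)|_V$ and $0$ otherwise.

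Finally I would take $Z(g)$-invariants and sum over conjugacy classes: $\hoc^i(G\ltimes\mathrm{Weyl}(V))=\bigoplus_{[g]:\,\rank(1-g)=i}\bigl(\CC\bigr)^{Z(g)}=\bigoplus_{[g]:\,\rank(1-g)=i}\CC$, which is exactly the space of conjugation-invariant functions on $S_i=\{g:\rank(1-g)|_V=i\}$ (one must check that $Z(g)$ acts trivially on the one-dimensional space $\hoc^i(\mathrm{Weyl}(V),\mathrm{Weyl}(V)_g)$, which follows since a one-dimensional representation of a finite group coming from an orientation/determinant on $V_g$ is trivial because $g$ acts symplectically, hence with determinant $1$, on $V_g$ — or more simply because the generator can be chosen canonically up to positive scalar). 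The main obstacle I expect is the careful bookkeeping in the twisted Koszul computation for the $V_g$ factor — verifying that the twisted Koszul complex is acyclic except in top degree and pinning down the $Z(g)$-action on the surviving line; the reduction via smash products and the Künneth splitting are comparatively formal.
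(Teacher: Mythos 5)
Your proposal is correct and follows essentially the same route as the paper: the reduction $\hoc^\bullet(G\ltimes B)\cong\bigl(\bigoplus_g\hoc^\bullet(B,Bg)\bigr)^G$, a K\"unneth splitting, a (twisted) Koszul computation for the Weyl algebra, and the observation that $Z(g)$ acts trivially on the surviving line because it acts symplectically (hence with determinant $1$) on $\mathrm{Im}(1-g)$. The only organizational difference is that the paper pushes the K\"unneth reduction all the way to $2\times 2$ symplectic blocks, computing $\hoc^\bullet(\mathbb A_1,\mathbb A_1 g)$ for $g=\diag(\lambda,\lambda^{-1})$ by a filtration and Euler-characteristic argument, whereas you stop at the split $V=V^g\oplus V_g$ and do the twisted Koszul computation on $V_g$ directly; either route makes precise your expected ``main obstacle,'' since the claim that the twisted Koszul differential is acyclic except in top degree is exactly what the paper's rank-one filtration argument establishes.
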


\begin{corollary}\label{h2}
The odd cohomology of $G\ltimes {\mathrm Weyl}(V)$ vanishes, 
and $\hoc^2(G\ltimes {\mathrm Weyl}(V))$ is the space $\CC[\cS]^G$ of conjugation invariant 
functions on the set of symplectic reflections. 
In particular, there exists a universal deformation $A$ of $A_0=G\ltimes {\mathrm Weyl}(V)$
parametrized by $\CC [\cS]^G$.
\end{corollary}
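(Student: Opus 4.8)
The plan is to deduce Corollary \ref{h2} from Theorem \ref{afls} together with the general deformation theory reviewed above, and then to establish the existence of the universal deformation parametrized by $\CC[\cS]^G$ by exhibiting the symplectic reflection algebras $\sH_{1,c}$ themselves as the total family, so that the obstruction issue is handled "by hand" rather than by a vanishing theorem.

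First I would unwind Theorem \ref{afls}. For each $i$, $\hoc^i(G\ltimes {\mathrm Weyl}(V))$ is the space of conjugation-invariant functions on $S_i=\{g\in G: \rank(1-g)|_V=i\}$. Since $G$ acts symplectically, $1-g$ preserves a nondegenerate form on its image, so $\rank(1-g)|_V$ is always even; hence $S_i=\emptyset$ for $i$ odd, giving $\hoc^{\mathrm odd}(G\ltimes {\mathrm Weyl}(V))=0$. For $i=2$, $S_2$ is by definition exactly the set $\cS$ of symplectic reflections, so $\hoc^2(G\ltimes {\mathrm Weyl}(V))=\CC[\cS]^G$. In particular $\hoc^3(G\ltimes {\mathrm Weyl}(V))=0$.

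Now, by the deformation-theoretic discussion (the paragraph on "Universal deformation"), vanishing of $\hoc^3$ guarantees that all obstructions $b_N\in \hoc^3(A_0)$ vanish, so the hierarchy of equations for $\mu_N$ is solvable at every order, and there exists a universal (semiuniversal) formal deformation $A_u=A_0[[H]]$ with $H=\hoc^2(A_0)=\CC[\cS]^G$. This already gives the last sentence of the corollary as a formal statement. I would then remark (as is implicit in the phrasing of the corollary and will be used in the next subsection) that the family $\sH_{1,c}[G,V]$ over $\CC[\cS]^G$ is itself a flat deformation of $A_0=\sH_{1,0}[G,V]=G\ltimes{\mathrm Weyl}(V)$ by the PBW theorem (Theorem \ref{pbw1}), and that the first-order term of $\sH_{1,c}$ in the $c$-directions is, by the commutation relation \eqref{mainrel}, precisely the Hochschild cocycle $(x,y)\mapsto -2\sum_{s}c_s\omega_s(x,y)s$, whose class spans $\hoc^2(A_0)=\CC[\cS]^G$. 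Hence the classifying map of the family $\sH_{1,c}$ from $\CC[\cS]^G$ to the base of the universal deformation has invertible derivative at the origin, so $\sH_{1,c}$ is (isomorphic to) the universal deformation.

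The only real subtlety — the "main obstacle" — is that $\hoc^3(A_0)=0$ by itself yields a universal formal deformation over $\CC[[H]]$, whereas one wants the honest algebraic family over $\CC[\cS]^G$; but this is exactly supplied by the explicit family $\sH_{1,c}$, whose flatness is the already-proven PBW theorem, so no further obstruction analysis is needed. I would therefore present the proof as: (1) parity of $\rank(1-g)$ kills odd cohomology and identifies $\hoc^2$ with $\CC[\cS]^G$ and forces $\hoc^3=0$; (2) invoke the reviewed deformation theory to get a universal deformation parametrized by $\hoc^2(A_0)=\CC[\cS]^G$; (3) identify it with $\sH_{1,c}$ via PBW and the first-order computation. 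Everything beyond step (1) is a direct citation of the general machinery already set up in the excerpt, so the write-up should be short.
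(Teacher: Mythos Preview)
Your argument is correct and matches the paper's approach; the paper's own proof is literally ``Directly from the theorem,'' and you have simply spelled out the two implicit points (evenness of $\rank(1-g)$ for symplectic $g$ of finite order, and $S_2=\cS$ by definition) plus the invocation of $\hoc^3=0$ for unobstructedness. Your step (3) identifying the universal deformation with $\sH_{1,c}$ is more than the corollary asserts---that identification is the content of the \emph{next} theorem in the paper---so for this corollary you can stop after step (2).
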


\begin{proof}
Directly from the theorem.
\end{proof}

\begin{proof}[Proof of Theorem \ref{afls}]
\begin{lemma}
Let $B$ be a $\CC $-algebra together with an action of 
a finite group $G$. Then 
$$
\hoc^*(G\ltimes B,G\ltimes B)=(\bigoplus_{g\in G}\hoc^*(B,Bg))^G,
$$ 
where $Bg$ is the bimodule isomorphic to $B$ as a space where 
the left action of $B$ is the usual one and the right action is the usual action twisted by $g$. 
\end{lemma}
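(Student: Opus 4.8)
The plan is to prove the lemma by a direct comparison of resolutions, reducing the Hochschild cohomology of the crossed product $G\ltimes B$ to an invariant sum over $g\in G$ of the Hochschild cohomology of $B$ with coefficients in the twisted bimodules $Bg$. First I would set $A=G\ltimes B$ and recall that $A$, as a left $A$-module tensored with itself over the base field, has the bar resolution; but it is more efficient to use the standard fact that for a crossed product the category of $A$-bimodules is closely related to the category of $G$-equivariant $B$-bimodules. Concretely, $A=\bigoplus_{g\in G}Bg$ as a $B$-bimodule (left action usual, right action twisted), and multiplication in $A$ is determined by the $G$-action on $B$.

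The key computational step is the identification
$$
\hoc^*(A,A)=\Ext^*_{A\text{-bimod}}(A,A).
$$
I would then observe that an $A$-bimodule is the same as a $B$-bimodule $M$ together with a compatible $G$-action (i.e. linear maps $g:M\to M$ with $g(bmb')=g(b)\,g(m)\,g(b')$ and $(gh)=g\circ h$), and that under this equivalence $A$ itself corresponds to the $B$-bimodule $\bigoplus_{g\in G}Bg$ with the evident permutation-plus-twist $G$-action. Since $\CC G$ is semisimple (characteristic zero), taking $G$-invariants is exact, so
$$
\Ext^*_{A\text{-bimod}}(A,A)=\Ext^*_{B\text{-bimod}}\!\bigl(B,\textstyle\bigoplus_{g\in G}Bg\bigr)^G
=\Bigl(\bigoplus_{g\in G}\Ext^*_{B\text{-bimod}}(B,Bg)\Bigr)^G
=\Bigl(\bigoplus_{g\in G}\hoc^*(B,Bg)\Bigr)^G,
$$
where the $G$-action permutes the summands via $g'\cdot:\hoc^*(B,Bg)\to \hoc^*(B,Bg'gg'^{-1})$ coming from the action of $g'$ on everything in sight. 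This is exactly the claimed formula. To make the middle equality rigorous I would either (a) use the explicit $G\ltimes B$-bimodule bar complex $A^{\otimes(n+2)}$, decompose each tensor factor $A=\bigoplus Bg$, and identify the resulting direct-summand subcomplexes with copies of the $B$-bar complex computing $\hoc^*(B,Bg)$, with $G$ acting by permuting and twisting; or (b) invoke the general principle that $\Ext$ in a module category over $R\# G$ with $|G|$ invertible equals the $G$-invariants of $\Ext$ over $R$.

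The main obstacle I expect is bookkeeping the $G$-action correctly on the twisted bimodules $Bg$ and verifying that the decomposition of the bar complex of $A$ is $G$-equivariant with the stated permutation action on the index $g$ — in particular checking that the summand of $A^{\otimes(n+2)}$ indexed by a tuple $(g_0,\dots,g_{n+1})$ maps, under the Hochschild differential, into summands consistent with the $B$-bar differential for the bimodule $B(g_0g_1\cdots g_{n+1})$, so that after taking $\Hom_{A\text{-bimod}}(-,A)$ one genuinely recovers $\bigoplus_g C^*(B,Bg)$ with the right $G$-equivariant structure. Once this is set up, exactness of $(-)^G$ over $\CC G$ finishes the argument; I would not belabor the purely formal verifications of signs and twists.
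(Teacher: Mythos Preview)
Your main line of argument has a genuine gap. The claim that ``an $A$-bimodule is the same as a $B$-bimodule $M$ together with a compatible $G$-action'' is not correct: an $A$-bimodule carries \emph{two} $G$-actions (left and right), i.e.\ it is a module over $A\otimes A^{\op}\cong (G\times G)\ltimes(B\otimes B^{\op})$, not over $G\ltimes(B\otimes B^{\op})$. (Take $B=\CC$: then $A$-bimodules are $G\times G$-representations, while your description gives only $G$-representations.) Consequently your option (b) --- ``$\Ext$ over $R\# G$ equals $G$-invariants of $\Ext$ over $R$'' --- applied with $R=B\otimes B^{\op}$ would yield $(G\times G)$-invariants, not $G$-invariants, and does not directly give the displayed ``middle equality''.

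What is missing is precisely the step the paper supplies: one first identifies $\hoc^*(A,A)=\Ext^*_{(G\times G)\ltimes(B\otimes B^{\op})}(A,A)$, then observes that $A\cong \mathrm{Ind}_{G_{\mathrm{diag}}}^{G\times G}B$ as a $(G\times G)\ltimes(B\otimes B^{\op})$-module and applies the Shapiro lemma to reduce to $\Ext^*_{G_{\mathrm{diag}}\ltimes(B\otimes B^{\op})}(B,A)$. Only after this reduction does semisimplicity of $\CC G$ give the single-$G$-invariants formula $\Ext^*_{B\otimes B^{\op}}(B,\bigoplus_g Bg)^G$. Your option (a), the direct bar-complex decomposition, can be made to work and in effect recapitulates the Shapiro step combinatorially, but as written you have not carried out the identification and the equivalence-of-categories shortcut you rely on instead is false.
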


\begin{proof} The algebra $G\ltimes B$ is a projective $B$-module. 
Therefore, using the Shapiro lemma, we get
\begin{eqnarray*}
&&\hoc^*(G\ltimes B,G\ltimes B)=\Ext^*_{(G\times G)\ltimes (B\otimes B^{op})}(G\ltimes B,G\ltimes B)\\
&=&
\Ext^*_{G_{\mathrm diagonal}\ltimes (B\otimes B^{op})}(B,G\ltimes B)=
\Ext^*_{B\otimes B^{op}}(B,G\ltimes B)^G\\
&=&
(\bigoplus_{g\in G}\Ext^*_{B\otimes B^{op}}(B,Bg))^G=
(\bigoplus_{g\in G}\hoc^*(B,Bg))^G,
\end{eqnarray*}
as desired. 
\end{proof}

Now apply the lemma to $B={\mathrm Weyl}(V)$. 
For this we need to calculate $\hoc^*(B,Bg)$, where 
$g$ is any element of $G$. We may assume that 
$g$ is diagonal in some symplectic basis: 
$g=\diag(\lambda_1,\lambda_1^{-1},\ldots,\lambda_n,\lambda_n^{-1})$. 
Then by the K\"unneth formula we find that 
$$\hoc^*(B,Bg)=\bigotimes_{i=1}^n \hoc^*(\mathbb A_1,\mathbb A_1g_i),$$ where 
$\mathbb A_1$ is the Weyl algebra of the $2$-dimensional space, 
(generated by $x,y$ with defining relation $xy-yx=1$), and 
$g_i=\diag(\lambda_i,\lambda_i^{-1})$. 

Thus we need to calculate 
$\hoc^*(B,Bg)$, $B=\mathbb A_1$, $g=\diag(\lambda,\lambda^{-1})$. 

\begin{proposition}
$\hoc^*(B,Bg)$ is 1-dimensional, concentrated in degree 
$0$ if $\lambda=1$ and in degree $2$ otherwise. 
\end{proposition}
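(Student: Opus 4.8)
The plan is to compute $\hoc^*(\mathbb{A}_1, \mathbb{A}_1 g)$ directly from an explicit small free resolution of $\mathbb{A}_1$ as a bimodule, i.e.\ as a module over $\mathbb{A}_1 \otimes \mathbb{A}_1^{\op}$. The standard Koszul-type resolution here is of length $2$: one has
$$
0 \to \mathbb{A}_1 \otimes \mathbb{A}_1^{\op} \xrightarrow{d_2} (\mathbb{A}_1 \otimes \mathbb{A}_1^{\op})^2 \xrightarrow{d_1} \mathbb{A}_1 \otimes \mathbb{A}_1^{\op} \xrightarrow{\mathrm{mult}} \mathbb{A}_1 \to 0,
$$
where $d_1$ is given by right/left multiplication by $x \otimes 1 - 1 \otimes x$ and $y \otimes 1 - 1 \otimes y$ (suitably arranged), and $d_2$ encodes the single relation $[x,y]=1$. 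First I would write this resolution down carefully and recall why it is exact (it is the Koszul complex attached to the quadratic-type presentation of $\mathbb{A}_1$; exactness follows from a PBW/filtration argument, reducing to the commutative polynomial ring in the associated graded).

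Next I would apply $\Hom_{\mathbb{A}_1\text{-bimod}}(-, \mathbb{A}_1 g)$ to this resolution. Since $\Hom_{\mathbb{A}_1 \otimes \mathbb{A}_1^{\op}}(\mathbb{A}_1 \otimes \mathbb{A}_1^{\op}, \mathbb{A}_1 g) = \mathbb{A}_1 g \cong \mathbb{A}_1$ as a vector space, the complex computing $\hoc^*(\mathbb{A}_1, \mathbb{A}_1 g)$ becomes
$$
\mathbb{A}_1 \xrightarrow{\delta^0} \mathbb{A}_1^2 \xrightarrow{\delta^1} \mathbb{A}_1,
$$
where the differentials are built from the twisted commutators $a \mapsto a x - \lambda x a$ and $a \mapsto a y - \lambda^{-1} y a$ (the twist by $g$ is exactly what replaces $x$ by $\lambda x$ and $y$ by $\lambda^{-1} y$ on one side). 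The computation then splits into two cases. If $\lambda = 1$, these are the ordinary commutators with $x$ and $y$ in the Weyl algebra; since $\mathbb{A}_1$ has trivial center and the commutator maps are surjective with the expected kernels, one recovers $\hoc^0 = \CC$, $\hoc^1 = 0$, $\hoc^2 = 0$ — wait, that is not right, so let me be careful: actually for $\lambda=1$ the bimodule is $\mathbb{A}_1$ itself, and the known fact is $\hoc^*(\mathbb{A}_1,\mathbb{A}_1)$ is one-dimensional in degree $0$ and vanishes otherwise (the Weyl algebra is "cohomologically like a point"). If $\lambda \neq 1$, I would show $\delta^0$ is injective and its image is a codimension-one-behaving subspace in a way that forces $\hoc^0 = \hoc^1 = 0$, while the cokernel computation at the top gives a one-dimensional $\hoc^2$; concretely, the operators $a \mapsto ax - \lambda xa$ and $a \mapsto ay - \lambda^{-1}ya$ act on the monomial basis $x^i y^j$ with eigenvalue-type behavior, and one checks that the Koszul differential is an isomorphism except in the single "corner" contributing $\hoc^2$.

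The cleanest way to run the $\lambda \neq 1$ case is to exploit the torus action: grade $\mathbb{A}_1$ by $\deg x = 1$, $\deg y = -1$, so the twisted differentials are homogeneous, and the whole complex decomposes into finite-dimensional graded pieces. In each graded piece the twisted Koszul complex is a complex of finite-dimensional spaces whose Euler characteristic I can compute, and a direct inspection (the maps $x^iy^j \mapsto (\text{scalar})x^iy^j + \cdots$ are generically invertible when $\lambda \neq 1$) shows the cohomology is concentrated in top degree and is one-dimensional overall. I expect the main obstacle to be bookkeeping: getting the signs and the precise form of $d_2$ in the bimodule Koszul resolution right, and then carefully identifying which single graded component survives to give the one-dimensional $\hoc^2$ when $\lambda \neq 1$. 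Once the proposition is established, the theorem follows by assembling: the K\"unneth formula gives $\hoc^*(\mathrm{Weyl}(V), \mathrm{Weyl}(V)g) = \bigotimes_i \hoc^*(\mathbb{A}_1, \mathbb{A}_1 g_i)$, which is one-dimensional, sitting in degree equal to twice the number of $i$ with $\lambda_i \neq 1$, i.e.\ in degree $\rank(1-g)|_V$; then the Lemma identifying $\hoc^*(G \ltimes B)$ with $\left(\bigoplus_{g} \hoc^*(B, Bg)\right)^G$ yields that $\hoc^i(G \ltimes \mathrm{Weyl}(V))$ is the space of conjugation-invariant functions on $S_i = \{g : \rank(1-g)|_V = i\}$, as claimed.
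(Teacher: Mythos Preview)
Your overall framework matches the paper's: both use the length-$2$ Koszul bimodule resolution of $\mathbb{A}_1$, apply $\Hom(-,\mathbb{A}_1 g)$, and analyze the resulting three-term complex $Bg \to Bg\oplus Bg \to Bg$. For $\lambda=1$ the paper does essentially what you say, making it explicit by passing to the associated graded under the Bernstein filtration and identifying the resulting complex with the de Rham complex of the affine plane.

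For $\lambda\neq 1$, however, your proposed torus grading $\deg x=1$, $\deg y=-1$ does \emph{not} give finite-dimensional graded pieces: the degree-$n$ component of $\mathbb{A}_1$ is spanned by all $x^{n+k}y^k$ with $k\geq \max(0,-n)$, which is infinite-dimensional, so the sentence ``in each graded piece the twisted Koszul complex is a complex of finite-dimensional spaces'' is false and the Euler-characteristic argument does not go through as written. Relatedly, the twisted commutators are not of the shape $x^iy^j\mapsto(\text{scalar})x^iy^j+\cdots$: for instance $x^iy^j\cdot x-\lambda\, x\cdot x^iy^j=(1-\lambda)x^{i+1}y^j+(\text{lower})$, which shifts the monomial. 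The paper's fix is to use instead the Bernstein filtration ($\deg x=\deg y=1$), with $C^0,C^1,C^2$ starting in filtration degrees $2,1,0$. In the associated graded the differentials become, for $\lambda\neq 1$, nonzero scalar multiples of multiplication by $x$ and $y$ in $\CC[x,y]$, i.e.\ the ordinary Koszul complex for the regular sequence $(x,y)$. One then reads off $\hoc^0_{\gr}=0$ and $\hoc^2_{\gr}=\CC[x,y]/(x,y)\cong\CC$ directly, and the graded Euler characteristic $(t^2-2t+1)(1-t)^{-2}=1$ forces $\hoc^1_{\gr}=0$; the filtered statement follows.
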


\begin{proof} 
If $B=\mathbb A_1$ then $B$ has the following Koszul resolution 
as a B-bimodule:
$$
B\otimes B\to B\otimes \CC^2\otimes B\to B\otimes B\to B.
$$
Here the first map is given by the formula 
\begin{eqnarray*}
b_1\otimes b_2&\mapsto& b_1\otimes x\otimes yb_2-b_1\otimes y\otimes xb_2
-b_1y\otimes x\otimes b_2+b_1x\otimes y\otimes b_2,
\end{eqnarray*} 
the second map is given by 
$$
b_1\otimes x\otimes b_2\mapsto b_1x\otimes b_2-b_1\otimes xb_2,\quad 
b_1\otimes y\otimes b_2\mapsto b_1y\otimes b_2-b_1\otimes yb_2,
$$
and the third map is the multiplication. 

Thus the cohomology of $B$ with coefficients in $Bg$ can be computed by mapping 
this resolution into $Bg$ and taking the cohomology. 
This yields the following complex $C^\bullet$: 
\begin{equation}\label{comple}
0\to Bg\to Bg\oplus Bg\to Bg\to 0,
\end{equation}
where the first nontrivial map is given by 
$bg\mapsto [bg,y]\otimes x-[bg,x]\otimes y$, and the second nontrivial map is 
given by $bg\otimes x\mapsto [x,bg]$, $bg\otimes y\mapsto [y,bg]$. 

Consider first the case $g=1$. Equip the complex $C^\bullet$ with the 
Bernstein filtration ($\deg(x)=\deg(y)=1$), starting with $0,1,2$, for $C^0,C^1,C^2$, respectively
(this makes the differential preserve the filtration). 
Consider the associated graded complex $C_{\gr}^\bullet$. In this complex, brackets are replaced with 
Poisson brackets, and thus it is easy to see that $C_{\gr}^\bullet$ is the De Rham complex for 
the affine plane, so its cohomology is $\CC$ in degree 0 and 0 in other
degrees. Therefore, the cohomology of $C^\bullet$ is the same. 

Now consider $g\ne 1$. In this case, 
declare that $C^0,C^1,C^2$ start in degrees 2,1,0 respectively
(which makes the differential preserve the filtration), and 
again consider the graded complex $C_{\gr}^\bullet$. 
The graded Euler characteristic of this complex 
is $(t^2-2t+1)(1-t)^{-2}=1$. 

The cohomology in the $C^0_{\gr}$ term is 
the set of $b\in \CC [x,y]$ such that $ab=ba^g$ for all $a$. This means 
that $\hoc^0=0$.  

The cohomology of the $C^2_{\gr}$ term is the quotient of 
$\CC[x,y]$ by the ideal generated by 
$a-a^g$, $a\in \CC[x,y]$. Thus the cohomology $\hoc^2$ of the rightmost term is 1-dimensional, in degree 0. 
By the Euler characteristic argument, this implies that $\hoc^1=0$.
The cohomology of the filtered complex $C^\bullet$ is therefore the same, and we are done. 
\end{proof} 

The proposition implies that in the n-dimensional case 
$\hoc^*(B,Bg)$ is 1-dimensional, concentrated in 
degree $\rank(1-g)$. It is not hard to check that 
the group $G$ acts on the sum of these 1-dimensional spaces by 
simply permuting the basis vectors. Thus the theorem is proved. 
\end{proof}

\begin{remark}
Another proof of Theorem \ref{afls} 
is given in \cite{Pi}. 
\end{remark}

\begin{theorem} The algebra $\sH_{1,c}[G,V]$, with formal $c$, is 
the universal deformation of $\sH_{1,0}[G,V]=G\ltimes {\mathrm Weyl}(V)$. 
More specifically, the map 
$f: \CC[\cS]^G\to \hoc^2(G\ltimes {\mathrm Weyl}(V))$
induced by this deformation coincides with the isomorphism 
of Corollary \ref{h2}.  
\end{theorem}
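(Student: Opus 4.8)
The statement has two parts: first, that $\sH_{1,c}[G,V]$ with formal $c\in\CC[\cS]^G$ is a flat formal deformation of $A_0=G\ltimes\mathrm{Weyl}(V)$; and second, that the classifying map $f\colon\CC[\cS]^G\to\hoc^2(A_0)$ of this deformation agrees with the isomorphism of Corollary \ref{h2}. Flatness is already in hand: it is exactly the PBW theorem (Theorem \ref{pbw1}) applied with parameter $t=1$ and $c$ formal, since $\gr\sH_{1,c}=G\ltimes SV$ is independent of $c$, so $\sH_{1,c}$ is topologically free over $\CC[[\cS]]^G$. By Corollary \ref{h2} the odd Hochschild cohomology of $A_0$ vanishes, hence deformations of $A_0$ are unobstructed and a (semi)universal deformation $A_u$ over $\CC[[\hoc^2(A_0)]]=\CC[[\cS]]^G$ exists, classified by the identity map. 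So the whole content is to show that the map $f$ induced by the family $\sH_{1,c}$ is an isomorphism; since it is a linear map between spaces of the same finite dimension $|\cS/G|$, it suffices to show $f$ is injective, or equivalently that $f$ is nonzero on each ``coordinate direction'' $c_s$.

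\textbf{Step 1: Identify the first-order term of the deformation.} Write the multiplication in $\sH_{1,c}$ as a star product $\mu_0+\sum_s c_s\mu_1^{(s)}+O(c^2)$ on $A_0$; by the general deformation theory recalled above, each $\mu_1^{(s)}$ is a Hochschild $2$-cocycle on $A_0$ and $f(c_s)=[\mu_1^{(s)}]\in\hoc^2(A_0)$. The defining relation \eqref{mainrel} shows that, to first order in $c$, the only change from $\mathrm{Weyl}(V)$ is the extra term $-2\sum_s c_s\omega_s(x,y)s$ in $[x,y]$. Thus $\mu_1^{(s)}$ is the cocycle on $G\ltimes\mathrm{Weyl}(V)$ determined by $\mu_1^{(s)}(x,y)-\mu_1^{(s)}(y,x)=-2\omega_s(x,y)\,s$ for $x,y\in V$, extended to the whole algebra by the Leibniz-type constraints coming from associativity (the cocycle condition).

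\textbf{Step 2: Match with the AFLS identification.} The isomorphism of Corollary \ref{h2} sends the indicator function of a conjugacy class of a symplectic reflection $s$ to the generator of the one-dimensional summand $\hoc^2(\mathrm{Weyl}(V),\mathrm{Weyl}(V)s)^{\text{centralizer}}$ inside $\hoc^2(A_0)=\bigoplus_{g\colon\rank(1-g)=2}(\cdots)$, the summand being concentrated in degree $\rank(1-g)=2$ exactly as computed in the proof of Theorem \ref{afls}. The task is to check that $[\mu_1^{(s)}]$ is a \emph{nonzero} multiple of that generator. For this I would restrict attention to the component of $\mu_1^{(s)}$ landing in $\mathrm{Weyl}(V)s$ (projecting $\CC[G]$ onto the line $\CC s$): this is a cocycle for $\mathrm{Weyl}(V)$ with coefficients in the twisted bimodule $\mathrm{Weyl}(V)s$. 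Decomposing $V$ symplectically as $\mathrm{Im}(1-s)\oplus\ker(1-s)$ and using the K\"unneth formula as in the proof of the Proposition inside Theorem \ref{afls}, $\hoc^2(\mathrm{Weyl}(V),\mathrm{Weyl}(V)s)$ is one-dimensional, and the explicit Koszul complex \eqref{comple} for the $2$-dimensional factor $\mathbb{A}_1$ identifies this class with the image of the constant $1$ in $\CC[x,y]/(a-a^g)$ — i.e. with the class detected by the commutator pairing $bg\mapsto[x,bg],[y,bg]$ on $\mathrm{Im}(1-s)$. The relation $[x,y]=\dots-2c_s\omega_s(x,y)s+\dots$ feeds precisely a nonzero constant into this slot (namely $-2\omega_s$ restricted to the symplectic plane $\mathrm{Im}(1-s)$, which is the nondegenerate form there), so $[\mu_1^{(s)}]$ projects to a nonzero element of that line. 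Running over all conjugacy classes of symplectic reflections shows $f$ is injective, hence an isomorphism.

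\textbf{Expected main obstacle.} The genuinely delicate point is the bookkeeping in Step 2: one must be careful that the first-order cocycle $\mu_1^{(s)}$ coming from the abstract star product (which also sees contributions from reordering monomials, not just from the bare relation \eqref{mainrel}) projects onto the AFLS generator with a genuinely nonzero coefficient, rather than vanishing after passing to cohomology because of a coboundary cancellation. The clean way to control this is to use the Koszul (Bernstein-filtered) resolution from the proof of Theorem \ref{afls} and observe that the pairing detecting the nonvanishing class is literally $(b\mapsto[x,b],[y,b])$ evaluated in the twisted bimodule, which is computed directly from the commutator \eqref{mainrel} and is manifestly nonzero because $\omega_s$ is nondegenerate on $\mathrm{Im}(1-s)$. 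Once that identification is set up, the rest is the routine dimension count and the invocation of unobstructedness from Corollary \ref{h2}.
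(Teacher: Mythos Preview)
Your proposal is correct and follows essentially the same route the paper indicates: the paper does not actually carry out the proof but refers to \cite{Pi} for a ``direct computation with the Koszul resolution for $G\ltimes\mathrm{Weyl}(V)$,'' and to \cite{EG} for the weaker statement that $f$ is an isomorphism---which is exactly what you do in Steps~1--2 by projecting the first-order cocycle $\mu_1^{(s)}$ onto the $s$-summand of the AFLS decomposition via the Koszul complex \eqref{comple}. Your observation that $\mu_1^{(s)}$ lands (up to $G$-conjugacy) only in the $s$-component, together with the nondegeneracy of $\omega_s$ on $\mathrm{Im}(1-s)$, is precisely the computation needed, and the dimension count plus unobstructedness from Corollary~\ref{h2} finishes it.
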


\begin{proof} 
The proof (which we will not give) can be obtained by a direct computation 
with the Koszul resolution for $G\ltimes {\mathrm Weyl}(V)$. 
Such a proof is given in \cite{Pi}. 
The paper \cite{EG} proves a slightly weaker statement that 
the map $f$ is an isomorphism, which suffices
to show that $\sH_{1,c}(G,V)$ is the universal deformation of 
$\sH_{1,0}[G,V]$.  
\end{proof}

\subsection{Finite dimensional representations of $\sH_{0,c}$.} 
Let $\sM_{c}=\Spec\sZ_{0,c}$.
We can regard $\sH_{0,c}=\sH_{0,c}[G,V]$ as 
a finitely generated module over
$\sZ_{0,c}=\cO(\sM_{c})$.
Let $\chi\in \sM_c$ be a central character, $\chi:\sZ_{0,c}\to \CC$. 
Denote by $\langle\chi\rangle$ the ideal in $\sH_{0,c}$ 
generated by the kernel of $\chi$. 

\begin{proposition}\label{gen} 
If $\chi$ is generic then 
$\sH_{0,c}/\langle\chi\rangle$ is the matrix algebra of size $|G|$. In particular, 
$\sH_{0,c}$ has a unique irreducible representation $V_\chi$ 
with central character $\chi$. This representation 
is isomorphic to $\CC G$ as a $G$-module.
\end{proposition}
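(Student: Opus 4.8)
The plan is to exploit the double centralizer theorem (Theorem \ref{doubcen}) together with the commutativity of $\sZ_{0,c}$ and the structure of $\sH_{0,c}\e$ as a module over the spherical subalgebra. First I would recall that, as in the Cherednik algebra case, $\sH_{0,c}\e$ is a finitely generated $\sB_{0,c}$-module whose associated graded is $SV$ viewed as an $((\CC G\ltimes SV),(SV)^G)$-bimodule. Since $(SV)^G$-freely generates $SV$ over the generic point of $\Spec(SV)^G = V^*/G$ (the action of $G$ being free on a dense open set of codimension $\geq 2$ complement), the generic fiber of $\sH_{0,c}\e$ over $\sM_c$ is the regular representation $\CC G$ of $G$. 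I would make ``generic'' precise by passing to the Azumaya locus: there is a dense open subset $U\subseteq \sM_c$ over which $\sH_{0,c}$ is an Azumaya algebra of rank $|G|^2$; one shows this open set is nonempty by the associated-graded argument just sketched, since the corresponding statement holds generically for $\sH_{0,0}=G\ltimes SV$ over $V^*/G$.

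The key steps, in order, would be: (1) establish that $\sH_{0,c}$ is a finitely generated module over its center $\sZ_{0,c}=\cO(\sM_c)$, which is exactly the content of the preceding theorem (using $\gr\sZ_{0,c}=\sZ_{0,0}=(SV)^G$ and that $SV$ is module-finite over $(SV)^G$ by Hilbert); (2) deduce that $\sH_{0,c}$ has finite PI degree, and identify the generic value of that PI degree with $|G|$ by computing $\dim_{\CC(\sM_c)}\big(\sH_{0,c}\otimes_{\sZ_{0,c}}\CC(\sM_c)\big)$ via the filtration: it equals $\dim_{\CC(V^*/G)}\big(\gr\sH_{0,c}\otimes\CC(V^*/G)\big) = \dim_{\CC(V^*/G)}(G\ltimes\CC(V^*)) = |G|^2$; (3) conclude by standard PI-ring theory (Artin--Procesi, or directly Posner's theorem plus a dimension count) that for $\chi$ in the Azumaya locus, $\sH_{0,c}/\langle\chi\rangle \cong \mathrm{Mat}_{|G|}(\CC)$, whence there is a unique irreducible $V_\chi$ of dimension $|G|$; (4) finally, identify $V_\chi$ as a $G$-module: since $V_\chi = (\sH_{0,c}/\langle\chi\rangle)\e$ as a left $\sH_{0,c}$-module (the fiber of $\sH_{0,c}\e$ at $\chi$), and this fiber is the generic fiber of $\sH_{0,c}\e$, step (1)'s associated-graded computation shows it is $\CC G$ as a $G$-representation.

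For step (2) one has to be slightly careful that the filtration on $\sH_{0,c}$ is bounded below (it is: $\deg(V)=1$, $\deg(G)=0$, nonnegative grading) so that the rank cannot drop upon passing to associated graded, exactly as in the footnote to Theorem \ref{doubcen}; this guarantees that the generic rank of $\sH_{0,c}$ over $\sZ_{0,c}$ equals the generic rank of $G\ltimes SV$ over $(SV)^G$, which is $|G|^2$. For step (3), the cleanest route is to observe that over the smooth locus where $G$ acts freely the Azumaya property for $\gr\sH_{0,c}$ is Lemma \ref{lem:iso}, and Azumaya-ness is an open condition that lifts from the associated graded to a dense open $U\subseteq\sM_c$; then for $\chi\in U$ the fiber algebra is a central simple algebra over $\CC$ of dimension $|G|^2$, hence $\mathrm{Mat}_{|G|}(\CC)$.

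\textbf{Main obstacle.} The genuinely nontrivial point is step (4): pinning down that $V_\chi\cong\CC G$ \emph{as a $G$-module} rather than merely $\dim V_\chi = |G|$. This requires knowing that $\sH_{0,c}\e$, restricted to the Azumaya locus, is locally free of rank $|G|$ over $\sZ_{0,c}$ with fibers carrying the regular $G$-representation — which one extracts from the fact that $\gr(\sH_{0,c}\e) = SV$ is free of rank $|G|$ over $(SV)^G$ generically with fibers $\cong\CC G$ (the coinvariant-type computation), combined with semicontinuity to rule out jumps in the $G$-isotypic multiplicities at generic $\chi$. Everything else is a fairly mechanical application of PI theory and the already-established module-finiteness over the center.
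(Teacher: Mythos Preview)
Your approach is correct and shares the same core idea as the paper's proof: reduce to the associated graded case $c=0$ via the filtration, where the statement becomes an explicit computation. The paper, however, is considerably more direct and avoids the PI/Azumaya machinery you invoke. It simply observes that for generic $\chi\in V^*/G$ the fiber is a free $G$-orbit $\cO_\chi\subset V^*$, so $(G\ltimes SV)/\langle\chi\rangle = G\ltimes\mathrm{Fun}(\cO_\chi)$, which is manifestly isomorphic to $\mathrm{Mat}_{|G|}(\CC)$ with unique irreducible $\mathrm{Fun}(\cO_\chi)\cong\CC G$. The passage from $c=0$ to general $c$ is dismissed as a ``standard argument'' --- essentially the filtration argument you spell out in steps (1)--(3).

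Your main obstacle (step (4), identifying $V_\chi\cong\CC G$ as a $G$-module) evaporates in the paper's approach: since the $c=0$ irreducible is visibly $\mathrm{Fun}(\cO_\chi)$ with $G$ acting by permuting the free orbit, it is the regular representation on the nose, and this survives the deformation because characters of $G$ are locally constant in a flat family. Your route through fibers of $\sH_{0,c}\e$ and semicontinuity of isotypic multiplicities works but is more circuitous. The double centralizer theorem (Theorem \ref{doubcen}) is not actually needed here.
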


\begin{proof} It is shown by a standard argument (which we will skip)
that it is sufficient to check the statement 
in the associated graded case $c=0$. In this case, for generic $\chi$,  
$G\ltimes SV/\langle\chi\rangle=G\ltimes {\mathrm Fun}(\cO_\chi)$, where 
$\cO_\chi$ is the (free) orbit of $G$ consisting of the points of $V^*$ that map to $\chi\in V^*/G$,
and ${\mathrm Fun}(\cO_\chi)$ is the algebra of functions on $\cO_\chi$.  
It is easy to see that this algebra is isomorphic to a matrix algebra, and 
has a unique irreducible representation, ${\mathrm Fun}(\cO_\chi)$, 
which is a regular representation of $G$. 
\end{proof}

\begin{corollary}\label{est}
Any irreducible representation of $\sH_{0,c}$ has dimension $\le |G|$.  
\end{corollary}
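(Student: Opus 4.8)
The plan is to deduce this as a direct corollary of Proposition \ref{gen}, using the fact that $\sH_{0,c}$ is a finitely generated module over its center $\sZ_{0,c}$ (established in the theorem on the center of $\sH_{t,c}$ for $t=0$). The key point is that every irreducible representation of $\sH_{0,c}$ has a well-defined central character, and that the generic fiber has all its irreducibles of dimension exactly $|G|$; a standard semicontinuity argument then bounds the dimension on every fiber.

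First I would observe that since $\sH_{0,c}$ is a finite module over the commutative Noetherian ring $\sZ_{0,c}$, any irreducible $\sH_{0,c}$-module $M$ is finite-dimensional over $\CC$ and $\sZ_{0,c}$ acts on it by a scalar character $\chi$ (by Schur's lemma, as $\sZ_{0,c}$ is central and $\CC$ is algebraically closed). Hence $M$ is a module over the finite-dimensional algebra $\sH_{0,c}/\langle\chi\rangle$. So it suffices to bound $\dim M$ by $|G|$ for every $\chi\in\sM_c$, not just generic ones. Next I would note that, by the PBW theorem, $\sH_{0,c}\e$ is a finite module over $\sB_{0,c}$ of generic rank $|G|$ (its associated graded being $SV$ over $(SV)^G$), and more generally the pushforward of $\sH_{0,c}$ to $\sM_c$ is a coherent sheaf whose generic rank (as a matrix algebra of size $|G|$, by Proposition \ref{gen}) is $|G|^2$.

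The main step is the semicontinuity argument: the function $\chi\mapsto \max\{\dim M : M\text{ irreducible with central character }\chi\}$ is bounded above by $|G|$ everywhere. One clean way to see this is via the standard fact that for a prime affine PI algebra (which $\sH_{0,c}$ is, being a finite module over its center with center $\sZ_{0,c}$ a domain) finite over its center, the PI degree equals the maximal dimension of an irreducible representation, and this maximum is attained on a dense open set; since on the generic fiber all irreducibles have dimension exactly $|G|$ by Proposition \ref{gen}, the PI degree is $|G|$, so $\dim M\le|G|$ for all irreducible $M$. Alternatively, and perhaps more self-containedly given what is available in the text, one can argue that if $M$ is irreducible with character $\chi$, then $M$ is a quotient of $\sH_{0,c}\e\otimes_{\sB_{0,c}}\CC_\chi$ (since $\sH_{0,c}\e$ realizes every irreducible, the centralizer being $\sB_{0,c}$ by Theorem \ref{doubcen}), which has dimension $\le$ rank of $\sH_{0,c}\e$ over $\sB_{0,c}$; and this rank is everywhere $\le|G|$ because $\gr(\sH_{0,c}\e)=SV$ is free of rank $|G|$ over $(SV)^G=\gr\sB_{0,c}$ — freeness of the associated graded forces the fiber dimension to be constant equal to $|G|$, not merely generically $|G|$.

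The hard part will be making precise the passage from ``generic rank $|G|$'' to ``rank $\le|G|$ on every fiber'': this requires either invoking PI theory (Kaplansky's theorem / the structure of Azumaya loci, as alluded to in the introduction's discussion of Section 8) or exploiting the freeness of $SV$ over $(SV)^G$ to control all fibers of $\sH_{0,c}\e$ simultaneously. I would lean on the second route, since $SV$ is indeed free over $(SV)^G$ by the Chevalley–Shephard–Todd theorem applied to $G\subset\mathrm{Sp}(V)$ — wait, that requires $G$ to be generated by pseudoreflections, which need not hold; so instead I would use that $SV$ is Cohen–Macaulay and $(SV)^G$ has the same dimension, hence $SV$ is a maximal Cohen–Macaulay, hence (being graded over a graded polynomial-like base, or by Hilbert series comparison) of constant fiber dimension $|G|$ over every point of $\Spec(SV)^G$ where the module is defined — and this is exactly the kind of commutative-algebra input the paper reserves for Section 8. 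Thus in the end the cleanest honest statement is: $\sH_{0,c}\e$ is finite over $\sB_{0,c}$ with all fibers of dimension $\le|G|$ (equality generically), and every irreducible $\sH_{0,c}$-module is a quotient of some fiber, giving $\dim\le|G|$.
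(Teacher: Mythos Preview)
Your first suggested route---via PI theory---is exactly the paper's approach, made concrete. The paper invokes the Amitsur--Levitzki identity: the explicit polynomial identity of degree $2N$ satisfied by $\mathrm{Mat}_N$. Since generic fibers $\sH_{0,c}/\langle\chi\rangle$ are matrix algebras of size $|G|$ (Proposition~\ref{gen}) and $\sH_{0,c}$ is torsion-free over its center, the identity of degree $2|G|$ holds throughout $\sH_{0,c}$. If some irreducible $M$ had $\dim M>|G|$, the density theorem would realize $\mathrm{Mat}_{\dim M}$ as a quotient of $\sH_{0,c}$, contradicting the identity. So your PI remark is correct; you simply stopped short of naming the specific identity that makes the argument self-contained.

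Your second route, however, has a genuine gap, and it is the one you say you lean on. Upper semicontinuity of fiber dimension for a finitely generated module goes the \emph{wrong} way: the function $\chi\mapsto\dim_{\CC}\bigl(\sH_{0,c}\e\otimes_{\sB_{0,c}}\CC_\chi\bigr)$ can only jump \emph{up} at special points, so knowing the generic fiber is $|G|$-dimensional yields $\geq|G|$ everywhere, not $\leq|G|$. You correctly note that Chevalley--Shephard--Todd does not apply (a symplectic reflection group is typically not generated by pseudoreflections on $V$), so $SV$ is in general \emph{not} free over $(SV)^G$; already for $G=\ZZ/2$ acting by $-1$ on $\CC^2$ it is not. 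Your fallback to Cohen--Macaulayness does not rescue this: being Cohen--Macaulay over a \emph{singular} base (and $(SV)^G$ is singular here) does not force local freeness, hence does not force constant fiber dimension. Thus the final assertion ``all fibers of dimension $\le|G|$'' is precisely what remains unproved. The Amitsur--Levitzki argument sidesteps all of this by working with identities rather than fiber dimensions.
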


\begin{proof} We will use the following lemma.

\begin{lemma}[The Amitsur-Levitzki identity]{\label{lem:ali}}
For any $N\times N$ matrices $X_{1}, \ldots, X_{2N}$ 
with entries in a commutative ring $A$, 
$$
\sum_{\sigma\in \kS_{2n}}(-1)^{\sigma}X_{\sigma(1)}
\cdots X_{\sigma(2N)}=0.
$$
\end{lemma}
\begin{proof}
Consider the ring $\mathrm{Mat}_{N}(A)\otimes \wedge(\xi_{1}, \ldots, \xi_{2n})$. Let $X=\sum_{i} X_{i}\xi_{i}\in R$. So we have
$$
X^{2}=\sum_{i<j}[X_{i},X_{j}]\xi_{i}\xi_{j}\in 
\mathrm{Mat}_{N}(A\otimes \wedge^{\mathrm{even}}(\xi_{1}, \ldots, \xi_{2n})).
$$
It is obvious that $\tr X^{2}=0$. Similarly, one can easily show that
$\tr X^{4}=0, \ldots, \tr X^{2N}=0$. Since the ring 
$A\otimes \wedge^{\mathrm{even}}(\xi_{1}, \ldots, \xi_{2n})$ is commutative,
from the Cayley-Hamilton theorem,
we know that $X^{2N}=0$ which implies the lemma.
\end{proof}

Since for generic $\chi$ the algebra $\sH_{0,c}/\langle\chi\rangle$ is a matrix algebra, 
the algebra $\sH_{0,c}$ satisfies the Amitsur-Levitzki identity.
Next, note that since $\sH_{0,c}$ is a finitely generated 
$\sZ_{0,c}$-module
(by passing to the associated graded and using Hilbert's theorem), every irreducible
representation of $\sH_{0,c}$ is finite dimensional. 
If $\sH_{0,c}$ had an irreducible representation 
$E$ of dimension $m>|G|$, then
by the density theorem the matrix algebra $\mathrm{Mat}_m$ would be a quotient of 
$\sH_{0,c}$. But one can show that 
the Amitsur-Levitzki identity of degree $|G|$ is not satisfied 
for matrices of bigger size than $|G|$. Contradiction. 
Thus, $\dim E\le |G|$, as desired.      
\end{proof}

In general, for special central characters there are representations 
of $\sH_{0,c}$ of dimension less than $|G|$. 
However, in some cases one can show 
that all irreducible representations have dimension exactly $|G|$.
For example, we have the following result. 

\begin{theorem}\label{ratcher}
Let $G=\kS_n$, $V=\h\oplus \h^*$, $\h=\CC^{n}$ 
(the rational Cherednik algebra for $\kS_n$). 
Then for $c\ne 0$, every irreducible representation of $\sH_{0,c}$ 
has dimension $n!$ and is isomorphic to the regular representation of 
$\kS_n$. 
\end{theorem}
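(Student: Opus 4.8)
The plan is to prove that, for $c\ne 0$, the algebra $\sH_{0,c}=\sH_{0,c}[\kS_n,V]$ with $V=\h\oplus\h^*$ is an \emph{Azumaya} algebra over its centre $\sZ_{0,c}$ of rank $(n!)^2$: that is, every central character $\chi\colon\sZ_{0,c}\to\CC$ is as good as the generic one of Proposition~\ref{gen}. Granting this, $\sH_{0,c}/\langle\chi\rangle\cong\mathrm{Mat}_{n!}(\CC)$ for every $\chi$, so $\sH_{0,c}$ has a unique irreducible module with each central character, of dimension $n!$. Since $\sH_{0,c}$ is module-finite over $\sZ_{0,c}$, every irreducible module has a central character (on which the centre acts by Schur's lemma), so together with Corollary~\ref{est} this shows that all irreducible representations have dimension exactly $n!$.

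The Azumaya property will follow from two ingredients. First, the Azumaya locus $\mathcal U\subset\sM_c=\Spec\sZ_{0,c}$ (the set of $\chi$ for which $\sH_{0,c}/\langle\chi\rangle$ is a matrix algebra) is a nonempty Zariski-open subset: nonemptiness is Proposition~\ref{gen}, openness is standard prime-PI theory. Second — and this is the essential point — the Calogero--Moser space $\sM_c$ is \emph{smooth} for $\kS_n$ and $c\ne 0$, and for $\sH_{0,c}$ (which is a maximal Cohen--Macaulay module over $\sZ_{0,c}$, as one sees on the associated graded, where $\gr\sH_{0,c}=\CC\kS_n\otimes SV$ is maximal Cohen--Macaulay over $(SV)^{\kS_n}$) the Azumaya locus coincides with the smooth locus of $\sM_c$. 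As $\sM_c$ is smooth everywhere, $\mathcal U=\sM_c$. To establish the smoothness of $\sM_c$ I would use the Hamiltonian (quiver) reduction model of the type-$A$ Calogero--Moser space: for $c\ne 0$ the relevant fibre of the moment map consists entirely of points that are stable and have trivial $\PGL_n$-stabiliser, so the symplectic quotient is a smooth symplectic variety of dimension $2n$, whose coordinate ring is identified with $\sB_{0,c}\cong\sZ_{0,c}$.

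For the last assertion — that $V_\chi$ is the regular representation of $\kS_n$ — I would consider the $(\sH_{0,c},\sZ_{0,c})$-bimodule $P=\sH_{0,c}\e$, with $\e$ the symmetrizer of $\kS_n$. It is finitely generated over $\sZ_{0,c}$ (its associated graded is $SV$, finite over $(SV)^{\kS_n}$), and since $\sH_{0,c}$ is Azumaya, $P$ is a $\kS_n$-equivariant projective $\sZ_{0,c}$-module; by the computation recalled in Section~\ref{sec:ssra}, its generic fibre is $\CC\kS_n$ as a $\kS_n$-module. Decomposing $P$ into $\kS_n$-isotypic summands, each of which is again projective and hence of constant rank on the irreducible variety $\sM_c$, one gets $P\otimes_{\sZ_{0,c}}\CC_\chi\cong\CC\kS_n$ as a $\kS_n$-module for every $\chi$. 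Since $P\otimes_{\sZ_{0,c}}\CC_\chi=(\sH_{0,c}/\langle\chi\rangle)\e$ and $\e$ acts as a rank-one idempotent on $\CC\kS_n$, this module is the unique simple $\mathrm{Mat}_{n!}(\CC)$-module $V_\chi$; hence $V_\chi\cong\CC\kS_n$.

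The main obstacle is the smoothness of $\sM_c$ for $c\ne 0$. This is the one genuinely nontrivial geometric ingredient, and it is exactly where type $A$ is special: for a general (symplectic or complex) reflection group the Calogero--Moser space is typically singular, and then $\sH_{0,c}$ fails to be Azumaya and does admit irreducibles of dimension strictly smaller than $|G|$. Everything else above is formal once smoothness — equivalently, the identification of the Azumaya locus with all of $\sM_c$ — has been secured.
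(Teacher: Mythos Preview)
Your approach is valid in principle but dramatically more involved than the paper's, and in the paper's own logical flow it runs backwards. The paper gives a four-line argument: for any nontrivial $\sigma\in\kS_n$, pick $j$ with $\sigma(j)=i\ne j$; then in $\sH_{0,c}$ one has
\[
[y_j,\,x_i s_{ij}\sigma] \;=\; [y_j,x_i]\,s_{ij}\sigma \;=\; c\,s_{ij}^2\sigma \;=\; c\sigma,
\]
so for $c\ne 0$ the element $\sigma$ is a commutator up to scalar, hence $\tr_E(\sigma)=0$ on every finite-dimensional $E$. Thus $E$ is a multiple of the regular representation; combined with the bound $\dim E\le n!$ from Corollary~\ref{est}, $E$ is exactly the regular representation. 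No geometry, no Azumaya theory, no smoothness.

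Your route inverts the paper's logic. In the paper, Theorem~\ref{ratcher} is proved first by this character trick; \emph{then} Artin's theorem gives that $\sH_{0,c}$ is Azumaya; \emph{then} Theorem~\ref{smo} (Azumaya locus $=$ smooth locus) yields smoothness of $\sM_c$ as a corollary; and only in Section~9 is $\sM_c$ identified with the Kazhdan--Kostant--Sternberg space $\mC_n$ --- a proof that itself invokes the Azumaya property of $\sH_{0,c}$. So if you try to import smoothness from the Hamiltonian-reduction picture via the paper's identification $\sM_c\cong\mC_n$, you are circular. You would need an independent proof of that isomorphism (or of smoothness of $\sM_c$ directly), which exists (e.g.\ via Nakajima's quiver-variety description) but is a substantial detour. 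The elementary trace computation bypasses all of this.
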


\begin{proof}
Let $E$ be an irreducible representation of $\sH_{0,c}$. 
Let us calculate the trace in $E$ of any permutation $\sigma\ne 1$. 
Let $j$ be an index such that $\sigma(j)=i\ne j$. 
Then $s_{ij}\sigma(j)=j$. Hence in $\sH_{0,c}$ we have 
$$
[y_j,x_is_{ij}\sigma]=[y_j,x_i]s_{ij}\sigma=cs_{ij}^2\sigma=c\sigma.
$$
Hence $\tr_E(\sigma)=0$, and thus $E$ is a multiple of the regular representation of $\kS_n$. 
But by Theorem \ref{est}, $\dim E\le n!$, so we get that $E$ is the regular representation, as desired. 
\end{proof}

\subsection{Azumaya algebras}

Let $Z$ be a finitely generated commutative algebra over $\CC$, 
$M=\Spec Z$ the corresponding affine scheme, and $A$ a finitely generated $Z$-algebra. 

\begin{definition}
$A$ is said to be an Azumaya algebra of degree $N$ if the completion 
$\hat A_\chi$ of $A$ at every maximal ideal 
$\chi$ in $Z$ is the matrix algebra of size $N$ over the completion $\hat Z_\chi$ of $Z$. 
\end{definition}

Thus, an Azumaya algebra should be thought of as a bundle of matrix algebras on $M$.
\footnote{If $M$ is not affine, one can define, in a standard manner, the notion of a sheaf of 
Azumaya algebras on $M$.}
For example, if $E$ is an algebraic vector bundle on $M$ then $\End(E)$ is an Azumaya algebra. 
However, not all Azumaya algebras are of this form. 

\begin{example} 
For $q\in \CC^{*}$, consider the quantum torus 
$$\mathbb{T}_{q}=\CC\langle X^{\pm 1},Y^{\pm 1}\rangle/
\langle XY-qYX\rangle.$$
If $q$ is a root of unity of order $N$, then the center of 
$\mathbb{T}_{q}$ is $\langle X^{\pm N}, Y^{\pm N}\rangle=
\CC[M]$ where $M=(\CC^{*})^{2}$.
It is not difficult to show that 
$\mathbb{T}_q$ is an Azumaya algebra of degree $N$, but
$\mathbb{T}_{q}\otimes_{\CC[M]}\CC(M)\not\cong
\mathrm{Mat}_{N}(\CC(M))$, so $\mathbb{T}_q$ is not the endomorphism algebra of a vector bundle. 
\end{example}

\begin{example}
Let $X$ be a smooth irreducible variety over a field of characteristic $p$.
Then $\cD(X)$, the algebra of differential operators on $X$, 
is an Azumaya algebra with rank $p^{\dim X}$, which is not an
endomorphism algebra of a vector bundle. Its center is 
$Z=\cO(T^{*}X)^{\Bbb F}$, the Frobenius twisted functions
on $T^{*}X$.
\end{example}

It is clear that if $A$ is an Azumaya algebra (say, over $\Bbb C$) then 
for every central character $\chi$ of $A$, 
$A/\langle\chi\rangle$ is the algebra $\mathrm{Mat}_N(\CC)$ of complex $N$ by $N$ matrices, and 
every irreducible representation of $A$ has dimension $N$. 

The following important result is due to M. Artin.   

\begin{theorem} Let $A$ be a finitely generated (over $\CC$) 
polynomial identity (PI) algebra of degree $N$ 
(i.e. all the polynomial relations of the matrix algebra of size $N$ are satisfied in $A$). 
Then $A$ is an Azumaya algebra if and only if every irreducible representation of $A$ 
has dimension exactly $N$.  
\end{theorem}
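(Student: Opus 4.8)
The plan is to prove both directions, with the forward direction ("Azumaya $\Rightarrow$ all irreps have dimension $N$") being essentially immediate, and the converse being the substantive content. For the easy direction: if $A$ is Azumaya of degree $N$ over $Z$, then for any maximal ideal $\chi$ of $Z$, the completion $\hat A_\chi \cong \mathrm{Mat}_N(\hat Z_\chi)$; passing to the residue field (which is $\CC$ since $Z$ is finitely generated over $\CC$) gives $A/\langle\chi\rangle \cong \mathrm{Mat}_N(\CC)$, whose unique irreducible representation has dimension $N$. Since every irreducible representation of a finitely generated PI algebra has finite dimension and hence a central character, we are done. I would dispatch this in one paragraph.

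For the converse, the first step is to set up the variety structure. Let $M = \Spec Z$; I would first reduce to the case where $M$ is irreducible (by localizing at a minimal prime, or working componentwise), and consider the open locus $M_{\mathrm{Az}}$ of points $\chi$ at which $A$ is locally Azumaya — equivalently, where the fiber $A/\langle\chi\rangle$ has dimension $N^2$ as a $\CC$-vector space, or where $\dim A/\langle\chi\rangle$ is minimal. The key classical facts I would invoke are: (i) by the Artin–Procesi / Azumaya locus theory, the condition "the fiber is a full matrix algebra $\mathrm{Mat}_N$" is an open condition on $M$, cut out by nonvanishing of certain universal ``identities'' (the non-vanishing of an evaluation of a central polynomial of degree $N$ that is an identity for $\mathrm{Mat}_{N-1}$ but not for $\mathrm{Mat}_N$); and (ii) on this open set $A$ is genuinely Azumaya. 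So the Azumaya locus is always open and nonempty when $A$ has PI-degree exactly $N$.

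The main obstacle — and the heart of the argument — is showing that the Azumaya locus is all of $M$, i.e.\ that there is no proper closed ``bad'' locus. Here is where the hypothesis that \emph{every} irreducible representation has dimension exactly $N$ enters decisively. Suppose $\chi_0 \in M \setminus M_{\mathrm{Az}}$. The idea is that over $\chi_0$ the fiber algebra $A/\langle\chi_0\rangle$ must fail to be $\mathrm{Mat}_N$; but every irreducible $A$-module with central character $\chi_0$ has dimension $N$, so $\mathrm{Mat}_N(\CC)$ is a quotient of $A/\langle\chi_0\rangle$, which forces $A/\langle\chi_0\rangle$ to have dimension strictly greater than $N^2$ (it has extra nilpotents). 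This contradicts semicontinuity of fiber dimension: on the Azumaya locus the fiber has dimension $N^2$, and $\dim_\CC A/\langle\chi\rangle$ is upper-semicontinuous, so the generic (hence minimal) value is $N^2$ — but a jump \emph{up} at a special point is allowed by semicontinuity, so the contradiction must instead be extracted from the trace-form / separability argument: the reduced trace pairing on $A$ (valued in $Z$) is nondegenerate on $M_{\mathrm{Az}}$, its discriminant is a regular function on $M$, and one shows using the Amitsur–Levitzki/central-polynomial machinery together with the dimension hypothesis that this discriminant is a unit. The cleanest route is via Artin's original theorem on Azumaya algebras as exactly those PI algebras all of whose irreducible representations have the maximal dimension — so I would, in the write-up, either cite M.\ Artin's paper directly for this equivalence or reconstruct the discriminant-is-invertible argument. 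I expect the genuinely delicate point to be verifying that the trace form's nondegeneracy at \emph{every} point follows from the representation-dimension hypothesis, which is precisely the Artin–Procesi characterization and is not elementary; in a lecture-notes setting I would state it and refer to Artin's work rather than reproduce the proof.
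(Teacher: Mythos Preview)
The paper does not give a proof of this theorem at all: its entire ``proof'' is the single line ``See \cite{Ar} Theorem 8.3.'' Your proposal ultimately arrives at the same conclusion --- you explicitly say that in a lecture-notes setting you would ``cite M.\ Artin's paper directly for this equivalence'' --- so in that sense you and the paper agree.

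Your sketch of the argument is broadly on the right track: the forward direction is exactly as you say, and for the converse the Artin--Procesi characterization via central polynomials (equivalently, nondegeneracy of the reduced trace form at every point) is indeed the standard route. Your own discussion correctly identifies the place where a naive semicontinuity argument breaks down and where the real content lies. One minor point: you should be a bit careful with the phrase ``reduce to $M$ irreducible by localizing at a minimal prime,'' since the Azumaya condition in the paper is stated pointwise at maximal ideals and there is no a priori domain hypothesis on $Z$; but since the condition is local this is harmless. Overall, your write-up would be more detailed than the paper's, but the paper's intent is clearly to treat this as a black-box citation, and your proposal is compatible with that.
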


\begin{proof}
See \cite{Ar} Theorem 8.3.
\end{proof}

Thus, by Theorem \ref{ratcher}, 
for $G=\kS_n$, 
the rational Cherednik algebra $H_{0,c}(\kS_{n}, \CC^{n})$ 
for $c\ne 0$ is an Azumaya algebra of degree $n!$.
Indeed, this algebra is PI of degree $n!$ because the classical Dunkl representation 
embeds it into matrices of size $n!$ over 
$\CC (x_1,\ldots,x_n,p_1,\ldots,p_n)^{\kS_n}$.  

Let us say that $\chi\in M$ is an Azumaya point 
if for some affine neighborhood $U$ of $\chi$ the localization of $A$ to $U$ 
is an Azumaya algebra. Obviously, the set $\mathrm{Az}(M)$ of Azumaya points of $M$ is open. 

Now we come back to the study the space $\sM_c$ corresponding to a symplectic 
reflection algebra $\sH_{0,c}$. 

\begin{theorem} \label{smo}
The set $\mathrm{Az}(\sM_c)$ coincides with the set of smooth points of $\sM_c$.
\end{theorem}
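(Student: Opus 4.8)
The plan is to prove the two inclusions $\mathrm{Az}(\sM_c)\subseteq \sM_c^{\mathrm{sm}}$ and $\sM_c^{\mathrm{sm}}\subseteq \mathrm{Az}(\sM_c)$ separately, using the bimodule $\sH_{0,c}\e$ and the double centralizer property (Theorem \ref{doubcen}) as the central tool. First I would handle the easy direction: if $\chi$ is an Azumaya point, then on a neighborhood $U$ of $\chi$ the algebra $\sH_{0,c}$ is Azumaya, hence locally a matrix algebra over $\sZ_{0,c}$, and its center $\sZ_{0,c}|_U$ must be Morita-equivalent to $\sH_{0,c}|_U$. Since $\sH_{0,c}\e$ restricted to $U$ is a vector bundle (it is a finitely generated module which becomes, fiberwise, the regular representation of $G$ by Proposition \ref{gen}), the Morita theory forces $\sZ_{0,c}|_U$ to have finite homological dimension; combined with the fact that $\sZ_{0,c}=\gr$-lifts $(SV)^G$ and hence $\sM_c$ is a normal variety (it is Cohen-Macaulay as the spectrum of a graded ring which is a flat deformation of $(SV)^G$), smoothness at $\chi$ follows. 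Here I would invoke that a normal Cohen-Macaulay variety which carries an Azumaya algebra whose fibers all have the same dimension, together with a vector bundle of rank $|G|$ realizing the regular representation, must be smooth — this is essentially the argument that $\sH_{0,c}\e$ being locally free plus the double centralizer property exhibits $\sH_{0,c}|_U\cong\End_{\cO_U}(\mathcal E)$ for a bundle $\mathcal E$, so $\sM_c$ near $\chi$ is a quotient of a smooth variety by a free action, hence smooth.

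For the harder direction, suppose $\chi$ is a smooth point of $\sM_c$. The goal is to show $\sH_{0,c}$ is Azumaya near $\chi$, equivalently (by Artin's theorem, quoted above) that every irreducible representation with central character near $\chi$ has dimension exactly $|G|$. The strategy is to study the $\sZ_{0,c}$-module $\sH_{0,c}\e$: its generic rank is $|G|$ (Proposition \ref{gen}), and I claim it is a Cohen-Macaulay $\sZ_{0,c}$-module. This is because $\gr(\sH_{0,c}\e)=SV$ as a module over $\gr(\sZ_{0,c})=(SV)^G$, and $SV$ is a Cohen-Macaulay $(SV)^G$-module (it is even free over the polynomial subalgebra generated by a homogeneous system of parameters, by the Chevalley-type argument — $SV$ is CM and finite over $(SV)^G$, and $(SV)^G$ is CM, so $SV$ is a maximal CM module over it). Then at a smooth point $\chi$ of the (CM, normal) variety $\sM_c$, a CM module is automatically locally free; so $\sH_{0,c}\e$ is locally free of rank $|G|$ near $\chi$. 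By the double centralizer property, $\sH_{0,c}|_U=\End_{\sB_{0,c}|_U}(\sH_{0,c}\e|_U)=\End_{\cO_U}(\sH_{0,c}\e|_U)$, a matrix algebra bundle, i.e.\ $\sH_{0,c}$ is Azumaya on $U$. Hence $\chi\in\mathrm{Az}(\sM_c)$.

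The main obstacle I expect is establishing the Cohen-Macaulayness of $\sH_{0,c}\e$ over $\sZ_{0,c}$ and the compatibility with the associated graded picture — one needs that $\sM_c$ is itself Cohen-Macaulay and normal (so that the local-freeness-at-smooth-points criterion for CM modules applies), and that the filtration on $\sH_{0,c}\e$ is well-behaved so that $\gr$ commutes with the relevant constructions. The input $\gr\sZ_{0,c}=\sZ_{0,0}=(SV)^G$ from the earlier theorem on the center takes care of normality and Cohen-Macaulayness of $\sM_c$ (since $(SV)^G$ is a normal CM domain and these properties lift through the filtration). The second delicate point is the converse direction's use of Artin's theorem: one must know a priori that $\sH_{0,c}$ is a PI algebra of degree $|G|$, which follows because generically it is a matrix algebra of that size (Proposition \ref{gen}), so the polynomial identities of $\mathrm{Mat}_{|G|}$ hold on a Zariski-dense set and hence everywhere. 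With these pieces in place the two inclusions close up and the theorem follows. I would also remark that this simultaneously recovers, via Theorem \ref{ratcher}, that for $G=\kS_n$ acting on $\h\oplus\h^*$ the whole space $\sM_c$ is smooth and the Cherednik algebra is globally Azumaya for $c\neq 0$.
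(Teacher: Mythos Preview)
Your approach is essentially the same as the paper's: use Cohen--Macaulayness of $\sH_{0,c}\e$ (lifted from the associated graded $SV$ over $(SV)^G$) to get local freeness over the smooth locus, whence the double centralizer identity $\sH_{0,c}=\End_{\sZ_{0,c}}(\sH_{0,c}\e)$ gives Azumaya; and use Morita equivalence with $\cO_U$ on the Azumaya locus to transfer finite global dimension and conclude smoothness via Serre's criterion.

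There is one genuine gap in your ``easy'' direction. You correctly note that on an Azumaya neighborhood $U$ the module $\sH_{0,c}\e$ is locally free and gives a Morita equivalence between $\sH_{0,c}|_U$ and $\cO_U$, so their homological dimensions agree. But to conclude that $\cO_U$ has \emph{finite} homological dimension you must know that $\sH_{0,c}$ (or its localization) does. You never say why. The paper supplies this: $\gr\sH_{0,c}=G\ltimes SV$ has global dimension $\dim V$ (Hilbert syzygy theorem plus the observation that $\Ext^*_{G\ltimes SV}(M,N)=\Ext^*_{SV}(M,N)^G$), and global dimension does not increase under taking associated graded, nor under localization. Without this step your argument does not close.

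Two smaller points. The sentence ``so $\sM_c$ near $\chi$ is a quotient of a smooth variety by a free action, hence smooth'' is a non sequitur; no such quotient description is available, and it is not needed---Serre's theorem (finite global dimension $\Leftrightarrow$ regular) is the correct endpoint. Likewise, Artin's theorem is not actually used in either direction here: the Azumaya property at smooth points comes directly from $\sH_{0,c}|_U\cong\End_{\cO_U}(\sH_{0,c}\e|_U)$, not from a dimension count on irreducibles.
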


The proof of this theorem is given in the following two subsections. 

\begin{corollary} If $G=\kS_n$ and 
$V=\h\oplus \h^*$, $\h=\CC ^{n}$ 
(the rational Cherednik algebra case) then 
$\sM_c$ is a smooth algebraic variety for $c\ne 0$. 
\end{corollary}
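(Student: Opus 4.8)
The plan is to combine Theorem \ref{ratcher} with Theorem \ref{smo}, plus the fact already noted in the paper that the rational Cherednik algebra $H_{0,c}(\kS_n,\CC^n)$ for $c\ne 0$ is PI of degree $n!$ (via the classical Dunkl embedding into $n!\times n!$ matrices over $\CC(x_1,\ldots,x_n,p_1,\ldots,p_n)^{\kS_n}$). So the argument is essentially a two-line deduction, once those ingredients are in place.

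First I would recall that $\sM_c=\Spec\sZ_{0,c}$, where $\sZ_{0,c}$ is the center of $\sH_{0,c}[\kS_n,\h\oplus\h^*]=H_{0,c}(\kS_n,\CC^n)$; this space is irreducible (indeed $\gr\sZ_{0,c}=\sZ_{0,0}=(SV)^G$ has no zero divisors, by the discussion in Section \ref{sec:ssra}), so its smooth locus is dense and nonempty. Next, by Theorem \ref{ratcher}, for $c\ne 0$ every irreducible representation of $\sH_{0,c}$ has dimension exactly $n!$. By Artin's theorem applied to the PI algebra $\sH_{0,c}$ of degree $n!$, this means $\sH_{0,c}$ is an Azumaya algebra of degree $n!$ over $\sZ_{0,c}$; equivalently, $\mathrm{Az}(\sM_c)=\sM_c$, i.e. every point of $\sM_c$ is an Azumaya point. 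Finally, Theorem \ref{smo} identifies $\mathrm{Az}(\sM_c)$ with the smooth locus of $\sM_c$. Therefore the smooth locus is all of $\sM_c$, i.e. $\sM_c$ is a smooth algebraic variety for $c\ne 0$.

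Since none of the steps requires a genuinely new computation, there is no real obstacle here: the corollary is a formal consequence of the three preceding results. The only point worth flagging is that one should make sure the hypotheses of Theorem \ref{smo} (that $\sH_{0,c}$ is a symplectic reflection algebra with the standard assumptions: $V$ symplectically irreducible, $G$ faithful) are met in the type $A$ case — and they are, since $\h\oplus\h^*$ with $\h=\CC^n$ the reflection representation of $\kS_n$ is symplectically irreducible and $\kS_n$ acts faithfully. With that check in place the proof is complete.

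\begin{proof}
By Theorem \ref{ratcher}, for $c\ne 0$ every irreducible representation of $\sH_{0,c}=H_{0,c}(\kS_n,\CC^n)$ has dimension exactly $n!$. As observed above, $\sH_{0,c}$ is a finitely generated PI algebra of degree $n!$ (the classical Dunkl embedding realizes it inside $\mathrm{Mat}_{n!}$ over $\CC(x_1,\ldots,x_n,p_1,\ldots,p_n)^{\kS_n}$). Hence by Artin's theorem, $\sH_{0,c}$ is an Azumaya algebra of degree $n!$ over its center $\sZ_{0,c}$, so $\mathrm{Az}(\sM_c)=\sM_c$. By Theorem \ref{smo}, $\mathrm{Az}(\sM_c)$ equals the smooth locus of $\sM_c$. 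Therefore $\sM_c$ is smooth.
\end{proof}
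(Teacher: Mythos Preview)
Your proof is correct and is exactly the argument the paper intends: combine Theorem \ref{ratcher} with the PI/Artin observation (already made in the text just before Theorem \ref{smo}) to conclude $\sH_{0,c}$ is Azumaya everywhere, then apply Theorem \ref{smo}. One small slip in your aside: $\h=\CC^n$ is the permutation representation, not the reflection representation, so $V=\h\oplus\h^*$ is \emph{not} symplectically irreducible; but as the paper notes at the start of Section 8, that assumption is inessential, so this does not affect the argument.
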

\begin{proof}
Directly from the above theorem.
\end{proof}
\subsection{Cohen-Macaulay property and homological dimension}

To prove Theorem \ref{smo}, we will need some commutative algebra tools. 
Let $Z$ be a finitely generated commutative algebra over $\CC$ without zero divisors. 
By Noether's normalization lemma, there exist elements $z_1,\ldots,z_n\in Z$ which are 
algebraically independent, such that $Z$ is a finitely generated module over $\CC[z_1,\ldots,z_n]$.

\begin{definition} The algebra $Z$ (or the variety $\Spec Z$) is said to be Cohen-Macaulay if 
$Z$ is a locally free (=projective) module over $\CC[z_1,\ldots,z_n]$. 
\footnote{It was proved by Quillen that a locally free module over 
a polynomial algebra is free; this is a difficult theorem, which will not be needed here.}
\end{definition}

\begin{remark}
It was shown by Serre that if $Z$ is locally free over $\CC[z_1,\ldots,z_n]$ for some choice
of $z_1,\ldots,z_n$, then it happens for any 
choice of them (such that $Z$ is finitely generated as a module over 
$\CC [z_1,\ldots,z_n]$).   
\end{remark}

\begin{remark} Another definition of the Cohen-Macaulay property is that 
the dualizing complex $\omega_Z^\bullet$ of $Z$ is concentrated in degree zero. 
We will not discuss this definition here. 
\end{remark}

It can be shown that the Cohen-Macaulay property is stable under 
localization. Therefore, it makes sense to make the following definition. 

\begin{definition} An algebraic variety $X$ is Cohen-Macaulay if 
the algebra of functions on every affine open set in $X$ is Cohen-Macaulay. 
\end{definition}

Let $Z$ be a finitely generated commutative algebra over $\CC $ without zero divisors, and  
let $M$ be a finitely generated module over $Z$.

\begin{definition} $M$ is said to be Cohen-Macaulay
if for some algebraically independent $z_1,\ldots,z_n\in Z$ such that $Z$ is finitely generated over 
$\CC [z_1,\ldots,z_n]$, $M$ is locally free over $\CC [z_1,\ldots,z_n]$.
\end{definition}

Again, if this happens for some $z_1,\ldots,z_n$, then it happens for any of them. 
We also note that $M$ can be Cohen-Macaulay without $Z$ being Cohen-Macaulay, 
and that $Z$ is a Cohen-Macaulay algebra iff it is a Cohen-Macaulay module over itself. 

We will need the following standard properties of Cohen-Macaulay algebras and 
modules.

\begin{theorem}\label{cmac}
\begin{enumerate}
\item[(i)] Let $Z_1\subset Z_2$ be a finite extension of finitely generated commutative $\CC $-algebras,
without zero divisors, and $M$ be a finitely generated module over $Z_2$. Then 
$M$ is Cohen-Macaulay over $Z_2$ iff it is Cohen-Macaulay over $Z_1$. 
\item[(ii)]Suppose that $Z$ is the algebra of functions on a smooth affine variety. 
Then a $Z$-module $M$ is Cohen-Macaulay if and only if it is projective. 
\end{enumerate}
\end{theorem}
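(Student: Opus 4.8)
The plan is to reduce both statements to Noether normalization together with the Auslander--Buchsbaum formula and elementary facts about regular local rings. For part (i), I would first apply Noether's normalization lemma to $Z_1$, obtaining algebraically independent elements $z_1,\ldots,z_n\in Z_1$ such that $Z_1$ is module-finite over $R:=\CC[z_1,\ldots,z_n]$. Since $Z_2$ is module-finite over $Z_1$, it is module-finite over $R$ as well, so the same tuple $(z_1,\ldots,z_n)$ is simultaneously a Noether normalization of $Z_1$ and of $Z_2$ (in particular $\dim Z_1=\dim Z_2=n$); likewise $M$, being finitely generated over $Z_2$, is finitely generated over $Z_1$ and over $R$. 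By the definition of the Cohen--Macaulay property for a module, invoking the Serre-type independence of the chosen normalization recorded in the remark above, $M$ is Cohen--Macaulay over $Z_2$ if and only if $M$ is locally free over $R$, which holds if and only if $M$ is Cohen--Macaulay over $Z_1$. This is essentially the whole argument; the only point that needs care is the appeal to the independence result, which is what lets us test Cohen--Macaulayness of $M$ over $Z_1$ and over $Z_2$ against the \emph{same} subalgebra $R$.

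For part (ii), write $X=\Spec Z$, which is smooth, and of pure dimension $n:=\dim Z$ since $Z$ is a domain; fix a Noether normalization $R=\CC[z_1,\ldots,z_n]\subset Z$. The implication ``projective $\Rightarrow$ Cohen--Macaulay'' is the easy one: smoothness forces $Z$ to be regular, hence Cohen--Macaulay, i.e.\ $Z$ is locally free over $R$; a finitely generated projective $Z$-module $M$ is a direct summand of some $Z^{\oplus k}$, hence (restricting scalars) a direct summand over $R$ of the locally free $R$-module $Z^{\oplus k}$, hence projective and so locally free over $R$, i.e.\ Cohen--Macaulay. For the converse, localize at an arbitrary maximal ideal $\mathfrak{m}\subset Z$. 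Then $Z_{\mathfrak{m}}$ is a regular local ring of dimension $n$, so it has finite global dimension $n$ (in particular $\mathrm{pd}_{Z_{\mathfrak{m}}}(M_{\mathfrak{m}})<\infty$) and $\depth(Z_{\mathfrak{m}})=n$. The Auslander--Buchsbaum formula gives $\mathrm{pd}_{Z_{\mathfrak{m}}}(M_{\mathfrak{m}})+\depth(M_{\mathfrak{m}})=n$. Now $M$ Cohen--Macaulay means (by part (i) applied to the finite extension $R\subset Z$) that $M$ is locally free over $R$, and one checks this forces $\depth_{Z_{\mathfrak{m}}}(M_{\mathfrak{m}})=n$; hence $\mathrm{pd}_{Z_{\mathfrak{m}}}(M_{\mathfrak{m}})=0$, so $M_{\mathfrak{m}}$ is free over $Z_{\mathfrak{m}}$. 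Since this holds at every maximal ideal and $M$ is finitely generated over the Noetherian ring $Z$, $M$ is projective.

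The step I expect to be the main obstacle is the claim used in part (ii) that local freeness of $M$ over the polynomial subring $R$ is equivalent to $M$ being locally maximal Cohen--Macaulay over $Z$, i.e.\ $\depth_{Z_{\mathfrak{m}}}(M_{\mathfrak{m}})=\dim Z_{\mathfrak{m}}$ for every maximal ideal $\mathfrak{m}$. The content here is that depth is insensitive to the finite extension $R\hookrightarrow Z$: writing $\mathfrak{n}=\mathfrak{m}\cap R$, the map $R_{\mathfrak{n}}\to Z_{\mathfrak{m}}$ is a local homomorphism with $\mathfrak{n}Z_{\mathfrak{m}}$ an $\mathfrak{m}Z_{\mathfrak{m}}$-primary ideal, and the depth of a finitely generated $Z_{\mathfrak{m}}$-module is the same whether measured over $Z_{\mathfrak{m}}$ or, after restriction of scalars, over $R_{\mathfrak{n}}$ (depth depending only on the radical of the defining ideal), while over $R_{\mathfrak{n}}$ local freeness makes $M$ a free $R_{\mathfrak{n}}$-module of depth $\dim R_{\mathfrak{n}}=n$. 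This is standard commutative algebra, but it is the one place where one must argue rather than merely quote; everything else --- Noether normalization, the Auslander--Buchsbaum formula, and the equality of global dimension and Krull dimension for regular local rings --- can be taken from a standard reference.
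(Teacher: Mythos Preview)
The paper does not give a proof of this theorem at all; it simply writes ``The proof can be found in the text book \cite{Ei}'' and moves on. So there is nothing to compare against except the standard textbook treatment, and what you have written is exactly that: Noether normalization to reduce (i) to testing local freeness over a common polynomial subring, and Auslander--Buchsbaum over a regular local ring for (ii). Your argument is correct, and the point you flag as the delicate step---that depth of $M_{\mathfrak{m}}$ over $Z_{\mathfrak{m}}$ can be computed as depth over $R_{\mathfrak{n}}$ because $\mathfrak{n}Z_{\mathfrak{m}}$ is $\mathfrak{m}$-primary---is indeed the only place where a sentence of justification is needed; everything else is bookkeeping. One small remark: when you pass from $M_{\mathfrak{n}}$ free over $R_{\mathfrak{n}}$ to $\depth_{Z_{\mathfrak{m}}}(M_{\mathfrak{m}})=n$, you should note that $z_1,\ldots,z_n$ remain a regular sequence on the further localization $M_{\mathfrak{m}}$ (flatness of localization plus Nakayama to ensure the successive quotients stay nonzero when $M_{\mathfrak{m}}\ne 0$; the case $M_{\mathfrak{m}}=0$ is trivial). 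With that one line added, your write-up is more complete than what the paper provides.
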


\begin{proof}
The proof can be found in the text book \cite{Ei}.
\end{proof}

In particular, this shows that the algebra of functions 
on a smooth affine variety is Cohen-Macaulay. 
Algebras of functions on many singular varieties are also Cohen-Macaulay. 

\begin{example}
The algebra of regular functions on the cone $xy=z^2$ is Cohen-Macaulay. 
This algebra can be identified as 
$\CC[a,b]^{\ZZ_{2}}$ by letting $x=a^{2}, y=b^{2}$ and $z=ab$,
where the $\ZZ_{2}$ action is defined by $a\mapsto -a$,
$b\mapsto -b$.
It contains a subalgebra $\CC[a^{2},b^{2}]$,
and as a module over this subalgebra, it is free of rank $2$ with 
generators $1$, $ab$.
\end{example}

\begin{example}
Any irreducible affine algebraic curve is Cohen-Macaulay.
For example, the algebra of regular functions on $y^{2}=x^{3}$
is isomorphic to the subalgebra of $\CC [t]$ spanned by $1, t^{2}, t^{3},\ldots$.
It contains a subalgebra $\CC[t^{2}]$ and as a module over this subalgebra, it is free of rank $2$ with generators $1$, $t^{3}$.
\end{example}

\begin{example}
Consider the subalgebra in $\CC[x,y]$ spanned by $1$ and $x^iy^j$ with $i+j\ge 2$. 
It is a finite generated module over $\CC[x^{2}, y^{2}]$, but not free.
So this algebra is not Cohen-Macaulay.
\end{example}

Another tool we will need is homological dimension.
We will say that an algebra $A$ has homological dimension 
$\le d$ if any (left) $A$-module $M$ has a projective resolution of length 
$\le d$. The homological dimension of $A$ is the smallest integer having this property. 
If such an integer does not exist, $A$ is said to have infinite homological dimension. 

It is easy to show that the homological dimension of $A$ is $\le d$ if and only if 
for any $A$-modules $M,N$ one has $\Ext^i(M,N)=0$ for $i>d$.
Also, the homological dimension clearly does not decrease 
under taking associated graded of the algebra under 
a positive filtration (this is clear from considering 
the spectral sequence attached to the filtration). 

It follows immediately from this definition that 
homological dimension is Morita invariant. 
Namely, recall that a Morita equivalence 
between algebras $A$ and $B$ is 
an equivalence of categories
$A$-mod $\to$ $B$-mod. Such an equivalence maps projective modules to projective ones, 
since projectivity is a categorical property ($P$ is projective if and only if
the functor $\Hom(P,\cdot)$ is exact). This implies that if $A$ and $B$ are Morita equivalent then 
their homological dimensions are the same. 

Then we have the following important theorem. 

\begin{theorem}\label{fini} The homological dimension of a commutative 
finitely generated $\CC $-algebra $Z$ is finite if and only if $Z$ is regular, i.e. is the algebra of functions 
on a smooth affine variety. 
\end{theorem}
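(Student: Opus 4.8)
The plan is to reduce the statement to a purely local one and then carry out (a proof of) the Auslander--Buchsbaum--Serre theorem characterizing regular local rings by finiteness of global dimension.

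\emph{Reduction to the local case.} For a Noetherian ring one has $\mathrm{gl.dim}(Z)=\sup_{\mathfrak m}\mathrm{gl.dim}(Z_{\mathfrak m})$, the supremum over maximal ideals, since $\Ext$ commutes with localization and a finitely generated module vanishes iff all its localizations do. Likewise $Z$ is regular --- equivalently $\Spec Z$ is smooth over $\CC$, as $\CC$ is perfect --- iff $Z_{\mathfrak m}$ is a regular local ring for every maximal ideal $\mathfrak m$. Finally, $Z$ finitely generated over $\CC$ has finite Krull dimension, which bounds $\dim Z_{\mathfrak m}$ uniformly. So it suffices to prove: for a Noetherian local ring $(R,\mathfrak m,k)$ one has $\mathrm{gl.dim}(R)<\infty$ iff $R$ is regular, in which case $\mathrm{gl.dim}(R)=\dim R$. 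The first auxiliary fact I would record is $\mathrm{gl.dim}(R)=\mathrm{pd}_R(k)$: if $\mathrm{pd}_R(k)=d<\infty$, then for any finitely generated $M$ the $d$-th syzygy $\Omega^d M$ satisfies $\mathrm{Tor}_1^R(\Omega^d M,k)\cong\mathrm{Tor}_{d+1}^R(M,k)=0$, so $\Omega^d M$ is free by Nakayama's lemma, whence $\mathrm{pd}_R M\le d$.

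\emph{Regular implies finite global dimension.} Let $R$ be regular local of dimension $n$ with regular system of parameters $x_1,\dots,x_n$. A depth argument shows this is a regular sequence, so the Koszul complex $K_\bullet(x_1,\dots,x_n)$ is a finite free resolution of $k$ of length $n$; hence $\mathrm{pd}_R(k)\le n$ and $\mathrm{gl.dim}(R)\le n$. Since $\mathrm{Tor}_n^R(k,k)$ is the top homology of $K_\bullet\otimes_R k$, which is nonzero, in fact $\mathrm{gl.dim}(R)=n$. Globalizing, $\mathrm{gl.dim}(Z)=\dim Z<\infty$ for regular $Z$.

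\emph{Finite global dimension implies regular.} Assume $\mathrm{pd}_R(k)<\infty$; I would argue by induction on the embedding dimension $e=\dim_k(\mathfrak m/\mathfrak m^2)$ that $R$ is regular, i.e.\ $\dim R=e$. If $e=0$, then $\mathfrak m=\mathfrak m^2$, so $\mathfrak m=0$ by Nakayama and $R=k$ is a field. If $e>0$, the crux is to produce an element $x\in\mathfrak m\setminus\mathfrak m^2$ that is a nonzerodivisor. For this I would first prove the lemma that $\mathfrak m\notin\mathrm{Ass}(R)$: if $\mathfrak m=\mathrm{Ann}(y)$ with $y\ne 0$, then $Ry\cong k$, and feeding $0\to k\to R\to R/Ry\to 0$ into the long exact $\mathrm{Tor}(-,k)$ sequence, together with the finiteness of $\mathrm{pd}_R(k)$, forces $e=0$, a contradiction. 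Hence $\mathfrak m$ lies in neither $\mathfrak m^2$ nor any associated prime of $R$, so by prime avoidance we may choose $x\in\mathfrak m\setminus\bigl(\mathfrak m^2\cup\bigcup_{\mathfrak p\in\mathrm{Ass}(R)}\mathfrak p\bigr)$, which is then a nonzerodivisor. Setting $R':=R/xR$, we have $\mathrm{embdim}(R')=e-1$ since $x\notin\mathfrak m^2$, and, by the change-of-rings theorem for a nonzerodivisor outside $\mathfrak m^2$, $\mathrm{gl.dim}(R')=\mathrm{gl.dim}(R)-1<\infty$. By the induction hypothesis $R'$ is regular, so $\dim R'=e-1$, whence $\dim R=\dim R'+1=e$ and $R$ is regular; this also matches the dimension count of the previous step.

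The main obstacle is this last direction, and within it the associated-prime lemma and the prime-avoidance step: these are precisely where homological finiteness is converted into the geometric, dimension-theoretic statement, and they do not reduce to formal homological algebra. An alternative packaging of the last step is to combine the Auslander--Buchsbaum formula $\mathrm{pd}_R(k)+\depth(k)=\depth(R)$, which gives $\mathrm{pd}_R(k)=\depth(R)\le\dim R\le e$, with the Koszul lower bound $\mathrm{pd}_R(k)\ge e$, forcing $\dim R=e$; but this merely relocates the difficulty into the proof of the Auslander--Buchsbaum formula, which is of comparable depth. I would therefore take the inductive argument above as the main line of the proof.
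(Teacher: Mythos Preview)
Your proof is correct and is essentially the standard proof of the Auslander--Buchsbaum--Serre theorem. However, the paper does not actually prove this theorem: it simply states it as a known result and immediately moves on to use it in the proof of Theorem~\ref{smo}. So there is no argument in the paper to compare your proof against; you have supplied a proof where the paper chose to cite a classical fact.
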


\subsection{Proof of Theorem \ref{smo}}

First let us show that any smooth point $\chi$ of 
$\sM_c$ is an Azumaya point. Since $\sH_{0,c}=\End_{\sB_{0,c}}\sH_{0,c}\e=
\End_{\sZ_{0,c}}(\sH_{0,c}\e)$, it is sufficient to 
show that the coherent sheaf on $\sM_c$ corresponding to the module 
$\sH_{0,c}\e$ is a vector bundle near $\chi$. 
By Theorem \ref{cmac} (ii), for this it suffices to show 
that $\sH_{0,c}\e$ is a Cohen-Macaulay $\sZ_{0,c}$-module.

To do so, first note that the statement is true for $c=0$. 
Indeed, in this case the claim is that $SV$ is a Cohen-Macaulay module over $(SV)^G$. 
But $SV$ is a polynomial algebra, which is Cohen-Macaulay, 
so the result follows from Theorem \ref{cmac}, (i). 

Now, we claim that if $Z,M$ are positively filtered
and $\gr M$ is a Cohen-Macaulay $\gr Z$-module then 
$M$ is a Cohen-Macaulay $Z$-module. Indeed, let $z_1,\ldots,z_n$ be homogeneous algebraically independent 
elements of $\gr Z$ such that $\gr Z$ is a finite module over the subalgebra generated by them. 
Let $z_1', \ldots, z_n'$ be their liftings to $Z$. 
Then $z_1', \ldots, z_n'$ are algebraically independent, and 
the module $M$ over $\CC[z_1', \ldots, z_n']$ is finitely generated and (locally) free since so is 
the module $\gr M$ over $\CC [z_1, \ldots, z_n]$. 

Recall now that $\gr\sH_{0, c}\e=SV$, $\gr\sZ_{0, c}=(SV)^G$. 
Thus the $c=0$ case implies the general case, and we are done. 

Now let us show that any Azumaya point of $\sM_c$ is smooth. 
Let $U$ be an affine open set in $\sM_c$ consisting of Azumaya points. 
Then the localization $\sH_{0,c}(U):=\sH_{0,c}\otimes_{\sZ_{0,c}}
\cO_U$ is an Azumaya algebra. 
Moreover, for any $\chi\in U$, the unique irreducible representation of 
$\sH_{0,c}(U)$ with central character $\chi$ is the regular representation 
of $G$ (since this holds for generic $\chi$ by Proposition \ref{gen}). 
This means that $\e$ is a rank $1$ idempotent 
in $\sH_{0,c}(U)/\langle\chi\rangle$ for all $\chi$. 
In particular, $\sH_{0,c}(U)\e$ is a vector bundle on $U$. 
Thus the functor $F: \cO_U$-mod $\to \sH_{0,c}(U)$-mod 
given by the formula 
$F(Y)=\sH_{0,c}(U)\e\otimes_{\cO_U}Y$ is an equivalence of categories
(the quasi-inverse functor is given by the formula $F^{-1}(N)=\e N$). 
Thus $\sH_{0,c}(U)$ is Morita equivalent to $\cO_U$, and therefore 
their homological dimensions are the same. 

On the other hand, the homological dimension of 
$\sH_{0,c}$ is finite (in fact, it equals to $\dim V$). 
To show this, note that by the Hilbert syzygies theorem, 
the homological dimension of $SV$ is $\dim V$. 
Hence, so is the homological dimension of $G\ltimes SV$ (as 
$\Ext^{*}_{G\ltimes SV}(M,N)=\Ext^{*}_{SV}(M,N)^G$). 
Thus, since $\gr \sH_{0,c}=G\ltimes SV$, 
we get that $\sH_{0,c}$ has homological dimension $\le \dim V$. 
Hence, the homological dimension of $\sH_{0,c}(U)$ is also $\le \dim V$
(as the homological dimension clearly does not increase under the localization). 
But $\sH_{0,c}(U)$ is Morita equivalent to $\cO_U$, so $\cO_U$ has a finite homological dimension. 
By Theorem \ref{fini}, this implies that $U$ consists of smooth points. 

\begin{corollary} $\mathrm{Az}(\sM_c)$ is also the set of points at which 
the Poisson structure of $\sM_c$ is symplectic.  
\end{corollary}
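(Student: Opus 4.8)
The corollary asserts that for the Calogero–Moser space $\sM_c$ of a symplectic reflection algebra, the Azumaya locus $\mathrm{Az}(\sM_c)$ coincides with the locus where the Poisson bracket on $\sZ_{0,c}=\cO(\sM_c)$ is nondegenerate (symplectic). By Theorem \ref{smo} we already know $\mathrm{Az}(\sM_c)$ equals the smooth locus $\sM_c^{\mathrm{sm}}$, so it suffices to prove that the symplectic locus of $\sM_c$ coincides with $\sM_c^{\mathrm{sm}}$. The plan is to prove the two inclusions separately, using the quantization $\sB_c=\sB_{\hbar,c}$ of $\sM_c$ constructed in Section \ref{sec:ssra} together with the Morita equivalence established in the proof of Theorem \ref{smo}.

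\emph{Symplectic points are smooth.} This is the easy direction and I would dispose of it first: an affine Poisson variety whose Poisson structure is nondegenerate at a point $\chi$ is automatically smooth at $\chi$, since nondegeneracy of the bivector forces the cotangent space to have even dimension equal to $\dim\sM_c$ and, more concretely, the symplectic leaf through $\chi$ is open, hence $\sM_c$ is a manifold near $\chi$. (Alternatively: $\sB_c$ is a flat $\CC[\hbar]$-deformation of $\sZ_{0,c}$ quantizing the Poisson bracket, and at a point where the bracket is symplectic one can invoke the standard fact that such a deformation is locally isomorphic to a Weyl algebra completion, forcing regularity of the center's special fiber.)

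\emph{Smooth points are symplectic.} This is the substantive direction. Let $U\subset\sM_c^{\mathrm{sm}}$ be an affine open consisting of smooth (equivalently, Azumaya) points. From the proof of Theorem \ref{smo}, the localization $\sH_{0,c}(U)$ is Azumaya and Morita equivalent to $\cO_U$ via the bimodule $\sH_{0,c}(U)\e$, which is a vector bundle on $U$. I would now lift this picture to the quantization: the algebra $\sB_c$ restricted to (the preimage of) $U$ is a flat formal (or $\CC[\hbar]$-) deformation of $\cO_U$, hence defines a Poisson bracket on $\cO_U$ whose leading term in $\hbar$ is the given one; the point is to show this bracket is nondegenerate. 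Because $\sH_{0,c}(U)$ is Azumaya, a compatible quantization $\sH_c(U)$ of it over $\CC[\hbar]$ (coming from $\sH_{\hbar,c}$) remains "Azumaya-like", and one can compare: $\e\sH_c(U)\e=\sB_c(U)$ is then forced to be an honest (formal) deformation quantization of the smooth symplectic-or-not variety $U$. The cleanest route is to observe that $\sH_{\hbar,c}$ localized over $U$ is, after the Morita reduction, a deformation of $\cD$-module type on $U$; finiteness of homological dimension (used already in Theorem \ref{smo}) together with the fact that $\sH_{\hbar,c}$ is a flat noncommutative deformation whose commutator pairing $[x,y]/\hbar\bmod\hbar$ recovers the Poisson bracket means that nondegeneracy of the Poisson bracket on $U$ is equivalent to $\sB_c(U)$ being a "nondegenerate" quantization, which holds precisely because the associated graded (or $\hbar\to 0$ limit) is the smooth algebra $\cO_U$ and the deformation is nontrivial in the maximal possible way. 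I expect the main obstacle to be making the last implication rigorous: one must rule out that the Poisson bracket degenerates along a divisor in the smooth locus $U$. The way around this is a dimension/rank argument — the symplectic foliation of $\sM_c$ has an open dense leaf (since $\sM_c$ is irreducible and its generic point is Azumaya with symplectic structure inherited from $(V_{\reg}\times V^*_{\reg})/G$ or the analogous generic orbit picture), and the set where the bracket has maximal rank is open; one then shows this maximal-rank open set is exactly the Azumaya locus by noting that at any Azumaya point the local structure of $\sH_{0,c}$ as $\End$ of a rank-$|G|$ bundle over the (smooth) base forces, via the Morita-transported $\cD$-module structure from $\sH_{\hbar,c}$, the symplectic form to be everywhere nondegenerate on $U$.

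\textbf{Remark on references.} All ingredients are in hand: irreducibility of $\sM_c$ and the Poisson structure from Section \ref{sec:ssra}, the identification $\mathrm{Az}(\sM_c)=\sM_c^{\mathrm{sm}}$ from Theorem \ref{smo}, the Morita equivalence $\sH_{0,c}(U)\sim\cO_U$ and the finite homological dimension bound from its proof, and the quantization $\sB_c$ of $\sM_c$. The corollary follows by combining the two inclusions above; a careful writer would note that this is precisely the statement that $\sM_c$ is a smooth \emph{symplectic} variety on its smooth locus, parallel to the type $A$ case where $\sM_c$ is globally smooth and symplectic.
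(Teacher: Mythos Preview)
Your proposal has a genuine gap in the ``smooth $\Rightarrow$ symplectic'' direction. You correctly identify the obstacle --- ruling out that the Poisson bivector degenerates along a divisor inside the smooth locus --- but your suggested route (lifting the Morita equivalence to the quantization $\sB_c$, then arguing that the deformation is ``nondegenerate in the maximal possible way'') never actually closes. Nothing in the Morita/quantization picture by itself prevents the Poisson bracket from dropping rank on a hypersurface of the smooth locus; a flat deformation of $\cO_U$ can perfectly well quantize a Poisson structure that degenerates along a divisor. Your final paragraph gestures at a rank/dimension argument but does not carry it out.

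The paper's proof is a two-line codimension argument that avoids the quantization entirely. The key observation (already implicit in the proof of Lemma~\ref{poisbr}) is that $\sM_0=V^*/G$ is symplectic outside a subset of codimension $\ge 2$, because the fixed-point loci of nontrivial elements are symplectic subspaces; by flatness of the family this persists for $\sM_c$. Hence the set $\sS$ of smooth points where the top exterior power of the Poisson bivector vanishes has codimension $\ge 2$. But on the smooth locus this top exterior power is locally a regular function, so if $\sS$ were nonempty it would be a divisor. Therefore $\sS=\emptyset$, every smooth point is symplectic, and Theorem~\ref{smo} finishes the job. This is exactly the ``dimension/rank argument'' you were reaching for, but the missing ingredient is the codimension-$2$ input coming from the special structure of $\sM_0$.
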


\begin{proof} The variety $\sM_c$ is symplectic outside of a subset
of codimension $2$, because so is $\sM_0$.
Thus the set $\sS$ of smooth points of $\sM_c$ where the top exterior power 
of the Poisson bivector vanishes is of codimension $\ge 2$. 
Since the top exterior power of the Poisson bivector is locally a regular function, 
this implies that $\sS$ is empty. 
Thus, every smooth point is symplectic, 
and the corollary follows from the theorem.  
\end{proof}

\subsection{Notes}
Our exposition in this section follows Section 8 -- Section 10 of \cite{E4}.

\newpage \section{Calogero-Moser spaces}

\subsection{Hamiltonian reduction along an orbit}{\label{sec:hrao}}
 
Let $\cM$ be an affine algebraic variety and $G$ a reductive algebraic group. 
Suppose $\cM$ is Poisson and the action of $G$ preserves 
the Poisson structure.
Let $\g$ be the Lie algebra of $G$ and $\g^{*}$ the dual of $\g$.
Let $\mu: \cM\to \g^{*}$ be a moment map for this action
(we assume it exists). It induces a map $\mu^{*}: S\g\to \CC[\cM]$.

Let $\Ob$ be a closed coadjoint orbit
of $G$,  $I_\Ob$ be the ideal in $S\g$ 
corresponding to $\Ob$, and let $J_\Ob$ be the ideal in $\CC[\cM]$
generated by $\mu^*(I_\Ob)$. Then $J_\Ob^G$ is a Poisson ideal in 
$\CC[\cM]^G$, and $A=\CC[\cM]^G/J_\Ob^G$ is a Poisson algebra. 

Geometrically, $\Spec(A)=\mu^{-1}(\Ob)/G$ (categorical quotient). 
It can also be written as $\mu^{-1}(z)/G_z$, where $z\in \Ob$ and
$G_z$ is the stabilizer of $z$ in $G$. 

\begin{definition}
The scheme $\mu^{-1}(\Ob)/G$ is
called {\em the Hamiltonian reduction of $\cM$ with respect to $G$ 
along $\Ob$}. We will denote by $R(\cM,G,\Ob )$. 
\end{definition}

The following proposition is standard. 

\begin{proposition}{\label{prop:freeact}}
 If $\cM$ is a symplectic variety and the action of $G$ on
$\mu^{-1}(\Ob )$ is free, then 
$R(\cM,G,\Ob )$ is a symplectic variety, of dimension 
$\dim(\cM)-2\dim(G)+\dim(\Ob )$. 
\end{proposition}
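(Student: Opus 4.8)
The plan is to reduce the statement to the standard fact that Hamiltonian reduction at a single point of a symplectic manifold by a freely acting group is again symplectic, and then account for the extra directions contributed by the coadjoint orbit $\Ob$. First I would fix a point $z\in\Ob$ with stabilizer $G_z$, and use the well-known identification $R(\cM,G,\Ob)=\mu^{-1}(z)/G_z$ mentioned just before the statement: indeed, since $\mu$ is $G$-equivariant and $\Ob=G/G_z$, every $G$-orbit in $\mu^{-1}(\Ob)$ meets the fiber $\mu^{-1}(z)$ in exactly one $G_z$-orbit. One checks that if $G$ acts freely on $\mu^{-1}(\Ob)$ then $G_z$ acts freely on $\mu^{-1}(z)$; in particular $z$ is a regular value of $\mu$ (freeness of the action forces the differential of $\mu$ to be surjective at each point of the fiber, by the standard argument that the image of $d\mu_m$ is the annihilator of the Lie algebra of the stabilizer of $m$), so $\mu^{-1}(z)$ is a smooth submanifold.

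Next I would invoke the classical Marsden--Weinstein theorem at the point $z$: since $G_z$ acts freely and properly on the coisotropic submanifold $\mu^{-1}(z)$, the quotient $\mu^{-1}(z)/G_z$ carries a canonical symplectic form $\omega_{\mathrm{red}}$, characterized by $p^*\omega_{\mathrm{red}}=\iota^*\omega$, where $\iota:\mu^{-1}(z)\hookrightarrow\cM$ is the inclusion and $p:\mu^{-1}(z)\to\mu^{-1}(z)/G_z$ the projection. The one point requiring a remark in the algebraic category is that the categorical quotient $\mu^{-1}(z)/G_z$ coincides with the geometric (orbit space) quotient and is smooth; this follows because the action is free and $G_z$ is reductive, so Luna's slice theorem applies and gives local triviality of $p$. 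Hence $R(\cM,G,\Ob)$ is a smooth symplectic variety.

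For the dimension count I would simply add up: $\dim\mu^{-1}(z)=\dim\cM-\dim\g$ since $z$ is a regular value, and then $\dim R(\cM,G,\Ob)=\dim\mu^{-1}(z)-\dim G_z=\dim\cM-\dim G-\dim G_z$. Finally $\dim G_z=\dim G-\dim\Ob$ because $\Ob\cong G/G_z$, so
$$
\dim R(\cM,G,\Ob)=\dim\cM-\dim G-(\dim G-\dim\Ob)=\dim(\cM)-2\dim(G)+\dim(\Ob),
$$
as claimed.

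The main obstacle is not the local differential geometry, which is entirely standard, but the bookkeeping needed to see that the algebraically defined object $\Spec(\CC[\cM]^G/J_\Ob^G)$ really is the naive orbit space $\mu^{-1}(z)/G_z$ as a \emph{scheme} (in particular reduced and smooth), rather than merely set-theoretically. I would handle this by combining: (i) flatness/surjectivity of $\mu$ near $\mu^{-1}(\Ob)$, which shows $J_\Ob$ cuts out a reduced complete intersection there; (ii) freeness of the $G$-action, which makes $\mu^{-1}(\Ob)\to\mu^{-1}(\Ob)/G$ a principal $G$-bundle (étale-locally trivial), so that taking $G$-invariants commutes with the relevant quotients and preserves smoothness and reducedness; and (iii) the resulting identification of Poisson structures. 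Once these are in place, the symplectic form descends by the usual Marsden--Weinstein computation and the proposition follows.
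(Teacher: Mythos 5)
The paper states this proposition as ``standard'' and supplies no proof at all, so there is nothing to compare against: you have simply written out the usual Marsden--Weinstein argument that the authors chose to omit. Your proof is correct. The reduction to the point-fiber quotient $\mu^{-1}(z)/G_z$, the observation that free action forces $d\mu$ to be surjective along the fiber (so $z$ is a regular value), the descent of $\iota^*\omega$ to a nondegenerate closed form on the quotient, and the dimension count $\dim\cM-\dim\g-\dim G_z$ with $\dim G_z=\dim G-\dim\Ob$ are all exactly as they should be. Your closing paragraph, flagging that in the algebraic category one needs Luna's slice theorem (or \'etale-local triviality of the quotient for a free reductive action) to identify the categorical quotient with the geometric one and to see it is smooth and reduced, is a genuine and worthwhile addition beyond what most references spell out; it is the one point where ``standard'' in the differential-geometric literature does not translate verbatim to the scheme-theoretic setting the paper works in.
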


\subsection{The Calogero-Moser space}{\label{sec:cmspace}}

Let $\cM=T^*{\mathrm Mat}_n(\CC )$, and $G=\PGL_n(\CC)$ (so $\g={\gsl}_n(\CC )$). 
Using the trace form we can identify $\g^*$ with $\g$, and 
$\cM$ with ${\mathrm Mat}_n(\CC )\oplus {\mathrm Mat}_n(\CC )$.
Then a moment map is given by the formula $\mu(X,Y)=[X,Y]$, for
$X,Y\in {\mathrm Mat}_n(\CC )$. 

Let $\Ob $ be the orbit of the matrix ${\rm diag}(-1,-1,\ldots,-1,n-1)$, 
i.e. the set of traceless matrices $T$ such that $T+1$ has rank $1$. 
\begin{definition}[Kazhdan, Kostant, Sternberg, \cite{KKS}]
The scheme $\mC_n:=R(\cM,G,\Ob )$ is called {\em the Calogero-Moser space}. 
\end{definition}

\begin{proposition}
The action of $G$ on $\mu^{-1}(\Ob )$ is free, and thus 
(by Proposition \ref{prop:freeact}) $\mC_n$ is a smooth 
symplectic variety (of dimension $2n$). 
\end{proposition}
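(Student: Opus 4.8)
The plan is to prove freeness of the $\PGL_n(\CC)$-action on $\mu^{-1}(\Ob)$ pointwise, and then read off smoothness and the dimension count from Proposition \ref{prop:freeact}. So first I would fix a point $(X,Y)\in \mu^{-1}(\Ob)$, meaning $[X,Y]+1$ has rank $1$, and let $g\in \GL_n(\CC)$ be a representative of an element of the stabilizer in $\PGL_n$; thus $gXg^{-1}=X$, $gYg^{-1}=Y$, i.e. $g$ commutes with both $X$ and $Y$. The goal is to show $g$ is a scalar matrix, so that its image in $\PGL_n$ is trivial.

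The key step is the standard irreducibility argument for Calogero--Moser data: I claim the pair $(X,Y)$ is \emph{irreducible}, i.e. there is no proper nonzero subspace $U\subsetneq \CC^n$ with $XU\subseteq U$ and $YU\subseteq U$. Indeed, if such a $U$ existed, then $[X,Y]$ would preserve $U$, so $[X,Y]|_U=(T-1)|_U$ where $T=[X,Y]+1$ has rank $1$; taking traces on $U$ gives $\tr([X,Y]|_U)=0$ (it is a commutator of operators on $U$, since $X,Y$ preserve $U$), hence $\dim U=\tr(1|_U)=\tr(T|_U)$. But $T$ has rank $1$, so $\tr(T|_U)\in\{0,\tr T\}$ depending on whether $\Image T\subseteq U$ or not — more precisely $\tr(T|_U)$ equals $\tr T = n$ if the rank-one image line of $T$ is contained in $U$ and $T$ acts on it nontrivially, and otherwise $\tr(T|_U)=0$. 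Matching $\dim U=\tr(T|_U)$ against $0<\dim U<n$ forces a contradiction in every case (if it is $0$ then $\dim U=0$; if it is $n$ then $\dim U=n$). Hence no proper invariant subspace exists. Now any $g$ commuting with both $X$ and $Y$ has all of its eigenspaces invariant under $X$ and $Y$; by irreducibility $g$ can have only one eigenspace, so $g$ is scalar. Therefore the stabilizer in $\PGL_n$ is trivial, and the action on $\mu^{-1}(\Ob)$ is free.

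Given freeness, I would invoke Proposition \ref{prop:freeact} directly: $\cM=T^*\mathrm{Mat}_n(\CC)$ is symplectic, $G=\PGL_n(\CC)$ acts preserving the symplectic form with moment map $\mu(X,Y)=[X,Y]$, and the action on $\mu^{-1}(\Ob)$ is free, so $\mC_n=R(\cM,G,\Ob)$ is a smooth symplectic variety. For the dimension, $\dim\cM = 2n^2$, $\dim G = n^2-1$, and $\dim\Ob = 2(n-1)$ (the orbit of $\mathrm{diag}(-1,\dots,-1,n-1)$, whose stabilizer in $\PGL_n$ is the image of $\GL_{n-1}\times\GL_1$, of dimension $(n-1)^2+1 = n^2-2n+2$, so $\dim \Ob = (n^2-1)-(n^2-2n+2)+1 = 2n-2$). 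Then $\dim\mC_n = \dim\cM - 2\dim G + \dim\Ob = 2n^2 - 2(n^2-1) + (2n-2) = 2n$, as claimed.

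The main obstacle is the irreducibility argument — specifically, being careful with the trace bookkeeping for the rank-one perturbation $T=[X,Y]+1$ when restricted to an invariant subspace, and making sure the two cases (image line in or out of $U$) are handled cleanly; everything else is a formal consequence of Proposition \ref{prop:freeact}. One should also note, as a minor point, that freeness of the action on the closed subvariety $\mu^{-1}(\Ob)$ is what is needed, and that $\Ob$ is indeed closed (it is a semisimple orbit), so $\mu^{-1}(\Ob)$ is an honest closed subscheme and the categorical quotient behaves well.
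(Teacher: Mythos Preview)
Your proposal is correct and follows essentially the same strategy as the paper: reduce freeness to irreducibility of the pair $(X,Y)$, then invoke Schur's lemma. The paper phrases the irreducibility argument in terms of the eigenvalues of $[X,Y]$ on an invariant subspace (a subcollection of $\{-1,\ldots,-1,n-1\}$ summing to zero must be everything), while you phrase it via $\tr(T|_U)\in\{0,n\}$ for $T=[X,Y]+1$; these are the same computation. One small wrinkle: in your dimension count, the stabilizer of $\mathrm{diag}(-1,\ldots,-1,n-1)$ in $\PGL_n$ has dimension $(n-1)^2$, not $(n-1)^2+1$ (the scalars in $\GL_{n-1}\times\GL_1$ die in the quotient), so the extra ``$+1$'' you insert should not be there --- the correct line is simply $\dim\Ob=(n^2-1)-(n-1)^2=2n-2$.
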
 

\begin{proof}
It suffices to show that if $X,Y$ 
are such that $XY-YX+1$ has rank 1, then 
$(X,Y)$ is an irreducible set of matrices. 
Indeed, in this case, by Schur's lemma,
if $B\in \GL_n$ is such that $BX=XB$ and $BY=YB$ 
then $B$ is a scalar, so the stabilizer of $(X,Y)$ in $\PGL_n$ is
trivial. 

To show this, assume that $\cW\ne 0$ is an invariant subspace 
of $X,Y$. In this case, the eigenvalues 
of $[X,Y]$ on $\cW$ are a subcollection of the collection of $n-1$
copies of $-1$ and one copy of $n-1$. The sum of the elements of
this subcollection must be zero, since it is the trace of $[X,Y]$
on $\cW$. But then the subcollection must be the entire collection,
so $\cW=\CC^n$, as desired. 
\end{proof}

Thus, $\mC_n$ is the space of conjugacy classes of pairs of
$n\times n$ matrices $(X,Y)$ such that the matrix $XY-YX+1$ has rank $1$. 

In fact, one also has the following more complicated theorem. 

\begin{theorem}[G. Wilson, \cite{Wi}]{\label{thm:wil}} 
The Calogero-Moser space is connected. 
\end{theorem}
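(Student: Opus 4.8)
\textbf{Proof proposal for Theorem \ref{thm:wil} (connectedness of $\mathcal{C}_n$).}

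The plan is to exhibit an explicit stratification of $\mathcal{C}_n$ by locally closed subsets, each of which is visibly connected (in fact irreducible), and then to show that all strata lie in the closure of a single dense open stratum, so that the whole space is connected. First I would describe the dense open piece: since $\mathcal{C}_n$ has dimension $2n$ and carries the Calogero--Moser integrable system, there is a natural map $\pi\colon\mathcal{C}_n\to\CC^n/\kS_n$ recording the (multiset of) eigenvalues of $X$. Over the locus where these eigenvalues are distinct, one can use the $\PGL_n$-action to diagonalize $X=\diag(x_1,\dots,x_n)$; the rank-one condition on $XY-YX+1$ then forces the off-diagonal entries of $Y$ to be $Y_{ij}=1/(x_i-x_j)$ for $i\ne j$ (up to the residual torus, which acts on the diagonal entries of $Y$ freely and transitively by shifts $Y_{ii}\mapsto Y_{ii}+t_i$ once we impose $\sum$ normalizations), while the diagonal entries $y_1,\dots,y_n$ of $Y$ are free. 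Hence $\pi^{-1}((\CC^n)^{\mathrm{reg}}/\kS_n)$ is isomorphic to $((\CC^n)^{\mathrm{reg}}\times\CC^n)/\kS_n$, which is irreducible, hence connected, and of dimension $2n$; so it is dense in $\mathcal{C}_n$.

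Next I would handle the boundary, i.e.\ points where $X$ has repeated eigenvalues. The key structural input is that $\mathcal{C}_n$ is smooth and symplectic of pure dimension $2n$ (proved just above in the excerpt), so it has no components of lower dimension hiding in the boundary; every irreducible component has dimension $2n$. Therefore it suffices to show that every point of $\mathcal{C}_n$ lies in the closure of the regular locus. I would do this by a degeneration/deformation argument: given a pair $(X_0,Y_0)$ representing a boundary point, I would produce a one-parameter family $(X_\varepsilon,Y_\varepsilon)$ of solutions to $\rank(X_\varepsilon Y_\varepsilon-Y_\varepsilon X_\varepsilon+1)=1$ with $(X_0,Y_0)$ as its $\varepsilon=0$ limit and with $X_\varepsilon$ having distinct eigenvalues for $\varepsilon\ne0$. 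Concretely, one can first use irreducibility of the pair $(X_0,Y_0)$ (established in the proof of smoothness) to put $(X_0,Y_0)$ into a convenient normal form, then perturb $X_0$ within its conjugacy-data while correcting $Y_0$ so that the rank condition is preserved — the correction exists because the moment-map level set $\mu^{-1}(\mathcal{O})$ is smooth and the projection to the $X$-eigenvalues is a submersion onto its image (this is where the integrable-system structure, with its $n$ independent commuting Hamiltonians coming from $\tr X^k$, enters). Since the regular locus is connected, and every point is a limit of regular points, $\mathcal{C}_n$ is connected.

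The main obstacle I anticipate is the boundary step: making the deformation argument rigorous requires controlling the fibers of $\pi$ over non-regular eigenvalue configurations and checking that the rank-one condition can genuinely be preserved under the perturbation of $X$, which is a non-trivial transversality statement about $\mu^{-1}(\mathcal{O})$. An alternative, and perhaps cleaner, route that I would fall back on is Wilson's original strategy using the adelic Grassmannian: one realizes $\mathcal{C}_n$ as a moduli space of certain rank-one subspaces (``planes'') commensurable with a fixed reference, partitions it according to the support divisor (a point of $\mathrm{Sym}^k\CC$ for various $k\le n$), shows each stratum is a vector bundle over a symmetric product of affine lines hence irreducible, and then checks the incidence relations among strata directly. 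Either way, the essential point — and the genuinely hard content of the theorem — is the gluing of the strata into one connected whole; the smoothness and dimension count from the preceding proposition do most of the bookkeeping, but the explicit degeneration of a repeated-eigenvalue configuration to a nearby distinct-eigenvalue one is the crux.
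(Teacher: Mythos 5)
Your setup coincides with the paper's: you identify the open stratum $U_n$ where $X$ is regular semisimple, show it is irreducible of dimension $2n$, invoke the smoothness and pure dimension $2n$ of $\mC_n$ from the preceding proposition, and note that any putative extra component would have to sit inside the hypersurface $\Delta(X)=0$. Where you diverge is the crux. You aim for the \emph{pointwise} statement that every boundary point is a limit of points of $U_n$, to be produced by an explicit one-parameter deformation, and you ground the existence of the correction term in a claimed submersion property of the eigenvalue map on $\mu^{-1}(\Ob)$. The paper instead proves only the weaker, \emph{global} statement that $\{\Delta(X)=0\}\cap\mC_n$ has dimension $\le 2n-1$, hence cannot contain a $2n$-dimensional component; this is where the ``every component has dimension $2n$'' fact is actually used (in your density route it is not needed, since $\mC_n=\overline{U_n}$ would already be irreducible). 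To get that dimension bound the paper passes to the associated graded with $\deg X=0$, $\deg Y=1$, lands in the quotient $\{(X,Y):\rank[X,Y]\le 1\}/\PGL_n$, and identifies the latter, on the level of reduced schemes, with $(\CC^n\times\CC^n)/\kS_n$ using two tools absent from your sketch: the linear-algebra fact (Lemma~\ref{Rud}) that a pair of matrices with rank-$\le 1$ commutator is simultaneously triangularizable, and Weyl's generation theorem for $S^nB$. The bound $2n-1$ then falls out because $\prod_{i<j}(x_i-x_j)=0$ is a hypersurface in $(\CC^n\times\CC^n)/\kS_n$.

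The genuine gap is exactly the step you flag as ``the crux.'' Your assertion that ``the projection to the $X$-eigenvalues is a submersion onto its image'' on $\mu^{-1}(\Ob)$ is stated without proof, and it is not an elementary transversality fact: at points where $X$ is non-regular the differentials $\d\tr X,\dots,\d\tr X^n$ need not be independent, and it is precisely at such points that you need to perturb. Without that, your deformation of a boundary pair $(X_0,Y_0)$ into $U_n$ while preserving the rank-one condition does not get off the ground. The adelic Grassmannian fallback you mention is Wilson's actual proof, but it is a self-contained construction, not something you can invoke as a corollary of what came before. The lesson worth internalizing is that the paper's dimension count sidesteps the local density question entirely: once smoothness forces every component to be $2n$-dimensional, a single codimension bound on the bad locus suffices, and that bound can be computed after degenerating to the (much better understood) commuting scheme.
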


We will give a proof of this theorem later, in Subsection \ref{proofwil}. 

\subsection{The Calogero-Moser integrable system}

Let $\cM$ be a symplectic variety,
and let $H_1,\ldots,H_n$ be regular functions on $\cM$ such that
$\{H_i,H_j\}=0$ and $H_i$'s are algebraically independent
everywhere. Assume that $\cM$ carries a symplectic action of a reductive
algebraic group $G$ with moment map $\mu: \cM\to
\g^*$, which preserves the functions $H_i$, and let 
$\Ob $ be a coadjoint orbit of $G$. Assume that $G$ acts freely on
$\mu^{-1}(\Ob )$, and 
so the Calogero-Moser space $R(\cM, G, \Ob )$ is symplectic.
The functions $H_{i}$ descend to $R(\cM, G,\Ob )$.
If they are still algebraically independent and 
$n=\dim R(\cM,G,\Ob )/2$, then we get an integrable system 
on $R(\cM,G,\Ob )$. 
 
A vivid example of this is the Kazhdan-Kostant-Sternberg
construction of the Calogero-Moser system. In this case
$\cM=T^*{\rm Mat}_n(\CC )$ (regarded as the set of pairs of
matrices $(X,Y)$ as in Section \ref{sec:cmspace}), with the usual
symplectic form $\omega=\tr(\d Y\wedge \d X)$. Let
$H_i=\tr(Y^i)$, $i=1,\ldots, n$. 
Let $G=\PGL_n(\CC)$ act on $\cM$ by conjugation, and let $\Ob $ be
the coadjoint orbit of $G$ considered in Subsection \ref{sec:cmspace}. 
Then the system $H_1,\ldots,H_n$ descends to a system of
functions in involution on $R(\cM,G,\Ob )$, which is the
Calogero-Moser space $\mC_n$. Since this space is
$2n$-dimensional, $H_1,\ldots,H_n$ form an integrable system on
$\mC_n$. It is called {\em the (rational) Calogero-Moser
system}.

The Calogero-Moser flow is, by definition, the Hamiltonian flow
on $\mC_n$ defined by the Hamiltonian
$H=H_2=\tr(Y^2)$. Thus this flow is integrable, in the sense that
it can be included in an integrable system. In particular, its
solutions can be found in quadratures using the inductive
procedure of reduction of order. However (as often happens with
systems obtained by reduction), solutions can also be found by a
much simpler procedure, since they can be found already on the
``non-reduced'' space $\cM$: indeed, on $\cM$ the Calogero-Moser flow
is just the motion of a free particle in the space of matrices,
so it has the form $g_t(X,Y)=(X+2Yt,Y)$. The same formula is
valid on $\mC_n$. In fact, we can use the same method to
compute the flows corresponding to all the Hamiltonians
$H_i=\tr(Y^i)$, $i\in \mathbb{N}$: these flows are given by the
formulas $$ g_t^{(i)}(X,Y)=(X+iY^{i-1}t,Y).  $$

Let us write the Calogero-Moser system explicitly in coordinates. To
do so, we first need to introduce local coordinates on the
Calogero-Moser space $\mC_n$.

To this end, let us restrict our attention to the open set
$U_n\subset \mC_n$ which consists of conjugacy classes
of those pairs $(X,Y)$ for which the matrix $X$ is
diagonalizable, with distinct eigenvalues; 
by Wilson's Theorem \ref{thm:wil},
this open set is dense in $\mC_n$.

A point $P\in U_n$ may be represented by a pair $(X,Y)$ such that
$X=\diag(x_1,\ldots,x_n)$, $x_i\ne x_j$. In this case, the
entries of $T:=XY-YX$ are $(x_i-x_j)y_{ij}$. In particular, the
diagonal entries are zero. Since the matrix $T+1$ has rank $1$,
its entries $\kappa_{ij}$ have the form $a_ib_j$ for some numbers
$a_i,b_j$. On the other hand, $\kappa_{ii}=1$, so $b_j=a_j^{-1}$
and hence $\kappa_{ij}=a_ia_j^{-1}$. By conjugating $(X,Y)$ by
the matrix $\diag(a_1,\ldots,a_n)$, we can reduce to the
situation when $a_i=1$, so $\kappa_{ij}=1$. Hence the matrix $T$
has entries $1-\delta_{ij}$ (zeros on the diagonal, ones off the
diagonal). Moreover, the representative of $P$ with diagonal $X$
and $T$ as above is unique up to the action of the symmetric
group $\kS_n$. Finally, we have $(x_i-x_j)y_{ij}=1$ for $i\ne j$,
so the entries of the matrix $Y$ are 
$y_{ij}=1/(x_i-x_j)$ if $i\neq j$. On the other hand, the
diagonal entries $y_{ii}$ 
of $Y$ are unconstrained. Thus
we have obtained the following result. 

\begin{proposition}
Let ${\CC }^n_{\reg}$ be the open set 
of $(x_1,\ldots,x_n)\in {\CC }^n$ such that $x_i\ne x_j$ for $i\ne j$. 
Then there exists an isomorphism of algebraic varieties 
$\xi: T^*({\CC }^n_{\reg}/\kS_n)\to U_n$ given by the formula 
$$(x_1,\ldots,x_n,p_1,\ldots,p_n)\mapsto (X,Y),$$ where 
$X=\diag(x_1,\ldots,x_n)$, and $Y=Y(\mathbf x,\mathbf p):=(y_{ij})$, 
$$
y_{ij}=\frac{1}{x_i-x_j}, i\ne j,\ y_{ii}=p_i.
$$   
\end{proposition}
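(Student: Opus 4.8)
The plan is to make the identification $\xi$ completely explicit and then check it is an isomorphism by exhibiting the inverse. First I would take a point $P\in U_n$, represented by a pair $(X,Y)$ with $X$ diagonalizable and distinct eigenvalues, and run exactly the normalization argument sketched in the paragraph preceding the statement: conjugate so that $X=\diag(x_1,\ldots,x_n)$ with $x_i\neq x_j$, observe that $T:=XY-YX$ has zero diagonal (since $T_{ij}=(x_i-x_j)y_{ij}$), and that $T+1$ has rank $1$ forces $T_{ij}=a_ia_j^{-1}$ with $a_i=1$ after a further diagonal conjugation. This pins down $y_{ij}=1/(x_i-x_j)$ for $i\neq j$, leaves the $y_{ii}=:p_i$ free, and shows the residual ambiguity in the representative is exactly the simultaneous $\kS_n$-action permuting the $x_i$ and $p_i$. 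Hence the assignment $(X,Y)\mapsto (x_1,\ldots,x_n,p_1,\ldots,p_n)$ is a well-defined map $U_n\to T^*(\CC^n_{\reg}/\kS_n)$.

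Next I would check that $\xi$, defined by the stated formula, is genuinely a morphism into $U_n$: given $(\mathbf x,\mathbf p)$ with $x_i\neq x_j$, set $X=\diag(\mathbf x)$ and $Y=(y_{ij})$ with $y_{ij}=1/(x_i-x_j)$ off-diagonal and $y_{ii}=p_i$. One computes $[X,Y]_{ij}=(x_i-x_j)y_{ij}=1-\delta_{ij}$, so $[X,Y]+1$ is the all-ones matrix, which has rank $1$ and trace $n$; thus $[X,Y]$ is traceless with $[X,Y]+1$ of rank $1$, i.e. $[X,Y]\in\Ob$, so $(X,Y)\in\mu^{-1}(\Ob)$ and its class lies in $\mC_n$; since $X$ is regular semisimple, the class lies in $U_n$. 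This computation also shows the formula is $\kS_n$-invariant, so $\xi$ descends to $T^*(\CC^n_{\reg}/\kS_n)$.

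Finally I would argue that the two maps are mutually inverse. Going $\xi$ then the normalization map back is immediate: starting from $(\mathbf x,\mathbf p)$, the pair $(X,Y)=\xi(\mathbf x,\mathbf p)$ already is in the normal form used above (diagonal $X$, $T_{ij}=1-\delta_{ij}$), so reading off the eigenvalues of $X$ and the diagonal of $Y$ returns $(\mathbf x,\mathbf p)$ up to $\kS_n$. Conversely, starting from a class in $U_n$, the normalization produces a representative of the form $\xi(\mathbf x,\mathbf p)$, so applying $\xi$ to the output recovers the original class. Both composites being the identity, $\xi$ is a bijective morphism; since both varieties are smooth (the target obviously, the source by the smoothness of $\mC_n$ proved above) and we are over $\CC$, a bijective morphism of smooth varieties that admits a morphism inverse is an isomorphism of algebraic varieties.

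The main obstacle is the bookkeeping in the normalization step: one must verify carefully that the diagonal conjugation by $\diag(a_1,\ldots,a_n)$ used to arrange $\kappa_{ij}=1$ does not disturb the already-achieved form of $X$ (it does not, being diagonal) and that the only remaining freedom is the $\kS_n$ permuting simultaneously the $x_i$'s and the rows/columns of $Y$ — in particular that no residual torus ambiguity survives. This is exactly the point where one uses that the $x_i$ are \emph{distinct}, so that the centralizer of $X$ in $\GL_n$ is the diagonal torus and the rank-one condition rigidifies the off-diagonal entries completely. Everything else is a direct matrix computation.
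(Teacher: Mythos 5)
Your argument is correct and reproduces the normalization discussion the paper gives in the paragraphs immediately before the proposition (the paper states the proposition as a summary and offers no separate proof), so the two approaches coincide. The one slightly muddled step is your closing sentence: as written you have constructed only a set-theoretic inverse, not yet a morphism inverse; but since $\xi$ is a bijective morphism onto the smooth (hence normal) variety $U_n$, Zariski's main theorem already forces it to be an isomorphism, or alternatively one can check directly that the inverse is a morphism by noting that the generating $\kS_n$-invariants $\sum_i x_i^a p_i^b$ pull back to regular functions on $U_n$ via the traces $\tr(X^kY^m)$ together with $1/\Delta(X)$.
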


In fact, we have a stronger result: 

\begin{proposition}\label{symvar} $\xi$ is an isomorphism  
of symplectic varieties (where the cotangent bundle 
is equipped with the usual symplectic structure). 
\end{proposition}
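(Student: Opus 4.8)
The plan is to verify directly that the map $\xi$, which we already know to be an isomorphism of algebraic varieties by the previous proposition, pulls back the canonical symplectic form $\omega_{CM}$ on $U_n \subset \mC_n$ (the restriction of $\tr(\d Y \wedge \d X)$) to the standard symplectic form $\sum_i \d p_i \wedge \d x_i$ on $T^*(\CC^n_{\reg}/\kS_n)$. Since $\xi$ is already known to be a bijective morphism, it suffices to check the identity of $2$-forms pointwise on the dense open set where the $x_i$ are distinct; everything is explicit there.

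First I would compute $\xi^*\omega_{CM}$ using the formula $\omega_{CM} = \tr(\d Y \wedge \d X)$. Since $X = \diag(x_1,\ldots,x_n)$, we have $\d X = \diag(\d x_1,\ldots,\d x_n)$, so
$$
\tr(\d Y \wedge \d X) = \sum_{i,j} \d Y_{ij} \wedge \d X_{ji} = \sum_i \d Y_{ii} \wedge \d x_i = \sum_i \d p_i \wedge \d x_i,
$$
because only the diagonal entries of $\d X$ are nonzero, and $Y_{ii} = p_i$ while the off-diagonal entries $Y_{ij} = 1/(x_i - x_j)$ for $i \ne j$ contribute nothing to this particular trace (they are paired against $\d X_{ji}$ with $j \ne i$, which vanishes). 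Thus $\xi^*\omega_{CM} = \sum_i \d p_i \wedge \d x_i$, which is exactly the canonical symplectic form on the cotangent bundle, descended to the $\kS_n$-quotient.

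The one subtlety to address — and the main point requiring care rather than difficulty — is that the symplectic form on $\mC_n$ is the reduced form coming from Hamiltonian reduction (Proposition \ref{prop:freeact}), not literally $\tr(\d Y \wedge \d X)$ on $\cM = T^*\mathrm{Mat}_n(\CC)$. So I would recall that the reduced symplectic form on $\mu^{-1}(\Ob)/G$ is characterized by the property that its pullback to $\mu^{-1}(\Ob)$ equals the restriction of $\omega_\cM = \tr(\d Y \wedge \d X)$ to that submanifold. Our representatives $(X,Y)$ with $X = \diag(x_1,\ldots,x_n)$ and $Y_{ij} = 1/(x_i-x_j)$ for $i \ne j$ form a slice to the $G$-action inside $\mu^{-1}(\Ob)$, so the computation above of $\tr(\d Y \wedge \d X)$ along this slice directly computes the reduced form. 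I would also note that the remaining gauge freedom is the finite group $\kS_n$, under which both forms are manifestly invariant, so the identification descends to the quotient. This completes the argument; there is no serious obstacle, only the need to keep the reduction bookkeeping straight.
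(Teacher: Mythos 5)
Your proposal is correct, but it takes a genuinely different route from the paper. The paper works on the level of Poisson brackets: it introduces the functions $a_k=\tr(X^k)$, $b_k=\tr(X^kY)$, uses the necklace bracket formula (Theorem \ref{nbf}) to compute their brackets on $\mC_n$, checks by a parallel computation that $\xi^*a_k$, $\xi^*b_k$ have the same brackets on the cotangent bundle, and then concludes because these functions form a local coordinate system on a dense open set. You instead work on the level of $2$-forms: you use the characterization of the reduced symplectic form in Hamiltonian reduction (its pullback to $\mu^{-1}(\Ob)$ equals $\iota^*\omega_\cM$) together with the explicit slice $(X,Y)$ with $X=\diag(x_i)$, $Y_{ij}=1/(x_i-x_j)$, $Y_{ii}=p_i$, and then observe that the restriction of $\tr(\d Y\wedge \d X)$ to this slice collapses to $\sum_i \d p_i\wedge \d x_i$ because $\d X$ is diagonal. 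Your approach is more direct and bypasses the necklace bracket formula entirely, at the cost of requiring you to carefully invoke the slice characterization of the reduced form (which you do address). The paper's approach, while computationally heavier, stays within the algebro-Poisson framework used throughout the surrounding sections and reuses the necklace bracket machinery that is needed elsewhere. Both are valid; yours is arguably the more economical for this isolated proposition.

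One small point of rigor you might make explicit: one should note that the slice $S$ is indeed transverse to the $\PGL_n$-orbits in $\mu^{-1}(\Ob)$ and that $\pi|_S$ is a local diffeomorphism onto $U_n$ (modulo the residual $\kS_n$), which is needed for the restriction of $\omega_\cM$ to $S$ to compute the reduced form. This follows from the dimension count ($\dim S = 2n = \dim U_n$) together with the bijectivity of $\xi$ established in the preceding proposition, so it is implicit in what you wrote, but it is worth flagging since it is precisely the "reduction bookkeeping" you allude to.
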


For the proof of Proposition \ref{symvar}, 
we will need the following general and important but easy
theorem. 

\begin{theorem}[The necklace bracket formula]\label{nbf} 
Let $a_1,\ldots ,a_r$ and $b_1,\ldots,b_s$ 
be either $X$ or $Y$. 
Then on $\cM$ we have 
\begin{eqnarray*}
\lbrace{\tr(a_1\cdots a_r),\tr(b_1\cdots b_s)\rbrace}
&=&
\sum_{(i,j): a_i=Y, b_j=X}
\tr(a_{i+1}\cdots a_ra_1\cdots a_{i-1}b_{j+1}
\cdots b_sb_1\cdots b_{j-1})-\\
&&\quad\sum_{(i,j): a_i=X, b_j=Y}
\tr(a_{i+1}\cdots  a_ra_1\cdots  a_{i-1}b_{j+1}\cdots  b_sb_1\cdots  b_{j-1}). 
\end{eqnarray*}
\end{theorem}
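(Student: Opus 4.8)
The plan is to reduce everything to the basic Poisson bracket on $\cM=T^*\mathrm{Mat}_n(\CC)$ in matrix-entry coordinates and then perform a bookkeeping of indices. First I would record the elementary brackets: writing $X=(x_{ij})$, $Y=(y_{ij})$, the symplectic form $\omega=\tr(\d Y\wedge\d X)$ gives $\{y_{ij},x_{k\ell}\}=\delta_{i\ell}\delta_{jk}$, while $\{x_{ij},x_{k\ell}\}=\{y_{ij},y_{k\ell}\}=0$. Next I would expand the two traces as sums over index cycles, $\tr(a_1\cdots a_r)=\sum (a_1)_{m_1m_2}(a_2)_{m_2m_3}\cdots(a_r)_{m_rm_1}$ and similarly for $\tr(b_1\cdots b_s)$, and apply the Leibniz rule: the bracket of the two traces is a double sum over pairs of positions $(i,j)$, where position $i$ in the first word and position $j$ in the second word contribute a nonzero term only when one of $a_i,b_j$ is $Y$ and the other is $X$ (because of the vanishing of the $XX$ and $YY$ brackets).

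The heart of the computation is then to check that a single such pair $(i,j)$ with $a_i=Y$, $b_j=X$ produces, after contracting the two Kronecker deltas, exactly the single trace $\tr(a_{i+1}\cdots a_r a_1\cdots a_{i-1} b_{j+1}\cdots b_s b_1\cdots b_{j-1})$, with cyclic indexing of the subscripts understood, and with the opposite sign when $a_i=X$, $b_j=Y$. Concretely, the delta $\{y_{m_im_{i+1}},x_{n_jn_{j+1}}\}=\delta_{m_i n_{j+1}}\delta_{m_{i+1}n_j}$ splices the first cycle open at position $i$ and the second cycle open at position $j$ and glues them into one cycle; tracking which endpoint meets which is the only place care is needed. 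Summing over all admissible $(i,j)$ yields the stated formula. I would remark that $a_0:=a_r$, $a_{r+1}:=a_1$, etc., so the displayed products are read cyclically and no boundary cases arise.

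I expect the only real obstacle to be notational: making the cyclic relabelling of indices unambiguous and getting the sign right in the two cases $a_i=Y,b_j=X$ versus $a_i=X,b_j=Y$ (the asymmetry comes from $\{y,x\}=+1$ hence $\{x,y\}=-1$). Everything else is a routine application of Leibniz and the elementary brackets, so I would state the elementary brackets, do the one-pair computation carefully once, and then assert the general case by summation. It is also worth noting that the formula is consistent with the Jacobi identity and with cyclicity of the trace, which provides a useful sanity check but is not needed for the proof.
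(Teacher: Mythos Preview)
Your approach is correct and is the natural direct computation. The paper itself does not prove this theorem at all: it introduces it as a ``general and important but easy theorem'' and immediately moves on to use it in the proof of Proposition~\ref{symvar}. Your plan---record the elementary brackets $\{y_{ij},x_{k\ell}\}=\delta_{i\ell}\delta_{jk}$ coming from $\omega=\tr(\d Y\wedge\d X)$, expand both traces over cyclic index chains, apply Leibniz, and observe that each surviving pair $(i,j)$ splices the two cycles into one via the two Kronecker deltas---is exactly how one fills in this omitted computation, and your identification of the sign source ($\{y,x\}=+1$ versus $\{x,y\}=-1$) is correct. There is nothing to compare against and no gap in what you wrote.
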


\begin{proof}[Proof of Proposition \ref{symvar}]
Let $a_k=\tr(X^k)$, $b_k=\tr(X^kY)$.
It is easy to check using the necklace bracket formula 
that on $\cM$ we have 
$$
\lbrace{a_m,a_k\rbrace}=0,\quad
\lbrace{b_m,a_k\rbrace}=ka_{m+k-1},\quad
\lbrace{b_m,b_k\rbrace}=(k-m)b_{m+k-1}.
$$
On the other hand, $\xi^*a_k=\sum x_i^k$, 
$\xi^*b_k=\sum x_i^kp_i$. Thus we see that 
$$
\lbrace{f,g\rbrace}=\lbrace{\xi^*f,\xi^*g\rbrace},
$$ 
where $f,g$ are either $a_k$ or $b_k$. 
But the functions $a_k,b_k$, $k=0,\ldots,n-1$, form a local
coordinate system near a generic point of $U_n$, so we are done. 
\end{proof} 

Now let us write the Hamiltonian of the Calogero-Moser
system in coordinates. It has the form 
\begin{equation}\label{cmeq}
H=\tr(Y(\mathbf x,\mathbf p)^2)=\sum_i p_i^2-\sum_{i\ne j}\frac{1}{(x_i-x_j)^2}.
\end{equation}
Thus the Calogero-Moser Hamiltonian describes the motion 
of a system of $n$ particles on the line with interaction
potential $-1/x^2$, which we considered in Section \ref{Olper}.

Now we finally see the usefulness of the Hamiltonian reduction
procedure. The point is that it is not clear at all 
from formula (\ref{cmeq}) why the Calogero-Moser Hamiltonian
should be completely integrable. However, our reduction procedure
implies the complete integrability of $H$, and gives an explicit
formula for the first integrals: 
\footnote{Thus, for type $A$ we have two methods of proving the
integrability of the Calogero-Moser system - one using 
Dunkl operators and one using Hamiltonian reduction.}
$$
H_i=\tr(Y(\mathbf x,\mathbf p)^i).
$$

Moreover, this procedure immediately 
gives us an explicit solution of the system.
Namely, assume that $\mathbf x(t),\mathbf p(t)$ is the solution 
with initial condition $\mathbf x(0),\mathbf p(0)$.
Let $(X_0,Y_0)=\xi(\mathbf x(0),\mathbf p(0))$.  
Then $x_i(t)$ are the eigenvalues of the matrix 
$X_t:=X_0+2tY_0$, and $p_i(t)=x_i'(t)/2$. 

\subsection{Proof of Wilson's theorem}\label{proofwil}

Let us now give a proof of Theorem \ref{thm:wil}.

We have already shown that all components of 
$\mC_{n}$ are smooth and have dimension
$2n$. Also, we know that there is at least one component (the
closure of $U_n$), and that the other components, if they exist, 
do not contain pairs $(X,Y)$ in which $X$ is regular semisimple. 
This means that these components are contained in the hypersurface 
$\Delta(X)=0$, where $\Delta(X)$ stands for the discriminant of $X$ (i.e.,
$\Delta(X):=\prod_{i\ne j}(x_i-x_j)$, where $x_i$ are the eigenvalues of $X$).

Thus, to show that such additional components don't in fact exist, it suffices to show that 
the dimension of the subscheme $\mC_n(0)$ cut out in $\mC_n$ by the equation 
$\Delta(X)=0$ is $2n-1$.  

To do so, first notice that the condition $\rank([X,Y]+1)=1$ is equivalent to the equation
$\wedge^{2}([X,Y]+1)=0$; thus, the latter can be used 
as the equation defining $\mC_n$ inside $T^*{\rm Mat}_n/PGL_n$. 

Define ${\mC}_{n}^0:=\Spec(\gr{\cO}(\mC_n))$
to be the degeneration of $\mC_n$
(we use the filtration on ${\cO}(\mC_n)$ defined by $\deg(X)=0$, $\deg(Y)=1$). 
Then $\mC_n^0$ is a closed subscheme in the scheme 
$\widetilde{\mC}_n^0$ cut out by the equations 
$\wedge^{2}([X,Y])=0$ in $T^*{\mathrm Mat}_n/\PGL_n$. 

Let $(\widetilde{\mC}_n^0)_{\mathrm red}$ be the reduced part of 
$\widetilde{\mC}_n^0$. Then $(\widetilde{\mC}_n^0)_{\mathrm red}$ 
coincides with the categorical quotient
$\{(X, Y)|\rank([X, Y])\le 1\}/\PGL_{n}$.

Our proof is based on the following proposition.

\begin{proposition}\label{catquo} 
The categorical quotient $\{(X, Y)|\rank([X, Y])\le 1\}/\PGL_{n}$ 
coincides with the categorical quotient
$\{(X, Y)|[X, Y]=0\}/\PGL_{n}$. 
\end{proposition}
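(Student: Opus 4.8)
The plan is to descend everything to rings of $\PGL_n$-invariant functions and to exploit the Geometric Invariant Theory of the conjugation action on pairs of matrices. Write $Z=\{(X,Y)\mid\rank[X,Y]\le 1\}$ and $Z_0=\{(X,Y)\mid[X,Y]=0\}$ for the two closed $\PGL_n$-stable subschemes of $\mathrm{Mat}_n\times\mathrm{Mat}_n$ in question, so $Z_0\subset Z$. Restriction of functions gives a homomorphism $\CC[Z]^{\PGL_n}\to\CC[Z_0]^{\PGL_n}$; since $\PGL_n$ is reductive and $Z_0$ is $\PGL_n$-stable and closed in $Z$, the functor of invariants is exact, so this map is surjective, and on categorical quotients we get a genuine closed immersion $Z_0/\!/\PGL_n\hookrightarrow Z/\!/\PGL_n$. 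It then suffices to show this closed immersion is surjective on (closed) points: once that is known, $\ker\!\big(\CC[Z]^{\PGL_n}\twoheadrightarrow\CC[Z_0]^{\PGL_n}\big)$ is contained in every maximal ideal of the finitely generated (hence Jacobson) ring $\CC[Z]^{\PGL_n}$, hence is nilpotent, and since $\CC[Z_0]^{\PGL_n}$ is reduced, the two reduced categorical quotients coincide, which is exactly the assertion.

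Two auxiliary inputs will be used freely. First, by classical invariant theory (Procesi--Sibirskii) $\CC[\mathrm{Mat}_n\times\mathrm{Mat}_n]^{\PGL_n}$ is generated by the trace functions $\tr(w(X,Y))$ over all words $w$ in $X,Y$; restricting these to $Z_0$ (closed in the ambient space, so restriction of invariants is again surjective) one identifies $\CC[Z_0]^{\PGL_n}$ with the ring $\CC[x_1,\dots,x_n,y_1,\dots,y_n]^{\kS_n}$, a polynomial invariant ring and in particular a domain; here one uses that a generic commuting pair is simultaneously diagonalizable and that the commuting variety of $\gl_n$ is irreducible (Motzkin--Taussky), together with the fact that the polarized power sums $\sum_i x_i^a y_i^b$ generate the symmetric invariants. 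Second, for the conjugation action the $\PGL_n$-orbit of a pair $(X,Y)$ is closed if and only if the subalgebra $\CC\langle X,Y\rangle\subseteq\mathrm{Mat}_n$ is semisimple, i.e. $\CC^n$ is a semisimple $\CC\langle X,Y\rangle$-module, and by GIT over $\CC$ the points of the categorical quotient of an affine variety are precisely its closed orbits.

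The heart of the argument is then the following lemma: every closed $\PGL_n$-orbit in $Z$ is contained in $Z_0$; equivalently, if $\rank[X,Y]\le 1$ and the orbit of $(X,Y)$ is closed, then $[X,Y]=0$. To prove it, decompose $(X,Y)=\bigoplus_i (X_i,Y_i)^{\oplus m_i}$ into isotypic components, with $(X_i,Y_i)$ irreducible on $\CC^{d_i}$; then $[X,Y]=\bigoplus_i [X_i,Y_i]^{\oplus m_i}$, so $\sum_i m_i\,\rank[X_i,Y_i]\le 1$. If some block with $d_i\ge 2$ occurred, then $(X_i,Y_i)$ would be an irreducible pair of size at least $2$ whose commutator has rank $1$; but $[X_i,Y_i]$ then has rank one and trace zero, hence is nilpotent of square zero, and one checks by a short matrix computation (restrict to a generalized eigenspace of $X_i$ and use $[X_i,Y_i]^2=0$) that $X_i$ and $Y_i$ must share an eigenvector, contradicting irreducibility. (Alternatively one may invoke the classical theorem that two matrices with commutator of rank $\le 1$ are simultaneously triangularizable, which in the irreducible case forces $d_i=1$.) Hence all $d_i=1$, the blocks commute, and $[X,Y]=0$.

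Assembling: the lemma shows that $Z$ and $Z_0$ have the same set of closed $\PGL_n$-orbits, so the closed immersion $Z_0/\!/\PGL_n\hookrightarrow Z/\!/\PGL_n$ is surjective on points, and by the first paragraph the reduced categorical quotients of $Z$ and $Z_0$ coincide — and both get identified with $(\CC^n\times\CC^n)/\kS_n$, consistently with the surrounding discussion of $\mC_n^0$. The main obstacle is the lemma: everything else is formal (linear reductivity, GIT, Procesi--Sibirskii), whereas the lemma rests on the genuinely non-formal input that a pair of matrices with rank-$\le 1$ commutator cannot act irreducibly in dimension $\ge 2$, the one small matrix-theoretic point flagged above.
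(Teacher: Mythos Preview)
Your proof is correct and rests on the same linear-algebra input as the paper's argument, namely that two matrices whose commutator has rank at most one can be simultaneously upper-triangularized (your ``alternatively one may invoke'' is exactly the paper's Lemma~\ref{Rud}). The packaging, however, is different: the paper argues directly that every trace invariant $\tr(w(X,Y))$ on $Z$ depends only on the diagonal parts of a simultaneous triangularization, hence only on a commuting pair, so the restriction map $\CC[Z]^{\PGL_n}\to\CC[Z_0]^{\PGL_n}$ is injective. You instead pass through the GIT dictionary (closed orbits $\leftrightarrow$ semisimple $\CC\langle X,Y\rangle$-modules), show that semisimplicity plus $\rank[X,Y]\le 1$ forces $[X,Y]=0$, and then use Jacobson-ness plus reducedness of the target to conclude. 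Your route is more structural and makes the geometric content (identical closed orbits) explicit, at the cost of requiring the reducedness of $\CC[Z_0]^{\PGL_n}$ as a separate input (which the paper establishes afterwards as Proposition~\ref{catquo1}); the paper's computation avoids this dependency but is less conceptual. Your direct sketch for the irreducibility step (``restrict to a generalized eigenspace of $X_i$'') is a bit thin as written, so it is safer to simply cite the triangularization lemma, as you do parenthetically.
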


\begin{proof}
It is clear that $\{(X, Y)|[X, Y]=0\}/\PGL_{n}$ 
is contained in $\{(X, Y)|\rank([X, Y])\le 1\}/\PGL_{n}$. 
For the proof of the opposite inclusion we need to show that 
any regular function on $\{(X, Y)|\rank([X, Y])\le 1\}/\PGL_{n}$ 
is completely determined by its values on the subvariety
$\{(X, Y)|[X, Y]=0\}/\PGL_{n}$, i.e. that any invariant polynomial 
on the set of pairs of matrices with commutator of rank at most $1$ 
is completely determined by its values on pairs of commuting matrices. 
To this end, we need the following 
lemma from linear algebra.

\begin{lemma}\label{Rud}
If $A, B$ are square matrices such that $[A, B]$ has rank $\leq 1$, then
there exists a basis in which both $A, B$ are upper triangular.
\end{lemma}
\begin{proof}
Without loss of generality, we can assume $\ker A\neq 0$ (by replacing $A$ with $A-\lambda$ if needed) 
and that $A\neq 0$. It suffices to show that there exists a proper nonzero subspace invariant  
under $A, B$; then the statement will follow by induction in dimension. 

Let $C=[A,B]$ and suppose $\rank C=1$ (since the case $\rank C=0$
is trivial). If $\ker A\subset \ker C$, then $\ker A$ is $B$-invariant: 
if $Av=0$ then $ABv=BAv+Cv=0$. Thus $\ker A$ is the required subspace.
If $\ker A\nsubseteq \ker C$, 
then there exists a vector 
$v$ such that $Av=0$ but $Cv\neq 0$.
So $ABv=Cv\neq 0$. Thus $\mathrm{Im} C\subset \mathrm{Im} A$.
So $\mathrm{Im} A$ is $B$-invariant: 
$BAv=ABv+Cv\in {\rm Im}A$. So ${\rm Im}A$ is the required subspace.

This proves the lemma.
\end{proof}

Now we are ready to prove Proposition \ref{catquo}.
By the fundamental theorem of invariant theory, 
the ring of invariants of $X$ and $Y$
is generated by traces of words of $X$ and $Y$: ${\rm
Tr}(w(X,Y))$. If $X$ and $Y$ are upper triangular 
with eigenvalues $x_i,y_i$, then any such trace has the form 
$\sum x_i^my_i^r$, i.e. coincides with the value of the corresponding 
invariant on the diagonal parts $X_{\rm diag}, Y_{\rm diag}$ of $X$ and $Y$, 
which commute. The proposition is proved.
\end{proof} 

We will also need the following proposition:

\begin{proposition}\label{catquo1}
The categorical quotient 
$\{(X, Y)|[X, Y]=0\}/\PGL_{n}$
is isomorphic to $(\CC^n\times \CC^n)/\kS_n$, i.e. 
its function algebra is $\CC[x_1,\ldots,x_n,y_1,\ldots,y_n]^{\kS_n}$. 
\end{proposition}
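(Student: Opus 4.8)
The plan is to reduce the statement to a question about the classical commuting scheme (or rather its reduced version) and then invoke the classical description of invariants of a pair of commuting matrices. First I would observe that, set-theoretically, the variety $\{(X,Y)\mid [X,Y]=0\}$ maps onto the set of pairs of matrices that can be simultaneously upper-triangularized, by Lemma \ref{Rud} applied with commutator of rank $0$; and conversely, simultaneous triangularizability is automatic for commuting matrices over $\CC$. The categorical quotient $\{(X,Y)\mid [X,Y]=0\}/\PGL_n$ has function algebra equal to the algebra of $\PGL_n$-invariant regular functions on the commuting variety, which by the fundamental theorem of invariant theory for $\GL_n$ (Procesi) is generated by traces of words $\tr(w(X,Y))$.

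The key step is then the same argument already used in the proof of Proposition \ref{catquo}: for a pair of commuting matrices we may pass to a common eigenbasis (over the generic point, to a simultaneous diagonalization, and in general simultaneous triangularization suffices since only the diagonal part enters the traces), so that $\tr(w(X,Y)) = \sum_i x_i^{m_w} y_i^{r_w}$ where $x_i, y_i$ are the diagonal entries and $m_w, r_w$ count the occurrences of $X$ and $Y$ in the word $w$. Thus every generator of the invariant ring restricts to the symmetric function $\sum_i x_i^m y_i^r$ on the ``diagonal'' copy $\CC^n \times \CC^n$. This produces a homomorphism from the invariant ring to $\CC[x_1,\ldots,x_n,y_1,\ldots,y_n]^{\kS_n}$; I would show it is surjective because the power sums $\sum_i x_i^m y_i^r$ generate the latter ring (this is a standard fact about the invariants of the diagonal $\kS_n$-action, e.g. via polarization of Newton's identities, or by Weyl's theorem on vector invariants of $\kS_n$), and injective because any pair of commuting matrices degenerates/specializes to a diagonal pair with the same traces, so a word-trace invariant vanishing on all diagonal pairs vanishes identically on the commuting variety.

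The main obstacle I anticipate is the scheme-theoretic (as opposed to set-theoretic) content: the commuting \emph{scheme} $\{[X,Y]=0\}$ is not known to be reduced in general, and even its categorical quotient requires care. However, for the statement as phrased — which concerns the categorical quotient, i.e. $\Spec$ of the \emph{invariant} subring — this difficulty is circumvented: one only needs that invariant functions are determined by their values on closed orbits (points of the categorical quotient), and a theorem that says the ring of multisymmetric functions $\CC[\mathbf{x},\mathbf{y}]^{\kS_n}$ is exactly the image. I would therefore be careful to phrase everything on the level of function algebras: define $\phi:\CC[\,\text{commuting variety}\,]^{\PGL_n}\to \CC[\mathbf x,\mathbf y]^{\kS_n}$ by restriction to the diagonal locus, prove surjectivity by the power-sum generation statement, and prove injectivity by the triangularization argument of the previous paragraph. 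One delicate point worth flagging: surjectivity of $\phi$ needs that \emph{mixed} power sums $\sum_i x_i^m y_i^r$ generate $\CC[\mathbf x,\mathbf y]^{\kS_n}$, which follows from the polarization identity applied to ordinary power sums in the combined variables, so no genuinely new input is required beyond classical invariant theory.
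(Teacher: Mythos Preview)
Your proposal is correct and follows essentially the same approach as the paper: define the restriction homomorphism to $\CC[\mathbf x,\mathbf y]^{\kS_n}$, prove injectivity via the simultaneous triangularization argument (exactly as in the proof of Proposition~\ref{catquo}), and prove surjectivity by showing that the mixed power sums $\sum_i x_i^m y_i^r$ generate the target. The paper packages the surjectivity step as a corollary of a theorem of H.~Weyl (that $S^nB$ is generated by ``power sums'' for any algebra $B$, applied here with $B=\CC[x,y]$), which is precisely the result you allude to as Weyl's theorem on vector invariants; your concerns about scheme-theoretic issues are unnecessary here, since the statement is purely about the invariant subring.
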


\begin{proof} 
Restriction to diagonal matrices defines a homomorphism 
$$
\xi: \cO(\{(X, Y)|[X, Y]=0\}/\PGL_{n})\to 
\CC[x_1,\ldots,x_n,y_1,\ldots,y_n]^{\kS_n}.
$$
Since (as explained in the proof of Proposition \ref{catquo}),
any invariant polynomial of entries of commuting 
matrices is determined by its values on diagonal matrices, 
this map is injective. Also, 
$\xi(\tr(X^mY^r))=\sum x_i^my_i^r$, where $x_i,y_i$ are the 
eigenvalues of $X$ and $Y$. 

Now we use the following well known theorem of H. Weyl
(from his book ``Classical groups'').

\begin{theorem}\label{Wey} Let $B$ be an algebra over $\CC$. 
Then the algebra $S^nB$ is generated by elements of the form 
$$
b\otimes 1\otimes\cdots\otimes 1+
1\otimes b\otimes\cdots\otimes 1+\cdots+
1\otimes 1\otimes\cdots\otimes b.
$$
\end{theorem}

\begin{proof} Since $S^nB$ is linearly spanned by elements of the
form $x\otimes\cdots\otimes x$, $x\in B$, it suffices to prove the 
theorem in the special case $B=\CC[ x]$. In this case, 
the result is simply the fact that the ring of symmetric
functions is generated by power sums, which is well known. 
\end{proof} 

\begin{corollary} \label{gene}
The ring $\CC[x_1,\ldots,x_n,y_1,\ldots,y_n]^{\kS_n}$ 
is generated by the polynomials $\sum x_i^my_i^r$ for $m,r\ge 0$,
$m+r>0$. 
\end{corollary}

\begin{proof}
Apply Theorem \ref{Wey} in the case $B=\CC[x,y]$. 
\end{proof} 

Corollary \ref{gene} implies that $\xi$ is surjective. 
Proposition \ref{catquo1} is proved.
\end{proof}

Now we are ready to prove Wilson's theorem. 
Let $\mC_n(0)^0$ be the degeneration of $\mC_n(0)$, 
i.e. the subscheme of $\mC_n^0$ cut out by the equation $\Delta(X)=0$. 
According to Propositions \ref{catquo} and \ref{catquo1}, 
the reduced part $(\mC_n(0)^0)_{\rm red}$ is contained 
in the hypersurface in $(\CC^n\times \CC^n)/\kS_n$ 
cut out by the equation $\prod_{i<j}(x_i-x_j)=0$. 
This hypersurface has dimension $2n-1$, so we are done. 

\subsection{The Gan-Ginzburg theorem}

Let ${\rm Comm}(n)$ be  {\em the commuting scheme} 
defined in $T^*{\rm Mat}_n={\rm Mat}_n\times {\rm Mat}_n$ 
by the equations $[X,Y]=0$, $X,Y\in {\rm Mat}_n$. Let $G=\PGL_n$, and consider  
the categorical quotient ${\rm Comm}(n)/G$ (i.e., the Hamiltonian reduction $\mu^{-1}(0)/G$ of 
$T^*{\rm Mat}_n$ by the action of $G$), whose algebra of regular functions is 
$A=\CC[{\rm Comm}(n)]^G$. 

It is not known whether the commuting scheme ${\rm Comm}(n)$ is reduced (i.e. 
whether the corresponding ideal is a radical ideal);
this is a well known open problem. The underlying variety is
irreducible (as was shown by Gerstenhaber \cite{Ge1}), 
but very singular, and has a very complicated
structure. However, we have the following result. 

\begin{theorem}[Gan, Ginzburg, \cite{GG}] \label{thm:GG} 
${\rm Comm}(n)/G$ is reduced,
and isomorphic to $\CC^{2n}/\kS_n$. Thus 
$A=\CC[x_1,\ldots,x_n,y_1,\ldots,y_n]^{\kS_n}$. 
The Poisson bracket on this algebra is induced from the standard
symplectic structure on $\CC^{2n}$.  
\end{theorem}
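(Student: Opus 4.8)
The plan is to deduce the theorem from the machinery of rational Cherednik algebras developed in Sections \ref{rcera} and on symplectic reflection algebras, specializing to $G = \kS_n$ acting on $V = \h \oplus \h^*$ with $\h = \CC^n$. The key observation is that the commuting scheme quotient ${\rm Comm}(n)/\PGL_n$ is, on the level of reduced schemes, a Hamiltonian reduction of $T^*{\rm Mat}_n$ along the zero orbit, while the Calogero--Moser space $\mC_n$ of Section \ref{sec:cmspace} is the reduction along the orbit $\Ob$ of ${\rm diag}(-1,\dots,-1,n-1)$. First I would recall from Proposition \ref{catquo} and Proposition \ref{catquo1} that the reduced quotient $\{(X,Y) : [X,Y]=0\}/\PGL_n$ is isomorphic to $\CC^{2n}/\kS_n$, with coordinate ring $\CC[x_1,\dots,x_n,y_1,\dots,y_n]^{\kS_n}$; this already handles the ``underlying variety'' statement and identifies the Poisson bracket (which is transported from the symplectic structure on $T^*{\rm Mat}_n$, hence on $\CC^{2n}$, via the necklace bracket formula of Theorem \ref{nbf}, exactly as in the proof of Proposition \ref{symvar}). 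So the content of the theorem is the \emph{reducedness} of $A = \CC[{\rm Comm}(n)]^{\PGL_n}$, i.e. that the natural surjection $A \to \CC[x_1,\dots,x_n,y_1,\dots,y_n]^{\kS_n}$ induced by restriction to pairs of diagonal matrices is an isomorphism, equivalently that $A$ has no nilpotents.

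The approach to reducedness I would take is the quantization/degeneration argument of Gan--Ginzburg. One realizes $A$ as the $t \to 0$, $c \to 0$ limit (associated graded) of the spherical subalgebra $B_{t,c} = \e \sH_{t,c}[\kS_n, V] \e$ of the rational Cherednik algebra. More precisely, consider the family of Hamiltonian reductions of the Weyl algebra ${\rm Weyl}(T^*{\rm Mat}_n) = \cD({\rm Mat}_n)$ by $\gsl_n$ along the coadjoint orbits $c\cdot\Ob$, $c \in \CC$: quantum Hamiltonian reduction $\big(\cD({\rm Mat}_n)/\cD({\rm Mat}_n)\cdot(\mu - c\chi)(\gsl_n)\big)^{\PGL_n}$ produces a flat family of filtered algebras whose $c = 0$ specialization has associated graded equal to $A$, and (by the Levasseur--Stafford/Etingof--Ginzburg type identification, cf. the $t\neq 0$ analysis) whose generic member is Morita-equivalent to $\sH_{1,c}$. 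The key input is that the quantum moment map ideal generates, after taking invariants, an algebra that is a \emph{flat} deformation, so that $\gr$ of the quantum reduction injects into the classical reduction; combined with the fact that the generic fibers are honest (reduced) algebras and with a dimension count, one concludes that the classical reduction along the zero orbit must already be reduced. Alternatively, and perhaps more in the spirit of the excerpt, one can argue directly: the ring $A$ is a domain-free finitely generated algebra equipped with a Poisson structure whose symplectic leaves are controlled by Cherednik-algebra representation theory, and one shows $A$ is Cohen--Macaulay (using that $\gr A$ embeds in $\CC[{\rm Mat}_n \times {\rm Mat}_n]^{\PGL_n}$ and invariant theory) and generically reduced, hence reduced.

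The steps in order: (1) identify $({\rm Comm}(n)/\PGL_n)_{\rm red}$ with $\CC^{2n}/\kS_n$ via Propositions \ref{catquo}, \ref{catquo1} and transport the Poisson bracket using Theorem \ref{nbf}; (2) set up the classical Hamiltonian reduction $\mu^{-1}(0)/\PGL_n$ and its quantization, the quantum Hamiltonian reduction of $\cD({\rm Mat}_n)$ along the family of orbits $c\Ob$; (3) prove flatness of the quantum family in $c$ and that $\gr$ of the $c=0$ member is $A$ — this is where one uses a careful analysis of the moment map, e.g. that $\mu$ is flat and the orbit $\Ob$ is nice; (4) use the fact that for generic $c$ the reduced algebra is semisimple-generic (Azumaya, cf. Theorem \ref{smo} and Theorem \ref{ratcher}) to deduce generic reducedness of the $c=0$ member, then upgrade to full reducedness via a Cohen--Macaulayness/codimension argument. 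The main obstacle will be step (3): establishing that the quantum Hamiltonian reduction along the zero orbit is flat and has the expected associated graded is genuinely delicate, because it requires controlling the failure of exactness of the invariants functor applied to the moment map complex — equivalently, vanishing of certain higher $\gsl_n$-cohomology. This is precisely the hard technical heart of \cite{GG}, and I would expect to cite their Koszulness-of-the-moment-map argument rather than reprove it. The remaining steps are, by contrast, either already in the excerpt or routine invariant theory and commutative algebra.
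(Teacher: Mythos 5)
Your reduction of the problem to showing that $A=\CC[\mathrm{Comm}(n)]^G$ has no nilpotents, and your use of Propositions~\ref{catquo} and~\ref{catquo1} plus the necklace bracket (Theorem~\ref{nbf}) to handle the underlying reduced variety and the Poisson structure, are correct and match the paper. But your proposed route to reducedness misses the one idea the paper's argument actually hinges on: Gan and Ginzburg pass to the \emph{almost commuting variety} $\cM_n=\{(X,Y,\mathbf v,\mathbf f)\mid [X,Y]+\mathbf v\otimes\mathbf f=0\}\subset \gl_n\times\gl_n\times\CC^n\times(\CC^n)^*$, prove that this auxiliary scheme (not $\mathrm{Comm}(n)$, not $A$) is a complete intersection with $n+1$ irreducible components that is generically reduced, conclude it is reduced by Serre's criterion, and then identify $\CC[\cM_n]^G$ with $\CC[\gl_n\times\gl_n]^G$ modulo the $\wedge^2[X,Y]=0$ relation, which by a scheme-theoretic strengthening of Proposition~\ref{catquo} is $A$. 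You never mention $\cM_n$, and the extra vector/covector directions are precisely what makes the complete-intersection argument possible.

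Both of your substitutes for this step have gaps. The ``Cohen--Macaulay plus generically reduced'' argument applied directly to $A$ (or to $\mathrm{Comm}(n)$) is not available: $\mathrm{Comm}(n)$ is notoriously singular and not a complete intersection, its reducedness is an open problem, and there is no route in the paper to CM-ness of $A$ itself --- the whole point of introducing $\cM_n$ is to manufacture a scheme where the complete-intersection argument applies. The quantum Hamiltonian reduction argument is, in the paper's exposition, running in the wrong direction: the text derives the Levasseur--Stafford theorem (Theorem~\ref{thm:LS}) \emph{from} Theorem~\ref{thm:GG}, so quoting that flatness machinery to prove reducedness of $A$ would be circular. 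Moreover, even granting a flat quantum family $B_c$ with $\gr B_0\hookrightarrow A$, reducedness of $A$ does not follow without first showing this map is onto, which is essentially the surjectivity $\operatorname{Im}\HC\supset\cD(\h)^W$ of Proposition~\ref{conta} --- again proved in the paper \emph{using} Theorem~\ref{thm:GG}. Your instinct that the hard technical content lives in a flatness or ``Koszulness of the moment map'' statement is a reasonable guess about the Gan--Ginzburg paper, but as the argument is organized here, the decisive step is the classical geometry of $\cM_n$, and that step is absent from your proposal.
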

\begin{proof}[Sketch of the proof]
Look at the {\it almost commuting variety}
$\cM_{n}\subset \gl_{n}\times \gl_{n}\times\CC^{n}\times(\CC^{n})^{*}$
defined by
$$
\cM_{n}=\{(X, Y, \mathbf v, \mathbf f)|[X, Y]+\mathbf v\otimes \mathbf f=0\}.
$$
Gan and Ginzburg proved the following result.
\begin{theorem}
$\cM_{n}$ is a complete intersection. It has $n+1$ irreducible 
components denoted by $\cM_{n}^{i}$, labeled by
$i=\dim\CC\langle X, Y\rangle \mathbf v$. Also, $\cM_{n}$ is generically reduced.
\end{theorem}
Since $\cM_{n}$ is generically reduced and is a complete intersection, 
by a standard result of commutative algebra it is reduced.
Thus $\CC[\cM_{n}]$ has no nonzero nilpotents. This implies
$\CC[\cM_{n}]^{G}$ has no nonzero nilpotents.

However, it is easy to show that the algebra $\CC[\cM_n]^G$ is isomorphic to the algebra 
of invariant polynomials of entries of $X$ and $Y$ modulo the ``rank 1'' relation 
$\wedge^2[X,Y]=0$. By a scheme-theoretic version of Proposition \ref{catquo}
(proved in \cite{EG}), the latter is isomorphic to $A$. This implies the theorem
(the statement about Poisson structures is checked directly in coordinates on the open part where 
$X$ is regular semisimple).
\end{proof}

\subsection{The space $\sM_c$ for $\kS_n$
and the Calogero-Moser space.}{\label{sec:tsacms}}

Let $\sH_{0,c}=\sH_{0,c}[\kS_n,V]$ be the symplectic reflection 
algebra of the symmetric group $\kS_n$
and space $V=\h\oplus \h^*$, where $\h=\CC ^n$
(i.e., the rational Cherednik algebra $H_{0,c}(\kS_n,\h)$).
Let $\sM_{c}=\Spec\,\sB_{0,c}[\kS_n,V]$ 
be the Calogero-Moser space defined in Section \ref{sec:ssra}.
It is a symplectic variety for $c\ne 0$.

\begin{theorem} For $c\ne 0$ the space $\sM_c$ is isomorphic to the Calogero-Moser 
space ${\mC}_n$ as a symplectic variety. 
\end{theorem}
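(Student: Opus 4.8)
The plan is to identify both $\sM_c$ and $\mC_n$ with a common Hamiltonian reduction, using the $c=1$ normalization (recall $\sH_{0,c}\cong \sH_{0,\lambda c}$ after rescaling, so it suffices to treat one nonzero value of $c$, and $c=1$ is the convenient one for matching with the $\rank=1$ orbit $\Ob$ in $\mC_n$). First I would realize the spherical subalgebra quantum-classically: by Theorem \ref{doubcen} and the results of Section \ref{sec:ssra}, $\sB_{0,c}=\e\sH_{0,c}\e$ is the center-type object whose spectrum is $\sM_c$, and $\sH_{0,c}$ is Azumaya of degree $n!$ over it (Theorem \ref{ratcher} plus Artin's theorem, as noted after Theorem \ref{smo}). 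So $\sM_c$ parametrizes the irreducible $n!$-dimensional representations of $\sH_{0,c}$, each isomorphic to $\CC\kS_n$ as a $\kS_n$-module. The key step is then to attach to such a representation a pair of $n\times n$ matrices: pick the $\kS_n$-module $\CC\kS_n\cong \bigoplus_\tau \tau^{\oplus \dim\tau}$, or more usefully the regular representation, and let $X,Y$ be the operators by which $\sum_i x_i$-type and $\sum_i y_i$-type elements act — more precisely, one uses that in $H_{0,c}(\kS_n,\CC^n)$ the elements $x_i,y_i$ together with $\kS_n$ generate, and the defining relation $[y_i,x_j]=cs_{ij}$, $[y_i,x_i]=-c\sum_{j\ne i}s_{ij}$ (with $t=0$) translates, on an $n!$-dimensional representation realized on $\CC\kS_n$, into the rank-one commutator condition $[\mathbf X,\mathbf Y]+c=$ (rank $\le 1$) after a suitable change of basis. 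This is exactly the KKS description of $\mC_n$.

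Concretely, I would follow Etingof--Ginzburg: consider the $\sH_{0,c}$-module $\sH_{0,c}\e$, which is a vector bundle of rank $n!$ over $\sM_c$ (shown in the proof of Theorem \ref{smo}), carrying a fiberwise action of $\kS_n$ making each fiber the regular representation. Decompose $\sH_{0,c}\e=\bigoplus_\tau \tau\otimes M_\tau$ where $M_\tau$ is a vector bundle over $\sM_c$ of rank $\dim\tau$. Taking $\tau=\h=\CC^{n-1}$ (or the permutation representation $\CC^n$), the operators given by multiplication by the $\kS_n$-invariant combinations built from $x_i$ and from $y_i$ act on these bundles; the relations of $H_{0,c}$ force, on the $n$-dimensional piece, a pair $(X,Y)\in\mathrm{Mat}_n\times\mathrm{Mat}_n$ with $[X,Y]+c\cdot\mathrm{Id}$ of rank $1$, well-defined up to simultaneous conjugation by $\PGL_n$ (the residual gauge freedom). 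Rescaling $c$ to $1$ and comparing with the orbit $\Ob$ of $\diag(-1,\dots,-1,n-1)$ used in Definition of $\mC_n$, one gets a morphism $\Phi:\sM_c\to\mC_n$. For the inverse, given $(X,Y)$ with $\rank([X,Y]+1)=1$ one reconstructs an $n!$-dimensional $\sH_{0,c}$-module (the "standard" construction: take $\CC^n$ with $X$ acting, induce up, and use the rank-one datum to define the $y_i$), hence a point of $\sM_c=\{$irreducible reps$\}$; this gives $\Psi:\mC_n\to\sM_c$. Then I would check $\Phi\circ\Psi=\mathrm{id}$ and $\Psi\circ\Phi=\mathrm{id}$ on the level of points, and since both sides are smooth (Theorems \ref{smo} and the smoothness of $\mC_n$) and the maps are algebraic, this suffices for an isomorphism of varieties.

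To upgrade to an isomorphism of \emph{symplectic} (equivalently Poisson) varieties, I would use the quantization: both $\sM_c$ and $\mC_n$ carry Poisson structures arising as the $\hbar\to 0$ limit of noncommutative algebras — $\sB_{\hbar,c}$ on the one side, and the quantum Hamiltonian reduction of the Weyl algebra of $\mathrm{Mat}_n$ by $\PGL_n$ on the other. It is cleaner, though, to argue directly: on the dense open set $U_n\subset\mC_n$ where $X$ is regular semisimple, Proposition \ref{symvar} identifies $\mC_n$ with $T^*(\CC^n_{\reg}/\kS_n)$ symplectically; on the $\sM_c$ side, the localization lemma (Proposition \ref{prop-loc}(ii)) identifies $\sB_{0,c}^{\mathrm{loc}}$ with $\CC[\h^*\times\h_{\reg}]^{\kS_n}$, and its Poisson structure is the restriction of the standard symplectic one (Corollary after Proposition \ref{prop-loc}). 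One checks $\Phi$ intertwines these two symplectic charts over the common dense open set $\h_{\reg}^*/\kS_n$ — this is essentially the classical Dunkl-operator / Lax-matrix computation, matching $y_{ij}=1/(x_i-x_j)$, $y_{ii}=p_i$ with the classical Dunkl operator $D^0_{y_i}$. Since a Poisson map agreeing with a given one on a dense open subset of smooth varieties agrees everywhere, $\Phi$ is symplectic.

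\textbf{Main obstacle.} I expect the technical heart to be the second step: writing down the matrices $(X,Y)$ from an arbitrary $n!$-dimensional representation of $\sH_{0,c}$ in a canonical, basis-independent way, and verifying that the rank-one condition $\rank([X,Y]+1)=1$ comes out \emph{exactly} (with the correct scaling of $c$ and the correct orbit $\Ob$), together with showing that the two constructions $\Phi,\Psi$ are mutually inverse rather than merely inverse on an open set. The smoothness of both spaces makes the final gluing painless, and the symplectic compatibility then reduces to the single explicit Lax-matrix identity already implicit in Proposition \ref{symvar} and the Dunkl-operator picture; but getting the representation-theoretic dictionary precisely right — in particular that every irreducible $\sH_{0,c}$-module arises from a unique $\PGL_n$-orbit of such pairs — is where the real work lies, and this is carried out in \cite{EG}.
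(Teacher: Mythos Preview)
Your overall strategy matches the paper's: interpret $\sM_c$ as the moduli space of irreducible $\sH_{0,c}$-modules (all $n!$-dimensional and carrying the regular $\kS_n$-action by Theorem \ref{ratcher}), extract from each such module a pair of $n\times n$ matrices satisfying the rank-one commutator condition, and verify the symplectic structures agree on the dense regular-semisimple locus via the classical Dunkl picture.

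The differences are in the two places you flagged as delicate. First, your recipe for producing $(X,Y)$ --- take the $\kS_n$-isotypic component for $\h$ or $\CC^n$ and act by ``$\kS_n$-invariant combinations of the $x_i,y_i$'' --- does not work as stated: $\kS_n$-invariant elements of $\sH_{0,c}$ act as \emph{scalars} on each multiplicity space $M_\tau$, so no nontrivial matrix appears this way. The paper's construction is cleaner and sidesteps this: on the irreducible module $E$ take the space $E^{\kS_{n-1}}$ of invariants for the stabilizer of the index $1$, which is $n$-dimensional since $E\cong\CC\kS_n$, and let $X,Y$ be the restrictions of $x_1,y_1$ (these commute with $\kS_{n-1}$). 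Then $[X,Y]$ is the restriction of $\sum_{i\ge 2}s_{1i}$, and in the basis $\e,\,\e s_{12},\ldots,\e s_{1n}$ of $E^{\kS_{n-1}}$ this operator is the all-ones-minus-identity matrix, so $[X,Y]+1$ has rank $1$ on the nose.

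Second, the paper does \emph{not} build an inverse $\Psi$. Reconstructing an $\sH_{0,c}$-module from an arbitrary $(X,Y)\in\mC_n$ is genuinely delicate away from the regular-semisimple locus, and the paper avoids it. Instead it argues on function algebras: $f^*:\cO(\mC_n)\to\sB_{0,c}$ sends $\tr(X^p)\mapsto\sum x_i^p$ and, by $\SL_2$-equivariance, $\tr(Y^p)\mapsto\sum y_i^p$; one checks that $f^*$ is Poisson on these elements (via the necklace bracket formula versus the relations in $\sH_{0,c}$), and since $\sB_{0,c}$ is Poisson-generated by power sums this gives surjectivity. Injectivity then follows from Wilson's connectedness theorem (Theorem \ref{thm:wil}) plus equality of dimensions --- so the hard geometric input is connectedness of $\mC_n$, not an explicit inverse. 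Your dense-open-set argument for symplectic compatibility is essentially what the paper does.
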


\begin{proof} To prove the theorem, we will first construct a map 
$f: \sM_c\to \mC_n$, and then prove that $f$ is an isomorphism.  

Without loss of generality, we may assume that $c=1$. 
As we have shown before, the algebra $\sH_{0,c}$ is an Azumaya algebra. 
Therefore, $\sM_c$ can be regarded as the moduli space of irreducible representations of $\sH_{0,c}$. 

Let $E\in \sM_c$ be an irreducible representation of $\sH_{0,c}$. 
We have seen before that $E$ has dimension $n!$ and is isomorphic to the regular representation 
as a representation of $\kS_n$. Let $\kS_{n-1}\subset \kS_n$ be the subgroup which preserves the element $1$.
Then the space of invariants $E^{\kS_{n-1}}$ has dimension $n$. On this space we have operators 
$X,Y: E^{\kS_{n-1}}\to E^{\kS_{n-1}}$ obtained by restriction of the operators $x_1,y_1$ on $E$
to the subspace of invariants. We have 
$$
[X,Y]=T:=\sum_{i=2}^ns_{1i}.
$$
Let us now calculate the right hand side of this equation explicitly. 
Let $\e$ be the symmetrizer of $\kS_{n-1}$. Let us realize the regular
representation $E$ of $\kS_n$ as $\CC[\kS_n]$ with action of 
$\kS_n$ by left multiplication. 
Then $v_1=\e,v_2=\e s_{12},\ldots,v_n=\e s_{1n}$ 
is a basis of $E^{\kS_{n-1}}$. 
The element $T$ commutes with $\e$, so we have 
$$
Tv_i=\sum_{j\ne i}v_j.
$$
This means that $T+1$ has rank $1$, and hence the pair $(X,Y)$ 
defines a point on the Calogero-Moser space 
$\mC_n$. \footnote{Note that the pair $(X,Y)$ is well defined only up to conjugation, because 
the representation $E$ is well defined only up to an isomorphism.}

We now set $(X,Y)=f(E)$. It is clear that $f: \sM_c\to \mC_n$ is a regular map. 
So it remains to show that $f$ is an isomorphism. 
This is equivalent to showing that the corresponding map 
of function algebras $f^*: \cO(\mC_n)\to \sB_{0,c}$ is an isomorphism. 

Let us calculate $f$ and $f^*$ more explicitly. 
To do so, consider the open set $\sU$ in $\sM_c$ consisting of representations 
in which $x_i-x_j$ acts invertibly. These are exactly the representations that 
are obtained by restricting representations of 
$\kS_n\ltimes \CC[x_1,\ldots,x_n,p_1,\ldots,p_n,\delta(\mathbf x)^{-1}]$ 
using the classical Dunkl embedding. 
Thus representations $E\in \sU$ are of the form $E=E_{\lambda,\mu}$ 
($\lambda,\mu\in \CC^n$, and $\lambda$ has distinct coordinates), where 
$E_{\lambda,\mu}$ is the space of complex valued functions 
on the orbit $\Ob_{\lambda,\mu}\subset \CC^{2n}$, 
with the following action of $\sH_{0,c}$: 
$$
(x_iF)(\mathbf a,\mathbf b)=a_iF(\mathbf a,\mathbf b), \quad
(y_iF)(\mathbf a,\mathbf b)=b_iF(\mathbf a,\mathbf b)+\sum_{j\ne i}\frac{(s_{ij}F)(\mathbf a,\mathbf b)}{a_i-a_j}.
$$
(the group $\kS_n$ acts by permutations). 

Now let us consider the space $E_{\lambda,\mu}^{\kS_{n-1}}$.
A basis of this space is formed by characteristic functions of 
$\kS_{n-1}$-orbits on $\Ob_{\lambda,\mu}$. 
Using the above presentation, it is straightforward to calculate the matrices
of the operators $X$ and $Y$ in this basis:
$$
X=\diag(\lambda_1,\ldots,\lambda_n), 
$$
and 
$$
Y_{ij}=\mu_i
\text{ if }j=i,\ Y_{ij}=\frac{1}{\lambda_i-\lambda_j}\text{ if }j\ne i. 
$$
This shows that $f$ induces an isomorphism $f|_\sU: \sU\to U_n$, where 
$U_n$ is the subset of $\mC_n$ consisting of pairs $(X,Y)$ for which $X$ has distinct eigenvalues. 

From this presentation, it is straigtforward that 
$f^*(\tr(X^p))=x_1^p+\cdots+x_n^p$ for every positive integer $p$. 
Also, $f$ commutes with the natural $\SL_2(\CC)$-action
on $\sM_c$ and $\mC_n$ (by $(X,Y)\to (aX+bY,cX+dY)$), so we also get 
$f^*(\tr(Y^p))=y_1^p+\cdots+y_n^p$, and 
$$
f^*(\tr(X^pY))=\frac{1}{p+1}\sum_{m=0}^p\sum_i x_i^my_ix_i^{p-m}.
$$
Now, using the necklace bracket formula on ${\mC}_n$ and 
the commutation relations of $\sH_{0,c}$, 
we find, by a direct computation, that 
$f^*$ preserves Poisson bracket on the elements 
$\tr(X^p)$, $\tr(X^qY)$. 
But these elements are a local coordinate system near a generic point, 
so it follows that $f$ is a Poisson map. Since the algebra $\sB_{0,c}$ 
is Poisson generated by 
$\sum x_i^p$ and $\sum y_i^p$ for all $p$, we get that $f^*$ is a surjective map. 

Also, $f^*$ is injective. Indeed, by Wilson's theorem the Calogero-Moser space is connected, and hence 
the algebra $\cO (\mC_n)$ has no zero divisors, while 
$\mC_n$ has the same dimension as $\sM_c$. 
This proves that $f^*$ is an isomorphism, so $f$ is an isomorphism. 
\end{proof}

\subsection{The Hilbert scheme $\Hilb_{n}(\CC^{2})$ and the Calogero-Moser space}

The Hilbert scheme $\Hilb_{n}(\CC^{2})$ is defined to be
\begin{eqnarray*}
&&\Hilb_{n}(\CC^{2})=\{\text{ ideals }I\subset\CC[x, y]|\mathrm{codim} I=n \}\\
&=&\{(E, v)|E \text{ is a $\CC[x, y]$-module of dimension $n$}, \, 
v \text{ is a cyclic vector of $E$}\}.
\end{eqnarray*}
The second equality can be easily seen from the short exact sequence
$$0\to I\to \CC[x, y]\to E\to 0.$$
Let $S^{(n)}\CC^{2}
=\underbrace{\CC^{2}\times\cdots\times\CC^{2}}_{n \text{ times}}
/\kS_{n}$, where $\kS_{n}$ acts by permutation.
We have a natural map $\Hilb_{n}(\CC^{2})\to S^{(n)}\CC^{2}$
which sends every ideal $I$ to its zero set (with multiplicities). 
This map is called the {\it Hilbert-Chow map}. 

\begin{theorem}[Fogarty, \cite{F}]
\begin{enumerate}
\item[(i)] $\Hilb_{n}(\CC^{2})$ is a smooth quasiprojective variety.
\item[(ii)] The Hilbert-Chow map $\Hilb_{n}(\CC^{2})\to S^{(n)}\CC^{2}$
is projective. It is a resolution of singularities.
\end{enumerate}
\end{theorem}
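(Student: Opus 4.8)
The plan is to place $\Hilb_n(\CC^2)$ into the same framework of linear‑algebra data modulo a group action that was used above for the Calogero--Moser space $\mC_n$, and then to read off smoothness from one elementary observation about endomorphisms of a cyclic module. First I would pass to the \emph{matrix model}: choosing an isomorphism $E\cong\CC^n$, a pair $(E,v)$ becomes a triple $(X,Y,v)$, where $X,Y\in\mathrm{Mat}_n(\CC)$ are the commuting operators of multiplication by $x,y$ on $E=\CC[x,y]/I$ and $v\in\CC^n$ is the cyclic vector (the image of $1$). Set
$$
\widetilde H_n=\{(X,Y,v)\in\mathrm{Mat}_n(\CC)\times\mathrm{Mat}_n(\CC)\times\CC^n\ :\ [X,Y]=0,\ \CC[X,Y]v=\CC^n\},
$$
with $\GL_n(\CC)$ acting by $g\cdot(X,Y,v)=(gXg^{-1},gYg^{-1},gv)$. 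Since fixing the cyclic vector $v$ kills even the scalars, the stabilizer of every point of $\widetilde H_n$ is trivial, so $\GL_n$ acts freely. Sending $(X,Y,v)$ to $I=\{p\in\CC[x,y]:p(X,Y)v=0\}$, and conversely an ideal $I$ to the multiplication operators on $\CC[x,y]/I$, gives mutually inverse bijections between $\widetilde H_n/\GL_n$ and $\Hilb_n(\CC^2)$. I would upgrade this to an isomorphism of schemes: the family over $\widetilde H_n$ with fibre $\CC[x,y]/I_{(X,Y,v)}$ is flat, hence induces a $\GL_n$‑invariant morphism to $\Hilb_n(\CC^2)$, while $\widetilde H_n/\GL_n$ exists as a quasiprojective variety by geometric invariant theory, the cyclicity condition being precisely the stability condition for the linearization by $\det$; comparing the two one gets an isomorphism.

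\emph{Smoothness.} It suffices to show $\widetilde H_n$ is smooth of dimension $n^2+2n$, for then $\Hilb_n(\CC^2)=\widetilde H_n/\GL_n$ is smooth of dimension $(n^2+2n)-n^2=2n$. On the open cyclic locus $\widetilde H_n$ is cut out inside $\mathrm{Mat}_n(\CC)^2\times\CC^n$ by the entries of $[X,Y]$, so its tangent space at $(X,Y,v)$ is $\{(A,B,u):[A,Y]+[X,B]=0\}$, of dimension $2n^2+n-\mathrm{rank}\,(d\mu)$ with $d\mu(A,B)=[A,Y]+[X,B]$. By the trace pairing, the cokernel of $d\mu$ is the commutant $\{C:[C,X]=[C,Y]=0\}$, which for a cyclically acting commutative subalgebra equals that subalgebra itself, i.e. $\CC[X,Y]\cong\CC[x,y]/I$, of dimension $n$ (here one uses the trivial but crucial fact $\End_{\CC[x,y]}(\CC[x,y]/I)=\CC[x,y]/I$). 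Hence $\mathrm{rank}\,(d\mu)=n^2-n$ and the tangent space has dimension $n^2+2n$ at every cyclic point. Since the commuting variety $\{[X,Y]=0\}$ is irreducible of dimension $n^2+n$ (Gerstenhaber, \cite{Ge1}), $\widetilde H_n$ has local dimension $n^2+2n$ everywhere, so tangent dimension equals local dimension and $\widetilde H_n$ is smooth. Quasiprojectivity of $\Hilb_n(\CC^2)$ comes from the GIT construction of the quotient.

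\emph{The Hilbert--Chow morphism.} In the model, $\pi$ sends $(X,Y,v)$ to the point of $S^{(n)}\CC^2=\CC^{2n}/\kS_n$ determined by $\pi^*(\textstyle\sum_i x_i^ay_i^b)=\tr(X^aY^b)$; these power sums generate $\CC[x_1,\dots,x_n,y_1,\dots,y_n]^{\kS_n}$ (Theorem \ref{Wey}, Corollary \ref{gene}) and the functions $\tr(X^aY^b)$ are $\GL_n$‑invariant and well defined since $X,Y$ commute, so $\pi$ is a morphism. Properness I would get by restricting the projective Hilbert--Chow morphism $\Hilb_n(\mathbb{P}^2)\to S^{(n)}\mathbb{P}^2$ over the open set $S^{(n)}\CC^2$, using that $\Hilb_n(\CC^2)$ is the open subscheme of $\Hilb_n(\mathbb{P}^2)$ parametrizing subschemes disjoint from the line at infinity. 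Over the open dense locus of $n$ distinct points, $\pi$ is an isomorphism (a reduced length‑$n$ subscheme is $\bigcap_i\mathfrak m_{p_i}$, with $X,Y$ simultaneously diagonalizable with eigenvalues the $p_i$), so $\pi$ is a proper birational morphism from the smooth variety $\Hilb_n(\CC^2)$ onto the normal variety $\CC^{2n}/\kS_n$, i.e. a resolution of singularities.

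\emph{Main obstacle.} The genuinely technical point is Step 1: proving that the bijection $\widetilde H_n/\GL_n\cong\Hilb_n(\CC^2)$ is an isomorphism of \emph{schemes} (representability of the Hilbert functor, construction and comparison of the universal families, and the GIT quotient with the correct stability), together with the properness of $\pi$ from a non‑proper source — the embedding into $\Hilb_n(\mathbb{P}^2)$ is the clean way to handle the latter. Once the model is in place, the smoothness and the resolution statement are short, the only inputs being the identity $\End_{\CC[x,y]}(\CC[x,y]/I)=\CC[x,y]/I$ and Gerstenhaber's irreducibility of the commuting variety.
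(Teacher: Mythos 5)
Your proposal is correct, and since the paper gives no proof of this theorem but simply refers to \cite{Na}, your sketch is essentially the argument from that cited reference (the linear-algebra/ADHM-type model, not Fogarty's original cohomological argument in \cite{F}). The dimension count via $\mathrm{coker}(d\mu)\cong\CC[X,Y]\cong\CC[x,y]/I$ together with Gerstenhaber's irreducibility of the commuting variety, the deduction of smoothness from tangent dimension equal to local dimension, and the passage through $\Hilb_n(\mathbb{P}^2)$ for properness are all correct, and you rightly flag the scheme-theoretic identification $\widetilde H_n/\GL_n\cong\Hilb_n(\CC^2)$ as the place where genuine technical work (flatness of the tautological family, GIT stability) remains.
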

\begin{proof}
Proof can be found in \cite{Na}.
\end{proof}

Now consider the Calogero-Moser space $\mC_{n}$
defined in Section \ref{sec:cmspace}.

\begin{theorem}[see \cite{Na}]\label{na} 
The Hilbert Scheme $\Hilb_{n}(\CC^{2})$ is $C^{\infty}$-diffeomorphic
to $\mC_{n}$. 
\end{theorem}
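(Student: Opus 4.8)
The plan is to realize both $\Hilb_n(\CC^2)$ and $\mC_n$ as members of a single smooth family of (hyperk\"ahler) quotients of one and the same flat space, differing only in the choice of a level parameter, and then to invoke connectedness of the parameter space. Let $\mathcal V=\End(\CC^n)\oplus\End(\CC^n)\oplus\Hom(\CC,\CC^n)\oplus\Hom(\CC^n,\CC)$, with points written $(X,Y,v,f)$ and $\GL_n(\CC)$ acting by conjugation on $X,Y$ and correspondingly on $v,f$. This is the ADHM/Jordan--quiver space; as a quaternionic vector space it carries a flat hyperk\"ahler structure for which $U(n)\subset\GL_n$ acts preserving all three complex structures, with complex moment map $\mu_{\CC}(X,Y,v,f)=[X,Y]+vf\in\gl_n$ and a real moment map $\mu_{\RR}(X,Y,v,f)=[X,X^{*}]+[Y,Y^{*}]+vv^{*}-f^{*}f\in\mathfrak{u}(n)$. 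For $\vec\zeta=(\zeta_{\RR},\zeta_{\CC})\in\RR\oplus\CC\cong\RR^{3}$ set
$$
\mathfrak M_{\vec\zeta}:=\bigl(\mu_{\RR}^{-1}(\zeta_{\RR}\cdot\Id)\cap\mu_{\CC}^{-1}(\zeta_{\CC}\cdot\Id)\bigr)\big/U(n).
$$

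First I would pin down the two endpoints. Taking $\zeta_{\RR}=0$ and $\zeta_{\CC}\ne 0$: after rescaling we may assume $\mu_{\CC}=\Id$, i.e. $[X,Y]+vf=\Id$, so $[X,Y]+\Id$ has rank $\le 1$; conversely any $(X,Y)$ with $\rank([X,Y]+\Id)=1$ factors as $vf$, uniquely up to $v\mapsto\lambda v,\ f\mapsto\lambda^{-1}f$, and a trace computation forces the eigenvalues of $[X,Y]$ to be $(n-1,-1,\dots,-1)$. Since the central $\CC^{*}\subset\GL_n$ acts only on $(v,f)$ (preserving $vf$), passing to the $\GL_n$-quotient is the same as taking $(X,Y)$ modulo $\PGL_n$; using that at $\zeta_{\CC}\ne 0$ the action on $\mu_{\CC}^{-1}(\zeta_{\CC}\Id)$ is free with all points stable (the argument behind the freeness statement of Proposition~\ref{prop:freeact} in Section~\ref{sec:cmspace}, via Schur's lemma), one identifies $\mathfrak M_{(0,\zeta_{\CC})}$ with $\mC_n$. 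Taking instead $\zeta_{\RR}\ne 0$, $\zeta_{\CC}=0$: on the stable locus the framing forces $f=0$ and $[X,Y]=0$, and $v$ becomes a cyclic vector for $\CC[X,Y]$; sending $(X,Y,v,0)$ to the ideal $I=\ker\bigl(\CC[x,y]\to\CC^{n},\ p\mapsto p(X,Y)v\bigr)$ gives the ADHM description $\mathfrak M_{(\zeta_{\RR},0)}\cong\Hilb_n(\CC^2)$ (Fogarty's theorem together with Nakajima's ADHM presentation, see \cite{Na}).

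The remaining, and main, step is the soft one: to show that $\mathfrak M_{\vec\zeta}$ is a smooth $4n$-manifold for every $\vec\zeta\ne 0$, that $\{\mathfrak M_{\vec\zeta}\}_{\vec\zeta\in\RR^{3}\setminus\{0\}}$ is a locally trivial $C^{\infty}$ fibre bundle over $\RR^{3}\setminus\{0\}$, and then to use connectedness. Smoothness comes from the fact that for $\vec\zeta\ne 0$ the $U(n)$-action on the common level set is free and the combined moment map is a submersion there — freeness can only fail through a proper $(X,Y,v,f)$-invariant subspace on which $\mu_{\CC}$ or $\mu_{\RR}$ would vanish, which is impossible once the corresponding component of $\vec\zeta$ is nonzero. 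Local triviality then follows by an Ehresmann-type argument applied to the family of level sets, or, more slickly, from the residual $Sp(1)$-action rotating the triple of complex structures and acting on $\vec\zeta\in\RR^{3}=\mathrm{Im}\,\mathbb H$ by rotations, which yields honest diffeomorphisms (indeed isometries) $\mathfrak M_{\vec\zeta}\xrightarrow{\ \sim\ }\mathfrak M_{g\cdot\vec\zeta}$. As $\RR^{3}\setminus\{0\}$ is connected, all $\mathfrak M_{\vec\zeta}$ with $\vec\zeta\ne 0$ are diffeomorphic; specializing to $\vec\zeta=(\zeta_{\RR},0)$ and $\vec\zeta=(0,1)$ gives $\Hilb_n(\CC^2)\cong_{C^{\infty}}\mC_n$.

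I expect the genuine obstacle to be packaging the hyperk\"ahler-reduction input that is not developed in these notes — namely freeness and submersivity of the moment maps for all $\vec\zeta\ne 0$, and the consequent local triviality of the family. By contrast, the two endpoint identifications are essentially the facts recorded in Section~\ref{sec:cmspace} and in \cite{Na}, and the connectedness argument is immediate. Accordingly, a complete write-up would either cite Nakajima's book for the quiver-variety machinery or reproduce the standard (but somewhat lengthy) analytic lemmas on hyperk\"ahler quotients and their dependence on parameters.
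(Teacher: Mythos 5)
Your proposal is correct, and it is essentially the argument of \cite{Na} to which the notes defer: the theorem is stated with a citation and no proof, so there is no independent argument in the text to compare against. What the notes do offer is a weaker heuristic (Remark \ref{na1} and the remark after it): a complex one-parameter family $X_t$ over $\Bbb A_1$ with $X_0=\Hilb_n(\CC^2)$ and $X_t\cong\mC_n$ for $t\ne 0$, realized by varying the level/stability data in the Hamiltonian reduction, together with the statement that this makes the diffeomorphism ``natural to expect.'' By itself that algebraic family does not produce a $C^\infty$-diffeomorphism between the special and generic fibres; your passage to the three-real-parameter hyperk\"ahler family $\mathfrak{M}_{\vec\zeta}$, $\vec\zeta\in\RR^3\setminus\{0\}=\mathrm{Im}\,\Bbb H\setminus\{0\}$, with the $Sp(1)$-rotation of complex structures combined with real positive rescaling of $(X,Y,v,f)$ (which multiplies both moment maps by $\lambda^2$ and hence moves between spheres of different radii), is exactly the ingredient that turns the heuristic into a theorem — this is the content of \cite{Na}, and your observation that this makes the Ehresmann step unnecessary is correct. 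One small slip to fix: with your normalization $\mu_\CC=[X,Y]+vf=\Id$, the trace computation $\tr(vf)=n$ gives $[X,Y]$ eigenvalues $(1-n,1,\dots,1)$, not $(n-1,-1,\dots,-1)$; to land literally on the orbit $\Ob$ of $\diag(-1,\dots,-1,n-1)$ used in Subsection \ref{sec:cmspace} you should normalize to $\mu_\CC=-\Id$ (immaterial for the diffeomorphism, since $(X,Y)\mapsto(X,-Y)$ interchanges the two orbits).
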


\begin{remark}\label{na1}
More precisely there exists a family of algebraic varieties
over $\mathbb{A}_{1}$, say $X_{t}$, $t\in \Bbb A_1$, such that $X_{t}$ is isomorphic to $\mC_{n}$
if $t\neq 0$ and $X_{0}$ is the Hilbert scheme;
and also if we denote by $\overline{X_{t}}$ the deformation of $\CC^{2n}/\kS_n$ 
into the Calogero-Moser space, then 
there exists a map $f_{t}: X_{t}\twoheadrightarrow \overline{X_{t}}$,
such that for $t\neq 0$, $f_{t}$ is an isomorphism and $f_{0}$ is the
Hilbert-Chow map.
\end{remark}

\begin{remark}
Consider the action of $G=\PGL_{n}$ on $T^{*}{\mathrm Mat_{n}}$. 
As we have discussed, the corresponding moment map is $\mu(X, Y)=[X, Y]$, 
so $\mu^{-1}(0)=\{(X,Y)|[X, Y]=0\}$ is the commuting variety.
We can consider two kinds of quotient $\mu^{-1}(0)/G$ (i.e., of Hamiltonian reduction):

(1) The categorical quotient, i.e.,
$$
\Spec(\CC[x_{ij}, y_{ij}]/\langle[X, Y]=0\rangle)^{G}\cong (\CC^{n}\times\CC^{n})/\kS_{n}.
$$ 
It is a reduced (by Gan-Ginzburg Theorem \ref{thm:GG}), affine 
but singular variety.

(2) The GIT quotient, in which the stability condition
is that there exists a cyclic vector for $X,Y$. This quotient is 
$\Hilb_{n}(\CC^{2})$, which is smooth but not affine. 

Both of these reductions are degenerations of the 
reduction along the orbit of matrices $T$ such that $T+1$ has rank $1$, 
which yields the space $\mC_n$. 
This explains why Theorem \ref{na} and 
the results mentioned in Remark \ref{na1} are natural to expect. 
\end{remark}

\subsection{The cohomology of $\mC_n$}

We also have the following result describing the cohomology of $\mC_{n}$
(and hence, by Theorem \ref{na}, of $\Hilb_n(\CC^2)$).
Define the {\it age filtration} for the symmetric group $\kS_{n}$ by setting
$$
\mathrm{age}(\text{transposition})=1.
$$
Then one can show that for any $\sigma\in \kS_{n}$, $\mathrm{age}(\sigma)=\rank(1-\sigma)|_{\text{reflection representation}}$. 
It is easy to see that $0\leq\mathrm{age}\leq n-1$. 
Notice also that the age filtration can be defined for any Coxeter group.

\begin{theorem}[Lehn-Sorger, Vasserot]{\label{thm:coho}}
The cohomology ring $\H^{*}(\mC_{n}, \CC)$ lives in even degrees only and is isomorphic 
to $\gr({\mathrm Center}(\CC[\kS_{n}]))$ under the age filtration (with doubled degrees). 
\end{theorem}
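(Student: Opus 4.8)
The plan is to identify the cohomology ring $\H^*(\mathcal C_n,\CC)$ with the graded center of $\CC[\kS_n]$ by combining three ingredients: the geometry of the Hilbert scheme (via Theorem \ref{na}), the known description of $\H^*(\Hilb_n(\CC^2),\CC)$, and a deformation argument using the family of symplectic reflection / Calogero-Moser algebras. First I would use the diffeomorphism $\mathcal C_n \cong \Hilb_n(\CC^2)$ of Theorem \ref{na} to reduce the problem to computing $\H^*(\Hilb_n(\CC^2),\CC)$; since the diffeomorphism is of $C^\infty$ type it preserves the cohomology ring, so it suffices to work with the Hilbert scheme. The general formalism of the orbifold cohomology of $\CC^{2n}/\kS_n$ (the symmetric product) together with the crepant resolution $\Hilb_n(\CC^2)\to S^{(n)}\CC^2$ produces, by the work of Lehn–Sorger and Vasserot, an isomorphism of graded rings between $\H^*(\Hilb_n(\CC^2),\CC)$ and a ring built combinatorially from conjugacy classes of $\kS_n$, graded by the age function; the age of $\sigma$ equals $\rank(1-\sigma)$ on the reflection representation, as noted in the excerpt.

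The second step is to make the target ring precise and intrinsic: the associated graded $\gr(\mathrm{Center}(\CC[\kS_n]))$ under the age filtration (with degrees doubled to match the even grading on cohomology). I would recall that $\mathrm{Center}(\CC[\kS_n])$ has a basis of class sums $C_\sigma$, that the filtration $F^{\le k}$ spanned by class sums of elements of age $\le k$ is a ring filtration (because the age is subadditive under multiplication of permutations in the appropriate sense, i.e. $\rank(1-\sigma\tau)\le\rank(1-\sigma)+\rank(1-\tau)$), and that the resulting graded ring has the same Hilbert series as $\H^*(\Hilb_n(\CC^2),\CC)$ as computed by Göttsche's formula. The key structural input is that the multiplication in $\gr(\mathrm{Center}(\CC[\kS_n]))$ only retains the ``leading'' part of the class-algebra product — the part where age is exactly additive — and this is precisely the combinatorial ring appearing in the Lehn–Sorger–Vasserot computation of the cup product on $\Hilb_n(\CC^2)$.

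The third step, which I expect to be the genuine heart of the argument, is to pin down the ring isomorphism (not merely an additive one) between these two graded rings. Here the natural tool available from the earlier parts of the excerpt is the family of algebras $\sB_{\hbar,c}[\kS_n,V]$ quantizing $\mathcal C_n$ (Section \ref{sec:ssra}) together with the identification $\mathcal C_n\cong \sM_c$ of Section \ref{sec:tsacms}: one uses the Hochschild cohomology of $\sH_{1,0}[\kS_n,V]=\kS_n\ltimes\mathrm{Weyl}(V)$, which by Theorem \ref{afls} is exactly $\bigoplus_i \CC[S_i]^{\kS_n}$ with $S_i=\{\sigma:\rank(1-\sigma)=i\}$ — this is the associated graded of the center of $\CC[\kS_n]$ under the age filtration. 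The deformation-theoretic statement that $\sH_{1,c}$ is the universal deformation, plus a degeneration argument relating $\H^*(\mathcal C_n,\CC)$ to the Hochschild cohomology of the noncommutative deformation (a Hochschild–Kostant–Rosenberg / Kontsevich-type comparison for the symplectic resolution), yields a ring isomorphism $\H^*(\mathcal C_n,\CC)\cong \HH^*(\kS_n\ltimes\mathrm{Weyl}(V))\cong\gr\,\mathrm{Center}(\CC[\kS_n])$. The main obstacle is precisely this last comparison of ring structures: the additive identifications are standard, but matching the cup product on $\H^*(\Hilb_n)$ with the (graded) product on the center of the group algebra requires either the full Lehn–Sorger–Vasserot vertex-operator computation or an argument that the universal deformation $\sB_{\hbar,c}$ induces on cohomology exactly the class-algebra multiplication; I would present this via the Lehn–Sorger–Vasserot theorem as a black box, since reproving it is beyond the scope of these notes, and content myself with explaining why the age filtration on the center is the correct associated-graded object on the algebraic side.
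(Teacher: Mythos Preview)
Your primary route---reduce to $\Hilb_n(\CC^2)$ via Theorem \ref{na} and then invoke the Lehn--Sorger--Vasserot computation as a black box---is circular: the theorem you are trying to prove \emph{is} the Lehn--Sorger--Vasserot theorem (transported across the diffeomorphism), so citing it proves nothing new. The paper is not re-deriving their geometric/vertex-operator argument; it is sketching the \emph{alternative} noncommutative-algebraic proof from \cite{EG}, and that proof never passes through the Hilbert scheme at all.

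Your third step is much closer to what the paper actually does, but it is missing the decisive bridge. The paper's argument is: (1) apply the Nest--Tsygan theorem, which says that for an affine symplectic variety $M$ with quantization $A$ one has a \emph{ring} isomorphism $\hoc^*(A[\hbar^{-1}])\cong \H^*(M,\CC((\hbar)))$; (2) take $M=\mC_n$ and $A=\sB_{t,c}$ (which quantizes $\mC_n$ for $c\ne 0$); (3) use that $\sB_{t,c}$ is Morita equivalent to $\sH_{t,c}$, so their Hochschild cohomologies agree as rings; (4) compute $\hoc^*(\sH_{t,c})$ directly from generators and relations, producing explicit cocycle representatives and their products. Step (1) is exactly the ``ring-structure comparison'' you flag as the main obstacle, and it is resolved by Nest--Tsygan, not by an HKR/Kontsevich-type statement. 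Note also that you invoke Theorem \ref{afls}, which computes $\hoc^*(\sH_{1,0})$ at $c=0$; the paper instead works at generic $c$ (where $\sM_c\cong\mC_n$) and computes $\hoc^*(\sH_{t,c})$ there---the additive answer is the same, but the product has to be checked for the deformed algebra, which is why the paper says one produces explicit representatives and multiplies them.
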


\begin{proof}
Let us sketch a noncommutative-algebraic proof of this theorem, given in \cite{EG}. 
This proof is based on the following result. 
\begin{theorem}[Nest-Tsygan, \cite{NT}]
If $M$ is an affine symplectic variety and $A$ is a quantization of $M$, 
then
$$
\hoc^*(A[\hbar^{-1}], A[\hbar^{-1}])\cong \H^{*}(M, \CC((\hbar)))
$$
as an algebra over $\CC((\hbar))$.
\end{theorem}

Now, we know that the algebra $\sB_{t,c}$ is a quantization of $\mC_{n}$. 
Therefore by above theorem, the cohomology algebra of 
$\mC_n$ is the cohomology of $\sB_{t,c}$ (for generic $t$). 
But $\sB_{t,c}$ is Morita equivalent to $\sH_{t,c}$, so 
this cohomology is the same as the Hochschild cohomology of $\sH_{t,c}$. 
However, the latter can be computed by using that $\sH_{t,c}$ is given by generators and relations
(by producing explicit representatives of cohomology classes and
computing their product), which gives the result.  
\end{proof}

\subsection{Notes}

Sections \ref{sec:hrao}--\ref{sec:tsacms} follow Section 1, 2, 4 of \cite{E4};
the parts about the Hilbert scheme and its relation to Calogero-Moser 
spaces follow the book \cite{Na} (see also \cite{GS}); 
the other parts follow the paper \cite{EG}.
  
\newpage \section{Quantization of Claogero-Moser spaces}

\subsection{Quantum moment maps and quantum Hamiltonian reduction}

Now we would like to quantize the notion of a moment map. 
Let $\g$ be a Lie algebra, and 
$A$ be an associative algebra equipped with a $\g$-action, 
i.e. a Lie algebra map $\phi: \g\to {\mathrm Der}A$. 
A quantum moment map for $(A,\phi)$ is an associative algebra
homomorphism $\mu: U(\g)\to A$ such that for any $a\in \g$, $b\in
A$ one has $[\mu(a),b]=\phi(a)b$. 

The space of
$\g$-invariants $A^\g$, i.e. elements $b\in A$ such that
$[\mu(a),b]=0$ for all $a\in \g$, is a subalgebra of $A$.
Let $J\subset A$ be the left ideal generated by $\mu(a)$, $a\in
\g$. Then $J$ is not a 2-sided ideal, but $J^\g:=J\cap A^\g$ is a
2-sided ideal in $A^\g$. 
Indeed, let $c\in A^\g$, and $b\in
J^\g$, $b=\sum_i b_i\mu(a_i)$, $b_i\in A,a_i\in \g$. Then 
$bc=\sum b_i\mu(a_i)c=\sum b_ic\mu(a_i)\in J^\g$. 

Thus, the algebra $A//\g:=A^\g/J^\g$ is an associative algebra,
which is called the quantum Hamiltonian reduction of $A$
with respect to the quantum moment map $\mu$.

\subsection{The Levasseur-Stafford theorem}

In general, similarly to the classical case,  it is rather
difficult to compute the quantum reduction $A//\g$. 
For example, in this subsection we will 
describe $A//\g$ in the case when $A=\cD(\g)$ is the algebra of differential operators 
on a reductive Lie algebra $\g$, and $\g$ acts on $A$ through the adjoint 
action on itself. This description is a very nontrivial result
of Levasseur and Stafford.

Let $\h$ be a Cartan subalgebra of $\g$, and $W$ 
the Weyl group of $(\g,\h)$. 
Let $\h_{\reg}$ denote the set of regular points in $\h$, 
i.e. the complement of the reflection hyperplanes. 
To describe $\cD(\g)//\g$, we 
will construct a homomorphism $\HC: \cD(\g)^\g\to \cD(\h)^W$, 
called the Harish-Chandra homomorphism (as it was first
constructed by Harish-Chandra). 
Recall that we have the classical Harish-Chandra isomorphism
$\zeta: \CC [\g]^\g\to\CC[\h]^W$, defined simply by restricting
$\g$-invariant functions on $\g$ to the Cartan subalgebra $\h$. 
Using this isomorphism, we can define an action of $\cD(\g)^\g$ on
$\CC[\h]^W$, which is clearly given by $W$-invariant 
differential operators. 
However, these operators will, in general, have poles on the
reflection hyperplanes. Thus we get a homomorphism 
$\HC': \cD(\g)^\g\to \cD(\h_{\reg})^W$. 

The homomorphism $\HC'$ is called the radial part homomorphism, as
for example for $\g={\su}(2)$ it computes the radial parts of
rotationally invariant differential operators on $\RR^3$ 
in spherical coordinates. This homomorphism is not yet
what we want, since it does not actually land in $\cD(\h)^W$ (the
radial parts have poles). 

Thus we define the Harish-Chandra homomorphism by twisting 
$\HC'$ by the discriminant
$\delta(\mathbf x)=\prod_{\alpha>0}(\alpha,\mathbf x)$
($\mathbf x\in \h$, and $\alpha$ runs over positive roots of $\g$):
$$
\HC(D):=\delta \circ \HC'(D)\circ \delta^{-1}\in \cD(\h_{\reg})^W.
$$

\begin{theorem}\label{thm:LS} 
\begin{enumerate}
\item[(i)] $($Harish-Chandra, \cite{HC}$)$
For any reductive $\g$, $\HC$ lands in $\cD(\h)^W\subset
\cD(\h_{\reg})^W$.  
\item[(ii)]$($Levasseur-Stafford \cite{LS}$)$
The homomorphism $\HC$ defines an isomorphism $\cD(\g)//\g=\cD(\h)^W$.
\end{enumerate}  
\end{theorem}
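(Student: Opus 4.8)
\textbf{Proof proposal for Theorem \ref{thm:LS}.}

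The plan is to treat the two parts separately, since they have quite different flavors. For part (i) (the Harish-Chandra theorem), the goal is to show that the radial part operator $\HC(D)$, a priori an element of $\cD(\h_{\reg})^W$, actually has no poles along the reflection hyperplanes. First I would reduce to the case where $\g$ is semisimple, and in fact, by localizing near a generic point of a reflection hyperplane, to the rank-one case $\g=\gsl_2$. The key structural input is that $\cD(\g)^\g$ is generated, as an algebra, by the invariant polynomials $\CC[\g]^\g$, the invariant constant-coefficient operators $(\mathrm{S}\g)^\g$, and finitely many more generators coming from the $\gsl_2$-triple built from the Killing form; so it suffices to check the pole-freeness of $\HC$ on a small explicit set of generators. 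For these, one computes the radial part directly (this is the classical separation-of-variables computation for the invariant Laplacian and its higher analogues): conjugating by $\delta$ exactly cancels the first-order pole terms, which is precisely the reason $\delta$ (and not some other power) appears in the definition of $\HC$. An alternative, cleaner route I would keep in mind is to use the fact that $\delta\circ\mathrm{Res}\circ\delta^{-1}$ sends the invariant polynomial differential operators to operators that are regular because the anti-invariance of $\delta$ forces cancellation of the potentially singular terms — the same mechanism that appeared in the proof of Proposition \ref{prop-delta} above, where an anti-invariant of too-small degree was forced to vanish.

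For part (ii) (the Levasseur-Stafford theorem), I would proceed in three steps. First, show $\HC$ descends to a well-defined algebra map $\cD(\g)//\g\to\cD(\h)^W$: one checks that the left ideal $J$ generated by the image of the quantum moment map $\mu:U(\g)\to\cD(\g)$ (i.e.\ by the vector fields generating the adjoint action) is killed under $\HC$ after passing to invariants, so that $J^\g$ lies in the kernel; this is essentially the statement that a $\g$-invariant differential operator that is ``$J$-trivial'' acts by zero on $\CC[\h]^W$, which follows because $\CC[\g]^\g\cong\CC[\h]^W$ and the reduction computes differential operators on the quotient $\g/\!/G$ restricted to the locus where it is smooth. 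Second, prove injectivity: if $\HC(D)=0$ for $D\in\cD(\g)^\g$, then $D$ annihilates $\CC[\g]^\g$, hence (by a density/associated-graded argument using that the principal symbol map is compatible with restriction, and that $\g^{\mathrm{reg}}\to\g/\!/G$ is a smooth fibration over the regular semisimple locus) $D\in J^\g$. Third, and this is the heart, prove surjectivity: every $W$-invariant differential operator on $\h$ is $\HC$ of some invariant operator on $\g$ modulo $J^\g$. The cleanest approach here is the one of Levasseur-Stafford: pass to associated graded algebras under the order filtration. On the classical side one has $\gr(\cD(\g)//\g)\twoheadrightarrow \CC[T^*\h]^W=\gr\cD(\h)^W$, and the content is that this surjection is an isomorphism, i.e.\ the classical Hamiltonian reduction $\mu^{-1}(0)/\!/G$ for $T^*\g$ has coordinate ring exactly $\CC[\h\oplus\h^*]^W$ — which is a statement about the commuting variety, close in spirit to the Gan-Ginzburg theorem (Theorem \ref{thm:GG}) and provable by similar invariant-theory techniques (Kostant's results on the structure of $\g^{\mathrm{reg}}$ and the fibration over $\g/\!/G$). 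Then a standard filtered-algebra argument lifts the classical isomorphism to the quantum statement.

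The main obstacle I anticipate is the surjectivity in part (ii), specifically controlling the behavior of the reduction along the non-regular locus of $\g$: one must show that no ``extra'' invariant differential operators on $\h$ arise that fail to lift, and equivalently that the naive classical reduction is already reduced and irreducible with the expected coordinate ring. This is exactly where the geometry of the commuting scheme and Kostant's transversality/flatness results enter, and it is genuinely deep — it is the reason this theorem is due to Levasseur and Stafford rather than being a routine consequence of part (i). I would therefore budget most of the work there, citing \cite{HC} for part (i) and \cite{LS} for the delicate commutative-algebra input of part (ii), and present the filtered-to-graded reduction as the organizing principle that assembles these pieces.
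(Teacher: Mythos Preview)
Your overall strategy for part (ii) --- pass to the associated graded, invoke the Gan--Ginzburg description of the commuting scheme quotient, and lift --- is indeed the organizing principle of the paper's proof (which, note, treats only $\g=\gl_n$). But there is a genuine gap in your plan for part (i), and a key lemma missing from your plan for (ii).

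For part (i), your claim that $\cD(\g)^\g$ is generated by $\CC[\g]^\g$, $(S\g)^\g$, and ``finitely many more generators coming from the $\gsl_2$-triple'' is not correct; no such simple generating set is available, so checking regularity on generators is not a viable route. The paper does \emph{not} prove (i) separately at all. Instead it proves (i) and (ii) simultaneously: once one knows $\Im\HC\supset\cD(\h)^W$ and $\gr\Im\HC=\CC[\h\oplus\h^*]^W=\gr\cD(\h)^W$, the equality $\Im\HC=\cD(\h)^W$ follows by a filtration argument, and (i) is a corollary.

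The inclusion $\Im\HC\supset\cD(\h)^W$ is where the real work lies, and it requires two ingredients you did not identify. First, one must show that for $D\in (S\g)^\g$ a constant-coefficient invariant operator, $\HC(D)$ equals the constant-coefficient operator $\eta(D)$ with \emph{no lower-order correction terms}. This is not obvious; the paper proves it by first computing $\HC(\Delta_\g)=\Delta_\h$ explicitly, then observing that any putative lower-order correction $Y=\HC(D)-\eta(D)$ commutes with $\Delta_\h$ and has negative homogeneity in $\mathbf x$, which forces its symbol to be a non-polynomial solution of $\{S,\mathbf p^2\}=0$ --- ruled out by Lemma~\ref{lem:Raj}. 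Second, one needs Wallach's lemma that $\cD(\h)^W$ is generated as an algebra by $\CC[\h]^W$ and $(S\h)^W$; combined with the first ingredient and the obvious fact that $\HC$ restricts to the classical Harish-Chandra isomorphism on $\CC[\g]^\g$, this yields $\cD(\h)^W\subset\Im\HC$. Your ``standard filtered-algebra argument'' alone does not give this containment, because a priori $\Im\HC$ sits only in $\cD(\h_{\reg})^W$, not in $\cD(\h)^W$.
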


\begin{remark} 

\begin{enumerate}
\item Part (i) of the theorem says that the poles magically disappear after 
conjugation by $\delta$.
\item Both parts of this theorem are quite nontrivial. 
The first part was proved by Harish-Chandra using analytic methods, and 
the second part by Levasseur and Stafford using the theory of $\cD$-modules.
\end{enumerate}
\end{remark}

In the case $\g=\gl_n$, Theorem \ref{thm:LS}
is a quantum analog of Theorem \ref{thm:GG}. 
The remaining part of this subsection is devoted to the proof   
of Theorem \ref{thm:LS} in this special case, 
using Theorem \ref{thm:GG}. 

We start the proof with the following proposition, valid for any reductive Lie algebra. 

\begin{proposition}\label{concoe}
If $D\in (S\g)^\g$ is a differential
operator with constant coefficients, then $\HC(D)$ is 
the $W$-invariant differential operator with constant
coefficients on $\h$, obtained from $D$ via the classical
Harish-Chandra isomorphism $\eta: (S\g)^\g\to (S\h)^W$. 
\end{proposition}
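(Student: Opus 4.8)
The plan is to compute both sides of the Harish-Chandra homomorphism explicitly on constant-coefficient invariant operators and check that they agree. First I would set up the identification: elements of $(S\g)^\g$ are, via the symbol map, $G$-invariant polynomials on $\g^*$, which by the trace form we identify with $G$-invariant polynomials on $\g$; these act on $\CC[\g]$ as constant-coefficient differential operators (Fourier-dual to multiplication operators). The classical Harish-Chandra isomorphism $\eta:(S\g)^\g\to (S\h)^W$ is restriction of these polynomials to $\h$, so the statement to prove is that $\HC(D)$, which a priori lies in $\cD(\h_{\reg})^W$, actually equals the constant-coefficient operator $\eta(D)$ on $\h$.

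The key computational step is to evaluate the radial part $\HC'(D)$ on a suitable family of test functions and then conjugate by $\delta$. Concretely, I would apply $D\in(S\g)^\g$ to the exponential functions $e^{(\lambda,x)}$ for $\lambda\in\g$; since $D$ has constant coefficients, $D e^{(\lambda,x)} = P_D(\lambda)\, e^{(\lambda,x)}$ where $P_D$ is the symbol polynomial. Restricting to $x\in\h$ and averaging over $W$, one gets that $\HC'(D)$ acts on $\CC[\h]^W$-functions built from $\sum_{w\in W} e^{(w\lambda,x)}$ by the scalar $P_D(\lambda)$ when $\lambda\in\h$. The subtlety is the restriction of the $\g$-invariant function on $\g$ to $\h$: for a $G$-invariant polynomial this restriction is exactly $\eta(D)$ by definition of the classical isomorphism. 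Then I would track what conjugation by $\delta(\mathbf x)=\prod_{\alpha>0}(\alpha,\mathbf x)$ does: for a constant-coefficient operator $Q$ on $\h$, the conjugate $\delta\circ Q\circ\delta^{-1}$ has the same highest-order symbol, and I claim the lower-order correction terms vanish because $Q$ already commutes appropriately — more precisely, because the radial part of a constant-coefficient invariant operator, before twisting, already has a specific form (this is where one uses that $\delta$ is $W$-antisymmetric of minimal degree, exactly as in the proof of Proposition \ref{prop-delta}).

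The cleanest way to organize this is probably to reduce to checking the identity on a spanning set of $(S\g)^\g$, e.g. the power-sum-type generators, or alternatively to argue by a density/continuity argument: verify the scalar-eigenvalue computation on $e^{(\lambda,x)}$ for generic $\lambda$, conclude the two operators $\HC(D)$ and $\eta(D)$ agree on a dense set of functions, and hence agree as differential operators with rational coefficients — but since $\eta(D)$ has polynomial (indeed constant) coefficients, this forces $\HC(D)$ to have no poles, giving simultaneously the fact that $\HC(D)\in\cD(\h)^W$. I expect the main obstacle to be the bookkeeping of the conjugation by $\delta$: showing that no polar or lower-order terms are introduced requires knowing the precise structure of the radial part $\HC'(D)$ for $D$ constant-coefficient, which amounts to a concrete computation of how the Laplacian-type invariants restrict (the simplest case $D=\Delta_\g$ being exactly the content computed via $\delta_c$-conjugation earlier in the notes). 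Once that structure is in hand, the matching of symbols and the vanishing of corrections is routine.
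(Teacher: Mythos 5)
Your main computational step does not quite work as described, and the gap is precisely the hard part of the proof. The radial part $\HC'(D)$ is defined by its action on $G$-invariant functions: if $F$ is $G$-invariant on $\g$, then $\HC'(D)(F|_\h)=(DF)|_\h$. The exponential $e^{(\lambda,\mathbf x)}$ is \emph{not} $G$-invariant (only $\sum_{w}e^{(w\lambda,\mathbf x)}$ restricted to $\h$ is $W$-invariant), so you cannot compute $\HC'(D)$ on $\sum_w e^{(w\lambda,\mathbf x)}$ by simply differentiating $e^{(\lambda,\mathbf x)}$ on $\g$ and getting the scalar $P_D(\lambda)$. What you would actually need is the $G$-invariant extension of $\sum_w e^{(w\lambda,\cdot)}$ to $\g$, which is essentially the orbital integral $\int_G e^{(\lambda,\mathrm{Ad}(g)\mathbf x)}\d g$; but the formula for that integral (Theorem \ref{hcf}) is \emph{derived from} Proposition \ref{concoe} in this paper, so invoking it here would be circular. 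Your remark that ``the vanishing of corrections is routine'' once the structure is in hand is also not correct: the vanishing of the polar lower-order terms is the whole difficulty, and requires a genuine idea.

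The paper's route is quite different and avoids this. It computes $\HC(D)$ explicitly only for the single operator $D=\Delta_\g$, using the root decomposition to show $\HC(\Delta_\g)=\Delta_\h$ (your observation that the key identity is $\Delta_\h\delta=0$, with $\delta$ the lowest-degree antisymmetric polynomial, is correct for this sub-step). For general $D\in(S\g)^\g$ it then argues abstractly: the symbol of $\HC(D)$ is $\eta(D)$, so the difference $Y=\HC(D)-\eta(D)$ has order $\le d-1$ but homogeneity degree $d$, hence a symbol $S(\mathbf x,\mathbf p)$ of strictly negative degree in $\mathbf x$ which therefore cannot be polynomial. On the other hand $Y$ commutes with $\Delta_\h$ (since $\HC$ is a homomorphism and $\eta(D)$ has constant coefficients), so $\{S,\mathbf p^2\}=0$; Lemma \ref{lem:Raj} (the Raj lemma, proved earlier in the notes via the Hamiltonian flow argument) then forces $S$ to be polynomial, a contradiction, giving $Y=0$. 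You correctly spotted the Laplacian as the model and that $\eta(D)$ is the principal symbol, but the two essential ideas --- reducing everything to the single computation for $\Delta_\g$, and then killing the remainder via commutativity with $\Delta_\h$ together with Lemma \ref{lem:Raj} --- are missing from your proposal, and without them (or the non-circular use of orbital integrals) the argument does not close.
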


\begin{proof} Without loss of generality, we may assume that $\g$ is simple.

\begin{lemma}\label{lem:lapl} 
Let $D$ be the Laplacian $\Delta_\g$ of $\g$, corresponding 
to an invariant form. Then $\HC(D)$ is the Laplacian $\Delta_\h$. 
\end{lemma}

\begin{proof}
Let us calculate $\HC'(D)$. 
We have 
$$
D=\sum_{i=1}^r \partial_{x_i}^2+2\sum_{\alpha>0}\partial_{f_\alpha}\partial_{e_\alpha}, 
$$
where $x_i$ is an orthonormal basis of $\h$, and $e_\alpha,f_\alpha$ are root elements such that 
$(e_\alpha,f_\alpha)=1$. 
Thus if $F(\mathbf x)$ is a $\g$-invariant function on $\g$, then we get 
$$
(DF)|_{\h}=\sum_{i=1}^r \partial_{x_i}^2(F|_\h)+2\sum_{\alpha>0}(\partial_{f_\alpha}\partial_{e_\alpha}F)|_{\h}. 
$$
Now let $\mathbf x\in \h$, and consider 
$(\partial_{f_\alpha}\partial_{e_\alpha}F)(\mathbf x)$. We have 
$$
(\partial_{f_\alpha}\partial_{e_\alpha}F)(\mathbf x)=
\partial_s\partial_t|_{s=t=0}F(\mathbf x+tf_\alpha+se_\alpha).
$$
On the other hand, we have 
$$
{\rm Ad}(\be^{s\alpha(\mathbf x)^{-1}e_\alpha})(\mathbf x+tf_\alpha+se_\alpha)=
\mathbf x+tf_\alpha+ts\alpha(\mathbf x)^{-1}h_\alpha+\cdots,
$$
where $h_\alpha=[e_\alpha,f_\alpha]$. Hence, 
$
\partial_s\partial_t|_{s=t=0}F(\mathbf x+tf_\alpha+se_\alpha)=
\alpha(\mathbf x)^{-1}(\partial_{h_\alpha}F)(\mathbf x). 
$
This implies that 
$$
\HC'(D)F(\mathbf x)=\Delta_\h F(\mathbf x)
+2\sum_{\alpha>0}\alpha(\mathbf x)^{-1}\partial_{h_\alpha}F(\mathbf x).
$$
Now the statement of the Lemma reduces to the identity 
$
\delta^{-1}\circ \Delta_\h \circ \delta=\Delta_\h+2\sum_{\alpha>0}\alpha(\mathbf x)^{-1}\partial_{h_\alpha}. 
$
This identity follows immediately from the identity
$
\Delta_\h \delta=0.
$
To prove the latter, it suffices to note that $\delta$ is the lowest degree 
nonzero polynomial on $\h$, which is antisymmetric under the action of $W$. 
The lemma is proved. 
\end{proof} 

Now let $D$ be any element of $(S\g)^\g\subset \cD(\g)^\g$ of degree $d$
(operator with constant coefficients). It is obvious that the leading 
order part of the operator $\HC(D)$ is the operator $\eta(D)$ 
with constant coefficients, whose symbol is just the 
restriction of the symbol of $D$ from $\g^*$ to $\h^*$. 
Our job is to show that in fact $\HC(D)=\eta(D)$. To do so, denote by 
$Y$ the difference $\HC(D)-\eta(D)$. Assume $Y\ne 0$. 
By Lemma \ref{lem:lapl}, the operator $\HC(D)$ commutes with 
$\Delta_\h$. Therefore, 
so does $Y$. Also $Y$ has homogeneity degree $d$ but order $m\le d-1$.
Let $S(\mathbf x,\mathbf p)$ be the symbol of $Y$ ($\mathbf x\in \h, \mathbf p\in \h^*$). 
Then $S$  is a homogeneous function of homogeneity degree $d$
under the transformations $\mathbf x\to t^{-1}\mathbf x$, $\mathbf p\to t\mathbf p$, polynomial in 
$\mathbf p$ of degree $m$. 
From these properties of $S$ it is clear that $S$ is not a polynomial 
(its degree in $\mathbf x$ is $m-d<0$). 
On the other hand, since $Y$ commutes with $\Delta_\h$, 
the Poisson bracket of $S$ with $\mathbf p^2$ is zero.    
Thus Proposition \ref{concoe} follows from Lemma \ref{lem:Raj}. 
\end{proof}

Now we continue the proof of Theorem \ref{thm:LS}. 
Consider the filtration on $\cD(\g)$ in which
$\deg(\g)=1 \deg(\g^*)=0$ (the order filtration), and the associated graded map 
$\gr\HC: \CC[\g\times \g^*]^\g\to \CC[\h_{\reg}\times
\h^*]^W$, which attaches to every differential operator
the symbol of its radial part. 
It is easy to see that this map is just the restriction map to
$\h\oplus \h^*\subset \g\oplus \g^*$, so it actually lands in 
$\CC[\h\oplus \h^*]^W$. 

Moreover, $\gr\HC$ is a map {\bf onto} 
$\CC[\h\oplus \h^*]^W$. 
Indeed, $\gr\HC$ is a Poisson map, so the surjectivity follows 
from the following Lemma. 

\begin{lemma}\label{poisgen}
$\CC[\h\oplus \h^*]^W$ is generated as a
Poisson algebra by $\CC[\h]^W$ and $\CC[\h^*]^W$, i.e. by 
functions $f_m=\sum x_i^m$ and $f_m^*=\sum p_i^m$, $m\ge 1$. 
\end{lemma}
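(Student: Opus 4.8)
The plan is to show that the Poisson subalgebra $B\subseteq\CC[\h\oplus\h^*]^W$ generated by $\CC[\h]^W$ and $\CC[\h^*]^W$ is in fact all of $\CC[\h\oplus\h^*]^W$, by producing a convenient commutative-algebra generating set of the target and checking that each generator is a single Poisson bracket of the listed functions. In the situation that matters for Theorem \ref{thm:LS} we have $\g=\gl_n$, $W=\kS_n$ and $\h=\CC^n$, so $\CC[\h]^W$ is the polynomial ring on the power sums $f_m=\sum_i x_i^m$ and $\CC[\h^*]^W$ is the polynomial ring on $f_m^*=\sum_i p_i^m$; thus by definition $B$ is the Poisson subalgebra generated by all $f_m$ and $f_m^*$ with $m\ge 1$.

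First I would invoke H. Weyl's theorem (Theorem \ref{Wey}), in the packaged form of Corollary \ref{gene}: the ring $\CC[x_1,\dots,x_n,p_1,\dots,p_n]^{\kS_n}$ is generated as a commutative algebra by the polarized power sums $\pi_{a,b}:=\sum_i x_i^a p_i^b$ with $a,b\ge 0$ and $a+b>0$. Hence it suffices to show $\pi_{a,b}\in B$ for all such $a,b$. For this I would carry out the one-line computation, in the standard symplectic Poisson bracket on $T^*\h$ normalized by $\{x_i,p_j\}=\delta_{ij}$,
$$
\{f_{a+1},f_{b+1}^*\}=(a+1)(b+1)\sum_i x_i^a p_i^b=(a+1)(b+1)\,\pi_{a,b},
$$
which exhibits $\pi_{a,b}=(a+1)^{-1}(b+1)^{-1}\{f_{a+1},f_{b+1}^*\}\in B$. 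When $a=0$ or $b=0$ this merely recovers $f_{b+1}^*$ or $f_{a+1}$, already in $B$; the content lies in the mixed terms $a,b\ge 1$. Combining the two steps, $B$ contains a generating set of $\CC[\h\oplus\h^*]^W$, so $B=\CC[\h\oplus\h^*]^W$, which is the assertion of Lemma \ref{poisgen}.

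I do not expect a genuine obstacle: the only nonelementary ingredient is the generation statement of Corollary \ref{gene} (equivalently, Weyl's theorem on $S^nB$), which is already available from the previous subsection, and everything else reduces to the displayed bracket identity. The one point requiring care is fixing the sign and normalization conventions for the Poisson bracket on $T^*\h$ consistently, but since the structure constants $(a+1)(b+1)$ never vanish, the conclusion is insensitive to this choice. For a general Weyl group one runs the same scheme with arbitrary homogeneous generators $P_i$ of $\CC[\h]^W$ and $Q_j$ of $\CC[\h^*]^W$ in place of the power sums, allowing iterated brackets $\{P_i,Q_j\},\{P_i,\{P_k,Q_j\}\},\dots$ to reach a generating set; the type-$A$ case above is the cleanest instance precisely because the primary brackets $\{f_{a+1},f_{b+1}^*\}$ already produce all polarized power sums.
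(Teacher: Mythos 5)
Your proof of the lemma is correct and is essentially the paper's: compute $\{f_m^*,f_r\}=mr\sum_i x_i^{r-1}p_i^{m-1}$ (you write it as $\{f_{a+1},f_{b+1}^*\}=(a+1)(b+1)\sum_i x_i^ap_i^b$, same thing) and then invoke Corollary \ref{gene} to conclude that the polarized power sums generate $\CC[\h\oplus\h^*]^{\kS_n}$. Your observation that the precise sign convention is immaterial because the structure constants are nonzero is a sensible sanity check.

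However, your closing remark about general Weyl groups is wrong, and the paper itself warns about exactly this. The statement of Lemma \ref{poisgen} \emph{fails} for many reflection groups $W$: the remark following Lemma \ref{wal} records (citing Wallach) that it fails for the Weyl groups of $E_6$, $E_7$, $E_8$, and in fact already for the cyclic group of order $>2$ acting on a one-dimensional space. In that last case $\CC[\h]^W=\CC[x^n]$ and $\CC[\h^*]^W=\CC[p^n]$ with $n\ge 3$, and the Poisson algebra they generate does not contain $xp$, so they do not generate $\CC[\h\oplus\h^*]^W$. The type-$A$ case is not merely the ``cleanest instance''; it is essentially where the mechanism works. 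Outside type $A$ the correct substitute (Lemma \ref{wal} for general $W$) is due to Levasseur--Stafford and requires a genuinely different, noncommutative argument rather than iterated brackets of invariants.
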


\begin{proof}
We have $\lbrace{f_m^*,f_r\rbrace}=mr\sum x_i^{r-1}p_i^{m-1}$.
Thus the result follows from Corollary \ref{gene}.  
\end{proof} 

Let $K_0$ be the kernel of $\gr\HC$. Then by 
Theorem \ref{thm:GG}, $K_0$ is the ideal of the commuting scheme
$\mathrm{Comm}(\g)/G$. 

Now consider the kernel $K$ of the homomorphism $\HC$. 
It is easy to see that $K\supset J^\g$, so $\gr(K)\supset
\gr(J)^\g$. On the other hand, since $K_0$ is the ideal of
the commuting scheme, we clearly have $\gr(J)^\g\supset K_0$, and $K_0\supset \gr K$. 
This implies that $K_0=\gr K=\gr(J)^\g$, and $K=J^\g$.

It remains to show that $\Im\HC=\cD(\h)^W$. 
Since $\gr K=K_0$, we have 
$\gr\Im\HC=\CC[\h\oplus \h^*]^W$. 
Therefore, to finish the proof of the Harish-Chandra and 
Levasseur-Stafford theorems, 
it suffices to prove the following proposition. 

\begin{proposition}\label{conta}
$\Im \HC\supset \cD(\h)^W$. 
\end{proposition}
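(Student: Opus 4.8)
The plan is to show that every $W$-invariant differential operator on $\h$ lies in the image of $\HC$, by exhibiting enough operators explicitly and then using a filtration/degree argument to conclude that they generate all of $\cD(\h)^W$. First I would observe that by Proposition \ref{concoe}, all $W$-invariant constant-coefficient differential operators on $\h$ (i.e., the elements of $(S\h)^W$, acting as operators) are in $\Im\HC$: they come from $(S\g)^\g\subset \cD(\g)^\g$ via the classical Harish-Chandra isomorphism $\eta$. Dually, the multiplication operators by elements of $\CC[\h]^W$ are obviously in $\Im\HC$, since $\HC$ applied to a multiplication operator by an invariant polynomial on $\g$ is (after conjugation by $\delta$, which commutes with such operators) just the multiplication operator by its restriction to $\h$. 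So $\Im\HC$ contains both $\CC[\h]^W$ (as multiplication operators) and $(S\h)^W$ (as constant-coefficient operators).

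Next I would pass to associated graded. We already know from the preceding paragraphs of the proof that $\gr\Im\HC=\CC[\h\oplus\h^*]^W$, where the filtration is the order filtration on differential operators. By Lemma \ref{poisgen}, the Poisson algebra $\CC[\h\oplus\h^*]^W$ is generated by $\CC[\h]^W$ and $\CC[\h^*]^W$ — equivalently by the power sums $f_m=\sum x_i^m$ and $f_m^*=\sum p_i^m$. The key point is that $\Im\HC$ is not merely a subspace but a subalgebra of $\cD(\h_{\reg})^W$ (it is the image of an algebra homomorphism), and it is closed under the commutator bracket. Since $\Im\HC$ contains lifts of all the generators $f_m$ (namely multiplication by $\sum x_i^m$) and all the generators $f_m^*$ (namely the constant-coefficient operators coming from $\eta^{-1}(\sum p_i^m)$), and since taking commutators in $\cD(\h)$ lifts the Poisson bracket on the associated graded, the subalgebra of $\cD(\h_{\reg})^W$ generated by these elements has associated graded containing the Poisson subalgebra generated by the $f_m$ and $f_m^*$, which is all of $\CC[\h\oplus\h^*]^W$.

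Then I would run the standard induction on order: given $D\in\cD(\h)^W$ of order $d$ with symbol $\sigma\in\CC[\h\oplus\h^*]^W$, the surjectivity of $\gr\HC$ produces $D'\in\Im\HC$ with the same symbol, so $D-D'\in\cD(\h)^W$ has order $<d$; by induction $D-D'\in\Im\HC$, hence $D\in\Im\HC$. The base case (order zero) is the multiplication operators, already handled. This gives $\Im\HC\supset\cD(\h)^W$, which is the assertion of Proposition \ref{conta}; combined with $\gr\Im\HC=\CC[\h\oplus\h^*]^W$ this forces $\Im\HC=\cD(\h)^W$ exactly, completing the proof of Theorem \ref{thm:LS} for $\g=\gl_n$.

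The main obstacle is making precise the claim that $\Im\HC$ actually contains lifts of \emph{all} the generators $f_m^*$ — i.e., genuinely realizing every invariant constant-coefficient operator as $\HC$ of something — and then checking that the commutators of these explicit operators with the multiplication operators $\sum x_i^m$ fill out, in the associated graded, the full Poisson-generated algebra rather than something smaller. Here one must be a little careful that the filtration on $\cD(\h)$ is bounded below so that the inductive/associated-graded argument is valid (a subtlety flagged in the footnote to Theorem \ref{doubcen}). The input that rescues this is precisely Lemma \ref{poisgen} together with the already-established surjectivity $\gr\HC=\CC[\h\oplus\h^*]^W$, so the remaining work is bookkeeping with the order filtration rather than any new idea.
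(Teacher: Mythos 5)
Your approach is essentially the paper's: the paper packages exactly this argument as Wallach's Lemma \ref{wal} (that $\cD(\h)^W$ is generated as an algebra by $\CC[\h]^W$ and $(S\h)^W$), proved by passing to associated graded and invoking Lemma \ref{poisgen}, and then concludes from Proposition \ref{concoe} and the algebra property of $\Im\HC$. You unwind the same argument inline instead of isolating the generation statement as a lemma.

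One imprecision in your induction step deserves attention. You write that $\gr\HC$ surjectivity produces $D'\in\Im\HC$ with the same symbol as $D$, ``so $D-D'\in\cD(\h)^W$ has order $<d$.'' But at this stage of the argument one only knows $\Im\HC\subset\cD(\h_{\reg})^W$, not $\Im\HC\subset\cD(\h)^W$ — the latter inclusion (Harish-Chandra's theorem, part (i) of Theorem \ref{thm:LS}) is precisely one of the things being established in this subsection, and using it here would be circular. To make the induction close, $D'$ should be drawn not from $\Im\HC$ at large but from the subalgebra $A\subset\cD(\h)^W$ generated by the multiplication operators in $\CC[\h]^W$ and the constant-coefficient operators $\eta^{-1}((S\h)^W)$; your second paragraph in fact establishes that $\gr A=\CC[\h\oplus\h^*]^W$, which is exactly what is needed, and $A\subset\cD(\h)^W$ is manifest since both generating sets lie there. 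Then $D-D'\in\cD(\h)^W$ of order $<d$, the induction gives $D-D'\in A$, hence $D\in A\subset\Im\HC$. This is precisely what the paper's appeal to Wallach's lemma accomplishes: it first pins down $A=\cD(\h)^W$ as a fact about $\cD(\h)^W$ alone, and only then compares with $\Im\HC$.
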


\begin{proof} 
We will use the following Lemma. 

\begin{lemma}[N. Wallach, \cite{Wa}] \label{wal} 
$\cD(\h)^W$ is generated as an algebra 
by $W$-invariant functions and $W$-invariant differential
operators with constant coefficients. 
\end{lemma}
 
\begin{proof} 
The lemma follows by taking associated graded algebras from 
Lemma \ref{poisgen}.
\end{proof}

\begin{remark}
Levasseur and Stafford showed \cite{LS} that this lemma is valid 
for any finite group $W$ acting on a finite dimensional vector
space $\h$. However, the above proof does not apply, since, as
explained in \cite{Wa}, Lemma \ref{poisgen} fails for many groups $W$, 
including Weyl groups of exceptional Lie
algebras $E_6,E_7,E_8$ (in fact it even fails for the cyclic group of order
$>2$ acting on a 1-dimensional space!). 
The general proof is more complicated and
uses some results in noncommutative algebra. 
\end{remark}

Lemma \ref{wal} and Proposition \ref{concoe} imply Proposition \ref{conta}.
\end{proof}

Thus, Theorem \ref{thm:LS} is proved.

\subsection{Corollaries of Theorem \ref{thm:LS}}

Let $\g_{\RR}$ be the compact form of $\g$, and
$\Ob$ a regular coadjoint orbit in $\g_{\RR}^*$. Consider the map 
$$
\psi_{\Ob}:\h\to \CC, \quad\psi_\Ob(\mathbf x)=\int_\Ob \be^{(\mathbf b,\mathbf x)}\d \mathbf b,\ \mathbf x\in \h,
$$
where $\d \mathbf b$ is the measure on the orbit coming from the Kirillov-Kostant
symplectic structure. 

\begin{theorem}[Harish-Chandra formula]\label{hcf}
For a regular element ${\mathbf x}\in \h$, we have
$$
\psi_\Ob(\mathbf x)=\delta^{-1}(\mathbf x)\sum_{w\in
W}(-1)^{\ell(w)}\be^{(w\lambda,\mathbf x)},
$$
where $\lambda$ is the intersection of $\Ob$ with the 
dominant chamber in the dual Cartan subalgebra $\h_{\RR}^*\subset
\g_{\RR}^*$, and $\ell(w)$ is the length of an element $w\in W$.
\end{theorem}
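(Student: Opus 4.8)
The statement to be proved is the Harish-Chandra formula for the Fourier transform $\psi_\Ob$ of the Liouville measure on a regular coadjoint orbit. The plan is to derive it from Theorem \ref{thm:LS}, or more precisely from Proposition \ref{concoe}, by exploiting that $\psi_\Ob$ is simultaneously an eigenfunction of the whole commutative algebra of constant-coefficient invariant differential operators on $\g$, transported to $\h$ via the radial part construction. First I would observe that for $D\in (S\g)^\g$, viewed as a constant-coefficient differential operator on $\g$, the function $F_\Ob(\mathbf x)=\int_\Ob \be^{(\mathbf b,\mathbf x)}\d\mathbf b$ on $\g$ (not yet restricted to $\h$) satisfies $D F_\Ob = P_D(\lambda)\, F_\Ob$, where $P_D$ is the symbol of $D$ (a $W$-invariant polynomial on $\h^*$ after restriction) and $\lambda$ is the point where $\Ob$ meets the dominant chamber; this is just differentiation under the integral sign, since $\be^{(\mathbf b,\mathbf x)}$ is an eigenfunction of each translation-invariant operator with eigenvalue depending only on the conjugacy class of $\mathbf b$, i.e.\ on $\lambda$. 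Since $F_\Ob$ is $G$-invariant, its restriction $\psi_\Ob = F_\Ob|_\h$ is therefore annihilated by $D - P_D(\lambda)$ in the sense of the radial part: $\HC'(D)\psi_\Ob = P_D(\lambda)\psi_\Ob$ for all such $D$.

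Next I would pass to the conjugated function $\varphi:=\delta\cdot\psi_\Ob$ on $\h_{\reg}$. By the definition of the Harish-Chandra homomorphism, $\HC(D)\varphi = \delta\circ\HC'(D)\circ\delta^{-1}\varphi = \delta\cdot \HC'(D)\psi_\Ob = P_D(\lambda)\varphi$. Now invoke Proposition \ref{concoe}: $\HC(D)$ is the constant-coefficient $W$-invariant operator $\eta(D)$ on $\h$ obtained from $D$ via the classical Harish-Chandra isomorphism, and its symbol is exactly $P_D$ restricted to $\h^*$. In particular, taking $D$ to range over all of $(S\g)^\g$, the operators $\eta(D)$ generate the algebra $\CC[\partial]^W$ of all $W$-invariant constant-coefficient operators on $\h$, and $\varphi$ is a common eigenfunction: $\eta(D)\varphi = P_D(\lambda)\varphi$. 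The space of smooth (or even distributional) solutions on $\h_{\reg}$ of this system of eigenvalue equations, for a fixed regular $\lambda$, is spanned by the exponentials $\be^{(w\lambda,\mathbf x)}$, $w\in W$ — this is a standard fact: the system $\{p(\partial)u = p(\lambda)u : p\in\CC[\h^*]^W\}$ has solution space of dimension $|W|$ with this basis when $\lambda$ is $W$-regular, because $\CC[\h^*]$ is free of rank $|W|$ over $\CC[\h^*]^W$ and the characteristic variety consists of the $W$-orbit of $\lambda$. Hence $\varphi = \sum_{w\in W} c_w \be^{(w\lambda,\mathbf x)}$ for some constants $c_w$.

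It then remains to pin down the constants $c_w$. Here I would use the antisymmetry: $\psi_\Ob$ is $W$-invariant and $\delta$ is $W$-antiinvariant ($w\cdot\delta = (-1)^{\ell(w)}\delta$), so $\varphi$ is $W$-antiinvariant, forcing $c_w = (-1)^{\ell(w)} c$ for a single constant $c$. Thus $\psi_\Ob(\mathbf x) = c\,\delta^{-1}(\mathbf x)\sum_{w}(-1)^{\ell(w)}\be^{(w\lambda,\mathbf x)}$, and the final task is to check $c=1$. I expect this normalization to be the main obstacle, since the rest is essentially formal. It can be handled by a careful asymptotic or leading-term comparison: as $\mathbf x\to 0$ inside a chamber, or by comparing top-order behavior as $\mathbf x\to\infty$ along a generic ray in the dominant chamber, the dominant exponential on the right is $\be^{(\lambda,\mathbf x)}$ with coefficient $c/\delta(\mathbf x)$, while stationary-phase (or direct evaluation for the model case) applied to the oscillatory integral $\int_\Ob \be^{(\mathbf b,\mathbf x)}\d\mathbf b$ gives precisely $\delta(\mathbf x)^{-1}\be^{(\lambda,\mathbf x)}(1+o(1))$ once the Kirillov-Kostant measure is correctly normalized and the fact that $\delta$ (up to a constant absorbed into the measure) is the Weyl denominator is used; alternatively one reduces to $\g=\su(2)$ where the integral over a 2-sphere is elementary and the constant is manifestly $1$, then bootstraps to general $\g$ by the multiplicativity of both sides under products and the rank-one reduction along each root $\su(2)$-subalgebra. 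I would present the $\su(2)$ computation explicitly and indicate that the general constant follows by this reduction, leaving the bookkeeping of measure normalizations as a routine check.
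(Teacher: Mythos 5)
Your proof is correct and follows essentially the same route as the paper: use Proposition \ref{concoe} to identify $\HC(D)$ with the constant-coefficient operator $\eta(D)$, show $\delta\cdot\psi_\Ob$ is a joint eigenfunction with eigenvalues $P_D(\lambda)$, expand it in the exponential basis $\be^{(w\lambda,\mathbf x)}$, use $W$-antisymmetry to reduce to a single constant $c$, and fix $c=1$ by an asymptotic/stationary-phase comparison. You supply somewhat more justification than the paper (the freeness argument for the solution-space dimension, the optional $\su(2)$ reduction for the normalization), but the underlying argument is the same.
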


\begin{proof}
Take $D\in (S\g)^{\g}$. 
Then $\delta(\mathbf x)\psi_\Ob$ 
is an eigenfunction of $\HC(D)=\eta(D)\in (S\h)^W$ with eigenvalue $\chi_\Ob(D)$, where
$\chi_\Ob(D)$ is the value of the invariant polynomial $D$ at
the orbit $\Ob$.

Since the solutions of the equation $\eta(D)\varphi=\chi_{\Ob}(D)\varphi$
have a basis $\be^{(w\lambda, \mathbf x)}$ where $w\in W$,   
we have 
$$
\delta(\mathbf x)\psi_\Ob(\mathbf x)=\sum_{w\in W}c_{w}\cdot \be^{(w\lambda, \mathbf x)}.
$$
Since it is antisymmetric, we have $c_{w}=c\cdot (-1)^{\ell(w)}$, where $c$ is a constant. 
The fact that $c=1$ can be shown by comparing the asymptotics of both sides as ${\mathbf x}\to \infty$ 
in the regular chamber (using the stationary phase approximation for the integral). 
\end{proof}

From Theorem \ref{hcf} and the Weyl Character formula, we have the following 
corollary.

\begin{corollary}[Kirillov character formula for finite
dimensional representations, \cite{Ki}]
If $\lambda$ is a dominant integral weight, and $L_\lambda$ is 
the corresponding representation of $G$, then 
$$
\tr_{L_\lambda}(\be^{\mathbf x})=
\frac{\delta(\mathbf x)}{\delta_{\tr}(\mathbf x)}\int_{\Ob_{\lambda+\rho}} \be^{(\mathbf b,\mathbf x)}\d \mathbf b,
$$
where $\delta_{\tr}(\mathbf x)$ is the trigonometric version of
$\delta(\mathbf x)$, i.e. the Weyl denominator \linebreak 
$\prod_{\alpha>0}(\be^{\alpha(\mathbf x)/2}-\be^{-\alpha(\mathbf x)/2})$, and $\Ob_\mu$ denotes the
coadjoint orbit passing through $\mu$.
\end{corollary}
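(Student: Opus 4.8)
The statement to be proven is the Kirillov character formula: for a dominant integral weight $\lambda$,
$$
\tr_{L_\lambda}(\be^{\mathbf x})=\frac{\delta(\mathbf x)}{\delta_{\tr}(\mathbf x)}\int_{\Ob_{\lambda+\rho}}\be^{(\mathbf b,\mathbf x)}\d\mathbf b.
$$
The plan is to deduce this directly by combining the Harish-Chandra formula (Theorem \ref{hcf}) with the classical Weyl character formula, which we may take as known. No heavy machinery is required; this is a short computation once the two ingredients are in place.

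First I would apply Theorem \ref{hcf} to the orbit $\Ob=\Ob_{\lambda+\rho}$. Since $\lambda$ is dominant integral, $\lambda+\rho$ is regular dominant integral, hence it is precisely the intersection of $\Ob_{\lambda+\rho}$ with the dominant chamber of $\h_\RR^*$. Theorem \ref{hcf} then gives, for regular $\mathbf x\in\h$,
$$
\int_{\Ob_{\lambda+\rho}}\be^{(\mathbf b,\mathbf x)}\d\mathbf b=\psi_{\Ob_{\lambda+\rho}}(\mathbf x)=\delta(\mathbf x)^{-1}\sum_{w\in W}(-1)^{\ell(w)}\be^{(w(\lambda+\rho),\mathbf x)}.
$$
Multiplying both sides by $\delta(\mathbf x)/\delta_{\tr}(\mathbf x)$, the right-hand side of the claimed formula becomes
$$
\frac{1}{\delta_{\tr}(\mathbf x)}\sum_{w\in W}(-1)^{\ell(w)}\be^{(w(\lambda+\rho),\mathbf x)}.
$$
Next I would recall that the Weyl denominator is $\delta_{\tr}(\mathbf x)=\prod_{\alpha>0}(\be^{\alpha(\mathbf x)/2}-\be^{-\alpha(\mathbf x)/2})=\sum_{w\in W}(-1)^{\ell(w)}\be^{(w\rho,\mathbf x)}$, and that the Weyl character formula states exactly
$$
\tr_{L_\lambda}(\be^{\mathbf x})=\frac{\sum_{w\in W}(-1)^{\ell(w)}\be^{(w(\lambda+\rho),\mathbf x)}}{\sum_{w\in W}(-1)^{\ell(w)}\be^{(w\rho,\mathbf x)}}=\frac{\sum_{w\in W}(-1)^{\ell(w)}\be^{(w(\lambda+\rho),\mathbf x)}}{\delta_{\tr}(\mathbf x)}.
$$
Comparing with the expression above, the two agree on the regular locus, hence everywhere by continuity (both sides are regular functions of $\mathbf x$, the left being a finite exponential sum and the right extending across the reflection hyperplanes since the numerator is divisible by $\delta_{\tr}$).

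The only genuinely delicate point — and the one I would flag as the main obstacle — is the normalization constant $c=1$ in Theorem \ref{hcf}, which is already handled inside the proof of that theorem via the stationary phase asymptotics as $\mathbf x\to\infty$ in the regular chamber; downstream of that, the present corollary is purely formal. A secondary point worth a sentence is matching conventions for the invariant measure $\d\mathbf b$ on the coadjoint orbit (the Liouville measure from the Kirillov–Kostant form) so that the normalization in Theorem \ref{hcf} is literally the one used here; this is a bookkeeping matter rather than a real difficulty. I would therefore present the argument as a three-line deduction, citing Theorem \ref{hcf}, the formula for the Weyl denominator, and the Weyl character formula, and then invoke continuity to pass from $\h_{\reg}$ to all of $\h$.
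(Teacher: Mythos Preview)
Your proposal is correct and follows exactly the approach the paper takes: the paper simply states that the corollary follows from Theorem \ref{hcf} (the Harish-Chandra formula) combined with the Weyl character formula, and your argument spells out precisely this two-step deduction. Your additional remarks on the normalization constant and continuity across the reflection hyperplanes are accurate but go slightly beyond what the paper bothers to say.
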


\subsection{The deformed Harish-Chandra homomorphism}

Finally, we would like to explain how to quantize the Calogero-Moser space 
$\mC_n$, using the procedure of quantum Hamiltonian reduction. 

Let $\g=\gl_n$, $A=\cD(\g)$ as above. 
Let $k$ be a complex number, and 
$W_k$ be the representation of $\gsl_n$ 
on the space of functions of the form 
$(x_1\cdots x_n)^{k}f(x_1,\ldots,x_n)$, 
where $f$ is a Laurent polynomial
of degree $0$. We regard $W_k$ as a $\g$-module by pulling it
back to $\g$ under the natural projection 
$\g\to \gsl_n$. Let $I_k$ be the annihilator of $W_k$ in
$U(\g)$. The ideal $I_k$ is the quantum counterpart of 
the coadjoint orbit of matrices $T$ such that $T+1$ has rank $1$. 

Let $B_k=\cD(\g)^{\g}/(\cD(\g)\mu(I_{k}))^{\g}$
where $\mu:U(\g)\to A$ is the quantum momentum map
(the quantum Hamiltonian reduction with respect to the ideal $I_k$). 
Then $B_k$ has a filtration 
induced from the order filtration of $\cD(\g)^\g$. 

Let $\HC_k: \cD(\g)^\g\to B_k$ be the natural homomorphism, and 
$K(k)$ be the kernel of $\HC_k$. 

\begin{theorem}[Etingof-Ginzburg, \cite{EG}] \label{defhch}
\begin{enumerate}
\item[(i)] $K(0)=K$, $B_0=\cD(\h)^W$,
$\HC_{0}=\HC$. 
\item[(ii)] $\gr K(k)=\Ker(\gr\HC_k)=K_0$ for all complex $k$.
Thus, $\HC_{k}$ is a flat family of homomorphisms. 
\item[(iii)] The algebra $\gr B_k$ is commutative and isomorphic to
$\CC[\h\oplus \h^*]^W$ as a Poisson algebra. 
\end{enumerate} 
\end{theorem}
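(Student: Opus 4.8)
The strategy is to bootstrap the deformed statement off the $k=0$ case, which is exactly the Harish-Chandra/Levasseur-Stafford theorem (Theorem \ref{thm:LS}), together with the structure of the commuting scheme (Theorem \ref{thm:GG}). Part (i) is essentially a matter of unwinding definitions: when $k=0$ the module $W_0$ is the space of degree-zero Laurent polynomials, i.e. functions on the torus pulled back to $\gsl_n$; its annihilator $I_0$ in $U(\g)$ is generated by $\gsl_n$ together with the augmentation ideal of the center acting by the appropriate central character, and one checks that $I_0$ is precisely the quantization of the rank-one orbit $\Ob$ used to define $\mC_n$. Then $(\cD(\g)\mu(I_0))^\g = J^\g$ in the notation of the Levasseur-Stafford proof, so $B_0 = \cD(\g)//\g = \cD(\h)^W$ and $\HC_0 = \HC$, whence $K(0)=K$.

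**The heart of the argument (part (ii)).** First I would compute $\gr\HC_k$. As in the undeformed case, passing to symbols turns $\HC_k$ into a map of Poisson algebras $\CC[\g\oplus\g^*]^\g \to \gr B_k$, and the ideal $\mu(I_k)$ degenerates to the ideal generated by $\mathfrak{g}$-translates of the rank-$\le 1$ condition $\wedge^2[X,Y]=0$ on $\mathfrak{g}^*\times\mathfrak{g}^*$ (the symbol of $I_k$ does not depend on $k$, since the $k$-dependence of $I_k$ lives in lower order). Hence $\Ker(\gr\HC_k)$ contains $K_0$, the ideal of the commuting scheme quotient, independently of $k$. For the reverse inclusion one uses the filtered-to-graded inequality $\gr K(k) \supseteq K_0$ forces equality by a dimension/Hilbert series count: $\gr B_k$ is a quotient of $\CC[\h\oplus\h^*]^W$ (via $\gr\HC_k$ and Theorem \ref{thm:GG}), and on the other hand $B_k$ surjects onto the classical reduction $\CC[\h\oplus\h^*]^W$ by part (i)'s degeneration, so the Hilbert series of $\gr B_k$ is squeezed from both sides and must equal that of $\CC[\h\oplus\h^*]^W$. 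This gives $\gr K(k) = K_0 = \Ker\gr\HC_k$ and flatness of the family $\HC_k$ in $k$.

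**Part (iii)** is then immediate: $\gr B_k \cong \CC[\g\oplus\g^*]^\g/K_0$, which by the Gan-Ginzburg theorem (Theorem \ref{thm:GG}) is $\CC[\h\oplus\h^*]^W$ as a \emph{Poisson} algebra, the Poisson structure being the one induced from the symplectic form on $\CC^{2n}$. In particular $\gr B_k$ is commutative, so $B_k$ is a filtered quantization of the Calogero-Moser space $\mC_n \cong \sM_c$.

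**Main obstacle.** The delicate point is the equality $\gr K(k) = K_0$ rather than merely $\gr K(k) \supseteq K_0$: a priori the kernel could grow upon passing to the associated graded, or the symbol ideal of $\mu(I_k)$ could fail to generate all of $K_0$ after taking $\g$-invariants (taking invariants does not commute with taking associated graded in general). Controlling this requires knowing that the commuting variety quotient is reduced — which is exactly the content of Theorem \ref{thm:GG} — so that the naive symbol computation is not obstructed by nilpotents. The rest (parts (i) and (iii)) is bookkeeping once the $k=0$ identification and the flatness in (ii) are in hand.
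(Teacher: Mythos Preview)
The paper does not give a proof of this theorem; it is stated with a citation to \cite{EG} and the exposition moves on. So there is no proof in the paper to compare against, and I can only evaluate your outline on its own terms.

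There is a concrete error in your treatment of part (i). You assert that the annihilator $I_0$ of $W_0$ is ``generated by $\gsl_n$ together with the augmentation ideal of the center''. This is false: $W_0$ is the space of degree-zero Laurent polynomials with the natural $\gsl_n$-action by vector fields $E_{ij}\mapsto x_i\partial_j$, and this action is highly nontrivial (e.g.\ $x_1\partial_2$ does not kill $x_1/x_2$). So $\gsl_n\not\subset I_0$, and $I_0$ is strictly smaller than the augmentation ideal $\g\cdot U(\g)$. What \emph{is} true is the inclusion $I_0\subset \g\cdot U(\g)$ (since the constant function $1\in W_0$ gives a trivial submodule), which yields only a surjection $B_0\twoheadrightarrow \cD(\h)^W$, not an isomorphism.

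This error infects the logic of part (ii). Your ``squeeze'' argument for $\gr K(k)=K_0$ invokes part (i) (``$B_k$ surjects onto the classical reduction $\CC[\h\oplus\h^*]^W$ by part (i)'s degeneration''), so the argument is circular once (i) is no longer available as input. The dependence should run the other way: one proves (ii) and (iii) \emph{uniformly in $k$} by showing that $\gr I_k\subset S\g$ is the ideal of the closure of the rank-$\le 1$ cone (the $k$-dependence of $I_k$ lives in lower filtration, as you correctly note), then computing the classical Hamiltonian reduction along this cone via Propositions \ref{catquo}, \ref{catquo1} and Theorem \ref{thm:GG}. With (ii) in hand for $k=0$, part (i) follows: the surjection $B_0\twoheadrightarrow \cD(\h)^W$ coming from $I_0\subset \g\cdot U(\g)$ is a filtered map whose associated graded is the identity on $\CC[\h\oplus\h^*]^W$, hence an isomorphism. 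Your identification of the main obstacle (reducedness of the commuting quotient, i.e.\ Theorem \ref{thm:GG}) is correct; it is the order of deductions that needs to be reversed.
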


Because of this theorem, the homomorphism 
$\HC_k$ is called the deformed Harish-Chandra homomorphism. 

Theorem \ref{defhch} implies that $B_k$ 
is a quantization of the Calogero-Moser space $\mC_n$ 
(with deformation parameter $1/k$). 
But we already know one such quantization - 
the spherical Cherednik algebra 
$B_{1,k}$ for the symmetric group. 
Therefore, the following theorem 
comes as no surprize. 

\begin{theorem}[\cite{EG}] The 
algebra $B_k$ is isomorphic to the spherical rational
Cherednik algebra $B_{1,k}(\kS_n,\Bbb C^n)$. 
\end{theorem}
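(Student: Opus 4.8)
The plan is to identify both algebras $B_k$ and $B_{1,k}(\kS_n,\CC^n)$ inside a common larger algebra, namely (a localization of) the algebra of $\kS_n$-invariant differential operators on the torus-like space $\h_{\reg}$, and to check that they coincide as subalgebras. Concretely, I would first recall that both algebras are filtered quantizations of the same Poisson algebra $\CC[\h\oplus\h^*]^{\kS_n}$: for $B_k$ this is Theorem \ref{defhch}(iii), and for $B_{1,k}$ it is the statement that $\gr B_{1,k}=B_{1,0}=\CC[\h\oplus\h^*]^{\kS_n}$ coming from the PBW theorem (Proposition \ref{prop-pbw}) together with the definition of the spherical subalgebra. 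So the task is to produce a filtered isomorphism lifting the identity on associated graded.

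The key construction is a ``matrix radial part'' map. Restricting a $\gl_n$-invariant differential operator on $\gl_n$ to the diagonal Cartan $\h\cong\CC^n$ and conjugating by the Weyl denominator $\delta$ gives, by the (deformed) Harish-Chandra homomorphism $\HC_k$, an operator on $\h_{\reg}$ which is $\kS_n$-invariant; this is exactly how $B_k$ is realized as a subalgebra of $\cD(\h_{\reg})^{\kS_n}$. On the other side, the Dunkl operator embedding $\Theta_{1,k}$ of Proposition \ref{prop-pbw}(3), restricted to the spherical subalgebra, realizes $B_{1,k}(\kS_n,\h)$ inside $\cD(\h_{\reg})^{\kS_n}$ as well (here one should use that for $\kS_n$ the reflection representation and the permutation representation differ only by a trivial summand, so $\cD(\h_{\reg})$ versus $\cD(\CC^n_{\reg})$ is a harmless bookkeeping point). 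First I would compute $\HC_k$ explicitly on generators: the invariant constant-coefficient operators $\tr(\partial^m)$ on $\gl_n$ go, by Proposition \ref{concoe}'s deformed analogue, to the power sums $\sum y_i^m$ conjugated by $\delta$, i.e. to $\e(\sum y_i^m)\e$ in the Cherednik picture up to the $\delta$-twist; and the invariant functions $\tr(X^m)$ go to $\sum x_i^m=\e(\sum x_i^m)\e$. Since $B_k$ is generated by the images of these two families (this follows from Lemma \ref{poisgen} applied to $\gr B_k$ together with the flatness in Theorem \ref{defhch}(ii)) and $B_{1,k}$ is Poisson-generated, hence generated as a filtered algebra, by $\e(\sum x_i^m)\e$ and $\e(\sum y_i^m)\e$, matching these generators identifies the two subalgebras of $\cD(\h_{\reg})^{\kS_n}$.

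The main obstacle I anticipate is pinning down the precise normalization in the comparison of generators: the conjugation by $\delta$ in the definition of $\HC_k$ means that the image of $\tr(\partial^m)$ is not literally $\e(\sum y_i^m)\e$ but its conjugate $\delta\circ(\sum y_i^m)\circ\delta^{-1}$, and one must verify that this conjugation is implemented by an automorphism of $\cD(\h_{\reg})^{\kS_n}$ that preserves $B_{1,k}$ and acts trivially on $\gr$ — essentially the same ``the poles disappear after conjugating by $\delta$'' phenomenon as in Theorem \ref{thm:LS}(i) and Proposition \ref{prop-delta}, but now in the $k$-deformed setting. Once that normalization is fixed, one checks on generators that the Dunkl operators $D_{y_i}$ reproduce exactly the radial-part formula for $\HC_k(\tr(\partial^m))$ — this is the content of the classical identification of the trigonometric/rational Calogero-Moser operators with radial parts of Laplacians (Lemma \ref{lem:lapl} and its higher analogues), carried over to the spherical Cherednik algebra. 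The remaining verifications (that the map is well-defined on relations, that it is filtered, that it is an isomorphism because it is so on associated graded by Theorem \ref{defhch}(ii) and the PBW theorem) are then formal. I would cite \cite{EG} for the full details of the normalization computation, since that is where the bookkeeping lives, and present here the structural argument: common ambient algebra, matching generators, isomorphism on $\gr$.
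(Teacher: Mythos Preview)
The paper does not actually prove this theorem: it is stated with a citation to \cite{EG} and immediately followed by commentary, with no proof environment or argument given. So there is nothing in the paper's text to compare your proposal against.

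That said, your outlined strategy is precisely the one carried out in \cite{EG}: realize $B_k$ inside $\cD(\h_{\reg})^{\kS_n}$ via a $W_k$-valued (``matrix'') radial-part construction, realize $B_{1,k}$ inside the same algebra via the Dunkl embedding $\Theta_{1,k}$, and match the two images on the generating families $\sum x_i^m$ and the Calogero--Moser operators $L_m$. The flatness statement (Theorem \ref{defhch}(ii)) and the PBW theorem then promote agreement on generators to a filtered isomorphism. One point to be careful about: in the paper as written, $\HC_k$ is defined only as the quotient map $\cD(\g)^\g\to B_k$, not as a map into $\cD(\h_{\reg})^{\kS_n}$; the identification of $B_k$ with a subalgebra of differential operators on $\h_{\reg}$ via the $W_k$-twisted radial part is exactly the missing ingredient you flag, and it genuinely requires the computation in \cite{EG}, not just a ``deformed analogue of Proposition \ref{concoe}'' (which in the undeformed case gives constant-coefficient operators, whereas here the interaction terms must appear). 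Your acknowledgment that the normalization and the radial-part computation live in \cite{EG} is accurate; that is where the substantive work is.
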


Thus, quantum Hamiltonian reduction provides a Lie-theoretic construction 
of the spherical rational Cherednik algebra for the symmetric group. 
A similar (but more complicated) Lie theoretic construction 
exists for symplectic reflection algebras for wreath product groups 
defined in Example \ref{wp} (see \cite{EGGO}). 

\subsection{Notes}
Our exposition in this section follows Section 4, Section 5 of \cite{E4}.

\newpage

\end{document}